\DeclareFontFamily{OML}{rsfs}{\skewchar\font'177}
\DeclareFontShape{OML}{rsfs}{m}{n}{ <5> <6> rsfs5 <7> <8> <9> rsfs7
  <10> <10.95> <12> <14.4> <17.28> <20.74> <24.88> rsfs10 }{}
\DeclareMathAlphabet{\mathfs}{OML}{rsfs}{m}{n}
\newcommand{\x}{\ensuremath{\underline{x}}}
\newtheorem{maintheorem}{Theorem}
\newtheorem{theorem}{Theorem}[section]
\newtheorem{addendum}[theorem]{Addendum}
\newtheorem{proposition}[theorem]{Proposition}
\newtheorem{lemma}[theorem]{Lemma}
\newtheorem{corollary}[theorem]{Corollary}
\newtheorem{definition}[theorem]{Definition}
\newtheorem{claim}[theorem]{Claim}
\newtheorem*{claim*}{Claim}
\numberwithin{equation}{section}
\theoremstyle{definition}
\newtheorem{remark}[theorem]{Remark}
\newtheorem{example}[theorem]{Example}
\renewcommand{\mod}{\mbox{$\,\mathrm{mod}\,$}}
\renewcommand{\epsilon}{\varepsilon}
\def\wt{\widetilde}
\def\text#1{\textrm{#1}}
\def\emptyset{\varnothing}
\def\vf{\varphi}
\def\L{\Lambda}
\def\cW{\mathcal W}
\def\cD{\mathcal D}
\def\x{\times}
\def \R{\mathbb R}
\def \N{{\mathbb N}}
\def \Z{\mathbb Z}
\def \T{\mathbb T}
\def\ov{\overline}
\def\un{\underline}
\def\C{\mathbb C}
\def\wh{\widehat}
\def\({\biggl(}
\def\){\biggr)}
\def\<{\bold\langle}
\def\>{\bold\rangle}
\def\wh{\widehat}
\DeclareMathOperator\const{const}
\DeclareMathOperator\dist{\widehat d}
\DeclareMathOperator\Lip{Lip}
\DeclareMathOperator\NUH{NUH}
\DeclareMathOperator\id{Id}
\DeclareMathOperator\Var{Var}
\DeclareMathOperator\cO{\mathcal O}
\newcommand\hpi{\widehat \pi}
\newcommand\hmu{\widehat \mu}
\newcommand\hnu{\widehat \nu}
\newcommand\hbeta{\widehat \beta}
\newcommand\cM{{\mathcal M}}
\def\hmu{\widehat{\mu}}
\def\hphi{\widehat{\phi}}
\def\cO{\mathcal O}
\def\hpi{\widehat{\pi}}
\def\hvf{\widehat{\vf}}
\newcounter{jbStepCounter}
\newcommand{\occult}[1]{}
\newcommand\ignore[1]{}
\newcommand\diam{{\operatorname{diam}}}
\newcommand\eps{\epsilon}
\newcommand\E{\mathbb{E}}
\newcommand\NN{{\mathbb N}}
\newcommand\Prob{{\mathbb P}}
\newcommand\Proberg{{\mathbb P}_{\operatorname{erg}}}
\newcommand\RR{{\mathbb R}}
\newcommand\supp{\operatorname{supp}}
\renewcommand\top{{\operatorname{top}}}
\newcommand\TT{{\mathbb T}}
\newcommand\ZZ{{\mathbb Z}}
\newcommand\fs{\mathfrak{sh}}
\newcommand\tpi{\widetilde\pi}
\newcommand\Bor{\text{\sc top}}
\newcommand\hTOP{h_\Bor}
\newcommand{\supnorm}{{\sup}}
\def\hpsi{\widehat{\psi}}
\newcommand\wto{\rightharpoonup}
\newcommand\wsc{\rightharpoonup}
\newcommand\stL{\widetilde{\mathscr L}}
\newcommand\stH{\widetilde{\mathscr H}}
\newcommand\stK{\widetilde{\mathscr K}}
\newcommand\stV{\widetilde{\mathscr V}}
\newcommand\tf{\widetilde{f}}
\newcommand\tmu{\widetilde{\mu}}
\newcommand\tnu{\widetilde{\nu}}
\newcommand\tU{\widetilde{U}}
\newcommand\tK{\widetilde{K}}
\newcommand\tx{\widetilde{x}}
\newcommand\pci{{+,i}}
\newcommand\mci{{-,i}}
\newcommand\pc[1]{{+,#1}}
\def\hpsi{\widehat{\psi}}
\begin{document}
\author{J. Buzzi, S.    Crovisier, O. Sarig} 
\title[Strong positive recurrence and exponential mixing]{Strong positive recurrence and exponential mixing for diffeomorphisms} 
\begin{abstract}
We introduce the \emph{strong positive recurrence} (SPR)  property  for diffeomorphisms on closed manifolds with arbitrary dimension, and show that it has many consequences and holds in many cases. SPR diffeomorphisms  can be coded by countable state Markov shifts whose transition matrices act with a spectral gap on a large Banach space, and this implies exponential decay of correlations, almost sure invariance principle, large deviations, among other properties of the  ergodic measures of maximal entropy. Any $C^\infty$ smooth surface diffeomorphism with positive entropy is SPR, and there are many other examples  with lesser regularity, or in higher dimension. 
\end{abstract}

\keywords{smooth ergodic theory; symbolic dynamics; entropy; Lyapunov exponent; homoclinic class; measure maximizing the entropy; strong positive recurrence; exponential mixing; almost sure invariance principle; effective intrinsic ergodicity}
\subjclass[2020]{37C40, 28D20, 37A25, 37B10, 37D25, 37D35, 37E30}

\maketitle

\tableofcontents

\section{The SPR Property and Main Results}

\subsection{Introduction}
A fundamental question in dynamics  is to explain the  random behavior exhibited by  deterministic dynamical systems with high complexity.
This is best understood for uniformly hyperbolic systems. For these systems, one can use the spectral gap of an associated transfer  operator  to prove a variety of stochastic properties, ranging from exponential decay of correlations to almost sure invariance principles and large deviations \cite{Ruelle-TDF-book,Parry-Pollicott-Asterisque,Guivarch-Hardy,Gouezel-ASIP,Kifer}.
However, the uniform hyperbolicity condition is very restrictive \cite{Franks69,Newhouse70,Manning74}, and
a central  challenge  is to extend the theory  to a larger class of systems.
 This has given rise to various notions of ``semi-uniform'' hyperbolicity, which fall between Anosov's uniform hyperbolicity \cite{Anosov-Geodesic-Flows} and Pesin's non-uniform hyperbolicity \cite{Pesin-Izvestia-1976,Barreira-Pesin-Non-Uniform-Hyperbolicity-Book}. A principal  example is having a L.-S. Young tower with exponential tail for the return times to the base \cite{Young-Towers-Annals,young-tower-recurrence}. 

\smallskip
This work introduces a new property of this type, which we call, in analogy to a property in symbolic dynamics, ``\emph{strong positive recurrence" (SPR).}
We show that SPR is  common, and powerful. For example, we will prove  that {\em all}  $C^\infty$ surface diffeomorphisms with positive topological entropy are SPR. 
Using the general theory of SPR diffeomorphisms developed in this paper, we show:

\begin{theorem}\label{t.main}
Let $f$ be a topologically mixing $C^\infty$  diffeomorphism with positive topological entropy, on a closed surface $M$.
Let $\mu$ be the (unique) invariant measure which  maximizes the entropy. Then
for every $\beta>0$, there  are  $0<\theta<1$ and $C>1$ such that for all $\beta$-H\"older continuous functions $\varphi,\psi:M\to\R$,
 $$
 \left|\int \varphi\cdot (\psi\circ f^n)\, d\mu - \int \varphi\, d\mu \int \psi\, d\mu
    \right| \leq C\|\vf\|_\beta\|\psi\|_\beta \theta^n\ \ \ (\forall n\geq 0),\text{ where }
 $$
\begin{equation}\label{e.Holder-Norm-on-M}
\|\varphi\|_\beta:=\sup|\varphi|+\sup\bigg\{\frac{|\varphi(x)-\varphi(y)|}{d(x,y)^\beta}: x,y\in M,\ x\neq y\bigg\}.\hspace{1.1cm}
\end{equation}
\end{theorem}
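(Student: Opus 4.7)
\medbreak
\noindent\emph{Proof proposal.}
The plan is to reduce the statement to a question about a countable Markov shift, where one has a well-developed spectral theory. First, I would invoke the principal result advertised in the introduction, namely that every $C^\infty$ surface diffeomorphism $f$ with positive topological entropy is SPR, so that the general theory built in the paper applies. This provides a symbolic extension $\pi\colon\Sigma\to M$ with $\Sigma$ a topologically transitive countable state Markov shift carrying the unique measure of maximal entropy (lifted to a shift-invariant measure $\hmu$ with $\pi_*\hmu=\mu$). The finite-to-one factor $\pi$ is H\"older continuous, and the shift is conjugate (up to a set of zero $\hmu$-measure) to a Bernoulli scheme, after quotienting the canonical periodic partition. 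The reduction would express the correlation integrals on $(M,\mu)$ as correlation integrals on $(\Sigma,\hmu)$, with $\vf\circ\pi,\psi\circ\pi$ inheriting H\"older regularity.

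Next, I would use the SPR hypothesis to obtain a spectral gap for the transfer operator $\shL$ of the shift acting on a suitable Banach space of locally H\"older functions. In the classical Ruelle--Perron--Frobenius setting for countable Markov shifts, strong positive recurrence of the potential (here $0$, corresponding to entropy maximization) is exactly the condition guaranteeing that $\shL$ has a simple leading eigenvalue $e^{\hTOP}$ with a spectral gap on the relevant functional space. In combination with topological mixing of $f$, which lifts to aperiodicity of the big transition graph (after passing to the appropriate irreducible component coded by the homoclinic class), the gap gives exponential decay of correlations on $(\Sigma,\hmu)$ of the form
$$\left|\int \tilde\vf\cdot(\tilde\psi\circ\hsigma^n)\,d\hmu - \int\tilde\vf\,d\hmu\int\tilde\psi\,d\hmu\right|\leq C'\|\tilde\vf\|_\beta\|\tilde\psi\|_\beta\theta^n$$
for H\"older $\tilde\vf,\tilde\psi$ on $\Sigma$.

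Finally, I would transfer this decay back to $(M,\mu)$ by lifting observables: if $\vf,\psi\in C^\beta(M)$, then $\tilde\vf:=\vf\circ\pi$ and $\tilde\psi:=\psi\circ\pi$ are H\"older on $\Sigma$ with $\|\tilde\vf\|_\beta\leq C_\pi\|\vf\|_\beta$, because $\pi$ is H\"older continuous for an adapted symbolic metric; and since $\pi\circ\hsigma=f\circ\pi$, the symbolic correlations coincide with the ones on $M$. This yields the desired estimate with a constant $C$ depending on $\beta$ and a rate $\theta<1$ independent of $\vf,\psi$.

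The main obstacle, and where the work of the paper really lies, is establishing the spectral gap of $\shL$ on an appropriate Banach space under the SPR hypothesis and the unboundedness of the symbolic coding: the Markov shift has countably many states, the roof-like combinatorial data is not bounded, and classical Ionescu-Tulcea--Marinescu or Doeblin--Fortet arguments do not apply off the shelf. One must engineer a Banach space large enough to contain pullbacks of H\"older functions on $M$, yet small enough for $\shL$ to be quasicompact; the SPR condition is precisely what controls the Lasota--Yorke--type inequality needed to make this balance work. A secondary subtlety is that $\pi$ is only finite-to-one and the coding may split the measure of maximal entropy into several ergodic components on $\Sigma$; one must verify that topological mixing of $f$ selects (and aperiodizes) a unique relevant component so that the spectral gap on this component truly governs the decay on $M$.
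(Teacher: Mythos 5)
Your overall strategy matches the paper's: check that $C^\infty$ surface diffeomorphisms with positive entropy are SPR, build an irreducible SPR Markov coding of the Borel homoclinic class carrying $\mu$, use the Cyr--Sarig spectral gap for the transfer operator on the one-sided shift, and pull the decay of correlations back to $M$ via the H\"older coding map. This is exactly the route taken in the paper (Thm~\ref{t.SPR-C-infinity}, Thm~\ref{t.SPR-coding}, Thm~\ref{t.SGP}, and the proof of Thm~\ref{t.DOC-diffeos} in \S\ref{s.decay-of-corr-diffeo-proof}).

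There is one genuine gap: you assert that topological mixing of $f$ ``lifts to aperiodicity of the big transition graph,'' i.e.\ that the coding Markov shift $\Sigma$ can be taken aperiodic. This is not known and may fail. The coding map $\hpi\colon\Sigma\to M$ is only finite-to-one, so there is no reason for the period $q$ of $\Sigma$ to equal the period $p$ of $\mu$; the paper's Lemma~\ref{l.p|q} only establishes $p\,|\,q$, so $q$ can exceed $1$ even when $\mu$ is mixing ($p=1$). Consequently $\hmu$ need not be $\sigma$-mixing, and one cannot simply read off exponential decay of $\mathrm{Cov}_{\hmu}(\hvf,\hpsi\circ\sigma^n)$ from a spectral gap on an aperiodic shift. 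The paper instead passes to the spectral decomposition $\Sigma=\biguplus_{j=0}^{q-1}\sigma^j(\Sigma')$, applies the spectral gap to the $\sigma^q$-ergodic components, identifies $\mu'=\mu$ with the projection of such a component (here using the Bernoulli property of the MME's ergodic pieces), and then interpolates by dividing $pn$ by $q$ with remainder. Your proposal gestures at this subtlety at the end but states the unwarranted aperiodicity as a fact in the main argument; without the $p\,|\,q$ lemma and the $q$-periodic bookkeeping, the transfer of exponential decay from $\Sigma$ to $M$ is incomplete.
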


\noindent
The existence of the measure of maximal entropy is due to Newhouse \cite{Newhouse-Entropy}; Uniqueness and mixing is proved in \cite{BCS-1}. 
The proof of exponential mixing is split to several parts: First we define the SPR property (\S\ref{s.def-SPR-diffeo}); Then we characterize it in terms of continuity properties of Lyapunov exponents  and deduce (using our earlier paper \cite{BCS-2})  that all $C^\infty$ surface diffeomorphisms with positive entropy are SPR (\S\ref{ss.SPR-continuity}); Finally we show that the SPR property implies the existence of a symbolic model with  a spectral gap (\S\ref{section-Sigma}). This implies  exponential mixing (\S\ref{s.decay-of-corr-diffeo-proof}). 
This argument  can be extended to the non topologically-mixing case, and yields many other stochastic properties. It also applies  to a large  class of $C^r$-diffeomorphisms and to  some examples  in  higher dimension. See Theorems \ref{t.DOC-diffeos} and \ref{t.tight-Cr},  \S~\ref{s.example} and \S~\ref{ss-Prop-SPR-diffeos}. 

Theorem \ref{t.main} discusses measures of maximal entropy. It seems likely that the theorem is false, without additional assumptions, for  SRB measures. See \cite{Martens-Liverani}.

\subsubsection*{Standing Assumptions and Notation}
Throughout this paper, $f:M\to M$ is a diffeomorphism of a
smooth Riemannian manifold $M$, which is {\em closed}, i.e. compact and without boundary. The distance between points $x,y\in M$ is denoted by  $d(x,y)$.
Henceforth, all measures are understood to be invariant Borel probability measures.

We denote the  differential of $f$ by $Df:TM\to TM$. Unless specified otherwise, we will always assume that $f$ is of class $C^{1+}$, i.e. that $Df$  is expressed in coordinates by  H\"older continuous functions.
The {\em asymptotic dilation} is
\begin{equation}\label{e.dilation}
\lambda_{\max}(f):=\lim_{n\to +\infty} \max_{x\in M} \tfrac 1 n \log \max(\|Df^n(x)\|,\|Df^{-n}(x)\|)
\end{equation}
where $\|\cdot\|$ is the operator norm defined by the Riemannian structure.

We will also need to consider more general maps $T$.
Given a measurable map  $T$ on a measurable space $(\Omega,\mathfs F)$, we let
$\Prob(T)$ (resp. $\Proberg(T)$) denote the set of $T$-invariant (resp. $T$-invariant and ergodic) probability measures on $(\Omega,\mathfs F)$.
Given an invariant measurable set $X$,
$\Prob(T|_X):=\{\mu\in\Prob(T):\mu(X)=1\}$.

The metric entropy of $\mu\in\mathbb P(T)$ is denoted by $h(T,\mu)$.
The {\em top entropy} of $T$ is \begin{equation}\label{e.Top-Entropy}
h_{\Bor}(T):=\sup\{h(T,\mu):\mu\in\Prob(T)\}. \end{equation}
By the variational principle, if $\Omega$ is a compact metric space and  $T$ is continuous,  then $h_{\Bor}(T)$ is equal to the topological entropy of $T$, $h_{\top}(T)$. 
See \cite{Walters-Book}.

A measure $\mu\in \mathbb P(T)$ such that $h(T,\mu)=h_{\Bor}(T)$ is called a {\em measure of maximal entropy (MME)}.

\subsection{Strong Positive Recurrence}
\label{s.def-SPR-diffeo}
In this subsection, we suppose $f$ to be a $C^{1}$ diffeomorphism on a closed smooth manifold $M$.
Recall the following classical notion from Pesin theory \cite{Barreira-Pesin-Non-Uniform-Hyperbolicity-Book}:
\begin{definition}\label{d.pesin}
Fix $\chi,\varepsilon>0$. A \emph{$(\chi,\varepsilon)$-Pesin block} is a non-empty set
$\Lambda\subset M$ for which there are direct sum decompositions $T_x M=E^s(x)\oplus E^u(x)$ for all $x\in \bigcup_{n\in\Z} f^n(\Lambda)$,  and a uniform number $K>0$ such that
for any $n\in \ZZ$, $k\geq 0$, and $y\in\Lambda$,
\begin{equation}\label{e.def-pesin}
\max\bigl(\|Df^k|_{E^s(f^n(y))}\|,\; \|Df^{-k}|_{E^u(f^n(y))}\|\bigr)\leq K \exp(-\chi k+\varepsilon |n|).
\end{equation}
\end{definition}
\noindent
We will be mainly interested in cases when  $\varepsilon$ is much smaller than $\chi$.
The subspaces $E^s(x)$ and $E^u(x)$ are then automatically continuous and  invariant on $\Lambda$ \cite{Barreira-Pesin-Non-Uniform-Hyperbolicity-Book}, but their dimensions may depend on $x$.
Pesin blocks may be chosen compact, but in general, they are not invariant.  

The SPR property, which we now introduce, requires that all measures with sufficiently large entropy give a definite positive mass to some fixed Pesin block:

\begin{definition}\label{def-SPR-diffeo}
A diffeomorphism $f$ of a closed manifold is \emph{strongly positively recurrent (SPR)}, if  there exists $\chi>0$ such that  for each $\varepsilon>0$,
 there are a Borel $(\chi,\varepsilon)$-Pesin block $\Lambda$ and numbers
$h_0< h_\top(f)$, $\tau>0$ as follows:
\begin{equation}\label{e.SPR}
\text{For every ergodic measure $\nu$,}\quad
    h(f,\nu)>h_0 \implies \nu(\Lambda)>\tau.
\end{equation}
\end{definition}

We will eventually see that, for $C^{1+}$ diffeomorphisms, the SPR property is equivalent to many other dynamical properties, of a more transparent nature:
 \begin{itemize}
  \item[--] {\em Robustness for the MME:} Every  measure with nearly maximal entropy is close  to the MME 
  in the $1$-Wasserstein metric. In dimension two, the Lyapunov exponents are close as well.
\item[--] {\em Effective Robustness of the MME:} The above, with explicit estimates.
  \item[--] {\em Structure of the MME}:  The  MME distribution of the first entrance time into some Pesin block has exponentially decaying  tail.    \item[--] {\em  Entropy tightness} (see  \S \ref{s.defSPR}): One can find  $h_0$, $\tau$ and $\Lambda$ as in \eqref{e.SPR} with $\tau$  arbitrarily close to one.
 \end{itemize}
All these characterizations of the SPR property will be collected in Thm~\ref{thm-characterizations}.

The name ``strong positive recurrence" originates in the theory of countable state Markov shifts.  The connection between the SPR properties for diffeomorphisms and for Markov shifts  is that they are  both equivalent to the existence of   an ``entropy gap at infinity," see  Cor~\ref{c.SPR=Entropy-Gap-Diffeos}, Thm~\ref{t.SPR-as-Entropy-Gap-Shifts}(3).  

\indent As we shall see in Thm \ref{thm-characterizations}(III), every SPR diffeomorphism can be coded by an SPR Markov shift, and vice versa.
 This is instrumental in deriving many of the properties of SPR diffeomorphisms announced in  \S\ref{ss-Prop-SPR-diffeos} (see Part~\ref{part-symbolic-diffeo}).

We will now present some examples of SPR diffeomorphisms.

Any Anosov diffeomorphism is SPR. In this case, \eqref{e.SPR} holds with   arbitrary $0<\tau<1$, because there are $\chi>0$ such that the entire manifold is a $(\chi,\eps)$-Pesin block (for each $\eps>0$).

Another trivial example is a diffeomorphism $f$ such that $\Proberg(f)$ consists of  finitely many ergodic invariant measures, each carried by a hyperbolic periodic orbit. In this case we can take $\Lambda$ to be the union of these orbits. Notice that this example has zero topological entropy. In Prop. \ref{p.SPR-in-Zero-Entropy} we will see that every SPR diffeomorphism with zero topological entropy is like that.

\medskip

There are many more examples. One of the main results of this paper is:
\begin{maintheorem}\label{t.SPR-C-infinity}
On closed surfaces, all $C^\infty$ diffeomorphisms with positive topological entropy are SPR.
\end{maintheorem}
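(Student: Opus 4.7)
The plan is to combine two ingredients: an equivalent reformulation of the SPR property in terms of continuity of Lyapunov exponents (one of the equivalences announced in Theorem~\ref{thm-characterizations}), together with results specific to the $C^\infty$ surface setting---namely Newhouse's upper semicontinuity of metric entropy and the Lyapunov-exponent continuity proved in our earlier paper~\cite{BCS-2}. I would reduce Theorem~B to verifying, for $C^\infty$ surface diffeomorphisms with $h_{\top}(f)>0$, that every sequence $(\nu_n)$ of ergodic measures with $h(f,\nu_n)\to h_{\top}(f)$ has its Lyapunov exponents converging to those of any weak-$*$ limit.

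The proof then proceeds in three steps. First, invoke the characterization in Theorem~\ref{thm-characterizations}: on a surface, a $C^{1+}$ diffeomorphism is SPR if and only if any sequence of ergodic hyperbolic measures $(\nu_n)$ with $h(f,\nu_n)\to h_{\top}(f)$ and $\nu_n\weakto \nu_\infty$ satisfies $h(f,\nu_\infty)=h_{\top}(f)$ and $\chi^{\pm}(\nu_n)\to\chi^{\pm}(\nu_\infty)$. Second, apply Newhouse's theorem: $\nu\mapsto h(f,\nu)$ is upper semicontinuous for $C^\infty$ surface diffeomorphisms, which yields the existence of an MME and forces every weak-$*$ limit of such a sequence to be itself an MME; by Ruelle's inequality the limit is hyperbolic with $|\chi^{\pm}(\nu_\infty)|\ge h_{\top}(f)>0$. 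Third, apply the main continuity result of~\cite{BCS-2}, which asserts that along these entropy-maximizing sequences one indeed has $\chi^{\pm}(\nu_n)\to\chi^{\pm}(\nu_\infty)$. Combining the three steps verifies the Lyapunov-continuity characterization and hence delivers SPR.

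The principal obstacle is concentrated in the continuity result imported from~\cite{BCS-2}. The Lyapunov exponents are only upper semicontinuous in general, so proving lower semicontinuity at the MME requires preventing ``escape of hyperbolicity''---a potential loss of entropy to pieces of orbits whose exponents diverge as $n\to\infty$. The tool available to rule this out is Yomdin's polynomial volume growth theorem, which requires the full $C^\infty$ hypothesis; this is ultimately where the smoothness assumption is used. A secondary but still delicate obstacle is the ``Lyapunov-continuity $\Rightarrow$ SPR'' direction of Theorem~\ref{thm-characterizations}: one must translate pointwise Pesin-chart estimates into a uniform lower bound on the measure of a single fixed Pesin block, which in dimension two follows by controlling the tempered Pesin constant in terms of the exponents through the two-dimensional Oseledets decomposition.
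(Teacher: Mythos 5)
Your proposal takes essentially the same route as the paper: reduce SPR to entropy hyperbolicity and entropy continuity of Lyapunov exponents, obtain the former from Newhouse's upper semicontinuity plus Ruelle's inequality, and the latter from the continuity theorem of \cite{BCS-2} (whose $C^\infty$ hypothesis enters via Yomdin theory). The only small slip is that you invoke Thm~\ref{thm-characterizations}, which is scoped to Borel homoclinic classes and would require Prop.~\ref{p.decomposition} to transfer to $M$; the paper instead applies Thm~\ref{thm-CE-SPR} (the sufficient direction of the same equivalence), which is stated for arbitrary invariant Borel sets and so can be used directly with $X=M$.
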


\noindent
This will be proved as Thm~\ref{t.tight-C-infinity} as an application of a more general result, Thm~\ref{thm-CE-SPR}, valid in any dimension and finite smoothness.
\medskip

\noindent
In particular, unlike transitive Anosov surface diffeomorphisms, which can only exist on tori~\cite{Franks69, Newhouse70, Manning74}, SPR topologically mixing  $C^\infty$ diffeomorphisms with positive topologically entropy exist on all surfaces~\cite{Katok79}.

\medskip
\indent On the one hand, neither the $C^\infty$ assumption nor the dimension assumption can be removed in Theorem \ref{t.SPR-C-infinity}:
\begin{enumerate}[(1)]
\item Some {\em non-$C^\infty$} surface diffeomorphisms with positive topological entropy  are not SPR: Buzzi constructed  $C^r$ surface  diffeomorphisms with arbitrarily high $r$ and $h_{\top}(f)$, but without any measures of maximal entropy \cite{BuzziNoMax}. These diffeomorphisms cannot be SPR, by Thm~\ref{t.MME} below.

\medskip
\item Some $C^\infty$ diffeomorphisms with positive topological entropy on manifolds of {\em dimension three} are not SPR,  e.g. $f=T_A\times R_\alpha,$ where $T_A:\T^2\to\T^2$ is a hyperbolic toral automorphism and $R_\alpha:S^1\to S^1$ is the rotation by angle $\alpha$.
This diffeomorphism is not SPR, because all its orbits have one zero Lyapunov exponent, and therefore $f$ has no Pesin blocks at all.

\end{enumerate}
\indent On the other hand,  in~\S\ref{sec-surf-SPR}, we state and prove  an extension of Thm~\ref{t.SPR-C-infinity} to $C^r$ surface diffeomorphisms  with large but finite $r$, which  provides a {non-empty} $C^r$-open set of {surface} diffeomorphisms with  the SPR property.

In addition, we also have applications to higher dimensions.
We already mentioned the Anosov case. 
In \S\ref{s.example} we provide other examples, some partially hyperbolic, some with a dominated splitting (separating or not the Lyapunov exponents with different signs).
For this reason, even though our original motivation was the study of  $C^\infty$ surface diffeomorphisms, we decided to  develop the theory of strong positive recurrence in arbitrary dimension and only assuming  $C^{1+}$ smoothness.

\subsection{SPR, Continuity of Exponents, and the Proof of Theorem \ref{t.SPR-C-infinity}}\label{ss.SPR-continuity}

We will give a sufficient condition for the SPR property in terms of Lyapunov exponents.
In this section, 
we state this result in the setting where it is simplest and most powerful:
we assume $M$ to be a closed surface.
Let $\mu$ be an $f$-invariant Borel probability measure.  By the Oseledets theorem, a.e.  $x\in M$ has two Lyapunov exponents $\lambda^-(x)\leq \lambda^+(x)$ (see \S\ref{s.NUH} and \S\ref{s.SPR-exponents}). The Lyapunov exponents of  $\mu$ are $\lambda^\pm(\mu):=\int \lambda^\pm(x)d\mu(x)$. If $\mu$ is  ergodic, $\lambda^{\pm}(x)=\lambda^{\pm}(\mu)$ $\mu$-a.e.

\begin{maintheorem}\label{t.SPR-CE-SPR}
Let $f$ be a $C^{1+}$ diffeomorphism with positive topological entropy on a closed surface.
Then $f$ is SPR if and only if there is $\chi>0$ such that, for every  sequence of ergodic  measures  $\mu_k$ such that
 $h(f,\mu_k)\to h_{\top}(f)$ and
 $\mu_k\to \mu$
\mbox{weak-$*$}: \begin{enumerate}[ (C1)]
\item $\lambda^-(x)<-\chi<0<\chi<\lambda^+(x)$  $\mu$-almost everywhere; and
\item $\lambda^+(\mu_k)\to \lambda^+(\mu)$ and $\lambda^-(\mu_k)\to \lambda^-(\mu)$.
\end{enumerate}
Moreover in this setting the measure $\mu$ is an hyperbolic ergodic MME.
\end{maintheorem}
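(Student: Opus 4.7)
The plan is to link Pesin blocks to Oseledets theory. As a preliminary, on any compact $(\chi,\eps)$-Pesin block $\Lambda$ with $\eps<\chi$, the splitting $E^s(x)\oplus E^u(x)$ depends continuously on $x\in\Lambda$, and the functions $\vf^\sigma(x):=\log\|Df|_{E^\sigma(x)}\|$ ($\sigma\in\{s,u\}$) are continuous and bounded on $\Lambda$. A standard Pesin argument then shows that any ergodic $\nu$ with $\nu(\Lambda)>0$ is hyperbolic with $\lambda^-(\nu)=\int\vf^s\,d\nu\le -\chi$ and $\lambda^+(\nu)=\int\vf^u\,d\nu\ge \chi$, the Oseledets splitting of $\nu$ coinciding $\nu$-a.e. on $\Lambda$ with $E^s\oplus E^u$.

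\emph{Direction SPR $\Rightarrow$ (C1)+(C2), and $\mu$ is a hyperbolic ergodic MME.} Let $\mu_k$ be ergodic with $h(f,\mu_k)\to h_{\top}(f)$ and $\mu_k\to\mu$ weak-$*$. Fix $\chi_0>0$ from the SPR property and a sequence $\eps_j\downarrow 0$. SPR supplies compact $(\chi_0,\eps_j)$-Pesin blocks $\Lambda_j$ and thresholds $\tau_j>0$ such that, for $k$ large, $\mu_k(\Lambda_j)\ge\tau_j$; the Portmanteau theorem gives $\mu(\Lambda_j)\ge \tau_j$. I would then invoke the entropy-tightness equivalence announced in \S\ref{s.defSPR} and collected in Thm~\ref{thm-characterizations}, which upgrades $\tau_j$ to a quantity arbitrarily close to $1$; this makes (C1) essentially automatic (with any $\chi<\chi_0$), since $\mu$-a.e. point then lies in some $\Lambda_j$ with exponents $\le -\chi_0$ and $\ge \chi_0$. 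For (C2), extend $\vf^u,\vf^s$ from $\Lambda_j$ to continuous bounded functions on $M$ via Tietze, and combine the weak-$*$ convergence $\mu_k\to\mu$ with $\mu_k(M\setminus\Lambda_j),\mu(M\setminus\Lambda_j)\to 0$ to conclude $\lambda^\pm(\mu_k)\to \lambda^\pm(\mu)$ after letting $j\to\infty$. Ergodicity and the MME property of $\mu$ follow by transporting the analysis to the symbolic model of \S\ref{section-Sigma}, where MMEs correspond in finite-to-one fashion to the irreducible components of the coding.

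\emph{Direction (C1)+(C2) $\Rightarrow$ SPR.} Argue by contradiction. Assume $f$ is not SPR, in particular for the value $\chi$ supplied by (C1)+(C2). Then there is $\eps_0>0$ such that for every compact $(\chi,\eps_0)$-Pesin block $\Lambda$ one can find ergodic $\nu_k^{\Lambda}$ with $h(f,\nu_k^{\Lambda})\to h_{\top}(f)$ yet $\nu_k^{\Lambda}(\Lambda)\to 0$. A diagonal extraction produces ergodic $\nu_k$ with $h(f,\nu_k)\to h_{\top}(f)$ giving arbitrarily small mass to every $(\chi,\eps_0)$-Pesin block, and by compactness $\nu_k\to\mu$ weak-$*$ along a subsequence. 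By (C2) and (C1), $|\lambda^\pm(\nu_k)|\to|\lambda^\pm(\mu)|>\chi$, so for large $k$ the measures $\nu_k$ are uniformly hyperbolic at scale $>\chi$. Pesin's theorem then produces, uniformly in $k$, a $(\chi,\eps_0)$-Pesin block of $\nu_k$-mass bounded below by some $\tau_0>0$, contradicting the construction.

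\emph{Main obstacle.} The principal difficulty lies in the forward direction: upgrading the crude bound $\mu(\Lambda_j)\ge\tau_j$, with possibly small $\tau_j$, to the statement that $\mu$ is concentrated on hyperbolic points with exponents uniformly separated from $0$, and simultaneously identifying $\mu$ as a single MME rather than a mixture of lower-entropy pieces carrying some hyperbolic content. This forces one to use both the entropy-tightness equivalence (which amplifies $\tau$ toward $1$) and a control of the entropy of the limit via Pesin-block localization, since in the $C^{1+}$ category entropy need not be upper semicontinuous on the full space of invariant measures.
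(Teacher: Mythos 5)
The principal gap is in your converse direction, at the sentence ``Pesin's theorem then produces, uniformly in $k$, a $(\chi,\eps_0)$-Pesin block of $\nu_k$-mass bounded below by some $\tau_0>0$.'' This is precisely the hard core of the implication, and it does \emph{not} follow from Pesin's theorem. Pesin's theorem produces, for each hyperbolic ergodic $\nu$, a family of $(\chi',\eps')$-Pesin blocks of positive $\nu$-measure — but the Pesin constant $K$ achieving a prescribed mass depends on the measure, and knowing $\lambda^\pm(\nu_k)\to\lambda^\pm(\mu)$ (an $L^1$-type statement about averages of Lyapunov exponents) together with $\nu_k\to\mu$ weak-$*$ gives no a priori control on the pointwise fluctuations of $\tfrac1n\log\|Df^n\|$ along $\nu_k$-typical orbits. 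The paper extracts the missing uniformity in Thm~\ref{thm-CE-SPR} by lifting the measures to the Grassmannian bundle (where the log-Jacobian $\vf(x,E)=\log|\det Df|_E|$ becomes globally continuous), using compactness of the set $\stL^i$ of limit unstable lifts together with Prop.~\ref{prop-erg-lifts} to identify the limit lift, applying the Pliss lemma for measures (Lemma~\ref{l.pliss}) to produce Pliss times with uniform density, and finally passing from Pliss sets to genuine Pesin blocks via Prop.~\ref{p.pesin}. Your sketch simply omits this mechanism, and without it the claimed contradiction does not materialize.

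A secondary issue: in the forward direction, your derivation of (C1) and (C2) from entropy-tightness (Thm~\ref{thm-characterizations}(VII)) is coherent — compact Pesin blocks of $\mu_k$-mass $\to1$, Portmanteau, and Tietze extension of $\log\|Df|_{E^\sigma}\|$ off the block do give the exponent convergence. But your treatment of the remaining conclusion that $\mu$ is a hyperbolic \emph{ergodic MME} is only a wave at the symbolic model; that part is genuinely nontrivial because entropy of $C^{1+}$ diffeomorphisms is not upper semicontinuous, and in the paper it is packaged as Cor.~\ref{c.Kadyrov2}, which rests on Prop.~\ref{p.decomposition} and the effective-intrinsic-ergodicity estimate Thm.~\ref{t.Kadyrov-Ineq-diffeos}. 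Finally note that your route through entropy-tightness is not more elementary than the paper's: (I)$\Rightarrow$(VII) in Thm~\ref{thm-characterizations} is established via the full cycle (I)$\Rightarrow\cdots\Rightarrow$(VI)$\Rightarrow$(VII), and the step (VI)$\Rightarrow$(VII) is again Thm~\ref{thm-CE-SPR}, so the Pliss/Grassmannian machinery is hiding inside the black box you invoke.
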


\noindent
The direct implication follows from Cor~\ref{c.Kadyrov2} and the converse one from Thm~\ref{thm-CE-SPR}, both valid in any dimension.

\medskip

\noindent 

In \S \ref{sec-surf-SPR}, we use our work on surface diffeomorphisms in \cite{BCS-2} to show  that Conditions (C1) and (C2) hold for {\em all} $C^\infty$ surface diffeomorphisms with positive topological entropy. Thm~\ref{t.SPR-C-infinity} follows. 

In fact we can do more. Burguet has extended \cite{BCS-2} to the $C^r$-case \cite{Burguet-Cr}. Using this work, we can also show that conditions (C1) and (C2) hold for all $C^r$ surface diffeomorphisms such that $h_{\top}(f)>\lambda_{\max}(f)/r$. It follows that all such diffeomorphisms are SPR. See Thm~\ref{t.tight-Cr} below.

\medbreak
We now return to the case of closed manifolds of arbitrary dimension.

\newcommand\mHC{\operatorname{X}}

\subsection{SPR Borel Homoclinic Classes}
Many diffeomorphisms have several measures of maximal entropy. In such cases, it is useful to  decompose the dynamics  into invariant pieces with some irreducibility properties, implying transitivity and uniqueness of MME. A typical example is Smale's spectral decomposition  of the non-wandering set of an axiom A diffeomorphism into ``basic sets" \cite{Smale}.
 In \cite{BCS-1}, we gave a similar decomposition for {\em general} $C^{1+}$ diffeomorphisms based on a homoclinic relation between measures. 
 
 A variant of this decomposition is described in detail \S\ref{s.measurable-homoclinic-classes}.  We call the elements of this decomposition {\em Borel homoclinic classes}.\footnote{
The connection between our Borel homoclinic classes and Newhouse's homoclinic classes as defined in \cite{Newhouse-Homoclinic} is explained in Remark \ref{r.topological-class}. }
They form  a countable (possibly finite) family of pairwise disjoint invariant Borel sets $X_i$, whose union  carries all ergodic hyperbolic invariant measures. Each $X_i$ carries at most one \emph{local MME}, i.e., $\mu\in\Prob(f|_{X_i})$ such that $h(f,\mu)=h_{\Bor}(f|_{X_i}):=\sup\{h(f,\nu):\nu\in\mathbb P(f), \nu(X_i)=1\}$.

The {\em period} $p$ of a Borel homoclinic class $X_i$ is defined to be the greatest common divisor of the periods of periodic orbits inside $X_i$.
The unique MME of  $f|_{X_i}$ (if it exists) can be shown to be   isomorphic to the product of a Bernoulli scheme and a cyclic permutation of $p$ points. In particular, it is mixing iff $p=1$.

\medbreak

One can localize the SPR property of the diffeomorphism to any $X_i$ (or more generally to
arbitrary invariant Borel sets $X\subset M$):
\begin{definition}\label{def-SPR-diffeo-local}
A diffeomorphism $f$ is {\em SPR} on an invariant Borel subset $X$
if there exists $\chi>0$ such that  for each $\varepsilon>0$, there are
a $(\chi,\varepsilon)$-Pesin block $\Lambda$ and numbers
$h_0< h_\Bor(f|_X)$ and  $\tau>0$ as follows:
\begin{equation}\label{e.SPR-local}
\text{For any ergodic measure $\nu$ on $X$,}\quad
        h(f,\nu)>h_0 \implies \nu(\Lambda)>\tau.
\end{equation}
In this case we also say that {\em $X$ is SPR for $f$}.
\end{definition}
The following result relates the SPR property of $f$ to the SPR properties of its Borel homoclinic classes:
\begin{proposition}\label{p.decomposition}
Let $f$ be a $C^{1+}$ diffeomorphism of a closed manifold.
Then $f$ is SPR if and only if there exists $h_0<h_\top(f)$ with the following properties:
\begin{enumerate}[(a)]
\item All ergodic measures $\mu$ with $h(f,\mu)>h_0$ are hyperbolic.
\item The set of Borel homoclinic classes $X$ with entropy $h_\Bor(f|_X)>h_0$ is finite.
\item Each Borel homoclinic class $X$ such that $h_\Bor(f|_X)>h_0$ is SPR.
\end{enumerate}
\end{proposition}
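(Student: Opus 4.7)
The plan is to prove the two implications separately.

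For the direction $(\Leftarrow)$, I would let $X_1,\dots,X_N$ enumerate the finitely many Borel homoclinic classes with $h_\Bor(f|_{X_i})>h_0$ provided by (b), and for each $i$ extract from (c) an exponent $\chi_i>0$ together with, for every $\varepsilon>0$, a $(\chi_i,\varepsilon)$-Pesin block $\Lambda_i(\varepsilon)$, a threshold $h_{0,i}<h_\Bor(f|_{X_i})$, and $\tau_i>0$. Setting $\chi:=\min_i\chi_i$, $\Lambda(\varepsilon):=\bigcup_i\Lambda_i(\varepsilon)$, $h_0':=\max\{h_0,h_{0,1},\dots,h_{0,N}\}<h_\top(f)$, and $\tau:=\min_i\tau_i$, one checks that $\Lambda(\varepsilon)$ is a $(\chi,\varepsilon)$-Pesin block (with constant $\max_i K_i$, and using that $\chi\leq\chi_i$). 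Any ergodic $\nu$ with $h(f,\nu)>h_0'$ is hyperbolic by (a), hence supported on a Borel homoclinic class which, by the entropy bound, must be some $X_i$; then $h(f,\nu)>h_{0,i}$ yields $\nu(\Lambda_i(\varepsilon))>\tau_i\geq\tau$, so $f$ is SPR.

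For the direction $(\Rightarrow)$, I would fix $\chi>0$ from Definition \ref{def-SPR-diffeo}, pick $\varepsilon\in(0,\chi/2)$, and use the associated compact $(\chi,\varepsilon)$-Pesin block $\Lambda$, threshold $h_0<h_\top(f)$, and $\tau>0$. Statement (a) is immediate: any ergodic $\nu$ with $h(f,\nu)>h_0$ has $\nu(\Lambda)>\tau$, so by ergodicity $\nu$-typical orbits return to $\Lambda$ with positive frequency; the uniform Pesin bound \eqref{e.def-pesin} applied along these returns, combined with Kingman's subadditive ergodic theorem, pushes the Lyapunov exponents of $\nu$ outside the interval $[-\chi+O(\varepsilon),\,\chi-O(\varepsilon)]$, so $\nu$ is hyperbolic. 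Statement (c) follows by restricting the global SPR inequality to ergodic measures supported on a given class $X$ with $h_\Bor(f|_X)>h_0$, using the same data $(\chi,\Lambda,h_0,\tau)$.

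The main obstacle will be statement (b), where I would argue by contradiction. Given infinitely many pairwise distinct classes $X_n$ with $h_\Bor(f|_{X_n})>h_0$, choose ergodic $\mu_n$ on $X_n$ with $h(f,\mu_n)>h_0$, so $\mu_n(\Lambda)>\tau$ by SPR. A Pesin-adapted Katok closing lemma (see \cite{Barreira-Pesin-Non-Uniform-Hyperbolicity-Book}) applied at a Birkhoff-generic point of $\mu_n$ visiting $\Lambda$ produces a hyperbolic periodic orbit $\mathcal O_n$ that passes through a small neighborhood of $\Lambda$ and is homoclinically related to $\mu_n$, whence $\mathcal O_n\subset X_n$. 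On the other hand, the uniform hyperbolicity and transversality on the $(\chi,\varepsilon)$-Pesin block $\Lambda$ yield a radius $\delta>0$ such that any two saddles inside a neighborhood of $\Lambda$ lying at distance less than $\delta$ have transversely intersecting local invariant manifolds of uniform size, and so are homoclinically related. A finite $\delta$-cover of the compact set $\Lambda$ then forces the $\mathcal O_n$ to fall into at most finitely many homoclinic equivalence classes, contradicting that the $X_n$ are pairwise distinct. The delicate step is to identify the measure-theoretic Borel homoclinic classes of \cite{BCS-1} with the classical periodic-orbit homoclinic relation on $\Lambda$ --- precisely what the closing lemma on Pesin blocks together with the local product structure accomplishes.
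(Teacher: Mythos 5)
The direction $(\Leftarrow)$ is correct and essentially matches the paper (which phrases the same computation as two general "observations" about unions of SPR sets). The direction $(\Rightarrow)$, however, has a genuine gap in item (c), and item (b) takes a substantially more roundabout route than the paper.

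For item (c), you claim that restricting the global SPR inequality to ergodic measures on $X$, "using the same data $(\chi,\Lambda,h_0,\tau)$," gives the SPR property of $X$. But the SPR property of $X$ (Def.~\ref{def-SPR-diffeo-local}) is a for-all-$\varepsilon$ statement: for \emph{each} $\varepsilon$ one needs a $(\chi,\varepsilon)$-block, a threshold $h_0(\varepsilon)<h_\Bor(f|_X)$, and some $\tau(\varepsilon)$. At the start of your $(\Rightarrow)$ argument you fixed a single $\varepsilon$, and for other values of $\varepsilon$ the global SPR only guarantees $h_0(\varepsilon)<h_\top(f)$, not $h_0(\varepsilon)<h_\Bor(f|_X)$. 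For a class $X$ with $h_0 < h_\Bor(f|_X) < h_\top(f)$, the implication ``$h(f,\nu)>h_0(\varepsilon)\Rightarrow\nu(\Lambda)>\tau$'' could be vacuous on $X$, and one gets nothing. The paper avoids this by first \emph{increasing} $h_0$ so that every Borel homoclinic class with entropy above it has entropy exactly $h_\top(f)$; this is harmless because (a) and (b) are monotone in $h_0$, and because at least one class attains $h_\top(f)$. After the increase, $h_0(\varepsilon)<h_\top(f)=h_\Bor(f|_X)$ for all $\varepsilon$, and the restriction argument goes through. Your proof omits this step, and without it (c) is not established.

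For item (b), your contradiction argument via a Katok-type closing lemma is morally in the right direction but introduces machinery and several unproved steps: that the periodic orbit $\mathcal O_n$ can be taken homoclinically related to $\mu_n$, that it lies not merely near $\Lambda$ but inside a Pesin block with controlled constants (nearness to $\Lambda$ in $M$ gives no hyperbolicity estimate by itself), and that two such orbits at distance $<\delta$ are homoclinically related. The paper's Proposition~\ref{p.SPRfinite} bypasses periodic orbits entirely: every point of $X_i\cap\Lambda$ lies in $\NUH'(f)$ by definition of Borel homoclinic classes, so the homoclinic relation can be tested directly on these points; the $C^1$-continuity of local invariant manifolds on the Pesin block $\Lambda$ (Theorem~\ref{t.pesin}) then gives $x_i\sim x_j$ whenever $x_i,x_j\in\Lambda$ are close, and a convergent subsequence finishes the argument. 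The closing-lemma route essentially re-proves facts already packaged into the definition of $\NUH'(f)$. Finally, for item (a), the Birkhoff-return / Kingman argument you sketch is valid but unnecessary: already for a single point $x$ in a $(\chi,\varepsilon)$-Pesin block with $\varepsilon<\chi$, inequality \eqref{e.def-pesin} forces all Lyapunov exponents at $x$ out of $[-\chi,\chi]$, so $\nu(\Lambda)>0$ alone gives hyperbolicity of $\nu$.
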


\noindent
This will be proved in \S\ref{sss-BHC-SPR}.

\noindent
We will use Prop.~\ref{p.decomposition} to deduce the properties of  SPR diffeomorphisms from  results on   SPR Borel homoclinic classes.
Among other things, this approach allows to extend results such as Thm~\ref{t.main} to the non-mixing case (Thm~\ref{t.DOC-diffeos}).

\begin{example}\label{e.BHC-for-axiom-A}
If $f$ is an axiom A diffeomorphism, then
the Borel homoclinic classes coincide with the (maximal) basic sets and their number is finite.
Each of the $X_i$ is SPR, because it can be covered by a single Pesin block. In the special case when $f$ is a transitive Anosov diffeomorphism, there is only one $X_i$.
\end{example}
\begin{example}\label{e.BHC-for-top-transitive}
If $\dim(M)=2$, $f$ is $C^\infty$, and $h_{\top}(f)>0$, then \cite{BCS-1} builds a spectral decomposition that shows that:
\begin{enumerate}
\item For every $0<h_0<h_{\top}(f)$, there are finitely many Borel homoclinic classes which carry all ergodic invariant measures with entropy bigger than $h_0$.
\item In the topologically transitive case,  all ergodic measures with positive entropy are carried by a single Borel homoclinic class (obviously unique).
\item In the topologically mixing case, the period of this class equals one.
\end{enumerate}
Thm~\ref{t.tight-C-infinity} below says that all the Borel homoclinic classes of a $C^\infty$ diffeomorphism of a closed surface are SPR.
\end{example}

\subsection{Properties of SPR Diffeomorphisms}\label{ss-Prop-SPR-diffeos}
We shall see that the MMEs of SPR diffeomorphisms share many of the properties of the MMEs of axiom A diffeomorphisms.
We begin with existence and uniqueness of MMEs.
As a comparison, recall that some $C^r$ diffeomorphisms do not have such measures~\cite{Misiurewicz,BuzziNoMax}, but
$C^\infty$ diffeomorphisms in any dimension always admit MME~\cite{Newhouse-Entropy}.
\begin{maintheorem}[\bf \emph{Existence and Finite Multiplicity of MME}]\label{t.MME}
Let $f$ be a $C^{1+}$ diffeomorphism of a closed manifold.
Then any SPR Borel homoclinic class $X$ of $f$
carries exactly one local MME, i.e., an invariant measure $\mu$ such that
$$
h(f,\mu)=h_{\Bor}(f|_X):=\sup\{h(f,\nu):\nu\in\mathbb P(f), \nu(X)=1\}.
$$
If $f$ is SPR and $h_{\top}(f)>0$, then the set of ergodic MME is non-empty and finite.
\end{maintheorem}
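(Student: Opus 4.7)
My plan is to split the theorem into two parts: the local existence of an MME on an SPR Borel homoclinic class $X$ (uniqueness already being granted, since each Borel homoclinic class carries at most one local MME by the construction recalled just before the statement), and then the global non-emptiness and finiteness, which follow from Proposition~\ref{p.decomposition}.

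For the local existence, set $H := h_{\Bor}(f|_X)$ and assume $H>0$ (the case $H=0$ being degenerate). Applying the SPR property on $X$ with $\varepsilon \ll \chi$, I obtain a compact $(\chi,\varepsilon)$-Pesin block $\Lambda$, a threshold $h_0 < H$, and $\tau>0$ such that every ergodic $\nu$ on $X$ with $h(f,\nu)>h_0$ satisfies $\nu(\Lambda)>\tau$. Pick a maximizing sequence of ergodic measures $\mu_n$ on $X$ with $h(f,\mu_n)\to H$; eventually $\mu_n(\Lambda)>\tau$, so the sequence is tight. Extract a weak-$*$ accumulation point $\mu$; compactness of $\Lambda$ gives $\mu(\Lambda)\geq \tau>0$.

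The main technical step is the entropy lower bound $h(f,\mu)\geq H$. Global upper semicontinuity of entropy fails in $C^{1+}$, but on the restricted class of ergodic measures giving at least $\tau$ mass to a fixed Pesin block $\Lambda$, the uniform hyperbolic constants (uniform local stable/unstable manifold sizes and uniform distortion) yield a form of semicontinuity: concretely, I would approximate each $\mu_n$ by a Katok horseshoe lying near $\Lambda$ and take a Hausdorff accumulation point of horseshoes, then read off an entropy bound for $\mu$. Once $h(f,\mu)\geq H$ is known, decompose $\mu$ into ergodic components; those with entropy near $H$ assign positive mass to $\Lambda$ and are therefore hyperbolic with the signature imposed by $\Lambda$; since $X$ is an equivalence class for the homoclinic relation and intersects $\Lambda$, these components lie in $X$, and any component of entropy $H$ is the desired local MME.

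For the global statement, apply Proposition~\ref{p.decomposition} at some $h_0<h_{\top}(f)$: it yields a finite family $X_1,\dots,X_k$ of SPR Borel homoclinic classes carrying every ergodic measure of entropy exceeding $h_0$. Any ergodic MME of $f$ has entropy $h_{\top}(f)>h_0$, is hyperbolic, and thus lies in some $X_i$; by the local result combined with the already-granted uniqueness, each $X_i$ carries at most one MME, giving at most $k$ ergodic MMEs. Non-emptiness follows because a maximizing ergodic sequence for $f$ is eventually contained in $\bigcup X_i$, so some $X_{i_0}$ achieves $h_{\Bor}(f|_{X_{i_0}}) = h_{\top}(f)$, and the local existence produces an MME on $X_{i_0}$ that is also an MME for $f$. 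The main obstacle I anticipate is the entropy lower bound $h(f,\mu)\geq H$ in the local step: in $C^{1+}$ regularity this is not a soft semicontinuity fact but genuinely requires the uniform hyperbolic structure on $\Lambda$, together with careful bookkeeping to ensure that the ergodic components of the limit do not escape $X$.
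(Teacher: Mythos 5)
Your route (direct weak-$*$ compactness on the manifold, with no symbolic dynamics) is genuinely different from the paper's, which proves local existence by coding $f|_X$ with an irreducible SPR Markov shift (Thm~\ref{t.SPR-coding}), invoking Gurevich's theorem (Thm~\ref{t.Gurevich-Theorem}) to obtain a unique MME $\widehat\mu$ on $\Sigma$, and setting $\mu=\pi_*\widehat\mu$; properties ($\Sigma$3) and ($\Sigma$7) of the coding make $\mu$ automatically carried by $X$ with $h(f,\mu)=h_{\Bor}(\Sigma)=h_{\Bor}(f|_X)$. Your approach would be more elementary if it worked, but as written it has two genuine gaps.

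The first is the step $h(f,\mu)\geq H$ for the weak-$*$ limit $\mu$. ``Katok horseshoes plus Hausdorff accumulation'' does not establish this: the Hausdorff limit of a sequence of horseshoes need not carry topological entropy comparable to the limsup, and in any case the weak-$*$ limit measure $\mu$ of the $\mu_n$ is not supported on that Hausdorff limit, so no entropy bound for $\mu$ follows from it. A semicontinuity statement for sequences giving uniform mass $\geq\tau$ to a fixed Pesin block is indeed in the spirit of SPR, and the paper does prove robustness of that flavour (Cor.~\ref{c.Kadyrov1}, Thm~\ref{t.Kadyrov-Ineq-diffeos}) --- but only as a \emph{consequence} of the coding and the spectral gap, with $\mu$ already known to be the MME; it cannot be used to produce the MME in the first place without circularity.

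The second gap is in forcing the high-entropy ergodic components of $\mu$ back into $X$. The SPR inequality $\nu(\Lambda)>\tau$ applies only to ergodic $\nu$ already carried by $X$, and you have not shown that the ergodic components of $\mu$ live on $X$. Since $X$ is a Borel set and generally not closed, the weak-$*$ limit of $\mu_n\in\Proberg(f|_X)$ may charge $\overline{X}\setminus X$; and $\mu(\Lambda)\geq\tau$ is only an average over the ergodic decomposition, so it does not show that each high-entropy component individually charges $\Lambda$, nor that such a component is homoclinically related to points of $X$. This is exactly the issue the symbolic coding resolves by construction, since property ($\Sigma$3) of Thm~\ref{t.SPR-coding} states that every $\sigma$-invariant measure on $\Sigma$ projects to a measure carried by $X$. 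Your reduction of the global statement to the local one via Prop.~\ref{p.decomposition} is correct and matches the paper.
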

\noindent
This theorem is an immediate consequence of Prop.~\ref{p.decomposition} and Thm~\ref{t.MME-exists} on each Borel homoclinic class. The latter gives additional information on the MMEs. 
Thm~\ref{t.MME} implies Bowen's theorem on the existence and uniqueness of the MME on a basic set $X$ of an axiom A diffeomorphism \cite{Bowen-LNM}, see Example \ref{e.BHC-for-axiom-A}. 
Similarly, Theorems \ref{t.SPR-C-infinity}~and~\ref{t.MME} give a new proof of our results in \cite{BCS-1} on the finiteness of the number of ergodic MME  for  $C^\infty$ surface diffeomorphisms with positive topological entropy, and the uniqueness or mixing of the MME in the topologically transitive or topologically mixing case, see Example \ref{e.BHC-for-top-transitive}(2).

\smallskip
Thm~\ref{t.MME} says that the restriction of a $C^{1+}$ diffeomorphism to a SPR Borel homoclinic class $X$ is {\em intrinsically ergodic}: It has exactly one local MME $\mu$. Equivalently, 
 for every $\nu\in\mathbb P(f|_X)$, if $h(f,\nu)=h_{\Bor}(f|_X)$, then $\nu=\mu$.

If $h(f,\nu)\approx h_{\Bor}(f|_X)$, then $\nu\approx \mu$, with explicit error bounds:

\begin{maintheorem}[\bf \emph{Effective Intrinsic Ergodicity}]\label{t.effective}
Let $f$ be a $C^{1+}$ diffeomorphism of a closed manifold $M$. 
Let $X$ be an SPR Borel homoclinic class and let be
$\mu$ the MME of $f|_X$. Given any $\beta>0$,
there is $C>0$
such that for any invariant measure $\nu$ (non necessarily ergodic) on $X$:
\begin{enumerate}
\item  If $\varphi\colon M\to \RR$ is $\beta$-H\"older continuous and $\|\vf\|_\beta$ is defined by \eqref{e.Holder-Norm-on-M}, then
\begin{equation}\label{e.effective1}
{\textstyle \big|\int \varphi d\mu-\int \varphi d\nu\big|} \leq C \|\varphi\|_\beta \sqrt{h(f,\mu)-h(f,\nu)}.
\end{equation}
\item If $\nu$ has no atoms, there is an injective Borel map $T:M\to M$ such that $$T_\ast\nu=\mu
\text{ and }
\int c(x,Tx) d\mu \leq C\sqrt{h(f,\mu)-h(f,\nu)},$$
where
$c(x,y):=d(x,y)+ \wh{d}(E^s(x),E^s(y)) + \wh{d}(E^u(x),E^u(y))$,
and $\wh{d}$ is the metric on the Grassmannian bundle (see Appendix~\ref{app-grassmannian}).\footnote{All invariant measures on the same Borel homoclinic class are hyperbolic, homoclinically related, and have the same stable and unstable dimensions. See \S \ref{s.measurable-homoclinic-classes}.}
\footnote{We thank  Y. Pesin for the suggestion to study the distance of the Oseledets distributions of measures with high entropy to those of the MME. His suggestion motivated this statement.}
\medskip

\item $\qquad\qquad {|\Lambda^{+}(\mu)-\Lambda^{+}(\nu)|  + |\Lambda^{-}(\mu)-\Lambda^{-}(\nu)| \leq C \sqrt{h(f,\mu)-h(f,\nu)},}$
\medskip

\noindent
where $\Lambda^\pm(\nu):=\int\Lambda^{\pm}(x)d\nu$, and $\Lambda^+(x)$ and $\Lambda^-(x)$ are the sums of positive or negative Lyapunov exponents, with multiplicity (see \S\ref{ss.def-of-Lambda-plus}).
\end{enumerate}
\end{maintheorem}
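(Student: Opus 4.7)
The plan is to reduce to the symbolic model provided by the SPR property and then extract all three estimates from a single Kadyrov-type inequality obtained from a second-order expansion of the topological pressure. By Theorem~\ref{thm-characterizations}(III), the dynamics on $X$ is semiconjugate to a countable-state topological Markov shift $\sigma\colon \Sigma\to\Sigma$ via a bounded-to-one coding map $\pi\colon\Sigma\to X$ that preserves entropy, with the shift itself being SPR; the construction in \S\ref{section-Sigma} yields, for any locally H\"older potential, a transfer operator with a spectral gap on a suitable Banach space. This implies that for locally H\"older $\hat\varphi$ on $\Sigma$, the pressure function $t\mapsto P(t\hat\varphi)$ is real-analytic near $t=0$. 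Lift $\mu$ and $\nu$ to $\sigma$-invariant probabilities $\hat\mu, \hat\nu$ with the same entropies; this is standard for SPR codings.

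For part (1), set $\hat\varphi:=\varphi\circ\pi$, which is locally H\"older on $\Sigma$ with norm bounded by a constant multiple of $\|\varphi\|_\beta$ depending only on the coding. The variational principle applied to $\hat\nu$ gives
\begin{equation*}
h(\sigma,\hat\nu)+t\int\hat\varphi\,d\hat\nu \;\leq\; P(t\hat\varphi)\qquad(t\in\RR).
\end{equation*}
The spectral gap expansion gives $P(t\hat\varphi)=h_\Bor(f|_X)+t\int\hat\varphi\,d\hat\mu+R(t)$ with $|R(t)|\leq C_1 t^2\|\varphi\|_\beta^2$ for $|t|\leq \delta/\|\varphi\|_\beta$. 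Since $h(\sigma,\hat\mu)=h_\Bor(f|_X)$, subtracting and optimizing over $t=\pm\|\varphi\|_\beta^{-1}\sqrt{(h(f,\mu)-h(f,\nu))/C_1}$ yields \eqref{e.effective1}.

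For part (2), we use Kantorovich-Rubinstein duality with cost $c$. On each $(\chi,\varepsilon)$-Pesin block $\Lambda$ the Oseledets subspaces $E^s, E^u$ are H\"older-continuous in the base point (standard Pesin theory), so $c$ restricted to $\Lambda\times\Lambda$ is dominated by a H\"older function on $M\times M$ to which Step~2 applies. The SPR hypothesis (entropy tightness from Theorem~\ref{thm-characterizations}) lets us pick $\Lambda$ so that $\mu(M\setminus\Lambda)$ and $\nu(M\setminus\Lambda)$ are small; on $M\setminus\Lambda$ we use that $c$ is bounded. Combining these two contributions, the optimal transport cost between $\nu$ and $\mu$ is bounded by $C\sqrt{h(f,\mu)-h(f,\nu)}$. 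Since $\nu$ has no atoms and $X$ is standard Borel, disintegrating an optimal coupling over $\nu$ produces a Borel map $T$ with $T_*\nu=\mu$; the map is injective because $\mu$, being a hyperbolic measure with positive entropy, is also atomless. For part (3), note that $\Lambda^{\pm}(x)=\log\bigl|\det(Df|_{E^{u/s}(x)})\bigr|$ is Lipschitz in the Oseledets flag on each Pesin block, so integrating against the optimal coupling gives the claimed bound as a corollary of part~(2).

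The principal obstacle is handling the cost function $c$ in Step~3: the Oseledets splitting is merely a measurable section of the Grassmannian bundle and is continuous only on Pesin blocks, whose $\mu$-mass degrades as $\varepsilon\to 0$. One must choose $\Lambda$, depending on the entropy deficit $h(f,\mu)-h(f,\nu)$, so that simultaneously (i) the H\"older modulus of $E^{s/u}$ on $\Lambda$ is controlled and (ii) the mass lost off $\Lambda$ is of the same square-root order in $h(f,\mu)-h(f,\nu)$. Achieving this balance requires combining the quantitative form of entropy tightness provided by SPR with quantitative Pesin theory on the regularity of Oseledets subspaces, and constitutes the technical core of the argument.
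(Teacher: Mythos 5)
Part (1) of your argument is essentially the paper's: lift to the SPR symbolic model, use the second-order expansion of the pressure (equivalently, the R\"uhr--Sarig estimate Thm~\ref{t.effective-ergodicity-shift}(c)), and pull back. Two small things you elide: lifting is only guaranteed for measures of entropy above $h_\Bor(f|_X)-\kappa$ (the threshold from the SPR property), so for low-entropy $\nu$ one needs the trivial bound $|\mu(\varphi)-\nu(\varphi)|\le 2\sup|\varphi|\le 2\kappa^{-1/2}\|\varphi\|_\beta\sqrt{h(f,\mu)-h(f,\nu)}$; and for non-ergodic $\nu$ one passes to the ergodic decomposition, using Cauchy--Schwarz to move the square root outside the integral.

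For parts (2) and (3), however, you have a genuine gap, one you yourself flag at the end. Your plan is to truncate the cost $c$ off a Pesin block $\Lambda$ and control the two contributions separately, but this requires choosing $\Lambda$ so that \emph{simultaneously} the H\"older modulus of $E^{s/u}$ on $\Lambda$ is uniform and $\mu(M\setminus\Lambda),\nu(M\setminus\Lambda)=O(\sqrt{h(f,\mu)-h(f,\nu)})$. These pull against each other: shrinking the entropy deficit forces $\Lambda$ to grow, which worsens the Pesin constants, and no quantitative trade-off of the required form comes for free from the SPR definition. The paper sidesteps this entirely by introducing \emph{quasi-H\"older functions} (\S\ref{s.setup-for-consequences}): functions on $X$ that extend to H\"older functions on the Grassmannian bundle $\mathfrak G$ via the map $\xi(x)=(x;E^u(x),E^s(x))$. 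The decisive fact, supplied by the coding in property ($\Sigma$\ref{i.Sigma5}), is that $\un{x}\mapsto\xi(\pi(\un{x}))$ is H\"older continuous \emph{globally} on $\Sigma$, with no Pesin-block restriction; hence any quasi-H\"older $\psi$ on $X$ — including $J^s$, $J^u$, and any Lipschitz function of the Oseledets data — pulls back to an honest H\"older function $\hpsi$ on $\Sigma$ (Lemma~\ref{l.psi-hat-Holder}), and the symbolic Kadyrov inequality applies directly. Part (3) is then an immediate application to $J^s,J^u$ with no detour through optimal transport, and part (2) follows from Kantorovich--Rubinstein duality on $\mathfrak G$: $1$-Lipschitz test functions on $\mathfrak G$ restrict to quasi-H\"older observables on $X$, and $c(x,y)$ is comparable to $d_{\mathfrak G}(\xi(x),\xi(y))$. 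You also assert the transport map is injective because $\mu,\nu$ are atomless; that is not automatic — the map from \cite{Pratelli-2007} need not be injective, and the paper explicitly replaces it by an injective approximation via a partition-refinement argument.
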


\noindent
This will be proved in \S\ref{ss.Kadyrov-for-Diffeos}.

\begin{remark}
Let $W_1(\cdot,\cdot)$ denote the Wasserstein $1$-distance of measures. The Kantorovich-Rubinstein Theorem and  equation \eqref{e.effective1} in case $\beta=1$ say that  $$W_1(\mu,\nu)\leq C\sqrt{h(f,\mu)-h(f,\nu)}\text{ for all $\nu\in\mathbb P(f|_X)$.}$$
\end{remark}
\begin{remark}
Item (3) above yields the following rigidity phenomenon. Assume the hyperbolicity condition (C1) of Thm~\ref{t.SPR-CE-SPR} in the case of surfaces  (or in higher dimension, condition  (EH) of Thm~\ref{thm-CE-SPR}).
If $\Lambda^+(\nu_n)\to \Lambda^+(\mu)$ whenever $\nu_n\in\Proberg(f|_X)$, $\nu_n\to\mu$, and $h(f,\nu_n)\to h_{\Bor}(f|_X)$, then this convergence holds with the speed $\mathcal O(\sqrt{h_\top(f)-h(f,\nu_n)})$. See also Thm~\ref{thm-characterizations}.
\end{remark}

We note two important special cases of Theorem \ref{t.effective}. 
Firstly, if $f$ is a transitive Anosov diffeomorphism, then there is just one Borel homoclinic class, and the theorem holds with $X=M$. In  this case, the above is due to Kadyrov \cite{Kadyrov-Effective-Uniqueness}: Item (1) appears there explicitly, and Items (2) and (3) are  consequences of (1) and the  H\"older continuity of $E^u(x)$ and $E^{s}(x)$ on $M$.

Secondly, if $f$ is a topologically transitive $C^\infty$ surface diffeomorphism with positive topological entropy, then there is just one Borel homoclinic class with positive entropy (Example \ref{e.BHC-for-top-transitive}). Items (1)--(3) hold with $X=M$ and Item (3) reads as:
\begin{equation}\label{e.effective3}
{|\lambda^{+}(\mu)-\lambda^{+}(\nu)|  + |\lambda^{-}(\mu)-\lambda^{-}(\nu)| \leq C \sqrt{h(f,\mu)-h(f,\nu)}}\text{ for all }\nu\in\mathbb P(f).
\end{equation}

\medskip
Next we discuss the statistical properties of the ergodic  MMEs of SPR diffeomorphisms.
The {\em geometric potentials}  are the functions 
\begin{equation}\label{e.geom-pot-first}
J^s(x):=-\log |\det (Df|_{E^s(x)})|
 ,
  J^u(x):=-\log|\det (Df|_{E^u(x)})|,
  \end{equation}  or $-\infty$ if $E^t(x)$ is not defined ($t=u,s$). 
 In \S\ref{ss.exp-dec-cor}-\S\ref{ss.expansion-bounds} we show:

\begin{maintheorem}\label{t.stoch-properties}
Let $f$ be a $C^{1+}$ diffeomorphism of a closed manifold. Suppose 
$X$ is an SPR Borel homoclinic class with period $p$ such that $h_\Bor(f|_X)>0$,  and let $\mu$ be the unique local MME on $X$.
Then, the following holds for all  H\"older continuous observables on $M$
and for the geometric potentials:
 \begin{enumerate}
  \item \textbf{{Exponential Decay of Correlations}} in the mixing case, or exponential decay of correlations for the ergodic components of $\mu$ with respect to $f^p$  in the non-mixing case (Thm~\ref{t.DOC-diffeos});
  \item \textbf{Large Deviations Property} for Birkhoff sums (Thm~\ref{t.LDP-diffeos});\footnote{If $\dim M=2$,  the $n^\text{th}$ Birkhoff sums of $\varphi:=-\log\|Df|_{E^u}\|$ equal $-\log\|Df^n|_{E^u}\|$. We thank F. Ledrappier for the suggestion to analyze the large deviations of these functions. See \S\ref{ss.expansion-bounds}.}
   \item  \textbf{Almost Sure Invariance Principle} for Birkhoff sums,  and its consequences: central limit theorem,  law of the iterated logarithm, arcsine law, law of records (Thm~\ref{t.ASIP-diffeo} and Corollaries \ref{c.LIL}--\ref{c.Records});
  \item \textbf{Asymptotic Variance} of Birkhoff sums: existence,  Green-Kubo identity, linear response formula, and periodic orbit conditions for the non-vanishing of the asymptotic variance (Thm~\ref{t.asymp-var-diffeos}).
 \end{enumerate}
\end{maintheorem}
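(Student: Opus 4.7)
The plan is to prove the four statistical properties by coding $f|_X$ symbolically, invoking the spectral gap of the Ruelle operator on a countable state SPR Markov shift, and transferring the conclusions back to $(f,\mu)$ through the coding map. By Proposition~\ref{p.decomposition} we can restrict attention to a single SPR Borel homoclinic class $X$, and by the announcement after Theorem~\ref{t.main} the SPR property of $X$ yields a symbolic model (\S\ref{section-Sigma}): a topologically mixing countable state Markov shift $(\widehat\Sigma,\hat\sigma)$ with a H\"older, finite-to-one factor map $\hat\pi\colon\widehat\Sigma\to X$, whose cylinder sets are contained in $(\chi,\eps)$-Pesin blocks for a $\chi$ witnessing the SPR of $X$. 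The shift is itself SPR in the sense of countable-state thermodynamic formalism, and the local MME $\mu$ lifts to the unique MME $\hat\mu$ of $\hat\sigma$.

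The second step is to exploit the spectral gap. Normalize the Ruelle operator of $\hat\sigma$ against the constant potential $-h_\Bor(f|_X)$. Since $\hat\sigma$ is SPR and mixing, Sarig's version of the Ruelle--Perron--Frobenius theorem for countable Markov shifts gives that this operator has a simple leading eigenvalue and a spectral gap on a Banach space of locally H\"older functions on $\widehat\Sigma$. From this spectral gap one reads off the four statements for $\hat\mu$ by standard arguments: exponential decay of correlations via iteration of the operator; a large deviations principle by the Nagaev--Guivarc'h perturbation method applied to the family of operators twisted by $\varphi\circ\hat\pi$; the ASIP (with CLT, LIL, arcsine law, law of records as consequences) via the Gordin--Liverani martingale--coboundary decomposition; and existence of the asymptotic variance, together with the Green--Kubo identity, linear response, and the periodic-orbit characterization of non-degeneracy.

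Third, to descend the conclusions from $(\hat\sigma,\hat\mu)$ to $(f,\mu)$, I would check that every H\"older observable $\varphi\colon M\to\RR$ pulls back to an observable $\varphi\circ\hat\pi$ in the admissible Banach space, with norm controlled explicitly by $\|\varphi\|_\beta$. Because $\hat\pi$ is finite-to-one on a set of full $\hat\mu$-measure, correlations, Birkhoff averages, and quadratic variations computed upstairs and downstairs coincide. The non-mixing case with period $p>1$ is handled by applying the mixing case to $\hat\sigma^p$ restricted to each of its irreducible components; these components code the ergodic components of $\mu$ under $f^p$, and the four statements for $f$ follow component by component.

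The main obstacle is the geometric potentials $J^u$ and $J^s$. These are not continuous on $M$, so one cannot just quote H\"older observables. The fix is geometric: on each $(\chi,\eps)$-Pesin block the distributions $E^u,E^s$ are H\"older, hence $J^u,J^s$ are H\"older on each such block, and by construction every cylinder of $\widehat\Sigma$ sits inside one. The real work is to bound the variations of $J^u\circ\hat\pi,J^s\circ\hat\pi$ on $n$-cylinders exponentially in $n$, using the SPR-controlled recurrence to the Pesin blocks together with the Lipschitz dependence of stable/unstable Oseledets subspaces on their orbits inside Pesin blocks. Once these summable-variation estimates are in hand, the geometric potentials lie in the domain of the spectral gap, and items (1)--(4) of the theorem follow for $\varphi\in\{J^u,J^s\}$ exactly as for general H\"older observables.
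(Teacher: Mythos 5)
Your outline follows the same strategy as the paper: code $f|_X$ by a countable state SPR Markov shift using the constructions of Part~\ref{part-symbolic-diffeo}, exploit the spectral gap for the transfer operator from \cite{Cyr-Sarig}, and descend the spectral conclusions through the coding. The treatment of the geometric potentials is also equivalent in spirit to the paper's: you propose to control the variations of $J^u\circ\hpi,J^s\circ\hpi$ over $n$-cylinders using the H\"older dependence of $E^u,E^s$ inside Pesin blocks, while the paper packages the same information as a ``quasi-H\"older'' class (\S\ref{s.setup-for-consequences}) and observes directly that the Oseledets splittings $\un x\mapsto E^s(\un x), E^u(\un x)$ are H\"older continuous on $\Sigma$ (property~$(\Sigma\ref{i.Sigma5})$ of Thm~\ref{t.SPR-coding}). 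These are two dressings of the same argument.

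There is one point where your sketch is imprecise enough that it would need to be patched. You take for granted that the coding $(\widehat\Sigma,\hat\sigma)$ is \emph{topologically mixing} and that its $\hat\sigma^p$-irreducible components code the $f^p$-ergodic components of $\mu$. Neither is automatic. The coding of a Borel homoclinic class (Cor~\ref{c.coro-transitive-full-hyp-coding}, Thm~\ref{t.SPR-coding}) produces an \emph{irreducible} but not necessarily mixing shift, and its period $q$ need not equal the period $p$ of $\mu$; all one can say a priori is $p\,|\,q$ (Lemma~\ref{l.p|q}). Consequently the spectral decomposition of $\Sigma$ is into $q$ pieces that are mixing for $\sigma^q$, while $\mu$ decomposes into $p$ pieces that are Bernoulli for $f^p$, and matching up the two requires the extra arithmetic in the proof of Thm~\ref{t.DOC-diffeos} (\S\ref{s.decay-of-corr-diffeo-proof}): one identifies each $f^p$-ergodic component $\mu'$ as $\pi_*\hmu'_j$ for a $\sigma^q$-ergodic component $\hmu'_j$, writes $pn=q\ell_n+r_n$, and transfers the exponential decay rate from $\sigma^q$ to $f^p$. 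The same issue resurfaces in the Green--Kubo identity (one cannot read off $\sigma_\psi^2$ from $\sigma_{\hpsi}^2$ via the symbolic Green--Kubo formula when $p\ne q$, which is why the paper reproves it downstairs using the $f^p$-decay of correlations). Fixing this is routine once the discrepancy $p\,|\,q$ is acknowledged, so it is a gap in the write-up rather than in the mathematical idea.
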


\noindent
See  \S\ref{s.statement-of-conseq} for the full statements, and \S\ref{s.Conseq-of-SPR-Proofs-Diffeos} for the proofs.

As explained above, if $f$ is a transitive Anosov diffeomorphism in any dimension, or a transitive $C^\infty$ surface diffeomorphism with positive entropy, there is just one Borel homoclinic class with positive entropy, and the theorem holds with $X=M$.
In the Anosov case, all parts of the theorem are well-known:  (1) is  in \cite{Bowen-LNM},\cite{Ruelle-TDF-book}, (2) is in \cite{Kifer}, (3) is in \cite{Denker-Philipp}, and (4) is in \cite{Guivarch-Hardy},\cite{Livsic}.
For general $C^\infty$ surface diffeomorphisms, these results are all new.
Note that Thm~\ref{t.main} follows from item (1) above together with Thm~\ref{t.SPR-C-infinity} and Example~\ref{e.BHC-for-top-transitive}.

\medskip
Finally, we characterize the  SPR property in terms of the tail of the first entrance time into  Pesin blocks. We comment on the relation between this and L.-S. Young's  tower technique \cite{Young-Towers-Annals} in the next section.

\begin{maintheorem}[\bf \emph{Exponential Tails}]\label{t.tail}
Consider a $C^{1+}$ diffeomorphism $f$ of a closed manifold, and  a Borel homoclinic class $X$ with
an ergodic measure $\mu$ maximizing the entropy of $f|_{X}$.
 $X$ is SPR if and only if there exist $\chi,\delta>0$ s.t.:
\begin{enumerate}
 \item For each $\varepsilon>0$, there are a $(\chi,\varepsilon)$-Pesin block $\Lambda$ and $\theta\in (0,1)$, $C>0$, s.t.
$$\mu\{x\colon \; \forall 0\leq k\leq n,\;  f^k(x)\notin\Lambda\}\leq C\theta^n, \quad \forall n\geq 0;$$
\item For every $\mu\in\Proberg(f|_X)$ with $h(f,\mu)>\hTOP(f|_X)-\delta$, the Lyapunov exponents  are outside 
$[-\chi,\chi]$.
\end{enumerate} 
\end{maintheorem}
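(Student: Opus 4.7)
\medbreak\noindent\textbf{Proof plan for Theorem~\ref{t.tail}.}
The strategy is to prove both implications by passing through the symbolic coding of the Borel homoclinic class $X$ by a countable Markov shift $\Sigma$, exploiting the equivalence between SPR for diffeomorphisms and SPR for the associated shift (Cor.~\ref{c.SPR=Entropy-Gap-Diffeos} and Thm.~\ref{thm-characterizations}(III)) together with the characterization of SPR for Markov shifts via an entropy gap at infinity (Thm.~\ref{t.SPR-as-Entropy-Gap-Shifts}). The exponential tail on the diffeomorphism side corresponds to the entropy gap on the shift side.

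For the forward direction, I first deduce (2). Fix $\varepsilon$ small compared to the $\chi$ supplied by Definition~\ref{def-SPR-diffeo-local}, and let $\Lambda$, $h_0$, $\tau$ be the corresponding $(\chi,\varepsilon)$-Pesin block and thresholds. Any ergodic $\nu$ with $h(f,\nu) > h_0$ satisfies $\nu(\Lambda) > \tau$, so by ergodicity $\nu$-a.e.\ orbit visits $\Lambda$ with positive frequency; the uniform bounds~\eqref{e.def-pesin} along those returns force the Lyapunov exponents of $\nu$ on $E^s$ to lie below $-\chi + O(\varepsilon)$ and those on $E^u$ above $\chi - O(\varepsilon)$. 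Absorbing the $O(\varepsilon)$ loss into a slightly smaller $\chi'$ and setting $\delta := h_\Bor(f|_X) - h_0$ gives~(2). For~(1), I invoke the symbolic model: SPR of $f|_X$ yields a countable Markov shift $\Sigma$ and a finite-to-one factor map $\pi\colon\Sigma\to X$ that lifts $\mu$ to an MME $\hat\mu$ on $\Sigma$, with $\Sigma$ itself SPR. The spectral gap of the Ruelle transfer operator on a suitable Banach space then yields exponential tails for the first entrance of $\hat\mu$-typical trajectories into any fixed finite collection $A\subset\Sigma$ of states. Selecting $A$ so that $\pi(A)$ is contained in a prescribed $(\chi,\varepsilon)$-Pesin block $\Lambda$, which can be arranged by restricting to coding states of sufficiently regular Pesin parameters, and projecting the tail through $\pi$ yields~(1).

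For the converse, I assume (1) and (2) and reconstruct SPR. Condition~(2) permits every ergodic $\nu$ on $X$ with $h(f,\nu) > h_\Bor(f|_X) - \delta$ to be lifted to an ergodic $\hat\nu$ on $\Sigma$ with the same entropy, using the entropic-hyperbolicity threshold of the symbolic coding of homoclinic classes. Fix a finite set $A\subset\Sigma$ with $\pi(A) \subset \Lambda$, where $\Lambda$ is the Pesin block from~(1); then the set of $\hat x\in\Sigma$ whose first entrance into $A$ exceeds $n$ is contained in $\pi^{-1}$ of the set appearing in~(1), so $\hat\mu$ has an exponential tail of first entrance into the finite set $A$. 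By the entropy-gap-at-infinity characterization (Thm.~\ref{t.SPR-as-Entropy-Gap-Shifts}), such an exponential tail for an MME on a finite set of states is equivalent to $\Sigma$ being SPR. SPR of $\Sigma$ then provides $\tau' > 0$ and $h_0' < h_\Bor(f|_X)$ with $\hat\nu(A) > \tau'$ whenever $h(\sigma,\hat\nu) > h_0'$; projecting through $\pi$ and combining with the lifting step gives $\nu(\Lambda) \geq \hat\nu(A) > \tau'$ for every ergodic $\nu$ on $X$ with $h(f,\nu) > \max(h_0',\, h_\Bor(f|_X) - \delta)$, which is the SPR property for $X$.

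The main obstacle is the converse direction, and specifically the step from~(1)---a statement about the \emph{single} measure $\mu$---to a uniform lower bound on the mass of a Pesin block valid for \emph{all} near-maximal ergodic measures. The symbolic detour is what unlocks this: on the shift side the exponential tail of $\hat\mu$ into a finite set of states is equivalent to an intrinsic entropy-at-infinity property of $\Sigma$, and that property applies uniformly to every ergodic shift-invariant measure of large entropy. The delicate technical point is organizing the coding states so that the finite set $A$ really projects into a single prescribed Pesin block; this is where one uses the flexibility of $\varepsilon$ in~(1) and a careful control of the Pesin parameters labelling the states of $\Sigma$.
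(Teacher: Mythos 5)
Your account of the forward direction (SPR $\Rightarrow$ (1)--(2)) is essentially the paper's: (2) follows directly from the definition of Pesin blocks, and (1) is obtained by coding, using the half of the bornological property that gives $\pi(A)\subset\Lambda$ (the paper's Def.~\ref{d.bornological}(b)), so that $\tau_\Lambda\circ\pi\le\tau_a$, and then invoking the exponential return-time tail for the MME of an SPR Markov shift. The paper uses Cor.~\ref{c.Tail-CMS} (Vere-Jones combinatorics on $Z_n^*$) rather than the spectral gap, but that is a matter of packaging.

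The converse direction in your proposal has a genuine gap: the containment runs the wrong way. You write ``fix a finite set $A$ with $\pi(A)\subset\Lambda$; then the set of $\hat x$ whose first entrance into $A$ exceeds $n$ is contained in $\pi^{-1}$ of the set in (1).'' With $\pi(A)\subset\Lambda$, i.e.\ $A\subset\pi^{-1}(\Lambda)$, the implication one actually gets is the opposite one: if $\pi(\hat x)$ never visits $\Lambda$ up to time $n$ then $\hat x$ never visits $A$, so $\pi^{-1}(B_n)\subset\{\tau_A>n\}$ where $B_n$ is the set in (1). This bounds $\hat\mu[\tau_A>n]$ from \emph{below} by $\mu(B_n)$, which is useless. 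What you need for the tail transfer is $\pi^{-1}(\Lambda)\subset A$ modulo a $\hat\mu$-null set, which is exactly the other half of the bornological property, Def.~\ref{d.bornological}(a): the $\pi$-preimage of any $(\chi,\varepsilon)$-Pesin block is covered (mod null) by a finite union of cylinders. That containment gives $\tau_A\le\tau_\Lambda\circ\pi$ a.e.\ and hence transfers the exponential tail of $\mu$ into an exponential tail of $\hat\mu$ on $A$, as the paper does.

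Having fixed that, there is still a structural mismatch. In your final step you want to conclude $\nu(\Lambda)\ge\hat\nu(A)$, which would require the \emph{opposite} containment $A\subset\pi^{-1}(\Lambda)$; a single finite set $A$ cannot satisfy both inclusions simultaneously. The paper avoids this trap by not attempting to read off the SPR bound for $f|_X$ directly from the SPR bound on $\Sigma$: it proves (II) $\Rightarrow$ (III) (the coding $\Sigma$ is SPR, entropy-full, hyperbolic, bornological), and then travels around the full cycle of Thm.~\ref{thm-characterizations} back to (I), using Prop.~\ref{p.project-SPR}, whose proof consumes precisely Def.~\ref{d.bornological}(b) for the other containment direction. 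Finally, to deduce that $\Sigma$ is SPR from the exponential tail you invoke an ``equivalence'' that Thm.~\ref{t.SPR-as-Entropy-Gap-Shifts} does not state directly: the bridge in the paper is the Parry--Gurevich formula $\hat\mu({}_a[a,\xi_1,\dots,\xi_{n},b])=c_{ab}e^{-nh}$, which converts the exponential decay of $\hat\mu$ on excursions from $A$ into a subexponential count of such excursions, and only then does item (6) of Thm.~\ref{t.SPR-as-Entropy-Gap-Shifts} (the Gurevich--Zargaryan criterion) apply.
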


\noindent
The direct implication will be proved as Thm~\ref{t.Tail-Estimates-diffeos}(1); The other direction is the implication \eqref{i.Tail}$\implies$\eqref{i.SPR} in 
Thm~\ref{thm-characterizations}.

\begin{remark}
Condition (2)  is automatic in dimension two, because of the  Ruelle inequality. In this case 
Theorem \ref{t.tail} says that the SPR property, which is a priori a property of  measures with {\em large} entropy, can be checked by looking only at the measures with {\em maximal} entropy.
\end{remark}

 Suppose  a point $x$ belongs to some $(\chi,\eps)$-Pesin block. The  {\em optimal $(\chi,\eps)$-Pesin constant } $K_{\chi,\eps}(x)$ of $x$ is  the smallest constant $K$ in \eqref{e.def-pesin}. The following corollary to Thm~\ref{t.tail} was motivated by the work of Alves, Luzzatto \& Pinheiro \cite{Alvez-Luzzatto-Pinheiro}.   
\begin{corollary}[{\bf{\emph{Optimal Pesin Constants}}}]\label{c.tail}
Under the assumptions of Thm \ref{t.tail},  there exists $\chi>0$,  such that for all $\eps>0$ there are  $\eta,C>0$ which satisfy
$$
\mu\{x: K_{\chi,\eps}(x)\geq t\}\leq C t^{-\eta}\text{ for all $t>0$.}
$$
\end{corollary}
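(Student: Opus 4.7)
My plan is to deduce the polynomial tail on the optimal Pesin constant directly from the exponential tail on the first-entrance time provided by Theorem~\ref{t.tail}(1). The bridge is the elementary pointwise bound $K_{\chi,\eps}(x)\leq K e^{\eps\tau(x)}$, where $\tau(x):=\min\{n\geq 0:f^n(x)\in\Lambda\}$ is the first-entrance time and $K$ is the uniform Pesin constant of the block $\Lambda$. Once this bound is in place, the corollary is just a change of variables.

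Concretely, I take the $\chi>0$ from Theorem~\ref{t.tail}, and given $\eps>0$ I fix a Pesin block $\Lambda$ and constants $\theta\in(0,1)$, $C_0>0$ such that $\mu\{\tau>n\}\leq C_0\theta^n$ for all $n\geq 0$. For a point $x$ with $\tau(x)<\infty$, set $y:=f^{\tau(x)}(x)\in\Lambda$; note that $x\in f^{-\tau(x)}(\Lambda)\subset\bigcup_{m\in\ZZ}f^m(\Lambda)$, so the splitting $E^s(x)\oplus E^u(x)$ is defined. Using the defining Pesin property of $\Lambda$ at the iterate $n-\tau(x)$ of $y$, together with $|n-\tau(x)|\leq|n|+\tau(x)$ (valid for every $n\in\ZZ$), I would derive
$$\|Df^k|_{E^s(f^n(x))}\|=\|Df^k|_{E^s(f^{n-\tau(x)}(y))}\|\leq K e^{-\chi k+\eps|n-\tau(x)|}\leq (Ke^{\eps\tau(x)})\,e^{-\chi k+\eps|n|},$$
and symmetrically for the unstable direction; hence $K_{\chi,\eps}(x)\leq K e^{\eps\tau(x)}$.

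Combining the two ingredients, for any $t>K$ one has $\{K_{\chi,\eps}\geq t\}\subset\{\tau\geq\eps^{-1}\log(t/K)\}$, and then with $\eta:=-\log\theta/\eps>0$ the exponential tail on $\tau$ translates to $\mu\{K_{\chi,\eps}\geq t\}\leq C\,t^{-\eta}$ for a suitable $C>0$; for $0<t\leq K$ the bound becomes trivial once $C$ is enlarged so that $Ct^{-\eta}\geq 1$. I do not anticipate any real obstacle: the only step requiring care is that $|n-\tau(x)|\leq|n|+\tau(x)$ be applied for every $n\in\ZZ$ and not just for $n\geq 0$, which is precisely what makes the resulting estimate fit the definition of the optimal $(\chi,\eps)$-Pesin constant with the \emph{same} $\eps$ as in the block $\Lambda$.
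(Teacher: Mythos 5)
Your proof is correct and follows essentially the same route as the paper's. The paper deduces the corollary from the exponential‐tail estimate $\mu\{\log K_\ast>n\}=O(\gamma^n)$ of Thm~\ref{t.Tail-Estimates-diffeos}(2), which is in turn obtained from Thm~\ref{t.tail}(1) together with the temperability \eqref{e.temperability} of the optimal Pesin constant; your inline derivation of the pointwise bound $K_{\chi,\eps}(x)\le K\,e^{\eps\tau(x)}$ from the defining estimate \eqref{e.def-pesin} at the return point $f^{\tau(x)}(x)\in\Lambda$ is precisely the temperability applied in integrated form, and the final change of variables is the same.
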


\noindent
The  Corollary follows from Thm~\ref{t.Tail-Estimates-diffeos}(2) and a simple computation.

\subsection{Equilibrium States}\label{ss.equilibrium}
The thermodynamic formalism of Ruelle, Sinai, and Bowen  \cite{Ruelle-TDF-book,Sinai-Gibbs,Bowen-LNM} studies  a useful generalization of the measure of maximal entropy, called an {\em equilibrium measure}.  We recall the definitions.

Let $X\subset M$ be an invariant Borel set, and let $\phi:X\to \R\cup\{-\infty\}$ be a Borel function such that $\sup\phi<\infty$. We call $\phi$ a {\em potential} (see \S\ref{s.TDF-for-diffeos} for the explanation of the nomenclature).   The {\em pressure of $\phi$ relative a measure $\mu\in\mathbb P(f|_X)$} is  
$$
P(f,\mu,\phi):=h(f,\mu)+\int\phi d\mu.
$$
The {\em top pressure of $\phi$ on $X$} is 
$$
P_{\Bor}(f|_X,\phi):=\sup\{P(f,\mu,\phi): \mu\in \Prob(f|_X)\}.
$$
For compact invariant sets $X$ and continuous functions $\phi$, $P_{\Bor}(f|_X,\phi)$ equals the topological pressure of $\phi|_X$, because of the Walters variational principle \cite{Walters-Book}. 

An invariant measure on $X$ such that $
P(f,\mu,\phi)=P_{\Bor}(f|_X,\phi)$ is called an {\em equilibrium measure of $\phi$ on $X$}, or if  $X=M$, just an {\em equilibrium measure of $\phi$}. The equilibrium measures of the  potential $\phi\equiv 0$  are exactly the MME.

These classical ideas suggest the following generalization of the SPR property.

\begin{definition}\label{d.SPR-potential}
A diffeomorphism $f$ is
\emph{SPR} for a potential $\phi$ on an invariant Borel set $X$
if there exists $\chi>0$ such that for each $\varepsilon>0$, there are a Borel $(\chi,\varepsilon)$-Pesin block $\Lambda$
and numbers $P_0<P_{\Bor}(f|_X,\phi)$, $\tau>0$ as follows:
\begin{equation}\label{e.SPR-potential}
\text{For every ergodic measure $\nu$ on $X$,}\quad
    P(f,\nu,\phi)>P_0 \implies \nu(\Lambda)>\tau.
\end{equation}
When we do not specify the invariant set $X$, it is understood to be $M$. When we do not specify the potential, it is understood to be $\phi\equiv 0$.
\end{definition}

\begin{remark} Note that $f$ is SPR on $X$ in the sense of Definitions \ref{def-SPR-diffeo} and \ref{def-SPR-diffeo-local} iff it is SPR for the zero potential in the sense of Definition \ref{d.SPR-potential}.
\end{remark}

\indent
The results of the previous section
 extend in a straightforward way to equilibrium measures of SPR potentials on Borel homoclinic classes which are either H\"older or geometric (see \eqref{e.geom-pot-first}) (or more generally {\em quasi-H\"older}, in the sense of  \S\ref{s.setup-for-consequences}).
An example of an SPR geometrical potential is discussed in \S\ref{ss.geometrical}.

\begin{maintheorem}\label{t.equilibrium}
Let $f$ be a $C^{1+}$ diffeomorphism of a compact manifold. Let  $X$ be a Borel homoclinic class with period $p$, and let $\phi\colon X\to \mathbb{R}$ be a quasi-H\"older potential such that $f$ is SPR for $\phi$ on $X$. Then the following properties hold:
\begin{enumerate}
\item \textbf{Existence and Uniqueness:} There exists one and only one equilibrium measure $\mu_\phi$ on $X$ for $\phi$.
\smallskip
\item \textbf{Properties}: $\mu_\phi$  is  isomorphic to the product of a Bernoulli scheme with a cyclic permutation of $p$ points. Quasi-H\"older observables satisfy the large deviation property, the almost sure invariance principle, the properties of the asymptotic variance listed in Thm \ref{t.asymp-var-diffeos},
and, if $p=1$,  exponential decay of correlations. 

\smallskip
\item \textbf{Stability}: There is $\eps_0>0$ s.t. for every quasi-H\"older   $\psi:X\to\R$ such that  $\sup|\psi|<\eps_0$, $f$ is  SPR for $\phi+\psi$ on $X$, and  $t\mapsto P_{\Bor}(f|_X,\phi+t\psi)$ is real-analytic  on (-1,1).
\end{enumerate}
\end{maintheorem}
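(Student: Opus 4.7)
\begin{proofof}{Theorem~\ref{t.equilibrium} (Proposal)}
The plan is to transfer the three statements from the diffeomorphism $f|_X$ to a countable state Markov shift $(\Sigma,\sigma)$ which codes $X$ with a spectral gap, and then apply the thermodynamic formalism for SPR Markov shifts, in direct parallel with the MME case treated in Theorems~\ref{t.MME} and~\ref{t.stoch-properties}. By the characterization of SPR homoclinic classes (Thm~\ref{thm-characterizations}(III)), there is a finite-to-one Borel factor map $\pi\colon\Sigma\to X$ conjugating $\sigma$ and $f$, such that $\pi$ carries every large-entropy ergodic measure on $X$ to an SPR equilibrium structure on $\Sigma$. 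The first step is to lift $\phi$ to a potential $\widehat\phi:=\phi\circ\pi$ on $\Sigma$ and check that quasi-H\"older regularity of $\phi$ (in the sense of \S\ref{s.setup-for-consequences}) translates into summable variations of $\widehat\phi$ on cylinders, uniformly on the alphabet, together with the SPR property of $\widehat\phi$ for $\sigma$ in the symbolic sense (entropy/pressure gap at infinity, cf.\ Thm~\ref{t.SPR-as-Entropy-Gap-Shifts}).

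For (1), once $\widehat\phi$ is an SPR potential with summable variations, Sarig's theory for countable Markov shifts provides a unique Ruelle--Perron--Frobenius eigenmeasure and a unique positive eigenfunction of the transfer operator $\mathcal L_{\widehat\phi}$ acting with a spectral gap on an appropriate Banach space of locally H\"older functions. Their product is the unique equilibrium state $\widehat\mu_\phi$ on $\Sigma$, and $\mu_\phi:=\pi_\ast\widehat\mu_\phi$ is a candidate equilibrium measure for $\phi$ on $X$. Uniqueness on $X$ follows because any other equilibrium would, by the SPR condition \eqref{e.SPR-potential}, charge a Pesin block and thus lift through $\pi$ to an equilibrium on $\Sigma$, contradicting uniqueness there. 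For (2), the period $p$ of $X$ equals the period of the recurrent class of $\Sigma$ carrying $\widehat\mu_\phi$; decomposing $\Sigma$ into $p$ cyclic pieces and applying the spectral gap of $\mathcal L_{\widehat\phi}^p$, one obtains: exponential decay of correlations (when $p=1$), the large deviation principle via Kifer's theorem applied to $\mathcal L_{\widehat\phi+t\psi}$, the ASIP and its corollaries via Gou\"ezel's spectral method, and the formulas and nondegeneracy criteria for the asymptotic variance via the standard Green--Kubo analysis. The Bernoulli--times--cyclic-permutation structure follows, as in the MME case, from the Ornstein theory applied to the exact factor $f^p|_{X_i}$, where $X_i$ is one of the cyclic pieces.

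For (3), I would establish stability by perturbation of the transfer operator. For a small quasi-H\"older $\psi$ with $\sup|\psi|<\eps_0$, the perturbed potential $\widehat\phi+t\widehat\psi$ still has summable variations, and the SPR property is stable under small uniform perturbations because the pressure gap in \eqref{e.SPR-potential} is an open condition: if $\sup|\psi|<\eps_0$ is small enough, then $P_0+\sup|\psi|<P_{\Bor}(f|_X,\phi+t\psi)-\sup|\psi|$ still holds, and the same block $\Lambda$ witnesses SPR for $\phi+t\psi$ with adjusted $\tau,P_0$. Real-analyticity of $t\mapsto P_{\Bor}(f|_X,\phi+t\psi)$ on $(-1,1)$ (after possibly rescaling $\eps_0$) then follows from Kato's analytic perturbation theory applied to the family $t\mapsto \mathcal L_{\widehat\phi+t\widehat\psi}$ acting on the locally H\"older Banach space: the leading eigenvalue $\lambda(t)=\exp P_{\Bor}(f|_X,\phi+t\psi)$ is an isolated simple eigenvalue depending analytically on $t$ by the spectral gap, so its logarithm is real-analytic.

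The main obstacle I anticipate is the first step: verifying that the SPR property of $\phi$ on $X$ as a diffeomorphism transfers cleanly to the symbolic SPR property of $\widehat\phi$, particularly when $\phi$ is one of the geometric potentials $J^s,J^u$. These are not continuous on $M$ (they are only defined where the Oseledets splitting exists), and the quasi-H\"older framework of \S\ref{s.setup-for-consequences} is designed precisely to accommodate this. The delicate point is that $\pi$ need not be H\"older on all of $\Sigma$, so one must use the fact that the stable/unstable distributions along the image of $\pi$ vary H\"older continuously on each Pesin block to obtain the summable variations of $\widehat\phi$; and one must use the SPR hypothesis to control the contribution of symbols corresponding to increasingly bad Pesin constants, exactly as in the MME argument but with pressure replacing entropy throughout.
\end{proofof}
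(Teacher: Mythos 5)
Your proposal follows essentially the same route as the paper: code $X$ by an irreducible Markov shift via Thm~\ref{t.SPR-coding}, lift $\phi$ to a H\"older potential $\phi\circ\pi$ which is SPR on $\Sigma$ by Addendum~\ref{a.SPR-coding}, invoke the SPR Markov shift results of Appendix~\ref{appendix-Markov-Shifts} (spectral gap, unique equilibrium measure, ASIP, LDP, Green--Kubo), push the conclusions back through the pressure-full lift/projection correspondence, and treat analyticity of the pressure via Kato perturbation of the transfer operator (Thm~\ref{t.no-phase-transition}). One small correction: the anticipated obstacle you describe does not actually arise, because the coding $\pi$ of Thm~\ref{t.SPR-coding} is \emph{globally} H\"older on $\Sigma$ with respect to the symbolic metric, and the Oseledets bundles $E^{s/u}(\un{x})$ depend H\"older continuously on $\un{x}\in\Sigma$ (properties ($\Sigma$\ref{i.Sigma2}) and ($\Sigma$\ref{i.Sigma5})), so Lemma~\ref{l.psi-hat-Holder} makes $\phi\circ\pi$ genuinely H\"older on all of $\Sigma$ with no need to restrict to Pesin blocks; the SPR hypothesis enters only through Addendum~\ref{a.SPR-coding} to guarantee that the lifted potential is an SPR potential on the shift and that the coding is pressure-full, not to repair any regularity defect of $\pi$. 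Relatedly, your citation of Thm~\ref{thm-characterizations}(III) is slightly off—that equivalence concerns the zero potential—and what you need here is Addendum~\ref{a.SPR-coding}, the SPR-for-$\phi$ version of ($\Sigma$\ref{i.Sigma1bis}).
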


\noindent
This will be proved in \S\ref{ss-prop-equilibrium} and \S\ref{s.TDF-for-diffeos}.
\medskip

\begin{remark}
The SPR property is in general not robust under perturbations of the diffeomorphism.
There are SPR diffeomorphisms which can be $C^\infty$ perturbed to 
non-SPR diffeomorphisms:
for instance, consider a surface diffeomorphism $f$ whose only ergodic measures
are Dirac masses at hyperbolic fixed points but which is the limit of diffeomorphisms having zero topological entropy
and some non-hyperbolic ergodic measure (see Figure~\ref{f.fragile}). To get an example with positive entropy, consider the product of $f$ with an Axiom A diffeomorphism.
\end{remark}

\begin{figure}[h]
\begin{center}
\includegraphics[width=7cm,angle=0]{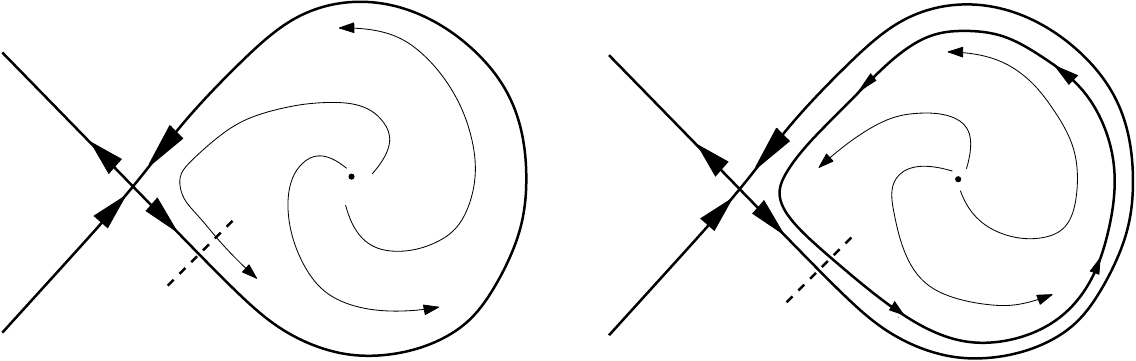}
\end{center}
\caption{An example of a SPR diffeomorphism (left) which can be perturbed to a non-SPR diffeomorphism (right) with zero entropy and a non-hyperbolic ergodic measure supported on an invariant circle. One may take the time-one map of a flow whose Poincar\'e map to a transverse section is $\infty$-tangent to the identity.} \label{f.fragile}
\end{figure}

\subsection{Historical Context}
The initial aim of this work was to understand the symbolic dynamical description of $C^\infty$ surface diffeomorphisms built in \cite{Sarig-JAMS}.

These symbolic models are \emph{countable state Markov shifts} (also called here \emph{Markov shifts}). Newhouse's theorem on the existence of MME for $C^\infty$ maps \cite{Newhouse-Entropy} and Gurevich's characterization of  Markov shifts with MME imply that these Markov shifts must satisfy a combinatorial condition called  \emph{positive recurrence} (see \eqref{e.PR}).

We wanted to know  if  they also satisfied a stronger combinatorial property, called {\em strong positive recurrence} (Def.~\ref{d.SPR-for-Shifts}).\footnote{The name ``strong positive recurrence" is from \cite{Sarig-CMP-2001}, but equivalent  conditions appeared before  in different forms the works of  Vere-Jones \cite{Vere-Jones-Geometric-Ergodicity},  Gurevich \cite{Gurevich-Stable}, and Gurevich-Zargaryan \cite{Gurevich-Zargaryan}. Other equivalent conditions   can be found in \cite{Gurevich-Savchenko},\cite{Ruette},\cite{Sarig-CMP-2001},\cite{BoyleBuzziGomez2014}.}
We were interested in this property because  of the paper \cite{Cyr-Sarig}, which characterized the SPR property for Markov shifts in terms of the existence of a Banach space where Ruelle's operator acts with spectral gap -- a condition which has long been known to have many  consequences, including the symbolic analogues of Thms~\ref{t.MME}--\ref{t.tail} above \cite{Ruelle-TDF-book},\cite{Parry-Pollicott-Asterisque},\cite{Guivarch-Hardy},\cite{Sarig-CMP-2001},\cite{Cyr-Sarig} (see Appendix~\ref{appendix-Markov-Shifts}).

We designed the SPR property for {\em diffeomorphisms} in Def.~\ref{def-SPR-diffeo}  to (a) guarantee the existence  of symbolic codings of Borel homoclinic classes by SPR {Markov shifts} (Thm~\ref{t.SPR-coding}), and (b) to hold for a large class of natural examples, e.g. all $C^\infty$ surface diffeomorphisms $f$ s.t. $h_{\top}(f)>0$ (Thm~\ref{t.SPR-C-infinity}).

We note that a large class of H\'enon maps (those with strongly regular parameters) has been shown in \cite{Berger-Henon} to have SRB measures coded by  SPR Markov shifts, yielding exponential mixing.
Other mathematicians have studied SPR-like conditions in other contexts. 
To relate our work to theirs, we note  a well-known characterization of the SPR condition for  Markov shifts in terms of  an {\em entropy gap at infinity} \cite[Sec. 6]{Buzzi-PQFT}: no sequence of measures which escapes to infinity can have entropies converging to the supremum of all entropies (see \S\ref{ss-SPR-Markov-shifts}). Entropy gaps at infinity  play a central role in the study of geodesic flows on non-compact negatively  curved manifolds and related dynamical systems \cite{ELPV},\cite{Riquelme-Velozo},\cite{Iommi-Velozo-Cusp},\cite{ST},\cite{GST}. For example, \cite[Thm~5.1]{ELPV} considers the geodesic flow on the modular surface, and gives an upper bound on the measure of the cusp in terms of the closeness of the entropy to its maximum.

In our setup, the ambient manifold is compact, but the hierarchy of Pesin blocks with larger and larger constants can be understood as a lack of compactness
and this opens the way for sequences of hyperbolic measures to lose mass in the limit to the
non-hyperbolic part of the phase space. An interpretation of the SPR condition for diffeomorphisms in terms of a suitably defined entropy gap at infinity is not only possible, but useful and illuminating,  see \S\ref{s.defSPR}.

\indent We note that the non-uniform specification properties introduced by Climenhaga and Thompson \cite{Climenhaga-Thompson} can at least sometimes be related to the existence of an SPR symbolic dynamics \cite{Climenhaga-Tower}.

L.-S. Young has introduced a powerful method for studying the rate of  decay of correlations for maps with respect to a reference measure such as the volume, based on representing the system as a factor of a ``Young tower" \cite{Young-Towers-Annals}. If the distribution of the first entrance time into the base of the tower decays exponentially fast, then one can deduce properties similar to those we prove in the SPR setting: exponential decay of correlations \cite{young-tower-recurrence}, large deviations \cite{Young-Rey-Bellet}, and the almost sure invariance principle \cite{Melbourne-Nicol-ASIP}.

Our work implies that SPR Borel homoclinic classes possess Young towers with exponential tail (for the MME). Indeed, we prove that  they
admit countable state Markov partitions such that the distribution of the  first entrance time to a partition set has exponential decay for the MME (Thms~\ref{c.Tail-CMS} and \ref{t.SPR-coding}), and every irreducible  Markov shift can be recast as a Young tower over a partition set, by inducing. 

But since our tower comes from a Markov partition, it enjoys additional properties, which are not always true for towers, such as the existence of entropy preserving lifts and projections relating the measures on the tower to the measures of the original dynamical systems (Thm~\ref{t.SPR-coding}, $(\Sigma$\ref{i.Sigma3}),$(\Sigma$\ref{i.Sigma4})).  These are crucial to our work.

\subsection{Guide to the Paper}
The paper consists of four parts, and appendices.

In Part~\ref{part-SPR-Diffeos}, 
we study the SPR property for diffeomorphisms, and  develop methods for checking it. The main result is that every Borel homoclinic class of every  $C^\infty$ surface diffeomorphism with positive entropy is SPR. 
The key idea is
to reduce SPR to  some condition on Lyapunov exponents, yielding Thm~\ref{thm-CE-SPR}. In this multidimensional generalization of Thm~\ref{t.SPR-CE-SPR} the largest exponent is replaced by the \emph{sum of the positive exponents}. Our earlier work \cite{BCS-2} establishes the required continuity for $C^\infty$ diffeomorphisms on surfaces, proving Thm~\ref{t.SPR-C-infinity} in \S\ref{sec-surf-SPR}.
We also discuss an extension to the $C^r$ case (Thm~\ref{t.tight-Cr}).
In \S\ref{s.defSPR} we introduce tightness of sequences of measures with respect to bornologies, and the precise definition of an entropy gap at infinity  that will allow us in Part~\ref{part-symbolic-diffeo}  to relate  the SPR property of a  diffeomorphisms to the SPR property of its symbolic model.  Finally Section~\ref{s.example} gives some examples of SPR diffeomorphisms in higher dimension.

In Part~\ref{part-SPR-Markov}, we recall some background on Markov shifts and then review the SPR property in this setting.  By \cite{Cyr-Sarig}, SPR  implies a spectral gap for an associated transfer operator, and this implies the analogues of Thms~\ref{t.MME}, \ref{t.effective} and \ref{t.stoch-properties} for Markov shifts. All this was essentially known before, but since the literature does not contain the results  in the generality we require, we give the  proofs in Appendix~\ref{appendix-Markov-Shifts}.

In  Part~\ref{part-symbolic-diffeo} we study the symbolic dynamics of a diffeomorphism $f$ on a Borel homoclinic class $X$. 
First, we introduce the ``bornological property" for an abstract symbolic coding, and show how it relates  the SPR property of a diffeomorphism to the SPR property of the coding (see Defs. \ref{d.Hyperbolic-Coding}, \ref{d-entropy-full}, \ref{d.bornological},  and eq.~\eqref{e.chi-entropy-hyperbolic}). Then 
 we review the specific  codings constructed in \cite{Sarig-JAMS,Ben-Ovadia-Codings} (and on Borel homoclinic classes in \cite{BCS-1}), and prove  that they have the bornological property.  This leads to Thm~\ref{t.SPR-coding}: If $f|_X$ is SPR then $f|_X$ admits an SPR coding; conversely the existence of a ``good" SPR coding  implies that $f|_X$ is  SPR. At the end of Part III, we explain why this analysis extends without effort to the SPR  property with respect to a potential
(Def.~\ref{d.SPR-potential}).

In Part~\ref{p.properties-of-SPR-diffeos}, we apply the SPR coding of Part~\ref{part-symbolic-diffeo} and the properties of SPR Markov shifts listed in Part~\ref{part-SPR-Markov}, to deduce our main results on the MMEs of  SPR diffeomorphisms (Thms~\ref{t.MME}--\ref{t.tail}), and some extensions to equilibrium measures associated to non-constant potentials (Thm~\ref{t.equilibrium}). A nice surprise is that some of the discontinuous objects of Pesin theory (such as the Oseledets distributions) are coded by H\"older continuous objects on the symbolic model. This enables us to obtain quantitative results on the robustness of Oseledets decomposition of the MME,  and the non-uniform hyperbolicity of its typical orbits.
These are discussed in \S\ref{ss.expansion-bounds}.

In \S\ref{s.converse-statements}, we show that many of the dynamical  consequences of the SPR property are actually equivalent to the SPR property. See Thm~\ref{thm-characterizations}.
\smallbreak

\indent Part V consists of appendices. These present well-known or folklore results in the setup needed for their application in this paper.

\subsection{Acknowledgements} S.C. was partially supported by  by the ERC project 692925 – NUHGD. O.S. was supported by ISF grants 
1149/18 and 264/22, and by a grant from the Minerva Stiftung. Part of this work was carried during a sabbatical visit to the  Universit\'e Paris-Saclay and the IH\'ES, and O.S. thanks these institutions for their hospitality.

\part{SPR Diffeomorphisms}
\label{part-SPR-Diffeos}

\section{Preliminaries on Non-Uniform Hyperbolicity}\label{s.NUH}
\noindent

Suppose $f:M\to M$ is a $C^{1}$ diffeomorphism of a closed smooth manifold $M$ with dimension $d$. By
the Oseledets theorem, there is an $f$-invariant set $M'$ with full measure for all invariant measures, so that for all $x\in M'$:
$$
T_xM=\bigoplus_{\lambda\in\sigma(x)} E^\lambda_x,
$$
where  $\sigma(x)\subset \mathbb{R}$ is a finite set, and $E^\lambda_x$ are non-trivial linear subspaces satisfying
$$E^\lambda_x=\{v\in T_xM\setminus 0: \lim_{n\to\pm\infty}\tfrac1n\log\|D_xf^n.v\|=\lambda\}\cup \{0\}.$$
Moreover, $\sigma(f(x))=\sigma(x)$ and  $Df(E^\lambda_x)=E^{\lambda}_{f(x)}$.

The elements of $\sigma(x)$ are called the {\em Lyapunov exponents of $x$}.
The dimension of $E^\lambda_x $ is called the {\em multiplicity of $\lambda$}. In this paper, we always order the Lyapunov exponents  in a decreasing order, repeated according to their multiplicity:
$$\lambda^1(x)\ge\dots\ge\lambda^d(x).$$

Fix $\chi>0$. A measure $\mu$ is called  \emph{$\chi$-hyperbolic},  if none of its Lyapunov exponents
belongs to $[-\chi,\chi]$.

\subsection{Pesin Blocks and the Non-Uniformly Hyperbolic Set}\label{s.PesinSets}
We examine the definition of  $(\chi,\varepsilon)$-Pesin blocks (Def.~\ref{d.pesin}) more carefully.

\begin{lemma}\label{l.splitting}
If $\Lambda$ is a $(\chi,\varepsilon)$-Pesin block with $\varepsilon<\chi$, then $\ov{\Lambda}$ is also a $(\chi,\varepsilon)$-Pesin block.
The decomposition $T_x M=E^s\oplus E^u$ in Def.~\ref{d.pesin} is uniquely defined on $\ov{\Lambda}$,  varies continuously with $x$ on $\ov{\Lambda}$, and  $E^s(x)=\{v\in T_xM: \underset{n\to +\infty}\limsup\|Df^n.v\|<+\infty\}$, and
$E^u(x)=\{v\in T_xM: \underset{n\to +\infty}\limsup\|Df^{-n}.v\|<+\infty\}$.
\end{lemma}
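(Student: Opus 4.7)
The plan is to first establish the $\limsup$ characterizations on $\Lambda$ itself (which immediately yields uniqueness there), and then to transport everything to $\overline{\Lambda}$ via Grassmannian compactness.

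First, I will fix $y\in\Lambda$ and exploit Definition~\ref{d.pesin} at $n=0$ to get $\|Df^k v\|\leq Ke^{-\chi k}\|v\|$ for $v\in E^s(y)$, $k\geq 0$. For $v\in E^u(y)\setminus\{0\}$, I apply~\eqref{e.def-pesin} at the orbit point $f^k(y)$ (that is, with $n=k$) to obtain $\|Df^{-k}|_{E^u(f^k(y))}\|\leq Ke^{-(\chi-\varepsilon)k}$; writing $v=Df^{-k}(Df^k v)$ and using $Df^k v\in E^u(f^k(y))$ then yields
$$\|Df^k v\|\geq K^{-1} e^{(\chi-\varepsilon)k}\|v\|\xrightarrow[k\to\infty]{}\infty,$$
because $\varepsilon<\chi$. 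Symmetric bounds hold in backward time. Decomposing an arbitrary $v\in T_yM$ along $E^s(y)\oplus E^u(y)$, its $E^u$-component must vanish whenever $\limsup_k\|Df^k v\|<\infty$, proving the announced characterization of $E^s$; the symmetric argument handles $E^u$. In particular the splitting is already unique on $\Lambda$.

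Next, given $y^*\in\overline{\Lambda}$ and $y_j\in\Lambda$ with $y_j\to y^*$, I extract a subsequence along which the integer dimensions $\dim E^s(y_j)$ and $\dim E^u(y_j)$ are constant and along which $E^s(y_j)\to F^s$, $E^u(y_j)\to F^u$ in the corresponding Grassmannian bundle at $y^*$. Smoothness of $f^n$ then yields $E^s(f^n(y_j))=Df^n\cdot E^s(y_j)\to Df^n\cdot F^s$ at $f^n(y^*)$ for every $n\in\ZZ$, and likewise for $F^u$. Since the operator norm of a fixed linear map restricted to a subspace varies continuously with the subspace, the Pesin inequality~\eqref{e.def-pesin} passes to the limit, showing that $F^s$ and $F^u$ satisfy~\eqref{e.def-pesin} along the entire orbit of $y^*$ with the \emph{same} constants $K,\chi,\varepsilon$. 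The bounds derived in the previous paragraph then apply verbatim to $F^s, F^u$, so any $v\in F^s\cap F^u$ would satisfy both $\|Df^k v\|\to 0$ and $\|Df^k v\|\to\infty$; combined with $\dim F^s+\dim F^u=d$, this forces $T_{y^*}M=F^s\oplus F^u$ and yields the $\limsup$ characterization at $y^*$. Because this characterization is intrinsic, $F^s$ and $F^u$ do not depend on the extracting subsequence, which simultaneously gives the continuity of $x\mapsto E^s(x), E^u(x)$ on $\overline{\Lambda}$ and the verification that $\overline{\Lambda}$ is itself a $(\chi,\varepsilon)$-Pesin block.

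The main obstacle I anticipate is the possible jump of $\dim E^s(y_j)$ along the approximating sequence, which forces the Grassmannian limit to be taken only after a preliminary constant-dimension extraction. One must also check that the Pesin inequality is preserved \emph{simultaneously} at every orbit point $f^n(y^*)$; this is automatic because for each fixed pair $(n,k)$ the inequality survives pointwise passage to the limit in the Grassmannian, and the constant $K$ does not depend on $(n,k)$.
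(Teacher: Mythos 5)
Your proof is correct and follows essentially the same strategy as the paper's: establish the $\limsup$ characterization on $\Lambda$ (giving uniqueness), then use Grassmannian compactness to extract a convergent subsequence at a boundary point, pass the Pesin inequalities to the limit, and invoke the intrinsic $\limsup$ characterization to conclude the limit is unique (hence the whole sequence converges, giving continuity). The paper frames the last step as a contradiction between two subsequential limits, while you argue directly via intrinsicness of the characterization, but the underlying mechanism is identical.
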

\begin{proof}
Suppose  $x$ is a point with decompositions  $T_{f^n(x)}M=E^u(f^n(x))\oplus E^s(f^n(x))$ which satisfy \eqref{e.def-pesin} for all $n$,
for some constant $K>0$.
Every unit vector $v\in T_xM$  decomposes into  $v=v^s+v^u$ for $v^s\in E^s(x)$, $v^u\in E^u(x)$, and for $n\geq 1$ we have
\begin{equation}\label{e.cont/exp}
\|Df^n.v^s\|\leq K\exp(-\chi n)\|v^s\|,\quad \|Df^n.v^u\|\geq K^{-1}\exp(\chi n-\varepsilon n)\|v^u\|.
\end{equation}
Since $\chi-\varepsilon>0$, if $v\notin E^s(x)$, then  $\|Df^n.v\|\sim\|Df^n.v^u\|\to\infty$, as $n\to\infty$.
Consequently, $v$ cannot belong to the stable space of another decomposition into stable and unstable spaces. So the decomposition $T_{x} M=E^u(x)\oplus E^s(x)$ is unique, and given by the statement of the lemma. Similarly, the decompositions $T_{f^n(x)} M=E^u(f^n(x))\oplus E^s(f^n(x))$ are unique.

\medskip
Let $x_i$ be a sequence of points with decompositions $T_{f^n(x_i)}M=E^u(f^n(x_i))\oplus E^s(f^n(x_i))$ as in \eqref{e.def-pesin}, for all $n$, and with the same constant $K$, for all $i$. We claim that if $x_i\to x$, then $x$ also has a decomposition $T_{f^n(x)}M=E^u(f^n(x))\oplus E^s(f^n(x))$ for all $n$, as in  \eqref{e.def-pesin}, and   with the same constant $K$.

Since condition \eqref{e.def-pesin} is closed,
it is sufficient to show that $E^u(f^n(x_i))$ and $E^s(f^n(x_i))$ converge as $i\to\infty$. A compactness argument and a diagonal argument shows that if this were not the case, then there would be two subsequences $i_k, j_k\to\infty$ so that
$$
E^t(f^n(x_{i_k}))\to E^t_1(f^n(x))\ , \ E^t(f^n(x_{j_k}))\to E^t_2(f^n(x))\ \ (t=u,s),
$$
where $E_1^t(f^n(x))\neq E_2^t(f^n(x))$ for some $n$ and some $t\in\{u,s\}$. However, it is not difficult to see using  \eqref{e.cont/exp} that
$$
T_{f^n(x)} M=E^u_i(f^n(x))\oplus E^s_i(f^n(x))\ \ \ (n\in\Z)
$$
are two direct sum decompositions which satisfy \eqref{e.def-pesin}, in contradiction to the uniqueness of the decomposition.
It follows that $E^u(x), E^s(x)$  is continuous on $\Lambda$  and extends continuously to $\ov{\Lambda}$. The lemma follows.
\end{proof}

Pesin's stable manifold theorem~\cite{Pesin-Izvestia-1976} can be stated as follows:

\begin{theorem}[Pesin]\label{t.pesin}
Given a closed Riemannian manifold $M$, and $r>1$, there exists a positive continuous function $\varepsilon_{M,r}$ with the following property.

For any $C^r$ diffeomorphism $f$ of $M$, any $\chi,\varepsilon>0$ with
$\varepsilon<\varepsilon_{M,r}(\chi,\varepsilon,\|f\|_{C^r})$, and any $(\chi,\varepsilon)$-Pesin block
$\Lambda$, each $x\in \Lambda$ admits a $C^r$ embedded disc $W^s_{loc}(x)$ satisfying:
$$\forall y\in W^s_{loc}(x),\quad \limsup_{n\to\infty}\tfrac 1 n \log d(f^n(x),f^n(y))<0.$$
Moreover the embedded disc $W^s_{loc}(x)$ varies continuously with $x\in \Lambda$ in the $C^r$ topology.
An analogous statement holds for the local unstable manifolds $W^u_{loc}(x)$.
\end{theorem}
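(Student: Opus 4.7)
The plan is to follow the classical Hadamard--Perron / graph transform approach, adapted to the non-uniformly hyperbolic setting of Pesin theory. First I would use Lemma~\ref{l.splitting} to get that the splitting $T_xM=E^s(x)\oplus E^u(x)$ extends continuously to $\overline{\Lambda}$, and combine this with the tempered estimate \eqref{e.def-pesin} to construct a continuous family of \emph{Lyapunov charts}: for each $x\in\overline{\Lambda}$, a $C^r$ coordinate chart $\Phi_x$ of size $r(x)>0$ (where $r(x)$ decays at most like $e^{-\varepsilon|n|}$ along the orbit) in which the differential $Df(x)$ is block-diagonal with respect to $E^s(f(x))\oplus E^u(f(x))$, contracts the stable block by a factor $\leq e^{-\chi+2\varepsilon}$, and expands the unstable block by a factor $\geq e^{\chi-2\varepsilon}$. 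In these charts $f$ reads as $\tilde f_x(u,v)=(A_s(x)u+g_x(u,v),A_u(x)v+h_x(u,v))$ where the nonlinear parts $g_x,h_x$ vanish to second order at the origin and have arbitrarily small $C^1$ norm on the chart.

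Next, I would look for $W^s_{loc}(x)$ as the image under $\Phi_x$ of the graph of a $C^r$ map $\sigma_x\colon B_s\to E^u(x)$ with $\sigma_x(0)=0$, $D\sigma_x(0)=0$, and uniformly bounded $C^r$ norm. Invariance forces the sequence $(\sigma_{f^n(x)})_{n\geq 0}$ to be a fixed point of the graph transform $\mathcal{G}$ obtained by pulling back the graph at $f^n(x)$ via $\tilde f_{f^{n-1}(x)}^{-1}$ (restricted to the component transverse to $E^u$). In the appropriate complete metric space of admissible graph families, the hyperbolicity estimates show that $\mathcal{G}$ is a contraction with rate roughly $e^{-\chi+O(\varepsilon)}$; the unique fixed point yields a $C^1$ invariant manifold $W^s_{loc}(x)$ tangent to $E^s(x)$, and exponential contraction in forward time follows directly from the contraction of $A_s$.

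Continuous dependence of $W^s_{loc}(x)$ on $x\in\Lambda$ in the $C^r$ topology is then automatic from the abstract fixed-point theorem: the data $(A_s(x),A_u(x),g_x,h_x)$ defining the contraction varies continuously in $x\in\overline{\Lambda}$ (by continuity of the splitting), the Lipschitz constant of $\mathcal{G}$ is uniform, and fixed points of a uniform contraction depend continuously on parameters.

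The main obstacle, and the reason for the dependence of $\varepsilon_{M,r}$ on $r$ and on $\|f\|_{C^r}$, is obtaining $C^r$ regularity of $\sigma_x$. Upgrading from $C^1$ requires controlling the formal derivatives of $\mathcal{G}$ up to order $r$ in a fiber-bundle version of the contraction argument; each differentiation produces a loss of hyperbolicity, so one must ensure a non-resonance-type inequality of the form $\varepsilon\ll (\chi/r)$ and $\varepsilon$ small compared to the $C^r$-size of $f$, which is exactly what is encoded in the requirement $\varepsilon<\varepsilon_{M,r}(\chi,\varepsilon,\|f\|_{C^r})$. The unstable manifold statement is obtained by applying the same argument to $f^{-1}$, exchanging the roles of $E^s$ and $E^u$.
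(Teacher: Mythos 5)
The paper does not prove Theorem~\ref{t.pesin}: it is stated as a citation of Pesin's classical result~\cite{Pesin-Izvestia-1976}, with the remark that it can also be found (with the stated independence from the choice of $\varepsilon$) in~\cite{Barreira-Pesin-Non-Uniform-Hyperbolicity-Book}. There is therefore no internal proof to compare against, and your task amounts to reconstructing Pesin's argument. Your outline — continuous splitting on $\overline\Lambda$ via Lemma~\ref{l.splitting}, tempered Lyapunov charts, graph transform as a fiber-contraction, continuous dependence of the fixed point on the data, and appeal to the symmetric construction for $W^u_{\loc}$ — is indeed the standard Hadamard--Perron route that Pesin and the textbook references use, and it is correct in structure.

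One point is imprecise. You attribute the $r$- and $\|f\|_{C^r}$-dependence of $\varepsilon_{M,r}$ to a ``non-resonance-type inequality $\varepsilon\ll\chi/r$'' needed to bootstrap regularity. For \emph{individual} stable manifolds this is not what happens: because the base dynamics on the prospective $W^s_{\loc}$ is itself a contraction with rate $\lesssim e^{-\chi+O(\varepsilon)}$, the transversal-to-base contraction ratio governing the $k$-th jet of the graph transform is of the order $e^{-\chi+O(\varepsilon)}\cdot e^{-k(\chi-O(\varepsilon))}$, which \emph{improves} with $k$; no bunching or $\chi/r$-pinching is required, and the $C^r$ smoothness of the stable (or unstable) manifold is free once $\varepsilon$ is small compared to $\chi$ by a fixed factor. (Bunching-type conditions of the form you describe only enter when one asks for $C^r$ regularity of the stable/unstable \emph{distributions or foliations}, or of center manifolds.) The genuine reason $\varepsilon_{M,r}$ depends on $r$ and on $\|f\|_{C^r}$ is more prosaic: the Lyapunov charts must be chosen small enough that the nonlinear remainder of $f$ in chart coordinates has $C^{r-1}$ (or $C^{1+\alpha}$) norm below the threshold for the fiber contraction, and the distortion constants controlling that remainder, as well as the admissible chart radii (cf.\ the quantity $Q_{\bar\eps}(x)$ in \S\ref{ss.construction-hyperbolic} and Lemma~\ref{l.f-in-coord}), depend explicitly on $r$, on the H\"older modulus of $D^{r}f$, and hence on $\|f\|_{C^r}$. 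With that correction your sketch is a faithful account of the classical proof.
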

The  {\em global stable and unstable manifolds} of $x\in\Lambda$ are defined by
$$
\displaystyle W^s(x):=\bigcup_{n\geq 0}f^{-n}[W^s_{loc}(f^n(x))]\ ,\
W^u(x):=\bigcup_{n\geq 0}f^{n}[W^s_{loc}(f^{-n}(x))].
$$ 
The choice of  $\eps$ in Theorem \ref{t.pesin} does not matter, see \cite[\S8.2]{Barreira-Pesin-Non-Uniform-Hyperbolicity-Book}.

In \cite{Pesin-Izvestia-1976}, Pesin builds $(\chi,\eps)$-Pesin blocks with positive mass for any $\chi$-hyperbolic measure. In general, these blocks are not invariant,  and they may have zero measure with respect to some
$\chi'$-hyperbolic measures with $\chi'<\chi$. This suggests the following construction:

\begin{definition}\label{def-NUH}
The \emph{non-uniformly hyperbolic set} is
\begin{equation}\label{e.NUH}
\NUH(f):=\bigcup_{\chi>0}\bigcap _{\varepsilon>0}\{x: x \text{ is contained in a $(\chi,\varepsilon)$-Pesin block}\}.
\end{equation}
The \emph{recurrent non-uniformly hyperbolic set} is
\begin{equation}\label{e.NUH2}
\NUH'(f):=\bigcup_{\chi>0}\bigcap _{\varepsilon>0}
\;\;\bigcup_{\substack{(\chi,\varepsilon)\text{-Pesin}\\ \text{block } \Lambda}}
\{x: x \text{ is a limit of periodic points in $\Lambda$}\}.
\end{equation}
\end{definition}
\noindent
Note that $\NUH'(f)\subset\NUH(f)$.
These are Borel sets, because there is a countable family of Pesin blocks $\{\Lambda_i\}_{i\in\N}$ such that every $(\eps,\chi)$-Pesin block (for any $\eps,\chi$) is a subset of $\Lambda_i$ for some $i$.
They are $f$-invariant, because the image of a $(\chi,\eps)$-Pesin block (with constant $K$) is contained in a $(\chi,\eps)$-Pesin block (with constant $Ke^{\eps}$). By Theorem~\ref{t.pesin},
each point $x\in \NUH(f)$ admits a stable and an unstable manifold.

It will be useful later to consider a fixed value of $\chi>0$ and to introduce
$$\NUH_\chi(f):=\bigcap _{\varepsilon>0}\{x: x \text{ is contained in a $(\chi,\varepsilon)$-Pesin block}\}$$
and the corresponding $\NUH'_\chi(f)$.

\begin{theorem}[Pesin~\cite{Pesin-Izvestia-1976}, Katok~\cite{KatokIHES}]\label{t.katok}
Let $f$ be a $C^{1+}$ diffeomorphism on  a closed manifold. Every hyperbolic  invariant measure of $f$  satisfies
$\mu(\NUH'(f))=1$.
\end{theorem}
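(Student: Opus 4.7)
The plan is to reduce to the ergodic case by ergodic decomposition (the $\chi$-hyperbolicity of $\mu$ passes to its ergodic components, and so does $\mu(\NUH'(f))=1$ once we prove it for each component), and then combine Pesin's block construction with Katok's closing lemma.

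First I would invoke Pesin's theorem: for an ergodic hyperbolic $\mu$ with Lyapunov exponents outside $[-2\chi,2\chi]$, there is a family of $(\chi,\varepsilon)$-Pesin blocks $\Lambda_{K}^{\chi,\varepsilon}$ (indexed by the optimal constant $K$) whose union has full $\mu$-measure. In particular, for each $\varepsilon>0$ one can choose a Pesin block $\Lambda=\Lambda^{\chi,\varepsilon}$ with $\mu(\Lambda)>0$, on which local stable/unstable manifolds $W^{s,u}_{\mathrm{loc}}(x)$ have uniformly bounded geometry (size, angle, H\"older dependence of the splitting) by Theorem~\ref{t.pesin} and Lemma~\ref{l.splitting}. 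Note that $\mu$-a.e.\ point lies in $\NUH_\chi(f)$.

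The key step is to show that $\mu$-almost every $x\in\Lambda$ is a limit of periodic points contained in an (a priori slightly larger) Pesin block $\Lambda'$ with the same exponent $\chi$. For this I would use Katok's closing/shadowing lemma for non-uniformly hyperbolic systems (the Pesin block version): there exists $\delta_0=\delta_0(\Lambda)>0$ such that whenever $y\in\Lambda$, $f^n(y)\in\Lambda$, and $d(y,f^n(y))<\delta_0$, the local product structure of $W^s_{\mathrm{loc}}(y)$ and $W^u_{\mathrm{loc}}(f^n(y))$ (after sliding along unstable leaves) produces a hyperbolic periodic point $p_n$ of period $n$ with $d(f^k(p_n),f^k(y))\le C e^{-\chi\min(k,n-k)}$ for all $0\le k\le n$. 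Moreover, by construction $p_n$ lies in a $(\chi,\varepsilon')$-Pesin block $\Lambda'$ whose constant $K'$ depends only on $K(\Lambda)$ and on $C$, hence is uniform in $n$.

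Now apply Poincar\'e recurrence to the induced map $f_{\Lambda}$: for $\mu$-a.e.\ $x\in\Lambda$ there is a sequence of return times $n_j\to\infty$ with $f^{n_j}(x)\in\Lambda$ and $d(f^{n_j}(x),x)\to 0$. For $j$ large the displacement is below $\delta_0$, so Katok's closing lemma produces hyperbolic periodic points $p_{n_j}$ with $p_{n_j}\in\Lambda'$ and $d(p_{n_j},x)\to 0$. This shows $x\in\NUH'_\chi(f)\subset\NUH'(f)$, and hence $\mu(\NUH'(f))=1$. Writing $\NUH'(f)$ as a countable union over the enumeration of Pesin blocks (as noted after Def.~\ref{def-NUH}) ensures Borel measurability of everything in sight.

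The main obstacle is the precise form of the closing lemma: one must check that the shadowing periodic point not only exists but actually belongs to a Pesin block with the same hyperbolicity exponent $\chi$ (up to an arbitrarily small loss $\varepsilon'$) and a uniform constant $K'$ depending only on the original block's geometry. This is where $C^{1+}$ smoothness enters (the stable/unstable manifold theorem and the graph transform estimates controlling the derivative along the orbit of $p_{n_j}$); see~\cite{KatokIHES} or~\cite{Barreira-Pesin-Non-Uniform-Hyperbolicity-Book}. Once this is in hand, the rest is a standard recurrence argument.
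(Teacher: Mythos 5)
The paper does not prove this theorem; it is cited to Pesin and Katok, and your proof faithfully reproduces the standard Katok argument from~\cite{KatokIHES}: Pesin blocks of positive measure, Poincar\'e recurrence on the block, Katok's closing lemma to manufacture hyperbolic periodic orbits shadowing the recurrent orbit segment, and uniformity of the resulting hyperbolicity constants so the periodic points land in a fixed Pesin block. Two small remarks. First, your stated shadowing estimate $d(f^k(p_n),f^k(y))\le C e^{-\chi\min(k,n-k)}$ is missing the factor $d(y,f^n(y))$ on the right; the correct closing lemma bound is $d(f^k(p_n),f^k(y))\le C\,d(y,f^n(y))\,e^{-\chi\min(k,n-k)}$, and it is precisely this displacement factor that makes $d(p_{n_j},x)\to 0$ as the return distance shrinks (you use this implicitly, but the bound as written does not yield it). Second, it is worth making explicit that the periodic orbits produced carry \emph{uniform} exponential estimates with \emph{no} $\varepsilon$-slack, so their union is a $(\chi',\varepsilon)$-Pesin block for \emph{every} $\varepsilon>0$ simultaneously; this is exactly what is needed to verify the quantifier structure $\bigcap_{\varepsilon>0}$ in the definition of $\NUH'(f)$ without rerunning the recurrence argument for each $\varepsilon$. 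With these clarifications the argument is complete and is the intended one.
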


\subsection{Borel Homoclinic Classes}\label{s.measurable-homoclinic-classes}\label{ss.class}
Let $f$ be a $C^{1+}$ smooth diffeomorphism of a closed manifold $M$. The set of transverse intersection points of two sub-manifolds $U,V$ in $M$ will be denoted by $U\pitchfork V$.

In \cite[\S2.4]{BCS-1}, we introduced a partial generalization of Smale's  spectral decomposition. The elements of this decomposition were
determined up to sets of  measure zero, simultaneously for all hyperbolic invariant measures.
We will now specify a a canonical choice for these sets.

\subsubsection{The Homoclinic Relation}
First, we extend the homoclinic relation on hyperbolic periodic orbits introduced by Newhouse \cite{Newhouse-Homoclinic}. See also~\cite{Rodriguez-Hertz-Squared-Tahzibi-Ures-CMP}.

\begin{definition}
Two points $x,y\in \NUH'(f)$ are \emph{homoclinically related} ($x\sim y$) if
$W^s(x)$ has a transverse intersection point with an iterate of $W^u(y)$,
and $W^u(x)$ has a transverse intersection point with an iterate of $W^s(y)$.
\end{definition}
\begin{proposition}\label{p.hr-is-equiv-rel}
The homoclinic relation $\sim$ is an equivalence relation on $\NUH'(f)$.
\end{proposition}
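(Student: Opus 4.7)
Symmetry is immediate from the definition, so the content lies in reflexivity and transitivity. Both arguments will use the same three ingredients: (i) by definition of $\NUH'(f)$, for each small $\varepsilon>0$ every $x\in\NUH'(f)$ lies in a $(\chi,\varepsilon)$-Pesin block $\Lambda$ and is accumulated by hyperbolic periodic points of $\Lambda$; (ii) the local product structure on $\overline{\Lambda}$: Lemma~\ref{l.splitting} gives continuity of $E^s, E^u$ on $\overline{\Lambda}$ and Theorem~\ref{t.pesin} gives $C^1$-continuous dependence of the discs $W^s_{\mathrm{loc}}, W^u_{\mathrm{loc}}$, so for points of $\Lambda$ that are close enough the local stable and local unstable manifolds meet transversely, and such transverse intersections persist when one of the base points is slightly perturbed inside $\Lambda$; (iii) the classical $\lambda$-lemma at a hyperbolic periodic orbit, used to transport transverse intersections along forward or backward iterates.

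For reflexivity, I would fix $x \in \NUH'(f)$, take a periodic point $p \in \Lambda$ very close to $x$, and use (ii) to obtain transverse intersections $z_u \in W^u_{\mathrm{loc}}(p) \pitchfork W^s_{\mathrm{loc}}(x)$ and $z_s \in W^s_{\mathrm{loc}}(p) \pitchfork W^u_{\mathrm{loc}}(x)$. The $\lambda$-lemma at $p$, applied to the disc $W^u(x)$ through $z_s$, produces forward iterates $f^k W^u(x)$ that are $C^1$-close to $W^u(p)$ near $p$; since $W^u(p)$ already crosses $W^s(x)$ transversely at $z_u$, so does $f^k W^u(x)$, yielding one of the two intersections required for $x\sim x$. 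Running the same argument for $f^{-1}$ (with the roles of stable and unstable discs swapped) yields the other.

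For transitivity, given $x\sim y$ and $y\sim z$ with the four corresponding transverse intersection points (up to iterates), I would pick a hyperbolic periodic orbit $q$ sharing a Pesin block with $y$ and arbitrarily close to $y$; by (ii) the transverse intersection $W^u(x) \pitchfork W^s(y)$ persists when $y$ is replaced by $q$, and the same holds for $W^u(y) \pitchfork W^s(z)$. The $\lambda$-lemma at $q$ then makes $f^k W^u(x)$ $C^1$-approximate $W^u(q)$ near $q$, and the latter crosses $W^s(z)$ transversely, so one obtains $f^k W^u(x) \pitchfork W^s(z) \neq \varnothing$. The inverse $\lambda$-lemma applied to stable discs, starting from the remaining two transverse intersections $W^s(x) \pitchfork W^u(y)$ and $W^s(y) \pitchfork W^u(z)$, gives $W^s(x) \pitchfork f^{-k} W^u(z) \neq \varnothing$. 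Combining, $x \sim z$.

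The main technical obstacle is the robustness of transverse intersections as the base point of a local stable or unstable disc is perturbed inside a Pesin block, which is exactly what the continuous dependence of $W^s_{\mathrm{loc}}, W^u_{\mathrm{loc}}$ on $\overline{\Lambda}$ buys; this is also why the relation is defined on $\NUH'(f)$ rather than the larger set $\NUH(f)$, since only there do we have approximating periodic orbits in a common Pesin block to which the classical $\lambda$-lemma can be anchored. Once these geometric inputs are in place, the transport of transverse intersections becomes a routine inclination-lemma computation.
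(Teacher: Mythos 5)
Your argument is correct, and it relies on exactly the same two ingredients as the paper's own proof: continuity of local stable/unstable manifolds on Pesin blocks, and the inclination lemma anchored at an approximating hyperbolic periodic orbit. The transitivity step is even organized a bit more economically than in the paper. The paper picks three periodic points $p,q,r$ close to $x,y,z$ respectively, checks $x\sim p$, $p\sim q$, $q\sim r$, $r\sim z$, invokes transitivity of the homoclinic relation among hyperbolic periodic orbits to get $p\sim r$, and then converts back to $x\sim z$. You instead anchor the argument at a single periodic $q$ near $y$: continuity of the Pesin-block manifolds pushes the four transverse intersections coming from $x\sim y$ and $y\sim z$ onto $q$, and then a forward and a backward inclination lemma at $q$ transport $W^u(x)$ across $W^s(z)$ and $W^s(x)$ across $W^u(z)$. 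This avoids the intermediate appeal to periodic-orbit transitivity as a known fact; it is a mild streamlining rather than a genuinely different method, since the underlying mechanism is the same lambda-lemma transport.

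One small point worth noting: reflexivity does not actually require the periodic approximation and lambda-lemma machinery you deploy. For any $x\in\NUH'(f)$ the point $x$ itself is a transverse intersection of $W^s(x)$ and $W^u(x)$, because $T_xW^s(x)=E^s(x)$ and $T_xW^u(x)=E^u(x)$ and $E^s(x)\oplus E^u(x)=T_xM$. Taking the zeroth iterate in the definition of $\sim$ then gives $x\sim x$ immediately; this is why the paper dispatches reflexivity with the single word ``clear.'' Your more elaborate argument, producing a nontrivial homoclinic point via a nearby periodic orbit, is valid but is doing more work than the definition asks for.
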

\begin{proof}
Symmetry and reflexivity are clear;  we will show transitivity.
Suppose  $x,y,z\in \NUH'(f)$, and $x\sim y$ and $y\sim z$.
By definition, there exists  periodic points $p,q,r$ which are arbitrarily close to $x,y,z$ in a common Pesin block.
By continuity of the stable and unstable manifolds, we can choose $p,q,r$ so  that $p\sim q$, $q\sim r$,
$x\sim p$ and $z\sim r$.

The inclination lemma (see~\cite{Katok-Hasselblatt-Book}) implies the transitivity of the homoclinic relation
between hyperbolic periodic orbits, hence $p\sim r$. Replacing $r$ by an iterate if necessary, one can thus find an unstable disc $\cD^u\subset W^u(p)$ and a stable disc $\cD^s\subset W^s(r)$ which intersect transversally.
By the inclination lemma, since $x\sim p$ there also exists an iterate $f^n(x)$ such that
$W^u(f^n(x))$ contains a disc $C^1$-close to $\cD^u$. Similarly,
there exists an iterate $f^{-m}(z)$ such that $W^s(f^{-m}(z))$ contains a disc $C^1$-close to $\cD^s$.
Consequently $W^u(f^n(x))\pitchfork W^s(f^{-m}(z))\neq \emptyset$.
With the same argument, there exist iterates of $W^s(x)$ and $W^u(y)$ which intersect transversally.
This proves $x\sim z$ as required.
\end{proof}

\begin{definition}\emph{Borel homoclinic classes} are  equivalence classes of $\sim$ in $\NUH'(f)$. A Borel homoclinic class equal to a  hyperbolic periodic orbit is called {\em trivial}.
\end{definition}
\begin{proposition}\label{p.homoclinic}
\begin{enumerate}
\item The set of Borel homoclinic classes is a finite or countable partition  of $\NUH'(f)$ into invariant Borel sets.
\item Each class contains a dense set of hyperbolic periodic points.
\item Any hyperbolic ergodic measure $\mu$ is carried by a Borel homoclinic class $X$.
\item Any measure carried by a Borel homoclinic class is hyperbolic.
\end{enumerate}
\end{proposition}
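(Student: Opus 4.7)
The plan is to prove the four items in sequence, with (2) serving as a pivot that supplies what (1) needs for countability and a convenient parameterization.

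For (1), I would note the partition of $\NUH'(f)$ is immediate from Prop.~\ref{p.hr-is-equiv-rel}. Invariance of each class will follow from $f(W^s(x))=W^s(f(x))$ and $f(W^u(x))=W^u(f(x))$, so $x\sim y \iff f(x)\sim f(y)$. For countability, I would appeal to (2) to place a hyperbolic periodic orbit inside each class; since $\Per(f)$ is countable, so is the family of classes. For Borel measurability, I would pick a hyperbolic periodic representative $p$ of each class and write the class as $\{x\in\NUH'(f):x\sim p\}$, expressing it as a countable union over an exhausting Borel family of $(\chi,\eps)$-Pesin blocks (on which $W^{s/u}_{\loc}$ varies continuously by Thm.~\ref{t.pesin}) intersected with the Borel condition that some iterate of $W^{u/s}_{\loc}(x)$ transversally meets some iterate of $W^{s/u}_{\loc}(p)$.

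For (2), I would fix $x$ in a class $X$. By the definition of $\NUH'(f)$, there is $\chi>0$ such that for every $\eps>0$ some $(\chi,\eps)$-Pesin block $\Lambda_\eps$ contains $x$ together with a sequence of hyperbolic periodic points $p_n\to x$ in $\Lambda_\eps$. Thm.~\ref{t.pesin} provides continuous dependence of $W^{s/u}_{\loc}$ on $\Lambda_\eps$, so $W^{s/u}_{\loc}(p_n)\to W^{s/u}_{\loc}(x)$ in the $C^r$ topology. The inclination lemma would then produce, for large $n$, transverse intersections of (iterates of) $W^s(x)$ with $W^u(p_n)$ and of $W^u(x)$ with $W^s(p_n)$, showing $x\sim p_n$. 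Hence every class contains hyperbolic periodic points densely.

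For (3), I would apply Katok's Thm.~\ref{t.katok}: any hyperbolic ergodic $\mu$ satisfies $\mu(\NUH'(f))=1$. By (1), $\NUH'(f)$ is partitioned into countably many invariant Borel classes, so ergodicity forces $\mu$ to be concentrated on a single class. For (4), I would use $\NUH'(f)=\bigcup_{n\geq 1}\NUH'_{1/n}(f)$, noting that membership in $\NUH'_{1/n}(f)$ forces the Lyapunov exponents to lie outside $(-1/n,1/n)$, via the splitting of Lemma~\ref{l.splitting} and the bounds \eqref{e.def-pesin}. Thus any invariant measure carried by a class $X\subset\NUH'(f)$ gives full mass to points with Lyapunov exponents bounded away from zero by some measurable positive $\chi(x)$, so $\mu$ is hyperbolic.

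The main obstacle will be the Borel measurability in (1): one needs a precise Borel formulation of ``transverse intersection after iteration'' between globally defined invariant manifolds, together with careful bookkeeping over the countable family of Pesin blocks indexing $\NUH'(f)$. The other delicate point is ensuring that the inclination lemma applies cleanly to the non-periodic limit $x$ in (2), which is exactly what the $C^r$ continuity statement in Thm.~\ref{t.pesin} (combined with hyperbolicity of the approximating $p_n$) is designed to enable.
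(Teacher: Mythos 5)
Your proof is correct and follows essentially the same route as the paper: countability via a hyperbolic periodic representative in each class, Borel measurability via a countable exhausting family of Pesin blocks on which the relation $y\sim p$ is relatively open, and items (2)--(4) from the definition \eqref{e.NUH2} of $\NUH'(f)$ together with Katok's Thm.~\ref{t.katok}. One small remark: the inclination lemma is not needed for item (2); the $C^1$-continuity of local stable and unstable manifolds on a Pesin block (Thm.~\ref{t.pesin}) already yields transverse intersections of $W^{u}_{\loc}(p_n)$ with $W^{s}_{\loc}(x)$ and of $W^{s}_{\loc}(p_n)$ with $W^{u}_{\loc}(x)$ once $p_n$ is close enough to $x$, which is exactly how the paper argues inside the proof of Prop.~\ref{p.hr-is-equiv-rel}.
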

\begin{proof}
Every $x\in\NUH'(f)$ is homoclinically related to some hyperbolic periodic orbit (see the proof of Prop \ref{p.hr-is-equiv-rel}).
Since there are  at most countably many hyperbolic periodic orbits $x_i$,  there are at most countably many Borel homoclinic classes, each equal to the equivalence class of some $x_i$.
The equivalence class of $x_i$ is clearly invariant.
It is Borel, because for every $0<\eps_k<\lambda_k$ there is a countable family of Pesin blocks $\Lambda_{k,i}$ such that every $(\eps_k,\lambda_k)$-Pesin block is contained in some $\Lambda_{k,i}$, and  the sets $\{y\in\Lambda_{k,i}: y\sim x_i\}$ are  open in the relative topology. This proves (1).
Items 2 and 4 follow from \eqref{e.NUH2}. Item 3 follows from   Thm~\ref{t.katok}.
\end{proof}
Using Prop.~\ref{p.homoclinic}(3), we define the \emph{Borel homoclinic class of an ergodic hyperbolic measure $\mu$} to be the (unique)
Borel homoclinic class which carries $\mu$. If $\mu$ sits on a hyperbolic periodic orbit $\mathcal O$, we also speak of the {\em Borel homoclinic class of $\mathcal O$.}

\begin{remark}\label{r.topological-class}
Newhouse defined the  {homoclinic class} of $\mathcal O$ to be the closure of  the union
of all periodic orbits that are homoclinically related to $\mathcal O$ \cite{Newhouse-Homoclinic}.
Following \cite{BCS-1}, we call these sets
 \emph{topological homoclinic classes}. They are not the same as Borel homoclinic classes. The topological homoclinic class of $\mathcal O$  is equal to the {\em closure} $\overline X$ of the Borel homoclinic class $X$ of $\cO$.
See \cite{Newhouse-Homoclinic,bonatti-crovisier-connecting} for their  properties.

Suppose $\dim M=2$ and $f\in C^r$ with $r>1$.
Any point in $\ov{X}\setminus X$ belongs to either $\NUH'(f)^c$, 
or some set
$\overline X\cap \overline{X'}$, where $X'$ is a Borel homoclinic class different from $X$.
By \cite[Prop.~6.8]{BCS-1},  $h_{\Bor}(f|_{\ov{X}\setminus X})\leq \lambda_{\max}(f)/r$, with $\lambda_{\max}(f)$ as in \eqref{e.dilation}.

 In particular, for $C^\infty$ surface diffeomorphisms, the topological and Borel homoclinic classes of $\mathcal O$ are equal up to a set a set of measure zero, for  any ergodic invariant measure with positive entropy.

\begin{remark}
Other decompositions into pairwise disjoint sets related to Newhouse's construction were previously used in \cite{Rodriguez-Hertz-Squared-Tahzibi-Ures-CMP} (``ergodic homoclinic classes") and in \cite[Prop. 2.15]{BCS-1} (the sets denoted there by $H_{\cO}$).
\end{remark}

\end{remark}

\begin{remark}
In \cite[Def.~2.10]{BCS-1}, we discussed an equivalence relation of ergodic hyperbolic measures which, in the language of this paper,  is equivalent  to saying that $\mu_1\sim \mu_2$
when $x\sim y$ for $(\mu_1\times\mu_2)$-a.e. $(x,y)$.
We called the associated equivalence classes  \emph{measured homoclinic classes}.

Measured homoclinic classes are sets of {\em measures}; Borel homoclinic classes are sets of {\em points}. They are related by the following (obvious) statement:  the measured homoclinic class of the measure $\mu$ is the set of ergodic hyperbolic measures carried by the Borel homoclinic class of $\mu$.
\end{remark}

\subsubsection{The Period of a Borel Homoclinic Class}
We extend the results of~\cite{Abdenur-Crovisier} (compare with \cite[Prop. 2.17]{BCS-1}).

\begin{definition}
The \emph{period} of a Borel homoclinic class $X$ is the greatest common divisor of all the periods of periodic orbits  in the class.
It is denoted by $\text{period}(X)$.
\end{definition}

\begin{proposition}
If a Borel homoclinic  class $X$ has period $p$, then there is a Borel set $A$ such that $f^i(A)\cap A=\emptyset$ for $i=0,\ldots,p-1$, $f^p(A)=A$, and
$$ X=A\cup f(A)\cup\cdots\cup f^{p-1}(A).$$
Two points $x,y\in X$ belong to a same $f^i(A)$ iff there is a hyperbolic periodic point $q$ such that
$W^u(x)\pitchfork W^s(q)$ and $W^u(q)\pitchfork W^s(y)$ are both non-empty.
The set  $A$ is a Borel homoclinic class of $f^p$.
\end{proposition}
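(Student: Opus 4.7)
The plan is to adapt the Abdenur--Crovisier analysis of the cyclic structure of topological homoclinic classes to the present Borel setting, to check that the resulting decomposition has period exactly $p$, and finally to verify that the ``elementary piece'' $A$ is a Borel homoclinic class for $f^p$.

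First I would show that the binary relation $R$ defined in the statement (``$xRy$ iff there exists a hyperbolic periodic $q\in X$ with $W^u(x)\pitchfork W^s(q)\neq\emptyset$ and $W^u(q)\pitchfork W^s(y)\neq\emptyset$'') is an equivalence relation on $X$. Reflexivity comes from approximating $x\in X\subset\NUH'(f)$ by hyperbolic periodic points $q_n$ in a common Pesin block: Lemma~\ref{l.splitting} and Theorem~\ref{t.pesin} provide $C^1$-continuity of local stable and unstable manifolds on the block, and local product structure then yields $W^u(x)\pitchfork W^s(q_n)$ and $W^u(q_n)\pitchfork W^s(x)$ for large $n$. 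Transitivity uses the inclination lemma: if $xRy$ is witnessed by $q_1$ and $yRz$ by $q_2$, then $q_1,q_2$ lie in the same Borel homoclinic class and are thus homoclinically related as periodic orbits, giving $W^u(q_1)\pitchfork W^s(q_2)\neq\emptyset$; the inclination lemma at $q_2$ upgrades $W^u(q_2)\pitchfork W^s(z)$ to $W^u(q_1)\pitchfork W^s(z)$, so $q_1$ witnesses $xRz$. Symmetry is argued similarly, using a periodic point close to $y$ (furnished by $y\in\NUH'(f)$) as witness for $yRx$, after upgrading the relevant connections via the inclination lemma.

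Next I would analyze the induced action of $f$ on the $R$-classes. Equivariance $xRy\iff f(x)Rf(y)$ is immediate, so $f$ permutes the $R$-classes; let $A$ be the $R$-class of a chosen periodic point $q_0\in X$, and $r$ the least positive integer with $f^r(A)=A$. For each hyperbolic periodic orbit $\mathcal O\subset X$ of period $\pi$, the images $A,f(A),\ldots,f^{r-1}(A)$ cycle along the orbit of $\mathcal O$, and $f^\pi$ returns each point to its initial class, forcing $r\mid\pi$; taking the gcd over all periodic orbits yields $r\mid p$. The reverse divisibility $p\mid r$ follows from the Abdenur--Crovisier argument: within $X$ the cyclic group generated by transverse connections between hyperbolic periodic orbits has order exactly $p$. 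Hence $r=p$. The sets $A,f(A),\ldots,f^{p-1}(A)$ then partition $X$, since any $y\in X$ is, by homoclinic relatedness to $q_0$, $R$-related to some $f^i(q_0)$. Measurability follows from $A=\{y\in X:W^u(q_0)\pitchfork W^s(y)\neq\emptyset\}$ together with the $C^1$-continuity of $W^s(y)$ on each Pesin block and the open nature of transverse intersections.

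The step I expect to require most care is verifying that $A$ is itself a Borel homoclinic class of $f^p$. The set $A$ is $f^p$-invariant by construction; the Oseledets splittings are $f^p$-invariant on the same Pesin blocks; and the global stable and unstable manifolds of $f^p$ coincide with those of $f$ at every point of $A\subset\NUH'(f^p)$. Any $f$-periodic witness $q$ is also $f^p$-periodic with identical transverse intersections, so the relation $R$ for $f$ and its analogue $R^{f^p}$ for $f^p$ agree on $A$. To conclude that the full $f^p$-homoclinic relation $\sim_{f^p}$ also agrees with this relation on $A$, equivalently that the $f^p$-cyclic decomposition of $A$ is trivial, I would reapply the previous step to $f^p$ and observe that the $f^p$-period of $A$ equals $\gcd\{\pi/p:\mathcal O\subset X \text{ an } f\text{-periodic orbit of period }\pi\}=1$, by the very definition of $p$. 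This closes the argument.
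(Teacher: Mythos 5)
Your strategy is recognizably related to the paper's: both rest on Abdenur--Crovisier's decomposition of the periodic points of a homoclinic class into cyclically permuted pieces determined by the relation $W^u(q)\pitchfork W^s(r)\neq\emptyset$. But you take a different route: you try to verify \emph{directly} that the relation $R$ of the statement is an equivalence relation on all of $X$, and then count orbits of $R$-classes. The paper instead quotes Abdenur--Crovisier's Proposition 1 for the set $X_0$ of periodic points (giving a decomposition $X_0=A_0\cup\dots\cup f^{p-1}(A_0)$), sets $A:=\{x\in X: W^u(x)\pitchfork W^s(q)\neq\emptyset\text{ for some }q\in A_0\}$, and proves the characterization by passing to periodic approximants of $x$ and $y$. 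The paper's route is cleaner precisely because it never needs to establish symmetry or transitivity of $R$ directly.

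This is where your proposal has a genuine gap. For transitivity you write that $q_1,q_2$, being in the same Borel homoclinic class, ``are thus homoclinically related as periodic orbits, giving $W^u(q_1)\pitchfork W^s(q_2)\neq\emptyset$.'' This is not what homoclinic relatedness gives: it only gives $W^u(q_1)\pitchfork W^s(f^k(q_2))\neq\emptyset$ for \emph{some} $k$, and $f^k(q_2)$ is generally a different point of the orbit. To pin down $k\equiv 0\pmod p$, one needs the chain of $u$-to-$s$ intersections $W^u(q_1)\pitchfork W^s(y)$, $W^u(y)\pitchfork W^s(q_2)$ to be transferred to a chain between hyperbolic periodic points: approximate $y$ by periodic $r_n$ in a common Pesin block, obtain $W^u(q_1)\pitchfork W^s(r_n)$ and $W^u(r_n)\pitchfork W^s(q_2)$, and only then invoke the inclination-lemma/Abdenur--Crovisier transitivity for periodic points. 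Your one-line invocation of ``upgrading via the inclination lemma'' hides exactly this step, and a direct inclination-lemma argument at the non-periodic point $y$ does not work without the return-to-Pesin-block structure. The same issue recurs in your symmetry argument. (This is also why the paper's proof systematically replaces $x,y$ by periodic approximants before invoking the inclination lemma.)

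Your treatment of the last assertion is closer to correct than it first appears, but the gcd computation is a red herring. Once you know $A=\{R\text{-class of }q_0\}$ equals the $R^{f^p}$-class of $q_0$ (since $f$-periodic $=$ $f^p$-periodic and $W^{s/u}$ are unchanged), the first part of the proposition \emph{applied to $f^p$} says the $\sim_{f^p}$-class $C_0$ of $q_0$ is a \emph{disjoint} union $A\cup f^p(A)\cup\dots\cup (f^p)^{p'-1}(A)$. But $f^p(A)=A$, so disjointness forces $p'=1$ and $C_0=A$. You don't need to compute $\gcd\{\pi/p\}$, and in fact the periods that would enter that gcd are the $f^p$-periods of periodic orbits in $C_0$, which you haven't yet identified with the $f$-periodic orbits of $X$. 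The simpler argument above avoids the issue, and is also (in spirit) what the paper does: it shows $A$ lies inside a single $f^p$-homoclinic class and that distinct $f^i(A)$ lie in distinct $f^p$-classes, which together with the covering of $X$ gives the claim.
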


\begin{proof}
Let $X_0$ be the set of periodic points in $X$.
Proposition 1 in \cite{Abdenur-Crovisier} and its proof show that $X_0=A_0\cup f(A_0)\cup\cdots\cup f^{p-1}(A_0)$, with $A_0$  as follows:
\begin{enumerate}
\item $A_0,f(A_0),\ldots,f^{p-1}(A_0)$ are pairwise disjoint and  $f^p(A_0)=A_0$;
\item Two periodic $q,r\in X_0$ are in the same $f^i(A_0)$ iff $W^u(q)\pitchfork W^s(r)\neq \emptyset$;
\end{enumerate}
We show that the proposition holds with the set
$$A:=\{x\in X: W^u(x)\pitchfork W^s(q)\neq\emptyset\text{ for some }q\in A_0\}.$$

Obviously $f^p(A)=A$.
We claim that $A$ is disjoint from $f^k(A)$ for $1\leq k<p$.
Suppose by way of contradiction that there exists some $x\in A\cap f^k(A)$ for $1\leq k<p$.  By the definition of $\NUH'(f)$,  $x=\lim q_n$ where $q_n$ are periodic orbits in the same Pesin block. Since $q_n\to x$, $W^u_{loc}(q_n)\to W^u_{loc}(x)$ in the $C^1$ topology.
Since $x\in A$, $W^u(x)\pitchfork W^s(q)\neq \emptyset$  for some $q\in A_0$. So $W^u(q_n)\pitchfork W^s(q)\neq \emptyset$  for all $n$ sufficiently large, with the conclusion that $q_n$ must eventually all lie in $A_0$. Similarly, the assumption that  $x\in f^k(A)$ leads to the conclusion that $q_n$  all eventually lie in $f^k(A_0)$. So $f^k(A_0)\cap A_0\neq \emptyset$,  a contradiction.

Fix $x,y\in X$, and suppose $x,y\in f^i(A)$ for the same $i$.
Write $x=\lim q_n$ and $y=\lim r_n$, where $q_n$ and $r_n$ are periodic points all lying in the same Pesin block. As we saw above, $q_n$ and $r_n$  must all eventually belong to $f^i(A_0)$. Therefore, for all sufficiently large $n$, $W^u(q_n)\pitchfork W^s(r_n)\neq \emptyset$. In addition, since $q_n\to x, r_n\to y$,  for all sufficiently large $n$, $W^u(x)\pitchfork W^s(q_n)\neq \emptyset$, and $W^u(r_n)\pitchfork W^s(y)\neq \emptyset$. By the inclination lemma, $W^u(q_n)$ must accumulate on $W^u(r_n)$, and therefore $W^u(q_n)\pitchfork W^s(y)\neq \emptyset$. So there is a periodic orbit $q=q_n$ such that $W^u(x)\pitchfork W^s(q)$ and $W^s(y)\pitchfork W^u(q)$ are non-empty.

Conversely, suppose there is a hyperbolic periodic point $q$ such that
$W^u(x)\pitchfork W^s(q)\neq \emptyset$  and $W^u(q)\pitchfork W^s(y)\neq \emptyset$.
Then for all $n$ large enough,  $W^u(q_n)\pitchfork W^s(q)\neq \emptyset$ and $W^s(r_n)\pitchfork W^u(q)\neq \emptyset$. Necessarily $q_n,q,r_n$ belong to the same set $f^i(A_0)$.
One concludes that $x$ and $y$ belong to the same set $f^i(A)$.

Using the inclination lemma, it is possible to see that $A$  coincides with the set of $x\in X$ such that
$W^s(x)\pitchfork W^u(q)\neq \emptyset$ for some $q\in A_0$. Since all points in $A_0$ are $f^p$--homoclinically related,  $A$ must be contained in a  homoclinic class of $f^p$.
From the considerations above, any two points $x,y\in X$ such that $W^u(x)\pitchfork W^s(y)\neq \emptyset$
belong to a same set $f^i(A)$.
As a consequence the disjoint sets $A, f(A),\dots,f^{p-1}(A)$ are contained in different homoclinic classes of $f^p$.
This implies that $A$ is equal to a Borel homoclinic class of $f^p$.
\end{proof}

\subsubsection{MME on  Borel Homoclinic Classes}
The next result was stated and proved  in~\cite[Cor 3.3]{BCS-1} for surface diffeomorphisms, but the proof works  in any dimension:
\begin{proposition}\label{p.mme-on-bhc}
Each Borel homoclinic class $X$ carries at most one measure $\mu$ such that $h(f,\mu)=h_\Bor(f|_{X})$. This measure,
if it exists, is isomorphic to the product of a  Bernoulli scheme with a cyclic permutation of
$\text{period}(X)$ points. In particular, $\mu$ is mixing exactly when the period of the class is $1$.
\end{proposition}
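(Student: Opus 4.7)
The plan is to follow the approach of \cite[Cor.~3.3]{BCS-1}, which reduces the statement to the Gurevich–Sarig theory of countable state Markov shifts via a symbolic coding of the Borel homoclinic class. That proof was stated in dimension two because it invoked Sarig's coding \cite{Sarig-JAMS}; in arbitrary dimension one simply replaces this input by the higher-dimensional coding of Ben-Ovadia \cite{Ben-Ovadia-Codings}, whose properties for Borel homoclinic classes will be recorded in Part~\ref{part-symbolic-diffeo}.

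First I would reduce to the case $\text{period}(X)=1$. Let $\mu$ be an ergodic MME on $X$ (the ergodic case suffices, since every ergodic component of a non-ergodic MME is itself an MME with the same entropy, hence would coincide with $\mu$ once uniqueness in the ergodic class is established). Write $X=A\cup f(A)\cup\cdots\cup f^{p-1}(A)$ with $A$ as in the preceding proposition. Because $f$ cyclically permutes the pairwise disjoint sets $f^i(A)$ and $\mu$ is $f$-invariant, $\mu(f^i(A))=1/p$, and the measures $\mu_i:=p\cdot\mu|_{f^i(A)}=(f^i)_\ast\mu_0$ are the $f^p$-ergodic components of $\mu$. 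A direct entropy computation gives $h(f^p,\mu_i)=p\cdot h(f,\mu)=p\cdot h_\Bor(f|_X)=h_\Bor(f^p|_A)$, so each $\mu_i$ is an $f^p$-MME on the period-one Borel homoclinic class $A$ of $f^p$. By construction, $(X,\mu,f)$ is isomorphic to $(A,\mu_0,f^p)$ crossed with the cyclic permutation of $p$ points. Thus it is enough to prove that any period-one Borel homoclinic class carries at most one MME, and that it is Bernoulli.

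Next I would invoke the symbolic coding. Let $\pi\colon(\Sigma,\sigma)\to(A,f^p)$ be the Borel semiconjugacy from a topologically transitive countable state Markov shift, with the property that every hyperbolic ergodic invariant measure on $A$ lifts to a $\sigma$-invariant measure of equal entropy; any MME on $A$ therefore lifts to an MME on $\Sigma$. Gurevich's theorem gives at most one MME on $\Sigma$, and, when it exists, it is isomorphic to the product of a Bernoulli scheme with a cyclic permutation of $d$ points, where $d$ is the period of the underlying directed graph. The fact that $A$ has period one as a Borel homoclinic class forces $d=1$, so the symbolic MME is Bernoulli; pushing it down via $\pi$ (and using that factors of Bernoulli shifts are Bernoulli, by Ornstein theory) yields a unique Bernoulli MME on $A$, which combined with Step~1 gives the full statement. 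The main obstacle is the bookkeeping needed to match the period of the Borel homoclinic class $X$ with the period of the graph underlying the coding, and to ensure that \emph{every} MME (not just one) lifts to the symbolic system without entropy loss; both are precisely what the coding machinery of \cite{Sarig-JAMS,Ben-Ovadia-Codings} and Part~\ref{part-symbolic-diffeo} are designed to deliver.
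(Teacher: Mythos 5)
Your overall strategy — reduce to period one, code $(A,f^p)$ by an irreducible countable Markov shift, and invoke the Gurevich–Sarig theory — is the right one and is indeed what \cite[Cor.~3.3]{BCS-1} does; the paper itself does not reproduce this and simply cites \cite{BCS-1}, noting that the argument works in any dimension once Sarig's coding is replaced by Ben-Ovadia's. Your reduction to $\operatorname{period}(X)=1$ is correct, and the uniqueness step via lifting two candidate MMEs to the shift (choosing $\chi$ small enough that both lift) is sound.

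The genuine gap is the sentence ``The fact that $A$ has period one as a Borel homoclinic class forces $d=1$.'' This is the heart of the period identification and is not automatic. A priori the period $d$ of the graph underlying the coding of $(A,f^p)$ could be any multiple of the class period. Indeed, the paper's own Lemma~\ref{l.p|q} only establishes \emph{divisibility} ($p\mid q$ for the full coding of $X$), and its proof explicitly invokes Thm~\ref{t.MME-exists}(2) — i.e.\ the very statement you are trying to prove — so you cannot appeal to it here without circularity. Nor is the period matching ``delivered by the coding machinery of Part~\ref{part-symbolic-diffeo}'': Part~\ref{part-symbolic-diffeo} gives lifting and entropy preservation (your second ``obstacle''), but says nothing about the period of the graph matching $\operatorname{period}(X)$.

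What is actually needed — and what \cite{BCS-1} supplies — is an argument that relates cycle lengths in the graph to periods of hyperbolic periodic orbits in $A$. Concretely: if $d>1$ then the MME on $\Sigma$ has a continuous, locally constant eigenfunction with eigenvalue $e^{2\pi i/d}$; to derive a contradiction one shows that sufficiently hyperbolic periodic orbits in $A$ lift to periodic orbits in $\Sigma$ \emph{with the same period}, forcing $d$ to divide the gcd of those periods, which can be made $1$ by choosing $\chi$ small enough (since $\operatorname{period}(A)=1$). The lifting-with-same-period property, the fact that enough periodic orbits are captured for a fixed $\chi$, and the passage between the measurable $L^2$-eigenfunction on $A$ and the continuous eigenfunction on $\Sigma$ each require justification that your outline omits. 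Without this, what you get from Gurevich's theorem and Ornstein theory is only that $\mu_0$ is isomorphic to $(\text{Bernoulli})\times(\ZZ/d'\ZZ)$ for some divisor $d'$ of $d$, not that $d'=1$.
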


\begin{proposition}\label{p.SPRfinite}
Given any Pesin block $\Lambda$, there exist at most finitely many Borel homoclinic classes which intersect $\Lambda$.
\end{proposition}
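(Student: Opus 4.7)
The plan is to exploit the uniformity of Pesin's stable manifold theorem on a Pesin block, together with the compactness of (the closure of) $\Lambda$. By Lemma~\ref{l.splitting}, the closure $\overline{\Lambda}$ is itself a $(\chi,\varepsilon)$-Pesin block (with the same $K$), and the hyperbolic splitting $E^s\oplus E^u$ extends continuously to $\overline{\Lambda}$. Since $\Lambda\subset \overline{\Lambda}$, it is enough to bound the number of Borel homoclinic classes meeting the compact set $\overline{\Lambda}\cap \NUH'(f)$.

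First, I would invoke Theorem~\ref{t.pesin} to get, for every $x\in\overline{\Lambda}$, local stable and unstable discs $W^s_{\mathrm{loc}}(x)$ and $W^u_{\mathrm{loc}}(x)$ whose sizes are uniformly bounded below and which depend continuously on $x\in\overline{\Lambda}$ in the $C^1$ topology. Because the splitting $E^s\oplus E^u$ is continuous on $\overline{\Lambda}$, the dimensions of $E^s(x)$ and $E^u(x)$ are locally constant on $\overline{\Lambda}$, so each $x\in\overline{\Lambda}$ has a neighborhood $V_x$ in $M$ in which these dimensions are constant and $W^u_{\mathrm{loc}}(x)\pitchfork W^s_{\mathrm{loc}}(x)=\{x\}$ is a transverse intersection of complementary-dimensional discs.

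Second, I would use continuity of $W^s_{\mathrm{loc}}$ and $W^u_{\mathrm{loc}}$, together with the openness and stability of transverse intersections, to shrink $V_x$ so that for every pair $y,z\in \overline{\Lambda}\cap V_x$ one has
\[
 W^u_{\mathrm{loc}}(y)\pitchfork W^s_{\mathrm{loc}}(z)\neq \emptyset
 \quad\text{and}\quad
 W^s_{\mathrm{loc}}(y)\pitchfork W^u_{\mathrm{loc}}(z)\neq\emptyset.
\]
In particular, any two points of $\NUH'(f)\cap\Lambda\cap V_x$ are homoclinically related and therefore lie in the same Borel homoclinic class.

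Finally, I would extract a finite subcover $V_{x_1},\dots,V_{x_N}$ of the compact set $\overline{\Lambda}$. By the previous step, each $V_{x_i}$ meets at most one Borel homoclinic class, so $\Lambda\cap \NUH'(f)$ intersects at most $N$ such classes. The main (but mild) obstacle is to ensure that the discs involved in the transverse intersections have complementary dimensions; this is handled by choosing the neighborhoods small enough that the locally constant dimensions of $E^s$ and $E^u$ do not jump, after which continuity of the local invariant manifolds and transversality being an open condition yield the claim.
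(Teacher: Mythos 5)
Your argument is correct and is essentially the same as the paper's, just phrased differently: the paper argues by contradiction, picking a point $x_i$ in each of a supposed infinite family of classes intersecting $\Lambda$, passing to a convergent subsequence, and using the $C^1$-continuity of $W^{s/u}_{\mathrm{loc}}$ on the Pesin block to show the $x_i$ are eventually all homoclinically related; you instead package the same continuity and openness-of-transversality input as a finite open subcover of $\overline{\Lambda}$ by ``local homoclinic-relation neighborhoods.'' These are two standard faces of the same compactness argument. One small bonus of your write-up is that you explicitly flag and handle the locally-constant-dimension issue for $E^s,E^u$ on $\overline{\Lambda}$, which the paper's proof leaves implicit.
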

\begin{proof}
Suppose $\Lambda$ is  a Pesin block $\Lambda$, and $X_1,X_2,\ldots$ are  Borel homoclinic classes  which intersect $\Lambda$. Fix some  $x_i\in X_i\cap\Lambda$. Without loss of generality, $(x_i)_{i\geq 1}$ converges (otherwise pass to a subsequence).
By Theorem~\ref{t.pesin}, the local stable and unstable manifolds of the points in $\Lambda$ vary continuously in the
$C^1$-topology. 
As a consequence, for any $i,j$ large enough, the stable and unstable manifolds of  $x_i$ and $x_j$
have transverse intersections. This implies that for all $i$ large enough, the points $x_i$ are homoclinically related, hence the classes $X_i$ are the same.
\end{proof}

If $\dim M=2$, then we have the following global finiteness result, which follows from Remark 6.2 and Thm~6.12 in 
 \cite{BCS-1},  and Remark  \ref{r.topological-class} above:
\begin{proposition}\label{p.SPRfinite2}
Fix $1<r\leq\infty$ and let $f$ be a $C^r$ diffeomorphism on a closed surface. For any $\delta>0$
there are only finitely many Borel homoclinic classes $X$ s.t.
$$
  h_\Bor(f|_{X})>(\lambda_{\max}(f)/r)+\delta
  \qquad (\lambda_{\max}(f)/\infty=0 \text{ by convention}).
$$
\end{proposition}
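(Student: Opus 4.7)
\medskip
\noindent\textbf{Proof plan for Proposition \ref{p.SPRfinite2}.}
My plan is to reduce the claim about Borel homoclinic classes to the analogous finiteness statement for \emph{topological} homoclinic classes established in Theorem~6.12 (together with Remark~6.2) of \cite{BCS-1}, using the structural information about the boundary $\overline{X}\setminus X$ recalled in Remark~\ref{r.topological-class}. Concretely, Theorem~6.12 of \cite{BCS-1} asserts that on a closed surface and for $f\in C^r$, there exist only finitely many topological homoclinic classes $Y$ with $h_\Bor(f|_Y)>\lambda_{\max}(f)/r+\delta$, and Remark~\ref{r.topological-class} gives $\overline{X}=Y$ for some topological class $Y$ together with the bound $h_\Bor(f|_{\overline{X}\setminus X})\le \lambda_{\max}(f)/r$.

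The first step is to associate to each Borel homoclinic class $X$ with $h_\Bor(f|_X)>\lambda_{\max}(f)/r+\delta$ its topological closure $\overline{X}$, which by Remark~\ref{r.topological-class} is a topological homoclinic class, and to observe the monotonicity
\[
  h_\Bor(f|_{\overline{X}})\ \geq\ h_\Bor(f|_X)\ >\ \lambda_{\max}(f)/r+\delta.
\]
The key step is then to show that the assignment $X\mapsto\overline{X}$ is \emph{injective} on the family of Borel homoclinic classes with entropy above this threshold. Suppose $X_1\neq X_2$ are two such classes with $\overline{X_1}=\overline{X_2}$. Since Borel homoclinic classes are equivalence classes of $\sim$, they are pairwise disjoint, so $X_1\cap X_2=\emptyset$ and hence $X_1\subset \overline{X_1}\setminus X_2=\overline{X_2}\setminus X_2$. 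But Remark~\ref{r.topological-class} yields $h_\Bor(f|_{\overline{X_2}\setminus X_2})\le\lambda_{\max}(f)/r$, which by monotonicity forces $h_\Bor(f|_{X_1})\le\lambda_{\max}(f)/r$, contradicting our assumption on $X_1$.

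Having established injectivity, the finite collection of topological homoclinic classes $Y$ with $h_\Bor(f|_Y)>\lambda_{\max}(f)/r+\delta$ provided by Theorem~6.12 of \cite{BCS-1} gives a finite upper bound for the number of Borel homoclinic classes $X$ with $h_\Bor(f|_X)>\lambda_{\max}(f)/r+\delta$, concluding the proof.

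The only delicate point is the careful use of Remark~\ref{r.topological-class}: one must ensure that the bound $h_\Bor(f|_{\overline{X}\setminus X})\le\lambda_{\max}(f)/r$ is available with the exact constant $\lambda_{\max}(f)/r$ (not $\lambda_{\max}(f)/r+\delta$), so that even an arbitrarily small $\delta>0$ suffices to separate the Borel classes from each other via the map to topological classes. This is exactly the content of that remark, and the rest of the argument is essentially bookkeeping. No compactness or measure-theoretic ingredient beyond monotonicity of $h_\Bor$ under inclusion is required.
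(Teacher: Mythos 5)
Your proof is correct and follows exactly the route the paper indicates: the paper's only justification for Proposition~\ref{p.SPRfinite2} is a citation of Remark~6.2 and Theorem~6.12 of \cite{BCS-1} together with Remark~\ref{r.topological-class}, and you have supplied precisely the reduction these references demand — map each high-entropy Borel class $X$ to the topological class $\overline{X}$, note the entropy of $f|_{\overline{X}}$ is at least that of $f|_X$, and use the bound $h_{\Bor}(f|_{\overline{X}\setminus X})\leq\lambda_{\max}(f)/r$ from Remark~\ref{r.topological-class} to show this map is injective on the family of Borel classes above the $\lambda_{\max}(f)/r+\delta$ threshold. The injectivity argument (that two distinct high-entropy Borel classes cannot share a closure, since the excluded one would then live in the boundary and violate the entropy gap) is the key observation the paper leaves implicit, and you have stated and used it accurately.
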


\subsubsection{Borel Homoclinic Classes and the SPR Property}\label{sss-BHC-SPR}
We now prove Prop.~\ref{p.decomposition}.
We start by making the following simple general observation. Let $A, B, A_1,\dots,A_\ell$ be 
invariant measurable sets, then
 \begin{enumerate}
  \item If $A\subset B$, $B$ is SPR, and $h_\Bor(f|_A)=h_\Bor(f|_B)$, then $A$ is also SPR.
  \item $[\forall i$,\;  $A_i$ is SPR and $h_\Bor(f|_B)<h_\Bor(f|_{A_i})]$
  $\Rightarrow$  $A_1\cup \dots \cup A_\ell\cup B$ is SPR.
 \end{enumerate}

First assume that the diffeomorphism $f$ is SPR, i.e., that eq.~\eqref{e.SPR} is satisfied for some Pesin block $\Lambda$ and numbers $h_0<h_\Bor(f|_X)$ and $\tau>0$. Each ergodic measure $\mu$ with entropy larger than $h_0$ satisfies $\mu(\Lambda)>\tau$. So $\mu$ has a set of  positive measure of points with  non-vanishing Lyapunov exponents. Such measures are  hyperbolic, and we obtained Item (a) of Prop.~\ref{p.decomposition}.

Every homoclinic class with top entropy larger than $h_0$ carries an ergodic measure $\mu$ with entropy larger than $h_0$. Necessarily,  $\mu(\Lambda)>\tau$.
Prop.~\ref{p.SPRfinite} implies that there exist only finitely many classes carrying such measures, hence the number of Borel homoclinic classes
with entropy larger than $h_0$ is finite. This is Item (b).

In particular, one can increase $h_0$ so that all the homoclinic classes with top entropy larger than $h_0$ have top entropy
exactly equal to $h_\top(f)$.
Hence Item~(c) follows from  Observation (1) above.

Conversely, suppose $f$  satisfies properties  (a), (b), (c) from the statement of Proposition \ref{p.decomposition}. By (b), there is a constant  $h$ such that the  classes $X$ with $h_{\Bor}(f|_X)>h$ form  a finite family  $X_1,\dots,X_\ell$.
Increasing $h$ (while keeping $h<h_\top(f)$),  one can assume that $h_{\Bor}(f|_{X_i})=h_\top(f)$ for all $i$.
Each of these classes is SPR (by Item~(c)).
By Item~(a) and Proposition~\ref{p.homoclinic}(3), the top entropy of $Y:=M\setminus(X_1\cup\dots\cup X_\ell)$ is at most $h$. Since $M=Y\cup X_1\cup\dots\cup X_\ell$, Observation~(2) implies that $f$ is SPR.
\hfill$\Box$

\subsubsection{SPR Diffeomorphisms with Zero Topological Entropy}\
\begin{proposition}\label{p.SPR-in-Zero-Entropy}
A $C^{1+}$ diffeomorphism $f$  with zero topological entropy on a closed manifold  is  SPR iff  the following two properties hold:
\begin{enumerate}
\item[(a)] $f$ has at most  finitely many ergodic measures, and
\item[(b)] each ergodic measure is supported on a hyperbolic periodic orbit.
\end{enumerate}
\end{proposition}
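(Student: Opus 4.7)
The plan is to prove the two implications separately, using the tools about Borel homoclinic classes developed in \S\ref{s.measurable-homoclinic-classes} together with Katok's theorem.

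\medskip
\noindent\textbf{Sufficiency of (a) and (b).} This direction is essentially free. Suppose $f$ has finitely many ergodic measures $\mu_1,\dots,\mu_n$, each supported on a hyperbolic periodic orbit $\mathcal O_i$. Set $\Lambda:=\bigcup_{i=1}^n \mathcal O_i$; it is a finite union of hyperbolic periodic orbits. I would choose $\chi>0$ smaller than all exponents appearing on the $\mathcal O_i$, so that $\Lambda$ is a $(\chi,\varepsilon)$-Pesin block for every $\varepsilon>0$. Since $h_\top(f)=0$, any value $h_0<0$ satisfies $h_0<h_\top(f)$, and the hypothesis $h(f,\nu)>h_0$ becomes automatic. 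For every ergodic $\nu$ we have $\nu=\mu_i$ for some $i$, hence $\nu(\Lambda)=1$. Thus \eqref{e.SPR} holds with any $\tau<1$, proving SPR.

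\medskip
\noindent\textbf{Necessity of (a) and (b).} Assume $f$ is SPR with $h_\top(f)=0$. Fix $\chi>0$ from Def.~\ref{def-SPR-diffeo}, pick any $\varepsilon\in(0,\chi)$, and let $\Lambda$, $h_0<0$, $\tau>0$ be the resulting Pesin block and constants. Since all measures have non-negative entropy and $h_0<0$, the hypothesis $h(f,\nu)>h_0$ is automatic, so \emph{every} ergodic measure $\nu$ satisfies $\nu(\Lambda)>\tau$. The first step is to observe that any such $\nu$ is $\chi$-hyperbolic: indeed, by Poincar\'e recurrence $\nu$-a.e.\ point visits $\Lambda$ infinitely often, and the decomposition of Lemma~\ref{l.splitting} together with \eqref{e.def-pesin} then forces the Lyapunov exponents to lie outside $[-\chi+\varepsilon,\chi-\varepsilon]$.

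Next, by Katok's theorem (Thm.~\ref{t.katok}) every ergodic $\nu$ gives full mass to $\NUH'(f)$, and by Prop.~\ref{p.homoclinic}(3) is carried by a unique Borel homoclinic class. Since $\nu(\Lambda)>0$, that class must intersect $\Lambda$, and Prop.~\ref{p.SPRfinite} guarantees that only finitely many Borel homoclinic classes $X_1,\dots,X_n$ meet $\Lambda$. So every ergodic measure sits on one of these classes.

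The final step is to show that each $X_i$ supports at most one ergodic measure, and that this measure is a Dirac-type average on a hyperbolic periodic orbit. By the variational principle, $h_\Bor(f|_{X_i})\leq h_\top(f)=0$, so every ergodic measure on $X_i$ has zero entropy, which equals $h_\Bor(f|_{X_i})$, i.e.\ every ergodic measure on $X_i$ is a local MME. Prop.~\ref{p.mme-on-bhc} then says there is at most one such measure $\mu_i$, and that $\mu_i$ is isomorphic to the product of a Bernoulli scheme with a cyclic permutation of $p_i:=\text{period}(X_i)$ points. Since $h(f,\mu_i)=0$, the Bernoulli factor must be trivial, so $\mu_i$ is uniformly supported on a periodic orbit of length $p_i$. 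Finally, $\supp(\mu_i)\subset X_i\subset \NUH'(f)$, so the Lyapunov exponents along this orbit are bounded away from zero, i.e.\ the orbit is hyperbolic. This yields (a) (at most $n$ ergodic measures) and (b) (each one is supported on a hyperbolic periodic orbit).

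\medskip
No step is especially delicate here; the only point that needs a little care is the extraction of hyperbolicity from the condition $\nu(\Lambda)>\tau$, together with the invocation of Prop.~\ref{p.SPRfinite} and Prop.~\ref{p.mme-on-bhc} to collapse each class to a single periodic orbit.
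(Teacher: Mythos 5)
Your sufficiency direction is essentially the paper's: take $\Lambda$ to be the union of the finitely many hyperbolic periodic orbits, note it is a $(\chi,\eps)$-Pesin block for $\chi$ below all exponents, and observe that with any $h_0<0$ the SPR condition is vacuously checked since every ergodic measure has full mass on $\Lambda$.

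For the necessity direction you take a genuinely different route. The paper applies Prop.~\ref{p.decomposition} (just proved) to get hyperbolicity of all ergodic measures and finiteness of the Borel homoclinic classes in one stroke, and then disposes of the classes by a topological argument: if a Borel homoclinic class were not a single hyperbolic periodic orbit, some power of $f$ would exhibit a transverse homoclinic intersection, forcing $h_\top(f)>0$ by Smale/Newhouse. You instead unfold the SPR definition by hand: hyperbolicity of every ergodic measure comes from $\nu(\Lambda)>\tau$ together with the Pesin block estimate (indeed $y\in\Lambda$ at $n=0$ in \eqref{e.def-pesin} already forces exponents outside $(-\chi,\chi)$; the Poincar\'e recurrence step and the weaker interval $[-\chi+\eps,\chi-\eps]$ are not needed); finiteness comes from Prop.~\ref{p.SPRfinite}; and the reduction to a single hyperbolic periodic orbit per class is obtained from Prop.~\ref{p.mme-on-bhc} rather than from Smale. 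Since $h_\Bor(f|_{X_i})=0$ (non-empty because each class contains periodic points by Prop.~\ref{p.homoclinic}(2)), every measure on $X_i$ is a local MME, so Prop.~\ref{p.mme-on-bhc} gives at most one, its Bernoulli$\times$cyclic structure with zero entropy forces it to be the uniform measure on a finite cycle, and membership in $\NUH'(f)$ makes that cycle a hyperbolic orbit. Both arguments are valid and of comparable length; the paper's is slicker given that Prop.~\ref{p.decomposition} is already at hand, while yours replaces the Smale horseshoe ingredient with the measure-theoretic classification of local MMEs and is more self-contained.
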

\begin{proof}Suppose $f$ is  SPR and $h_{\top}(f)=0$. The constant $h$ in
Prop~\ref{p.decomposition} must be negative. Consequently, every ergodic invariant measure is hyperbolic, and the number of Borel homoclinic classes is  finite.

In addition, every Borel homoclinic class is equal to a hyperbolic periodic orbit. Otherwise the class would have contained a  transverse homoclinic intersection for some power of $f$, and $h_{\top}(f)$ would have been positive \cite{Smale,Newhouse-Homoclinic}.

It follows that the ergodic invariant measures are all carried by a finite collection of homoclinic classes, each equal to a hyperbolic periodic orbit.
This proves $(\Rightarrow)$.

Conversely, if  $f$  satisfies (a) and (b),  then $h_{\top}(f)=0$ by the variational principle, and $f$ is SPR because Definition \ref{def-SPR-diffeo} holds with $h_0$ negative and $\Lambda$ equal to the union of the (finite) collection of  hyperbolic periodic orbits of $f$.
\end{proof}

\subsection{Pliss Lemma for Measures}
Fix an invertible map  $T:\Omega\to\Omega$, and a function $\varphi:\Omega\to \R$. For every $j\geq 0$, the $j$-th {\em Birkhoff sum} of $\vf$ is
\begin{equation}\label{e.Birkhoff-Sums}
\varphi_0:=0\ ,\ \varphi_j:=\varphi+\varphi\circ T+\cdots+\varphi\circ T^{j-1}.
\end{equation}
A  {\em (forward) Pliss point} (with constant $\beta\in\R$) is a point $x$ such that
$\varphi_j(x)\geq \beta j$ for each $j\geq 0$. The next lemma estimates the size of the set of Pliss points:

\begin{lemma}\label{l.pliss}
Let $T$ be an invertible measure preserving map on a probability space $(\Omega,\mathfs F,\nu)$. Suppose $\varphi\in L^\infty(\nu)$, $A\in\R,\kappa\in [0,1]$, and
$
\nu\{x\in\Omega: \vf(x)> A\}\leq \kappa.
$
Then for every $\beta<A$,
\begin{equation}\label{e.pliss}
\nu\{x:\forall j\geq 0,\ \ \vf_j(x)\geq \beta j\}\geq \frac{\int\vf d\nu-\beta-\kappa\|\vf-A\|_\infty}{A-\beta}.
\end{equation}
\end{lemma}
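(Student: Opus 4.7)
The plan is to reduce the lemma to a clean ``one-sided'' formulation and then exploit $T$-invariance via a cocycle identity for the infimum of the Birkhoff sums.

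First I would truncate by setting $g := \min(\vf, A) \leq A$. Because $(\vf - g)^+ = (\vf - A)^+$ vanishes on $\{\vf \leq A\}$ and is pointwise at most $\|\vf - A\|_\infty$ on $\{\vf > A\}$ (a set of $\nu$-measure at most $\kappa$),
\begin{equation*}
\int g\, d\nu \;\geq\; \int \vf\, d\nu - \kappa\,\|\vf - A\|_\infty.
\end{equation*}
Since $g \leq \vf$, any point where $g_j(x) \geq \beta j$ for all $j\geq 0$ also satisfies $\vf_j(x) \geq \beta j$, so it suffices to prove the bound for $g$. Setting $\psi := g - \beta$ and $\bar A := A - \beta > 0$, the task reduces to showing $\nu(P) \geq (\int \psi\, d\nu)/\bar A$ for the Pliss set $P := \{x : \psi_j(x) \geq 0 \text{ for all } j \geq 0\}$ whenever $\psi \leq \bar A$.

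The bound is trivial when $\int \psi\, d\nu \leq 0$, so I may assume $\int \psi\, d\nu > 0$. Because $P$ is defined independently of $\nu$, both sides of the desired inequality are linear in $\nu$, and ergodic decomposition lets me assume $\nu$ is ergodic. Birkhoff's theorem then ensures $\psi_n(x)/n \to \int \psi\, d\nu > 0$ almost surely, so $d(x) := -\inf_{n \geq 0} \psi_n(x)$ is finite a.e., nonnegative (because $\psi_0 = 0$), and $P = \{d = 0\}$. A short case analysis on whether $\inf_{n \geq 1}\psi_n(x) \geq 0$ (equivalently, $x \in P$) yields the cocycle identity
\begin{equation*}
d(Tx) - d(x) \;=\; \min\bigl(\psi(x),\, d(Tx)\bigr),
\end{equation*}
with $\min(\psi, d\circ T) = d\circ T \leq \psi$ on $P$ and $\min(\psi, d\circ T) = \psi$ on $P^c$. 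Integrating and using the $T$-invariance of $\nu$ gives $\int \min(\psi, d\circ T)\, d\nu = 0$, so
\begin{equation*}
\int_{P^c}\psi\, d\nu \;=\; -\int_P d\circ T\, d\nu \;\leq\; 0.
\end{equation*}
Combined with $\psi \leq \bar A$ on $P$, this forces $\int \psi\, d\nu \leq \int_P \psi\, d\nu \leq \bar A\,\nu(P)$, which is the desired bound; substituting back yields \eqref{e.pliss}.

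The delicate point is the integrability of $d$, which is needed to justify the cancellation $\int(d\circ T - d)\, d\nu = 0$. I would handle this by running the same computation with the finite-horizon surrogate $d_N(x) := -\min_{0 \leq n \leq N} \psi_n(x)$, which is bounded hence integrable. The analogous identity for $d_N$ gives a Cesaro-type lower bound $\frac{1}{N}\sum_{k=1}^N \nu(P_k) \geq (\int\psi\, d\nu)/\bar A - o(1)$, where $P_k := \{\psi_j \geq 0 : 0 \leq j \leq k\}$. Since the $P_k$ are decreasing with $\nu(P_k) \searrow \nu(P)$, the Cesaro averages converge to $\nu(P)$, and the sought estimate follows upon letting $N \to \infty$.
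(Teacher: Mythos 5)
Your proof is correct, but it takes a genuinely different route from the paper's. The paper applies the Kakutani--Yosida maximal ergodic inequality to $\varphi$ directly (citing it as a black box) to obtain $\int_{[\inf_j\frac1j\varphi_j<0]}\varphi\,d\nu\le 0$, and then accounts for the set $\{\varphi>A\}$ at the end to absorb the $\kappa\|\varphi-A\|_\infty$ error. You instead truncate first ($g:=\min(\varphi,A)$) and then \emph{re-derive} a maximal-inequality bound from scratch via the cocycle identity $d\circ T - d = \min(\psi, d\circ T)$ for $d:=-\inf_{n\ge0}\psi_n$. This is essentially Hopf's elementary proof of the maximal ergodic theorem, adapted to the problem; the payoff is a self-contained argument with no appeal to outside ergodic theorems, at the cost of a slightly longer write-up and the integrability caveat you correctly flag.

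One simplification worth noting: your final Cesaro-averaging patch is unnecessary. Running the cocycle computation on the bounded truncation $d_N := -\min_{0\le n\le N}\psi_n$ gives
$$
\int d_{N-1}\,d\nu - \int d_N\,d\nu \;=\; \int_{P_N} d_{N-1}\!\circ T\,d\nu + \int_{P_N^c}\psi\,d\nu,
$$
and since $d_N$ is nondecreasing in $N$ and $d_{N-1}\circ T\ge0$, the left side is $\le 0$ and the first term on the right is $\ge 0$, giving $\int_{P_N^c}\psi\,d\nu\le 0$ and hence $\nu(P_N)\ge (\int\psi\,d\nu)/\bar A$ for every $N$. Because $P_N\searrow P$, continuity from above gives the bound for $P$ directly, with no need to average over $k$. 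As written, your Cesaro step is vague (it is not obvious which identity is being summed to produce it), so replacing it with the monotone limit would tighten the argument.
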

\begin{remark}
In the special case $A:=\mathrm{ess}\,\sup\varphi$ and  $\kappa=0$, the lemma says that $$ \nu\{x:\forall j\geq 0,\ \ \vf_j(x)\geq \beta j\}\geq \frac{\int\vf d\nu-\beta}{A-\beta}.$$
If $\nu$ is ergodic, this implies that for $\nu$-almost every $y$, the density of $k$ such that
$x=T^k(y)$ is a Pliss point with constant $\beta$
is bounded from below by
$\frac{\int\vf d\nu-\beta}{A-\beta}$.
This is the classical Pliss lemma, see for instance
\cite[Chapter IV, Lemma 11.8]{Mane}.
Lemma~\ref{l.pliss} relaxes the assumptions and allows $\varphi$ to be larger than $A$ on a set with small measure.
Another generalized version of Pliss in a similar spirit but with different assumptions can be found in~\cite[Lem~A]{andersson-vasquez}.
\end{remark}
\begin{proof}[Proof of Lemma~\ref{l.pliss}] Without loss of generality $\beta=0$, otherwise we 
subtract $\beta$ from $\varphi$ and $A$. 
Let $P:=\{x: \forall j\geq 0\ \vf_j(x)\geq 0\}$. By the Kakutani-Yosida maximal inequality~\cite{Kakutani-Yosida}, 
 $
 \displaystyle  \int_{[\inf\limits_{j\geq 1}\frac{1}{j}\varphi_j<0]}\varphi\, d\nu\leq 0.
 $
Hence, 
\begin{align*}
 \int_\Omega \varphi d\nu = \int_P \varphi d\nu 
 + \int_{[\inf\limits_{j\geq 1}\frac{1}{j}\varphi_j< 0] }\varphi d\nu+\int_{[\inf\limits_{j\geq 1}\frac{1}{j}\varphi_j\geq  0]\cap[\varphi<0] }\varphi d\nu
 \leq \int_P \varphi d\nu.
\end{align*}
It now follows that 
\begin{align*}
\int_\Omega \varphi d\nu&\leq \int_P \varphi d\nu=
\int_{P\cap[\vf\leq A]}(\vf-A)d\nu+\int_{P\cap[\vf>A]}(\vf-A)d\nu+A\nu(P)\\
&\leq \int_{P\cap[\vf>A]}(\vf-A)d\nu+A\nu(P)\\
&\leq \|\vf-A\|_\infty \nu[\vf>A]+A\nu(P)\leq \kappa\|\vf-A\|_\infty+A\nu(P).
\end{align*}
Thus, $\nu(P)\geq \frac{1}{A}(\int\varphi\, d\nu-\kappa\|\vf-A\|_\infty)$, which is~\eqref{e.pliss} when $\beta=0$.
\end{proof}

\subsection{Tempered Envelopes}Given an invertible map  $T:\Omega\to\Omega$, a function $\Pi:\Omega\to (0,\infty)$ and $\varepsilon>0$,
we define the {\em  $\epsilon$-tempered envelope $\Pi_\eps$} by
$$
\Pi_\eps:\Omega\to(0,\infty]\ , \ \Pi_\epsilon(x):=\sup\{e^{-|n|\epsilon}\Pi(T^n x): n\in\Z\}.
$$
If $\Pi_\eps$ is finite at $x$, then $\Pi_\eps$ is finite on the orbit $\cO(x)$, and $\Pi_\eps:\cO(x)\to (0,\infty)$ is  the smallest function on $\cO(x)$ such that  $\Pi_\epsilon\geq \Pi$ and
$
e^{-\epsilon}\Pi_\epsilon\leq \Pi_\epsilon\circ T\leq e^{\epsilon}\Pi_\epsilon
$.
\begin{lemma}\label{l.Tempered-Tail}
Let $T$ be an invertible ergodic probability preserving map on a probability space  $(\Omega,\mathfs F,\nu)$. Suppose $\Pi:\Omega\to (0,\infty)$ is  measurable, and $\varepsilon>0$.
\begin{enumerate}[(1)]
\item\label{i.tempered1}  If $\log\tfrac{\Pi\circ T}{\Pi}\in L^1(\nu)$, then $\Pi_\epsilon(x)$ is finite at almost every point $x$.
\item\label{i.tempered2}  If $\exp(-c_0)\leq \frac{\Pi(Tx)}{\Pi(x)}\leq \exp(c_0)$ for some constant $c_0>0$, then for any $t>0$,
$$
\nu\{x: \Pi_\epsilon(x)>t\}\: \leq \:  4 \max(\tfrac{c_0}{\epsilon},1)\:\nu\{x: \Pi(x)>t\}.
$$
\end{enumerate}
\end{lemma}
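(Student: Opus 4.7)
\medbreak\noindent\emph{Proof plan.}
The strategy for both parts runs through the Birkhoff sums $S_n(\phi):=\log\Pi(T^nx)-\log\Pi(x)$ of the cocycle $\phi:=\log(\Pi\circ T/\Pi)$, together with the invariance inequalities $e^{-\epsilon}\Pi_\epsilon\leq\Pi_\epsilon\circ T\leq e^\epsilon\Pi_\epsilon$ built into the definition of the tempered envelope.

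For part (1), I would first note that these inequalities make $\{\Pi_\epsilon<\infty\}$ a $T$-invariant set, so by ergodicity it has measure $0$ or $1$. The hypothesis $\phi\in L^1(\nu)$ permits Birkhoff's theorem, giving $S_n/n\to c:=\int\phi\,d\nu$ almost everywhere. The crucial step is to show $c=0$: if $c>0$, then $\Pi(T^nx)\to+\infty$ a.e., so applying bounded convergence to $\mathbf{1}_{\{\Pi>M\}}\circ T^n$ and using $T$-invariance gives $\nu\{\Pi>M\}=1$ for every $M>0$, contradicting the finiteness of $\Pi$ on a probability space; the case $c<0$ is symmetric via $\Pi(T^nx)\to 0$. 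With $c=0$ in hand, $S_n(\phi)-|n|\epsilon\to-\infty$ a.e.\ for each $\epsilon>0$, so $\log\Pi_\epsilon(x)=\log\Pi(x)+\sup_n(S_n(\phi)(x)-|n|\epsilon)<+\infty$ almost surely.

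For part (2), set $L:=c_0/\epsilon$. If $L\leq 1$, iterating the hypothesis gives $\Pi(T^nx)\leq e^{|n|c_0}\Pi(x)\leq e^{|n|\epsilon}\Pi(x)$, so $\Pi_\epsilon\leq \Pi$ pointwise and the bound holds with constant $1$. If $L>1$, decompose $\{\Pi_\epsilon>t\}\subset A\cup B^+\cup B^-$ with $A:=\{\Pi>t\}$ and $B^\pm$ the set of $x\notin A$ admitting a witness $n$ with $\pm n\geq 1$ satisfying $\Pi(T^nx)>te^{|n|\epsilon}$. For $x\in B^+$ with witness $n\geq 1$, the Lipschitz bound forces $\Pi(T^kx)>t$ for every integer $k$ with $|k-n|\leq M_n:=\lfloor n/L\rfloor$, since $\Pi(T^kx)\geq\Pi(T^nx)e^{-|k-n|c_0}>te^{n\epsilon-M_nc_0}\geq t$. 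Hence the one-sided maximal function $f^+(x):=\sup_{N\geq 1}\frac{1}{N+1}\sum_{k=0}^{N}\mathbf{1}_A(T^kx)$ admits the lower bound $(2M_n+1)/(n+M_n+1)$ on the stratum of $B^+$ with first positive witness $n$, and minimization in $n\geq 1$ shows $f^+$ stays above a constant of order $1/L$ on $B^+$. The one-sided Hardy--Littlewood maximal inequality $\nu\{f^+>\lambda\}\leq\lambda^{-1}\nu(A)$ then gives $\nu(B^+)\leq O(L)\nu(A)$, and symmetrically for $B^-$; combining and optimizing constants yields $\nu\{\Pi_\epsilon>t\}\leq 4\max(L,1)\nu(A)$.

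The main obstacle, I anticipate, is extracting the explicit constant $4\max(L,1)$ rather than a larger one of the same order. A direct implementation gives $\nu(B^\pm)\leq(L+1)\nu(A)$ and total $(2L+3)\nu(A)$, which matches $4L$ only for $L\geq 3/2$; for $L\in(1,3/2)$ a sharper choice of window in the maximal function (exploiting that the forced-visit interval $[n-M_n,n+M_n]$ nearly fills $[0,n+M_n]$ when $L$ is close to $1$) should bridge the gap.
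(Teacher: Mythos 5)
Your part (1) is essentially the paper's proof (which is attributed to Pesin and stated tersely), and you add a genuinely useful detail: the paper simply invokes Birkhoff to conclude $\tfrac{1}{n}\log\Pi(T^nx)\to 0$, which silently uses that $c:=\int\log\tfrac{\Pi\circ T}{\Pi}\,d\nu=0$; your contradiction argument (if $c>0$ then $\Pi\circ T^n\to\infty$ a.e., forcing $\nu\{\Pi>M\}=1$ for every $M$) is a clean justification of exactly this point.

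Part (2) takes a genuinely different route. The paper's argument (attributed to Yuntao Zang) decomposes $\Pi_\eps=\max(\Pi_\eps^s,\Pi_\eps^u)$, stratifies $\{\Pi_\eps^s>t\}$ by the \emph{last} forward witness $n$, shifts to $n=0$ by $T$-invariance, uses $\Pi(x)\le e^{c_0}\Pi(Tx)$, and then just counts, for a fixed $\Pi(x)$, how many $n$ could have been that last witness; this yields $(1+c_0/\eps)\nu\{\Pi>t\}$ per piece and hence $2(1+c_0/\eps)\le 4\max(c_0/\eps,1)$ with no case analysis beyond $c_0\lessgtr\eps$. You instead stratify by the \emph{first} positive witness, use the Lipschitz bound to force a window of visits to $A=\{\Pi>t\}$, and invoke the maximal ergodic theorem. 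Both are natural, but the paper's version is more elementary (pure $T$-invariance plus a count, no maximal inequality) and delivers the stated constant automatically.

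As you already observe, there is a residual gap: your direct implementation gives $\nu\{\Pi_\eps>t\}\le(2L+3)\nu(A)$ with $L=c_0/\eps$, which exceeds $4L$ for $L\in(1,3/2)$. The bottleneck is concrete: for $L$ just above $1$ the stratum with first witness $n=1$ has $M_1=\lfloor 1/L\rfloor=0$, so the forced-visit window $[n-M_n,n+M_n]$ is the singleton $\{1\}$; the maximal-function lower bound on that stratum is only $\tfrac12$, giving $\nu(B^\pm)\le 2\nu(A)$ and a total of $5\nu(A)$, already above $4L\approx 4$. Your proposed fix ("sharper choice of window") does not obviously help, precisely because that window is a single point when $n=1$, $L>1$; so this is a genuine (if modest) gap, even though the method does give the correct order $O(c_0/\eps)$.
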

\begin{proof}
\eqref{i.tempered1} is due to Pesin; we recall the proof.
If $\log\tfrac{\Pi\circ T}{\Pi}\in L^1(\nu)$ then by Birkhoff's ergodic theorem, $\tfrac{1}{n}\log \Pi(T^n x)\to 0$
as $n\to\pm \infty$ for almost every point $x$.
So $e^{-|n|\epsilon}\Pi(T^n x)\to 0$, and $\Pi_\epsilon(x)$ is finite.

To prove~\eqref{i.tempered2} it is sufficient to consider  ${c_0}>\epsilon$, because if ${c_0}\leq \epsilon$, then $\Pi_\epsilon\equiv\Pi$, and \eqref{i.tempered2} is obvious.
Observe that $\Pi_\epsilon(x):=\max(\Pi^s_\epsilon(x), \Pi^u_\epsilon(x))$, where
$$
\Pi^s_\epsilon(x):=\sup_{n\geq 0}e^{-n\epsilon}\Pi(T^n x)\ , \
\Pi^u_\epsilon(x):=\sup_{n\geq 0}e^{-n\epsilon}\Pi(T^{-n} x).
$$

We claim that $\nu\{x: \Pi_\epsilon^s(x)>t\}\: \leq \:  \tfrac{2c_0}{\epsilon}\:\nu\{x: \Pi(x)>t\}$. 
The following short proof was shown to us by Yuntao Zang \cite{Zang-P-C}:
 $$\begin{aligned}
  \nu(\{x:\Pi^s_\eps(x)>t\} 
  &\le^{(1)} \sum_{n\ge0} \nu(\{x:\Pi(T^nx)>t\cdot e^{\eps n},\; \Pi(T^{n+1}x)\le t\cdot e^{\eps (n+1)}\})\\
  &\le^{(2)} \sum_{n\ge0} \nu(\{x:\Pi(x)>t\cdot e^{\eps n},\; \Pi(Tx)\le t\cdot e^{\eps (n+1)}\})\\
  &\le^{(3)} \sum_{n\ge0} \nu(\{x:\Pi(x)>t\cdot e^{\eps n},\; \Pi(x)\le t\cdot e^{\eps (n+1)+c_0}\})\\
  &\le^{(4)} \int_{[\Pi>t]}\hspace{-0.2cm} \#\left\{n\in\NN: \tfrac1\eps\log\frac{\Pi(x)}{t}-1-\tfrac{c_0}\eps \le n < \tfrac1\eps\log\frac{\Pi(x)}{t}\right\}\, d\nu\\
  &\le \left(1+\tfrac{c_0}\eps\right) \cdot \nu(\{x:\Pi(x)>t\})\le \tfrac{2c_0}{\eps}\cdot \nu(\{x:\Pi(x)>t\}).
 \end{aligned}$$
 $\leq^{(1)}$ is because  $\tfrac{1}{n}\log \Pi(T^n x)\to 0$ a.e.; $\leq ^{(2)}$ uses  $T$-invariance;   $\leq ^{(3)}$ is because  $\Pi(x)\le e^{c_0}\Pi(Tx)$; and $\leq^{(4)}$ follows by exchanging the sum and the integral.

By symmetry, we also have $\nu\{x: \Pi_\epsilon^u(x)>t\}\: \leq \:  \tfrac{2c_0}{\epsilon}\:\nu\{x: \Pi(x)>t\}$. Since $\Pi_\eps=\max(\Pi_\eps^s,\Pi_\eps^u)$, a union bound gives Item \eqref{i.tempered2}.
\end{proof}

\subsection{Pesin Blocks with Measure Bounded From Below}

\begin{proposition}\label{p.pesin}
Let $f$ be a $C^1$ diffeomorphism of a closed manifold $M$.
Given $n_0\geq 1$ and $\chi>0$, let $P_{n_0}:=P_{n_0}(\chi)$ be the set
of Pliss points
defined by
\begin{equation}\label{e.PlissSet}
P_{n_0}:=
\bigg\{x
\colon\begin{array}{l}
 \text{there is a splitting } T_x M=E(x)\oplus F(x)\text{ s.t. }  \forall j\geq 0,\\
 \|Df^{j n_0}|_{E(x)}\|\leq e^{-\chi j n_0}
\text{and } \|Df^{-j n_0}|_{F(x)}\|\leq e^{-\chi j n_0}
\end{array}\bigg\}.
\end{equation}
Then, for every $\eps>0$ there exists a $(\chi,\varepsilon)$-Pesin block $\Lambda_{n_0}=\Lambda_0(\chi,\eps)$ such that
\begin{equation}\label{e.pesin}
\nu(M\setminus\Lambda_{n_0})\; \leq\;  4\max(\tfrac{c_0}\varepsilon , 1)\;  \nu(M\setminus P_{n_0}) \text{ for all invariant  measures $\nu$,}
\end{equation}
where $c_0=\chi+\max(\log\|Df\|_\supnorm,\; \log\|Df^{-1}\|_\supnorm).$
\end{proposition}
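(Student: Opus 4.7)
The plan is to build $\Lambda_{n_0}$ as a sublevel set of the $\eps$-tempered envelope of the natural ``pointwise Pesin constant'' function, and then apply Lemma \ref{l.Tempered-Tail}(2).

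\emph{Step 1 (The Pesin constant function).} For each $x\in M$, let $\Pi(x)\in[1,+\infty]$ be the infimum of all $K\geq 1$ for which there exists a splitting $T_xM=E(x)\oplus F(x)$ such that $\|Df^k|_{E(x)}\|\leq Ke^{-\chi k}$ and $\|Df^{-k}|_{F(x)}\|\leq Ke^{-\chi k}$ for every $k\geq 0$. Because of the decay/growth dichotomy, whenever $\Pi(x)<\infty$ the splitting is unique (the same argument as in Lemma \ref{l.splitting}), $Df$-invariant along the orbit, and Borel in $x$; so $\Pi$ is Borel. If $x\in P_{n_0}$, write any $k\geq 0$ as $k=jn_0+r$ with $0\leq r<n_0$ to get
\[
\|Df^k|_{E(x)}\|\leq \|Df\|_\supnorm^{r}\cdot e^{-\chi jn_0}\leq e^{n_0(\chi+\log\|Df\|_\supnorm)}\, e^{-\chi k}\leq K_1\,e^{-\chi k},
\]
with $K_1:=e^{n_0 c_0}$, and symmetrically for $F$. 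Hence $\Pi\leq K_1$ on $P_{n_0}$, i.e.\ $\{\Pi>K_1\}\subset M\setminus P_{n_0}$.

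\emph{Step 2 (Lipschitz bound along orbits).} Using that $E(fx)=Df\,E(x)$ and $F(fx)=Df\,F(x)$ on $\{\Pi<\infty\}$, and that $\|(Df|_E)^{-1}\|\leq\|Df^{-1}\|_\supnorm$ (and symmetrically), a straightforward estimate shows
\[
e^{-c_0}\,\Pi(x)\leq \Pi(fx)\leq e^{c_0}\,\Pi(x)\qquad\text{on }\{\Pi<\infty\},
\]
which is an $f$-invariant Borel set. Now define $\Pi_\eps(y):=\sup_{n\in\ZZ}e^{-|n|\eps}\Pi(f^n y)$ on this set. By the Lipschitz bound, $\Pi_\eps$ is finite on $\{\Pi<\infty\}$ (or apply Lemma \ref{l.Tempered-Tail}(1)), and the set
\[
\Lambda_{n_0}:=\{y:\Pi_\eps(y)\leq K_1\}
\]
is Borel. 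For $y\in\Lambda_{n_0}$, any $n\in\ZZ$ and $k\geq 0$, $\|Df^k|_{E(f^n y)}\|\leq \Pi(f^n y)\,e^{-\chi k}\leq K_1 e^{\eps|n|}e^{-\chi k}$, and symmetrically for $F$, so $\Lambda_{n_0}$ is a $(\chi,\eps)$-Pesin block with constant $K_1$, depending only on $\chi,\eps,n_0$ (not on $\nu$).

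\emph{Step 3 (The measure estimate).} Fix an invariant measure $\nu$. It suffices to prove \eqref{e.pesin} for $\nu$ ergodic; the general case follows by integrating against the ergodic decomposition. If $\nu(\{\Pi=\infty\})=1$, then $\nu(M\setminus P_{n_0})=1$ (by Step 1) and the inequality is trivial. Otherwise $\nu(\{\Pi<\infty\})=1$, and Lemma \ref{l.Tempered-Tail}(2), applied on the $f$-invariant set $\{\Pi<\infty\}$ with threshold $t=K_1$, gives
\[
\nu(M\setminus\Lambda_{n_0})=\nu\{\Pi_\eps>K_1\}\leq 4\max(c_0/\eps,1)\cdot\nu\{\Pi>K_1\}\leq 4\max(c_0/\eps,1)\cdot\nu(M\setminus P_{n_0}),
\]
which is \eqref{e.pesin}. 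The one slightly delicate point in the whole argument is verifying the Lipschitz step \eqref{Step 2} cleanly and uniformly (and the corresponding measurability of $E,F,\Pi$); everything else reduces to applying the already-proven tempered envelope lemma to a carefully chosen function.
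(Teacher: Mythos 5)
Your proof is correct and follows essentially the same route as the paper: define the pointwise optimal Pesin constant $\Pi$, show it is bounded on $P_{n_0}$ by an explicit constant depending only on $n_0$ and $c_0$, establish the Lipschitz bound $|\log\Pi\circ f - \log\Pi|\le c_0$, take the $\eps$-tempered envelope, and invoke Lemma~\ref{l.Tempered-Tail}(2). The only cosmetic difference is that you define $\Pi$ directly as an infimum over admissible Pesin constants (invoking uniqueness of the splitting as in Lemma~\ref{l.splitting}), whereas the paper pins down $E\oplus F$ as the Oseledets splitting on a full-measure set of regular points and then defines $\Pi=\max(\Pi^s,\Pi^u)$ via suprema over $k$; the resulting function and constants agree up to the harmless replacement of the paper's $C$ by your $K_1=e^{n_0 c_0}$.
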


\begin{proof}
Let $\nu$ be an invariant measure. We assume $\nu$ to be ergodic, as the general case follows by averaging over the ergodic decomposition. We also assume $\nu(P_{n_0})>0$ (otherwise there is nothing to prove). These two properties imply that
 the set $M'$ of Oseledets-regular points $x$ with no zero Lyapunov exponent has full $\nu$-measure. Thus, at each point $x\in M'$, there is a splitting $T_x M=E^-_x\oplus E^+_x$ such that for every nonzero $v\in E^-_x$, $\lim\limits_{n\to\infty}\tfrac{1}{n}\log\|Df^{ n}.v\|$ and $\lim\limits_{n\to-\infty}\tfrac{1}{n}\log\|Df^{-n}.v\|$  exist, and are negative; Similarly for every nonzero $v\in E^+_x$ the  limits exist and are positive.
Thus,  for $x\in P_{n_0}\cap M'$, the decomposition $T_x M=E(x)\oplus F(x)$ must coincide with $T_xM=E^-_x\oplus E^+_x$.
We define for  $x\in M'$:
$$
\Pi(x):=\max(\Pi^s(x),\Pi^u(x)),\text{ where}
\begin{cases}
\Pi^s(x):=\sup_{k\geq 0} (\|Df^k|_{E^-(x)}\|\exp(\chi k)) & \\
\Pi^u(x):=\sup_{k\geq 0} (\|Df^{-k}|_{E^+(x)}\|\exp(\chi k)).&
\end{cases}
$$
Let $C:=\max_{0\leq k\leq n_0}\max(\|Df^k\|,\|Df^{-k}\|)$, then
$$P_{n_0} \cap M'\subset \{x\colon \Pi(x)\leq C\}.  $$

Let $\Pi_\eps$ denote the $\eps$-tempered envelope of $\Pi$. For every $n\in\ZZ$ and $k\geq 0$,
$$\|Df^k|_{E^-(f^n(x))}\|\leq \Pi(f^n(x)) \exp(-\chi k) \leq \Pi_\varepsilon(x)\exp(-\chi k+\varepsilon |n|),$$
$$\|Df^{-k}|_{E^+(f^n(x))}\|\leq \Pi(f^n(x)) \exp(-\chi k) \leq \Pi_\varepsilon(x)\exp(-\chi k+\varepsilon |n|).$$
Therefore, the following set is a $(\chi,\varepsilon)$-Pesin block:
$$\Lambda_{n_0}:=\{x\colon \Pi_\varepsilon(x)\leq C\}.$$

Note that 
 $
   \|(Df|_{E^-(x)})^{-1}\|^{-1} \le 
       \frac{\|Df^{k+1}|_{E^-(x)}\|}{\|Df^k|_{E^-(f(x))}\|} 
     \le \|Df|_{E^-(x)}\|.
 $
Consequently,
$$\|(Df|_{E^-(x)})^{-1}\|^{-1}e^{\chi}\cdot \Pi^s(f(x))
          \le \Pi^s(x) \le \max\bigl(1,\|Df|_{E^-(x)}\|\cdot e^{\chi}\cdot \Pi^s(f(x))\bigr).
$$
Therefore,
 $
   \Pi^s(f(x)) \le  \|(Df|_{E^-(x)})^{-1}\| \cdot e^{-\chi} \cdot \Pi^s(x) 
 $. In addition,  if $\Pi^s(x)>1$,
 $
   \Pi^s(x)\le \|Df|_{E^-(x)}\| \cdot e^{\chi} \cdot \Pi^s(f(x)).
 $
Otherwise $\Pi^s(x)=1\le \Pi^s(f(x))$. 
Similar inequalities hold for $\Pi^u$. In conclusion,
\begin{equation}\label{e.bound-tempered}
\exp(-c_0)\: \Pi(x)\leq \Pi(f(x))\leq \exp(c_0)\: \Pi(x),
\end{equation}
where  $c_0:=\chi+\max(\log\|Df\|_\supnorm,\; \log\|Df^{-1}\|_\supnorm)>0$.

$\Pi(x)$ is well-defined and finite  $\nu$-almost everywhere. 
By Lemma~\ref{l.Tempered-Tail}
and~\eqref{e.bound-tempered}, the envelope $\Pi_\varepsilon(x)$ is also finite $\nu$-a.e., and 
\begin{align*}
\nu(M\setminus \Lambda_{n_0}) &=
\nu(\{x:\Pi_\eps(x)>C\})\leq
4\max(\tfrac{c_0}{\varepsilon},1)\cdot\mu\{x\colon \Pi (x)> C\}\\
&\leq 4\max(\tfrac{c_0}{\varepsilon},1)\cdot\mu(M\setminus P_{n_0}).
\tag*{\qedhere}\end{align*}
\end{proof}

\section{Deducing SPR  from Properties of Lyapunov Exponents}\label{s.SPR-exponents}

In this section we give a sufficient condition for the SPR property of $C^1$ diffeomorphisms in  dimension $d\geq 2$ (Theorem \ref{thm-CE-SPR}).\footnote{
For $C^{1+}$ diffeomorphisms, this condition is  also   necessary. See Theorem~\ref{thm-characterizations}.}  The condition is in terms of the   Lyapunov exponents of   measures with nearly maximal entropy, and  consists of two properties:  {\em entropy hyperbolicity} (EH) and {\em entropy continuity} (EC), see \S \ref{s.thm-CE-SPR}.

This sufficient condition  is the key for the proof  that  {\em every} $C^\infty$ surface diffeomorphism with positive entropy is SPR (Theorem~\ref{t.SPR-C-infinity}). 
Indeed, it implies much more: For $C^\infty$ surface diffeomorphisms, all homoclinic classes (Borel or topological) are SPR (Thm~\ref{t.tight-C-infinity});  and many  $C^r$ surface diffeomorphisms (or homoclinic classes of $C^r$ diffeomorphisms) are SPR (Thm~\ref{t.tight-Cr}). 

To verify properties (EH) and (EC) in these cases, we use \cite{BCS-2} in the $C^\infty$ scenario, and  \cite{Burguet-Cr} in the $C^r$ scenario.

\subsection{Notation}\label{ss.def-of-Lambda-plus}
For the remainder of \S\ref{s.SPR-exponents}, $f$ is a $C^1$ diffeomorphism of a $d$-dimen\-sional closed manifold $M$ with $d\ge2$. There are some simplifications when $d=2$. We will point these out as we go.

Recall from  \S\ref{s.NUH} that  $\lambda^1(x)\ge\dots\ge\lambda^d(x)$ is the ordered list of Lyapunov exponents, repeated according to multiplicity. Let $\sigma(x):=\{\lambda_1(x),\ldots,\lambda_d(x)\}$, and let
$T_x M=\oplus_{\lambda\in \sigma(x)} E^\lambda_x$ be the Oseledets decomposition.
The {\em unstable space} of $x$ is 
 $E_x^+:=\underset{\lambda\in\sigma(x)}{\oplus_{\lambda>0}}E^\lambda_x$. 
 The {\em unstable dimension of $x$} is
 $i(x):=\dim E_x^+$. 
In the two-dimensional case, $\dim(E_x^+)=1$ a.e. for all ergodic measures with positive entropy. In higher dimensions, the  unstable dimension
 could have other values. We have:
$$i(x):=\max\{i:\lambda^i(x)>0\}\text{ or zero, if the set is empty}.$$
The {\em sums of the positive (resp. negative) exponents} are defined  by
$$
 \Lambda^+(x):=\sum_{i=1}^d \max(\lambda^i(x),0)\ , \ \Lambda^-(x):=\sum_{j=1}^d \min(\lambda^j(x),0).
$$
Then $\Lambda^{\pm}(x)=\Lambda^{\pm,i(x)}(x)$, where
$$\Lambda^{\pci}(\nu):=\lambda^1(\nu)+\dots+\lambda^i(\nu), \quad \Lambda^\mci(\nu):=\lambda^{i+1}(\nu)+\dots+\lambda^d(\nu).$$
If $\nu$ is ergodic, then $\lambda^i(x), \Lambda^{\pm}(x), \Lambda^{\pm,i}(x)$ and $i(x)$ are constant a.e., and we denote their constant values by $\lambda^i(\nu), \Lambda^{\pm}(\nu), \Lambda^{\pm,i}(\nu)$ and $i(\nu)$. For non-ergodic measures,
$$\lambda^i(\nu):=\int \lambda^i(x)\,d\nu(x),\quad  \Lambda^\pm(\nu):=\int \Lambda^\pm(x)\,d\nu(x), \quad \Lambda^{\pm,i}(\nu):=\int \Lambda^{\pm,i}(x)\, d\nu.$$

We write $\mu_n\wsc\mu$, when $\mu_n$ converge weak-$\ast$ to $\mu$ on $M$, i.e. when $\mu_n(\vf)\to\mu(\vf)$ for all continuous $\vf:M\to\R$. We emphasize that even  in cases when we consider $\mu_n\in\mathbb P(f|_X)$, the test functions are continuous functions on $M$, and not just on $X$.

\subsection{A Sufficient Condition for SPR in Terms of Lyapunov Exponents}\label{s.thm-CE-SPR}
Let $X\subset M$ be an  invariant Borel set ($X$ may be  non-compact). Consider the following two conditions on $f|_X$:
\begin{enumerate}
\item[(EH)] \emph{\bf Entropy Hyperbolicity on $\boldsymbol X$}:
There is $\chi>0$ as follows. 
Suppose $\mu_n\in \Proberg(f|_X)$ and $\mu_n\wsc\mu$ on $M$. If   $h(f,\mu_n)\to h_{\Bor}(f|_X)$, then  there exists a constant $i:=i(\mu)$ such that 
$ \lambda^i(x) >\chi>-\chi>\lambda^{i+1}(x)\text{ $\mu$-a.e.}$

\smallskip
\item[(EC)] \emph{\bf Entropy Continuity of $\boldsymbol{\Lambda^+}$ on $\boldsymbol{X}$:}
Suppose   $\mu_n\in \Proberg(f|_X)$ and 
$\mu_n\wsc\mu$ on $M$. If $h(f,\mu_n)\to h_{\Bor}(f|_X)$, then
$
    \lim_{n\to\infty} \Lambda^{+}(\mu_n)=\Lambda^{+}(\mu).
$

\end{enumerate}

\begin{theorem}\label{thm-CE-SPR}
Let $f$ be a $C^1$ diffeomorphism of a closed manifold, and let $X\subset M$ be a nonempty invariant Borel set. If $f$ is entropy hyperbolic and $\Lambda^+$ is entropy continuous on $X$, then $f$ is  SPR on $X$.
\end{theorem}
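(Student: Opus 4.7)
The plan is to argue by contradiction, using the Pliss Lemma for measures (Lemma~\ref{l.pliss}) together with upper semi-continuity of the partial Lyapunov sums $\Lambda^{+,k}$ and the passage from Pliss sets to Pesin blocks provided by Proposition~\ref{p.pesin}.

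Suppose $f$ is not SPR on $X$, with $\chi>0$ the constant given by (EH), and fix $\chi_0\in(0,\chi)$. By the negation of the SPR property, there is $\varepsilon>0$ such that for every $(\chi_0,\varepsilon)$-Pesin block $\Lambda$, every $h_0<h_{\Bor}(f|_X)$ and every $\tau>0$, one can find $\nu\in\Proberg(f|_X)$ with $h(f,\nu)>h_0$ and $\nu(\Lambda)\le\tau$. Applying this with $\Lambda$ equal to the Pesin block $\Lambda_{n_0}$ produced by Proposition~\ref{p.pesin} and diagonalizing over $n_0$, $h_0$, $\tau$, one obtains ergodic measures $\nu_n\in\Proberg(f|_X)$ with $h(f,\nu_n)\to h_{\Bor}(f|_X)$ and $\nu_n(\Lambda_{n_0})\to 0$ for each fixed $n_0$. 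By weak-$*$ compactness pass to a subsequence $\nu_n\wsc\mu$ on $M$. Hypothesis (EH) then forces the existence of a well-defined index $i:=i(\mu)$ with $\lambda^i(\mu)>\chi>-\chi>\lambda^{i+1}(\mu)$ $\mu$-a.e.

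For each $k\in\{1,\dots,d\}$, the functional
$
\Lambda^{+,k}(\nu)=\inf_{n\ge1}\tfrac1n\int\log\|\textstyle\bigwedge^{\!k}Df^n\|\,d\nu
$
is an infimum of continuous functionals on $\Prob(f)$ (since $x\mapsto\log\|\bigwedge^{k}Df^n(x)\|$ is continuous), hence weak-$*$ upper semi-continuous. Combined with (EC), which asserts $\Lambda^+(\nu_n)\to\Lambda^+(\mu)=\Lambda^{+,i}(\mu)$, and with the strict inequalities $\Lambda^{+,k}(\mu)<\Lambda^{+,i}(\mu)$ for $k\ne i$ coming from (EH), one concludes that $i(\nu_n)=i$ for $n$ large, that $\Lambda^{+,i}(\nu_n)\to\Lambda^{+,i}(\mu)$, and (combining with USC of $\Lambda^{+,i\pm1}$) that $\liminf_n\lambda^i(\nu_n)\ge\lambda^i(\mu)>\chi$ and $\limsup_n\lambda^{i+1}(\nu_n)\le\lambda^{i+1}(\mu)<-\chi$. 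Hence, for all large $n$, the measure $\nu_n$ is \emph{uniformly} $\chi$-hyperbolic, with the same index $i$ as $\mu$.

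Now apply Lemma~\ref{l.pliss} to $T=f^{n_0}$ and to the bounded measurable function $\varphi_n(x):=-\tfrac1{n_0}\log\|Df^{n_0}|_{E^-_x(\nu_n)}\|$, with $\beta=\chi_0$. The continuous proxy $\psi_{n_0}(x):=-\tfrac1{n_0}\log\sigma_{i+1}(Df^{n_0}(x))$ satisfies $\psi_{n_0}\ge\varphi_n$ pointwise (since $\|Df^{n_0}|_{E^-_x}\|\ge\sigma_{i+1}(Df^{n_0}(x))$), and $\nu\mapsto\int\psi_{n_0}\,d\nu$ is weak-$*$ continuous. Using Kingman's theorem on $\mu$, one can pick $n_0$ large so that $\int\tfrac1{n_0}\psi_{n_0}\,d\mu$ is as close as desired to $-\lambda^{i+1}(\mu)>\chi$, and for $n$ large $\int\psi_{n_0}\,d\nu_n$ is close to $\int\psi_{n_0}\,d\mu$; passing this through the estimate $\int\varphi_n\,d\nu_n\ge\int\psi_{n_0}\,d\nu_n$ is delicate (see obstacle below) but yields $\int\varphi_n\,d\nu_n>\chi_0+\eta$ for a fixed $\eta>0$ and all large $n$. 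The proxy also controls $\kappa_n:=\nu_n\{\varphi_n>-\lambda^{i+1}(\mu)+\delta\}\le\nu_n\{\psi_{n_0}>-\lambda^{i+1}(\mu)+\delta\}$, which tends to $\mu\{\psi_{n_0}>\cdots\}\to 0$ as $n\to\infty$ and $n_0\to\infty$. Lemma~\ref{l.pliss} then gives a Pliss set of $\nu_n$-measure as close to $1$ as we please; since the Oseledets splitting witnesses the Pliss inclusion, this Pliss set sits inside the set $P_{n_0}(\chi_0)$ of Proposition~\ref{p.pesin}. Combining with the bound $\nu_n(\Lambda_{n_0}^c)\le 4\max(c_0/\varepsilon,1)\,\nu_n(P_{n_0}^c)$ of Proposition~\ref{p.pesin}, we obtain $\nu_n(\Lambda_{n_0})>\tau>0$ uniformly in $n$, contradicting step~1 and completing the proof.

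The main obstacle lies in the third paragraph: transferring the Pliss density for the continuous singular-value function $\psi_{n_0}$ into a Pliss density for the $\nu_n$-dependent measurable cocycle $\log\|Df^{n_0}|_{E^-_x(\nu_n)}\|$, \emph{uniformly in $n$}. The subspace $E^-_x(\nu_n)$ does not depend continuously on $\nu_n$, so one must leverage Fekete monotonicity together with Kingman's theorem and the continuous bridge $\sigma_{i+1}$ to achieve uniform control of both $\int\varphi_n\,d\nu_n$ and $\kappa_n$; this uniformity is the heart of the argument and the step most likely to require careful quantitative bookkeeping with $(\chi_0,\varepsilon,n_0)$.
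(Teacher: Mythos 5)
You correctly locate the crux of the argument — uniform control of the Pliss estimate across the sequence $\nu_n$ despite the discontinuity of the Oseledets subspaces — but your proposed workaround via the singular-value proxy cannot work because the inequalities point the wrong way. Since $E^-_x(\nu_n)$ is $(d-i)$-dimensional, the Courant--Fischer minimax gives $\|Df^{n_0}|_{E^-_x(\nu_n)}\|\ge\sigma_{i+1}(Df^{n_0}(x))$, hence $\varphi_n\le\psi_{n_0}$ pointwise, so $\int\varphi_n\,d\nu_n\le\int\psi_{n_0}\,d\nu_n$; the inequality $\int\varphi_n\,d\nu_n\ge\int\psi_{n_0}\,d\nu_n$ that you call ``delicate'' is simply false. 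What the Pliss Lemma demands here is a \emph{lower} bound on $\int\varphi_n\,d\nu_n$, uniform in $n$, and neither $\psi_{n_0}$ nor Fekete/Kingman subadditivity (which also yields only $\int\varphi_n\,d\nu_n\le-\lambda^{i+1}(\nu_n)$, another upper bound) gives that. This is a genuine structural obstruction to running the Pliss argument on the base manifold $M$.

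The paper resolves this by lifting to the Grassmannian bundle $\mathfrak G(i,M)$ and never touching the cocycle $\log\|Df^{n_0}|_{E^-}\|$ directly. The unstable lift $\tmu^+$ of a hyperbolic measure makes the Oseledets distribution part of the phase space, so $\mu_n\wsc\mu$ together with (EC) and (EH) upgrades (via Lemma~\ref{lemma-hyplim0}) to $\tmu_n^+\wsc\tmu^+$ on $\mathfrak G(i,M)$; the limit set $\stL^i$ of all such lifts is then weak-$*$ compact and contained in the set $\stH^i$ of $\chi$-hyperbolic unstable lifts (Lemma~\ref{lem-stL}). The Pliss Lemma is applied not to a measure-dependent cocycle but to the indicator of the fixed \emph{open} sets $\tU_N=\{(x,E): \|Df_x^{-N}|_E\|<e^{-\chi N}\}$, and Lemma~\ref{lem-comp-exp-time} exploits lower semicontinuity of open-set mass on weak-$*$ neighborhoods together with compactness of $\stL^i$ to make the choice of $N$ uniform over the relevant measures. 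That single switch — Pliss on a fixed continuous indicator on the Grassmannian, with uniformity coming from compactness — is exactly what your argument is missing, and it is not recoverable by a continuous proxy on $M$ since the best such proxy can only dominate from one side.
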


\begin{remark}\label{r.EH-in-Dim-2}
Property (EH) is always true for $C^\infty$ surface diffeomorphisms and invariant Borel sets $X$ such that $\hTOP(f|_X)=h_\top(f|_{\ov{X}})>0$ (e.g., $X=M$). The reason is that in this case, by Newhouse's upper-semicontinuity theorem \cite{Newhouse-Entropy}, a.e. ergodic component of $\mu$ must be an MME. Hence, by Ruelle's entropy inequality, (EH) holds with $i:=1$ and $\chi:=h_\top(f(|_{\ov{X}})/2$.
\end{remark}

\begin{remark}\label{r.Lambda-Minus}
Property (EC), the entropy continuity of $\Lambda^+$ on $X$, is equivalent to the entropy continuity of $\Lambda^-$ on $X$. This is because, by the Oseledets theorem, 
$\displaystyle\Lambda^-(\mu_n)=\int \log|\det(Df)|d\mu_n-\Lambda^+(\mu_n),$  and $\log|\det(Df)|$  is continuous on $M$.\end{remark}

\begin{remark}
If $d>2$, then  the ergodic measures  $\mu_n$ in (EC) may have different unstable dimensions. However,  
Lemma \ref{lem-AB-index} below shows that if  (EH) holds, then   $i(\mu_n)$ is eventually constant, and  equal to $i(\mu)$. In particular, for all $n$ large enough, $\Lambda^+(x)=\sum_{j=1}^{i(\mu)}\lambda^j(x)$ $\mu_n$-a.e., and not just $\mu$-a.e. (In dimension two, this follows directly from Ruelle's inequality.)
\end{remark}

\begin{remark}\label{rem-CE-beyondentropy}
The careful reader will notice that the proof of Theorem~\ref{thm-CE-SPR} below does not depend on any specific property of entropy. Indeed, Thm \ref{thm-CE-SPR} still holds if one generalizes the notions of SPR, (EH), and (EC) by replacing the entropy by some arbitrary function $P:\Proberg(f|_X)\to\RR$ and $\hTOP(f|_X)$ by the corresponding supremum $P_X:=\sup_{\mu\in\Proberg(f|_X)} P(\mu)$. More precisely, the three notions may be generalized as follows.

\medbreak\noindent
\emph{Generalized SPR on $X$:} 
there is $\chi>0$ such that for each $\eps>0$, there are a $(\chi,\eps)$-Pesin block $\Lambda$ and numbers $P_0<P_X$ and $\tau>0$ satisfying:
For any $\nu\in\Proberg(f|_X)$,
   $P(\nu)>P_0\implies \nu(\Lambda)>\tau.
 $

\smallbreak\noindent
\emph{Generalized (EH) on $X$:} there is $\chi>0$ as follows. 
Suppose $\mu_n\in \Proberg(f|_X)$ and $\mu_n\wsc\mu$ on $M$. If   $P(\mu_n)\to P_X$, then  there exists a constant $i:=i(\mu)$ such that 
$ \lambda^i(x) >\chi>-\chi>\lambda^{i+1}(x)\text{ $\mu$-a.e.}$

\smallbreak\noindent
\emph{Generalized (EC) on $X$:} Suppose $\mu_n\in \Proberg(f|_X)$ and 
$\mu_n\wsc\mu$ on $M$. If $P(\mu_n)\to P_X$, then
$
    \lim_{n\to\infty} \Lambda^{+}(\mu_n)=\Lambda^{+}(\mu).
$
\smallbreak

The reader may check that the proof of Theorem~\ref{thm-CE-SPR} generalizes almost verbatim, just replacing the Kolmogorov-Sinai entropy by the function $P$ and the top entropy of $X$ by $P_X$. It shows that if $f|_X$ satisfies the generalized (EH) and (EC), then it also satisfies generalized SPR.
\end{remark}

The  proof of the  theorem is given below. First we will use (EH)  to see that every $\mu_n\in\mathbb P(f|_X)$ with sufficiently high entropy must be hyperbolic. Every ergodic hyperbolic measure $\mu$ has a canonical lift to  the Grassmannian bundle, given by $\wt{\mu}^+:=\int_M \delta_{(x,E^+_x)}d\mu(x)$. We will use  (EC) to prove that $\wt{\mu}_n^+\wsc\wt{\mu}^+$ for many sequences of measures $\mu_n$ such that $\mu_n\wsc\mu$ and $h(f,\mu_n)\to h_{\Bor}(f|_X)$ (Lemma \ref{lemma-hyplim0}). This gives  some control of the hyperbolicity properties of  measures with high entropy, and  opens the way to using the Pliss Lemma for measures (Lemma~\ref{l.pliss}) and  Proposition~\ref{p.pesin}, to  produce  a Pesin block which has large measure for all measures with high entropy. SPR follows.

\subsection{Preparations for the Proof}\label{sec-prelim}  
\subsubsection{Continuity Lemmas}
The following semicontinuity is well-known:

\begin{proposition}\label{prop-uscLambda}
For any $C^1$ map $f$ of a closed manifold $M$,
if $\mu_n\in\Prob(f)$ and  $\mu_n\wsc\mu$, then $\limsup\limits_{n\to\infty} \Lambda^{\pci}(\mu_n)\leq \Lambda^{\pci}(\mu)$ and 
$\liminf\limits_{n\to\infty} \Lambda^{\mci}(\mu_n)\geq \Lambda^{\mci}(\mu)$, 
for all $i$.
\end{proposition}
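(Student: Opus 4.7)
The plan is to realize $\Lambda^{+,i}$ as an infimum over $n$ of integrals of fixed continuous functions on $M$, which makes upper semicontinuity under weak-$*$ convergence automatic. The companion inequality for $\Lambda^{-,i}$ will then follow from the Oseledets identity $\Lambda^{+,i}(\mu)+\Lambda^{-,i}(\mu)=\int\log|\det Df|\,d\mu$ together with the continuity of $\log|\det Df|$ on $M$.

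First I would introduce, for each $n\ge 1$ and each $i\in\{1,\dots,d\}$, the function $\varphi_n^i(x):=\log\|\wedge^i Df^n(x)\|$, where $\wedge^i Df^n$ is the $i$-th exterior power of $Df^n$. Since $f\in C^1$ and $M$ is compact, $\varphi_n^i$ is continuous on $M$. The submultiplicativity inequality $\|\wedge^i Df^{n+m}(x)\|\le \|\wedge^i Df^n(f^m x)\|\cdot \|\wedge^i Df^m(x)\|$, combined with $f$-invariance, shows that $a_n(\mu):=\int\varphi_n^i\,d\mu$ is subadditive in $n$, so
\[
    \lim_{n\to\infty}\tfrac1n a_n(\mu)=\inf_{n\ge 1}\tfrac1n a_n(\mu).
\]
Kingman's subadditive ergodic theorem, applied to the cocycle $\log\|\wedge^i Df^n\|$, identifies this common value with $\int(\lambda^1(x)+\cdots+\lambda^i(x))\,d\mu(x)=\Lambda^{+,i}(\mu)$: this is the classical identity for ergodic $\mu$, and the non-ergodic case follows by integration over the ergodic decomposition.

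The decisive point is now that each $\tfrac1n\varphi_n^i$ is a single continuous function on $M$, so $\mu_k\wsc\mu$ yields $\tfrac1n\int\varphi_n^i\,d\mu_k\to \tfrac1n\int\varphi_n^i\,d\mu$ for every fixed $n$. Applying the elementary estimate $\limsup_k \inf_n b_{k,n}\le \inf_n \limsup_k b_{k,n}$ with $b_{k,n}=\tfrac1n a_n(\mu_k)$ gives
\[
    \limsup_{k\to\infty}\Lambda^{+,i}(\mu_k) \le \inf_{n\ge 1}\tfrac1n\int\varphi_n^i\,d\mu=\Lambda^{+,i}(\mu),
\]
which is the first inequality. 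For the second, I would use $\Lambda^{-,i}(\mu)=\int\log|\det Df|\,d\mu-\Lambda^{+,i}(\mu)$; since $\log|\det Df|$ is continuous on $M$, the first term is continuous in $\mu$ under $\wsc$, and subtracting the upper-semicontinuity bound for $\Lambda^{+,i}$ yields the lower-semicontinuity bound for $\Lambda^{-,i}$. The only mildly delicate step is identifying $\inf_n\tfrac1n a_n(\mu)$ with $\Lambda^{+,i}(\mu)$ for non-ergodic $\mu$, but this is routine via Kingman's theorem and an ergodic decomposition; everything else is a direct consequence of pairing $\wsc$-convergent measures against fixed continuous integrands.
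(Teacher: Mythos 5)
Your proof is correct and follows essentially the same route as the paper: reduce the $\Lambda^{-,i}$ inequality to the $\Lambda^{+,i}$ one via the Oseledets identity $\Lambda^{+,i}+\Lambda^{-,i}=\int\log|\det Df|\,d\mu$, and obtain upper semicontinuity of $\Lambda^{+,i}$ by expressing it (via Kingman's theorem applied to the exterior-power cocycle $\wedge^i Df^n$, i.e.\ Raghunathan's identity) as an infimum over $n$ of continuous functionals $\mu\mapsto\tfrac1n\int\log\|\wedge^i Df^n\|\,d\mu$. The paper states this more tersely, handling $i=1$ first and then invoking Raghunathan's identity for $i>1$, but the argument is the same.
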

\begin{proof}
It is sufficient to prove the result for $\Lambda^{\pci}$, see Remark \ref{r.Lambda-Minus}.

If $i=1$, then $\Lambda^{+,1}(\nu)=\int \lim\tfrac{1}{n}\log\|Df^n\|d\nu$. 
By  the sub-additive ergodic theorem, $\Lambda^{+,1}(\nu)=\inf_{n\geq 1}\tfrac{1}{n}\int \log\|Df^n\|d\nu$, an upper semi-continuous function of $\nu$.

If $d=2$ we are done. If $d>2$ and $i>1$, then we use  Raghunathan's identity to express $\Lambda^\pci(\mu)$ as the top Lyapunov exponent of an exterior power of the derivative cocycle $Df$ (see e.g.~\cite[Prop.~2.2]{bochi-viana}). Then we proceed as in the case $i=1$.
\end{proof}

The following studies the unstable dimensions of $\mu_n\wsc\mu$ in the setting of (EH):
\begin{lemma}\label{lem-AB-index}
Suppose $\mu_n\in\Proberg(f|_X)$,   $\mu_n\wsc\mu$, and  there is $0<i<d$ such that $\lambda^{i}(x)>0>\lambda^{i+1}(x)$ $\mu$-a.e. Then:
\begin{enumerate}
\item If $\Lambda^+(\mu_n)\to\Lambda^+(\mu)$, then $\mu_n$ is hyperbolic and   $i(\mu_n)=i$ for all $n$ large.
\item $\Lambda^+(\mu_n)\to\Lambda^+(\mu)$ iff $\sum_{k=1}^{i}\lambda^k(\mu_n)\to \sum_{k=1}^{i}\lambda^k(\mu_n)$.
\end{enumerate}
\end{lemma}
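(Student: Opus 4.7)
\medbreak
\noindent\textit{Proof plan.}
The argument rests entirely on Proposition~\ref{prop-uscLambda}, which gives $\limsup_{n}\Lambda^{+,j}(\mu_{n})\le \Lambda^{+,j}(\mu)$ for every $j\in\{0,\dots,d\}$. The hypothesis $\lambda^{i}(x)>0>\lambda^{i+1}(x)$ $\mu$-a.e.\ forces $j\mapsto \Lambda^{+,j}(\mu)=\sum_{k=1}^{j}\lambda^{k}(\mu)$ to be strictly increasing on $\{0,\dots,i\}$ (each $\lambda^{k}(\mu)$ with $k\le i$ is positive) and strictly decreasing on $\{i,\dots,d\}$ (each $\lambda^{k}(\mu)$ with $k>i$ is negative). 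Thus $j=i$ is the unique maximizer of $\Lambda^{+,j}(\mu)$, with value $\Lambda^{+,i}(\mu)=\Lambda^{+}(\mu)$.

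For part~(1), combine these two ingredients. For every $j\ne i$, $\limsup_{n}\Lambda^{+,j}(\mu_{n})\le \Lambda^{+,j}(\mu)<\Lambda^{+}(\mu)=\lim_{n}\Lambda^{+}(\mu_{n})$. Since only finitely many indices are involved, for all $n$ large enough and all $j\ne i$,
\[
\Lambda^{+,j}(\mu_n)\;<\;\Lambda^{+}(\mu_n)\;=\;\Lambda^{+,i(\mu_n)}(\mu_n),
\]
which forces $i(\mu_n)=i$. Once this index match is secured, the identities
$\Lambda^{+,i+1}(\mu_n)-\Lambda^{+}(\mu_n)=\lambda^{i+1}(\mu_n)$ and $\Lambda^{+}(\mu_n)-\Lambda^{+,i-1}(\mu_n)=\lambda^{i}(\mu_n)$, together with upper semicontinuity at $j=i\pm 1$ and $\Lambda^{+}(\mu_n)\to \Lambda^{+}(\mu)$, yield
\[
\limsup_{n}\lambda^{i+1}(\mu_n)\;\le\;\lambda^{i+1}(\mu)\;<\;0,\qquad \liminf_{n}\lambda^{i}(\mu_n)\;\ge\;\lambda^{i}(\mu)\;>\;0.
\]
Because the exponents are monotonic in $k$, these strict signs propagate to every index: $\lambda^{k}(\mu_n)\le \lambda^{i+1}(\mu_n)<0$ for $k>i$ and $\lambda^{k}(\mu_n)\ge \lambda^{i}(\mu_n)>0$ for $k\le i$. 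Hence $\mu_n$ has no zero Lyapunov exponent for large $n$, i.e.\ $\mu_n$ is hyperbolic.

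For part~(2), observe that $\sum_{k=1}^{i}\lambda^{k}(\mu_n)=\Lambda^{+,i}(\mu_n)$ by definition, and $\Lambda^{+,i}(\mu)=\Lambda^{+}(\mu)$ by the first paragraph. Implication $(\Rightarrow)$ is immediate from part~(1): for large $n$, $\Lambda^{+,i}(\mu_n)=\Lambda^{+}(\mu_n)\to \Lambda^{+}(\mu)=\Lambda^{+,i}(\mu)$. Conversely, $\Lambda^{+,i}(\mu_n)\le \Lambda^{+,i(\mu_n)}(\mu_n)=\Lambda^{+}(\mu_n)$ gives $\liminf_n \Lambda^{+}(\mu_n)\ge \Lambda^{+}(\mu)$; and ergodicity of $\mu_n$ yields $\Lambda^{+}(\mu_n)=\max_{j}\Lambda^{+,j}(\mu_n)$, so
\[
\limsup_{n}\Lambda^{+}(\mu_n)\;\le\;\max_{j}\limsup_{n}\Lambda^{+,j}(\mu_n)\;\le\;\max_{j}\Lambda^{+,j}(\mu)\;=\;\Lambda^{+}(\mu).
\]

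The delicate point is the order of operations: one cannot directly control the individual exponents $\lambda^{i}(\mu_n)$ and $\lambda^{i+1}(\mu_n)$, because upper semicontinuity only constrains cumulative sums. One must first match the unstable dimension ($i(\mu_n)=i$) using the ``off'' indices $j\ne i$, and only then exploit the algebraic identities above at $j=i\pm 1$ to convert semicontinuity of the partial sums into one-sided sign control of the single exponents. Everything else is routine bookkeeping with upper/lower semicontinuity.
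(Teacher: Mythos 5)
Your proof is correct and rests on the same core mechanism as the paper's: upper semicontinuity of $\Lambda^{+,j}$ (Prop.~\ref{prop-uscLambda}) combined with strict monotonicity of $j\mapsto\Lambda^{+,j}(\mu)$ on either side of $i$. The differences are modest but real streamlinings. For part~(1) the paper splits the index mismatch $j\ne i$ into $j>i$ (handled via $\Lambda^{+,j}$) and $j<i$ (handled via $\Lambda^{-,j}$ and Remark~\ref{r.Lambda-Minus}), after first reducing to a subsequence with constant unstable dimension; you dispose of both cases at once by noting $\Lambda^{+,j}(\mu)<\Lambda^{+}(\mu)$ for all $j\ne i$, so the $\Lambda^{-}$ detour and the subsequence reduction are unnecessary. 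For part~(2) the paper again passes to a constant-dimension subsequence and reruns the index-matching argument; you instead bound $\liminf_n\Lambda^{+}(\mu_n)\ge\Lambda^{+,i}(\mu)$ directly (since $\Lambda^{+,i}(\mu_n)\le\Lambda^+(\mu_n)$) and $\limsup_n\Lambda^{+}(\mu_n)\le\max_j\Lambda^{+,j}(\mu)=\Lambda^+(\mu)$ via a finite max, again with no subsequence extraction. Both routes buy the same conclusion; yours is slightly shorter and self-contained within the $\Lambda^{+,j}$ framework, while the paper's is consistent with its habit elsewhere of invoking Remark~\ref{r.Lambda-Minus} to pass between $\Lambda^+$ and $\Lambda^-$. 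One small point worth making explicit in your write-up: the identities $\Lambda^{+}(\mu_n)-\Lambda^{+,i-1}(\mu_n)=\lambda^i(\mu_n)$ and $\Lambda^{+,i+1}(\mu_n)-\Lambda^{+}(\mu_n)=\lambda^{i+1}(\mu_n)$ are valid only once $i(\mu_n)=i$ is established, so the index match must be stated as the prior step (which your order of operations already reflects, but it is worth flagging since you use $\Lambda^+(\mu_n)=\Lambda^{+,i}(\mu_n)$ silently there).
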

\noindent
(The lemma is obvious in the two-dimensional case.)

\begin{proof}
Let $\mu_n\wsc\mu$ as above.
To prove (1), it is sufficient to consider the special case when all the $\mu_n$ have the same unstable dimension $j$. Thus we assume $\Lambda^+(\mu_n)=\Lambda^{+,j}(\mu_n)\to\Lambda^{+,i}(\mu)$ and deduce $j=i$.  

Assume by contradiction that $j\ne i$.
Suppose first that $j=i+k$ with $k$ positive. By the upper semicontinuity of $\Lambda^\pc{i+k}(\nu)$ (Prop.~\ref{prop-uscLambda}) and the hyperbolicity of $\mu$,
 $$\begin{aligned}
   \limsup_{n\to\infty} \Lambda^+(\mu_n)&=\limsup_{n\to\infty} \Lambda^\pc{i+k}(\mu_n) \le \Lambda^\pc{i+k}(\mu)\\
   &= \Lambda^\pci(\mu)+\lambda^{i+1}(\mu)+\dots+\lambda^{i+k}(\mu)
   <\Lambda^\pci(\mu)=\Lambda^+(\mu),
 \end{aligned}$$
which contradicts the assumption in (1).
Next suppose $j=i-k$ with $k\geq 1$. By Prop.~\ref{prop-uscLambda} $\liminf_{n\to\infty}\Lambda^{-,j}(\mu_n)\geq \Lambda^{-,j}(\mu)$, and proceeding as before we obtain $\liminf \Lambda^-(\mu_n)>\Lambda^-(\mu)$. But this contradicts Remark \ref{r.Lambda-Minus}.
Thus $j=i$. 
\medskip

We now check the hyperbolicity of $\mu_n$ for large $n$.
In particular, $\Lambda^\pci(\mu_n)\to \Lambda^\pci(\mu)$.
Also, $\limsup_{n\to\infty} \Lambda^\pc{i+1}(\mu_n) \le \Lambda^\pc{i+1}(\mu)$ which is strictly smaller than $\Lambda^\pc{i}(\mu)$, by (EH). Thus $\Lambda^\pc{i+1}(\mu_n)<\Lambda^\pc{i}(\mu_n)$ for $n$ large, whence $\lambda^{i+1}(\mu_n)$ is eventually negative.  Thus  $\mu_n$ is hyperbolic for all large $n$, and  Item (1) follows.

Let us deduce item (2). The direct implication $\Rightarrow$ in item (2) is an immediate consequence of item (1).
For the converse, we assume $\Lambda^\pci(\mu_n)\to\Lambda^\pci(\mu)=\Lambda^+(\mu)$ and that, without loss of generality, the unstable dimension of $\mu_n$ is some integer $j$ for all large $n$. As above, the semicontinuity of $\Lambda^{+,j}$, together with the hyperbolicity of $\mu$, gives that if $j\ne i$,
 $$
   \limsup_n \Lambda^{+,j}(\mu_n)\le \Lambda^{+,j}(\mu) < \Lambda^+(\mu)=\Lambda^\pci(\mu)=\lim_n \Lambda^\pci(\mu_n).
 $$
This implies that 
for large $n$, $\Lambda^i(\mu_n)>\Lambda^j(\mu_n)$, a contradiction to $j$ being the unstable dimension of the $\mu_n$'s. Thus $j=i$, whence $\Lambda^+(\mu_n)=\Lambda^{+,i}(\mu_n)$. Direction~$\Leftarrow$ in Item (2) easily follows.
\end{proof}

\subsubsection{Grassmannian Extensions}\label{ss.grassmannian-extensions}

Suppose $1\leq i\leq d$. 
The  {\em $i$-th Grassmannian bundle} of $M$ is the bundle $\mathfrak G(i,M)$  with base space $M$, and fibres  $$\mathbb G(i,T_x M):=\{E\subset T_x M:E\text{ is a linear $i$-dimensional subspace}\}.$$ There is a natural smooth Riemannian structure on  $\mathfrak G(i,M)$, see Appendix~\ref{app-grassmannian}.
We denote the  points of $\mathfrak G(i,M)$  by $(x,E)$ ($x\in M\, , \, E\in\mathbb G(i,T_x M)$). There is a natural projection $\tpi:\mathfrak G(i,M)\to M$, given by $\tpi(x,E)=x$.

Any diffeomorphism $f$ on $M$ induces a diffeomorphism  $\tf$ on $\mathfrak G(i,M)$ acting by
$$       \tf(x,E)=(f(x),D_xf.E). $$

Let $\mu$ be a (possibly non-ergodic) $f$-invariant measure such that
the unstable dimension $i(x)$ is equal to a positive constant $i$ $\mu$-almost everywhere. Then
 $$
   \tmu^+ := \int_M \delta_{(x,E^{+}_x)}\, d\mu(x)
 $$
is an $\tf$-invariant measure on $\mathfrak G(i,M)$, called the {\em unstable lift} of $\mu$.  If $\mu$ is $f$-ergodic, then  $\tmu^+$ if $\wt{f}$-ergodic.

Recall that the \emph{the determinant} of a linear operator $T$ mapping one $i$-dimensional inner product space $E_1$ onto another $i$-dimensional inner product space $E_2$ is defined, up to sign, to be  the determinant of the matrix representing the linear operator $S_2 T S_1^{-1}:\R^i\to\R^i$, where $S_i:\R^i\to E_i$ are  
isometries. 

The {\em $i$-dimensional log-Jacobian of $f$} is the function $\vf:\mathfrak G(i,M)\to\R$ given by
$$\varphi(x,E):=\log |\det(Df_x|_E)|.$$
(The notation suppresses $i=\dim E$, because it is obvious from the context.) If $f$ is  a $C^1$ diffeomorphism, then $\vf$ is globally defined, and continuous.

\begin{proposition}\label{prop-erg-lifts}
Let $\nu$ be an $f$-invariant measure such that
the unstable dimension $i(x)$ is equal  $\nu$-almost everywhere to a constant $i$. Then: \begin{equation}\label{eq-max-dil}
   \Lambda^+(\nu)=\int_{\mathfrak G(i,M)} \vf\, d\tnu^+.
  \end{equation}
All  other $\tnu\in \mathbb P(\tf)$ such that $\tpi_*\tnu=\nu$ satisfy
$\tnu(\varphi)<\Lambda^+(\nu)$.
\end{proposition}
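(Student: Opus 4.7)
The plan is to obtain both statements from Oseledets' theorem combined with Birkhoff's ergodic theorem and a contraction argument for the $\tilde f$-action on the Grassmannian fibers. The essential observation is that $\varphi$ is an additive cocycle over $\tilde f$: $\sum_{k=0}^{n-1}\varphi(\tilde f^k(x,E))=\log|\det(Df^n_x|_E)|$, so questions about $\tilde f$-invariant averages of $\varphi$ translate into questions about the asymptotic volume growth of $Df^n$ on $i$-dimensional subspaces.

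First I would prove the identity $\int\varphi\,d\tilde\nu^+=\Lambda^+(\nu)$. Applied at $(x,E^+_x)$, the cocycle identity gives $\sum_{k=0}^{n-1}\varphi(\tilde f^k(x,E^+_x))=\log|\det(Df^n_x|_{E^+_x})|$. By Oseledets' theorem, the normalized limit $\tfrac{1}{n}\log|\det(Df^n_x|_{E^+_x})|$ equals $\Lambda^+(x)$ for $\nu$-a.e. $x$, since $E^+_x$ is spanned by the Oseledets eigenspaces with positive exponent (using $i(x)=i$ $\nu$-a.e.). The $\tilde f$-invariance of $\tilde\nu^+$ gives $\int\varphi\,d\tilde\nu^+=\tfrac{1}{n}\int\log|\det(Df^n_x|_{E^+_x})|\,d\nu(x)$ for every $n$, and the right-hand side converges to $\int\Lambda^+\,d\nu=\Lambda^+(\nu)$ by dominated convergence (the integrand is uniformly bounded).

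For the strict inequality, let $\tilde\nu$ be any other $\tilde f$-invariant lift of $\nu$. Birkhoff's theorem applied to $\tilde\nu$ and $\varphi$ gives $\int\varphi\,d\tilde\nu=\int\tilde\varphi^*(x,E)\,d\tilde\nu$, where $\tilde\varphi^*(x,E):=\lim_{n\to\infty}\tfrac{1}{n}\log|\det(Df^n_x|_E)|$ exists $\tilde\nu$-a.e. Since $|\det(Df^n_x|_E)|$ is bounded above by the top singular value of $\bigwedge^i Df^n_x$, whose growth rate equals $\Lambda^+(x)$, one has $\tilde\varphi^*(x,E)\le\Lambda^+(x)$, with equality iff $E$ is transverse to the weak-stable Oseledets subspace $V^-_x:=\bigoplus_{\lambda\le 0}E^\lambda_x$. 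Integrating, $\int\varphi\,d\tilde\nu\le\Lambda^+(\nu)$, and if $\tilde\nu$ gives positive mass to the invariant set $\{(x,E):E\cap V^-_x\ne\{0\}\}$ the inequality is strict at once.

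Otherwise $\tilde\nu$ is supported on the open stratum of $E$'s transverse to $V^-_x$, and I would compare $\tilde\nu$ with $\tilde\nu^+$ via graph coordinates: identify such $E$ with $L_x\in\mathrm{Hom}(E^+_x,V^-_x)$ through $E=\{\xi+L_x\xi:\xi\in E^+_x\}$. Writing $A(x):=Df|_{V^-_x}$ and $B(x):=Df|_{E^+_x}$, the $\tilde f$-action becomes the linear cocycle $L\mapsto A(x)LB(x)^{-1}$, with $n$-step iterate $L_n=\hat A_nL\hat B_n^{-1}$. By Oseledets the top Lyapunov exponent of $\hat A_n$ is at most $\lambda^{i+1}(x)\le 0$ and that of $\hat B_n^{-1}$ equals $-\lambda^i(x)<0$, so $\limsup_n\tfrac{1}{n}\log\|L_n\|\le\lambda^{i+1}(x)-\lambda^i(x)<0$ for $\nu$-a.e. $x$. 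Hence $\|L_n\|\to 0$ for $\tilde\nu$-a.e. $(x,L)$; combining this with $\tilde f$-invariance gives, for each $\varepsilon>0$, $\tilde\nu\{\|L\|>\varepsilon\}=\tilde\nu\{\|L_n\|>\varepsilon\}\to 0$ by dominated convergence, so $L=0$ $\tilde\nu$-a.e., i.e. $\tilde\nu=\tilde\nu^+$. The main obstacle is this last rigidity step: pointwise one has $\tilde\varphi^*(x,E)=\Lambda^+(x)$ on an open dense set of $E$'s, not only at $E=E^+_x$, so the strict inequality cannot come from a pointwise bound alone; the contraction $\lambda^{i+1}(x)<\lambda^i(x)$ guaranteed by $i(x)=i$ being the unstable dimension is precisely what lets the invariance of $\tilde\nu$ force it onto the attracting graph of $E^+$.
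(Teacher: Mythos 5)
Your proof is correct, and for the second (strict-inequality) statement it takes a genuinely different route from the paper's. For the first part both arguments coincide: cocycle identity plus Oseledets plus invariance. For the strict inequality, the paper applies Oseledets directly to the restriction of the derivative cocycle to the tautological sub-bundle over $(\mathfrak G(i,M),\tilde f,\tilde\nu)$: it names the exponents $\alpha_1>\dots>\alpha_r$ of $Df|_F$, uses the bi-regularity of the sub-cocycle together with $F\ne E^+_x$ a.e.\ to force $\alpha_r<0$, and then bounds $\sum|b_s|\alpha_s\le\lambda^1+\dots+\lambda^{i-1}+\alpha_r<\Lambda^+(\nu)$. You instead split into the invariant dichotomy: if $\tilde\nu$ charges the non-transverse locus $\{E\cap V^-_x\ne\{0\}\}$, the pointwise singular-value bound is already strict there and you integrate; otherwise you pass to graph coordinates $L\in\mathrm{Hom}(E^+_x,V^-_x)$ and run the linear contraction $L_n=\hat A_nL\hat B_n^{-1}\to0$ against $\tilde f$-invariance of $\tilde\nu$ to conclude $L=0$ $\tilde\nu$-a.e., i.e.\ $\tilde\nu=\tilde\nu^+$. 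Your rigidity-by-contraction step is the key novelty relative to the paper: you do not need to invoke the multiplicative ergodic theorem for the sub-cocycle at all, only the pointwise bound and the dominated-convergence trick with $\tilde\nu\{\|L\|>\varepsilon\}=\tilde\nu\{\|L_n\|>\varepsilon\}\to0$. This is arguably more self-contained (and it makes transparent why $\tilde\nu^+$ is the unique lift achieving equality: $E^+$ is the attracting section of the Grassmannian extension). The trade-off is that the paper's route, once one is comfortable applying Oseledets to the tautological sub-bundle, produces the strict inequality in a single stroke without the case split. One small remark: you should note explicitly that the transversality locus and its complement are $\tilde f$-invariant, so that for ergodic $\tilde\nu$ your dichotomy is exhaustive, and that the general (non-ergodic) case follows by ergodic decomposition of $\tilde\nu$ -- as does the paper's.
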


\noindent
This is well-known, but we could not find a reference.
We give the proof in  \S\ref{app-dilavg}.

\begin{remark}
If $i=1$, then $\Lambda^+(\nu)=\int_M\log|\det(Df|_{E^+})|d\nu$, and \eqref{eq-max-dil} is trivial. But even in this simple case, \eqref{eq-max-dil} has the advantage that $\vf$ is globally defined and continuous, whereas $\log|\det(Df|_{E^+})|$ is not.  For more information on the defect $\Lambda^+(\nu)-\tnu(\vf)$ for arbitrary lifts in this case, see \cite[Lemma 3.3]{BCS-2}.
\end{remark}

Next we use  (EH) to give a condition guaranteeing the convergence of the unstable lifts of a convergent sequence of measures $\mu_n$ the  unstable lift of $\lim\mu_n$:
\begin{lemma}\label{lemma-hyplim0}
Suppose $\mu_n\in\Proberg(f|_X)$,   $\mu_n\wsc\mu$, and  $\mu$ is hyperbolic, with a.s. constant unstable dimension $i$.
Then $\Lambda^+(\mu_n)\to\Lambda^+(\mu)$ iff
$\tmu_n^{+}\wto\tmu^{+}$ on $\mathfrak G(i,M)$.
\end{lemma}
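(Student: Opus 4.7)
The plan is to reduce the lemma to the integral formula $\Lambda^+(\nu)=\int\vf\,d\tnu^+$ from Prop.~\ref{prop-erg-lifts}, exploiting that $\vf(x,E)=\log|\det(Df_x|_E)|$ is a \emph{globally defined continuous} function on the compact Grassmannian bundle $\mathfrak G(i,M)$. This is the main payoff of passing to the Grassmannian: the a priori discontinuous quantity $\Lambda^+$ becomes an integral of a continuous function against a lifted measure.

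For the easy direction ($\Leftarrow$), I would proceed as follows. The assumption that $\tmu_n^+$ is a measure on $\mathfrak G(i,M)$ forces $\mu_n$ to have a.s. unstable dimension $i$ for all $n$ (else its canonical lift lives on another Grassmannian bundle). By Prop.~\ref{prop-erg-lifts} applied to each $\mu_n$ and to $\mu$,
\begin{equation*}
\Lambda^+(\mu_n)=\int_{\mathfrak G(i,M)}\vf\,d\tmu_n^+\quad\text{and}\quad \Lambda^+(\mu)=\int_{\mathfrak G(i,M)}\vf\,d\tmu^+.
\end{equation*}
Since $\vf$ is continuous on the compact space $\mathfrak G(i,M)$, weak-$*$ convergence $\tmu_n^+\wto\tmu^+$ yields $\Lambda^+(\mu_n)\to\Lambda^+(\mu)$ directly.

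For the nontrivial direction ($\Rightarrow$), I would first apply Lemma~\ref{lem-AB-index}(1) to deduce that, for all $n$ sufficiently large, $\mu_n$ is hyperbolic and its unstable dimension equals $i$; thus $\tmu_n^+$ is well defined as a measure on $\mathfrak G(i,M)$. Since $\mathfrak G(i,M)$ is compact, the sequence $(\tmu_n^+)_n$ is precompact in the weak-$*$ topology. Let $\tnu$ be any subsequential limit along $n_k\to\infty$. Three properties to check:
\begin{itemize}
\item $\tnu$ is $\tf$-invariant, being a weak-$*$ limit of $\tf$-invariant measures.
\item $\tpi_*\tnu=\mu$, because $\tpi$ is continuous and $\tpi_*\tmu_{n_k}^+=\mu_{n_k}\wsc\mu$.
\item $\tnu(\vf)=\lim_k\tmu_{n_k}^+(\vf)=\lim_k\Lambda^+(\mu_{n_k})=\Lambda^+(\mu)$, using continuity of $\vf$ and the hypothesis.
\end{itemize}
By the uniqueness statement in Prop.~\ref{prop-erg-lifts}, the equality $\tnu(\vf)=\Lambda^+(\mu)$ together with $\tpi_*\tnu=\mu$ and $\tnu\in\mathbb P(\tf)$ forces $\tnu=\tmu^+$. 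Since every subsequential limit equals $\tmu^+$, compactness gives $\tmu_n^+\wto\tmu^+$, completing the proof.

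I do not expect any serious obstacle: the only subtle point is ensuring that $\tmu_n^+$ lives on the correct Grassmannian bundle $\mathfrak G(i,M)$, which is exactly what Lemma~\ref{lem-AB-index}(1) provides (and which, in dimension two, is automatic from Ruelle's inequality once $\mu$ has positive entropy). All the analytic content is encapsulated in the variational characterization of Prop.~\ref{prop-erg-lifts}, and the argument reduces to standard weak-$*$ compactness once the correct continuous observable $\vf$ has been identified.
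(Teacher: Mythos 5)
Your proof is correct and follows essentially the same path as the paper's: the $(\Leftarrow)$ direction is immediate from Prop.~\ref{prop-erg-lifts} and continuity of $\vf$ on the compact bundle, and the $(\Rightarrow)$ direction uses Lemma~\ref{lem-AB-index}(1) to place the lifts on $\mathfrak G(i,M)$, weak-$*$ compactness to pass to subsequential limits, and the uniqueness clause of Prop.~\ref{prop-erg-lifts} to identify the limit with $\tmu^+$. Your explicit verification of invariance, projection, and the value of $\tnu(\vf)$ is a slightly more detailed record of what the paper leaves tacit, but the argument is the same.
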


\begin{proof}
Let $\mu_n\wsc\mu$ as above.
By assumption, there is a constant $i$ such that $i(x)=i$  $\mu$-a.e.
Suppose  $\tmu_n^{+}\wto\tmu^{+}$, with all unstable lifts living on $\mathfrak G(i,M)$. Since $\vf$ is continuous on $\mathfrak G(i,M)$, $\tmu_n^{+}(\varphi) \to \tmu^{+}(\varphi)$, and by
 eq.~\eqref{eq-max-dil}
$
    \Lambda^+(\mu_n)\to\Lambda^+(\mu).
 $

Conversely, suppose $\Lambda^+(\mu_n)\to\Lambda^+(\mu)$. 
By Lemma~\ref{lem-AB-index}, for all large $n$,  $\mu_n$ is hyperbolic with unstable dimension $i$, hence $\tmu_n$ lives on $\mathfrak G(i,M)$.
This set is compact and metrizable, therefore  $\Prob(\tf)$ is weak-$*$ compact. So it suffices to show that for any subsequence $\tmu_{n_k}^{+}$ such that $\tmu_{n_k}^{+}\wto\tnu$ on $\mathfrak G(i,M)$,  $\tnu=\tmu^+$.
 We have:
$$
   \tmu^+(\varphi) =  \Lambda^{+}(\mu) = \lim_{n\to\infty} \Lambda^{+}(\mu_n)= \lim_{k\to\infty} \tmu_{n_k}^+(\varphi)= \tnu(\varphi).
$$
Moreover $\tpi_*\tnu=\mu$. By Proposition~\ref{prop-erg-lifts}, $\tnu=\tmu^+$ as claimed.
\end{proof}

\subsection{Proof of Theorem \ref{thm-CE-SPR}}
Recall that $f$ is a $C^1$ diffeomorphism on $M$, and $X$ is some invariant measurable subset $X$. Our goal is to deduce strong positive recurrence from Conditions (EH) and (EC).

\subsubsection{Property {\rm (*)}}
\label{ss-proof-thm-CE-SPR}
We say that a $C^1$ diffeomorphism $f$  satisfies {\em Property {\rm (*)}}
on an invariant Borel set $X$ if there exists $\chi_0>0$ s.t. for each $\tau<1$ there are $h_0<\hTOP(f|_X)$ and $n_0\ge1$ so that:
\begin{equation}\label{e-exp-unif}
\text{For any ergodic measure $\nu$ on $X$,}\quad h(f,\nu)>h_0 \implies
\nu(P_{n_0}(\chi_0))>\tau.
\end{equation}

\begin{proposition}\label{prop-crit-SPR-ET}
Let $f$ be a $C^1$ diffeomorphism on a closed manifold $M$, and suppose $X$ is an  invariant measurable subset $X$. If $f$ satisfies {Property} { \rm (*)} on $X$, then $f$ is SPR  on $X$.
\smallbreak

\end{proposition}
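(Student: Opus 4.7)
The plan is to derive the SPR condition directly from Property (*) using Proposition~\ref{p.pesin} as the bridge. The key observation is that Property (*) gives control over the measure of the Pliss set $P_{n_0}(\chi_0)$, whereas SPR requires control over the measure of a Pesin block $\Lambda$; Proposition~\ref{p.pesin} translates the former bound into the latter one, at the cost of a multiplicative constant that depends on $\chi_0$ and $\eps$ but not on $n_0$ or on the measure.

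First I would fix $\chi := \chi_0$ from Property (*), noting that $c_0 := \chi_0 + \max(\log\|Df\|_\supnorm, \log\|Df^{-1}\|_\supnorm)$ is a finite constant independent of $\eps$ and $n_0$. Given any $\eps>0$, set $K := 4\max(c_0/\eps, 1)$; by Proposition~\ref{p.pesin}, for each $n_0\ge 1$ one obtains a $(\chi_0,\eps)$-Pesin block $\Lambda_{n_0}$ satisfying
\begin{equation*}
\nu(M\setminus \Lambda_{n_0}) \;\le\; K\, \nu(M\setminus P_{n_0}(\chi_0))
\end{equation*}
for every invariant measure $\nu$.

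Next I would choose $\tau' \in (0,1)$ close enough to $1$ that $1 - K(1-\tau') > 0$; concretely, $\tau' := 1 - 1/(2K)$ suffices and yields $1 - K(1-\tau') = 1/2$. Applying Property~(*) to this $\tau'$ produces $h_0 < h_\Bor(f|_X)$ and $n_0 \ge 1$ such that any ergodic $\nu$ on $X$ with $h(f,\nu) > h_0$ satisfies $\nu(P_{n_0}(\chi_0)) > \tau'$. Combined with the estimate above, this gives
\begin{equation*}
\nu(\Lambda_{n_0}) \;\ge\; 1 - K\bigl(1 - \nu(P_{n_0}(\chi_0))\bigr) \;>\; 1 - K(1-\tau') \;=\; \tfrac{1}{2}.
\end{equation*}
Taking $\Lambda := \Lambda_{n_0}$ and $\tau := 1/2$ verifies Definition~\ref{def-SPR-diffeo-local} for this $\eps$, and since $\chi_0$ was chosen once and for all, $f$ is SPR on $X$.

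There is really no hard step here: the entire content is packaged in Proposition~\ref{p.pesin}, which via the tempered-envelope machinery of Lemma~\ref{l.Tempered-Tail} converts the Pliss-type pointwise exponential control on the unit time-$n_0$ cocycle into a full $(\chi,\eps)$-Pesin block estimate, uniformly across invariant measures. The only subtle point, and the one I would be careful about when writing it up, is to make sure that $K$ depends on $\eps$ but not on $n_0$ (so that the choice of $\tau'$ can precede the choice of $n_0$ delivered by Property~(*)); this is apparent from the formula $K = 4\max(c_0/\eps,1)$ since $c_0$ is determined by $f$ and $\chi_0$ alone.
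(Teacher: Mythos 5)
Your proof is correct and follows essentially the same route as the paper: fix $\chi_0$, use Proposition~\ref{p.pesin} to convert a Pliss-set bound into a Pesin-block bound with a constant $K=4\max(c_0/\eps,1)$ that is independent of $n_0$, then apply Property~(*) with a $\tau'$ chosen close enough to $1$ to absorb the factor $K$. The only difference is cosmetic: the paper introduces an extra parameter $\delta$ so as to get $\nu(\Lambda)>1-\delta$ for arbitrary $\delta>0$ (which is the stronger entropy-tightness property recorded in Remark~\ref{r.entropy-tightness}), whereas you stop at $\tau=1/2$, which already suffices for SPR.
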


\begin{proof}
Choose $\chi_0>0$ as in Property~(*) above and $c_0$ as in Prop.~\ref{p.pesin}. Fix     $\eps,\delta>0$  arbitrarily small. Fix $\tau<1$ so close to $1$ that
 $
    1-\tau< \tfrac\delta{4} \min(\eps/c_0,1).
 $
For this $\tau$, choose $n_0,h_0$ as in
Property (*). Proposition~\ref{p.pesin} provides us with  a $(\chi_0,\eps)$-Pesin block $\Lambda_{n_0}=\Lambda_{n_0}(\chi_0,\eps)$ such that, for any $\nu\in\Proberg(f|_X)$ with $h(f,\nu)>h_0$,
 $$
\nu(M\setminus \Lambda_{n_0})\leq 4\max(\tfrac{c_0}\varepsilon , 1) \nu(M\setminus P_{n_0}(\chi_0))<\delta.
 $$
So $\nu(\Lambda_{n_0})>1-\delta>0$. Since $\eps$ was arbitrary,  $f$ is SPR on $X$.
\end{proof}

\subsubsection{{\rm (EH)} and {\rm (EC)} Imply Property {\rm (*)}}
We suppose that $f|_X$ satisfies  Conditions (EC) and  (EH) with constant  $\chi>0$, and we prove Property (*).

Define the following sets of measures on $\mathfrak G(i,M)$:
\begin{align*}
&\stL^i:=\big\{\wt{\mu}: \exists \mu_n\in \Proberg(f|_X) \text{ s.t. } 
\wt{\mu}_n^+\rightharpoonup\wt{\mu}\text{ on }\mathfrak G(i,M),\text{ and }\\
&\hspace{ 6cm} i(\mu_n)=i,\
h(f,\mu_n)\to h_{\Bor}(f|_X)
\big\},\\
&\stH^i:=\left\{\wt{\mu}^+:\begin{array}{l}
\mu:= \tpi_*\tmu^+ \text{ has the following properties:  $i(x)=i$ $\mu$-a.e.; and }\\
\text{for $\mu$-a.e. $x$, $\lambda^1(x),\ldots,\lambda^d(x)\not\in [-\chi,\chi]$}
\end{array} \right\}.
\end{align*}

Since Property (*) is only concerned with ergodic measures of nearly maximal entropy, it is sufficient to consider measures with  unstable dimension $i$ such that 
\begin{equation}\label{eq-htop-idim}
   \sup\{ h(f,\mu): \mu\in\Proberg(f|_X)\text{ and }i(\mu)=i\}=\hTOP(f|_X).
 \end{equation}

\begin{lemma}\label{lem-stL}
$\stL^i$ is a weak-$*$ compact subset of $\stH^i$.
\end{lemma}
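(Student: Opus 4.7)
The plan is to prove the two claims separately: that $\stL^i$ is weak-$*$ closed inside the compact space $\Prob(\wt f)$, and that $\stL^i\subset \stH^i$.

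For the compactness, I will use that $M$ (and hence $\mathfrak G(i,M)$) is compact and metrizable, so $\Prob(\wt f)$ is weak-$*$ compact and the weak-$*$ topology on it is metrizable. It thus suffices to show $\stL^i$ is sequentially closed. Given $\wt\mu_k\in\stL^i$ with $\wt\mu_k\rightharpoonup\wt\mu$, I pick for each $k$ a sequence $\wt\nu^+_{k,n}\rightharpoonup \wt\mu_k$ witnessing membership, then extract a diagonal sequence $\wt\nu^+_{k,n(k)}$ along which the weak-$*$ distance to $\wt\mu$ tends to $0$ and $h(f,\nu_{k,n(k)})\to h_{\Bor}(f|_X)$. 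This yields $\wt\mu\in\stL^i$.

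For the inclusion, fix $\wt\mu\in\stL^i$ and write $\mu:=\tpi_*\wt\mu$; choose witnesses $\nu_n\in\Proberg(f|_X)$ with $i(\nu_n)=i$, $h(f,\nu_n)\to h_{\Bor}(f|_X)$, and $\wt\nu_n^+\rightharpoonup\wt\mu$. Since $\tpi$ is continuous, $\nu_n\rightharpoonup\mu$ on $M$. Condition (EH) applies and delivers a constant $i(\mu)$ such that $\mu$-a.e. Lyapunov exponents lie outside $[-\chi,\chi]$, with exactly $i(\mu)$ positive ones. In particular $\mu$ is hyperbolic with a.s. constant unstable dimension $i(\mu)$. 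Condition (EC) gives $\Lambda^+(\nu_n)\to\Lambda^+(\mu)$, so Lemma~\ref{lem-AB-index}(1) forces $i(\nu_n)=i(\mu)$ for all large $n$; combined with the hypothesis $i(\nu_n)=i$ this yields $i(\mu)=i$, which is half of the requirement for $\wt\mu$ to lie in $\stH^i$.

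It remains to identify $\wt\mu$ as the canonical lift $\wt\mu^+$ of $\mu$. Here I invoke Lemma~\ref{lemma-hyplim0}: since $\mu$ is hyperbolic with constant unstable dimension $i$, and $\Lambda^+(\nu_n)\to\Lambda^+(\mu)$, we have $\wt\nu_n^+\rightharpoonup\wt\mu^+$ on $\mathfrak G(i,M)$. By uniqueness of the weak-$*$ limit, $\wt\mu=\wt\mu^+$, so $\wt\mu\in\stH^i$.

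The main obstacle is bookkeeping of unstable dimensions in the non-ergodic limit: one must verify that the a.s.\ unstable dimension $i(\mu)$ supplied by (EH) matches the prescribed index $i$ of the approximating sequence, since $\stL^i$ is indexed by $i$ whereas (EH) alone only asserts existence of \emph{some} constant. This is precisely what Lemma~\ref{lem-AB-index}, fed by (EC), arranges, and it is the mechanism that makes Lemma~\ref{lemma-hyplim0} applicable.
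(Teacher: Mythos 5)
Your proof is correct and follows essentially the same route as the paper's: invoking (EC) to get $\Lambda^+(\nu_n)\to\Lambda^+(\mu)$, Lemma~\ref{lemma-hyplim0} to identify $\wt\mu$ with the unstable lift $\wt\mu^+$, and (EH) plus Lemma~\ref{lem-AB-index} to verify hyperbolicity and pin down the unstable dimension. You merely spell out the compactness step (which the paper dismisses as clear) and arrange the invocations of (EH), (EC), Lemma~\ref{lem-AB-index} and Lemma~\ref{lemma-hyplim0} in a slightly tidier logical order.
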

\begin{proof}
Compactness is clear. To see that $\stL^i\subset \stH^i$, suppose $\wt{\mu}=\lim \wt{\mu}_n^+$ with $\mu_n$ as in the definition of $\stL^i$. Then $\mu:=\tpi_\ast\wt{\mu}=\lim\mu_n$. By entropy continuity (EC), $\Lambda^+(\mu_n)\to \Lambda^+(\mu)$. By Lemma \ref{lemma-hyplim0}, $\tmu_n^+\to\tmu^+$. So $\tmu=\tmu^+$.
By entropy hyperbolicity (EH), the Lyapunov exponents of $\mu$-a.e. $x$ all lie outside $[-\chi,\chi]$, and by  Lemma~\ref{lem-AB-index}, $i(x)=\lim i(\mu_n)=i$ $\mu$-a.e.  So $\tmu=\tmu^+\in\stH^i$.
\end{proof}

For every measurable set $\tU\subset\mathfrak G(i,M)$,  $0<\gamma<1$, and $\tf$-invariant measure $\tmu$ on $\mathfrak G(i,M)$ such that $\tmu(\tU)>1-\gamma^2$,
Pliss Lemma~\ref{l.pliss} gives (with $A=1$, $\kappa=0$, $\beta=1-\gamma$ and $\varphi$ the characteristic function of $\tU$):
 \begin{equation}\label{eq-Pliss-gamma}
        \tmu\left(\left\{\tx:\forall n\ge 0\; |\{0\leq k<n:\tf^{-k}\tx\in\tU\}|\ge(1-\gamma)n\right\}\right)> 1-\gamma.
 \end{equation}
We define the number $L:=\sup_x\log\|Df_x^{-1}\|>0$ and the open sets
 $$
    \tU_{N} := \left\{(x,E)\in\mathfrak G(i,M) : \|Df_x^{-N}|_E\| < e^{-\chi N} \right\} \qquad (N\ge1),
 $$
and we prove the following estimate:

\begin{lemma}\label{lem-comp-exp-time}
Given $\stK\subset\stH^i$ compact and $\delta\in (0,1/2)$,  there are an open set
$\stV\supset\stK$ and an integer $N_0\geq 1$ such that, for all $\tmu\in\stV$, there is
$1\leq N\leq N_0$ with
 $
 \tmu(\tU_{N}) > 1-\delta^2 .
 $
\end{lemma}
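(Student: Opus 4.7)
The plan is to show pointwise in $\stH^i$ that $\tmu(\tU_N)\to 1$ as $N\to\infty$, and then upgrade this to a uniform statement on a compact subset $\stK\subset\stH^i$ by using lower semicontinuity of the map $\tnu\mapsto\tnu(\tU_N)$ on open sets together with a finite cover argument.

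First, fix $\tmu\in\stH^i$ and write $\tmu=\tmu^+$ where $\mu:=\tpi_*\tmu$ has unstable dimension $i$ $\mu$-a.e.\ and all its Lyapunov exponents lie outside $[-\chi,\chi]$. In particular, $\lambda^i(x)>\chi$ for $\mu$-a.e.\ $x$. By the Oseledets theorem, for $\mu$-a.e.\ $x$ the restriction $Df_x^{-N}|_{E^+_x}$ satisfies
\[
  \lim_{N\to\infty}\tfrac1N\log\|Df_x^{-N}|_{E^+_x}\|=-\lambda^i(x)<-\chi,
\]
so for each such $x$ there is an integer $M_x$ with $(x,E^+_x)\in \tU_N$ for every $N\ge M_x$. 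Therefore the sets $A_M:=\bigcap_{N\ge M}\tU_N$ satisfy $\tmu(A_M)\nearrow 1$ as $M\to\infty$. Since $A_M\subset\tU_N$ whenever $N\ge M$, we deduce $\liminf_{N\to\infty}\tmu(\tU_N)\ge\tmu(A_M)$ for every $M$, and hence
\[
   \lim_{N\to\infty}\tmu(\tU_N)=1.
\]

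Second, the maps $(x,E)\mapsto \|Df_x^{-N}|_E\|$ are continuous on the Grassmannian bundle $\mathfrak G(i,M)$, so each $\tU_N$ is open. By the portmanteau theorem, the functional $\tnu\mapsto\tnu(\tU_N)$ is lower semicontinuous on $\Prob(\tf)$ equipped with the weak-$*$ topology. Given $\tmu\in\stK$, by the previous step we may pick an integer $N(\tmu)$ with $\tmu(\tU_{N(\tmu)})>1-\tfrac12\delta^2$; by lower semicontinuity, there is an open weak-$*$ neighborhood $\stV(\tmu)$ of $\tmu$ such that every $\tnu\in\stV(\tmu)$ satisfies $\tnu(\tU_{N(\tmu)})>1-\delta^2$.

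Finally, the compact set $\stK$ is covered by the open family $\{\stV(\tmu)\}_{\tmu\in\stK}$, so we may extract a finite subcover $\stV(\tmu_1),\dots,\stV(\tmu_k)$. Set
\[
   \stV:=\bigcup_{j=1}^{k}\stV(\tmu_j)\quad\text{and}\quad N_0:=\max_{1\le j\le k}N(\tmu_j).
\]
Then $\stV$ is an open neighborhood of $\stK$, and every $\tnu\in\stV$ belongs to some $\stV(\tmu_j)$, so it satisfies $\tnu(\tU_{N(\tmu_j)})>1-\delta^2$ with $1\le N(\tmu_j)\le N_0$, as required. The main point is the Oseledets-based pointwise convergence $\tmu(\tU_N)\to 1$ on $\stH^i$; once that is available, compactness and lower semicontinuity finish the proof with no extra difficulty.
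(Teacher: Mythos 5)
Your proof is correct and follows essentially the same route as the paper's: show $\tmu(\tU_N)>1-\delta^2$ for each fixed $\tmu\in\stK$ and large enough $N$, use openness of $\tU_N$ to get a weak-$*$ neighborhood on which this persists, then extract a finite subcover by compactness. You simply spell out two steps the paper leaves terse — the Oseledets-based pointwise limit $\tmu(\tU_N)\to 1$ (the paper just says "by $\chi$-hyperbolicity") and the invocation of lower semicontinuity via the portmanteau theorem — and your buffer $1-\tfrac12\delta^2$ is harmless but unnecessary since a strict inequality $\tmu(\tU_N)>1-\delta^2$ already suffices.
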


\begin{proof}
Let $\tmu\in\stK$. By the definition of $\stH^i$, we have $\tmu=\tmu^+$, where $\mu$ is $\chi$-hyperbolic. By the definition of unstable lifts,  $(x,E)=(x,E^+_x)$ $\tmu$-a.e.. Therefore, by $\chi$-hyperbolicity,  there is an  $N_1=N_1(\tmu)\ge 1$
such that $\tmu(\tU_{N_1})>1-\delta^2$.

By the definition of the weak-$*$ topology, $\tmu$ has an open neighborhood $\stV(\tmu)$ s.t.
 $$
  \forall \tnu\in\stV(\tmu),\quad \tnu(\tU_{N_1})>1-\delta^2.
 $$
By the compactness of $\stK$, there are $\tmu_1,\dots,\tmu_{T}\in\stK$ such that $\stK\subset\bigcup_{\ell=1}^{T} \stV(\tmu_\ell)$. The lemma holds with $\stV:=\bigcup_{i=1}^{T} \stV(\tmu_i)$, $N_0:=\max(N_1(\tmu_1),$ $\dots,N_1(\tmu_{T}))$:
If $\tnu\in\stV(\tmu_\ell)$, take $N=N_1(\tmu_\ell)$.
\end{proof}

\begin{proof}[Proof of Theorem~\ref{thm-CE-SPR}]
Recall that condition (EH) holds with $\chi>0$, so it must be the case that $0<\chi\leq \sup_x\log\|Df_x^{-1}\|=L$.
We will prove   Property (*) for any $\chi_0\in (0,\chi)$, and the SPR property will then follow from Prop~\ref{prop-crit-SPR-ET}. 

Fix some index $1<i<d$ which satisfies \eqref{eq-htop-idim}, and some  $\tau\in (0,1)$. To begin with, we fix $\eps>0$ so that $(1-\eps)\chi=\chi_0$ and apply Lemma~\ref{lem-comp-exp-time} with $\stK:=\stL^i$ and
 $$
  \delta:=\min(\eps\cdot\chi/(4L+4),(1-\tau)/4),
 $$
obtaining $N_0,\stV$. By the definition of $\stL^i$ as a limit set, there is $h_0=h_0(i)\!<\!\hTOP(f|_X)$ such that $\stV$ contains the unstable lift $\tmu^+$ of all $\mu\in\Proberg(f|_X)$ with $i(\mu)\!=\!i$ and  $h(f,\mu)\!>\!h_0$.

Fix such a measure $\mu$. Lemma~\ref{lem-comp-exp-time} gives $1\le N\le N_0$ with $\tmu^+(\tU_{N})>1-\delta^2$.
By \eqref{eq-Pliss-gamma} (a consequence  of the Pliss Lemma),  $\tmu^+(\tK)>1-\delta \geq 1-(1-\tau)/4$, where
 $$
    \tK:=\{\tx\in\mathfrak G(i,M):\forall n\ge0\; |\{0\le k<n:\tf^{-k}(\tx)\in\tU_{N}\}|\ge
    (1-\delta)      n\}.
 $$

 Next, we bound $\|Df^{-n}_x|_E\|$ for all $\tx:=(x,E)\in\tK$ and $n\ge 0$ (not necessarily large). To do so, we divide the negative orbit segment $\tx,\tf^{-1}\tx,\dots,\tf^{-n+1}\tx$ using  the visits to $\tU_{N}$ by setting $b_0:=0$ and, inductively, for $i\ge1$:
 $$
    a_i:=\inf\{k\ge b_{i-1}:\tf^{-k}\tx\in\tU_{N}\},\; b_i:=a_i+N.
 $$
 
We set $I:=\sup\{i\ge1:b_i\le n\}$ and $V:=\bigcup_{i=1}^I [a_i,b_i)$ and note that $V$ contains all visits to $\tU_{N}$ up to time $n-N$. Therefore, since $N\leq N_0$ and $\chi\leq L$, 
 $$
   |[0,n)\setminus V|\le \frac{\eps\cdot\chi}{4L+4} n+N_0
  \text{ and }|V|\geq n\biggl(1-\frac{\eps}{4}\biggr)-N_0.
 $$
Thus, $\|Df^{-n}_x|_E\| \leq \prod_{i=1}^I e^{-\chi(b_i-a_i)} \times \prod_{k\in [0,n)\setminus V} e^L
       =e^{-\chi|V|} \times e^{L|[0,n)\setminus V|}$
\vspace{-0.05cm}       
\begin{align*}
        \hspace{1cm} & \le e^{-(1-\eps/4)n\chi + \chi N_0} \times
      e^{(\chi \eps /4)n+L\cdot N_0} \ \ \ (\text{since } |[0,n)\setminus V|\le n(\eps\cdot \chi/4L)+N_0) \\
      & \le e^{-(1-\eps/2)n\chi} \times e^{(\chi+L)N_0}\\
      &=e^{-(1-\eps)\chi n} \times e^{-\frac{\eps }{2}\chi n+(\chi+L)N_0}=e^{-\chi_0 n} \times e^{-\frac{\eps }{2}\chi n+(\chi+L)N_0} \ \  (\text{since } (1-\eps)\chi=\chi_0).    
 \end{align*}
Now we fix  an integer $n_0=n_0(i)\ge1$ so large that $\frac\eps2\chi n_0 \ge (\chi+L)N_0$. Taking $n=j n_0$, we obtain that $\|Df_x^{-jn_0}|_E\|\leq e^{-\chi_0 j n_0}$ 
 for all $j\ge 0$. Thus,
  $$
    \tmu\left(\{\tx\in\mathfrak G(i,M):\forall j\ge0\; \|Df^{-n_0j}|_{E^+_x}\|\le e^{-\chi_0 n_0 j}\}\right) \ge \tmu(\wt{K})> 1-(1-\tau)/2.
 $$
 
Since
$\tmu^+$ is the unstable lift of some $\mu\in\Proberg(f|_X)$ with $h(f,\mu)>h_0$ and unstable dimension $i$,  the above yields, for any such $\mu$,
 $$
    \mu\left(\{x\in M:\forall j\ge0\; \|Df^{-n_0j}|_{E^+_x}\|\le e^{-\chi_0 n_0 j}\}\right) > 1-(1-\tau)/2.
 $$
Similarly, 
 $
    \mu\left(\{x\in M:\forall j\ge0\; \|Df^{n_0j}|_{E^-_x}\|\le e^{-\chi_0 n_0 j} \}\right) > 1-(1-\tau)/2,
 $
and therefore $\mu(P_{n_0})>\tau$.
 We have just proved \eqref{e-exp-unif},  for every ergodic measure with entropy bigger than $h_0$, and a {\em fixed}  unstable dimension $i$ satisfying \eqref{eq-htop-idim}. 
 
 If there are several such $i$, then we let $n_0(i)$ and $h_0(i)$ be the numbers obtained above, and we 
make the following definitions:  $n_0$ is  the least common multiple of $n_0(i)$, and
$h_0$ is the maximum  of $h_0(i)$ and 
$
h_0':=\sup\{h(f,\nu): \nu\in\Proberg(f|_X),\text{ $i(\nu)$ does not satisfy \eqref{eq-htop-idim}}\}.
$
Then $h_0<h_{\Bor}(f|_X)$, and 
   $P_{n_0}\supset \bigcup_i P_{n_0(i)}$. 
  So  Property (*) holds with $\tau, n_0$ and $h_0$.
\end{proof}

\begin{remark}\label{r.entropy-tightness}
Notice that  we proved more than what was required, since we showed that the number $\tau$ in  \eqref{e.SPR-local} can be taken arbitrarily close to 1: In the proof of Prop.~\ref{prop-crit-SPR-ET},  $\tau:=1-\delta$, with $\delta$ arbitrarily small. The stronger property we proved is called {\em entropy tightness}, and will be investigated further in \S\ref{s.defSPR} and in \S \ref{s.converse-statements}.
\end{remark}

\subsection{Sufficiently Smooth Surface Diffeomorphisms are SPR (Thm~\ref{t.SPR-C-infinity})}\label{sec-surf-SPR}
We now apply   Thm~\ref{thm-CE-SPR} to surface diffeomorphisms, first in the $C^\infty$ setting, then in the more general  $C^r$ setting. In the $C^\infty$ setting, we obtain the following generalization of  Theorem \ref{t.SPR-C-infinity} in the introduction.

\begin{theorem}\label{t.tight-C-infinity}
Let $f$ be a $C^\infty$  diffeomorphism of a closed surface. 
 \begin{enumerate}
   \item If $h_\top(f)>0$, then $f$  is SPR;
   \item Every Borel  homoclinic class of $f$ is SPR;
   \item Every topological homoclinic class of $f$ is SPR;
   \item Every invariant Borel set $X$ such that $h_{\top}(f|_{\ov{X}})=h_{\Bor}(f|_X)>0$ is SPR.
 \end{enumerate}
\end{theorem}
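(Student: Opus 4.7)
\smallskip
\noindent\textbf{Proof plan.} The uniform strategy for all four items is to apply Theorem~\ref{thm-CE-SPR}, which reduces SPR on an invariant Borel set to the two properties (EH) and (EC) on that set. I will verify these separately, using Remark~\ref{r.EH-in-Dim-2} for entropy hyperbolicity and the surface work of~\cite{BCS-2} for entropy continuity of $\Lambda^+$, and then I will derive (1), (2), (3) from (4) by adjusting the ambient set.

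\smallskip
\noindent\textbf{Step 1: establishing (4).}
Let $X$ be an invariant Borel set with $h_\top(f|_{\overline{X}})=h_\Bor(f|_X)>0$. Remark~\ref{r.EH-in-Dim-2} gives (EH) on $X$ with $i=1$ and $\chi=h_\top(f|_{\overline X})/2$: any weak-$*$ limit $\mu$ of ergodic $\mu_n\in\Proberg(f|_X)$ with $h(f,\mu_n)\to h_\Bor(f|_X)$ is carried by $\overline{X}$; Newhouse's upper semicontinuity of entropy in the $C^\infty$ category \cite{Newhouse-Entropy} implies $h(f,\mu)=h_\top(f|_{\overline X})$, so almost every ergodic component of $\mu$ is an MME of $f|_{\overline X}$; Ruelle's inequality $h(f,\cdot)\leq \lambda^+(\cdot)$ in dimension two and the analogous bound $h(f,\cdot)\leq -\lambda^-(\cdot)$ then yield $\lambda^+(x)>\chi>-\chi>\lambda^-(x)$ $\mu$-a.e. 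For (EC), the same weak-$*$ setup applies, and by the main continuity result of~\cite{BCS-2}, the $C^\infty$ hypothesis forces $\lambda^+(\mu_n)\to\lambda^+(\mu)$ (equivalently $\Lambda^+(\mu_n)\to\Lambda^+(\mu)$) along any such sequence of ergodic measures on $\overline X$ whose entropies converge to the supremum. Theorem~\ref{thm-CE-SPR} then delivers SPR on $X$.

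\smallskip
\noindent\textbf{Step 2: deducing (1), (2), (3).}
For (1), apply (4) with $X=M$ since $h_\Bor(f)=h_\top(f)>0$. For (2), let $X$ be a Borel homoclinic class. If $X$ is trivial, i.e., a hyperbolic periodic orbit, then SPR is immediate: $X$ itself can serve as a $(\chi,\varepsilon)$-Pesin block for any $\varepsilon>0$ and the only invariant measure on $X$ gives it full mass. If $X$ is nontrivial, it contains a transverse homoclinic orbit of some hyperbolic periodic point, so by Smale's theorem $h_\Bor(f|_X)>0$; by Remark~\ref{r.topological-class}, in the $C^\infty$ case $h_\Bor(f|_{\overline X\setminus X})\leq \lambda_{\max}(f)/\infty=0$, hence $h_\Bor(f|_X)=h_\Bor(f|_{\overline X})=h_\top(f|_{\overline X})$ by the variational principle on the compact invariant set $\overline X$, and (4) applies. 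For (3), a topological homoclinic class equals $\overline X$ for some Borel homoclinic class $X$. If $h_\top(f|_{\overline X})=0$, then $\overline X$ is a single hyperbolic periodic orbit (trivial case, SPR is immediate as above). Otherwise set $Y=\overline X$; it is a compact invariant set, so $h_\top(f|_{\overline Y})=h_\top(f|_Y)=h_\Bor(f|_Y)>0$ by the variational principle, and (4) applied to $Y$ gives SPR.

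\smallskip
\noindent\textbf{Main obstacle.} The only deep input is the verification of (EC), which hinges on the continuity of $\Lambda^+$ along entropy-maximizing sequences for $C^\infty$ surface diffeomorphisms; this is precisely where the $C^\infty$ regularity is essential and where the work of~\cite{BCS-2} is invoked. Everything else is a matter of correctly matching the hypotheses of Theorem~\ref{thm-CE-SPR} to the invariant set at hand (surface dimension for (EH), compactness or Remark~\ref{r.topological-class} to identify $h_\Bor(f|_X)$ with $h_\top(f|_{\overline X})$), and handling trivial classes by inspection.
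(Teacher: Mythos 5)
Your proof is correct and follows essentially the same strategy as the paper's: establish Item (4) via Theorem~\ref{thm-CE-SPR} (checking (EH) by Remark~\ref{r.EH-in-Dim-2} and (EC) via \cite{BCS-2}, exactly as in Lemma~\ref{lem-surf-cpt-SPR}), then derive (1), (2), (3) by suitable choices of $X$ and by using Remark~\ref{r.topological-class} to identify $h_\Bor(f|_X)$ with $h_\top(f|_{\ov X})$ in the positive-entropy case and Smale/Newhouse for the zero-entropy (periodic-orbit) case. The only differences are cosmetic: you unroll the content of Remark~\ref{r.EH-in-Dim-2} and Theorem~\ref{t.surf-SPR} inline, and your trivial/nontrivial dichotomy in Step 2 replaces the paper's positive/zero-entropy dichotomy, which is logically equivalent.
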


Indeed, item (1) is Theorem \ref{t.SPR-C-infinity}. Notice that (1) only implies the SPR property for homoclinic classes with  \emph{large entropy} (Prop~\ref{p.decomposition}). By contrast, Items (2) and (3) apply to \emph{all} homoclinic classes.

\medskip
\indent The proof of Thm~\ref{t.tight-C-infinity} relies on the following corollary of  \cite[Thm C]{BCS-2}:

\begin{theorem}[\cite{BCS-2}]\label{t.surf-SPR}
Let $f$ be a $C^\infty$ diffeomorphism of a closed surface, and let $X$ be a Borel invariant set such that $h_\top(f|_{\ov{X}})=\hTOP(f|_X)>0$.
For any sequence of ergodic measures $\mu_n\in\Proberg(f|_X)$ which  converges weak-$*$ on $M$ to some possibly non-ergodic $\mu\in\Prob(f)$, if $h(f,\mu_n)\to\hTOP(f|_X)$, then \begin{itemize}
\item[$\circ$] $\mu$ is an MME of $f|_{\ov{X}}$,
\item[$\circ$] $\lambda^1(f,\mu_n)\to\lambda^1(f,\mu).$
\end{itemize}
\end{theorem}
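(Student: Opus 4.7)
The plan is to deduce the two conclusions from two separate ingredients: Newhouse's upper semi-continuity of the metric entropy for $C^\infty$ diffeomorphisms, and \cite[Thm C]{BCS-2} itself. These give the two opposite inequalities needed in each of the two statements.

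For the first conclusion, that $\mu$ is an MME of $f|_{\ov{X}}$: since each $\mu_n$ is supported on $X$, the weak-$*$ limit $\mu$ is supported on the compact invariant set $\ov{X}$. The variational principle gives $h(f,\mu) \leq h_\top(f|_{\ov{X}}) = \hTOP(f|_X)$, using the hypothesis that these two quantities coincide. Conversely, Newhouse's theorem asserts that $\nu\mapsto h(f,\nu)$ is upper semi-continuous on $\Prob(f)$ when $f$ is $C^\infty$ on a compact manifold, so $h(f,\mu)\geq\limsup_n h(f,\mu_n)=\hTOP(f|_X)$. Equality holds, and $\mu$ maximizes the entropy on $\ov{X}$.

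For the second conclusion, the upper bound $\limsup_n \lambda^1(f,\mu_n)\leq \lambda^1(f,\mu)$ is a direct consequence of Proposition \ref{prop-uscLambda} (applied with $i=1$). The matching lower bound is the content of \cite[Thm C]{BCS-2}, which establishes a form of lower semi-continuity for the top Lyapunov exponent along sequences of ergodic measures whose entropies approach the topological entropy of the ambient compact space. The equality $h_\top(f|_{\ov{X}})=\hTOP(f|_X)$ lets us invoke that theorem with $\ov{X}$ playing the role of the ambient compact dynamical system, and with the sequence $\mu_n$ satisfying the required near-maximization of entropy.

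The main obstacle is the lower bound on the Lyapunov exponent. A priori one can envision ergodic measures with nearly maximal entropy accumulating on a limit $\mu$ that places significant mass on an invariant subset where the derivative is much less expanding; this would force $\lambda^1(f,\mu_n)$ to drop in the limit and break the claimed convergence. Ruling this out is nontrivial and is not a soft consequence of the variational principle or the Ruelle inequality; the $C^\infty$ hypothesis enters decisively through the Yomdin-type entropy estimates underlying \cite{BCS-2}. Once this lower semi-continuity is granted, the theorem follows at once by combining it with Proposition \ref{prop-uscLambda} and the two sides of Newhouse's semi-continuity.
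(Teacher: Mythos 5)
Your proof of the first conclusion is correct and is in fact slightly more direct than the paper's. Since each $\mu_n$ is carried by $X$, the limit $\mu$ is carried by the compact invariant set $\ov{X}$, so the variational principle gives $h(f,\mu)\le h_\top(f|_{\ov{X}})=\hTOP(f|_X)$; Newhouse's upper semi-continuity supplies the reverse inequality, and equality follows. The paper instead obtains this as a byproduct of the decomposition argument discussed below, so your argument is a legitimate (and cleaner) alternative for that part.

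Your treatment of the second conclusion has a genuine gap: you mischaracterize what \cite[Thm C]{BCS-2} says. According to the paper's use of it, Theorem~C is \emph{not} a direct lower semi-continuity statement for $\lambda^1$. Rather, it produces a convex decomposition $\mu=(1-\beta)\nu_0+\beta\nu_1$ with $0<\beta\le1$, $h(f,\nu_1)>0$, and
$$
\lim_n h(f,\mu_n)\le \beta\, h(f,\nu_1),\qquad \beta=\frac{\lim_n\lambda^1(f,\mu_n)}{\lambda^1(f,\nu_1)}.
$$
The convergence of the exponents does not come out of this statement by itself; it requires the additional entropy argument that $\beta=1$. Concretely: since $\nu_1\ll\mu$ and $\mu(\ov X)=1$, the variational principle gives $h(f,\nu_1)\le h_\top(f|_{\ov X})=\hTOP(f|_X)$; combined with $\hTOP(f|_X)=\lim_n h(f,\mu_n)\le\beta\, h(f,\nu_1)\le\beta\,\hTOP(f|_X)$ and $\hTOP(f|_X)>0$, this forces $\beta\ge1$, hence $\beta=1$ and $\mu=\nu_1$. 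Only \emph{then} does the identity $\beta=\lim_n\lambda^1(\mu_n)/\lambda^1(\nu_1)$ deliver $\lim_n\lambda^1(f,\mu_n)=\lambda^1(f,\mu)$. So the two conclusions are intertwined through the same $\beta=1$ computation, rather than being provable separately, and the lower bound for $\lambda^1$ you want is not ``the content of Theorem~C'' as stated. Your upper bound via Proposition~\ref{prop-uscLambda} is correct but is actually superfluous once $\beta=1$ is established, since that gives the exact limit, not just a one-sided bound. The overall strategy (Newhouse plus Theorem~C of \cite{BCS-2}) matches the paper, and you correctly locate where the $C^\infty$ hypothesis enters, but to complete the argument you need the precise decomposition form of Theorem~C and the $\beta=1$ deduction.
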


\begin{proof}
Suppose $\mu_n\in\Proberg(f|_X)$ converge weak-$*$ on $M$ to some $\mu\in\Prob(f)$, and  $h(f,\mu_n)\to \hTOP(f|_X)>0$. By Newhouse's upper semi-continuity theorem \cite{Newhouse-Entropy},  $h(f,\mu)\ge \hTOP(f|_X)$. By Theorem C of \cite{BCS-2}, the limit measure $\mu$ can be written as $\mu=(1-\beta)\nu_0+\beta\nu_1$ with $0<\beta\le 1$, $\nu_0,\nu_1\in\Prob(f)$ such that  $h(f,\nu_1)>0$, and
  $$
     \lim_n h(f,\mu_n) \le \beta \cdot h(f,\nu_1) \text{ with }
     \beta= \frac{\lim_n \lambda^1(f,\mu_n)}{\lambda^1(f,\nu_1)}.
 $$
Note that $\nu_1(\ov{X})=1$ since $\nu_1\ll\mu$. Hence $h(f,\nu_1)\le h_\top(f|_{\ov X})=\hTOP(f|_{X})$. The above implies $h(f,\nu_1)=\hTOP(f|_X)$ and $\beta=1$ so $\mu=\nu_1$. 
\end{proof}

First, we prove Item (4), and then we will deduce the rest of the theorem.

\begin{lemma}\label{lem-surf-cpt-SPR}
Let $f$ be a $C^\infty$ diffeomorphism of a closed surface.
Any invariant Borel set $X\subset M$ such that $h_\top(f|_{\ov{X}})=\hTOP(f|_X)>0$ has the SPR property.
\end{lemma}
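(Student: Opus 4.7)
The plan is to apply Theorem~\ref{thm-CE-SPR} to $X$, so I must verify that $f|_X$ satisfies the entropy hyperbolicity condition (EH) and the entropy continuity of $\Lambda^+$ condition (EC). Set $h_\ast := h_\Bor(f|_X) = h_\top(f|_{\ov X}) > 0$, and consider an arbitrary sequence $\mu_n \in \Proberg(f|_X)$ with $\mu_n \wsc \mu$ on $M$ and $h(f,\mu_n) \to h_\ast$. The key input is Theorem~\ref{t.surf-SPR} (from \cite{BCS-2}), which here gives (i) that $\mu$ is an MME of $f|_{\ov X}$, so $h(f,\mu) = h_\ast$, and (ii) that $\lambda^1(f,\mu_n) \to \lambda^1(f,\mu)$.

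To establish (EH), I will use the ergodic decomposition $\mu = \int \mu_\omega\, d\mu(\omega)$: each ergodic component $\mu_\omega$ is supported on $\ov X$, so $h(f,\mu_\omega) \leq h_\top(f|_{\ov X}) = h_\ast$; but since $\int h(f,\mu_\omega)\, d\mu(\omega) = h(f,\mu) = h_\ast$, this forces $h(f,\mu_\omega) = h_\ast$ for $\mu$-a.e.\ $\omega$. Applying Ruelle's inequality to $f$ and to $f^{-1}$ on each such component yields $\lambda^1(\mu_\omega) \geq h_\ast > 0$ and $\lambda^2(\mu_\omega) \leq -h_\ast < 0$. Hence for $\mu$-a.e.\ $x$, $\lambda^1(x) > h_\ast/2 > -h_\ast/2 > \lambda^2(x)$, so (EH) holds with index $i := 1$ and $\chi := h_\ast/2$. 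This is exactly the argument sketched in Remark~\ref{r.EH-in-Dim-2}.

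To establish (EC), note that in dimension two every ergodic measure $\nu$ with $h(f,\nu)>0$ is hyperbolic with unstable dimension $1$: Ruelle's inequality applied to $f$ gives $\lambda^1(\nu) \geq h(f,\nu) > 0$, and then Ruelle applied to $f^{-1}$ forces $\lambda^2(\nu) \leq -h(f,\nu) < 0$. Consequently $\Lambda^+(\nu) = \lambda^1(\nu)$ whenever $h(f,\nu) > 0$. This applies to $\mu_n$ for all $n$ large enough (since $h(f,\mu_n) \to h_\ast > 0$), and to $\mu$ as well, because by (EH) almost every ergodic component of $\mu$ has unstable dimension~$1$ and hence $\Lambda^+(\mu) = \lambda^1(\mu)$. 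Therefore (ii) yields $\Lambda^+(\mu_n) = \lambda^1(\mu_n) \to \lambda^1(\mu) = \Lambda^+(\mu)$, which is (EC).

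With (EH) and (EC) in place, Theorem~\ref{thm-CE-SPR} gives that $f$ is SPR on $X$. The substantive step is (EC), which rests on the continuity of $\lambda^1$ along entropy-maximizing sequences provided by Theorem~\ref{t.surf-SPR}; once that is granted, (EH) is routine from Newhouse's upper semicontinuity plus Ruelle's inequality in dimension two, and Theorem~\ref{thm-CE-SPR} does the rest. The main potential obstacle, namely that the limit measure $\mu$ may live partly on $\ov X\setminus X$ and need not be ergodic, is neutralized by the fact that both (EH) and (EC) are pointwise/integral conditions on $\mu$ that survive under ergodic decomposition.
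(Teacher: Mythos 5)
Your proof is correct and follows essentially the same route as the paper's: invoke Remark~\ref{r.EH-in-Dim-2} (Newhouse upper semicontinuity plus Ruelle in dimension two) for (EH), note that the unstable dimensions are all one so $\Lambda^+=\lambda^1$, derive (EC) from Theorem~\ref{t.surf-SPR}, and close with Theorem~\ref{thm-CE-SPR}. The only cosmetic difference is that you deduce $h(f,\mu)=h_\ast$ from the first bullet of Theorem~\ref{t.surf-SPR} rather than directly from Newhouse's semicontinuity theorem, but these are interchangeable and you in any case cite the Remark, so the argument is the same.
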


\begin{proof}
By Remark \ref{r.EH-in-Dim-2},  $f$ satisfies   (EH) on $X$. Moreover, the unstable dimensions in (EH) and (EC) are all equal to one, so  $\Lambda^+=\Lambda^{+,1}=\lambda^1$.
Now Thm~\ref{t.surf-SPR} implies the entropy continuity of $\Lambda^+$ on $X$.
By Theorem~\ref{thm-CE-SPR}, $f|_X$ is SPR. 
\end{proof}

\begin{proof}[Proof of Theorem~\ref{t.tight-C-infinity}] 
Lemma~\ref{lem-surf-cpt-SPR} is Item (4), and Item (1) is the special case  $X=M$ in Item (4). Item (3)  follows from Item (4) in the positive entropy case. So does Item (2), because by  Remark \ref{r.topological-class}, $\hTOP(f|_{\ov{X}\setminus X})=0$ so $\hTOP(f|_X)=h_\top(f|_{\ov{X}})$. 
To settle the zero entropy case, recall  that  a zero entropy topological homoclinic class is just a hyperbolic periodic orbit -- see  \cite{Smale,Newhouse-Homoclinic}.
\end{proof}

\begin{remark}\label{r.proof-SPR-surf}
 The proof of Item (2)  involves \cite{BCS-1}, through Remark~\ref{r.topological-class}.
 However, the proof of Items (1), (3), and (4) 
 relies on \cite{BCS-2}, but not on \cite{BCS-1}.
\end{remark}

Recall the definition of $\lambda_{\max}(f)$ from \eqref{e.dilation}.
Thm~\ref{t.tight-C-infinity} can be generalized  to  $C^r$ diffeomorphisms, with finite $r$, as follows. 
\begin{theorem}\label{t.tight-Cr}
Fix $r>1$, and  a $C^r$ diffeomorphism $f$ of a closed surface. Then:
 \begin{enumerate}
   \item If $h_\top(f)>\lambda_{\max}(f)/r$, then $f$  is SPR.  
   \item Every Borel  homoclinic class $X$ s.t. $\hTOP(f|_X)>\lambda_{\max}(f)/r$ is SPR.
   \item Every topological homoclinic class $\ov{X}$ s.t. $h_\top(f|_{\ov{X}})>\lambda_{\max}(f)/r$
   is SPR.
    \item Every invariant Borel set $X$ s.t. $h_{\top}(f|_{\ov{X}})=h_{\Bor}(f|_X)>\lambda_{\max}(f)/r$ is SPR.
 \end{enumerate}
\end{theorem}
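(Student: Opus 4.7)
The plan is to run the scheme of the proof of Theorem~\ref{t.tight-C-infinity} verbatim, with Theorem~\ref{t.surf-SPR} replaced by its $C^r$ analogue due to Burguet~\cite{Burguet-Cr}. As before, the strategy is to establish Item~(4) first, via Theorem~\ref{thm-CE-SPR}, and to deduce the other three items from it.

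For Item~(4), fix an invariant Borel set $X$ with $h_{\top}(f|_{\overline X})=h_{\Bor}(f|_X)>\lambda_{\max}(f)/r$, and verify conditions (EH) and (EC) of Theorem~\ref{thm-CE-SPR}. In dimension two $\Lambda^+=\lambda^1$, and the only unstable dimension to consider in (EH) is $i=1$. Given $\mu_n\in\Proberg(f|_X)$ with $\mu_n\wsc\mu$ and $h(f,\mu_n)\to h_{\Bor}(f|_X)$, Burguet's $C^r$ extension of \cite[Thm~C]{BCS-2} provides a convex decomposition $\mu=(1-\beta)\nu_0+\beta\nu_1$ with $0<\beta\le 1$, $h(f,\nu_1)>0$, $\nu_0$ carried by the set of non-hyperbolic points, and
\[
 \lim_n h(f,\mu_n)\leq \beta\,h(f,\nu_1)+\lambda_{\max}(f)/r,\qquad \beta=\frac{\lim_n\lambda^1(f,\mu_n)}{\lambda^1(f,\nu_1)}.
\]
Since $\nu_1\ll\mu$ one has $\nu_1(\overline X)=1$, hence $h(f,\nu_1)\leq h_{\top}(f|_{\overline X})=h_{\Bor}(f|_X)$, and the hypothesis $h_{\Bor}(f|_X)>\lambda_{\max}(f)/r$ forces $\beta\cdot h(f,\nu_1)\geq h_{\Bor}(f|_X)-\lambda_{\max}(f)/r>0$, so $h(f,\nu_1)$ is uniformly bounded below and $\beta$ is uniformly bounded away from zero along the sequence. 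Since $\lambda^1(\nu_0)=0$, we obtain $\lim_n \lambda^1(f,\mu_n)=\beta\lambda^1(f,\nu_1)=\lambda^1(f,\mu)$, which is (EC). Ruelle's inequality applied to $\nu_1$ gives $\lambda^1(f,\nu_1)\geq h(f,\nu_1)>\lambda_{\max}(f)/r$, so any $\chi\in(0,\tfrac12(h_{\Bor}(f|_X)-\lambda_{\max}(f)/r))$ works in (EH). Theorem~\ref{thm-CE-SPR} then yields SPR on $X$.

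Items (1)--(3) follow from (4) exactly as in the $C^\infty$ case. Item (1) is (4) applied to $X:=M$. For Item (3), apply (4) to the closed invariant set $\overline X$ and use the variational principle $h_{\Bor}(f|_{\overline X})=h_{\top}(f|_{\overline X})>\lambda_{\max}(f)/r$. For Item (2), let $X$ be a Borel homoclinic class with $h_{\Bor}(f|_X)>\lambda_{\max}(f)/r$; Remark~\ref{r.topological-class} (which uses \cite[Prop.~6.8]{BCS-1}) gives $h_{\Bor}(f|_{\overline X\setminus X})\leq\lambda_{\max}(f)/r<h_{\Bor}(f|_X)$, so $h_{\top}(f|_{\overline X})=h_{\Bor}(f|_X)$ and (4) applies.

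The main obstacle is extracting the precise form of Burguet's decomposition needed above. In the $C^\infty$ setting Newhouse's upper semicontinuity is lossless, which forces $\beta=1$ and $\mu=\nu_1$ to be an MME of $f|_{\overline X}$, so that (EH) and (EC) are immediate. In the $C^r$ setting the Yomdin-type defect $\lambda_{\max}(f)/r$ is present, and one must carefully exploit the strict entropy gap $h_{\Bor}(f|_X)-\lambda_{\max}(f)/r>0$ to guarantee that the hyperbolic piece $\nu_1$ carries a definite fraction of $\mu$ and has entropy bounded below uniformly along the sequence. Once this bookkeeping is in place, the remainder of the argument is a direct transcription of the $C^\infty$ proof of Theorem~\ref{t.tight-C-infinity}.
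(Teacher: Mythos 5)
Your high-level strategy is the paper's own: run the proof of Theorem~\ref{t.tight-C-infinity} with Burguet's $C^r$ input in place of \cite[Thm~C]{BCS-2}, verify (EH) and (EC) on $X$, and invoke Theorem~\ref{thm-CE-SPR}. Items (1)--(3) are then deduced from Item~(4) exactly as in the $C^\infty$ case, with Remark~\ref{r.topological-class} handling Item~(2) via the bound $h_{\Bor}(f|_{\overline X\setminus X})\leq\lambda_{\max}(f)/r$. So far so good.

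The gap is in your verification of (EH). You conjecture a convex decomposition $\mu=(1-\beta)\nu_0+\beta\nu_1$ with the $C^r$ defect $\lambda_{\max}(f)/r$ built into the entropy inequality, and from it you extract only that $\beta$ is bounded away from zero. But (EH) demands $\lambda^1(x)>\chi>-\chi>\lambda^2(x)$ for $\mu$-almost every $x$, and if the non-hyperbolic piece $\nu_0$ carries positive mass this fails on its support, regardless of how large $\beta$ is. In the $C^\infty$ argument the chain $h_{\Bor}(f|_X)=\lim_n h(f,\mu_n)\leq\beta h(f,\nu_1)\leq h(f,\nu_1)\leq h_{\Bor}(f|_X)$ is tight and forces $\beta=1$; once you insert the $\lambda_{\max}(f)/r$ term the chain is no longer tight and $\beta=1$ does not follow. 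You acknowledge this when you say $\nu_1$ carries ``a definite fraction'' of $\mu$, but a definite fraction is not enough for (EH). Note also that your assertion $\lambda^1(\nu_0)=0$ is not part of the \cite[Thm~C]{BCS-2} decomposition as quoted in the proof of Theorem~\ref{t.surf-SPR}, and without it your identification $\lambda^1(\mu)=\beta\lambda^1(\nu_1)$ (needed for (EC)) is unjustified as well.

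The fix, which is what the paper does, is to not reconstruct the corollary from a decomposition at all: Theorem~\ref{t.surf-SPR-Cr} (Burguet's \cite[Cor~1]{Burguet-Cr}) already asserts directly that the weak-$*$ limit $\mu$ \emph{is} an MME of $f|_{\overline X}$ and that $\lambda^1(f,\mu_n)\to\lambda^1(f,\mu)$. From the MME statement, by affinity of entropy a.e.\ ergodic component of $\mu$ has entropy $h_\top(f|_{\overline X})>\lambda_{\max}(f)/r>0$, and Ruelle's inequality (applied to $f$ and $f^{-1}$) gives $\lambda^1(x)>h_\top(f|_{\overline X})>-h_\top(f|_{\overline X})>\lambda^2(x)$ $\mu$-a.e., which is (EH) with any $\chi<h_\top(f|_{\overline X})$, exactly as in Remark~\ref{r.EH-in-Dim-2}. (EC) is the second bullet of Theorem~\ref{t.surf-SPR-Cr}. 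Replacing your decomposition analysis with this two-line deduction closes the proof.
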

\noindent
The proof of this result relies on the following generalization of  Thm~\ref{t.surf-SPR} to the $C^r$ case, due to D. Burguet~\cite[Cor~1]{Burguet-Cr}:

\begin{theorem}[Burguet]\label{t.surf-SPR-Cr}
Let $f$ be a $C^r$ diffeomorphism of a closed surface, and let $X$ be a Borel invariant set such that $h_\top(f|_{\ov{X}})=\hTOP(f|_X)>\lambda_{\max}(f)/r$.
For any sequence of ergodic measures $\mu_n\in\Proberg(f|_X)$ which converges weak-$*$ on $M$ to some possibly non-ergodic $\mu\in\Prob(f)$, if $h(f,\mu_n)\to\hTOP(f|_X)$,
then:
\begin{itemize}
\item[$\circ$] $\mu$ is an MME of $f|_{\ov{X}}$,
\item[$\circ$] $\lambda^1(f,\mu_n)\to\lambda^1(f,\mu).$
\end{itemize}
\end{theorem}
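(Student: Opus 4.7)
\medbreak\noindent
\textbf{Proof plan for Theorem~\ref{t.tight-Cr}.} The plan is to mirror the proof of Theorem~\ref{t.tight-C-infinity}, substituting Burguet's Theorem~\ref{t.surf-SPR-Cr} for Theorem~\ref{t.surf-SPR} and replacing the use of Newhouse's upper semi-continuity (which fails for finite smoothness) with a direct application of the Ruelle inequality on the ergodic components of the limit measure. The key step is to establish Item~(4); the other three items will follow from it by the same elementary reductions used in the $C^\infty$ case.

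\medbreak\noindent
\textbf{Proof of Item~(4).} Fix an invariant Borel set $X$ with $h_\top(f|_{\ov X})=\hTOP(f|_X)>\lambda_{\max}(f)/r$. I will verify the conditions (EH) and (EC) of \S\ref{s.thm-CE-SPR} and then invoke Theorem~\ref{thm-CE-SPR}. Consider any sequence $\mu_n\in\Proberg(f|_X)$ with $\mu_n\wsc\mu$ on $M$ and $h(f,\mu_n)\to\hTOP(f|_X)$. By Theorem~\ref{t.surf-SPR-Cr}, $\mu$ is an MME of $f|_{\ov X}$ and $\lambda^1(\mu_n)\to\lambda^1(\mu)$.

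To obtain (EH), fix any $0<\chi<\lambda_{\max}(f)/r$. The ergodic decomposition $\mu=\int \mu_\omega\,d\omega$ consists of $f$-invariant ergodic probabilities on $\ov X$, so $h(f,\mu_\omega)\le h_\top(f|_{\ov X})=\hTOP(f|_X)$ for a.e.\ $\omega$; since $\int h(f,\mu_\omega)\,d\omega=h(f,\mu)=\hTOP(f|_X)$, almost every component satisfies $h(f,\mu_\omega)=\hTOP(f|_X)>\lambda_{\max}(f)/r>\chi$. The Ruelle inequality then gives $\lambda^1(\mu_\omega)\ge h(f,\mu_\omega)>\chi$, and the same inequality applied to $f^{-1}$ gives $-\lambda^2(\mu_\omega)\ge h(f,\mu_\omega)>\chi$. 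Integrating over $\omega$ shows that $\lambda^1(x)>\chi$ and $\lambda^2(x)<-\chi$ for $\mu$-almost every $x$, establishing (EH) with unstable index $i=1$. For (EC), since $\mu$ has a.s.\ constant unstable dimension $1$, $\Lambda^+(\mu)=\lambda^1(\mu)$; likewise, by Lemma~\ref{lem-AB-index} (or directly since $\mathrm{dim}\,M=2$), $\Lambda^+(\mu_n)=\lambda^1(\mu_n)$ for all $n$ large. Burguet's convergence $\lambda^1(\mu_n)\to\lambda^1(\mu)$ therefore yields $\Lambda^+(\mu_n)\to\Lambda^+(\mu)$. Theorem~\ref{thm-CE-SPR} now implies that $f$ is SPR on $X$.

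\medbreak\noindent
\textbf{Reductions to the other items.} Item~(1) is Item~(4) applied to $X=M$, where the hypothesis $h_\top(f)>\lambda_{\max}(f)/r$ gives $h_\top(f|_{\ov M})=h_{\Bor}(f|_M)>\lambda_{\max}(f)/r$ via the variational principle. Item~(3) follows from Item~(4) applied to the compact invariant set $X=\ov X$: then $h_\top(f|_{\ov{X}})=h_{\Bor}(f|_{\ov X})$ by the variational principle, and the hypothesis of~(3) is exactly the hypothesis of~(4). For Item~(2), let $X$ be a Borel homoclinic class with $\hTOP(f|_X)>\lambda_{\max}(f)/r$. By Remark~\ref{r.topological-class}, $h_{\Bor}(f|_{\ov X\setminus X})\le \lambda_{\max}(f)/r<\hTOP(f|_X)$, so
\begin{equation*}
h_\top(f|_{\ov X})=\max\bigl(\hTOP(f|_X),\, h_{\Bor}(f|_{\ov X\setminus X})\bigr)=\hTOP(f|_X),
\end{equation*}
placing us once more under the hypothesis of Item~(4).

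\medbreak\noindent
\textbf{Principal obstacle.} The only genuinely non-trivial input is establishing condition (EH) in the $C^r$ setting. In the $C^\infty$ case this was immediate from Newhouse's upper semi-continuity of the entropy (Remark~\ref{r.EH-in-Dim-2}), but this tool is unavailable when $r<\infty$. The replacement is a two-step argument: first, Burguet's theorem identifies the weak-$*$ limit $\mu$ as an MME of $f|_{\ov X}$, guaranteeing that its entropy stays above the threshold $\lambda_{\max}(f)/r$; second, one transfers this information to the ergodic components via the ergodic decomposition of the entropy and applies the Ruelle inequality. The quantitative gap $h(f,\mu_\omega)>\lambda_{\max}(f)/r$ is precisely what forces the two Lyapunov exponents of $\mu_\omega$ to be bounded away from zero, and this is where the entropy hypothesis of the theorem is used in an essential way.
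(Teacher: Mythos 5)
Theorem~\ref{t.surf-SPR-Cr} is Burguet's result, which the paper cites from \cite[Cor~1]{Burguet-Cr} rather than proves; you correctly treat it as given, and what you prove instead is Theorem~\ref{t.tight-Cr}, which is exactly what the paper's one-line remark (``Thm~\ref{t.tight-Cr} follows from Thm~\ref{t.surf-SPR-Cr} as before'') derives from it. Your expansion of that remark is correct and follows the route the paper intends — mirroring Lemma~\ref{lem-surf-cpt-SPR} by verifying (EH) and (EC) and invoking Theorem~\ref{thm-CE-SPR}, with Item~(2) handled through Remark~\ref{r.topological-class}. The one genuinely non-trivial modification, which you identify accurately, is that (EH) can no longer be obtained from Newhouse's upper semi-continuity (Remark~\ref{r.EH-in-Dim-2} is explicitly $C^\infty$ only); instead it is extracted from Burguet's conclusion that the weak-$*$ limit $\mu$ is an MME of $f|_{\ov X}$, followed by the ergodic decomposition of the entropy and Ruelle's inequality to push the lower bound $h_\top(f|_{\ov X})>\lambda_{\max}(f)/r$ onto both exponents of a.e. ergodic component. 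This is precisely what the paper's ``as before'' leaves implicit.
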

\noindent
Thm~\ref{t.tight-Cr} follows from Thm~\ref{t.surf-SPR-Cr} as before. Item (2) requires  Remark~\ref{r.topological-class},  which guarantees  that
$\hTOP(f|_{\ov{X}\setminus X})\leq \lambda_{\max}(f)/r$, and therefore $\hTOP(f|_{\ov{X}})=\hTOP(f|_{X})$.

\section{Other Formulations of the SPR Property}\label{s.defSPR}

\newcommand\Born{\mathscr{B}}
\newcommand\sC{\mathscr{C}}

This section gives two alternative characterizations of the SPR property:  {\em Entropy tightness}, and the existence of an {\em entropy gap at infinity}.
These characterizations are useful as a link between the SPR property for diffeomorphisms (Def~\ref{def-SPR-diffeo}), and the (classical) SPR property for countable state Markov shifts (Def~\ref{d.SPR-for-Shifts}).  This link will serve us in Part~\ref{part-symbolic-diffeo}, where we show  that SPR diffeomorphisms can be coded symbolically by SPR Markov shifts, and in Part~\ref{p.properties-of-SPR-diffeos} , where we use this coding to prove the properties of SPR diffeomorphisms we  announced in \S \ref{ss-Prop-SPR-diffeos}.

\subsection{Bornologies}\label{s.bornologies-section} 
Let $Z$ be a non-empty subset of a set $\Omega$. 

A {\em bornology} on $Z$ is a collection $\mathfrak B$ of subsets of $Z$ which covers $Z$, is closed under finite unions, and is {\em hereditary}: Every subset of an element of $\mathfrak B$ is also an element of $\mathfrak B$.
The elements of $\mathfrak B$ are called {\em bounded sets}, and their complements are
called {\em neighborhoods of infinity}. Points in $\Omega\setminus Z$ are said to {\em lie at infinity.}
\begin{enumerate}[$\bullet$]
\item If $Z\in\mathfrak B$, then all subsets of $Z$ are in $\mathfrak B$, and we call $\mathfrak B$ {\em trivial}.
If $\mathfrak B$ is not trivial, then  the neighborhoods of infinity form a proper filter on $Z$.

\smallskip
\item Let $\mathfs F$ be a $\sigma$-algebra on $Z$.  $\mathfrak B$ is called {\em $\mathfs F$-measurable} (or just ``measurable") if  every element of  $\mathfrak B$ is contained in an $\mathfs F$-measurable element of $\mathfrak B$.

\smallskip
\item A {\em base} for $\mathfrak B$ is a collection  $\mathfrak A\subset\mathfrak B$ such that every $B\in\mathfrak B$ is contained in some $A\in\mathfrak A$.  A bornology with a countable base is called {\em countably generated}.
\end{enumerate}

\medskip
\noindent
{\em Example 1\/:}  Suppose $\Omega$ is a second countable locally compact topological space. Let  $\mathfrak B$ be the bornology on $\Omega$ generated by all compact subsets of $\Omega$. Equivalently, $\mathfrak B$ is the set of all pre-compact subsets of $\Omega$. Then  $\mathfrak B$ is a countably generated Borel measurable bornology  on $\Omega$.  $\mathfrak B$ is trivial if and only if $\Omega$ is compact.

\medskip
\noindent
{\em Example 2\/:}  Suppose $(\Omega,\mathfs F)$ is a measurable space, $F:\Omega\to (0,\infty ]$ is a measurable function, and   $Z:=\{x\in \Omega:F(x)<\infty\}$. Let $\mathfrak B$ denote the collection of  $B\subset \Omega$ such that  $\sup\limits_B F<\infty$. Then 
 $\mathfrak B$ is a countably generated measurable bornology on $Z$.  If $F$ is bounded on $Z$, then  $\mathfrak B$ is trivial.

\subsubsection*{{\bf Escape to Infinity with Respect to a Bornology.}}\nopagebreak
\begin{enumerate}[$\bullet$]
\item We say that  a sequence of points $x_n\in \Omega$  {\em escapes to infinity relatively to $\mathfrak B$}, and write
$
x_n\to\infty\ \ (\mathfrak B),
$
 if
$$\forall B\in\mathfrak B,\,   \exists N\text{ such that }\forall n>N,\, x_n\not\in B.$$
(Any sequence of points in  $\Omega\setminus Z$ escapes to infinity relative to $\mathfrak B$.)

\smallskip
\item Suppose $\mathfrak B$ is a measurable bornology on a measurable subset $Z$ of
a measurable space  $(\Omega,\mathfs F)$. We say that a sequence of  probability measures $\mu_n\in\mathbb P(\Omega) $ {\em escapes to infinity relative to $\mathfrak B$}, and write
$\mu_n\to\infty \ \ (\mathfrak B)$,
if
$$\forall B\in\mathfrak B\cap\mathfs F,\  \limsup_{n\to\infty}\mu_n(B)=0.$$
(Any sequence of measures on  $\Omega\setminus Z$  escapes to infinity relative $\mathfrak B$.)

\end{enumerate}

\subsubsection*{{\bf {Tightness} with Respect to a Bornology}}\label{ss.tightness}
We are interested in conditions which prevent  escape to infinity within a given collection of measures.

Let $(\Omega,\mathfs F)$ be a measurable space,  and  suppose $\mathfrak B$ is a measurable bornology on some measurable set $Z\subset \Omega$. Fix some set  $\cM$  of measures on $\Omega$, and  some function $h:\cM\to[0,\infty)$.
We denote  $$h(\cM):=\{h(\mu):\mu\in\cM\},$$ and form the supremum  $\sup h(\cM)$.
The following definition is modeled on  Def~\ref{def-SPR-diffeo}:

\begin{definition}\label{def.partial-tight}
$\cM$ is \emph{{partially} $h$-tight}  if for \underline{some} $\tau\in (0,1)$, there are a number $h_0<\sup h(\cM)$ and  a measurable set $B\in\mathfrak B$  such that
 \begin{equation}\label{eq.tight}
     \forall \mu \in\cM\quad h(\mu)>h_0 \implies \mu(B)>\tau.
  \end{equation}
\end{definition}
\noindent
If $\mathfrak B$ is countably generated, then this is equivalent to saying that there is no sequence of $\mu_n\in\cM$,  such that $\mu_n\to\infty \ \ (\mathfrak B)$ and $h(\mu_n)\to \sup h(\cM)$.

The following stronger property is very useful (see \S \ref{ss-entropy-tightness}):

\begin{definition}\label{def.full-tight}
$\cM$ is \emph{{fully} $h$-tight} (or just {\em $h$-tight}) if for \underline{every} $\tau\in (0,1)$, there are a number $h_0<\sup h(\cM)$ and a measurable set $B\in\mathfrak B$  such that eq.~\eqref{eq.tight} holds.
\end{definition}

\subsubsection*{{\bf {$\boldsymbol h$-Gap} at Infinity
}}
Definition~\ref{def.partial-tight} can be made more quantitative, as follows. Let

\newcommand\hinfty{h^\infty_{\mathfrak B}}
\newcommand\Hinfty{H^\infty_{\mathfrak B}}

\begin{equation}\label{e.def-limsup}
\hinfty(\cM):=\sup\left\{t\in\R:\begin{array}{c}
\forall \tau\in (0,1), \forall B\in\mathfrak B\cap\mathfs F, \exists\mu\in\cM \\ \text{ s.t. }
 \mu(B)<\tau\text{ and }h(\mu)>t
\end{array}
\right\},
\end{equation}
with the convention that $\sup\emptyset:=-\infty$.
In particular, if the bornology is trivial, then $\hinfty(\cM):=-\infty$. We call this number 
the {\em $h$-value at infinity}. Note that:
\begin{lemma}\label{l.limsup-general}
If $\mathfrak B$ is countably generated and measurable, then
$$
\hinfty(\cM)=\limsup_{\mu\to\infty\ (\mathfrak B)} h(\mu):=\sup\left\{
\limsup_{n\to\infty} h(\mu_n): \mu_n\in\cM,\ \mu_n\to\infty\ (\mathfrak B)
\right\}.
$$
\end{lemma}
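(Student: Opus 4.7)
The plan is to prove the two inequalities $\hinfty(\cM)\le L$ and $L\le\hinfty(\cM)$ separately, where $L$ denotes the right-hand side of the claimed equality. Both directions are elementary once the countably generated measurable structure is used to replace the quantifier over all of $\mathfrak B\cap\mathfs F$ by a quantifier over a single increasing sequence.

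For the inequality $L\le\hinfty(\cM)$, I would fix a sequence $\mu_n\in\cM$ with $\mu_n\to\infty\ (\mathfrak B)$ and set $\ell:=\limsup_n h(\mu_n)$. Given any $t<\ell$, any $\tau\in(0,1)$, and any $B\in\mathfrak B\cap\mathfs F$, the escape condition gives $\mu_n(B)<\tau$ for all large $n$, while $h(\mu_n)>t$ for infinitely many $n$; taking $n$ in the intersection produces a witness $\mu\in\cM$ for the definition of $\hinfty(\cM)$ at level $t$, so $t\le\hinfty(\cM)$ for every $t<\ell$, hence $\ell\le\hinfty(\cM)$. Taking the supremum over such sequences gives $L\le\hinfty(\cM)$.

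The reverse inequality $\hinfty(\cM)\le L$ is the main content of the lemma, and here the hypothesis enters crucially. Since $\mathfrak B$ is countably generated, it has a countable base $\{A_k\}_{k\ge 1}$; since $\mathfrak B$ is measurable, each $A_k$ is contained in some $B_k'\in\mathfrak B\cap\mathfs F$; and since $\mathfrak B$ is closed under finite unions, I may replace $B_k'$ by $B_k:=B_1'\cup\cdots\cup B_k'$ to obtain an increasing sequence $(B_k)_{k\ge 1}$ in $\mathfrak B\cap\mathfs F$ such that every $A\in\mathfrak B$ is contained in some $B_k$. Now fix any $t<\hinfty(\cM)$. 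By the definition of $\hinfty(\cM)$, for each $k\ge 1$ one can apply the defining property with $\tau=1/k$ and $B=B_k$ to obtain $\mu_k\in\cM$ with $\mu_k(B_k)<1/k$ and $h(\mu_k)>t$.

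It remains to check that the sequence $(\mu_k)$ so constructed escapes to infinity and that its limsup of entropies beats $t$. For the first point, given any $B\in\mathfrak B\cap\mathfs F$, some $B_N$ contains $B$ by construction of the base, hence $\mu_k(B)\le\mu_k(B_k)<1/k$ for all $k\ge N$, which yields $\limsup_k\mu_k(B)=0$; thus $\mu_k\to\infty\ (\mathfrak B)$. For the second point, $h(\mu_k)>t$ for every $k$, so $\limsup_k h(\mu_k)\ge t$ and therefore $L\ge t$. Letting $t\uparrow\hinfty(\cM)$ completes the proof. I do not anticipate serious obstacles; the only subtlety is to be sure that the base $(B_k)$ can be taken simultaneously measurable, increasing, and cofinal in $\mathfrak B$, which is arranged in the first step above.
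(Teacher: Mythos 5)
Your argument is correct, and since the paper states this lemma without proof (it appears after ``Note that:''), there is nothing to compare against — your proof is the natural one and fills the gap cleanly. The key step, constructing an increasing measurable cofinal sequence $(B_k)$ from the countable base and then diagonalizing with $\tau=1/k$, is exactly right; the only cosmetic nit is that $\tau$ must lie in $(0,1)$, so one should use $\tau=1/(k+1)$ or start the index at $k=2$, which changes nothing. The easy direction $L\le\hinfty(\cM)$ and the observation that the set of admissible $t$ is downward closed (so any $t<\hinfty(\cM)$ satisfies the defining condition) are also handled correctly.
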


\begin{definition}
We say  $\cM$ has an \emph{$h$-gap at infinity}, if
 $$
   \hinfty(\cM) < \sup h(\cM).
 $$
\end{definition}

The following lemma is straightforward:

\begin{lemma}\label{l-partial-tight}
$\cM$ is $h$-partially tight if and only if $\cM$ has an $h$-gap at infinity.
\end{lemma}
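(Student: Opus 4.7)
The plan is to prove both directions by directly unravelling the two definitions, which amounts to observing that ``partial $h$-tightness'' and ``$h_0$ does not belong to the set in \eqref{e.def-limsup}'' are essentially the same statement (up to a harmless replacement of $\tau$ by $\tau/2$ to recover a strict inequality).

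For the direction $(\Rightarrow)$, I would begin by assuming partial tightness: there exist $\tau_0\in(0,1)$, a number $h_0<\sup h(\cM)$, and a set $B_0\in\mathfrak{B}\cap\mathfs{F}$ such that $h(\mu)>h_0\Rightarrow \mu(B_0)>\tau_0$ for every $\mu\in\cM$. The claim is then that $\hinfty(\cM)\le h_0$. Indeed, if some $t>h_0$ belonged to the set appearing in \eqref{e.def-limsup}, then specializing its defining condition to the particular pair $(\tau_0,B_0)$ would produce a measure $\mu\in\cM$ with $\mu(B_0)<\tau_0$ and $h(\mu)>t>h_0$, contradicting partial tightness. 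Hence no $t>h_0$ lies in that set, so $\hinfty(\cM)\le h_0<\sup h(\cM)$, yielding an $h$-gap at infinity.

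For the converse $(\Leftarrow)$, I would assume $\hinfty(\cM)<\sup h(\cM)$ and pick any $h_0$ strictly between them. Since $h_0>\hinfty(\cM)$, the real number $h_0$ is not in the set whose supremum defines $\hinfty(\cM)$. Negating the existential statement inside that set, there must exist $\tau\in(0,1)$ and $B\in\mathfrak{B}\cap\mathfs{F}$ such that no $\mu\in\cM$ simultaneously satisfies $\mu(B)<\tau$ and $h(\mu)>h_0$; equivalently, $h(\mu)>h_0$ forces $\mu(B)\ge\tau$. Setting $\tau':=\tau/2\in(0,1)$, one has $\mu(B)>\tau'$ whenever $h(\mu)>h_0$, which is exactly partial $h$-tightness of $\cM$ (with parameters $\tau'$, $h_0$, and $B$).

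There is essentially no obstacle here: the whole content of the lemma is that $h$-partial tightness is the logical contrapositive of ``$h_0$ lies in the set defining $\hinfty(\cM)$'' for some admissible $h_0<\sup h(\cM)$. The only minor point is replacing $\ge\tau$ by $>\tau/2$, which is why $\tau$ is allowed to range in the open interval $(0,1)$ in Definition~\ref{def.partial-tight}; no countable-base or measurability assumption beyond what is built into the statement of $\hinfty$ (and already incorporated into Definition~\ref{def.partial-tight}) is needed.
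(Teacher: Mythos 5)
Your proof is correct. The paper labels the lemma ``straightforward'' and gives no proof, and your argument is precisely the natural unwinding of the two definitions: one direction specializes the quantified condition in \eqref{e.def-limsup} to the witnesses $(\tau_0,B_0)$ of partial tightness, the other negates membership of $h_0$ in that set (using that the set is downward closed, so $h_0>\hinfty(\cM)$ does imply $h_0$ is not a member) and then passes from $\ge\tau$ to $>\tau/2$ to match Definition~\ref{def.partial-tight}.
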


\begin{remark}
Let us define an increasing function $(0,1)\to \{-\infty\}\cup [0, \sup h(\cM)]$:
$$
     H^\infty_{\mathfrak B,\cM}(\tau):= \sup\left\{ t\in\R:  \forall   B\in\mathfrak B\cap\mathfs F\; \exists\mu\in\cM
       \text{ s.t. } \mu(B)<\tau \text{ and }h(\mu)>t\right\}.
$$
Note that $\hinfty(\cM)=\inf_\tau H^\infty_{\mathfrak B,\cM}(\tau)$.
Then $\cM$ is partially (resp. fully) $h$-tight if and only if for some (resp. for all) $\tau\in (0,1)$ ,  $H^\infty_{\mathfrak B,\cM}(\tau)< \sup h(\cM)$.
\end{remark}

\subsection{The Pesin Bornologies}\label{ssPesinBorlonogies}
Let $f$ be a $C^1$ diffeomorphism on a closed manifold $M$, 
and let  $X\subset M$ be an invariant Borel set. We define $\cM:=\Proberg(f|_{X})$,  and let $h\colon \cM\to [0,\infty)$ denote the entropy function, $h(\mu):=h(f,\mu)$. 
Then $$\sup h(\cM)=h_{\Bor}(f|_X), \text{ see \eqref{e.Top-Entropy}}.$$ 

Fix $0<\eps<\chi$, and assume $f|_X$ has at least one $\chi$-hyperbolic measure on $X$. Then $X$ contains 
some $(\chi,\eps)$-Pesin block. Let $Z_{{\chi,\eps}}$ denote the intersection of $X$ with  the union of all $(\chi,\eps)$-Pesin blocks.

\begin{definition}
The {\em Pesin bornology of $f|_X$ (with parameters $(\chi,\eps)$)} is the bornology on $Z_{\chi,\eps}\subset X$ given by  
\begin{equation}\label{e.Pesin-Bornologies}
\mathfrak P_{\chi,\eps}=\mathfrak P_{\chi,\eps}(f|_X):=\{\Lambda\subset Z_{\chi,\eps}: \Lambda \text{ is a $(\chi,\eps)$-Pesin block}\}\cup\{\emptyset\}.\end{equation}
\end{definition}
\begin{definition} The number 
$h^\infty_{\chi,\eps}=h^\infty(f|_X;\chi,\eps):=h^\infty_{\mathfrak P_{\chi,\eps}}(\cM)=\hspace{-0.2cm}\limsup\limits_{\mu\to\infty\, (\mathfrak P_{\chi,\eps})}\!\!h(f,\mu)$ 
is called {the} \emph{entropy at infinity of $f|_X$}, {with parameters $(\chi,\eps)$}.
\end{definition}

\begin{lemma}\label{l.P-is-bornology}
Under the above assumptions, 
\begin{enumerate}[(1)]
\item $\mathfrak P_{\chi,\eps}$ is a countably generated Borel  bornology on  $Z_{\chi,\eps}$.
\item If $\mathfrak P_{\chi,\eps}$ is trivial, then {$f:\ov{Z_{\chi,\eps}}\to\ov{Z_{\chi,\eps}}$} is uniformly hyperbolic.
\item If $\chi_1\leq \chi_2$ and $\eps_1\geq \eps_2$, then $h^\infty_{\chi_1,\eps_1}\leq h^\infty_{\chi_2,\eps_2}$.
\item Partial (resp. full) $h$-tightness w.r.t. $\mathfrak P_{\chi_2,\eps_2}$ implies the same w.r.t. $\mathfrak P_{\chi_1,\eps_1}$.
\end{enumerate}
\end{lemma}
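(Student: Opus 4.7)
The plan is to verify each item in turn, exploiting the monotonicity of Definition \ref{d.pesin} in the constant $K$ and in the parameters $(\chi,\eps)$.

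For (1), the bornology axioms are immediate from Definition \ref{d.pesin}: every $x\in Z_{\chi,\eps}$ lies in some $(\chi,\eps)$-Pesin block by construction; any subset of such a block with constant $K$ is again a block with constant $K$; and the union of two blocks $\Lambda_1,\Lambda_2$ with constants $K_1,K_2$ is a block with constant $\max(K_1,K_2)$, since the uniqueness part of Lemma \ref{l.splitting} (valid because $\eps<\chi$) guarantees that the two splittings agree on any common orbit and glue consistently. To get measurability and countable generation, I would define
\[
 \Lambda_K := \{y\in X:\eqref{e.def-pesin}\text{ holds at }y\text{ with constant }K\}
\]
for each $K\in\NN$, and show that $\Lambda_K$ is closed by re-running the diagonal/compactness argument from Lemma \ref{l.splitting} at each orbit iterate. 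Each $\Lambda\in\mathfrak P_{\chi,\eps}$ with constant $K$ is then contained in $\Lambda_{\lceil K\rceil}$, so $(\Lambda_K)_{K\in\NN}$ is a countable Borel base.

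For (2), triviality forces $Z_{\chi,\eps}$ itself to be a $(\chi,\eps)$-Pesin block, for some $K$. The set $Z_{\chi,\eps}$ is $f$-invariant, because whenever $\Lambda$ is a $(\chi,\eps)$-Pesin block with constant $K$, so is $f(\Lambda)$ (with constant $Ke^\eps$); hence $f(Z_{\chi,\eps})\subset Z_{\chi,\eps}$, and similarly for $f^{-1}$. By Lemma \ref{l.splitting}, $\overline{Z_{\chi,\eps}}$ is a $(\chi,\eps)$-Pesin block with the same $K$ and continuous splitting $E^s\oplus E^u$, and it is $f$-invariant as well. Since every $y\in\overline{Z_{\chi,\eps}}$ is itself a point of the block, applying \eqref{e.def-pesin} with $n=0$ at $y$ yields $\|Df^k|_{E^s(y)}\|\leq Ke^{-\chi k}$ and $\|Df^{-k}|_{E^u(y)}\|\leq Ke^{-\chi k}$ for all $k\geq0$; this is uniform hyperbolicity of $f$ on $\overline{Z_{\chi,\eps}}$.

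For (3) and (4), the key observation is that if $\chi_1\leq\chi_2$ and $\eps_1\geq\eps_2$ then the right-hand side of \eqref{e.def-pesin} is larger in the $(\chi_1,\eps_1)$ case, so every $(\chi_2,\eps_2)$-Pesin block with constant $K$ and splitting $E^s\oplus E^u$ is also a $(\chi_1,\eps_1)$-Pesin block with the same data. Consequently $Z_{\chi_2,\eps_2}\subset Z_{\chi_1,\eps_1}$ and $\mathfrak P_{\chi_2,\eps_2}\subset\mathfrak P_{\chi_1,\eps_1}$, both as families of sets and of Borel sets. In the definition \eqref{e.def-limsup} of $h^\infty_{\mathfrak B}$, the quantifier ``$\forall B\in\mathfrak B\cap\mathfs F$'' becomes more restrictive as $\mathfrak B$ grows, so $h^\infty_{\chi_1,\eps_1}\leq h^\infty_{\chi_2,\eps_2}$; this is (3). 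For (4), any Borel $B\in\mathfrak P_{\chi_2,\eps_2}$ witnessing (partial or full) tightness via \eqref{eq.tight} with some pair $h_0,\tau$ also lies in $\mathfrak P_{\chi_1,\eps_1}$, and the same $B,h_0,\tau$ witness the same type of tightness there.

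The only mildly subtle step is the closedness of $\Lambda_K$ in item (1), which requires invoking Lemma \ref{l.splitting}'s compactness/diagonal argument uniformly along the full orbit of a limit point to manufacture a splitting at every iterate satisfying \eqref{e.def-pesin} with the same $K$. Everything else reduces to monotonicity bookkeeping.
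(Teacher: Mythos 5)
Your proof is correct and follows essentially the same route as the paper's: verifying the bornology axioms directly, using Lemma \ref{l.splitting} for measurability/countable generation via the level sets of the Pesin constant, and reducing items (2)--(4) to the monotonicity of Definition \ref{d.pesin} in $(\chi,\eps)$. The only cosmetic difference is that the paper funnels (3) and (4) through Lemmas \ref{l.limsup-general} and \ref{l-partial-tight}, whereas you argue directly from \eqref{e.def-limsup} and \eqref{eq.tight}, which is an equivalent and equally clean bookkeeping.
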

\begin{remark}
The converse of (2) may fail: $f:\ov{Z_{\chi,\eps}}\to\ov{Z_{\chi,\eps}}$ may be uniformly hyperbolic, while $\ov{Z_{\chi,\eps}}\setminus Z_{\chi,\eps}$ carries a measure with some exponent smaller than $\chi$.
\end{remark}
\begin{proof}
Clearly, $\mathfrak P_{\chi,\eps}$ covers $Z_{\chi,\eps}$, is made of subsets of $Z_{\chi,\eps}$, and is hereditary. Also, by the definition of Pesin blocks, $\mathfrak P_{\chi,\eps}$ is closed under finite unions. So $\mathfrak P_{\chi,\eps}$ is a bornology.
$\mathfrak P_{\chi,\eps}$ is measurable, because the closure of a $(\chi,\eps)$-Pesin block in $Z_{\chi,\eps}$ is also a $(\chi,\eps)$-Pesin block,  see Lemma \ref{l.splitting}. Finally, $\mathfrak P_{\chi,\eps}$ is countably generated, because it is generated by the countable collection of sets  
\begin{align*}
A_N:=\left\{
x\in Z_{\chi,\eps}:\begin{array}{l}\forall n\in\Z, \; T_{f^n(x)} M=E^s(f^n(x))\oplus E^u(f^n(x)) \text{ where}\\
\text{$E^s(f^n(x))$, $E^u(f^n(x))$ satisfy \eqref{e.def-pesin} with $K:=N$}
\end{array}
\right\},\ N\in\N.
\end{align*}

If $\mathfrak P_{\chi,\eps}$ is trivial, then
$Z_{\chi,\eps}$ is a $(\chi,\eps)$-Pesin block. By Lemma \ref{l.splitting},  $\ov{Z_{\chi,\eps}}$ is a $(\chi,\eps)$-Pesin block, and this set is $f$-invariant.  So $f$ is uniformly hyperbolic on $\overline{Z_{\chi,\eps}}$.

Suppose $\chi_1\leq \chi_2$ and $\eps_1\geq \eps_2$. Then \eqref{e.def-pesin} with $(\chi_2,\eps_2)$ implies \eqref{e.def-pesin} with $(\chi_1,\eps_1)$. Therefore, every $(\chi_2,\eps_2)$-Pesin block is a $(\chi_1,\eps_1)$-Pesin block and $\mathfrak P_2:=\mathfrak P_{\chi_2,\eps_2}$ is a subset of $\mathfrak P_1:=\mathfrak P_{\chi_1,\eps_1}$. Thus,
$
\mu_n\to\infty\ (\mathfrak P_1)\Rightarrow \mu_n\to\infty\ (\mathfrak P_2).
$

Parts (3) and (4) now easily follow from Lemmas \ref{l.limsup-general} and \ref{l-partial-tight}. 
\end{proof}

\subsection{SPR, Partial Entropy Tightness, and Entropy Gap at Infinity}
In the setup of the previous section~\S\ref{ssPesinBorlonogies}, partial (or full) $h$-tightness is called \emph{partial (or full) entropy-tightness}, and an $h$-gap at infinity is called an {\em entropy gap at infinity}. 

Definition \ref{def-SPR-diffeo-local}, Definition \ref{def.partial-tight} and Lemma \ref{l-partial-tight} immediately yield:

\begin{lemma}\label{l-SPR-as-tight}
Let $X$ be an invariant Borel set of   a $C^{1}$ diffeomorphism $f$ on a closed manifold $M$. Then 
$f$ is SPR on $X$ iff there is $\chi>0$ such that for all $0<\eps<\chi$,  $\Proberg(f|_X)$ is  partially entropy-tight relative to $\mathfrak P_{\chi,\eps}(f|_X)$.
\end{lemma}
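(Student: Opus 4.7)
The plan is to argue by directly matching the two definitions, using only that a non-empty element of the Pesin bornology $\mathfrak P_{\chi,\eps}(f|_X)$ is, by \eqref{e.Pesin-Bornologies}, the same thing as a $(\chi,\eps)$-Pesin block contained in $Z_{\chi,\eps}\subset X$. The equivalence then comes down to aligning the quantifiers on the Pesin block, the threshold $h_0$, and the lower bound $\tau$ for the mass.

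For the forward direction, I will take the $\chi>0$ supplied by SPR on $X$ and fix any $0<\eps<\chi$. Definition \ref{def-SPR-diffeo-local} furnishes a $(\chi,\eps)$-Pesin block $\Lambda$, a number $h_0<h_\Bor(f|_X)$, and a constant $\tau>0$, such that every ergodic $\mu$ on $X$ with $h(f,\mu)>h_0$ satisfies $\mu(\Lambda)>\tau$. Since ergodic measures on $X$ assign full mass to $X$, I may replace $\Lambda$ by $\overline{\Lambda}\cap X$: by Lemma \ref{l.splitting}, $\overline{\Lambda}$ is still a $(\chi,\eps)$-Pesin block, its intersection with $X$ is a $(\chi,\eps)$-Pesin block inside $Z_{\chi,\eps}$ (subsets of Pesin blocks are Pesin blocks), and it is Borel. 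Replacing $\tau$ by $\min(\tau,1/2)$ puts it in $(0,1)$. The resulting triple is precisely the data required by Definition \ref{def.partial-tight} for partial entropy-tightness of $\Proberg(f|_X)$ relative to $\mathfrak P_{\chi,\eps}$.

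For the converse, I will assume that $\chi>0$ is such that $\Proberg(f|_X)$ is partially entropy-tight with respect to $\mathfrak P_{\chi,\eps}$ for every $0<\eps<\chi$, and verify Definition \ref{def-SPR-diffeo-local} for this $\chi$. Given any such $\eps$, partial tightness provides a measurable $B\in\mathfrak P_{\chi,\eps}$, a number $h_0<h_\Bor(f|_X)$, and $\tau\in(0,1)$ with $\mu(B)>\tau$ whenever $h(f,\mu)>h_0$; by construction $B$ is a Borel $(\chi,\eps)$-Pesin block, which is exactly what Definition \ref{def-SPR-diffeo-local} requires for this $\eps$. To cover the remaining range $\eps\geq\chi$ needed for SPR, I will reuse the data obtained for $\eps'=\chi/2$: any $(\chi,\eps')$-Pesin block is also a $(\chi,\eps)$-Pesin block whenever $\eps'\leq\eps$, since the right-hand side of \eqref{e.def-pesin} is monotone in $\eps$. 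This is a particular case of the inclusion $\mathfrak P_{\chi,\eps'}\subset\mathfrak P_{\chi,\eps}$ recorded in Lemma \ref{l.P-is-bornology}(4).

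The entire argument is a direct unpacking of the two definitions and presents no substantive obstacle. The two minor technical points to be careful about are (i) upgrading the Pesin block to a Borel set, which is handled by taking closures via Lemma \ref{l.splitting}, and (ii) enlarging the range of $\eps$ from $(0,\chi)$ to $(0,\infty)$ in the SPR condition, which is handled by the monotonicity of $\mathfrak P_{\chi,\eps}$ in $\eps$. Lemma \ref{l-partial-tight} is not strictly needed for the equivalence itself, but it provides the convenient reformulation of the conclusion as the existence of an entropy gap at infinity.
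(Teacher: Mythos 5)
Your proof is correct and unpacks exactly what the paper dismisses with ``Definition~\ref{def-SPR-diffeo-local}, Definition~\ref{def.partial-tight} and Lemma~\ref{l-partial-tight} immediately yield.'' The two concrete points of care you isolate are the right ones: forcing the Pesin block to be a measurable member of $\mathfrak P_{\chi,\eps}$ (which you do via $\overline\Lambda\cap X$ and Lemma~\ref{l.splitting}, and which one could equivalently do by invoking the measurability of the bornology asserted in Lemma~\ref{l.P-is-bornology}(1)), and covering the range $\eps\geq\chi$ in Definition~\ref{def-SPR-diffeo-local} via the monotonicity $\mathfrak P_{\chi,\eps'}\subset\mathfrak P_{\chi,\eps}$ for $\eps'\leq\eps$ --- the latter is a genuine, if minor, gap between the two quantifications that the paper glosses over. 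Your remark that Lemma~\ref{l-partial-tight} is not actually needed for this lemma (only for Corollary~\ref{c.SPR=Entropy-Gap-Diffeos}) is also accurate.
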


\begin{corollary}\label{c.SPR=Entropy-Gap-Diffeos}
Let $X$ be an invariant Borel set of   a $C^{1}$ diffeomorphism $f$ on a closed manifold $M$. Then 
$f$ is SPR on $X$ iff there is $\chi>0$ such that for all $0<\eps<\chi$, $\Proberg(f|_X)$ has an entropy gap at infinity with respect to $\mathfrak P_{\chi,\eps}(f|_X)$, i.e. 
\begin{equation}\label{e.Entropy-Gap}
h^\infty(f|_X;\chi,\eps)< h_{\Bor}(f|_{X}).
\end{equation}
\end{corollary}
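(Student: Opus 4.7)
The plan is to assemble the corollary by chaining two already-established results, namely Lemma \ref{l-SPR-as-tight} and Lemma \ref{l-partial-tight}, together with the identifications made in \S\ref{ssPesinBorlonogies}. Since both halves of the stated equivalence have exactly the quantifier pattern ``$\exists \chi>0$ such that $\forall\, \eps\in(0,\chi)$\dots'', it suffices to show, for each fixed pair $(\chi,\eps)$ with $\eps\in(0,\chi)$, that SPR-on-$X$ at parameters $(\chi,\eps)$ is equivalent to \eqref{e.Entropy-Gap} at the same parameters. The rest of the argument is then a matter of transcribing definitions.

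In more detail, fix $\chi>0$ and $0<\eps<\chi$, and set $\cM:=\Proberg(f|_X)$ and $h(\mu):=h(f,\mu)$. By Lemma \ref{l.P-is-bornology}, $\mathfrak P_{\chi,\eps}(f|_X)$ is a countably generated measurable bornology on $Z_{\chi,\eps}\subset X$, so the abstract framework of \S\ref{s.bornologies-section} applies. First, Lemma \ref{l-SPR-as-tight} gives the equivalence between ``$f$ is SPR on $X$'' (as formalized in Definition \ref{def-SPR-diffeo-local}) and the partial $h$-tightness of $\cM$ with respect to $\mathfrak P_{\chi,\eps}(f|_X)$. Second, Lemma \ref{l-partial-tight} identifies partial $h$-tightness of $\cM$ with the existence of an $h$-gap at infinity, namely
\[
  h^\infty_{\mathfrak P_{\chi,\eps}}(\cM) \;<\; \sup h(\cM).
\]
Finally, we translate the two sides of this inequality into the notation of the statement: by the definition recalled at the start of \S\ref{ssPesinBorlonogies} we have $\sup h(\cM)=h_{\Bor}(f|_X)$ (the ergodic decomposition ensures that restricting to ergodic measures does not change the supremum of entropy), and by the definition in \S\ref{ssPesinBorlonogies} of the entropy at infinity we have $h^\infty_{\mathfrak P_{\chi,\eps}}(\cM)=h^\infty(f|_X;\chi,\eps)$. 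Substituting these yields exactly \eqref{e.Entropy-Gap}.

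There is no substantive obstacle: the whole argument is a two-step chain of equivalences already proved, and the only thing to verify is that the quantifier structure is preserved, which it is because each step of the chain holds at each fixed choice of $(\chi,\eps)$. The one point that might deserve an explicit line in the written proof is the identification $\sup h(\cM)=h_{\Bor}(f|_X)$, to justify replacing the supremum over ergodic measures by the supremum over all invariant measures on $X$; but this follows at once from the affine linearity of entropy under the ergodic decomposition and is routine. Accordingly, the written proof will be only a few lines long, quoting Lemma \ref{l-SPR-as-tight}, Lemma \ref{l-partial-tight}, and the definitional identities above.
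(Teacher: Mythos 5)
Your proof is correct and matches the paper's own derivation: the corollary is obtained by chaining Lemma \ref{l-SPR-as-tight} with Lemma \ref{l-partial-tight} and translating via the definitional identities $\sup h(\cM)=h_{\Bor}(f|_X)$ and $h^\infty_{\mathfrak P_{\chi,\eps}}(\cM)=h^\infty(f|_X;\chi,\eps)$ from \S\ref{ssPesinBorlonogies}. The only stylistic slip is that you announce a fixed-$(\chi,\eps)$ reduction but then actually invoke the already-quantified Lemma \ref{l-SPR-as-tight}; since that lemma holds parameterwise anyway, this does not affect correctness.
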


\begin{remark}
Notice that these characterizations use infinitely many Pesin bor\-nologies, one for each $0<\eps<\chi$. However, we will eventually see that it is sufficient to show partial tightness relatively to a  \emph{single} Pesin bornology $\mathfrak P_{\chi,\eps(M,f,\chi)}$.
This can be proved by building an SPR coding using Remark~\ref{rem-SPR-single-eps} which in turn proves the SPR property by Proposition~\ref{p.project-SPR}.
The value of $\eps=\eps(M,f,\chi)$   could in principle be calculated by a following the estimates of  \cite{Sarig-JAMS,Ben-Ovadia-Codings}, but we did not do this.
\end{remark}

\newcommand\hinftyPesin{h^\infty_{\mathfrak P_{\chi,\eps}}}

\subsection{Entropy Tightness}\label{ss-entropy-tightness}
Fix a $C^{1}$ diffeomorphism $f$ of a closed manifold $M$, and let $X\subset M$ be an invariant Borel set. 
We will now consider the implications of {\em (full)} entropy tightness, with respect to a {\em single} Pesin bornology:

\begin{definition}\label{d.entropy-tight-diffeo}
  $f$ is {\em entropy-tight} on $X$, if there exists \underline{at least one} Pesin bornology $\mathfrak P_{\chi,\eps}$ with the following property: For every $\tau\in (0,1)$,  there exist $h_\tau<h_{\Bor}(f|_{X})$ and $\Lambda_\tau\in\mathfrak P_{\chi,\eps}$ such that for every $ \nu\in\Proberg(f|_{X})$,
 $$
 h(f,\nu)>h_\tau\Rightarrow \nu(\Lambda_\tau)>\tau.
 $$
When $X=M$, we just say that $f$ is entropy-tight.
\end{definition}

Remark \ref{r.entropy-tightness} can be rephrased as:

\medskip
\noindent{\bf Theorem A'.} {\em On closed surfaces, all $C^\infty$ diffeomorphisms with positive topological entropy are  entropy-tight. Moreover any Borel $f$-invariant subset $X$ such that $h_{\top}(f|_{\overline{X}})=\hTOP(f|_X)>0$ is also entropy-tight.}
\medskip

While the SPR property involves  infinitely many Pesin bor\-nologies, with different epsilons, entropy tightness  uses just one  Pesin bornology.
However, we will show that entropy-tightness with respect to {\em some} Pesin bornology $\mathfrak P_{\chi,\eps}$ implies entropy-tightness with respect to {\em every} Pesin bornology $\mathfrak P_{\chi,\eps'}$ with  $\eps'>0$, and deduce:

\begin{proposition}\label{prop-tight-implies-SPR}
Let $X$ be an invariant Borel subset of a $C^1$ diffeomorphism $f$ on a closed manifold $M$.  If $f$ is entropy-tight on $X$, then $f$ is SPR on $X$.
\end{proposition}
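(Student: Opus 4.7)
The plan is to derive SPR on $X$ (with some $\chi'$) from entropy-tightness on $X$ (with parameters $(\chi,\eps_0)$) for any $\chi' \in (0,\chi)$, by converting the single high-mass Pesin block given by entropy-tightness into a family of $(\chi',\eps)$-Pesin blocks (one for each $\eps > 0$) of positive mass. The main tool is Proposition \ref{p.pesin}, which produces, for any $n_0 \geq 1$ and any $\eps > 0$, a $(\chi',\eps)$-Pesin block $\Lambda_{n_0}(\chi',\eps)$ whose complement has measure at most $4\max(c_0/\eps,1)$ times the complement of the Pliss set $P_{n_0}(\chi')$, where $c_0 = \chi' + \max(\log\|Df\|_{\sup}, \log\|Df^{-1}\|_{\sup})$ depends only on $\chi'$.

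The key observation linking the two notions is that any $(\chi,\eps_0)$-Pesin block $\Lambda_*$ with Pesin constant $K$ is contained in $P_{n_0}(\chi')$ as soon as $n_0 \geq \log K /(\chi-\chi')$. Indeed, the Pesin splitting $T_yM = E^s(y) \oplus E^u(y)$ on $\Lambda_*$ satisfies, for $j\geq 0$,
$$\|Df^{j n_0}|_{E^s(y)}\| \leq K e^{-\chi j n_0} \leq e^{-\chi' j n_0},$$
and analogously on $E^u(y)$, which is precisely the Pliss condition defining $P_{n_0}(\chi')$.

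Armed with these two facts, I would fix $\chi' \in (0,\chi)$ and an arbitrary target $\eps > 0$, then select $\tau \in (0,1)$ close enough to $1$ that $4\max(c_0/\eps,1)(1-\tau) < \tfrac12$. Apply entropy-tightness with this $\tau$ to obtain $h_\tau < h_\Bor(f|_X)$ and a $(\chi,\eps_0)$-Pesin block $\Lambda_\tau$ (with Pesin constant $K_\tau$) such that every ergodic $\nu$ on $X$ with $h(f,\nu) > h_\tau$ satisfies $\nu(\Lambda_\tau) > \tau$. Then pick $n_0 \geq \log K_\tau /(\chi - \chi')$ so that $\Lambda_\tau \subset P_{n_0}(\chi')$, and let $\Lambda := \Lambda_{n_0}(\chi',\eps)$ be the $(\chi',\eps)$-Pesin block furnished by Proposition \ref{p.pesin}. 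For any ergodic $\nu$ on $X$ with $h(f,\nu) > h_\tau$,
$$\nu(M\setminus\Lambda) \leq 4\max(c_0/\eps,1)\, \nu(M\setminus P_{n_0}(\chi')) \leq 4\max(c_0/\eps,1)(1-\tau) < \tfrac12,$$
so $\nu(\Lambda) > \tfrac12$. Since $\eps$ was arbitrary, this verifies Definition \ref{def-SPR-diffeo-local} with constant $\chi'$, proving SPR on $X$.

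I expect no serious obstacle: the only delicate point is that $n_0$ must be chosen after $\tau$ (hence after $K_\tau$), but since entropy-tightness allows us first to fix $\tau$ and only then to produce $\Lambda_\tau$, the order of quantifiers works out. Everything else is a one-step application of Proposition \ref{p.pesin} combined with the elementary inclusion of Pesin blocks inside Pliss sets.
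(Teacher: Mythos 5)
Your proposal is correct and follows essentially the same route as the paper: apply Definition~\ref{d.pesin} to include the high-mass $(\chi,\eps_0)$-Pesin block inside a Pliss set $P_{n_0}(\chi')$ for appropriate $n_0$, then invoke Proposition~\ref{p.pesin} to produce a $(\chi',\eps)$-Pesin block with a quantitatively controlled measure defect. The only (cosmetic) difference is that the paper first re-establishes \emph{full} entropy-tightness with the new constant $\chi'=\chi/2$ and every $\eps'$ before passing to SPR, whereas you stop at the partial tightness that SPR actually requires, threading the $\tau$ dependence a bit more directly; both are valid.
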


\begin{proof}
For simplicity, we give the proof in the case $X=M$. The modifications needed to treat the general case are routine.
Consider the following properties:
$$
\begin{array}{l l }
(\text{SPR}_{\chi,\eps}) & (\chi,\eps)\text{-Strong Positive Recurrence: } \exists \tau>0, h_0<h_{\top}(f), \Lambda\in\mathfrak P_{\chi,\eps}\text{ s.t. }\\
& \forall\nu\in\Proberg(f),\   h(f,\nu)>h_0\Rightarrow\nu(\Lambda)>\tau;\\
(\text{ET}_{\chi,\eps}) & (\chi,\eps)\text{-Entropy-Tightness: }\forall \tau\in (0,1), \exists h_\tau<h_{\top}(f),\Lambda_\tau\in \mathfrak P_{\chi,\eps}\text{ s.t. }\\
& \forall\nu\in\Proberg(f),\   h(f,\nu)>h_\tau\Rightarrow\nu(\Lambda_\tau)>\tau.
\end{array}
$$
If $f$ is entropy-tight, then  $(\text{ET}_{\chi,\eps})$ holds for some $\chi$ and $\eps$. We will show that
$$
(\text{ET}_{\chi,\eps})\;{\Rightarrow}\; {\exists \chi'>0,\forall\eps'\ (\text{ET}_{\chi',\eps'})\;\Rightarrow\; \exists \chi'>0,\forall\eps'\ (\text{SPR}_{\chi',\eps'})}\;\Rightarrow\;\text{SPR}.
$$
The second and third implications are self-evident. We will prove the first.

The proof is based on Prop. \ref{p.pesin}.
Let $c_0:=\tfrac{\chi}2+\max(\log\|Df\|_{\sup},\log\|Df^{-1}\|_{\sup})$, and fix $\eps'>0$, $\tau\in (0,1)$. By $(\text{ET}_{\chi,\eps})$ there are $h_\tau>0$ and  $\Lambda_0\in\mathfrak P_{\chi,\eps}$, such that
$$
\forall \nu\in\Proberg(f),\ h(f,\nu)>h_\tau\Rightarrow \nu(\Lambda_0)>{1-\tfrac{1-\tau}{4\max(c_0/\eps',1)}.}
$$

By  Def.~\ref{d.pesin}, there is $n_0\geq 1$ s.t.
for every $x\in \Lambda_0$ and  $j\geq 0$,  $\|Df^{jn_0}|_{E^s(x)}\|\leq e^{-jn_0{\chi /2}}$ and $\|Df^{-j}|_{E^u(x)}\|\leq e^{- jn_0{\chi/2}}$. 

By Prop. \ref{p.pesin}, there exists $\Lambda_\tau\in\mathfrak P_{{\chi/2},\eps'}$ such that
$$
\forall \nu\in\Proberg(f),\ \nu(M\setminus \Lambda_\tau)\leq 4\max(c_0/\eps',1)\nu(M\setminus \Lambda_0).
$$
Thus, if $h(f,\nu)>h_\tau$, then $\nu(\Lambda_\tau)>1-4\max(c_0/\eps',1)(1-\nu(\Lambda_0))>\tau$. 

This is $(\text{ET}_{\chi',\eps'})$ with $\chi'=\chi/2$.
\end{proof}

\begin{remark}
We will eventually see that for $C^{1+}$ diffeomorphisms,  entropy-tightness and SPR are  equivalent (Thm~\ref{thm-characterizations}). We do not know if this is also the case for $C^1$ diffeomorphisms.
\end{remark}

\section{Examples in Higher Dimension}\label{s.example}
We exhibit in this section open sets of diffeomorphisms, in dimension $3$ and higher, that are not uniformly hyperbolic but are SPR.
Note how the SPR property is easily deduced from facts already established in previous works.

\subsection{Partially Hyperbolic SPR Diffeomorphisms with Center Foliation of  Circles}\label{ss.center-compact}
We consider the $C^1$-open class of \emph{partially hyperbolic diffeomorphisms} on a $3$-dimensional closed connected manifold $M$:
these are the diffeomorphisms which preserve a splitting $TM=E^s\oplus E^c\oplus E^u$ into three non-trivial bundles
such that $E^s$  is uniformly contracted, $E^u$ is uniformly expanded, and there exists $N\geq 1$ such that for any $x\in M$, any unit vectors
$v^s\in E^s_x$, $v^c\in E^c_x$, $v^u\in E^u_x$ satisfy
$$
 2\|Df^N.v^s\|\leq \|Df^N.v^c\| \leq \tfrac 1 2 \|Df^N.v^u\|.
$$
It is well-known that there exist two (unique) invariant foliations $\mathcal{W}^s$, $\mathcal{W}^u$ whose leaves are tangent to
$E^s$ and $E^u$ respectively.
We will focus on diffeomorphisms which satisfy the following additional properties:
\begin{enumerate}
\item There exists an invariant foliation $\mathcal{W}^c$ tangent to $E^c$, whose leaves are circles with uniformly bounded length.
\item The system is \emph{accessible}: any two points $x,y\in M$ can be joined by a path
that is tangent to $E^s\oplus E^u$.
\item There exists a hyperbolic periodic point $x\in M$ (thus the action along the center is not isometric). 
\end{enumerate}
\begin{proposition}
The set of diffeomorphisms of $\TT^3$ satisfying (1), (2), (3) is $C^1$-open, and it contains a non-empty $C^1$-open set of transitive systems which are not uniformly hyperbolic.
\end{proposition}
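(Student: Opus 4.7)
The plan is to prove the proposition in two steps: first, that the three conditions (1)--(3) together with partial hyperbolicity define a $C^1$-open class of diffeomorphisms on $\TT^3$, and then to exhibit a non-empty $C^1$-open subclass of transitive, non-Anosov examples.

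First I would check $C^1$-openness. Partial hyperbolicity is a classical $C^1$-open property, and the existence of a hyperbolic periodic point (3) is preserved under $C^1$-perturbations by the implicit function theorem. Condition (1)---the existence of an invariant center foliation whose leaves are circles of uniformly bounded length---is $C^1$-persistent as a consequence of the Hirsch--Pugh--Shub normal hyperbolicity theorem, together with the uniform compactness of the center leaves (the ``uniformly compact center'' setting studied by Bohnet and Bohnet--Bonatti): the perturbed foliation $\Fc$ is again by uniformly length-bounded circles, and in particular it is dynamically coherent. In this dynamically coherent, three-dimensional, center-compact setting, a theorem of Didier ensures that accessibility (2) is also $C^1$-open. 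Combining these three facts yields the $C^1$-openness of the full class.

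For the second part I would build a concrete example and then perturb it. Let $A\in\operatorname{SL}(2,\ZZ)$ be a linear Anosov automorphism of $\TT^2$ with eigenvalues $0 < \lambda^s < 1 < \lambda^u$, and let $g:S^1\to S^1$ be a Morse--Smale diffeomorphism with exactly one hyperbolic sink and one hyperbolic source, chosen close enough to the identity that $\lambda^s < |g'(t)| < \lambda^u$ for every $t$. Define $F_0(x,t):=(Ax,g(t))$ on $\TT^3 = \TT^2\times S^1$. Then $F_0$ is partially hyperbolic with vertical center foliation by circles of length one, giving (1); the fixed points of $g$ give rise to hyperbolic periodic orbits of $F_0$, giving (3); and the sink and source of $g$ produce periodic points with two negative Lyapunov exponents and with only one negative Lyapunov exponent, respectively. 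Since these two periodic points persist with their indices under $C^1$-perturbation, the resulting mismatch of stable dimensions prevents $F_0$, and every $C^1$-small perturbation of $F_0$, from being Anosov.

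The property $F_0$ fails is (2): every $su$-leaf is confined to a horizontal slice $\TT^2\times\{t\}$, so $F_0$ is neither accessible nor transitive. I would remedy this by a $C^1$-small perturbation $F_1$ of $F_0$, obtained by composing $F_0$ with a localized vertical twist along a short arc of a strong unstable leaf inside a single non-periodic center fibre, in the spirit of Ni\c{t}ic\u{a}--T\"or\"ok. For a generic choice of twist, $F_1$ is (stably) accessible, so (1)--(3) hold, and I would deduce transitivity of $F_1$ by combining accessibility with the density of the unstable leaves of the base Anosov $A$ on the quotient $\TT^3/\Fc \simeq \TT^2$, via a standard Brin-type argument using the $su$-holonomy. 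Taking a $C^1$-neighborhood $\cU$ of $F_1$ then provides the desired non-empty open set. The main obstacle I expect is the $C^1$-robustness of transitivity inside $\cU$: this requires showing that the mechanism producing transitivity for $F_1$---minimality of the strong unstable foliation combined with accessibility---is itself $C^1$-stable, for which I would invoke the minimality-of-strong-foliations toolkit of Bonatti--Diaz--Pujals, adapted to the center-compact setting.
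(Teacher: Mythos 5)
Your proposed starting diffeomorphism $F_0 = A\times g$, with $g$ a Morse--Smale circle map having a hyperbolic sink $t_-$ and source $t_+$, cannot serve as a seed for a $C^1$-open set of transitive systems: it has a persistent proper attractor. Indeed, for small $\eps>0$ the set $U:=\TT^2\times(t_--\eps,t_-+\eps)$ is a trapping region with $F_0(\overline U)\subset U$ strictly, and this inclusion survives every $C^0$-small (in particular $C^1$-small) perturbation. Hence any $F_1$ close to $F_0$, including your Ni\c{t}ic\u{a}--T\"or\"ok-style twist, still has $F_1(\overline U)\subset U$ and therefore a compact invariant attractor $\bigcap_{n\geq0}F_1^n(U)\subsetneq\TT^3$: no such $F_1$ can be topologically transitive. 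The ``Brin-type argument'' you invoke tacitly requires the nonwandering set to be all of $\TT^3$, which accessibility alone does not give in the dissipative setting, so even transitivity of a single $F_1$ fails, not just its $C^1$-robustness. The difficulty you flag as your ``main obstacle'' is therefore not a missing lemma but a structural defect of the construction.

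The paper's route avoids this by choosing a seed with no proper attractor or repeller: $A\times\mathrm{id}$ (which has, however, no hyperbolic periodic point). It then invokes Bonatti--D\'iaz to supply a $C^1$-open set of \emph{transitive} non-hyperbolic diffeomorphisms accumulating $A\times\mathrm{id}$ and already satisfying~(3): their blender mechanism simultaneously creates hyperbolic periodic points of both unstable indices $1$ and $2$ (ruling out Anosov) and forces $C^1$-robust transitivity. Accessibility~(2) is then supplied by Dolgopyat--Wilkinson and its $C^1$-openness by Didier, as you also observe. To repair your approach one would have to abandon the skew-product form $A\times g$ with Morse--Smale fibre dynamics and instead introduce hyperbolic periodic points without producing a trapping region --- which is precisely what the blender construction accomplishes. (A smaller point: your justification for the $C^1$-openness of~(1) via Hirsch--Pugh--Shub plus Bohnet/Bohnet--Bonatti is somewhat indirect --- HPS alone does not give $C^1$-persistence of invariant center foliations without a plaque-expansivity hypothesis; the paper relies on Martinchich's Theorem~A$'$, which is tailored exactly to the uniformly compact center setting.)
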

\begin{proof}
 Properties (1) and (2) are $C^1$-open by ~\cite[Theorem A']{Martinchich} and~\cite{didier} respectively.  Property (3) is $C^1$-open due to the implicit function theorem.

One can build a diffeomorphism satisfying (1) by considering the product of an Anosov diffeomorphism of $\TT^2$
with the identity on $S^1$. This diffeomorphism is accumulated by a $C^1$-open set of transitive non-hyperbolic diffeomorphisms satisfying (3),
see~\cite{Bonatti-Diaz}. Properties (2) can then be realized by $C^1$-perturbation
by~\cite{Dolgopyat-Wilkinson}.
\end{proof}

For these systems, the ergodic MME are non-uniformly hyperbolic, their number is finite and there exist at least one ergodic MME
with 2 negative Lyapunov exponents and one with two positive Lyapunov exponents~\cite{RodriguezHertz-RodriguezHertz-Tahzibi-Ures}.
Moreover \cite{Tahzibi-Yang} establishes a separation between positive and negative center exponents, yielding what we called entropy hyperbolicity (EH) in Thm~\ref{thm-CE-SPR}. As a consequence we obtain:

\begin{theorem}
Let $f$ be a $C^2$ partially hyperbolic diffeomorphism on a $3$-dimen\-sio\-nal closed connected manifold,
satisfying (1),(2),(3) above. Then $f$ is SPR.
\end{theorem}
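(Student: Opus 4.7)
The plan is to apply Theorem~\ref{thm-CE-SPR} with $X := M$, reducing the SPR property to verifying the entropy hyperbolicity (EH) and entropy continuity (EC) conditions. The first ingredient is continuity of the three Lyapunov exponents along $E^s$, $E^c$, $E^u$: since the partially hyperbolic splitting is continuous, the functions $\varphi^\sigma(x) := \log\|Df_x|_{E^\sigma}\|$ are continuous for $\sigma \in \{s,c,u\}$, so the functionals $\mu \mapsto \lambda^\sigma(\mu) := \int \varphi^\sigma \, d\mu$ are continuous on $\Prob(f)$ in the weak-$*$ topology. Uniform hyperbolicity on $E^s$ and $E^u$ provides $\lambda_0 > 0$ with $\lambda^u(\mu) \geq \lambda_0$ and $\lambda^s(\mu) \leq -\lambda_0$ uniformly, so only the center exponent $\lambda^c$ is genuinely at issue.

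The condition (EH) will come from the Tahzibi--Yang separation~\cite{Tahzibi-Yang}, which in our setting supplies $\chi_0 > 0$ and $h_0 < h_{\top}(f)$ such that every ergodic $\mu$ with $h(f,\mu) > h_0$ satisfies $|\lambda^c(\mu)| \geq \chi_0$, together with a separation between the ``positive center'' and ``negative center'' branches. Given any sequence $\mu_n \in \Proberg(f)$ with $h(f,\mu_n) \to h_{\top}(f)$ and $\mu_n \rightharpoonup \mu$, I may pass to a subsequence along which all $\mu_n$ lie in a common sign branch. Continuity of $\lambda^c$ transfers $|\lambda^c(\mu)| \geq \chi_0$ to the limit, and combining this with the finiteness of ergodic MMEs from~\cite{RodriguezHertz-RodriguezHertz-Tahzibi-Ures} and the separation of sign branches forces $\mu$-a.e.\ ergodic component to have center exponent of a single common sign. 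Together with $\lambda^u \geq \lambda_0$ and $\lambda^s \leq -\lambda_0$, this yields (EH) with $\chi := \min(\lambda_0,\chi_0)/2$ and with unstable dimension $i(\mu) \in \{1,2\}$ according to whether the common sign of $\lambda^c$ is negative or positive.

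With (EH) in force, (EC) is automatic: for all large $n$ and for the limit $\mu$ one has $\Lambda^+(\mu) = \lambda^u(\mu) + \max(\lambda^c(\mu),0)$ and analogously for $\mu_n$, so continuity of $\lambda^u$ and $\lambda^c$ gives $\Lambda^+(\mu_n) \to \Lambda^+(\mu)$. Theorem~\ref{thm-CE-SPR} then produces SPR on $M$. The main obstacle is the subtlety in the second paragraph: transferring the Tahzibi--Yang separation from the ergodic approximating measures $\mu_n$ to the possibly non-ergodic limit $\mu$, and ruling out an ergodic decomposition of $\mu$ mixing components of both signs of $\lambda^c$. This is exactly where the finiteness of ergodic MMEs and the openness of the two sign branches in $\Prob(f)$ are used, together with an upper-semicontinuity-type argument for the entropy along approximate-MME sequences in this partially hyperbolic setting.
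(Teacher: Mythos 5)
Your route is genuinely different from the paper's in one structural respect: you apply Theorem~\ref{thm-CE-SPR} directly with $X=M$, whereas the paper goes through Proposition~\ref{p.decomposition}, checking hyperbolicity of high-entropy measures (item~(a)), finiteness of high-entropy Borel homoclinic classes (item~(b), via a Pliss-lemma argument), and then verifying (EH)+(EC) on each class separately for item~(c). Your shortcut is legitimate: if (EH) and (EC) hold with $X=M$, Theorem~\ref{thm-CE-SPR} gives SPR outright and the finiteness of high-entropy classes then becomes a \emph{consequence} (via Proposition~\ref{p.decomposition}) rather than something you must establish beforehand. Your reduction of the problem to the sign of $\lambda^c$, the observation that $\lambda^s,\lambda^c,\lambda^u$ are weak-$*$ continuous because the bundles are one-dimensional and continuous, and the verification of (EC) granting (EH) are all correct.

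However, your verification of (EH) has a genuine gap. You correctly identify the crux as ruling out an ergodic decomposition of the limit $\mu$ mixing positive-center and negative-center components, but the ingredients you cite cannot close it. Entropy upper semicontinuity gives $h(f,\mu)=h_\top(f)$, so a.e.\ ergodic component of $\mu$ is one of the finitely many ergodic MMEs, each $\chi_0$-hyperbolic; but this system has ergodic MMEs of \emph{both} center signs (the paper records this, citing~\cite{RodriguezHertz-RodriguezHertz-Tahzibi-Ures}). From $\lambda^c(\mu_n)>\chi_0$ for large $n$ and continuity you get only $\lambda^c(\mu)\ge\chi_0$, and this is compatible with $\mu$ being a proper convex combination, say $0.9\,\nu_+ + 0.1\,\nu_-$ with $\lambda^c(\nu_+)\gg\chi_0>0>\lambda^c(\nu_-)$; ``openness of the sign branches'' among \emph{ergodic} measures says nothing about the ergodic decomposition of a non-ergodic limit. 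Excluding this mixed limit is exactly the content of Theorem~C of~\cite{Tahzibi-Yang} (quoted in the paper: all ergodic components of the limit have the \emph{same} sign of center exponent), which relies on a finer analysis of entropy along the unstable foliation, not on the soft compactness/finiteness facts you list. Your closing sentence acknowledges this is ``the main obstacle'' and names plausible tools, but stops short of an argument; as written, this is precisely the missing step, and the paper's proof fills it by invoking Tahzibi--Yang Theorem~C directly.
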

\begin{proof}
We check the  conditions for the SPR property in Proposition~\ref{p.decomposition}:
\medskip

\noindent
$\bullet$ \emph{There is $\chi>0$ such that the ergodic measures with large entropy are $\chi$-hyperbolic:} This is Theorem B in~\cite{Tahzibi-Yang}.
\medskip

\noindent
$\bullet$ \emph{The number of Borel homoclinic classes with large entropy is finite.}
Indeed, the partially hyperbolic structure, with one-dimensional center, together with Pliss lemma,
imply that for any $\chi$-hyperbolic ergodic measure,
there exists a positive measure set of points whose stable and unstable manifolds
have uniform size (independent from the measure).
Consequently there exists $\ell$ such that among any set of $\ell$ measures that are $\chi$-hyperbolic,
two of them have to be homoclinically related. This gives the property.
\medskip

\indent Now we can fix $h\in (0,h_{\top}(f))$ such that any Borel homoclinic class with entropy larger than $h$ has
entropy equal to $h_\top(f)$.
\medskip

\noindent
$\bullet$ \emph{Each of these Borel classes is SPR.}
Theorem C in~\cite{Tahzibi-Yang} states the following property:
\emph{Let $(\mu_n)$ be a sequence of ergodic measures of $f$
such that $(h(f,\mu_n))$ converges towards $h_\top(f)$ and $(\mu_n)$ converges towards an invariant measure $\mu$.
Then $h(f,\mu)=h_\top(f)$ and the center Lyapunov exponent of all the ergodic components of $\mu$ have the same sign.}
By continuity of the bundles $E^s, E^c, E^u$, we also have convergence of the Lyapunov exponents
$\lambda^i(\mu_n)\to \lambda(\mu)=\int \lambda^i(\mu)d\mu$.
The Entropy Hyperbolicity (EH) and the Entropy Continuity (EC) of $\Lambda^+$ are satisfied.
Theorem~\ref{thm-CE-SPR} then implies that $f$ is SPR.
\end{proof}

\begin{remark}
The construction in~\cite{Bonatti-Diaz} gives a nonempty $C^1$-open set of diffeomorphisms with two hyperbolic fixed points $P,Q$
with stable dimensions respectively equal to $1$ and $2$ such that any hyperbolic periodic point is homoclinically related
to $P$ or $Q$ and such that its topological homoclinic class coincides with the whole manifold. As a consequence, for these systems
there exist exactly two Borel homoclinic classes and two ergodic MMEs, both with full support.
\end{remark}

\subsection{Robustly Transitive but not Partially Hyperbolic SPR Diffeomorphisms}\label{ss.dominated-notPH}
Bonatti and Viana \cite{Bonatti-Viana} have  introduced a family of diffeomorphisms derived from Anosov which are robustly transitive, volume-hyperbolic but admit no uniform invariant sub-bundle.
We show that some of them are SPR:

\begin{theorem}
For any $d\geq 4$, there exists a non-empty $C^1$-open set of SPR transitive diffeomorphisms of $\TT^d$ which admit no uniform invariant sub-bundle.
\end{theorem}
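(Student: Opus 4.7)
My plan is to combine the derived-from-Anosov construction of~\cite{Bonatti-Viana} with the sufficient condition for SPR given by Theorem~\ref{thm-CE-SPR}, mirroring the strategy used in \S\ref{ss.center-compact} for the partially hyperbolic case with compact center. The construction supplies, for each $d\ge 4$, a nonempty $C^1$-open set $\mathcal{V}\subset\Diff^1(\TT^d)$ of robustly transitive diffeomorphisms $f$ that admit no uniform invariant sub-bundle but carry a dominated splitting $T\TT^d=E^{cs}\oplus E^{cu}$ with $\dim E^{cs},\dim E^{cu}\ge 2$ and pointwise volume-hyperbolicity bounds $|\det(Df|_{E^{cs}})|\le e^{-\alpha}<1<e^{\alpha}\le |\det(Df|_{E^{cu}})|$ for some $\alpha>0$. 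Intersecting $\mathcal{V}$ with $\Diff^{1+}(\TT^d)$ gives a nonempty $C^1$-open family of $C^{1+}$ diffeomorphisms to which the paper's machinery applies, and by shrinking it further I may assume every $f$ lies in a small $C^1$-neighborhood of an initial linear Anosov automorphism $A$ of $\TT^d$, with $h_\top(f)$ close to $h_\top(A)>0$.

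Within this shrunken open set I would verify the three conditions of Proposition~\ref{p.decomposition}. For condition (a), I combine Ruelle's inequality $h(f,\mu)\le \Lambda^+(\mu)$, the volume-hyperbolicity bounds (which control $\Lambda^+(\mu)+\Lambda^-(\mu)=\int\log|\det Df|\,d\mu$), and the upper semicontinuity of the partial exponent sums (Proposition~\ref{prop-uscLambda}) to conclude that any ergodic $\mu$ with $h(f,\mu)$ near $h_\top(f)\approx h_\top(A)$ must be hyperbolic with unstable dimension exactly $i_0:=\dim E^{cu}$ and all Lyapunov exponents outside a fixed interval $(-\chi,\chi)$. For condition (b), finiteness of Borel homoclinic classes of top entropy, I would invoke available uniqueness-of-MME results for small perturbations of linear Anosov diffeomorphisms in the derived-from-Anosov regime. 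For condition (c), SPR on this unique Borel homoclinic class $X$, I apply Theorem~\ref{thm-CE-SPR}: the argument above supplies (EH) with constant index $i_0$, and then entropy continuity (EC) of $\Lambda^+$ reduces, via Lemma~\ref{lemma-hyplim0}, to weak-$\ast$ continuity of $\mu\mapsto\int\log|\det(Df|_{E^{cu}})|\,d\mu$, which is automatic because $E^{cu}$ is a continuous sub-bundle of $T\TT^d$ and $Df$ is continuous on the compact manifold $\TT^d$. Transitivity and the absence of a uniform invariant sub-bundle come for free from the Bonatti-Viana construction.

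The hard part will be establishing (EH) robustly over the open set. In the compact-center setting of \S\ref{ss.center-compact}, Theorem~B of~\cite{Tahzibi-Yang} delivered this using the one-dimensional center; here the ``center'' is absorbed into two two-dimensional bundles $E^{cs},E^{cu}$, either of which could a priori carry a vanishing Lyapunov exponent for some high-entropy measure, and the dominated splitting alone does not exclude this. The route I would take is to stay in a $C^1$-neighborhood of $A$ small enough that weak-$\ast$ limits of ergodic measures with entropy close to $h_\top(f)$ remain close to the Haar measure of $A$, and to transfer the hyperbolicity of Haar to all such limits using Proposition~\ref{prop-uscLambda} together with domination. This restriction still yields a nonempty $C^1$-open set, which is exactly what the theorem requires.
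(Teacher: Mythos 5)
Your high-level strategy -- realize the Bonatti--Viana dominated splitting and then apply Theorem~\ref{thm-CE-SPR} via (EH) and (EC) -- matches the paper's, and your identification of (EC) as essentially automatic from continuity of $E^{cu}$ is correct. But the argument for (EH) is where the proposal does not hold up, and you partially acknowledge this yourself in the third paragraph.

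Ruelle's inequality plus volume-hyperbolicity plus domination is genuinely not enough when $\dim E^{cs},\dim E^{cu}\ge 2$. Volume-hyperbolicity only controls $\Lambda^{+,\dim E^{cu}}$ and $\Lambda^{-,\dim E^{cs}}$; it puts no lower bound on the \emph{smallest} exponent inside $E^{cu}$, and domination gives a strict but not quantitative gap between the $E^{cs}$ and $E^{cu}$ exponents. So a high-entropy ergodic measure could in principle have one tiny exponent in $E^{cu}$, and (EH) would fail. Your proposed patch -- shrink to a $C^1$-neighborhood of $A$ so small that weak-$\ast$ limits of high-entropy ergodic measures are ``close to Haar,'' then transfer hyperbolicity from Haar by semicontinuity -- is not a valid argument as stated. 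Bonatti--Viana diffeomorphisms are not $C^0$-close to $A$, $f$ does not preserve Haar, and there is no control of the sort you need on how close the limit measure of $f$ (as opposed to its image under the semiconjugacy) is to the linear model; in particular $C^1$-smallness of the perturbation of the map does not by itself localize high-entropy invariant measures near the MME of $A$.

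What the paper actually uses from~\cite{Buzzi-Fisher-2013} is strictly stronger than the volume-hyperbolicity you invoke, and it is exactly the missing ingredient. There is an open set $U$ with $\mu(U)<\eps$ for every high-entropy ergodic $\mu$, and a $\kappa>0$ such that \emph{outside} $U$ the operator-norm bounds $\|Df|_{E^{cs}}\|\le e^{-\kappa}$ and $\|(Df|_{E^{cu}})^{-1}\|\le e^{-\kappa}$ hold, with $\eps$ small compared to $\kappa/\log\max(\|Df\|,\|Df^{-1}\|)$. Integrating along orbits gives a lower bound $\kappa/2$ for \emph{every} Lyapunov exponent in $E^{cu}$ (not just the sum), uniformly over all high-entropy ergodic measures. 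The second ingredient you omit is the semiconjugacy $\pi$ to $A$ together with the fact that $\pi_*$ preserves entropy for high-entropy measures; this is what shows that a weak-$\ast$ limit of $\mu_n$ with $h(f,\mu_n)\to h_\top(f)$ is itself an MME, hence has a.e.\ ergodic component of maximal entropy, hence $\kappa/2$-hyperbolic with constant unstable dimension $\dim E^{cu}$. That is (EH). With these two facts the paper applies Theorem~\ref{thm-CE-SPR} directly to $X=\TT^d$, without routing through Proposition~\ref{p.decomposition} or through any separate uniqueness-of-MME input. To repair your proposal you would need to import precisely these pointwise norm estimates and the entropy-preserving semiconjugacy; the Ruelle/volume-hyperbolicity/semicontinuity route does not close the gap.
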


We rely on the analysis performed in \cite{Buzzi-Fisher-2013}. There,  a special class of Bonatti-Viana diffeomorphisms $f:\TT^d\to\TT^d$ is derived from an Anosov and ergodic automorphism $A:\TT^d\to\TT^d$ and the following properties are checked for some number $h<h_\top(f)$:
\begin{itemize}
 \item This class of systems contains a non-empty open set in the $C^1$ topology.
 \item They have positive entropy, equal to that of the Anosov automorphism $A$. 
More precisely, if $\pi:\TT^d\to\TT^d$ is the topological semiconjugacy making $f$ an extension of $A$, then,  for every ergodic $\mu\in\Prob(f)$ with $h(f,\mu)>h$, $h(f,\mu)=h(A,\pi_*(\mu))$.
 \item They have a continuous dominated splitting $T\TT^d=E^{cs}\oplus E^{cu}$.
  \item There are an open set $U\subset\TT^d$ and some numbers $\eps>0$ and $\kappa>0$ such that $\mu(U)<\eps$ for any $\mu\in\Proberg(f)$ with $h(f,\mu)>h$. 
Moreover, for  $x\notin U$, $\|Df|_{E^{cs}_x}\|\le e^{-\kappa}$ and $\|(Df|_{E^{cu}_x})^{-1}\|\le e^{-\kappa}$.
This number $\eps>0$ can be chosen so small that $\eps<\frac{1}{4}\frac{\kappa}{\log\max(\|Df\|,\|Df^{-1}\|)}\leq\frac14$.
\end{itemize}
It is now easy to see that for any $\mu\in\Proberg(f)$ with $h(f,\mu)>h$,  the Lyapunov exponents along $E^{cu}$ are  lower-bounded by
 $$\begin{aligned}
    &\ge (1-\mu(U))\kappa - \mu(U)\cdot\log\|Df^{-1}\|=\kappa - \mu(U) (\log\|Df^{-1}\|+\kappa)\\
    &\ge \kappa \left(1-\eps\left(1+\kappa^{-1}\log\|Df^{-1}\| \right) \right)
    > \kappa/2>0.
  \end{aligned}$$
Thus the exponents along $E^{cu}$ (resp. $E^{cs}$) are lower-bounded by a positive number (resp. upper-bounded by a negative number).
Since the splitting $T\TT^d=E^{cs}\oplus E^{cu}$ is continuous, the sum of the positive exponents, which coincides with the sum of the exponents along $E^{cu}$ is continuous with respect to the measure $\mu$ in the weak-$*$ topology.

Let us consider a converging sequence of ergodic measures $\mu_n\wsc\mu$ such that
$h(f,\mu_n)\to h_\top(f)$.
Note that $\mu$ is an MME. Indeed, using that $\pi$ is a semiconjugacy, $A$ is uniformly hyperbolic, and $\pi_*$ preserves the entropy of ergodic measures with large entropy:
 $$
   h(f,\mu)\ge h(A,\pi_*(\mu))\ge \lim_{n\to\infty}h(A,\pi_*(\mu_n)) = \lim_{n\to\infty}h(f,\mu_n) = h_\top(f).
 $$

This implies that a.e. ergodic component of $\mu$ is also an MME. 
In particular,  a.e. ergodic component of $\mu$ has unstable dimension $\dim E^{cu}$ and is $\kappa/2$-hyperbolic. It follows that the measure $\mu$ itself is $\kappa/2$-hyperbolic with constant unstable dimension $\dim E^{cu}$.
We have thus checked Properties (EH) and (EC); hence $f$ is SPR.

\begin{remark}
The construction in~\cite{Bonatti-Viana} shows that all the hyperbolic periodic points whose stable dimension equals $\dim(E^{cs})$
are homoclinically related. As a consequence these diffeomorphisms admit a unique Borel homoclinic class with maximal entropy
and a unique MME; its support is the whole manifold.
\end{remark}

\subsection{Further Examples of SPR Diffeomorphisms}
We believe the situation in Sections \ref{ss.center-compact}-\ref{ss.dominated-notPH} to be quite common: for many classes of non-uniformly hyperbolic diffeomorphisms, previous analysis, though only aimed at proving the finite number of ergodic MMEs, actually contain sufficient information to deduce the SPR property using the tools from this work.
Let us cite some interesting classes of partially hyperbolic diffeomorphisms which we expect can be handled this way:
 \begin{enumerate}
   \item Those with center foliation in circles, satisfying conditions similar to \S\ref{ss.center-compact}, but with arbitrary dimensions of the stable and unstable bundles.
  \item Those with invariant non-compact one-dimensional center leaves such as the perturbations of time-$1$ maps of Anosov flows
  and more generally discretized Anosov flows \cite{Martinchich}. The properties in this setting are similar to \S\ref{ss.center-compact}: from \cite{Buzzi-Fisher-Tahzibi,Crovisier-Poletti}
there exists a non-empty $C^1$-open set of robustly transitive $C^2$ diffeomorphisms which admit two ergodic MMEs, both fully supported, non-uniformly hyperbolic but with different unstable dimensions.
  \item Those with center Lyapunov exponents of given sign such as:
  \begin{itemize}
   \item[--] the Ma\~n\'e{'s} derived from Anosov and their generalizations \cite{Buzzi-Fisher-Sambarino-Vasquez-2012,Ures-2012,Fisher-Potrie-Sambarino-2014},
   \item[--] the dynamically defined class \cite{Mongez-Pacifico} introduced by Mongez and Pacifico.
   \end{itemize}
 \end{enumerate}

\subsection{Application to Physical Measures of Mostly Contracting Diffeomorphisms}\label{ss.geometrical}
We now give an example of SPR diffeomorphisms for some non-constant potential (Def \ref{d.SPR-potential}).
Let us consider a $C^{1+}$ diffeomorphism $f$ on a closed manifold $M$ with a (continuous) invariant dominated splitting $TM=E^{cs}\oplus E^u$ such that $E^u$ is uniformly expanded: for some $N\ge1$ and any unit  vectors $v^{cs}\in E^{cs}_x$, $v^u\in E^u_x$ at any $x\in M$,
 $$
   \|Df^N.v^{cs}\| \le \frac12 \|Df^N.v^u\|  \text{ and } \|Df^N.v^u\|\ge 2.
 $$
A \emph{u-state} is an invariant measure $\mu$ whose disintegrations along the leaves of the strong unstable foliation $\cW^u$ (tangent to $E^u$) are absolutely continuous with respect to the intrinsic volume; equivalently the partial entropy $h^u(f,\mu)$ in the direction $E^u$ coincides with
the average Jacobian $\int \log |\det(Df|_{E^u})|d\mu$.
A diffeomorphism is \emph{mostly contracting} \cite{Bonatti-Viana} if for any u-state, all the almost everywhere Lyapunov exponents of $\mu$ along $E^{cs}$ are negative.
The set of mostly contracting diffeomorphisms is $C^1$-open~\cite{JYang-u-entropy}. In this setting the u-states are
physical measures and the union of their basins has full volume in the manifold.
Mixing properties and limit theorems for these systems have been obtained in various works, for instance \cite{Dolgopyat, Castro, Melbourne-Nicol0}.

Any ergodic u-state $\mu$ is a hyperbolic measure. Ruelle's inequality implies that it is an equilibrium measure for
the potential $\phi(x)=-\log |\det(Df_{x}|_{E^u})|$ on the homoclinic class $X$ that carries $\mu$ and that $P(f,\mu,\phi)= 0$.

\begin{theorem}
Let $f$ be a $C^{1+}$ mostly contracting diffeomorphism and $\mu$ be an ergodic u-state carried by a homoclinic class $X$.
Then $X$ is SPR for the potential $\phi(x)=-\log |\det(Df_{x}|_{E^u})|$.
\end{theorem}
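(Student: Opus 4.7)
The plan is to apply the generalization of Theorem~\ref{thm-CE-SPR} described in Remark~\ref{rem-CE-beyondentropy}, with $P(\nu):=P(f,\nu,\phi)=h(f,\nu)+\int\phi\,d\nu$ playing the role of the entropy. The potential $\phi$ is continuous since $E^u$ is a continuous invariant sub-bundle, and the hypothesis on $\mu$ gives $P_X:=\sup\{P(f,\nu,\phi):\nu\in\Proberg(f|_X)\}=P(f,\mu,\phi)=0$. So one needs to check generalized entropy hyperbolicity and entropy continuity on $X$ for this functional $P$.

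First, set $i:=\dim E^u$. Uniform expansion of $E^u$ furnishes a universal $\chi_1>0$ with $\lambda^k(\nu)\ge\chi_1$ for $1\le k\le i$ and every $\nu\in\Prob(f)$. Since all ergodic measures carried by a Borel homoclinic class share the same unstable Oseledets dimension, every $\nu\in\Proberg(f|_X)$ must have $E^{+,\nu}_x=E^u_x$ $\nu$-a.e.; hence $\Lambda^+(\nu)=\int\log|\det(Df|_{E^u})|\,d\nu=-\int\phi\,d\nu$, and the Ruelle--Ledrappier--Young formula gives $P(f,\nu,\phi)\le 0$ on $X$, with equality iff $\nu$ is a u-state. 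The same observation yields the identity $h(\nu)=h^u(\nu)$ on $X$, where $h^u$ is Jiagang Yang's partial unstable entropy \cite{JYang-u-entropy}.

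To verify the generalized (EC) and (EH), I will consider $\mu_n\in\Proberg(f|_X)$ with $\mu_n\wsc\mu$ on $M$ and $P(f,\mu_n,\phi)\to 0$. Continuity of $\phi$ gives $h(\mu_n)\to-\int\phi\,d\mu$; the identity $h=h^u$ on $X$ and the upper semicontinuity of $h^u$ \cite{JYang-u-entropy} then force $h^u(\mu)\ge -\int\phi\,d\mu$, and the universal bound $h^u(\nu)+\int\phi\,d\nu\le 0$ makes this an equality, so $\mu$ is a u-state. Hence $\mu$-a.e.\ ergodic component of $\mu$ is a u-state with $E^+=E^u$, giving $\Lambda^+(\mu)=-\int\phi\,d\mu=\lim_n\Lambda^+(\mu_n)$; this is (EC). For (EH), the mostly contracting hypothesis gives $\lambda^{i+1}(\nu)<0$ at every ergodic u-state $\nu$. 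The map $\nu\mapsto\lambda^{i+1}(\nu)$ is upper semicontinuous on the weak-$*$ compact set of u-states, since $\sum_{k\le i+1}\lambda^k(\nu)$ is upper semicontinuous by Proposition~\ref{prop-uscLambda} while $\sum_{k\le i}\lambda^k(\nu)=\int\log|\det(Df|_{E^u})|\,d\nu$ is continuous on all of $\Prob(f)$. Its supremum over this compact set is therefore attained and strictly negative, say $-2\chi_2<0$. Taking $\chi:=\min(\chi_1,\chi_2)>0$ yields $\lambda^i(x)>\chi>-\chi>\lambda^{i+1}(x)$ $\mu$-a.e.\ for every such limit $\mu$.

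The hard part will be the identification of the limit $\mu$ as a u-state: this rests on the identity $h=h^u$ on $X$, which follows from the Ledrappier--Young formula and the fact that the positive Oseledets bundle of every measure carried by $X$ is exactly $E^u$. Together with Yang's upper semicontinuity of $h^u$, it is what propagates the equilibrium condition through weak-$*$ limits. Once (EH) and (EC) are established, the generalized Theorem~\ref{thm-CE-SPR} (Remark~\ref{rem-CE-beyondentropy}) immediately yields that $X$ is SPR for the potential $\phi$.
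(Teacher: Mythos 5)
Your argument is correct and follows the same route as the paper's own proof: invoke the generalized Theorem~\ref{thm-CE-SPR} via Remark~\ref{rem-CE-beyondentropy} with $P(f,\cdot,\phi)$ in place of entropy, use continuity of $\phi$ and Yang's upper semicontinuity of $h^u$ to show limit measures of pressure-maximizing sequences on $X$ are u-states, and obtain the uniform hyperbolicity constant $\chi$ from compactness of the set of u-states and upper semicontinuity of the top exponent along $E^{cs}$. You make explicit (via the identity $h=h^u$ on $X$, justified by the shared unstable dimension of measures carried by a Borel homoclinic class) a step the paper treats implicitly, but the substance and structure of the argument coincide.
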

In particular this allows to recover some of the aforementioned results:
each physical measure $\mu$ is exponentially mixing (if its period is $p=1$), satisfies
the large deviation property and the almost sure invariance principle.
\begin{proof}
By Ruelle's inequality, $h^u(f,\nu)\leq \int -\phi d\nu$ for any invariant measure; the definition of mostly contracting diffeomorphisms then implies that u-states are the measures which realize the equality.
The map $\nu\mapsto h^u(f,\nu)$ on $\Prob(f)$ is upper semi-continuous~\cite{JYang-u-entropy} and $\phi$ is continuous (since $E^u$ is continuous),
hence the set of u-states is compact. Since the upper Lyapunov exponent along $E^{cs}$ is upper semi-continuous with respect to the measure (similar to Lem~\ref{prop-uscLambda}),
there exists $\chi>0$ such that any ergodic u-state $\mu$ satisfies $\lambda^i(\mu) >\chi>-\chi>\lambda^{i+1}(\mu)$,
where $i=\dim(E^{u})$.

Let $X$ be a homoclinic class which carries a u-state $\mu$. The arguments in the previous paragraph imply that
for any sequence of ergodic measures $\nu_n$ on $X$ satisfying $P(f,\nu_n,\phi)\to 0$,  any limit measure $\nu$ is a u-state.
To summarize what we have obtained: suppose $\nu_n\in \Proberg(f|_X)$ and $\nu_n\wsc \nu$ on $M$; if $P(f,\nu_n,\phi)\to P_{\Bor}(f|_X,\phi)=0$ then,
\begin{itemize}
\item there exists $i:=i(\mu)$ such that $\lambda^i(x) >\chi>-\chi>\lambda^{i+1}(x)$ $\nu$-a.e.,
\item $\lim_{n\to\infty} \Lambda^+(\nu_n)=\Lambda^+(\nu)$ as $\Lambda^+$ is the average of a continuous function since $i(\nu_n)=i(\nu)=\dim E^u$.
\end{itemize}

These properties are the analogous to (EH) and (EC) in \S\ref{s.thm-CE-SPR} when the entropy is replaced by the pressure associated to $\phi$. Thm~\ref{thm-CE-SPR} then extends to that setting with exactly the same proof and implies that $X$ is SPR for $\phi$. (See Remark~\ref{rem-CE-beyondentropy}.)
\end{proof}

\part{SPR Markov Shifts}
\label{part-SPR-Markov}

\section{Preliminaries on Countable State Markov shifts}\label{s.Markov-shifts}

\subsection{Directed Graphs}\label{s.graphs}
Let $\mathfs G$ denote a directed graph with finite or countable collection of vertices $\mathfs V$.
Suppose $a,b$ are vertices.  

If there is an edge from $a$ to $b$, then we write $a\to b$.
The {\em out-going degree} of a vertex $a$ is $\#\{b\in\mathfs V: a\to b\}$. The {\em in-going degree} of a vertex $a$ is $\#\{b\in\mathfs V: b\to a\}$. If every vertex has positive  out-going and in-going degree, then we say that $\mathfs G$ is {\em proper}. If every vertex has finite out-going and in-going degree, then we say that $\mathfs G$ is {\em locally finite}.

A  {\em path of length $n$ from $a$ to $b$} is an $n+1$ word $(\xi_0,\xi_1,\ldots,\xi_{n-1},\xi_n)\in \mathfs V^{n+1}$ such that
$\xi_0=a$, $\xi_n=b$, and $\xi_i\to\xi_{i+1}$ for $0\leq i\leq n-1$. If there is a path of length $n>0$ from $a$ to $b$, then we write $a\xrightarrow[]{n}b$.

 $\mathfs G$ is called {\em connected},  if for every $a,b\in\mathfs V$, there are positive integers $m,n$ such that $a\xrightarrow[]{m}b$ and $b\xrightarrow[]{n}a$.  A {\em maximal connected component} of $\mathfs G$ is a connected sub-graph of $\mathfs G$, which is not contained in a bigger connected sub-graph of $\mathfs G$.

Suppose $\mathfs G$ is connected. It can be shown that
$
p(a):=\gcd\{n:a\xrightarrow[]{n}a\}
$
is the same value for all vertices $a$. The common value is called the {\em period of $\mathfs G$}. If the period is $1$, $\mathfs G$ is said to be \emph{aperiodic}.

\subsection{Markov Shifts} Let $\mathfs G$ be a countable directed graph. Let
\begin{equation}\label{e.CMS}
\Sigma=\Sigma(\mathfs G):=\{(\cdots,x_{-1},x_0,x_1,\cdots)\in \mathfs V^\Z: x_i\to x_{i+1}\text{ for all }i\in\Z\}.
\end{equation}
The {\em regular part} of $\Sigma(\mathfs G)$ is
\begin{equation}\label{e.regular-part}
\Sigma^\#:=\{(x_i)_{i\in\Z}\in\Sigma: (x_i)_{i\geq 0} \text{ and } (x_i)_{i\leq 0}\text{  have constant subsequences}\}.
\end{equation}
The {\em left shift map} is the transformation
$$
\sigma:\Sigma(\mathfs G)\to \Sigma(\mathfs G),\ \ \sigma(\un{x}):=\un{y},\text{ where }y_i:=x_{i+1}.
$$
The  resulting dynamical system is called the {\em countable state Markov shift (CSMS)} (or just {\em Markov shift}) associated to $\mathfs G$. 

\medbreak

Let $\mathfs G'$ denote the maximal subgraph of $\mathfs G$ all of whose vertices appear as coordinates of at least one  element of $\Sigma(\mathfs G)$, then $\Sigma(\mathfs G)=\Sigma(\mathfs G')$ and $\mathfs G'$ is proper. It follows that when studying CSMS, it is sufficient to consider proper graphs.

\subsection{Cylinders and Topology}
A non-empty set of the form
$$
{_m}[a_0,\ldots,a_k]:=\{\un{x}\in\Sigma(\mathfs G): {x_{i+m}=a_{i}}\ (0\leq i\leq k)\}
$$
is called a {\em cylinder}.
In case $m=0$, we write
$
[a_0,\ldots,a_{k}]:={_0}[a_0,\ldots,a_k].
$
The cylinders
$
[a]:=\{\un{x}\in\Sigma(\mathfs G):x_0=a\}
$
are called {\em partition sets}. The words $(a_0,\ldots,a_k)$ for which $[a_0,\ldots,a_k]\neq \emptyset$ are called {\em admissible}.

The cylinders form a basis for a topology on $\Sigma(\mathfs G)$, which makes $\sigma$ continuous. 
The associated Borel $\sigma$-algebra $\mathfs B$ is the $\sigma$-algebra generated by the cylinders.

The cylinders are  clopen. Moreover, 

\begin{lemma}[\cite{Kitchens-Book}]\label{l.local-compactness-crit}
Let $\mathfs G$ be a proper countable directed graph. Then $\mathfs G$ is locally finite if and only if $\Sigma(\mathfs G)$ is locally compact, and  in this case the cylinders are compact.
\end{lemma}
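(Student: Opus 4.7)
The plan is to treat the two implications separately; the forward direction is the easier one, and the backward direction is where the real work lies.

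\textbf{Direction $(\Leftarrow)$.} Assume $\mathfs G$ is locally finite. I would fix a cylinder $C={}_m[a_0,\ldots,a_k]$, and for each coordinate $j\in\Z$ let $V_j\subset\mathfs V$ denote the set of possible values $y_j$ as $\underline y$ ranges over $C$. Inside the constraint window $V_j$ is a singleton; outside, a straightforward induction outward from $a_0$ and $a_k$ shows that $V_j$ is finite, because at each step $V_{j+1}$ (resp.\ $V_{j-1}$) is a finite union of finite out-neighborhoods (resp.\ in-neighborhoods). Hence $C \subset \prod_j V_j$, a product of finite discrete sets, which is compact by Tychonoff, and $C$ is a closed subset of this product, so $C$ is compact. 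Since the cylinders form a clopen neighborhood basis of $\Sigma$, each point of $\Sigma$ has a compact neighborhood and $\Sigma$ is locally compact.

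\textbf{Direction $(\Rightarrow)$.} Assume $\Sigma$ is locally compact. The key tool is that a continuous map from a compact space into a discrete space has finite image; applied to the coordinate projections $\phi_j(\underline y)=y_j$, this gives that whenever $C\subset\Sigma$ is compact, $\{y_j:\underline y\in C\}$ is finite for every $j\in\Z$. Given a vertex $v$, propriety provides a bi-infinite admissible path $\underline x$ with $x_0=v$. Local compactness together with the clopen cylinder basis yields a compact cylinder $C=[x_{-m},\ldots,x_n]$ containing $\underline x$ (any clopen subset of a compact neighborhood is itself compact). Propriety once more implies that every out-neighbor $b$ of $x_n$ extends to some $\underline y\in C$ with $y_{n+1}=b$, so $\phi_{n+1}(C)$ equals exactly the out-neighborhood of $x_n$, forcing it to be finite; the symmetric argument on the backward side shows $x_{-m}$ has finite in-degree.

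\textbf{The main obstacle} is to upgrade the finite-degree conclusion from the boundary vertices $x_n$ and $x_{-m}$ to the prescribed vertex $v=x_0$. My strategy would be to refine the compact cylinder $C$ by further constraining additional coordinates so that $v$ becomes the rightmost (resp.\ leftmost) constrained vertex of a compact sub-cylinder. Closed subsets of compact sets are compact, so the resulting refined cylinder inherits compactness from $C$, and re-running the projection argument then yields that $v$ has finite out-degree (resp.\ in-degree). This step is the combinatorial crux and relies on propriety of $\mathfs G$ to ensure that the required admissible word extensions exist. Once $\mathfs G$ has been shown to be locally finite, the compactness of every cylinder follows from the argument of $(\Leftarrow)$, completing the proof.
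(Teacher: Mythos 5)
The paper cites this statement to Kitchens' book and gives no proof of its own, so I evaluate your argument on its own terms. Your proof of the implication ``locally finite $\Rightarrow$ locally compact, with compact cylinders'' (which you label $\Leftarrow$) is correct: working outward from the constrained window, local finiteness forces each projection $V_j$ to be a finite set, and the edge constraints $y_j\to y_{j+1}$ are closed conditions on the product, so each cylinder is a closed subspace of the compact set $\prod_j V_j$. This is, moreover, the only direction the paper ever invokes (e.g.\ in the proof of Cor.~\ref{c.coro-epsilon(M,f,chi,beta)} it passes from ``$\wh{\mathfs G}$ locally finite'' to ``$\Sigma(\wh{\mathfs G})$ locally compact'').

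The other direction has a genuine gap, precisely at the step you yourself flag as the ``main obstacle,'' and the fix you propose does not work. From a compact cylinder $C={}_{-m}[x_{-m},\ldots,x_n]$ the projection argument controls the out-degree of $x_n$ (and, by iterating, of every vertex forward-reachable from $x_n$), but says nothing about $v=x_0$. You suggest ``further constraining additional coordinates so that $v$ becomes the rightmost constrained vertex of a compact sub-cylinder.'' But constraining more coordinates makes the cylinder \emph{smaller} and pushes the rightmost constrained index further from $0$, not toward it; what you would actually need is compactness of the \emph{larger} cylinder ${}_{-m}[x_{-m},\ldots,x_0]$, and compactness does not pass to supersets.

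This obstruction is real, not just a defect of the argument: for proper but non-irreducible graphs the implication ``$\Sigma(\mathfs G)$ locally compact $\Rightarrow \mathfs G$ locally finite'' can fail. Take vertices $\{v\}\cup\{a_j:j\ge1\}\cup\{b_i^j:i,j\ge1\}$ with edges $a_{j+1}\to a_j$, $a_1\to v$, $v\to b_i^1$, and $b_i^j\to b_i^{j+1}$. Every vertex has positive in- and out-degree (so the graph is proper), and $v$ has infinite out-degree, so $\mathfs G$ is not locally finite. Yet every bi-infinite admissible path visits $v$ exactly once, after which both the forward and backward parts are forced; so every point of $\Sigma(\mathfs G)$ is isolated (the cylinder fixing coordinates on either side of the occurrence of $v$ is a singleton) and $\Sigma(\mathfs G)$ is a countable discrete, hence locally compact, space. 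The converse implication does hold under an irreducibility hypothesis --- one then routes an admissible path from $x_n$ back to $v$ inside $C$ and the infinite branching at $v$ makes a coordinate projection of $C$ infinite --- and that is presumably the hypothesis in Kitchens' formulation. Your argument for the only direction the paper uses is sound; the converse needs irreducibility, and your proposed refinement step would have to be replaced by such a routing argument.
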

\noindent

\subsection{Metric}\label{ss.metric}
The topology of $\Sigma(\mathfs G)$ is generated by the following  metric:
\begin{equation}\label{e.metric-on-sigma} d(\un{x},\un{y}):=
\exp[-\min\{|i|:x_i\neq y_i\}]\text{ when $\un{x}\neq \un{y}$, and $d(\un{x},\un{y}):=0$ otherwise.}
\end{equation}

Fix {$\beta>0$}. The $\beta$-H\"older norm of a complex valued function $\psi$ on $\Sigma(\mathfs G)$ is
\begin{equation}\label{e.Holder-Norm}
\|\psi\|_\beta:=\sup|\psi|+\sup\left\{\tfrac{|\psi(\un{x})-\psi(\un{y})|}{d(\un{x},\un{y})^\beta}:\un{x}\neq \un{y}\right\}.
\end{equation}
If this is finite, we say that $\psi$ is {\em $\beta$-H\"older continuous}. 

Occasionally (see e.g. the proof of Thm \ref{t.SGP}), we will need to modify $\psi$ by certain  unbounded coboundaries.  This forces  us to consider the following larger classes of functions: $\psi$
is \emph{locally $\beta$-H\"older continuous} (resp. \emph{weakly $\beta$-H\"older continuous}) if there is $C>0$ such that for any $\un{x},\un{y}$ with $x_0=y_0$ (resp. with $x_0=y_0$ and $x_1=y_1$),
$$|\psi(\un{x})-\psi(\un{y})|\le C \cdot d(\un{x},\un{y})^\beta.$$

\subsection{Irreducibility and  Aperiodicity}\label{s.spectral-decomp}
Markov shifts associated to connected graphs are called {\em irreducible}. The {\em period} of $\Sigma(\mathfs G)$ is the period of $\mathfs G$. Irreducible Markov shifts with period one are called {\em irreducible and aperiodic}. 

The following well-known lemmas explain the dynamical significance of these definitions. We omit the proofs, which are standard
(see, e.g., \cite[\S7.2]{Kitchens-Book}): 

\begin{lemma}\label{l.top-transitive-for-MS}
Let  $\mathfs G$ be a proper countable directed graph. Then
\begin{enumerate}[(1)]
\item $\sigma:\Sigma(\mathfs G)\to \Sigma(\mathfs G)$ is topologically transitive iff     $\Sigma(\mathfs G)$ is irreducible.
\item $\sigma:\Sigma(\mathfs G)\to \Sigma(\mathfs G)$ is topologically mixing iff 
        $\Sigma(\mathfs G)$ is irreducible and aperiodic.
\end{enumerate}
\end{lemma}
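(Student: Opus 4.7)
The plan is to reduce both statements to conditions on the graph $\mathfs G$ by using that cylinders form a basis of clopen sets for the topology of $\Sigma(\mathfs G)$. First I would establish the following key observation: for any two admissible words $(a_0,\ldots,a_k)$ and $(b_0,\ldots,b_\ell)$ and any $n>k$,
\[
\sigma^{-n}[b_0,\ldots,b_\ell]\cap [a_0,\ldots,a_k]\neq\emptyset \iff a_k\xrightarrow{n-k} b_0 \text{ in }\mathfs G.
\]
The direction $\Rightarrow$ is immediate from the definition of $\Sigma(\mathfs G)$. For $\Leftarrow$, given a path $a_k=\xi_0\to\xi_1\to\cdots\to\xi_{n-k}=b_0$, one concatenates $(a_0,\ldots,a_k,\xi_1,\ldots,\xi_{n-k-1},b_0,\ldots,b_\ell)$ and then extends it to a biinfinite admissible sequence. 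This extension is where the properness hypothesis enters: since every vertex has at least one outgoing and one incoming edge, one can inductively prepend predecessors and append successors to obtain an element of $\Sigma(\mathfs G)$.

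For (1), I would combine this observation with the fact that cylinders form a basis. Topological transitivity of $\sigma$ is equivalent to: for any two non-empty cylinders $[a_0,\ldots,a_k]$ and $[b_0,\ldots,b_\ell]$ there exists $n\geq 1$ with $\sigma^{-n}[b_0,\ldots,b_\ell]\cap[a_0,\ldots,a_k]\neq\emptyset$. By the observation above, this holds iff for every admissible pair of vertices $a,b$ there is a path from $a$ to $b$ in $\mathfs G$. Applying this symmetrically (and using properness to realize each vertex in an admissible word) yields exactly the condition that $\mathfs G$ be connected, i.e., that $\Sigma(\mathfs G)$ be irreducible.

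For (2), the same reduction shows that topological mixing is equivalent to: for every two vertices $a,b$ appearing in $\Sigma(\mathfs G)$ there is $N$ such that $a\xrightarrow{n}b$ for all $n\geq N$. The direction aperiodic $+$ irreducible $\Rightarrow$ mixing would proceed via the classical numerical semigroup fact: if $S\subset\NN$ is closed under addition and $\gcd(S)=1$, then $S$ contains all sufficiently large integers. Applied to $S_a:=\{n\geq 1: a\xrightarrow{n}a\}$, whose gcd is $1$ by aperiodicity and which is closed under addition by concatenation of loops, one gets $N_a$ such that $S_a\supset\{n\geq N_a\}$; irreducibility then supplies fixed paths from $a$ to $b$ and from $b$ back into the loops at $a$ (or equivalently at $b$), yielding paths of every sufficiently large length. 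Conversely, mixing gives that $S_a$ contains all sufficiently large integers for every $a$, so $\gcd(S_a)=1$, and since this gcd is by definition the period of $\mathfs G$, the shift is aperiodic (and irreducible by part~(1), since mixing implies transitivity).

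The main technical obstacle I anticipate is not the numerical semigroup input (which is standard) but handling the bookkeeping of the extension step: verifying that every finite admissible word can be extended to a biinfinite sequence requires the properness hypothesis, and care must be taken that the paths one produces in $\mathfs G$ can indeed be realized as biinfinite elements of $\Sigma(\mathfs G)$. Once this is set up, both equivalences follow directly from the dictionary between cylinders and admissible words.
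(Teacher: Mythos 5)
Your proof is correct and is exactly the standard argument that the paper chooses to omit (it refers the reader to Kitchens). The reduction to reachability in $\mathfs G$ via the cylinder--path dictionary, the use of properness to extend finite admissible words to biinfinite sequences, and the numerical-semigroup fact for aperiodicity are all carried out properly; the restriction $n>k$ in your key observation is harmless because the forward implications only need it for single-vertex cylinders (where $k=0$) and the converse ones let you choose $n>k$ freely.
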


\begin{lemma}\label{l.irreducible-components}
The non-wandering set of a countable state Markov shift $\Sigma(\mathfs G)$ is the finite or countable disjoint union of the irreducible Markov shifts defined by the maximal connected components of $\mathfs G$.
\end{lemma}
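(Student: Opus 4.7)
The plan is to decompose the vertex set $\mathfs V$ into its maximal strongly connected components $\{C_i\}_{i\in I}$, i.e., the equivalence classes of the relation $a\sim b\iff a=b$ or there exist $m,n\ge1$ with $a\xrightarrow{m}b$ and $b\xrightarrow{n}a$. Since $\mathfs V$ is countable, so is $I$. For each $i$, the subgraph $\mathfs G_i$ of $\mathfs G$ induced on $C_i$ is strongly connected, so $\Sigma(\mathfs G_i)$ is an irreducible Markov shift sitting inside $\Sigma(\mathfs G)$ as a closed $\sigma$-invariant set. The $\Sigma(\mathfs G_i)$ are pairwise disjoint since the $C_i$ are. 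The lemma reduces to identifying the non-wandering set of $\sigma$ with $\bigsqcup_{i\in I}\Sigma(\mathfs G_i)$.

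For the inclusion $\bigsqcup_{i\in I}\Sigma(\mathfs G_i)\subseteq$ non-wandering set, I would take $\un{x}\in\Sigma(\mathfs G_i)$ and an arbitrary cylinder neighborhood $U={_{-k}}[x_{-k},\ldots,x_k]$. Strong connectivity of $\mathfs G_i$ furnishes an admissible path $x_k=w_0\to w_1\to\cdots\to w_\ell=x_{-k}$ of some length $\ell\ge1$ with all $w_j\in C_i$. The periodic extension of the closed admissible word $(x_{-k},\ldots,x_k,w_1,\ldots,w_{\ell-1})$ produces a periodic point $\un{z}\in U$ fixed by $\sigma^{2k+\ell}$, witnessing $\sigma^{2k+\ell}(U)\cap U\ne\emptyset$ and hence that $\un{x}$ is non-wandering.

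For the reverse inclusion, I would prove the stronger statement that every cylinder neighborhood $U={_{-k}}[x_{-k},\ldots,x_k]$ of a non-wandering point $\un{x}$ contains a periodic point. By definition, there exist $n\ge1$ and $\un{y}\in U\cap\sigma^{-n}(U)$, giving $y_m=x_m$ for $|m|\le k$ and $y_{m+n}=x_m$ for $|m|\le k$. In particular $y_{-k+n}=x_{-k}=y_{-k}$, so the finite admissible word $(y_{-k},y_{-k+1},\ldots,y_{-k+n-1})$ closes on itself, and its periodic concatenation defines a periodic point $\un{z}\in\Sigma(\mathfs G)$ with $z_j=x_j$ for $|j|\le k$, i.e.\ $\un{z}\in U$. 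All coordinates of $\un{z}$ lie on a single cycle in $\mathfs G$, hence on a single component $C_{i(k)}$. Applying this to the shrinking family $U_k$ yields periodic points $\un{z}^{(k)}\to\un{x}$, and for each fixed $j$ one has $z^{(k)}_j=x_j\in C_{i(k)}$ whenever $k\ge|j|$; since the $C_i$ are disjoint, $i(k)$ must eventually be constant $=i$, so every $x_j\in C_i$ and $\un{x}\in\Sigma(\mathfs G_i)$.

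The main technical point is the construction of the periodic point $\un{z}$: one must verify that its periodic extension of period $n$ matches $\un{x}$ on the whole window $|j|\le k$, even when $n\le 2k$. This is a short check: for any $j\in[-k,k]$ one has $z_j=y_{-k+((j+k)\bmod n)}$, and combining $y_m=x_m$ with $y_{m+n}=x_m$ (applied $\lfloor(j+k)/n\rfloor$ times) identifies this value with $x_j$. Once this is in place, disjointness of the components and countability of $\mathfs V$ close the argument.
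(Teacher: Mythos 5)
Your proof is correct, and since the paper simply cites Kitchens \cite[\S7.2]{Kitchens-Book} without reproducing an argument, there is nothing to contrast it with: the approach you take (decompose $\mathfs V$ into strongly connected components, show every point of $\Sigma(\mathfs G_i)$ is approximated by periodic points inside $\Sigma(\mathfs G_i)$, and conversely show every neighborhood of a non-wandering point contains a periodic point whose coordinates all lie on a single cycle) is exactly the standard one the paper is deferring to.

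Two small remarks. First, your equivalence classes $C_i$ include degenerate singletons $\{a\}$ with $a$ not on any cycle; for those the induced subgraph is not ``connected'' in the paper's sense and $\Sigma(\mathfs G_i)=\emptyset$, so they contribute nothing to the union --- the decomposition is still correct, but the blanket statement ``the subgraph $\mathfs G_i$ induced on $C_i$ is strongly connected'' should be restricted to the non-degenerate classes, which are the paper's ``maximal connected components.'' Second, the inductive verification that $z_j=x_j$ on the whole window $|j|\le k$ even when $n\le 2k$ is genuinely needed and your descent on $q=\lfloor(j+k)/n\rfloor$, using the overlap of the constraints $y_m=x_m$ and $y_{m+n}=x_m$, closes that gap correctly; it is worth spelling out that the intermediate indices $j-n, j-2n,\dots$ stay inside $[-k,k]$ until $q$ reaches $0$, which is what makes the chain of equalities legitimate.
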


\begin{lemma}[{\bf Spectral Decomposition}]\label{l.spectral-decomposition-CMS}
Any irreducible Markov shift with period  $p>1$ can be  decomposed into
$
\Sigma(\mathfs G)=\biguplus_{i=0}^{p-1}\Sigma_i
$ in such a way that each $\Sigma_i$ is closed, $\sigma(\Sigma_i)=\Sigma_{i+1\text{ mod }p}$, and where $\sigma^p:\Sigma_i\to\Sigma_i$ are topologically conjugate to a topologically mixing Markov shift.
\end{lemma}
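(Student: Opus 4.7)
The plan is to mimic the classical spectral decomposition for finite-state Markov chains, adapted to the countable setting. First I will fix a reference vertex $a_0\in\mathfs V$ and define a coloring $\ell\colon\mathfs V\to\{0,1,\dots,p-1\}$ by setting $\ell(b):=n\bmod p$ whenever $a_0\xrightarrow{n}b$ in $\mathfs G$. Since $\mathfs G$ is connected, every vertex receives a color, and I will check that $\ell$ is well-defined: if $a_0\xrightarrow{n}b$ and $a_0\xrightarrow{n'}b$ and $b\xrightarrow{m}a_0$, then $n+m$ and $n'+m$ both lie in the set $R(a_0):=\{k:a_0\xrightarrow{k}a_0\}$. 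Because $\gcd R(a_0)=p$ and $R(a_0)$ is closed under addition, standard number-theoretic arguments (the return-time set of a connected graph is eventually an arithmetic progression of step $p$) force $n\equiv n'\pmod p$. By the same reasoning, if $a\to b$ then $\ell(b)=\ell(a)+1\bmod p$.

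Next, partition $\mathfs V=\mathfs V_0\uplus\dots\uplus\mathfs V_{p-1}$ along the fibers of $\ell$, and define
$$\Sigma_i:=\{\un{x}\in\Sigma(\mathfs G):x_0\in\mathfs V_i\}=\bigsqcup_{a\in\mathfs V_i}[a].$$
Each $\Sigma_i$ is then a disjoint union of clopen cylinders, hence clopen; the $\Sigma_i$ are pairwise disjoint by construction; and the coloring rule $\ell(x_1)=\ell(x_0)+1\bmod p$ forces $\sigma(\Sigma_i)\subset\Sigma_{i+1\bmod p}$, with equality because $\mathfs G$ is proper. Their union is all of $\Sigma(\mathfs G)$.

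To identify $\sigma^p|_{\Sigma_i}$ as a topologically mixing Markov shift, I will introduce the ``$p$-step'' graph $\mathfs G_i$ with vertex set $\mathfs V_i$ and an edge $a\to b$ whenever there is an admissible path of length exactly $p$ from $a$ to $b$ in $\mathfs G$. The map sending $\un{x}\in\Sigma_i$ to the sequence $(x_{np})_{n\in\ZZ}$ gives a continuous bijection onto $\Sigma(\mathfs G_i)$ conjugating $\sigma^p|_{\Sigma_i}$ to the shift on $\Sigma(\mathfs G_i)$; continuity of the inverse follows because the block $(x_{np},x_{np+1},\dots,x_{np+p})$ is determined by the pair $(x_{np},x_{(n+1)p})$ only up to finitely many admissible fillings, but in fact the whole sequence $\un{x}$ is reconstructed coordinate-by-coordinate from the $\mathfs G_i$-sequence together with the constraint $\un{x}\in\Sigma(\mathfs G)$, and one verifies that the inverse is continuous on cylinders. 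Then $\mathfs G_i$ is connected (given $a,b\in\mathfs V_i$, pick any $\mathfs G$-path $a\xrightarrow{m}b$; the color argument forces $p\mid m$, so this is a $\mathfs G_i$-path) and aperiodic (cycles of length $k$ in $\mathfs G_i$ at $a$ correspond to cycles of length $kp$ in $\mathfs G$ at $a$, and $\gcd\{k:kp\in R(a)\}=1$ because $\gcd R(a)=p$). By Lemma \ref{l.top-transitive-for-MS}(2), $\sigma$ on $\Sigma(\mathfs G_i)$ is topologically mixing.

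The main obstacle I expect is the verification that $\ell$ is well-defined and that $\mathfs G_i$ has period exactly $1$ (not just a divisor of $1$, which is trivial, but the subtle point that $\gcd\{k:kp\in R(a)\}=1$ really follows from $\gcd R(a)=p$). Both rest on the elementary but slightly delicate fact that the set of return times $R(a)$ is a sub-semigroup of $\NN$ whose gcd is $p$, so $R(a)/p$ is a sub-semigroup of $\NN$ with gcd $1$; all other steps are routine manipulations with cylinders and admissible words.
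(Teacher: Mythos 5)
Your first two steps — the coloring $\ell\colon\mathfs V\to\ZZ/p\ZZ$ and the resulting partition $\Sigma(\mathfs G)=\biguplus_i\Sigma_i$ with $\sigma(\Sigma_i)=\Sigma_{i+1\bmod p}$ — are correct and match the paper's construction (the paper calls your $\mathfs V_i$ by the name $S_i$).

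The gap is in the final step, where you identify $\sigma^p|_{\Sigma_i}$ with a Markov shift. Your ``$p$-step graph'' $\mathfs G_i$ has vertex set $\mathfs V_i$ and an edge $a\to b$ whenever $a\xrightarrow{p}b$ in $\mathfs G$, and you claim the map $\un{x}\mapsto(x_{np})_{n\in\ZZ}$ is a bijection from $\Sigma_i$ onto $\Sigma(\mathfs G_i)$. It is not: the map forgets the coordinates strictly between successive multiples of $p$, and as you yourself note these are ``determined \ldots only up to finitely many admissible fillings.'' That is precisely the failure of injectivity, and your follow-up clause (``but in fact the whole sequence is reconstructed'') is unsupported — it contradicts what you just said. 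A concrete counterexample: take vertices $\{a,b,c\}$ with edges $a\to b$, $a\to c$, $b\to a$, $c\to a$. Then $p=2$, $\mathfs V_0=\{a\}$, and your $\mathfs G_0$ is a single vertex with one loop, so $\Sigma(\mathfs G_0)$ is a single point — while $\Sigma_0$ contains a Cantor set of sequences (one for every $\{b,c\}$-choice between consecutive visits to $a$). So $\sigma^p|_{\Sigma_i}$ is only a \emph{factor} of the shift on $\Sigma(\mathfs G_i)$... wait, it is the other way around: the shift on $\Sigma(\mathfs G_i)$ is a factor of $\sigma^p|_{\Sigma_i}$, not conjugate to it, and moreover $\Sigma(\mathfs G_i)$ could have strictly smaller entropy.

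The fix — which is what the paper does — is to take, as the vertex set of the new graph, the non-empty $p$-cylinders $[a_0,\dots,a_{p-1}]$ with $a_0\in\mathfs V_i$ (i.e., admissible $p$-blocks starting in $\mathfs V_i$), with an edge $[\un a]\to[\un b]$ whenever $a_{p-1}\to b_0$ in $\mathfs G$. Then the map $\un{x}\mapsto\bigl((x_{np},\dots,x_{np+p-1})\bigr)_{n\in\ZZ}$ is a genuine homeomorphism conjugating $\sigma^p|_{\Sigma_i}$ to the left shift, because adjacent $p$-blocks overlap-compatibly determine the full $\mathfs G$-sequence. Your connectedness and aperiodicity arguments (the semigroup $R(a)/p$ has gcd $1$) transfer to this block graph without change, so the rest of your reasoning stands once the vertex set is corrected.
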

\begin{proof}
This is well-known, but we would like to sketch the construction, because we will use it later.
Fix $a\in\mathfs V$, and let $S_i:=\{b\in\mathfs V:a\xrightarrow[]{n}b, n=i\mod p\}$, $i=0,\ldots,p-1$. These sets are disjoint, because $p$ is the period of $\mathfs G$.
It follows that for $u,v\in\mathfs V$, if $u\in S_i$ and $u\to v$,  then $v\in S_{i+1\text{ mod }p}$.

Letting
$
\Sigma_i:=[S_i]=\{\un{x}\in\Sigma: x_0\in S_i\}
$, we find that $\Sigma_i$ are closed, $\Sigma(\mathfs G)=\biguplus_{i=0}^{p-1}\Sigma_i
$, and  $\sigma(\Sigma_i)=\Sigma_{i+1\text{ mod }p}$. In addition,  $\sigma^p:\Sigma_i^+\to\Sigma_i^+$ is topologically conjugate to the left shift on $\Sigma(\mathfs H_i)$, where $\mathfs H_i$ is the graph  with set of vertices  $\{{_0}[a_0,\ldots,a_{p-1}]:a_0\in S_i\}\setminus\{\emptyset\}$,  and edges  $[\un{a}]\to[\un{b}]$ when $a_{p-1}\to b_0$. This graph is connected and aperiodic, therefore $\sigma^p:\Sigma_i\to\Sigma_i$ is topologically mixing.
\end{proof}
\noindent
We remark that  $\sigma^p:\Sigma_i\to\Sigma_i$ are all topologically conjugate to one another, and the conjugacies may be taken to be suitable powers of $\sigma$.

\subsection{The Measure of Maximal Entropy and Positive Recurrence}\label{ss.positive-recurrence}
Suppose $\mathfs G$ is a connected countable directed graph, and $a$ is a vertex.
A {\em loop of length $n$ at $a$} is an admissible word of the form $(a,\xi_1,\ldots,\xi_{n-1},a)$. If $\xi_i\neq a$ for all $0<i<n$, we call this a {\em first return loop of length $n$ at  $a$}. 
\begin{align}
\hspace{-0.23cm}\text{Let }& Z_n(a):=\text{ the number of loops of length $n$ at vertex $a$};\label{e.Z_n}\\
& Z_n^\ast(a):=\text{ the number of first return loops of length $n$ at vertex $a$};\label{e.Z_n-star}\\
& h(\mathfs G):=\limsup_{n\to\infty}\tfrac{1}{n}\log Z_n(a)\text{ (this could be infinite)}.\label{e.Gurevich-entropy}
\end{align}

Gurevich showed that in the connected case, $h(\mathfs G)$ is independent of $a$. This number is called the {\em Gurevich entropy} of $\mathfs G$, and has the following properties \cite{Gurevich-Topological-Entropy},\cite{Gurevich-Measures-Of-Maximal-Entropy}:

\begin{theorem}[Gurevich]
\label{t.Gurevich-Theorem}
Suppose $\mathfs G$ is a connected countable directed graph with period $p$ and set of vertices $\mathfs V$. Then:
\begin{enumerate}[(1)]
\item $\displaystyle h(\mathfs G)=\lim_{n\to\infty}\tfrac{1}{pn}\log Z_{pn}(a)$, for all  $a\in\mathfs V$. Moreover, if $h(\mathfs G)<\infty$, then $Z_n(a), Z_n^\ast(a)<\infty$ for all $n$ and $a$.
\item $\displaystyle h(\mathfs G)$ coincides with $h_{\Bor}(\Sigma(\mathfs G)):=\sup\left\{h(\sigma,\nu): \begin{array}{l}
    \nu\text{ is a shift invariant}\\
     \text{Borel probability measure}
     \end{array} \right\}.$
\item Suppose $h(\mathfs G)<\infty$, then a measure of maximal entropy exists iff
    \begin{equation}\label{e.PR}
    \sum e^{-nh(\mathfs G)} Z_n(a)=\infty\text{ and }\sum n e^{-nh(\mathfs G)} Z_n^\ast(a)<\infty\text{ for some $a\in\mathfs V$.}
    \end{equation}
    In this case the MME is unique, and
    \eqref{e.PR} holds for all vertices $a$.
\end{enumerate}
\end{theorem}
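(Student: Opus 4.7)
My plan is to treat the three parts in order, with the main effort on part (3).

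For part (1), the key observation is that loops at a fixed vertex $a$ concatenate: $Z_{n+m}(a) \geq Z_n(a) Z_m(a)$, so $n \mapsto \log Z_n(a)$ is super-additive on the set $p\N$ (the period forces $Z_n(a) = 0$ for $n \notin p\N$). Fekete's lemma applied to $n \mapsto \log Z_{pn}(a)$ then gives existence of the limit and its identification with $h(\mathfs G)$. Independence of $a$ follows by connectivity: choose paths $a \xrightarrow[]{k} b$ and $b \xrightarrow[]{\ell} a$; then $Z_n(a) \leq Z_{n+k+\ell}(b)$ and symmetrically. Finally, $Z_n(a) = +\infty$ for some $(n,a)$ would concatenate to $Z_{jn}(a) = +\infty$ for all $j$ and force $h(\mathfs G) = +\infty$, so $h(\mathfs G) < \infty$ yields $Z_n(a), Z_n^*(a) < \infty$ for all $n, a$.

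For part (2), the two inequalities are handled separately. The upper bound $h(\sigma, \nu) \leq h(\mathfs G)$ uses that $\mathcal P := \{[a] : a \in \mathfs V\}$ is a generator; after truncation to a finite sub-alphabet and a routine Jensen/counting estimate on the refinements $\mathcal P^{(n)}$, the per-symbol entropy is bounded by $\tfrac{1}{n}\log(\#\text{admissible length-}n\text{ words})$, which is $\leq h(\mathfs G)$ by part~(1). The matching lower bound comes from periodic-orbit approximation: for each large $n \in p\N$, the uniform probability on the $Z_n(a)$ periodic orbits of period $n$ through $[a]$ is $\sigma$-invariant with entropy $\tfrac{1}{n}\log Z_n(a) \to h(\mathfs G)$.

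For part (3), I would work with the loop generating functions
\begin{equation*}
G_a(z) := \sum_{n \geq 0} Z_n(a) z^n, \qquad F_a(z) := \sum_{n \geq 1} Z_n^*(a) z^n,
\end{equation*}
both of radius of convergence $R := e^{-h(\mathfs G)}$, linked by the renewal identity $G_a(z)(1 - F_a(z)) = 1$ in $|z| < R$ (unique decomposition of each loop at $a$ into first-return loops). Condition~\eqref{e.PR} then amounts exactly to $F_a(R) = 1$ and $F_a'(R) < \infty$. In this case I construct the MME via a Vere-Jones recipe: define a probability $\nu$ on $[a]$ by $\nu([a, \xi_1, \ldots, \xi_{n-1}]) := R^n$ for each first-return loop of length $n$ (a probability precisely because $F_a(R) = 1$), and lift it to a $\sigma$-invariant, $\sigma$-finite measure $\tilde\mu$ on $\Sigma$ via the Kakutani tower over the first-return map. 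Its total mass is $\tilde\mu(\Sigma) = \int \tau\, d\nu = R F_a'(R) < \infty$, so $\mu := \tilde\mu/\tilde\mu(\Sigma)$ is a shift-invariant probability with $\mu[a] = (R F_a'(R))^{-1} > 0$; Abramov's formula applied to the induced Bernoulli process with marginal $R^{n(\ell)}$ then gives $h(\sigma, \mu) = h(\mathfs G)$.

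The main obstacle is uniqueness combined with the converse direction of \eqref{e.PR}. Given any MME $\mu'$, necessarily $\mu'[a] > 0$ for some vertex $a$ (partition sets cover $\Sigma$). Let $p_\ell$ denote the induced probability of each first-return loop $\ell$ of length $n(\ell)$. Kac gives $T := \sum n(\ell) p_\ell = 1/\mu'[a]$; Abramov, combined with the elementary bound ``KS entropy of a stationary process $\leq$ Shannon entropy of its countable marginal'', gives $h(\mathfs G) \cdot T \leq -\sum p_\ell \log p_\ell$. A Gibbs inequality with reference $q_\ell := R^{n(\ell)}/F_a(R)$ then yields
\begin{equation*}
-\sum p_\ell \log p_\ell \leq -\sum p_\ell \log q_\ell = T \cdot h(\mathfs G) + \log F_a(R) \leq T \cdot h(\mathfs G),
\end{equation*}
using $F_a(R) \leq 1$ (which always holds, from the renewal identity). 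Equality throughout forces $F_a(R) = 1$, $p_\ell = R^{n(\ell)}$, the induced process being Bernoulli, and $T = R F_a'(R) < \infty$. This is simultaneously the converse of \eqref{e.PR} and the identification of $\mu'$ with the $\mu$ built above, yielding uniqueness.
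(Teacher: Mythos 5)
The paper does not prove Theorem~\ref{t.Gurevich-Theorem} — it is a cited result of Gurevich — so there is no in-paper argument to compare against; I can only assess your proposal on its own terms. Parts (1) and (3) follow what is essentially the classical route (Fekete for the limit, and the Vere-Jones generating-function/renewal-identity analysis with Kac and Abramov for existence, the converse, and uniqueness). Modulo some points you gloss over (you should first reduce to \emph{ergodic} $\mu'$ before invoking Kac and Abramov, and you should note why equality in your Gibbs chain forces Bernoulli structure and not merely the right marginal), part (3) is a correct proof sketch.

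The genuine gap is in the lower bound of part (2). You assert that ``the uniform probability on the $Z_n(a)$ periodic orbits of period $n$ through $[a]$'' has entropy $\tfrac{1}{n}\log Z_n(a)$. This is false: any invariant probability measure concentrated on finitely many periodic orbits has Kolmogorov--Sinai entropy zero, since each orbit contributes zero and entropy is affine over the ergodic decomposition. You are conflating the topological count of orbit segments with the metric entropy of a point mass on them. The correct lower-bound argument approximates $\mathfs G$ from inside: for each $n$, take a finite strongly connected subgraph $\mathfs G_n \subset \mathfs G$ containing all the $Z_n(a)$ loops of length $n$ at $a$ (this is finite when $h(\mathfs G)<\infty$; when $h(\mathfs G)=\infty$ take finitely many of them). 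Then $\Sigma(\mathfs G_n)$ is a finite SFT whose Parry measure $\mu_n$ satisfies $h(\sigma,\mu_n)=h(\mathfs G_n)\geq \tfrac1n\log Z_n(a)$, and $\mu_n$ is also shift-invariant on $\Sigma(\mathfs G)$. Letting $n\to\infty$ (along multiples of $p$, by part~(1)) gives $h_{\Bor}(\Sigma(\mathfs G))\geq h(\mathfs G)$. Alternatively, one can run your part~(3) Kakutani-tower construction at a sub-critical radius $r<R$, where $F_a(r)<1$ and $F_a'(r)<\infty$ automatically hold, and let $r\uparrow R$; but then one must handle the case $F_a(R)<1$, $F_a'(R)<\infty$ separately, which is why the finite-subgraph argument is cleaner.
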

\noindent

Condition \eqref{e.PR} is called {\em positive recurrence}.\footnote{\cite{Gurevich-Measures-Of-Maximal-Entropy} uses  a different but equivalent positive recurrence condition. For the equivalence, see \cite{Gurevich-Savchenko}.} It is  different from the  {\em strong} positive recurrence condition, which we will discuss in the next section.

We now describe the (unique) MME of an irreducible Markov shift.

\begin{theorem}\label{t.Parry-Gurevich-ASS}
Let $\Sigma=\Sigma(\mathfs G)$ be a positive recurrent and irreducible Markov shift, with period $p$, and with finite Gurevich entropy. Let $\lambda=\exp(h_{\Bor}(\Sigma))$. Then:
\begin{enumerate}[(1)]
\item The MME $\mu$ is ergodic, fully supported, and  isomorphic to the product of a Bernoulli scheme and a cyclic permutation of $p$ points.
    
\smallskip
\item There are positive vectors $\un{\ell}=(\ell_u)_{u\in \mathscr{V}}$, $\un{r}=(r_u)_{u\in \mathscr{V}}$ such that\begin{equation}\label{e.vectors}
\un{\ell}A=\lambda\un{\ell}\ ,\ A\un{r}=\lambda\un{r}\ , \text{ and } \sum_{u\in\mathscr{V}} \ell_u r_u=1.
\end{equation}
\item The MME $\mu$ is the Markov measure with initial distribution $
p_v=\ell_v r_v$, and transition probabilities $\displaystyle p_{uv}=\tfrac{r_v}{\lambda r_u}$
(when $u\to v$ is allowed),
i.e.  for every cylinder {of $\Sigma$}, 
 $$
\mu({_m}[v_0,\ldots,v_{n-1}])=p_{v_0} p_{v_0 v_1}\cdots p_{v_{n-2}v_{n-1}}
{=\lambda^{-n} \frac{r_{v_{n-1}}}{r_{v_0}}}.
$$
\end{enumerate}
\end{theorem}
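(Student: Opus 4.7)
Plan. My plan is to construct the left and right eigenvectors via Vere--Jones's generating-function method, build the claimed Markov measure explicitly from them, identify it with the (unique) MME supplied by Theorem~\ref{t.Gurevich-Theorem}(3) via a Rokhlin entropy computation, and finally read off the ergodic-theoretic properties.

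Step 1 (eigenvectors). Fix a reference vertex $a\in\mathscr V$. Following Vere--Jones I would define
$$
r_u:=\sum_{n\ge 0}\lambda^{-n}F_n(u,a),\qquad \ell_u:=\sum_{n\ge 0}\lambda^{-n}F_n(a,u),
$$
where $F_n(x,y)$ counts paths from $x$ to $y$ of length $n$ that do not pass through $y$ before time $n$ (with $F_0(x,y)=\delta_{xy}$). Positive recurrence provides $F^*_{aa}(\lambda^{-1}):=\sum_n\lambda^{-n}Z^*_n(a)=1$ (a consequence of Pringsheim's theorem applied to the loop series, whose radius of convergence is $\lambda^{-1}$ and whose value diverges there). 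First-passage decomposition of admissible walks then gives $A\un{r}=\lambda \un{r}$ and $\un{\ell}A=\lambda\un{\ell}$; connectedness of $\mathscr G$ forces $r_u,\ell_u>0$ at every vertex. The finer condition $\sum n\lambda^{-n}Z^*_n(a)<\infty$ rewrites, after decomposing each first-return loop at $a$ by recording its vertex at every interior time, as $\sum_u\ell_ur_u<\infty$; rescaling $\un{\ell}\leftarrow c\un{\ell}$, $\un{r}\leftarrow c^{-1}\un{r}$ then achieves $\sum_u\ell_ur_u=1$, which is part~(2).

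Step 2 (the Markov measure and its entropy). Set $p_v:=\ell_vr_v$ and $p_{uv}:=r_v/(\lambda r_u)$ for $u\to v$. The relation $A\un{r}=\lambda\un{r}$ gives $\sum_vp_{uv}=1$, while $\un{\ell}A=\lambda\un{\ell}$ gives $\sum_up_up_{uv}=p_v$, so $(p_v,p_{uv})$ defines a stochastic chain with invariant initial distribution. The Ionescu--Tulcea extension then produces a shift-invariant Borel probability measure $\mu$ on $\Sigma(\mathscr G)$ with the cylinder formula $\mu({_m}[v_0,\dots,v_{n-1}])=p_{v_0}p_{v_0v_1}\cdots p_{v_{n-2}v_{n-1}}$, giving part~(3). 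To identify $\mu$ as the MME I would compute
$$
   h(\sigma,\mu)=-\sum_up_u\sum_vp_{uv}\log p_{uv}.
$$
Using $-\log p_{uv}=\log\lambda+\log r_u-\log r_v$ and the invariance $\sum_up_up_{uv}=\ell_vr_v$, the $\log r_u$ and $\log r_v$ contributions cancel, leaving $h(\sigma,\mu)=\log\lambda=h_\Bor(\Sigma)$. Theorem~\ref{t.Gurevich-Theorem}(3) then forces $\mu$ to be \emph{the} MME.

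Step 3 (ergodic properties) and main obstacle. The chain $(p_{uv})$ is irreducible (since $\mathscr G$ is connected) and recurrent with finite expected return time $(F^*_{aa})'(\lambda^{-1})<\infty$ to $a$, whence $\mu$ is ergodic; full support is immediate from $p_v>0$ for every $v$. For the Bernoulli-times-cyclic description I would apply Lemma~\ref{l.spectral-decomposition-CMS} to present $(\sigma,\Sigma)$ as a $p$-cyclic extension of $(\sigma^p,\Sigma_0)$, with $p\cdot\mu|_{\Sigma_0}$ the unique MME of the topologically mixing positively recurrent shift $\sigma^p|_{\Sigma_0}$; Friedman--Ornstein for countable state Markov chains then upgrades this to the Bernoulli property, completing part~(1). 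I expect the most delicate point to be Step~1: both the eigenvector identities $A\un{r}=\lambda\un{r}$, $\un{\ell}A=\lambda\un{\ell}$ and the convergence/normalization of $\sum\ell_ur_u$ rest on careful first-passage graph combinatorics, and the identity $\sum\ell_ur_u<\infty$ is essentially what the finite-mean-return-time half of positive recurrence buys us. The Bernoulli conclusion uses Friedman--Ornstein in the countable state setting, which is a nontrivial but standard external ingredient once the MME is identified as a Markov measure of a positive recurrent chain.
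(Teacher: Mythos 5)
Note first that the paper does not prove this statement itself: it attributes parts (2)--(3) to Parry and to Gurevich, and part (1) to Friedman--Ornstein, Adler--Shields--Smorodinsky and \cite{Sarig-Bernoulli-JMD}. Your outline follows the classical Vere--Jones/Gurevich route contained in those references, which is the right architecture, but Step~1 has two concrete errors. Your $\un{\ell}$ is not a left eigenvector: you define $\ell_u$ via the first-passage count $F_n(a,u)$ (paths $a\to u$ of length $n$ not hitting $u$ before time $n$), but concatenating such a path with a final edge $u\to v$ does \emph{not} produce a first-passage path $a\to v$, so the identity $\sum_u\ell_uA_{uv}=\lambda\ell_v$ fails. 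The correct construction uses \emph{last exit from $a$}: $\ell_u:=\sum_{n\ge0}\lambda^{-n}\,\#\{\text{paths }a\to u\text{ of length }n\text{ avoiding }a\text{ at times }1,\dots,n\}$ (equivalently, first passage to $a$ in the reversed graph). Separately, the convention $F_0(x,y)=\delta_{xy}$ combined with recurrence $\sum_{n\ge1}\lambda^{-n}Z^*_n(a)=1$ gives $r_a=2$, whereas the first-step decomposition forces $r_a=1$ for $A\un{r}=\lambda\un{r}$ to hold at $u=a$; the sum defining $r_u$ should start at $n\ge1$, with $r_a=F_{aa}(\lambda^{-1})=1$.

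In Step~2, the cancellation $h(\sigma,\mu)=-\sum_u p_u\sum_v p_{uv}\log p_{uv}=\log\lambda$ is the finite-state calculation transplanted verbatim. Splitting the sum into the three pieces $\log\lambda$, $\sum p_u\log r_u$ and $-\sum p_v\log r_v$ requires $\sum_u p_u|\log r_u|<\infty$, and even the Markov entropy formula needs justification when the generating partition $\{[v]\}$ may have infinite entropy; neither point is discussed, and neither follows obviously from positive recurrence. One can repair this: the coboundary $\psi(\un x)=\log r_{x_0}-\log r_{x_1}$ is bounded below by $-\log\lambda$ and has $\int\psi\,d\mu = h(\sigma,\mu)-\log\lambda<\infty$, hence $\psi\in L^1(\mu)$, and an $L^1$ coboundary of a measurable (not necessarily integrable) transfer function integrates to zero. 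But the route taken by the cited sources avoids the issue entirely and is worth knowing: induce on $[a]$; the return-time law under $\mu$ is $n\mapsto\lambda^{-n}Z_n^*(a)$; the induced system is an i.i.d.\ scheme on the countable alphabet of first-return loops with entropy $\log\lambda\cdot\sum_n n\lambda^{-n}Z_n^*(a)$; Kac's lemma gives $\mu([a])^{-1}=\sum_n n\lambda^{-n}Z_n^*(a)$ and Abramov's formula then yields $h(\sigma,\mu)=\log\lambda$ with no integrability discussion. Your Step~3 (ergodicity, full support, spectral decomposition, Bernoulli via Friedman--Ornstein) is sound.
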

\noindent
Parry proved (2) and (3) when $|\mathfs G|<\infty$ \cite{Parry-Intrinsic-MC};
Gurevich proved (2) and (3) when $|\mathfs G|=\infty$ \cite{Gurevich-Measures-Of-Maximal-Entropy}. The uniqueness, ergodicity, and full support of the MME follow. When $|\mathfs G|<\infty$, the isomorphism in (1) was proved by Friedman \& Ornstein \cite{Ornstein-Friedman} when $p=1$,  and by 
 Adler, Shields \& Smorodinsky \cite{Adler-Shields-Smorodinsky}, when $p>1$. The extension to infinite, positive recurrent, graphs is routine. A detailed proof in the more general case of equilibrium measures can be found in \cite[Thm 3.1, Lem 4.1]{Sarig-Bernoulli-JMD}.

\section{Strongly Positively Recurrent  Markov Shifts}\label{s.SPR-shift}

\subsection{Strong Positive Recurrence} \label{ss-SPR-Markov-shifts}
Let $\mathfs G$ be a  connected countable directed graph with set of vertices $\mathfs V$, and assume that the entropy $h_{\Bor}(\Sigma)=h(\mathfs G)$ is finite.
Recall that $Z_n^\ast(a)$, respectively $Z_n(a)$, is  the number of first return loops, respectively arbitrary loops, of length $n$ at a vertex~$a$. 

\begin{definition}\label{d.SPR-for-Shifts}
An irreducible Markov shift $\Sigma(\mathfs G)$ is \emph{strongly positive recurrent (SPR)}, if, for some vertex $a$,
 \begin{equation}\label{ineq-MS-SPR}
    \limsup_{n\to\infty}\tfrac1n\log Z_n^\ast(a) <  \limsup_{n\to\infty}\tfrac1n\log Z_n(a)  
 \end{equation}
\end{definition}
\noindent
By Theorem \ref{t.Gurevich-Theorem}, the right hand side above is $h(\mathfs G)$. 

The nomenclature ``SPR"  is from \cite{Sarig-CMP-2001}, but equivalent conditions were considered before under different names. 
Gurevich, Zargaryan, and Savchenko \cite{Gurevich-Zargaryan,Gurevich-Stable,Gurevich-Savchenko} called this condition \emph{stable positive recurrence}, and Vere-Jones \cite{Vere-Jones-Geometric-Ergodicity}  used it to characterize \emph{geometric ergodicity}.
The following  useful characterization of the SPR property is due to Vere Jones  \cite{Vere-Jones-Geometric-Ergodicity} and Gurevich \& Savchenko \cite{Gurevich-Savchenko}. 

\begin{lemma}[Vere Jones]\label{l.SPR-for-Shifts} Let $R_a$ denote the radius of convergence of the generating function
$
\displaystyle{F_a(t):=\sum_{n=1}^\infty t^n Z_n^\ast(a).}
$
Then $F_a(R_a)>1$ iff \eqref{ineq-MS-SPR} holds.
\end{lemma}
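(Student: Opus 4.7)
The plan is to deduce the equivalence from the classical renewal-type identity relating loops to first-return loops at $a$. Set $G_a(t):=\sum_{n\geq 0}Z_n(a)t^n$ with $Z_0(a):=1$; the unique decomposition of any loop at $a$ into a concatenation of first-return loops at $a$ yields, as an identity in $[0,+\infty]$,
$$
G_a(t)=\sum_{k\geq 0}F_a(t)^k,
$$
so that $G_a(t)<\infty$ iff $F_a(t)<1$, in which case $G_a(t)=1/(1-F_a(t))$. By \eqref{e.Gurevich-entropy}, the radius of convergence of $G_a$ is $\rho:=e^{-h(\mathfs G)}$, while that of $F_a$ is $R_a=\exp\bigl(-\limsup_n\tfrac{1}{n}\log Z_n^\ast(a)\bigr)$; thus \eqref{ineq-MS-SPR} is equivalent to $R_a>\rho$, and the lemma reduces to showing $F_a(R_a)>1\iff R_a>\rho$, with $F_a(R_a)$ interpreted as the monotone limit $\lim_{t\uparrow R_a}F_a(t)\in[0,\infty]$.

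For the implication $R_a>\rho\Rightarrow F_a(R_a)>1$, I would first show $F_a(\rho)\geq 1$ by contradiction: if $F_a(\rho)<1$, then since $\rho<R_a$ the function $F_a$ is continuous at $\rho$, so $F_a(\rho')<1$ for some $\rho'\in(\rho,R_a)$, which would give $G_a(\rho')=1/(1-F_a(\rho'))<\infty$ and contradict the definition of $\rho$ as the radius of convergence of the positive-coefficient series $G_a$. Since $\mathfs G$ is connected and non-trivial, at least one $Z_n^\ast(a)$ is strictly positive, so $F_a$ is strictly increasing on $[0,R_a]$ in the limit sense, and $F_a(R_a)>F_a(\rho)\geq 1$.

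Conversely, if $F_a(R_a)>1$ then by monotonicity and continuity of $F_a$ on $[0,R_a)$, together with $F_a(0)=0$ and the Intermediate Value Theorem, there exists $t^\ast\in(0,R_a)$ with $F_a(t^\ast)=1$; the renewal identity then yields $G_a(t^\ast)=+\infty$, hence $\rho\leq t^\ast<R_a$. The only point requiring a little care is the interpretation of $F_a(R_a)$ as a monotone limit in $[0,\infty]$ together with the use of continuity of $F_a$ strictly inside its disc of convergence; beyond this, the proof consists only of elementary positive-series manipulations, and I do not anticipate any serious obstacle.
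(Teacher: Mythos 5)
Your proof is correct, and it is genuinely different in flavor from the paper's argument. The paper establishes the same renewal relation in the form $T_a(t)-1=T_a(t)F_a(t)$ (equivalent to your $G_a=\sum_k F_a^k$), but then works in the complex disk: it shows that if $F_a(R_a)>1$ then $1/(1-F_a(z))$ is an analytic continuation of $T_a$ to $\{|z|<\rho_a\}$ where $F_a(\rho_a)=1$, and concludes $r_a=\rho_a<R_a$; this step tacitly invokes Pringsheim's theorem that a power series with non-negative coefficients is singular at the positive real point of its circle of convergence. You avoid all of this by staying on the positive real axis: you read off both radii from the exponential growth rates, reduce the claim to $F_a(R_a)>1\iff R_a>\rho$, and settle both directions with monotonicity, continuity, the intermediate value theorem, and the elementary fact that a non-negative power series converging at some $t_0>0$ has radius of convergence at least $t_0$. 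That makes your version more elementary and, to my mind, makes it more transparent why the statement is true; the paper's version, on the other hand, is closer in spirit to the analytic-continuation viewpoint used elsewhere in the renewal-theory literature. Two tiny things worth making explicit in a written-up version: you should record that $R_a\geq\rho$ always (since $Z_n^*(a)\leq Z_n(a)$), which is what makes the reduction to an ``iff'' clean, and in the step ``$F_a(t^\ast)=1\Rightarrow G_a(t^\ast)=+\infty$'' you are using the $[0,\infty]$-valued renewal identity, which is legitimate by Tonelli but deserves a word.
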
\begin{proof}\cite{Vere-Jones-Geometric-Ergodicity},\cite{Gurevich-Savchenko}.
Let $T_a(t):=1+\sum_{n\geq 1} t^n Z_n(a)$. Every loop of length $n$ at the vertex $a$ is either a first return loop, or a concatenation of a first return loop of length $k$ and another loop of length $n-k$. It follows that
$$
Z_n(a)=Z_n^\ast(a)+\sum_{k=1}^{n-1} Z_k^\ast(a) Z_{n-k}(a), \text{ whence }T_a(t)-1=T_a(t)F_a(t).
$$
Note that $\displaystyle{T_a(t)=1/(1-F_a(t))}$.
The radius of convergence of $T_a(t)$ is $r_a:=\exp[-\limsup_n\frac{1}{n}\log Z_n(a)]$, and for $F_a(t)$ it is $R_a:= \exp[-\limsup_n\frac{1}{n}\log Z_n^\ast(a)]$. 

If $F_a(R_a)>1$, then there exists $0<\rho_a<R_a$ such that $F_a(\rho_a)=1$. For all $z\in\mathbb C$ such that $|z|<\rho_a$, $|F_a(z)|<1$. Thus $\frac{1}{1-F_a(z)}$ is an analytic extension of $T_a$ to  $\{z\in\mathbb C:|z|<\rho_a\}$, and $T_a(\rho_a)=\infty$. So $r_a=\rho_a$. Since $\rho_a<R_a$, we get \eqref{ineq-MS-SPR}. 

Similarly, if $F_a(R_a)\leq 1$, then $T_a$ can be analytically extended to $\{z\in\mathbb C:|z|<R_a\}$, whence $r_a\geq R_a$. So \eqref{ineq-MS-SPR} fails.
 \end{proof}

Vere-Jones  \cite{Vere-Jones-Geometric-Ergodicity}  proved that either $F_a(R_a)>1$ for all vertices $a$, or $F_a(R_a)\leq 1$ for all vertices $a$.
Thus, if \eqref{ineq-MS-SPR} holds for some vertex, then it holds for all vertices.

\begin{example} Lemma \ref{l.SPR-for-Shifts} can be used to construct many examples and non-examples of SPR shifts. A {\em bouquet graph} is a  directed graph $\mathfs G$ obtained by starting with a ``base vertex" $a$, and then adding, for each $n$,  $\ell_n$  first-return loops of length $n$ at the vertex $a$ in such a way that all these loops meet only at $a$. 
In this case
$F_a(t)=\sum_{n=1}^\infty \ell_n t^n,\text{ and } R_a=1/\limsup_{n\to\infty}\sqrt[n]{\ell_n}.$

Fix $M\geq 1$,  and define $\ell_n=\lceil 2^n/n^2\rceil$ for $n\geq M$, and $\ell_n=0$ for $n< M$. Then $R_a=\frac{1}{2}$, and if $M=1$ then $F_a(R_a)=\tfrac12\lceil\tfrac21\rceil+\cdots>1$,  and $\Sigma(\mathfs G)$ is SPR. But if $M$ is very large, then 
$F_a(R_a)\leq \sum_{n\ge M}\frac{1}{n^2}+ 2^{-M+1}<1$,  and $\Sigma(\mathfs G)$ is not SPR.
\end{example}

As in the case of diffeomorphisms, the SPR property of Markov shifts can be expressed using the ``entropy at infinity." Define the \emph{cylinder bornology} to be
\begin{equation}\label{e.cylinder-bornology}
\mathfrak C:=\{B\subset \Sigma(\mathfs G): B\subset \text{ finite union of cylinders}\}.
\end{equation}
(If $\mathfs G$ is locally finite, then $\mathfrak C=\{\text{pre-compact sets}\}$,  see Lemma \ref{l.local-compactness-crit}.)
Next,  define the  {\em entropy at infinity} of $\Sigma(\mathfs G)$ (see \S\ref{s.bornologies-section} and \cite{Buzzi-PQFT, Ruette, iommi-todd-velozo-2020, Iommi-Todd-Velozo-Advances}) to be 
$$h^\infty(\Sigma(\mathfs G)):=h^\infty_{\mathfrak C}(\Proberg(\sigma)):=\displaystyle 
\sup\left\{\limsup_{n\to\infty} h(\mu_n):\mu_n\in\Proberg(\sigma),\; \mu_n\to\infty (\mathfrak C)\right\}.$$
The next theorem states, among other things, the equivalence of the  SPR property to the existence of an entropy gap at infinity, $h^\infty(\sigma)<h_{\Bor}(\Sigma)$:

\begin{theorem}\label{t.SPR-as-Entropy-Gap-Shifts}
Let $\mathfs G$ be a connected countable directed graph, with finite Gurevich entropy, and let $a$ be a vertex. The following are equivalent:
\begin{enumerate}[(1)]
\item $\Sigma(\mathfs G)$ is strongly positively recurrent.
\item $F_a(R_a)>1$.
\item Entropy gap at infinity: $\displaystyle h^\infty(\Sigma(\mathfs G))<h_{\Bor}(\Sigma(\mathfs G))$.
\item The shift is partially entropy-tight w.r.t. the cylinder bornology  (see  \S\ref{s.bornologies-section}).
\item The shift is (fully) entropy-tight with respect to the cylinder bornology.
\item There is a finite set $W\subset\mathfs V$ s.t.
$\Sigma':=\Sigma(\mathfs G)\cap W^\Z$ is irreducible, and  
 $$
   \forall a,b\in W,\  h_{\Bor}(\Sigma')\ge \limsup\limits_{n\to\infty}\frac{1}{n}\log \#\left\{(a,\xi_1,\ldots,\xi_n,b):
   \begin{array}{c}
    \xi_1,\ldots,\xi_n\not\in W \\
    {[a,\xi_1,\ldots,\xi_n,b]\neq \emptyset}
    \end{array}
    \right\}.
  $$
\item There exists an MME $\mu$, and for any sequence $(\mu_n)$ of measures in $\Proberg(\sigma)$, if $h(\mu_n)\to h_{\Bor}(\Sigma(\mathfs G))$, then $(\mu_n)$ converges weak-$*$ to $\mu$.
\end{enumerate}
\end{theorem}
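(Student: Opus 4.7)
The plan is to prove the equivalences by establishing a convenient cycle, using the two easy observations as starting points. First, $(1)\Leftrightarrow(2)$ is immediate from Lemma~\ref{l.SPR-for-Shifts}, and $(3)\Leftrightarrow(4)$ follows from Lemma~\ref{l-partial-tight} once we check that the cylinder bornology $\mathfrak C$ is countably generated and Borel-measurable, which is obvious since $\mathfs V$ is countable and cylinders are clopen. The implication $(5)\Rightarrow(4)$ is tautological, so the remaining substantive tasks are to show $(1)\Leftrightarrow(3)$, $(1)\Leftrightarrow(6)$, $(1)\Rightarrow(7)$, and $(7)\Rightarrow(5)$.

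The heart of the matter is the identification of the entropy at infinity with the first-return exponential rate:
\begin{equation*}
   h^\infty(\Sigma(\mathfs G))=\limsup_{n\to\infty}\tfrac1n\log Z_n^\ast(a),
\end{equation*}
from which $(1)\Leftrightarrow(3)$ follows since $h_{\Bor}(\Sigma)=\limsup_n\tfrac1n\log Z_n(a)$ by Theorem~\ref{t.Gurevich-Theorem}. For the upper bound, take a sequence $\mu_n\in\Proberg(\sigma)$ with $\mu_n\to\infty\ (\mathfrak C)$ and $h(\sigma,\mu_n)\to\alpha$. Since $\mu_n([a])\to 0$, use Abramov--Kac together with the Rokhlin generator theorem applied to the first-return map on $[a]$: the induced entropy is realized by the empirical distribution of first-return words, and the exponential growth of their number is bounded by $\limsup\tfrac1n\log Z_n^\ast(a)$. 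A Jensen/averaging argument on return-time distributions concludes. For the lower bound, build for each large $n$ an ergodic measure $\mu_n$ of entropy close to $\tfrac1n\log Z_n^\ast(a)$, supported on concatenations of first-return loops at $a$; this is the unique Parry measure of the induced Bernoulli shift whose symbols are the $Z_n^\ast(a)$ first-return loops. Because each loop visits $[a]$ only once in $n$ steps, $\mu_n([a])=1/n\to 0$, and by passing to suitable subsequences (or to measures that diffuse over many long loops) one arranges $\mu_n\to\infty\ (\mathfrak C)$.

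For $(1)\Leftrightarrow(6)$ one uses the classical Gurevich--Zargaryan argument. Given SPR, fix $r$ with $\limsup\tfrac1n\log Z_n^\ast(a)<r<h_{\Bor}(\Sigma)$ and enlarge $\{a\}$ to a finite irreducible set $W$ by adding vertices in finitely many first-return loops whose combined weight in $F_a(R_a^{-})$ captures most of the total mass; a direct counting shows that paths through $\mathfs V\setminus W$ have growth rate at most $r<h_{\Bor}(\Sigma')$, which in $W$ can be arranged to equal $h_{\Bor}(\Sigma)$ by construction. Conversely, given $(6)$, the decomposition of first-return loops at $a\in W$ into alternating $W$-arcs (inside the finite irreducible $\Sigma'$, whose first-return exponent is strictly less than $h_{\Bor}(\Sigma')$ by Perron--Frobenius) and external excursions bounded by $(6)$, combined with a standard convolution/generating-function argument, yields $\limsup\tfrac1n\log Z_n^\ast(a)<h_{\Bor}(\Sigma)$.

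Finally, $(1)\Rightarrow(7)$ is obtained by invoking the Cyr--Sarig spectral gap theorem for SPR Markov shifts, which produces a Banach space on which the Ruelle operator acts with spectral gap; this implies existence and uniqueness of the MME $\mu$ together with weak-$\ast$ convergence of any sequence $\mu_n$ with $h(\sigma,\mu_n)\to h_{\Bor}(\Sigma)$ toward $\mu$ (via the variational principle characterization of $\mu$ as the unique annihilator of the dual gap eigenfunctional; see Appendix~\ref{appendix-Markov-Shifts}). Then $(7)\Rightarrow(5)$ is immediate from the Portmanteau theorem: for any $\tau\in(0,1)$, pick a finite union of partition sets $B\in\mathfrak C$ with $\mu(B)>(1+\tau)/2$; by weak-$\ast$ convergence $\mu_n(B)\to\mu(B)$, so for all $n$ large, $\mu_n(B)>\tau$. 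The main obstacle is the identification $h^\infty=\limsup\tfrac1n\log Z_n^\ast(a)$, where both directions require care: the upper bound because one must extract entropy control from the induced dynamics on a cylinder of vanishing mass, and the lower bound because the periodic-like measures on long first-return loops must be shown to truly escape every element of $\mathfrak C$, not merely the cylinder $[a]$.
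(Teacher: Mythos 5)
Your plan replaces the paper's citation-based proof with a self-contained one, which is a natural thing to try, but one step has a genuine gap that I want to flag.

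The paper disposes of this theorem entirely by attribution: $(1)\Leftrightarrow(2)$ is Lemma~\ref{l.SPR-for-Shifts}, $(2)\Rightarrow(3)$ is Buzzi, $(3)\Rightarrow(2)$ is Ruette, $(3)\Leftrightarrow(4)$ is Lemma~\ref{l-partial-tight}, $(1)\Rightarrow(7)$ is Iommi--Todd--Velozo, $(7)\Rightarrow(5)\Rightarrow(4)$ is trivial, $(2)\Leftrightarrow(6)$ is Gurevich--Zargaryan. Your cycle covers the same implications, your $(7)\Rightarrow(5)$ Portmanteau argument is fine, and the upper bound $h^\infty\le\limsup_n\tfrac1n\log Z_n^*(a)$ via Abramov/Kac and an entropy estimate on the return-word partition is correct and is the substance of the $(2)\Rightarrow(3)$ direction. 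The problem is your \emph{lower} bound.

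You assert the identity $h^\infty(\Sigma)=\limsup_n\tfrac1n\log Z_n^*(a)$, but the right-hand side depends on $a$ while the left does not, and they can genuinely disagree. Take a bouquet at $a$ with a single first-return loop of each square length, plus one extra loop $a\to b\to a$. Then $\limsup_n\tfrac1n\log Z_n^*(a)=0$, while first-return loops at $b$ decompose as a $b$-excursion through the subgraph avoiding $b$, which is a genuine renewal shift, so $\limsup_n\tfrac1n\log Z_n^*(b)$ equals its positive Gurevich entropy. In this example $h^\infty=0$, so the identity fails at $b$. The mechanism is exactly the one your argument glosses over: a first-return loop at $b$ of length $n$ visits $a$ on the order of $n$ times, so the Parry measure built on these loops has $\mu_n([b])\to 0$ but $\mu_n([a])$ bounded away from $0$; it does \emph{not} escape to infinity. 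Your parenthetical ``by passing to suitable subsequences (or to measures that diffuse over many long loops) one arranges $\mu_n\to\infty\ (\mathfrak C)$'' is precisely where the hard work lies: in the non-SPR case one must show, for \emph{some} vertex $a$ and a carefully chosen subsequence, that the induced Parry measures escape every cylinder, and this is exactly Ruette's nontrivial construction. As written, your $(3)\Rightarrow(1)$ direction does not close.

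Two smaller remarks. First, what you invoke for $(1)\Rightarrow(7)$ (Cyr--Sarig spectral gap plus ``the unique annihilator of the dual gap eigenfunctional'') is vague; the spectral gap by itself does not directly yield convergence of \emph{all} high-entropy sequences, and the paper's route through Iommi--Todd--Velozo is the cleaner path (one can also deduce it from Thm~\ref{t.effective-ergodicity-shift}). Second, your $(1)\Leftrightarrow(6)$ sketch is at the level of an attribution rather than a proof, which is fine, but be aware the genuine content of Gurevich--Zargaryan is that the bound in (6) is robust under the choice of $W$, which a quick counting argument does not deliver for free.
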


(1)$\Leftrightarrow$(2) is Lemma~\ref{l.SPR-for-Shifts} due to Vere-Jones.
(2)$\Rightarrow$(3) is due to Buzzi \cite[Prop. 6.1]{Buzzi-PQFT}, see also \cite{Iommi-Todd-Velozo-Advances}. 
(3)$\Rightarrow$(2) is due to Ruette \cite{Ruette}. 
(3)$\Leftrightarrow$(4) is a special case of Lemma~\ref{l-partial-tight}.
(1)$\Rightarrow$(7) is due to Iommi, Todd and Velozo, see \cite[Def. 2.14 and Thm 8.12]{Iommi-Todd-Velozo-Advances}. 
(7)$\Rightarrow$(5)$\Rightarrow$(4) are trivial.
(2)$\Leftrightarrow$(6) is due to Gurevich and Zargaryan \cite{Gurevich-Zargaryan}, see \cite[Thm~3.8]{Gurevich-Savchenko}. 
So (1)--(7) are all equivalent.

Theorem \ref{t.SPR-as-Entropy-Gap-Shifts} suggests a convenient extension of the definition of the  SPR property to the case when $\mathfs G$ is not connected (see \cite[Def.~1.13]{Buzzi-PQFT}):

\begin{definition}\label{d.SPR-for-irreducible-shifts}
We say that a (possibly non-irreducible) Markov shift $\Sigma$ with finite Gurevich entropy has the \emph{SPR property} if $\displaystyle h^\infty(\Sigma)<h_\Bor(\Sigma)$.
\end{definition}

\begin{remark}\label{r.reducible-SPR-shifts}The reader can check using Thm \ref{t.SPR-as-Entropy-Gap-Shifts} that a Markov shift is SPR iff:
(1)  there is $h_0<h_\Bor(\Sigma)$ such that all but finitely many irreducible components $\Sigma_i'$ satisfy $h_{\Bor}(\Sigma_i')\leq h_0$, and (2) the irreducible components $\Sigma_i'$ such that  $h_{\Bor}(\Sigma_i')=h_{\Bor}(\Sigma)$ are all SPR in the sense of Definition \ref{d.SPR-for-Shifts}. \end{remark}
The equivalence of SPR to entropy tightness  ((1)$\Leftrightarrow$(5) above, due to~\cite{Iommi-Todd-Velozo-Advances}) immediately extends to  reducible shifts:

\begin{corollary}[{after} Iommi, Todd \& Velozo]\label{coro-tight-SPR}
Suppose
$\Sigma$ is a Markov shift with finite entropy $h_\Bor(\Sigma)$.
Then $\Sigma$ is SPR iff for every $\delta>0$, there is $h_\delta<h_{\Bor}(\Sigma)$ and a finite union $F$ of cylinders such that
$$
\forall\mu\in\Proberg(\Sigma), \ h(\sigma,\mu)>h_\delta\Rightarrow \mu(F)>1-\delta.
$$
\end{corollary}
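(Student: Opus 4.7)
The plan is to deduce Corollary \ref{coro-tight-SPR} from the equivalence (1)$\Leftrightarrow$(5) of Theorem \ref{t.SPR-as-Entropy-Gap-Shifts}, which handles the \emph{irreducible} case, combined with Remark \ref{r.reducible-SPR-shifts}, which describes a general SPR shift in terms of finitely many irreducible components carrying the top entropy.

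The easy direction $(\Leftarrow)$ is immediate: if full entropy-tightness holds, pick any $\delta\in(0,1)$ with corresponding $h_\delta<h_\Bor(\Sigma)$ and finite union of cylinders $F$. Any sequence $(\mu_n)$ in $\Proberg(\sigma)$ that escapes to infinity with respect to the cylinder bornology $\mathfrak C$ satisfies $\mu_n(F)\to 0$ (since $F$ is a finite union of cylinders), so eventually $\mu_n(F)\leq 1-\delta$ and thus $h(\sigma,\mu_n)\leq h_\delta$. This gives $h^\infty(\Sigma)\leq h_\delta<h_\Bor(\Sigma)$, i.e.\ SPR in the sense of Definition \ref{d.SPR-for-irreducible-shifts}.

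For $(\Rightarrow)$, assume $h^\infty(\Sigma)<h_\Bor(\Sigma)$ and fix $h_0\in(h^\infty(\Sigma),h_\Bor(\Sigma))$. By Lemma \ref{l.irreducible-components} every ergodic invariant measure lives on a single maximal irreducible component. I first observe that only finitely many irreducible components $\Sigma'_1,\dots,\Sigma'_N$ can carry ergodic measures of entropy $>h_0$: otherwise one selects one such measure per distinct component and, since distinct components use disjoint vertex sets, the resulting sequence escapes to infinity relative to $\mathfrak C$ while keeping entropy $>h_0>h^\infty(\Sigma)$, a contradiction. Among these $\Sigma'_i$, those satisfying $h_\Bor(\Sigma'_i)=h_\Bor(\Sigma)$ are themselves SPR as irreducible shifts, because
$$h^\infty(\Sigma'_i)\leq h^\infty(\Sigma)<h_\Bor(\Sigma)=h_\Bor(\Sigma'_i).$$
Theorem \ref{t.SPR-as-Entropy-Gap-Shifts} ((1)$\Rightarrow$(5)) then supplies, for the given $\delta>0$, a level $h_i<h_\Bor(\Sigma)$ and a finite union of cylinders $F_i\subset\Sigma'_i$ such that every ergodic $\mu\in\Proberg(\sigma|_{\Sigma'_i})$ with $h(\sigma,\mu)>h_i$ has $\mu(F_i)>1-\delta$.

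Set $F:=\bigcup_i F_i$ (a finite union of cylinders in $\Sigma$) and $h_\delta:=\max\bigl(\max_i h_i,\,\max_j h_\Bor(\Sigma'_j)\bigr)$, where $j$ ranges over the finitely many components with $h_0<h_\Bor(\Sigma'_j)<h_\Bor(\Sigma)$; then $h_\delta<h_\Bor(\Sigma)$. An ergodic $\mu$ with $h(\sigma,\mu)>h_\delta$ cannot be supported on a component of top entropy $\leq h_0$ (by the variational principle), nor on any of the intermediate components (since $h(\sigma,\mu)\leq h_\Bor(\Sigma'_j)\leq h_\delta$ there), so it lives on some top-entropy $\Sigma'_i$ and automatically satisfies $h(\sigma,\mu)>h_i$, giving $\mu(F)\geq\mu(F_i)>1-\delta$. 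The main point that requires genuine care is the finiteness extraction in the $(\Rightarrow)$ direction, where one must translate the quantitative gap $h^\infty(\Sigma)<h_\Bor(\Sigma)$ into a \emph{finite} list of relevant components; everything else is a routine bookkeeping reduction to the irreducible case, whose dynamical content is already in the Iommi--Todd--Velozo part of Theorem \ref{t.SPR-as-Entropy-Gap-Shifts}.
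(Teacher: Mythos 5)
Your proof is correct and takes essentially the same route the paper intends: reduce to the irreducible case and apply the Iommi--Todd--Velozo equivalence (1)$\Leftrightarrow$(5) of Theorem~\ref{t.SPR-as-Entropy-Gap-Shifts}, using the decomposition into irreducible components and the finiteness of top-entropy components given by Remark~\ref{r.reducible-SPR-shifts}. The only difference is that you spell out the finiteness extraction and the inequality $h^\infty(\Sigma_i')\leq h^\infty(\Sigma)$ explicitly, whereas the paper leaves these as an ``immediate extension''.
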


\subsection{SPR and Positive Recurrence}\label{s.SPR-vs-PR}
Thm \ref{t.Gurevich-Theorem} characterizes the irreducible Mar\-kov shifts with MME in terms of  the positive recurrence condition \eqref{e.PR}. Let us compare this condition to strong positive recurrence.

\medskip
\noindent
{\em SPR implies Positive Recurrence:\/} We saw in Lemma \ref{l.SPR-for-Shifts} and its proof that  SPR implies that  $F_a(R_a)>1$,  $T_a(r_a)=T_a(\rho_a)=\infty$ and $F_a'(r_a)<\infty$. The equality says that $\sum e^{-n h(\mathfs G)}Z_n(a)=\infty$, and the inequality says that $\sum n e^{-n h(\mathfs G)}Z_n^\ast(a)<\infty$. This is eq.~\eqref{e.PR}, i.e., positive recurrence.

\medskip
\noindent
{\em SPR is Strictly Stronger than Positive Recurrence:\/} The following example is due to Ruette \cite{Ruette}. Define $
\ell_n:=2^{n-\sqrt{n}}$ when $n\geq 1$ is a square, and $\ell_n:=0$ when $n\geq 1$ is not a square.
Take the bouquet graph with base vertex $a$, and $\ell_n$ first return loops of length $n$ at $a$.
Then $F_a(t)$ has radius of convergence $R_a=\frac{1}{2}$, and $F_a(R_a)=1$. So the SPR property fails.
On the other hand,
$
T_a(t):=1+\sum_{n=1}^\infty t^n Z_n(a)=\frac{1}{1-F_a(t)}
$ has radius of convergence $1$, $T_a(\frac{1}{2})=\infty$, and $F_a'(\frac{1}{2})=\sum k^2 2^{-k+1}<\infty$. This shows positive recurrence (with Gurevich entropy $\ln 2$).

\medskip
A good way to understand the difference between SPR and positive recurrence is to think of the recurrence properties of the MME $\mu$. Let $\lambda:=\exp[h(\mathfs G)]$, and let
$$
\vf_a(\un{x}):= \mathds{1}_{[a]}(\un{x})\min\{n\geq 1: x_n=a\}.
$$

\medskip
\noindent
{\em Positive Recurrence Says that  $\E_\mu[\vf_a]<\infty$}. Indeed by Thm \ref{t.Parry-Gurevich-ASS} we see that for every admissible loop $(a,\xi_1,\ldots,\xi_{n-1},a)$,
$$
\mu({_a}[a,\xi_1,\ldots,\xi_{n-1},a])=p_a p_{a\xi_1}\cdots p_{\xi_{n-1}a}=c(a)\lambda^{-n}
$$
where $c(a)$ is a positive constant which only depends on $a$. It follows that
$
\mu[\vf_a=n]=c(a)\lambda^{-n}Z_n^\ast(a)
$, whence by positive recurrence,
$$
\E_\mu [\vf_a]=\sum_{n=1}^\infty n\mu[\vf_a=n]=c(a)\sum_{n=1}^\infty n \lambda^{-n}Z_n^\ast(a)<\infty.
$$

\noindent
{\em SPR Says that there is $t>0$ such that $\E[e^{t\vf_a}]<\infty$}. Indeed, in the SPR case,   $\mu[\vf_a=n]=c(a)\lambda^{-n}Z_n^\ast(a)\to 0$ exponentially fast, by  Thm \ref{t.SPR-as-Entropy-Gap-Shifts}(2).

\begin{corollary}[Vere-Jones]\label{c.Tail-CMS}
Let $\Sigma$ be an SPR irreducible Markov shift with finite entropy $h_\Bor(\Sigma)$ and a MME $\mu$.
For every $a\in \mathscr{V}$ there are $0<\theta<1$, $C>1$ s.t.
$$\mu\big\{\un{x}\in \Sigma: \inf\{k\geq 1: x_k=a\} >n\big\}<C\theta^n.$$
\end{corollary}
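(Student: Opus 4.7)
\textit{Plan.} The key observation, already foreshadowed in \S\ref{s.SPR-vs-PR}, is that the SPR hypothesis produces geometric decay of the sequence $\lambda^{-m}Z^*_m(a)$, where $\lambda:=\exp h_\Bor(\Sigma)$. Indeed, Definition~\ref{d.SPR-for-Shifts} asserts that $\alpha:=\limsup_m \tfrac{1}{m}\log Z^*_m(a)<\log\lambda$, so choosing any $\beta\in(\alpha,\log\lambda)$ and setting $\theta_0:=e^{\beta-\log\lambda}\in(0,1)$, there is $C_0>0$ with $\lambda^{-m}Z^*_m(a)\leq C_0\theta_0^m$ for every $m\geq 1$. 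The strategy is to reduce the tail estimate for $\tau(\un{x}):=\inf\{k\geq 1:x_k=a\}$ to this bound.

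A short computation with the Parry-Gurevich formula (Thm~\ref{t.Parry-Gurevich-ASS}(3)) gives the $\mu$-measure of each first-return loop cylinder $[a,\xi_1,\ldots,\xi_{m-1},a]$ as $\ell_a r_a\lambda^{-m}=\mu[a]\lambda^{-m}$; summing over the $Z^*_m(a)$ such loops yields the distribution of the induced return time $\tau_a:=\tau|_{[a]}$ under $\mu_a:=\mu|_{[a]}/\mu[a]$:
\begin{equation*}
   \mathbb P_{\mu_a}[\tau_a=m]=\lambda^{-m}Z^*_m(a),\qquad m\geq 1.
\end{equation*}
I then represent $(\Sigma,\sigma,\mu)$ as the Kakutani skyscraper over the first-return map $T_a:[a]\to[a]$ with height function $\tau_a$: each $\un{x}\in\Sigma$ corresponds uniquely to a pair $(y,j)$ with $y\in[a]$ and $0\leq j<\tau_a(y)$, where $-j$ is the position of the most recent occurrence of $a$ in $\un{x}$. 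A direct check shows that $\tau(\un{x})>n$ is equivalent to $j<\tau_a(y)-n$, and summing fibre-by-fibre together with Kac's identity $\mathbb E_{\mu_a}[\tau_a]=1/\mu[a]$ gives
\begin{equation*}
   \mu\{\tau>n\}=\mu[a]\cdot\mathbb E_{\mu_a}\!\left[(\tau_a-n)^+\right]=\mu[a]\sum_{m>n}(m-n)\lambda^{-m}Z^*_m(a).
\end{equation*}
Substituting the bound of the first paragraph yields
\begin{equation*}
\mu\{\tau>n\}\leq \mu[a]\,C_0\sum_{m>n}(m-n)\theta_0^m=\mu[a]\,C_0\,\frac{\theta_0^{n+1}}{(1-\theta_0)^2},
\end{equation*}
which is of the desired form $C\theta_0^n$ after enlarging $C$ so that $C>1$.

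The only mildly technical point is the Kakutani identity for $\mu\{\tau>n\}$; an equivalent direct route is to decompose $\{\tau>n\}$ according to the index of the nearest previous visit of $\un{x}$ to $[a]$ and then invoke $\sigma$-invariance to re-express these events as sums over first-return loops of $\mathfs G$. Either route is routine and produces the same formula.
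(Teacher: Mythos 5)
Your proof is correct, and on unwinding it is essentially the same argument as the paper's. The paper proves the bound by decomposing $\{\tau_a^+>n\}$ according to the pair $(\tau_a^-,\tau_a^+)=(-\ell,r)$ and using shift-invariance to reduce each summand to $\mu[\vf_a=\ell+r]$; your Kakutani skyscraper over the first-return map is exactly the same decomposition under the change of variables $(\ell,r)\leftrightarrow(j,m-j)$ (the floor index $j$ is $-\tau_a^-$ and the base return time $m$ is $\tau_a^+-\tau_a^-$), and both proofs rest on the identity $\mu_a[\tau_a=m]=\lambda^{-m}Z_m^*(a)$ (equivalently $\mu[\vf_a=m]=\mu[a]\lambda^{-m}Z_m^*(a)$) together with the geometric decay of $\lambda^{-m}Z_m^*(a)$ forced by SPR; the only cosmetic difference is that you sum first over $j$ to get $(m-n)^+$ and then over $m$, while the paper keeps the double sum over $(\ell,r)$.
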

\noindent
{\em Proof.\/}
Let $\tau_a^+(\un{x}):=\inf\{n\geq 1:x_n=a\}$ and $\tau_a^-(\un{x}):=\sup\{n\leq {0}
: x_n=a\}$  when defined, and  $\tau_a^\pm(\un{x}):=\pm\infty$ otherwise.
 These functions are finite a.e., because
 $\mu$ is ergodic and fully supported, see Thm~\ref{t.Parry-Gurevich-ASS}. Note that
 $\vf_a(\un{x}):=\mathds{1}_{[a]}(\un{x})\tau_a^+(\un{x})$.

As mentioned above, in the  SPR case, there are constants $C_0>0$ and $0<\theta<1$ such that  $\mu[\vf_a=n]\leq C_0\theta^n$.
Since $\tau_a^-$ is finite a.e.,
 \begin{align*}
 &\mu[\tau_a^+>n]=\mu\left([\tau_a^+>n]\cap [\tau_a^->-\infty]\right)=\sum_{\ell={0}
 }^\infty\sum_{r=n+1}^\infty\mu[\tau_a^-=-\ell, \tau_a^+=r]\\
 &=\sum_{\ell={0}
 }^\infty\sum_{r=n+1}^\infty(\mu\circ\sigma^{\ell})[\vf_a=\ell+r]\leq \sum_{\ell={0}
 }^\infty\sum_{r=n+1}^\infty C_0\theta^{\ell+r}\leq\mathrm{const.}\theta^n.\hspace{2.4cm}\qed
 \end{align*}
\begin{remark}
It is shown in \cite{Vere-Jones-Geometric-Ergodicity} that $\theta$ depends on $a$.
\end{remark}

\subsection{Spectral Gap and Properties of SPR Markov Shifts}
{Since SPR Markov shifts are positive recurrent, the Gurevich Theorem~\ref{t.Gurevich-Theorem}(3)
implies that every irreducible SPR Markov shift admits a unique MME.
In the reducible case:}

\smallskip
\noindent
\begin{corollary}[{after} Gurevich]\label{t.Existence-of-MME-CMS}
{Every SPR   Markov shift admits a positive and finite number of ergodic measures of maximal entropy.}
\end{corollary}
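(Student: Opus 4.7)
The plan is to decompose $\Sigma$ into its irreducible components and apply the irreducible Gurevich theory to each maximal component, using the structural Remark~\ref{r.reducible-SPR-shifts} to control which components matter.

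First I would invoke Lemma~\ref{l.irreducible-components}: the non-wandering set of $\Sigma$ is the disjoint union of countably many irreducible Markov subshifts $\Sigma'_i$ ($i\in I$), every $\sigma$-invariant probability measure is carried by this non-wandering set, and every ergodic such measure is carried by a single $\Sigma'_i$. In particular $h_\Bor(\Sigma)=\sup_{i\in I} h_\Bor(\Sigma'_i)$. By Remark~\ref{r.reducible-SPR-shifts}, the SPR hypothesis furnishes $h_0<h_\Bor(\Sigma)$ such that only finitely many indices $i$ satisfy $h_\Bor(\Sigma'_i)>h_0$; since the supremum above is thus realized over a finite set of entropies that exceed $h_0$, it is attained, and the components $\Sigma'_{i_1},\dots,\Sigma'_{i_N}$ with $h_\Bor(\Sigma'_{i_j})=h_\Bor(\Sigma)$ form a non-empty finite family of irreducible SPR shifts.

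Next, for each such $\Sigma'_{i_j}$, strong positive recurrence implies positive recurrence, as observed in \S\ref{s.SPR-vs-PR}, and $h_\Bor(\Sigma'_{i_j})=h_\Bor(\Sigma)<\infty$ by the finite-entropy hypothesis implicit in Def.~\ref{d.SPR-for-irreducible-shifts}. Gurevich's Thm~\ref{t.Gurevich-Theorem}(3) then yields a unique MME $\mu_j$ on $\Sigma'_{i_j}$, and Thm~\ref{t.Parry-Gurevich-ASS}(1) guarantees that $\mu_j$ is ergodic.

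Finally, any ergodic MME $\nu$ of $\Sigma$ is supported on some $\Sigma'_i$, whose top entropy therefore equals $h_\Bor(\Sigma)$; hence $\nu\in\{\mu_1,\dots,\mu_N\}$. Conversely each $\mu_j$ is an ergodic MME of the ambient shift $\Sigma$. Thus the set of ergodic MMEs is exactly $\{\mu_1,\dots,\mu_N\}$, which is non-empty and finite. There is essentially no substantive obstacle: the corollary is a direct consequence of Gurevich's irreducible theory combined with the reducible characterization of SPR in Remark~\ref{r.reducible-SPR-shifts}. The only subtlety worth flagging is the attainment of the supremum $\sup_i h_\Bor(\Sigma'_i)$, which is forced by the SPR condition because only finitely many components can come within $h_\Bor(\Sigma)-h_0$ of the top entropy.
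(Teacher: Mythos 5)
Your proof is correct and follows essentially the same route as the paper: decompose into irreducible components (Lemma~\ref{l.irreducible-components}), use Remark~\ref{r.reducible-SPR-shifts} to isolate the finitely many full-entropy components and observe they are each irreducible SPR, then apply Gurevich's theory to each. The only difference is that you spell out the attainment of the supremum and cite Thm~\ref{t.Parry-Gurevich-ASS}(1) for ergodicity, both of which the paper leaves implicit.
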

\begin{proof}
{Each ergodic invariant measures is carried by an irreducible component.
By Remark \ref{r.reducible-SPR-shifts},  an SPR Markov shift has a positive and finite number of transitive components with full entropy. The result now follows from  Thm \ref{l.irreducible-components}.}
\end{proof}
Obviously, the ergodic properties of one MME depend only on  the irreducible component which carries it. 
From this point on, suppose $\Sigma(\mathfs G)$ is an {\em irreducible} SPR Markov shift with finite Gurevich entropy and  period $p$. Let $\mu$ be its MME.
In Appendix \ref{appendix-Markov-Shifts} we state  and prove the following results:
 \begin{enumerate}
  \item \textbf{{Exponential Decay of Correlations}} for H\"older continuous functions in case $p=1$, and  exponential decay of correlations for the ergodic components of $\mu$ with respect to $\sigma^p$  in case $p>1$ (Thm~\ref{t.DOC-for-CMS}).
  \item \textbf{Large Deviations Property} for Birkhoff sums of H\"older continuous observables (Thm~\ref{t.LDP-CMS}).
   \item  \textbf{Almost Sure Invariance Principle} for Birkhoff sums of H\"older continuous observables (Thm~\ref{t.ASIP-Sigma}).
  \item \textbf{Asymptotic Variance} of Birkhoff sums of H\"older continuous observables: existence,  Green-Kubo identity, linear response formula, and periodic orbit conditions for the non-vanishing of the asymptotic variance (Thm~\ref{t.zero-variance-CMS}).
   \item \textbf{Equilibrium Measures for Small Potentials}: there exists $\eps_0>0$ such that for every H\"older continuous {potential} $\phi:\Sigma(\mathfs G)\to\R$, if $\sup|\phi|<\eps_0$, then $\phi$ is SPR and has a unique equilibrium measure which satisfies properties (1)--(4) above,
            (see Thm~\ref{t.no-phase-transition}, \S\ref{s.TDF-for-diffeos}, and \cite{Cyr-Sarig}).
 \end{enumerate}

{The proofs of these results use the one-sided shift $\Sigma^+(\mathfs G)$ associated
to the graph $\mathfs G$ and its transfer operator.  They}
rely in a crucial way on the following property of SPR Markov shifts, proved in \cite{Cyr-Sarig}, and stated precisely in Appendix \ref{appendix-Markov-Shifts}:

\medskip
\noindent
 {\em There is  a ``nice" Banach space $\mathcal L$ of one-sided functions $\Sigma^+(\mathfs G)\to\R$, where the transfer operator of the MME acts ``with spectral gap."}

\medskip
\noindent
In fact some form of this  property is {\em equivalent} to the SPR property \cite{Cyr-Sarig}.

The derivation of properties (1)--(5) from the spectral gap was done before in different setups \cite{Guivarch-Hardy},\cite{Ruelle-TDF-book},\cite{Parry-Pollicott-Asterisque},\cite{Kifer},\cite{Gouezel-ASIP},\cite{Cyr-Sarig}. For our setup, see Appendix \ref{appendix-Markov-Shifts}.

\subsection{Strongly Positively Recurrent Markov Shifts with a Potential}
Throu\-ghout this section, let  $\Sigma:=\Sigma(\mathfs G)$ be an irreducible Markov shift, and let $\phi:\Sigma\to\R$ be a H\"older continuous function (in particular, $\sup|\phi|<\infty$).\footnote{Although we will not use this in this paper, we mention that the results of this section hold under the weaker assumption that $\phi$ is weakly H\"older and $\sup\phi<\infty$ \cite{Sarig-ETDS-99,Buzzi-Sarig,Sarig-CMP-2001,Daon}. }

A shift invariant probability measure $\mu$ is called an {\em equilibrium measure} for $\phi$ if the  {\em pressure of $\mu$}, $P(\sigma,\mu,\phi):=h(f,\mu)+\int \phi d\mu$, equals the  
 {\em top pressure of $\phi$},  $$P_{\Bor}(\Sigma,\phi):=\sup\{P(\sigma,\mu,\phi)\; :\; \mu\in \Prob(\sigma)\}.$$
Equilibrium measures of constant potentials are simply MME.

The theory for MME in  \S\ref{ss.positive-recurrence} and \S\ref{s.SPR-shift} has been extended to equilibrium measures of  H\"older continuous potentials
$\phi\colon \Sigma\to \RR$ in \cite{Gurevich-Stable,Sarig-CMP-2001}. Here is a  summary.
Suppose $P_{\Bor}(\Sigma,\phi)\!<\!\infty$ (since $\sup\phi<\infty$, this happens iff $h_{\Bor}(\Sigma)<\infty$).  Then:
\begin{enumerate}
\item If the equilibrium measure exists, then it is unique \cite{Buzzi-Sarig};
\item Call the unique equilibrium measure $\mu_\phi$, then  $(\Sigma,\mu_\phi)$ is measure theoretically isomorphic to the product of a Bernoulli scheme and a cyclic permutation of $p$ points, where $p$ is the period of $\Sigma$  \cite{Sarig-Bernoulli-JMD,Daon}.
\end{enumerate}

Given $a\in\mathfs V$, let  
$\vf_a(\underline x)=\mathds{1}_{[a]}(\underline x)\inf\{n\geq 1 : \sigma^n(\underline x)\in [a]\}$, $\phi_n:=\sum\limits_{k=0}^{n-1}\phi\circ\sigma^k$, and 
$$\displaystyle Z_n(\phi,a):=\sum_{\sigma^n(\underline x)=\underline x} e^{\phi_n(\underline x)}\mathds{1}_{[a]}(\underline x)\;\;
\text{ and }\;\;
Z_n^*(\phi,a):=\sum_{\sigma^n(\underline x)=\underline x} e^{\phi_n(\underline x)}\mathds{1}_{[\varphi_a=n]}(\underline x).$$
It is shown in \cite{Sarig-ETDS-99} that 
\begin{equation}\label{e.gurevich-pressure}
P_{\Bor}(\Sigma,\phi)=\limsup_{n\to +\infty} \tfrac 1 n \log Z_n(\phi,a).
\end{equation}
\begin{definition}\cite{Sarig-CMP-2001}\label{d.SPR-potential2}
We say that $\phi$ is a {\em strongly positive recurrent (SPR) potential} on  $\Sigma$ (or that {\em $\Sigma$ is SPR for $\phi$}),  if for some vertex $a$:
\begin{equation}\label{e.SPR-potential2}
\underset{n\to +\infty}\limsup \tfrac 1 n \log Z^*_n(\phi,a)<\underset{n\to +\infty}\limsup \tfrac 1 n \log Z_n(\phi,a).
\end{equation}
\end{definition}

\begin{remark}\label{r.SPR-potential}
In \cite{Sarig-CMP-2001} one defines a quantity $\Delta_a(\phi)$ called {\em the  discriminant},  and one defines the SPR property for $\phi$
by the condition $\Delta_a(\phi)>0$. The ``discriminant theorem"~\cite[Thm 2]{Sarig-CMP-2001} shows that $\Delta_a[\phi]>0$ iff \eqref{e.SPR-potential2} holds. Corollary 1 in  \cite{Sarig-CMP-2001} says that if \eqref{e.SPR-potential2} holds with {\em some} vertex $a$, then it holds with {\em all} vertices $a$.
\end{remark}

\begin{proposition}\cite{Ruhr-Sarig}\label{p.SPR-potential} 
Let  $\Sigma$ be an irreducible Markov shift with  finite Gurevich entropy, and suppose  $\phi$ is a H\"older continuous function.  Then $\phi$ is SPR on $\Sigma$ iff there exist $P_0<P_{\Bor}(\Sigma,\phi)$, $\tau>0$ and a finite union of cylinders $B$ such that:
\begin{equation}\label{e.SPR-potential-irred}
\text{For every ergodic measure $\nu$ on $\Sigma$,}\quad
    P(\sigma,\nu,\phi)>P_0 \implies \nu(B)>\tau.
\end{equation}
\end{proposition}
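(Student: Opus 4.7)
The plan is to cast this as a pressure-analogue of the equivalence $(1)\Leftrightarrow(4)$ in Theorem~\ref{t.SPR-as-Entropy-Gap-Shifts}, substituting the entropy function by $\mu\mapsto P(\sigma,\mu,\phi)$ and the Gurevich entropy formula by the Gurevich pressure formula \eqref{e.gurevich-pressure}. Applying Lemma~\ref{l-partial-tight} with the cylinder bornology $\mathfrak C$ of \eqref{e.cylinder-bornology} and the function $h(\mu):=P(\sigma,\mu,\phi)$ on $\cM:=\Proberg(\sigma)$, condition \eqref{e.SPR-potential-irred} becomes equivalent to the \emph{pressure gap at infinity}
\begin{equation*}
P^\infty(\Sigma,\phi):=\sup\Bigl\{\limsup_n P(\sigma,\mu_n,\phi):\mu_n\in\Proberg(\sigma),\ \mu_n\to\infty\ (\mathfrak C)\Bigr\}<P_{\Bor}(\Sigma,\phi).
\end{equation*}
Since $P_{\Bor}(\Sigma,\phi)=\limsup_n\tfrac1n\log Z_n(\phi,a)$ by \eqref{e.gurevich-pressure}, the proposition will reduce to the identity $P^\infty(\Sigma,\phi)=\limsup_n\tfrac1n\log Z_n^*(\phi,a)$, which translates Definition~\ref{d.SPR-potential2} of an SPR potential directly into this pressure-gap condition.

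For the lower bound $P^\infty(\Sigma,\phi)\geq\limsup_n\tfrac1n\log Z_n^*(\phi,a)$, I would construct, for each finite $W\subset\mathscr{V}$ with $a\in W$, periodic ergodic measures supported on first-return loops at $a$ whose interior vertices avoid $W$. By irreducibility of $\mathfs G$, trimming loops to avoid $W$ changes the exponential growth rate of $Z_n^*(\phi,a)$ only by $o(1)$ as $W$ exhausts $\mathscr{V}$; equidistributing mass on such loops produces ergodic measures escaping to infinity along $\mathfrak C$ whose pressures approach $\limsup_n\tfrac1n\log Z_n^*(\phi,a)$.

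The hard part is the reverse inequality. The plan is to induce on $[a]$: for ergodic $\mu$ with $\mu([a])>0$, Abramov's formula gives
\begin{equation*}
P(\sigma,\mu,\phi)=\mu([a])\cdot P(\hat\sigma_a,\hat\mu_a,\hat\phi_a),\qquad \hat\phi_a=\phi_{\varphi_a},
\end{equation*}
where $\hat\sigma_a$ is the full shift on the countable alphabet of first-return loops at $a$. The critical exponent $s^*$ satisfying $\sum_n e^{-s^* n}Z_n^*(\phi,a)=1$ equals $P_{\Bor}(\Sigma,\phi)$, and a direct computation on the full shift $\hat\Sigma_a$ shows that $P(\hat\sigma_a,\hat\mu_a,\hat\phi_a)\leq (\limsup_n\tfrac1n\log Z_n^*(\phi,a))\cdot\int\varphi_a\,d\hat\mu_a+o(\int\varphi_a\,d\hat\mu_a)$ as $\int\varphi_a\,d\hat\mu_a\to\infty$. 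Multiplying back by $\mu([a])=(\int\varphi_a\,d\hat\mu_a)^{-1}$ and exhausting $\mathscr{V}$ by finite sets $W_k$ (so that $\mu_n([W_k])\to 0$ for each $k$ whenever $\mu_n\to\infty\,(\mathfrak C)$) yields $\limsup_n P(\sigma,\mu_n,\phi)\leq\limsup_n\tfrac1n\log Z_n^*(\phi,a)$. The main technical obstacle is uniformity as $\mu_n([a])\to 0$, and the treatment of ergodic measures with $\mu([a])=0$; both are handled by passing to a different base vertex in the support of $\mu$ via the irreducibility of $\mathfs G$, using the H\"older regularity of $\phi$ and the uniform bound $\sup|\phi|<\infty$ to prevent the Abramov rescaling from exploding.
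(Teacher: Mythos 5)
Your approach is genuinely different from the paper's. The paper's proof of the proposition does not re-derive the entropy-at-infinity machinery for potentials: the forward implication follows at once from the quantitative estimate $|\nu[a]-\mu_\phi[a]|\leq C\sqrt{P_{\Bor}(\Sigma,\phi)-P(\sigma,\nu,\phi)}$ in \cite{Ruhr-Sarig} (Thm~7.1 there), and the converse follows from their Lemma~8.1 together with the $\mathcal F$-property, which produces a sequence of ergodic measures with $P(\sigma,\nu_n,\phi)\to P_{\Bor}(\Sigma,\phi)$ and $\nu_n[a]\to 0$ for every vertex $a$ whenever $\phi$ is not SPR. You instead propose to prove the equivalence from scratch by reducing it to the exact identity $P^\infty(\Sigma,\phi)=\limsup_n\frac1n\log Z_n^*(\phi,a)$. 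That identity is false, and this is the central gap.

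The right-hand side $\limsup_n\frac1n\log Z_n^*(\phi,a)$ genuinely depends on the choice of vertex $a$: Vere-Jones's result shows only that the qualitative SPR dichotomy is vertex-independent, not the value of the limsup, and one can readily exhibit bouquet-type graphs (with, say, $2^n$ loops of length $n$) where moving the base vertex from the center to an interior vertex changes the first-return growth rate. The left-hand side $P^\infty(\Sigma,\phi)$ does not depend on $a$, so they cannot be equal in general. Even more bluntly, for a finite irreducible graph the cylinder bornology is trivial, so $P^\infty(\Sigma,\phi)=-\infty$ by convention, while $\limsup_n\frac1n\log Z_n^*(\phi,a)$ is typically a finite number $\geq 0$; here your lower-bound step collapses, since exhausting a finite vertex set by $W$ eventually kills \emph{all} first-return loops, and "trimming changes the growth rate by $o(1)$" is false. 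Irreducibility of $\mathfs G$ is not the mechanism that would preserve the loop-growth rate after deleting a large finite $W$ — that survival is essentially a restatement of the Gurevich--Zargaryan non-SPR characterization (item (6) of Thm~\ref{t.SPR-as-Entropy-Gap-Shifts}), so using it here would be circular. What the proposition actually needs is the weaker \emph{simultaneous} comparison against $P_{\Bor}(\Sigma,\phi)$, namely $P^\infty(\Sigma,\phi)<P_{\Bor}(\Sigma,\phi)\iff\limsup_n\frac1n\log Z_n^*(\phi,a)<P_{\Bor}(\Sigma,\phi)$, not the identity; your Abramov-inducing sketch for the upper bound and the trimming sketch for the lower bound do not yet deliver that simultaneity. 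Unless you plan to rebuild the $\phi$-analogues of the Ruette/Buzzi/Gurevich--Zargaryan results, the clean route is the one the paper takes.
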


\begin{proof}
Suppose $\phi$ is SPR on $\Sigma$. By \cite{Sarig-CMP-2001} and the assumptions that $h_{\Bor}(\Sigma)<\infty$ and $\phi$ is  H\"older continuous, $\phi$ has an equilibrium measure $\mu_\phi$. 
By 
 \cite[Thm 7.1]{Ruhr-Sarig} and the discussion in \S9 there, there is a constant $C$ so that for every partition set $[a]$, for every shift-invariant measure $\nu$, 
 $
 |\nu[a]-\mu_\phi[a]|\leq C\sqrt{P_{\Bor}(\Sigma,\phi)-P(\sigma,\nu,\phi)}.
 $

In particular, for every $a$ such that $\mu_\phi[a]>0$ and for every $P_0$ sufficiently close to $P_{\Bor}(\Sigma,\phi)$, if $P(\sigma,\nu,\phi)>P_0$, then $\nu[a]>\frac{1}{2}\mu_{\phi}[a]$. {One easily deduces} \eqref{e.SPR-potential-irred}.

The other direction requires the following observation: By Thm~\ref{t.Gurevich-Theorem}(1) and the assumption that  $h_{\Bor}(\Sigma)<\infty$, we have that  $Z_n(b)<\infty$ for every vertex $b$ and every $n$. This is the ``$\mathcal F$-property" of \cite{iommi-todd-velozo-2020}.

Suppose \eqref{e.SPR-potential-irred} holds,  and assume by way of contradiction that $\phi$ is not SPR. 
By the $\mathcal F$-property and Lemma 8.1 in \cite{Ruhr-Sarig}, there exists a sequence of ergodic shift-invariant  probability measures $\nu_n$ such that $P(\sigma,\nu_n,\phi)\to P_{\Bor}(\Sigma,\phi)$, but $\nu_n[a]\to 0$ for all vertices $a$. Clearly, this contradicts \eqref{e.SPR-potential-irred}. So $\phi$ must be SPR.
\end{proof}
We can now extend Def~\ref{d.SPR-potential2} to reducible Markov shifts:
\begin{definition}\label{d.SPR-potential3}
We say that a Markov shift $\Sigma$ with finite Gurevich entropy is \emph{SPR for a H\"older continuous $\phi$}, if there exist a finite union of cylinders $B$
and numbers $P_0<P_{\Bor}(\Sigma,\phi)$, $\tau>0$ s.t.
\begin{equation}\label{e.SPR-potential3}
\text{For every ergodic measure $\nu$ on $\Sigma$,}\quad
    P(\sigma,\nu,\phi)>P_0 \implies \nu(B)>\tau.
\end{equation}
\end{definition}
\noindent
Again, $\phi$ is SPR on  $\Sigma$ iff  for some $P_0<P_{\Bor}(\Sigma(\mathfs G),\phi)$,  $\phi$ is SPR on all  irreducible components with pressure $>P_0$,  and the number  
of such components is finite.

\part{Symbolic Dynamics of SPR Diffeomorphisms}\label{part-symbolic-diffeo}

\section{Hyperbolic, Entropy-full, and Bornological  Codings}\label{s.hyperbolic-codings}

Suppose $T$ is a map on a topological space $Y$.
A {\em Markovian coding}  (or just {\em ``coding"}) of $T$ is a map $\pi:\Sigma\to Y$, where $\Sigma$ is a  countable state Markov shift,  $\pi$ is continuous, and  {$\pi\circ \sigma=T\circ\pi.$} We do not require injectivity or surjectivity. Codings with weak forms of these  properties are discussed below (Def \ref{d.Hyperbolic-Coding}~or~\ref{d-entropy-full}).

Many dynamical systems have useful Markovian codings: hyperbolic toral auto\-mor\-phisms  \cite{Adler-Weiss-Similarity-Toral-Automorphisms}, Anosov diffeomorphisms \cite{Sinai-Construction-of-MP,Sinai-MP-U-diffeomorphisms}, Axiom A diffeomorphisms
 \cite{Bowen-MP-Axiom-A}, piecewise monotonic interval maps \cite{Takahashi, Hofbauer-Tower,Buzzi-SIM,Buzzi-PQFT,Lima-1D},  and non-uniformly hyperbolic maps \cite{young-tower-recurrence,Sarig-JAMS,Ben-Ovadia-Codings,Araujo-Lima-Poletti}. 

We will only consider $C^{1+}$ diffeomorphisms $f$  on closed manifolds $M$. 
Our ultimate aim is to show that if $f$ is SPR, then $f$ admits ``good" Markovian codings $(\Sigma,\pi)$ with SPR Markov shifts $\Sigma$ (Thm~\ref{t.SPR-coding}). 
This section prepares the ground for this, by identifying  several desirable properties of abstract Markovian codings, which allow to relate the SPR property of $f$ to the SPR property of $\Sigma$.

\subsection{Hyperbolic Codings}
The lack of injectivity of the coding map may create many difficulties, including the following  crucial one:
some measures on $\pi(\Sigma)$ may fail to have lifts to $\Sigma$, or may have lifts with different entropy.

The ``hyperbolic codings" we will now introduce will not suffer from this problem (Lemma \ref{l.Hyp-Mark-Coding-Prop} below). In addition, they will have the following nice feature: every point in the coded set $\pi(\Sigma)$ has well defined ``stable" and ``unstable" directions.

Before giving the definition, we remind the reader that $\Sigma$ is endowed with the metric \eqref{e.metric-on-sigma}, and  the {\em regular part} of a Markov shift $\Sigma$ is the set$$
\Sigma^\#:=\{(x_i)_{i\in\Z}\in\Sigma: (x_i)_{i\geq 0}, (x_i)_{i\leq 0}\text{ both contain constant sub-sequences}\}.
$$

\begin{definition}\label{d.Hyperbolic-Coding}
Let $X$ be an invariant Borel subset. 
A {\em hyperbolic coding in}  $X$ is a pair $(\Sigma,\pi)$ where $\Sigma$ is a locally compact countable state Markov shift, and $\pi:\Sigma\to M$
is a H\"older continuous map satisfying:
\begin{enumerate}[(a)]
\item\label{i.def1}  $\pi\circ\sigma=f\circ\pi$.
\item\label{i.def2} $\pi:\Sigma^\#\to M$ is finite-to-one, i.e. $(\pi|_{\Sigma^\#})^{-1}(x)$ is finite for each $x\in M$.\footnote{We do not require the cardinality of these sets to be uniformly bounded.}
\item\label{i.def5} $(\pi_*\hmu)(X)=1$ for any $\sigma$-invariant probability measure $\hmu$  on $\Sigma$.
\item\label{i.def3} For some $\chi_0>0$, for every $\un x\in \Sigma$, there is  a splitting $T_{\pi(\un x)}M=E^s(\un x)\oplus E^u(\un x)$ s.t. $\underset{n\to +\infty} \limsup \tfrac 1 n \log \|Df^n|_{E^s(\un x)}\|<-\chi_0$,   $\underset{n\to +\infty} \limsup \tfrac 1 n \log \|Df^{-n}|_{E^u(\un x)}\|<-\chi_0$, and such that $\underline x\mapsto E^{s}(\un x)$ and $\underline x\mapsto E^{u}(\un x)$ are H\"older continuous.
\end{enumerate}
\end{definition}

\medbreak

\begin{lemma}[Basic Properties of Hyperbolic Codings]\label{l.Hyp-Mark-Coding-Prop}
Let $(\Sigma,\pi)$ be a hyperbolic coding in an $f$-invariant measurable set $X$.
Then:
\begin{enumerate}[(1)]
\item\label{i.basic0} $\pi(\Sigma^\#)$
is Borel measurable.
\item\label{i.basic1} Every ergodic measure $\hmu$ on $\Sigma$ satisfies $h(f,\pi_*\hmu)=h(\sigma,\hmu)$.

\item\label{i.basic2} Every $f$-invariant ergodic measure $\mu$ with $\mu(\pi(\Sigma^\#))=1$ admits an  ergodic lift to $\Sigma$,
i.e., some $\sigma$-invariant ergodic measure $\hmu$ satisfying $\pi_*\hmu=\mu$.

\item\label{i.basic2bis} $h_{\Bor}(\Sigma)\leq h_{\Bor}(f|_X)$.

\item\label{i.basic3} If $\phi:M\to\R$ is H\"older continuous, then $\hphi:=\phi\circ\pi$ is H\"older continuous on $\Sigma$.

\item\label{i.basic4} If $\Sigma$ is irreducible, then all  $\sigma$-invariant ergodic measures  on $\Sigma$ project to  homoclinically related $f$-invariant measures on $M$.
\end{enumerate}
\end{lemma}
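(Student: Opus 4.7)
The plan is to prove items (1)--(6) in order, with (6) being the main technical challenge; items (1)--(3) follow from standard measurable selection theorems and entropy formulas, while (4) and (5) are essentially immediate.

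For (1), $\Sigma^\#$ is Borel since it is the intersection of the sets ``some symbol appears infinitely often in the positive direction'' and ``\dots in the negative direction,'' each constructed from cylinders by countable unions and intersections; as $\pi|_{\Sigma^\#}$ is continuous with finite fibers by \ref{i.def2}, the Lusin--Novikov theorem yields that $\pi(\Sigma^\#)$ is Borel. For (2), I first note that every $\sigma$-invariant probability $\hmu$ satisfies $\hmu(\Sigma^\#)=1$: since $\Sigma=\bigsqcup_a [a]$ is countable, some $[a]$ has positive $\hmu$-mass, and Poincar\'e recurrence applied to both $\sigma$ and $\sigma^{-1}$ gives a.e.\ return in both time directions. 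The entropy equality then follows from the standard fact that countable-to-one measure-theoretic factor maps preserve Kolmogorov--Sinai entropy (Rokhlin's formula, with vanishing relative entropy on finite fibers). For (3), given an ergodic $\mu$ with $\mu(\pi(\Sigma^\#))=1$, observe that the fiber size $N(x):=|\pi^{-1}(x)\cap\Sigma^\#|$ is $f$-invariant, because $\sigma$ bijects $\pi^{-1}(x)\cap\Sigma^\#$ with $\pi^{-1}(f(x))\cap\Sigma^\#$; hence $N(x)\equiv N$ $\mu$-a.e.\ by ergodicity. Setting
\begin{equation*}
  \hmu:=\int_M\frac{1}{N}\sum_{\un y\in\pi^{-1}(x)\cap\Sigma^\#}\delta_{\un y}\,d\mu(x)
\end{equation*}
(enumerating fibers Borel-measurably by Lusin--Novikov) gives a $\sigma$-invariant lift with $\pi_*\hmu=\mu$; any ergodic component $\hmu_e$ of $\hmu$ projects to $\mu$ by uniqueness of the ergodic decomposition of $\mu$, furnishing the required ergodic lift. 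Items (4) and (5) are now immediate: (4) reads $h_\Bor(\Sigma)=\sup_{\hmu}h(\sigma,\hmu)=\sup_{\hmu}h(f,\pi_*\hmu)\le h_\Bor(f|_X)$ by (2) and \ref{i.def5}; (5) is the composition of two H\"older maps.

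For (6), let $\hmu_1,\hmu_2$ be ergodic $\sigma$-invariant measures and $\mu_i:=\pi_*\hmu_i$. I will show that, for $(\mu_1\times\mu_2)$-a.e.\ pair $(x,y)$, one has $x\sim y$ in the sense of the homoclinic relation of \S\ref{s.measurable-homoclinic-classes}, which forces both measures onto a common Borel homoclinic class. Fix cylinders $[a],[b]$ with $\hmu_1[a],\hmu_2[b]>0$; by irreducibility of $\Sigma$, pick admissible words $w$ from $a$ to $b$ and $w'$ from $b$ to $a$, of lengths $L,L'$. For $\hmu_i$-generic $\un x_i\in\Sigma^\#$ with $x_i^0=a,b$ respectively, splice $\un z\in\Sigma$ so that $z_i=x_1^i$ for $i\le 0$, $\un z$ follows $w$ for $0\le i\le L$, and $z_{L+i}=x_2^i$ for $i\ge 0$. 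By H\"older continuity of $\pi$, one has $d(f^{-n}\pi(\un z),f^{-n}\pi(\un x_1))\le C e^{-\alpha n}$ and $d(f^{L+n}\pi(\un z),f^n\pi(\un x_2))\le C e^{-\alpha n}$ for all $n\ge 0$. Combined with the uniform $\chi_0$-hyperbolic splitting of \ref{i.def3} and Pesin's stable/unstable manifold theorem, this places $\pi(\un z)\in W^u(\pi(\un x_1))\cap W^s(f^{-L}\pi(\un x_2))$; the direct-sum condition $E^s(\un z)\oplus E^u(\un z)=T_{\pi(\un z)}M$ of \ref{i.def3} makes this intersection transverse. Splicing in reverse order (using $w'$) supplies the symmetric transverse intersection, establishing $\pi(\un x_1)\sim\pi(\un x_2)$. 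The technical subtlety is controlling the Pesin constants so that the local stable/unstable manifolds genuinely intersect; this is handled by the H\"older continuity of the splitting in \ref{i.def3}, which makes the Pesin constants locally uniform on the compact cylinders of the locally compact shift $\Sigma$.
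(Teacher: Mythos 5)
Your items (1)--(5) match the paper's proof nearly line for line: (1) is Lusin--Novikov/Kechris, (2) is Abramov--Rokhlin on a finite-fiber skew product (your preliminary observation that $\hmu(\Sigma^\#)=1$ via Poincar\'e recurrence in both time directions is a good explicit addition), (3) is the same fiber-averaging construction the paper quotes from \cite[Prop.~13.2]{Sarig-JAMS}, and (4)--(5) are immediate as you say.

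The difference is item (6). The paper simply cites \cite[Prop.~3.6]{BCS-1}, while you attempt a self-contained splicing argument. The idea --- splice a sequence $\un z$ that agrees with $\un x_1$ on negative coordinates and with a shift of $\un x_2$ on positive ones, and read off the transverse homoclinic intersection from the shadowed point --- is the right one. But there are two Pesin-theoretic gaps that your sketch does not close. First, exponential backward convergence of $\pi(\un z)$ to $\pi(\un x_1)$ at the H\"older rate of $\pi$ does not by itself certify $\pi(\un z)\in W^u(\pi(\un x_1))$; one needs a Pesin-theoretic characterization of the unstable manifold as the backward-asymptotic set, which requires that $\pi(\un x_1)$ sit in a Pesin block with controlled constants, not merely satisfy the limsup-rate conditions of Def.~\ref{d.Hyperbolic-Coding}(\ref{i.def3}). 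Second, and more seriously, transversality at $\pi(\un z)$ requires identifying $T_{\pi(\un z)}W^u(\pi(\un x_1))$ with $E^u(\un z)$, which again requires uniform Pesin estimates at $\pi(\un z)$. Your closing remark that H\"older continuity of $\un x\mapsto E^{s/u}(\un x)$ ``makes the Pesin constants locally uniform on compact cylinders'' is exactly where the argument breaks: the definition only asserts $\limsup$-rates, and pointwise asymptotic decay plus continuity does not give uniform decay constants on a compact set (no monotonicity, so Dini does not apply). For the concrete shadowing codings of \cite{Sarig-JAMS,Ben-Ovadia-Codings}, this uniformity does hold because the double-chart construction controls $Q_{\bar\eps}$ on cylinders, and that is the content of the cited \cite[Prop.~3.6]{BCS-1}; but for an arbitrary pair $(\Sigma,\pi)$ satisfying only Def.~\ref{d.Hyperbolic-Coding}, your argument would need either an extra hypothesis or a more delicate argument to extract uniform Pesin bounds from the abstract limsup conditions. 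As it stands the key step is asserted rather than proved.
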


\begin{proof}
Part \eqref{i.basic0} is a consequence of the following general result \cite[Thm 18.10]{Kechris}: {\em The image of a Borel set by a countable-to-one Borel map is Borel measurable.}

Part \eqref{i.basic1} is a consequence of the Abramov-Rokhlin entropy  formula \cite{Abramov-Rokhlin}, and the fact that by Def.~\ref{d.Hyperbolic-Coding}\eqref{i.def2}, $\sigma:\Sigma^\#\to\Sigma^\#$ is measure theoretically isomorphic to a skew-product over $f:M\to M$, with finite fibers.

Part \eqref{i.basic2} follows from Def.~\ref{d.Hyperbolic-Coding}\eqref{i.def2} as in  the proof of  \cite[Prop. 13.2]{Sarig-JAMS}. Briefly, $\hmu$ can be taken to be  a typical ergodic component of the measure
 $$\int_{\Sigma^\#}\frac{1}{|(\pi|_{\Sigma^\#})^{-1}(x)|}\sum_{y\in (\pi|_{\Sigma^\#})^{-1}(x)}\delta_y\, d\mu(x).
 $$
Here the finiteness-to-one of $\pi:\Sigma^\#\to M$ is essential.

Part \eqref{i.basic2bis} follows from Part \eqref{i.basic1}.
Part \eqref{i.basic3} is a consequence of the H\"older continuity of $\pi:\Sigma\to M$.
Part \eqref{i.basic4} is the content of Proposition 3.6 in~\cite{BCS-1}.
\end{proof}

\subsection{Entropy-Full Codings}
A hyperbolic coding need not carry the  lift of every invariant  measure. ``Entropy-full" codings lift   all measures with large entropy:
\begin{definition}\label{d-entropy-full}
A hyperbolic coding $(\Sigma,\pi)$ in $X$ is \emph{entropy-full} 
if there is $h_0<\hTOP(f|_X)$ such that every ergodic invariant probability measure $\mu$ on $X$ with entropy bigger than $h_0$ is carried by $\pi(\Sigma^\#)$.
\end{definition}

\begin{lemma}\label{l-entropy-equal}
If $(\Sigma,\pi)$ is hyperbolic and entropy-full, then $\hTOP(f|_X)=\hTOP(\Sigma)$.
\end{lemma}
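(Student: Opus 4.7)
The plan is to prove the two inequalities separately, the easy one being already established.

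The inequality $h_\Bor(\Sigma)\le h_\Bor(f|_X)$ is exactly Lemma~\ref{l.Hyp-Mark-Coding-Prop}\eqref{i.basic2bis}. So the only thing to verify is
\[
  h_\Bor(f|_X)\le h_\Bor(\Sigma).
\]

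For this, I would first reduce to ergodic measures via the variational principle and the ergodic decomposition: since $h_\Bor(f|_X)=\sup\{h(f,\mu):\mu\in\Proberg(f|_X)\}$, it suffices to bound $h(f,\mu)$ for ergodic $\mu\in\Prob(f|_X)$. Using entropy-fullness (Def.~\ref{d-entropy-full}), fix the threshold $h_0<h_\Bor(f|_X)$ and restrict attention to ergodic measures $\mu$ with $h(f,\mu)>h_0$; each such $\mu$ is carried by $\pi(\Sigma^\#)$, hence by Lemma~\ref{l.Hyp-Mark-Coding-Prop}\eqref{i.basic2} admits an ergodic lift $\hmu$ on $\Sigma$ with $\pi_*\hmu=\mu$. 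By Lemma~\ref{l.Hyp-Mark-Coding-Prop}\eqref{i.basic1} the lift preserves entropy, so
\[
  h(f,\mu)=h(\sigma,\hmu)\le h_\Bor(\Sigma).
\]
Taking the supremum over all ergodic $\mu$ with $h(f,\mu)>h_0$ gives $h_\Bor(f|_X)\le h_\Bor(\Sigma)$, since $h_0<h_\Bor(f|_X)$ guarantees that this supremum equals $h_\Bor(f|_X)$.

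There is no real obstacle here: entropy-fullness provides exactly the bridge between arbitrary measures on $X$ with near-maximal entropy and measures on the coded set $\pi(\Sigma^\#)$, while parts \eqref{i.basic1} and \eqref{i.basic2} of Lemma~\ref{l.Hyp-Mark-Coding-Prop} turn this into a lift preserving entropy. The only small subtlety is to remember that the variational principle applies to non-ergodic measures too, but this is handled by reducing to ergodic measures via the ergodic decomposition and the fact that entropy is affine. Combining both inequalities yields the desired equality.
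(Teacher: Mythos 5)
Your proof is correct and follows essentially the same route as the paper: take ergodic measures of near-maximal entropy, use entropy-fullness to carry them onto $\pi(\Sigma^\#)$, lift via Lemma~\ref{l.Hyp-Mark-Coding-Prop}\eqref{i.basic2}, and conclude (the paper phrases it with a sequence $\mu_n$ approaching the sup and uses the factor inequality $h(f,\mu_n)\le h(\sigma,\hmu_n)$, whereas you invoke the equality from \eqref{i.basic1}, but this is an inessential difference). The reduction to ergodic measures that you spell out is implicit in the paper's choice of $\mu_n\in\Proberg(f|_X)$; both are fine.
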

\begin{proof}
Take $\mu_n\in\Proberg(f|_X)$ such that $h(f,\mu_n)\to h_{\Bor}(f|_X)$. By entropy-fullness, for all $n$ sufficiently large, $\mu_n(\Sigma^\#)=1$, and by Lemma \ref{l.Hyp-Mark-Coding-Prop}\eqref{i.basic2}, $\mu_n=\pi_\ast\hmu_n$ for some $\hmu_n\in\Proberg(\sigma)$. So $h(f,\mu_n)\leq h(\sigma,\hmu_n)\leq h_{\Bor}(\Sigma)$. Passing to the limit $n\to\infty$, we obtain $h_{\Bor}(f|_X)\leq h_{\Bor}(\Sigma)$. By Lemma \ref{l.Hyp-Mark-Coding-Prop}\eqref{i.basic2bis},  $h_{\Bor}(f|_X)=h_{\Bor}(\Sigma)$.
\end{proof}

\subsection{Bornological Codings}
The next property relates the Pesin bornologies of $f|_X$ to the cylinder bornology of $\sigma:\Sigma\to\Sigma$. We will use it to relate the SPR property of $f|_X$ to the SPR property of $\Sigma$.

\begin{definition}\label{d.bornological}

A hyperbolic coding $(\Sigma,\pi)$ in $X$ is {\em $\chi$-bornological in $X$}  if:
\begin{enumerate}[(a)]
\item\label{i.born1} For some $\varepsilon>0$, for any  $(\chi,\varepsilon)$-Pesin block $\Lambda$,
there exists a finite collection of cylinders  $A_i$ in  $\Sigma$ such that
$\hmu(\pi^{-1}(\Lambda)\setminus \bigcup_i A_i)=0$ for every $\hmu\in\mathbb P(\sigma)$.

\smallskip
\item\label{i.born2} For some $0<\tilde \varepsilon<\tilde \chi$, for  any cylinder $A$ in $\Sigma$,
there is a $(\tilde \chi,\tilde \varepsilon)$-Pesin block $\Lambda$  such that $\mu(\pi(A\cap \Sigma^\#)\setminus
\Lambda)=0$ for every $\chi$-hyperbolic  $\mu\in\mathbb P(f|_X)$.
\end{enumerate}
We say that the coding is {\em bornological}, if it is $\chi$-bornological for some $\chi>0$.
\end{definition}
\noindent

The {\em smaller} the $\chi$, the stronger  the $\chi$-bornological property, since there are  more Pesin blocks and more hyperbolic measures to consider.

\begin{proposition}\label{p.lift-SPR}
If $f$ is $\chi$-SPR on an invariant Borel set $X$ (Def.~\ref{def-SPR-diffeo-local}),
then every hyperbolic, $\chi$-bornological  coding $(\Sigma,\pi)$ in $X$ s.t. $\hTOP(\Sigma)=\hTOP(f|_X)$  is SPR.
\end{proposition}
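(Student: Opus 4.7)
The plan is to verify the partial entropy-tightness characterization of the SPR property for Markov shifts, as given by Theorem~\ref{t.SPR-as-Entropy-Gap-Shifts}(4) in the irreducible case, and more generally by Lemma~\ref{l-partial-tight} combined with Definition~\ref{d.SPR-for-irreducible-shifts}. Concretely, I would produce a finite union of cylinders $F$, a threshold $h_0<h_\Bor(\Sigma)$, and $\tau>0$ such that every ergodic $\hmu\in\Proberg(\sigma)$ with $h(\sigma,\hmu)>h_0$ satisfies $\hmu(F)>\tau$; this is exactly partial entropy-tightness with respect to the cylinder bornology $\mathfrak C$, whence SPR.

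First I would fix the $\varepsilon>0$ supplied by the bornological property, Def.~\ref{d.bornological}\eqref{i.born1}. Applying the $\chi$-SPR property of $f|_X$ (Def.~\ref{def-SPR-diffeo-local}) with this specific $\varepsilon$, I get a $(\chi,\varepsilon)$-Pesin block $\Lambda\subset X$, a number $h_0<h_\Bor(f|_X)$, and $\tau>0$ such that every ergodic $\nu\in\Proberg(f|_X)$ with $h(f,\nu)>h_0$ satisfies $\nu(\Lambda)>\tau$. Then Def.~\ref{d.bornological}\eqref{i.born1} applied to $\Lambda$ yields a finite family of cylinders $A_1,\ldots,A_k$ such that $\hmu(\pi^{-1}(\Lambda)\setminus F)=0$ for every $\hmu\in\mathbb P(\sigma)$, where $F:=A_1\cup\cdots\cup A_k$.

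The rest of the argument is a pull-back/push-forward shuttle. Take any $\hmu\in\Proberg(\sigma)$ with $h(\sigma,\hmu)>h_0$ and let $\nu:=\pi_*\hmu$. Since $\pi$ is a factor map, $\nu$ is ergodic; by Def.~\ref{d.Hyperbolic-Coding}\eqref{i.def5}, $\nu(X)=1$; and by Lemma~\ref{l.Hyp-Mark-Coding-Prop}\eqref{i.basic1}, $h(f,\nu)=h(\sigma,\hmu)>h_0$. Because the hypothesis $h_\Bor(\Sigma)=h_\Bor(f|_X)$ guarantees $h_0<h_\Bor(\Sigma)$ on the symbolic side, the $\chi$-SPR property of $f|_X$ applies and yields $\nu(\Lambda)>\tau$. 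Combined with the bornological control,
\[
\hmu(F)\;\ge\; \hmu(\pi^{-1}(\Lambda))\;=\;\nu(\Lambda)\;>\;\tau.
\]

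There is no serious obstacle here, since the bornological property was precisely designed to translate Pesin-block mass at the base into cylinder mass at the symbolic level. The only points requiring care are (i) the compatibility of the chosen $\varepsilon$, which is handled by first reading $\varepsilon$ off the bornological property and only then feeding it into SPR; and (ii) the use of $h_\Bor(\Sigma)=h_\Bor(f|_X)$ to turn the entropy gap on the manifold side into an entropy gap on the symbolic side. With these observed, the displayed inequality establishes partial entropy-tightness with respect to $\mathfrak C$, and Lemma~\ref{l-partial-tight} together with Definition~\ref{d.SPR-for-irreducible-shifts} conclude that $\Sigma$ is SPR.
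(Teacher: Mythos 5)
Your proof is correct and follows essentially the same path as the paper's: fix $\varepsilon$ from the bornological property, obtain $\Lambda,h_0,\tau$ from $\chi$-SPR, translate to a finite union of cylinders $F$, then push any $\hmu\in\Proberg(\sigma)$ with $h(\sigma,\hmu)>h_0$ forward to $\nu=\pi_*\hmu$ and shuttle back. The only cosmetic difference is that you invoke partial entropy-tightness explicitly via Lemma~\ref{l-partial-tight}, whereas the paper phrases the conclusion directly as an entropy gap at infinity, but these are the same statement.
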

\begin{remark}\label{rem-SPR-single-eps}This result  only requires the condition in  Def.~\ref{d.bornological}\eqref{i.born1}. Also, we do not need the full force of the SPR property for $f$, only the partial entropy tightness  with respect to $\mathfrak P_{\chi,\eps}$ for a {\em single} (but small enough) $\eps$.
\end{remark}
\begin{proof}

Take $\eps>0$ as in Def.~\ref{d.bornological}\eqref{i.born1}.
By the $\chi$-SPR property, there exist a $(\chi,\eps)$-Pesin block $\Lambda$ and $h_0<h_{\Bor}(f|_X), \tau>0$ such that
\begin{equation}\label{e.three-twenty}
\text{for every ergodic measure $\nu$ on $X$, $h(f,\nu)>h_0\Longrightarrow \nu(\Lambda)>\tau$}.
\end{equation}

 By Def.~\ref{d.bornological}\eqref{i.born1}, there is a finite union of cylinders $A\subset \Sigma$ such that
every $\sigma$-invariant measure gives zero mass to
$\pi^{-1}(\Lambda)\setminus A$.
By assumption,
$$
h_0<h_\Bor(f|_X)=h_{\Bor}(\Sigma).
$$
Let $\hnu$ be an ergodic measure  on $\Sigma$ such that $h(\sigma,\hnu)>h_0$. The projection $\nu:=\pi_\ast\hnu$ is an $f$-ergodic measure on $X$, and by Lemma \ref{l.Hyp-Mark-Coding-Prop}\eqref{i.basic1}, $h(f,\nu)=h(\sigma,\hnu)>h_0$. By~\eqref{e.three-twenty}, $\nu(\Lambda)>\tau$, whence by the choice of $A$,
$
\hnu(A)\geq \hnu(\pi^{-1}(\Lambda))=\nu(\Lambda)>\tau.
$

We started with the assumptions that $\hnu\in\Proberg(\sigma)$ and $h(\sigma,\hnu)>h_0$, and ended with the conclusion that $\hnu(A)>\tau$. Thus the entropy at infinity of $\Sigma$ does not exceed $h_0$, and we have an entropy gap at infinity. By Def.~\ref{d.SPR-for-irreducible-shifts}, $\Sigma$ is SPR.
\end{proof}

\begin{proposition}\label{p.project-SPR}
{Given $\chi>0$, let $(\Sigma,\pi)$ be an entropy-full, $\chi$-bornological, hyperbolic coding in $X$, and assume that $f|_X$ satisfies the following property:}
\begin{equation}\label{e.chi-entropy-hyperbolic}
\exists h_0<h_{\Bor}(f|_X),\quad (\forall \nu\in\Proberg(f|_X),\ h(f,\nu)>h_0\Longrightarrow\text{ $\nu$ is $\chi$-hyperbolic}).
\end{equation}
If $\Sigma$ is SPR, then $f$ is  entropy-tight, whence SPR,  on $X$.
\end{proposition}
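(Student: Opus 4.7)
My plan is to establish full entropy-tightness of $f|_X$ with respect to the Pesin bornology $\mathfrak P_{\tilde\chi,\tilde\eps}$ furnished by Def.~\ref{d.bornological}\eqref{i.born2}, from which the SPR conclusion will then follow via Prop.~\ref{prop-tight-implies-SPR}. First I would record that by entropy-fullness and Lemma~\ref{l-entropy-equal}, $\hTOP(\Sigma)=\hTOP(f|_X)$, and after enlarging the constant $h_0$ from~\eqref{e.chi-entropy-hyperbolic} if necessary, I may assume that every $\nu\in\Proberg(f|_X)$ with $h(f,\nu)>h_0$ is simultaneously $\chi$-hyperbolic and carried by $\pi(\Sigma^\#)$. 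For such $\nu$, Lemma~\ref{l.Hyp-Mark-Coding-Prop}\eqref{i.basic2} produces an ergodic lift $\hnu\in\Proberg(\sigma)$ with $\pi_\ast\hnu=\nu$, and Lemma~\ref{l.Hyp-Mark-Coding-Prop}\eqref{i.basic1} guarantees $h(\sigma,\hnu)=h(f,\nu)$.

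Next, given $\tau\in(0,1)$, I would apply Cor.~\ref{coro-tight-SPR} to the SPR shift $\Sigma$ to obtain $h_1<\hTOP(\Sigma)$ and a finite union of cylinders $F=A_1\cup\cdots\cup A_k$ such that $\hmu(F)>\tau$ for every $\hmu\in\Proberg(\sigma)$ with $h(\sigma,\hmu)>h_1$. Applying Def.~\ref{d.bornological}\eqref{i.born2} to each $A_j$ separately yields $(\tilde\chi,\tilde\eps)$-Pesin blocks $\Lambda_j$ such that $\mu(\pi(A_j\cap\Sigma^\#)\setminus\Lambda_j)=0$ for every $\chi$-hyperbolic $\mu\in\Prob(f|_X)$. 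I would then set $\Lambda:=\Lambda_1\cup\cdots\cup\Lambda_k$; by Lemma~\ref{l.splitting} (using $\tilde\eps<\tilde\chi$) the splittings on each $\Lambda_j$ are intrinsically determined on their orbits, so the finite union remains a single $(\tilde\chi,\tilde\eps)$-Pesin block, with constant equal to the maximum of the constants of the $\Lambda_j$.

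To conclude, I would pick any ergodic $\nu\in\Proberg(f|_X)$ with $h(f,\nu)>h_\tau:=\max(h_0,h_1)$; by construction $\nu$ is $\chi$-hyperbolic and its lift $\hnu$ satisfies $h(\sigma,\hnu)>h_1$, hence $\hnu(F)>\tau$. Since $\hnu$ is ergodic on a Markov shift, Poincar\'e recurrence to any visited cylinder gives $\hnu(\Sigma^\#)=1$, so
\[
\hnu(F)=\hnu(F\cap\Sigma^\#)\le\hnu\bigl(\pi^{-1}\pi(F\cap\Sigma^\#)\bigr)=\nu\bigl(\pi(F\cap\Sigma^\#)\bigr),
\]
while the bornological choice of $\Lambda$ ensures $\nu(\pi(F\cap\Sigma^\#)\setminus\Lambda)=0$. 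Combining, $\nu(\Lambda)>\tau$, so $f|_X$ is entropy-tight relative to $\mathfrak P_{\tilde\chi,\tilde\eps}$ in the sense of Def.~\ref{d.entropy-tight-diffeo}, and Prop.~\ref{prop-tight-implies-SPR} yields that $f$ is SPR on $X$. The main conceptual point is the seamless transfer of mass from cylinders upstairs to Pesin blocks downstairs; the essential ingredients are the bornological property together with the almost-sure concentration of $\hnu$ on the regular part $\Sigma^\#$, and everything else is routine bookkeeping.
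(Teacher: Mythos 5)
Your proof is correct, and its overall skeleton coincides with the paper's: use the equivalence of SPR and entropy-tightness for the Markov shift (Cor.~\ref{coro-tight-SPR}) to find a finite cylinder union $F$ of high $\hnu$-mass, translate it down via the bornological property Def.~\ref{d.bornological}\eqref{i.born2} into a $(\tilde\chi,\tilde\eps)$-Pesin block, and compare $\nu$ with its entropy-preserving lift $\hnu$ using $\hnu(\Sigma^\#)=1$.

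The one genuine divergence is how you close the estimate. You observe directly that
\[
\nu(\Lambda)\ \ge\ \nu\bigl(\pi(F\cap\Sigma^\#)\bigr)\ \ge\ \hnu(F\cap\Sigma^\#)\ =\ \hnu(F)\ >\ \tau,
\]
and conclude entropy-tightness with respect to $\mathfrak P_{\tilde\chi,\tilde\eps}$. The paper establishes the equivalent bound $\mu(M\setminus\tilde\Lambda_\delta)\le\hmu(\Sigma\setminus A_\delta)$ but does not stop there: it fixes $\chi_0$ slightly below $\tilde\chi$, invokes the Pliss-lemma construction Prop.~\ref{p.pesin} to manufacture a second Pesin block $\Lambda_\delta\in\mathfrak P_{\chi_0,\tilde\eps}$, and only then concludes. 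Your shortcut is legitimate and in fact gives a nominally sharper conclusion (tightness with parameter $\tilde\chi$ rather than $\chi_0<\tilde\chi$), since Prop.~\ref{prop-tight-implies-SPR} only requires tightness with respect to \emph{some} Pesin bornology; the extra Pliss step in the paper is not logically needed here. Two cosmetic remarks: for the union $\Lambda=\Lambda_1\cup\cdots\cup\Lambda_k$ one does not really need Lemma~\ref{l.splitting} — Def.~\ref{d.pesin} with constant $\max_jK_j$ and the finite-union stability recorded in Lemma~\ref{l.P-is-bornology}(1) already give that the union is a $(\tilde\chi,\tilde\eps)$-Pesin block; and strictly speaking one should replace $\Lambda$ by $\Lambda\cap X$ so that it sits inside the Pesin bornology $\mathfrak P_{\tilde\chi,\tilde\eps}(f|_X)$, which is harmless since $\nu(X)=1$.
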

\begin{remark}\label{r.SPR-implies-hyperbolicity}
(a) The proof only requires condition  Def.~\ref{d.bornological}\eqref{i.born2}.\\
(b) In dimension two, \eqref{e.chi-entropy-hyperbolic}  holds whenever 
$\chi<h_{\Bor}(f|_X)$,  by Ruelle's  inequality.\\ (c) If $f|_X$ is $\chi_0$-SPR for some $\chi_0>0$, then \eqref{e.chi-entropy-hyperbolic}
holds with any number $\chi\in (0,\chi_0)$.
\end{remark}
\begin{proof}
Def.~\ref{d.bornological}\eqref{i.born2}
provides  constants $\tilde\chi>\tilde\eps>0$ such that for any finite union of cylinders $A\subset\Sigma$ there is a $(\tilde\chi,\tilde\eps)$-Pesin block $\Lambda$ such that $
\mu(\pi(A\cap\Sigma^\#)\setminus\Lambda)=0$  for all $\chi$-hyperbolic invariant measures $\mu$ for $f|_X$.

Fix $\chi_0>0$ slightly smaller than $\tilde\chi$. We will show that for every $\delta>0$, there exist $h_\delta<h_\Bor(f|_X)$ and a $(\chi_0,\tilde\eps)$-Pesin block $\Lambda_\delta$ such that
\begin{equation}\label{e.9-point-2}
\forall\mu\in\Proberg(f|_X)\ \left(
h(f,\mu)>h_\delta\Rightarrow \mu(\Lambda_\delta)>1-\delta
\right).
\end{equation}
Eq~\eqref{e.9-point-2} says that   $f$ is entropy-tight, whence SPR, on $X$, see Def.~\ref{d.entropy-tight-diffeo} and Prop.~\ref{prop-tight-implies-SPR}.

\smallskip
\noindent
{\it Choice of ${h_\delta}$ and  ${A_\delta}$.}
By Lemma~\ref{l-entropy-equal}, $h_\Bor(\Sigma)=h_\Bor(f|_X)$.
Choose $h_0<h_{\Bor}(\Sigma)$ as in Def.~\ref{d-entropy-full} and Eq~\eqref{e.chi-entropy-hyperbolic}.
Fix $\delta>0$. Let $c_0:=\chi_0+\max(\log\|Df\|_{\sup}{,}\log\|Df^{-1}\|_{\sup})$, and choose $0<\tau_0<1$ so close to one, that
\begin{equation}\label{e.tau-zero}
4\max\left(\tfrac{c_0}{\tilde{\eps}},1\right)(1-\tau_0)<\delta.
\end{equation}

By assumption, $\Sigma$ is SPR. For  Markov shifts, the SPR property is equivalent to entropy tightness (Cor~\ref{coro-tight-SPR}). Therefore there exist $h_0<h_\delta<h_{\Bor}(\Sigma)$ and a finite union of cylinders $A_\delta$ such that
$$
\forall\hmu\in\Proberg(\sigma)\
\left(
h(f,\hmu)>h_\delta\Rightarrow \hmu(A_\delta)>1-\tau_0
\right).
$$

\smallskip
\noindent
{\it Choice of ${\Lambda_\delta}$.}
By the choice of $\tilde\chi$ and  $\tilde\eps$, there is a $(\tilde\chi,\tilde\eps)$-Pesin block $\tilde\Lambda_\delta$ such that
\begin{equation}\label{e.Lambda-delta-prime}
\mu(\pi(A_\delta\cap\Sigma^\#)\setminus\tilde\Lambda_\delta)=0\text{ for all $\chi$-hyperbolic $\mu\in\mathbb P(f|_X)$.}
\end{equation}
Since $\chi_0<\tilde\chi$, there exists an $n_0$ such that $\tilde\Lambda_\delta\subset P_{n_0}$, the Pliss set as in eq.~\eqref{e.PlissSet}:
$$
P_{n_0}:=\left\{x\in M:\begin{array}{l}
T_x M=E^u(x)\oplus E^s(x)\text{ and }\forall j\geq 0,\\
\|Df^{j n_0}|_{E^s(x)}\|\leq e^{-j n_0 \chi_0}\ , \  \|Df^{-j n_0}|_{E^u(x)}\|\leq e^{-j n_0 \chi_0}
\end{array} \right\}.
$$
By Prop. \ref{p.pesin}, there exists a $(\chi_0,\tilde\eps)$-Pesin block $\Lambda_\delta$ such that
$$
\forall\nu\in\Proberg(f),\ \nu(M\setminus\Lambda_\delta)\leq 4\max\left(\tfrac{c_0}{\tilde\eps},1\right)\nu(M\setminus P_{n_0}).
$$

\smallskip
\noindent
{\it Proof of \eqref{e.9-point-2}.} Suppose   $\mu\in \Proberg(f|_X)$ and $h(f,\mu)>h_\delta$.
By the choice of $\Lambda_\delta$,
\begin{align*}
&\mu(M\setminus\Lambda_\delta)\leq 4\max\left(\tfrac{c_0}{\tilde\eps},1\right)\mu(M\setminus P_{n_0})
\leq 4\max\left(\tfrac{c_0}{\tilde\eps},1\right)\mu(M\setminus\tilde\Lambda_\delta).
\end{align*}
Since $h_\delta>h_0$ and $h_0$ satisfies Eq~\eqref{e.chi-entropy-hyperbolic}, $\mu$ is $\chi$-hyperbolic. By \eqref{e.Lambda-delta-prime}, $$
\mu(M\setminus\Lambda_\delta)\leq
4\max\left(\tfrac{c_0}{\tilde\eps},1\right)\mu(M\setminus\pi(A_\delta\cap\Sigma^\#)).
$$

Since $h_\delta>h_0$ and $h_0$ satisfies Def.~\ref{d-entropy-full}, $\mu(\pi(\Sigma^\#))=1$.  By Lemma \ref{l.Hyp-Mark-Coding-Prop}\eqref{i.basic2}, there is a measure $\hmu\in\Proberg(\sigma)$ such that  $\mu=\pi_\ast\hmu$ and $h(\sigma,\hmu)=h(f,\mu)$. By the Poincar\'e recurrence theorem, $\hmu(\Sigma^\#)=1$.  By the identity $\mu=\hmu\circ\pi^{-1}$,
$$\mu(M\setminus\Lambda_\delta)\leq  4\max\left(\tfrac{c_0}{\tilde\eps},1\right)\hmu({\Sigma\setminus (A_\delta\cap\Sigma^\#)})=4\max\left(\tfrac{c_0}{\tilde\eps},1\right)\hmu({\Sigma}\setminus A_\delta).
$$
By the choice of $A_\delta$, $\hmu({\Sigma}\setminus A_\delta)<\tau_0$; by the choice of $\tau_0$, $\mu(M\setminus\Lambda_\delta)<\delta$, completing the proof of \eqref{e.9-point-2}. So $f|_X$ is entropy-tight.
By Prop.~\ref{prop-tight-implies-SPR}, $f|_X$ is {SPR}.
\end{proof}

\section{Construction of Hyperbolic Codings}\label{ss.construction-hyperbolic}
The papers \cite{Sarig-JAMS,Ben-Ovadia-Codings} show that every  $C^{1+}$ diffeomorphism {with at least one hyperbolic invariant measure}  has a hyperbolic Markovian coding $(\Sigma,\pi)$,  and that {for surface diffeomorphisms}, $(\Sigma,\pi)$ is  entropy-full. The paper \cite{BCS-1} shows that if we restrict to Borel homoclinic classes, then $\Sigma$  can be chosen to be irreducible. 

In this paper, we will  prove that if $f$ is SPR, then $\Sigma$ is SPR (and in the two-dimensional case and some other cases, vice-verse). 

The proof relies on the particular features of the constructions in 
\cite{Sarig-JAMS,Ben-Ovadia-Codings,BCS-1}. For the convenience of the reader,   we survey these constructions 
in this section.  
We try to keep as close as possible to the notational scheme of \cite{Ben-Ovadia-Codings}. We let  $d:=\dim M$ and assume that $d\geq 2$.
We fix  $\beta>0$ and assume that $f\in C^{1+\beta}$. 
Finally, we assume that $f$ admits some hyperbolic invariant measure, and  fix a
Borel homoclinic class $X$ and a number $\chi>0$ so small that $X$ carries a $\chi$-hyperbolic invariant measure (i.e. a measure such that all Lyapunov exponents are  outside $[-\chi,\chi]$ a.e.).

We fix some $\bar \chi\in (0,\chi)$, and another parameter $\bar{\eps}$, arbitrarily small, such that  
\begin{equation}\label{e.epsilon-bar}
{0<\bar \varepsilon<\min(\bar \chi, \chi-\bar \chi)\leq \bar \chi <\chi.}
\end{equation}
We will need to assume that $\bar{\eps}$ is ``small enough" so that the constructions in \cite{Sarig-JAMS,Ben-Ovadia-Codings,BCS-1} apply. How small depends only on $f,M,\beta,\chi,\bar{\chi}$, see \cite[\S1.6]{Sarig-JAMS}. The particular condition will not be important to us.

\subsection{Pesin Charts} Pesin introduced an atlas of charts for the ``hyperbolic part" of $M$, which bring $f$ to the form of a hyperbolic linear map plus a small perturbation. We review a variant of his construction, from \cite{Ben-Ovadia-Codings}.
\subsubsection{The Oseledets-Pesin Reduction ${C_{\bar \chi}(x)^{-1}}$}
\cite[Def. 2.10]{Ben-Ovadia-Codings} introduces a specific Borel set $\NUH^*_{\bar \chi}\subset M$, which only depends on $f,M$ and $\bar\chi$. Among its many properties, it also satisfies the following:
\begin{enumerate}[$\circ$]
\item $\NUH^*_{\bar \chi}$ has full measure for every $\bar{\chi}$-hyperbolic ergodic invariant measure.
\item Every point in $\NUH^*_{\bar \chi}\subset M$ is Lyapunov regular (i.e., satisfies the conclusion of the Oseledets Theorem for the derivative cocycle).
\item Every point in $\NUH^*_{\bar \chi}\subset M$ has some positive Lyapunov exponents, some negative Lyapunov exponents,  and
no Lyapunov exponents in $[-\bar{\chi},\bar{\chi}]$.
\end{enumerate}
($\NUH^*_{\bar \chi}$ has some other properties, which are needed  in \cite{Ben-Ovadia-Codings}, but which will not be  used explicitly below, and therefore we omit them.)

\begin{lemma}
There are a constant $\kappa=\kappa(\bar \chi,f)$ and  a collection of linear operators
$
C_{\bar \chi}(x):\R^d\to T_x M
$    $(x\in \NUH^*_{\bar \chi})$ with the following properties:
\begin{enumerate}[(1)]
\item $\|C_\chi(x)\|\leq 1$.
\item $x\mapsto C_{\bar \chi}(x)$ is a measurable map.
\item $C_{\bar \chi}(x)$ brings the derivative cocycle into a hyperbolic block-diagonal form: There are
square matrices $D_s(x), D_u(x)$  of dimensions $\dim E^s(x),\dim E^u(x)$
s.t.
\begin{equation}\label{e.OP-reduction-1}
C_{\bar \chi}^{-1}(f(x)) Df_x C_{\bar \chi}(x)=\left(\begin{array}{cc}
D_s(x) & 0\\
0 & D_u(x)
\end{array}\right),
\end{equation}
and for all unit vectors $\xi^s\in E^s(x)$, $\xi^u\in E^u(x)$,
\begin{equation}\label{e.OP-reduction-2}
\kappa^{-1}\leq \|D_s(x)\xi^s\|\leq e^{-{\bar \chi}}\ ,\ e^{{\bar \chi}}\leq \|D_u(x)\xi^u\|\leq \kappa.
\end{equation}
\end{enumerate}
\end{lemma}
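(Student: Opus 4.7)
The plan is to construct $C_{\bar\chi}(x)$ via the classical Pesin tempered kernel (Lyapunov norm) construction, localized to the invariant set $\NUH^*_{\bar\chi}$ where the Oseledets decomposition is valid and the exponents avoid $[-\bar\chi,\bar\chi]$. For each $x\in\NUH^*_{\bar\chi}$ I would define a new inner product on $T_xM$ by setting, for $u,v\in E^s(x)$,
\begin{equation*}
\langle u,v\rangle^s_x := 2\sum_{n\geq 0} e^{2\bar\chi n}\,\langle Df^n u,Df^n v\rangle_{f^n(x)},
\end{equation*}
and symmetrically, for $u,v\in E^u(x)$,
\begin{equation*}
\langle u,v\rangle^u_x := 2\sum_{n\geq 0} e^{2\bar\chi n}\,\langle Df^{-n} u,Df^{-n} v\rangle_{f^{-n}(x)},
\end{equation*}
declaring $E^s(x)\perp E^u(x)$ in the resulting Lyapunov inner product $\langle\cdot,\cdot\rangle'_x$. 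Oseledets regularity combined with the hypothesis that no Lyapunov exponent lies in $[-\bar\chi,\bar\chi]$ makes both series converge at every $x\in\NUH^*_{\bar\chi}$. The operator $C_{\bar\chi}(x):\mathbb{R}^d\to T_xM$ is then the unique linear map that sends the standard orthonormal basis of $\mathbb{R}^d$ (split according to $\R^{\dim E^s(x)}\oplus\R^{\dim E^u(x)}$) to an orthonormal basis for $\langle\cdot,\cdot\rangle'_x$ respecting the stable/unstable decomposition.

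Property (3) is immediate from the definition of the Lyapunov norm. Since $C_{\bar\chi}(x)^{-1}$ is an isometry between $(T_xM,\langle\cdot,\cdot\rangle'_x)$ and $(\R^d,\langle\cdot,\cdot\rangle_{\text{std}})$ in which $E^s(x),E^u(x)$ become orthogonal coordinate subspaces, the conjugated cocycle in \eqref{e.OP-reduction-1} is automatically block-diagonal. For the upper bounds $\|D_s(x)\xi^s\|\leq e^{-\bar\chi}$ and $\|D_u(x)\xi^u\|^{-1}\leq e^{-\bar\chi}$, I would use the identity
\begin{equation*}
\langle Df^{\pm 1} u,Df^{\pm 1} u\rangle'_{f^{\pm 1}(x)} = e^{\mp 2\bar\chi}\bigl(\langle u,u\rangle'_x - 2\langle u,u\rangle_x\bigr),
\end{equation*}
a standard telescoping on the defining series, which gives $\|Df|_{E^s(x)}\|_{\mathrm{Lyap}}\leq e^{-\bar\chi}$ and similarly for $E^u$. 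The matching lower bounds involve only the uniform Riemannian norm of $Df^{\pm 1}$ on $M$: since $\|Df^{-1}|_{f(E^s(x))}\xi\|'\leq \|Df^{-1}\|_\infty\cdot \|\xi\|'$ and similarly for $Df|_{E^u}$, we obtain $\|D_s(x)\xi^s\|\geq\kappa^{-1}$, $\|D_u(x)\xi^u\|\leq\kappa$ with $\kappa:=\max(\|Df\|_\infty,\|Df^{-1}\|_\infty)$, a constant depending only on $(\bar\chi,f)$.

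For property (1), I would verify that the Lyapunov norm dominates the Riemannian norm: the $n=0$ term in each series already gives $\langle u,u\rangle'_x \geq 2\langle u,u\rangle_x$ on both subspaces, and since $C_{\bar\chi}(x)$ sends unit vectors in $\R^d$ to $\langle\cdot,\cdot\rangle'_x$-unit vectors, we get $\|C_{\bar\chi}(x)v\|_x\leq \tfrac{1}{\sqrt{2}}|v|\leq |v|$.

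The main obstacle will be the measurability statement (2). The Oseledets subspaces $x\mapsto E^{s/u}(x)$ are measurable on $\NUH^*_{\bar\chi}$ by standard Oseledets theory, and the series defining $\langle\cdot,\cdot\rangle'_x$ are pointwise limits of measurable functions, hence measurable. The delicate point is selecting an orthonormal basis measurably; this is handled by a measurable Gram--Schmidt applied to a fixed measurable frame of $E^s(x)$ and $E^u(x)$ (obtained, e.g., from orthonormalizing in the Riemannian metric a measurable basis given by Oseledets), which makes $x\mapsto C_{\bar\chi}(x)$ Borel measurable on $\NUH^*_{\bar\chi}$. All of this is routine once the framework of \cite{Ben-Ovadia-Codings} and \cite{Barreira-Pesin-Non-Uniform-Hyperbolicity-Book} is in place, and the lemma is in fact a direct restatement of the standard Pesin reduction theorem adapted to the set $\NUH^*_{\bar\chi}$ defined there.
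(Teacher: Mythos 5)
Your proposal reproduces the paper's construction exactly: the same block-wise Lyapunov inner product $\langle\cdot,\cdot\rangle'_x$ summing $e^{2\bar\chi m}$-weighted Riemannian norms over forward (resp.\ backward) orbits on $E^s$ (resp.\ $E^u$), the same orthogonalization of the two blocks, and the same choice of $C_{\bar\chi}(x)$ as an isometry from $(\R^d,\text{std})$ to $(T_xM,\langle\cdot,\cdot\rangle'_x)$ respecting the splitting; the paper's own proof is likewise only a sketch deferring to~\cite{Ben-Ovadia-Codings}. Two small slips worth flagging, neither fatal: the bound $\|C_{\bar\chi}(x)v\|\leq\tfrac1{\sqrt2}|v|$ is not justified since $E^s(x)$ and $E^u(x)$ need not be Riemannian-orthogonal --- by triangle inequality plus Cauchy--Schwarz one only gets $\leq|v|$, which is what (1) requires; and the inequality $\|Df^{-1}\xi\|'\leq\|Df^{-1}\|_\infty\|\xi\|'$ used for the lower bound does not follow directly (the Lyapunov norm is not uniformly equivalent to the Riemannian one), the correct route being the same telescoping identity you used for the upper bound, which gives $\|Df^{-1}v\|_x'^2=2\|Df^{-1}v\|_x^2+e^{2\bar\chi}\|v\|_{f(x)}'^2$ and hence a finite $\kappa$ of the form $\sqrt{\|Df^{-1}\|_\infty^2+e^{2\bar\chi}}$ rather than $\max(\|Df\|_\infty,\|Df^{-1}\|_\infty)$.
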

\noindent
$C_\chi(x)^{-1}:T_x M\to \R^d$ is called the {\em Oseledets-Pesin Reduction} (at $x$).

\begin{proof}[Sketch of the Proof]
There are many constructions \cite{Pesin-Izvestia-1976,Katok-Hasselblatt-Book,Barreira-Pesin-Non-Uniform-Hyperbolicity-Book}, the following is from \cite[\S 2.1.2]{Ben-Ovadia-Codings}.
Suppose $x\in \NUH^*_{\bar \chi}$, and let $\<\cdot,\cdot\>_x$ denote the Riemannian metric on $T_x M$.
Let $\pi_{t}:T_x M\to E^{t}(x)$ $(t=s,u)$ denote the projections associated with the splitting $T_x M=E^u\oplus E^s$, and define
\begin{align*}
&\<\xi,\eta\>'_x:=\<\pi_s \xi,\pi_s \eta\>_{x,s}'+\<\pi_u \xi,\pi_u \eta\>_{x,u}',\text{ where }\\
&\<\xi,\eta\>'_{x,s}:=2\sum_{m=0}^\infty \<Df_x^m \xi,Df_x^m \eta\> e^{2m{\bar \chi}},\text{ for }\xi,\eta\in E^s(x),\\
&\<\xi,\eta\>'_{x,u}:=2\sum_{m=0}^\infty \<Df_x^{-m} \xi,Df_x^{-m} \eta\> e^{2m{\bar \chi}},\text{ for }\xi,\eta\in E^u(x).
\end{align*}

Let    $C_{\bar \chi}(x): \R^d\to T_x M$ be  a linear map so that
$
C_{\bar \chi}(x)[\R^{s(x)}\x\{0\}]=E^s(x)$,   $C_{\bar \chi}(x)[\{0\}\x \R^{u(x)}]=E^u(x)$, and  which carries the Euclidean metric  to $\<\cdot,\cdot\>_x'$. Then
$$
\<\xi,\eta\>_x'=\<C_{\bar \chi}(x)^{-1}\xi,C_{\bar \chi}(x)^{-1}\eta\>_{\R^2}\text{ for all }\xi,\eta\in T_x M.
$$
There are many choices for $C_{\bar \chi}(x)$ (differing by orthogonal self-maps of $E^u(x)$ and $E^s(x)$). We fix one choice, which depends  measurably on $x$.
Standard calculations show that $C_\chi(x)$ satisfies the conclusion of the lemma, see \cite[\S 2.1.2]{Ben-Ovadia-Codings}.
\end{proof}

\subsubsection{The Norm of  ${C_{\bar \chi}(x)^{-1}}$}
We collect several inequalities which relate the largeness of $\|C_{\bar \chi}(x)^{-1}\|$ to the weakness of the hyperbolicity bounds at $x$.

Let $\|\cdot\|_x$ denote the  Riemannian norm on $T_x M$ and define, for $x\in\NUH_{\bar\chi}^\ast$,
\begin{align}
S_{\bar \chi}(x)&:=\max_{\xi\in E^s(x), \|\xi\|_x=1}\sqrt{2\sum_{m=0}^\infty \|Df_x^m \xi\|^2_{f^m(x)} e^{2m{\bar \chi}}}=\|C_{\bar \chi}(x)^{-1}|_{E^s(x)}\| \label{e.S}\\
U_{\bar \chi}(x)&:=\max_{\eta\in E^u(x), \|\eta\|_x=1}\sqrt{2\sum_{m=0}^\infty \|Df_x^{-m} \eta\|^2_{f^{-m}(x)} e^{2m{\bar \chi}}}=\|C_{\bar \chi}(x)^{-1}|_{E^u(x)}\|\label{e.U}
\\
\alpha(x)&:=\measuredangle(E^u(x),E^s(x)),\text{always taken to be inside $(-\tfrac{\pi}{2},\tfrac{\pi}{2}]$.}\notag\\
\varrho_{\bar\chi}
(x)&:=\max\{S_{\bar\chi}(x),U_{\bar\chi}(x), {1}/{|\sin\alpha(x)|}\}.\notag
\end{align}
Note that $\varrho_{\bar\chi}(x)$ is large when the  hyperbolicity bounds at $x$ are weak.

\begin{lemma}\label{l.C-and-Xi} If $x$ belongs to $\NUH^*_{\bar \chi}$, then
\begin{equation}\label{e.C-and-Xi}
\varrho_{\bar\chi}(x)\leq \|C_{\bar\chi}(x)^{-1}\|\leq \sqrt{2}\,\varrho_{\bar\chi}(x)^2.
\end{equation}
\end{lemma}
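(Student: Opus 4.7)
The plan is to establish the two inequalities separately, exploiting the fact that by construction $C_{\bar\chi}(x)$ carries the orthogonal decomposition $\mathbb R^d=\mathbb R^{s(x)}\oplus \mathbb R^{u(x)}$ to the (non-orthogonal) splitting $T_xM=E^s(x)\oplus E^u(x)$, and it carries the Euclidean inner product to the inner product $\langle\cdot,\cdot\rangle'_x$. In particular, $C_{\bar\chi}(x)^{-1}(E^s(x))$ and $C_{\bar\chi}(x)^{-1}(E^u(x))$ are \emph{orthogonal} subspaces of $\mathbb R^d$, so for every $v\in T_xM$ written as $v=v_s+v_u$ with $v_t\in E^t(x)$, we have the Pythagorean identity
\[
\|C_{\bar\chi}(x)^{-1}v\|^2 \;=\; \|C_{\bar\chi}(x)^{-1}|_{E^s(x)}(v_s)\|^2 + \|C_{\bar\chi}(x)^{-1}|_{E^u(x)}(v_u)\|^2.
\]
I will use this identity throughout. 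Note also that for any unit $w\in E^s(x)$ (resp. $E^u(x)$), $\|C_{\bar\chi}(x)^{-1}w\|^2=\langle w,w\rangle'_x\ge 2\|w\|^2_x$, by taking the $m=0$ term in \eqref{e.S} or \eqref{e.U}.

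For the lower bound $\|C_{\bar\chi}(x)^{-1}\|\ge\varrho_{\bar\chi}(x)$, two of the three contributions are immediate: restricting $C_{\bar\chi}(x)^{-1}$ to $E^s(x)$ or $E^u(x)$ shows $\|C_{\bar\chi}(x)^{-1}\|\ge S_{\bar\chi}(x)$ and $\|C_{\bar\chi}(x)^{-1}\|\ge U_{\bar\chi}(x)$. For the angle contribution, I will pick unit vectors $\xi\in E^s(x)$, $\eta\in E^u(x)$ realizing the angle $\alpha(x)$, and test $C_{\bar\chi}(x)^{-1}$ on $v=\xi-\eta$. The Pythagorean identity together with the inequality $\|C_{\bar\chi}(x)^{-1}w\|\ge \|w\|$ for $w\in E^s\cup E^u$ yields $\|C_{\bar\chi}(x)^{-1}v\|^2\ge 2$, while $\|v\|^2=2-2\cos\alpha(x)=4\sin^2(\alpha(x)/2)$. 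This gives $\|C_{\bar\chi}(x)^{-1}\|^2\ge 1/(2\sin^2(\alpha(x)/2))$, and since $|\alpha(x)|\le\pi/2$ implies $\cos^2(\alpha(x)/2)\ge 1/2$, we get $1/(2\sin^2(\alpha(x)/2))\ge 1/\sin^2\alpha(x)$, so $\|C_{\bar\chi}(x)^{-1}\|\ge 1/|\sin\alpha(x)|$.

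For the upper bound $\|C_{\bar\chi}(x)^{-1}\|\le\sqrt{2}\,\varrho_{\bar\chi}(x)^2$, I will use the Pythagorean identity and the restriction bounds to get
\[
\|C_{\bar\chi}(x)^{-1}v\|^2 \le S_{\bar\chi}(x)^2\|v_s\|^2+U_{\bar\chi}(x)^2\|v_u\|^2\le \varrho_{\bar\chi}(x)^2\bigl(\|v_s\|^2+\|v_u\|^2\bigr).
\]
Then I will bound $\|v_s\|$ and $\|v_u\|$ in terms of $\|v\|$ via the oblique projections $\pi_s,\pi_u$ associated with the splitting $T_xM=E^s\oplus E^u$. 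A standard computation gives $\|\pi_s\|=\|\pi_u\|=1/|\sin\alpha(x)|$, so $\|v_s\|^2+\|v_u\|^2\le 2\|v\|^2/\sin^2\alpha(x)\le 2\varrho_{\bar\chi}(x)^2\|v\|^2$. Combining, $\|C_{\bar\chi}(x)^{-1}v\|^2\le 2\varrho_{\bar\chi}(x)^4\|v\|^2$, which is the desired upper bound.

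The only subtle point is the trigonometric manipulation needed to convert the natural bound $1/|\sin(\alpha/2)|$ (coming from $\|\xi-\eta\|$) into the claimed $1/|\sin\alpha|$; this is where I use $|\alpha(x)|\le\pi/2$ in an essential way. Apart from this, the proof is essentially linear algebra applied to the basic relations \eqref{e.S}--\eqref{e.U} defining $S_{\bar\chi}$ and $U_{\bar\chi}$.
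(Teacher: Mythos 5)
Your proof is correct and follows essentially the same strategy as the paper's: exploit the Pythagorean identity coming from $C_{\bar\chi}(x)$ carrying orthogonal subspaces of $\R^d$ to $E^s(x),E^u(x)$, handle the two subspace contributions by the very definitions of $S_{\bar\chi},U_{\bar\chi}$, and then reduce the angle contribution to a bound on the two-dimensional span of a pair of unit vectors. The one genuine difference is in the upper bound: you invoke the norm identity for the oblique projections, $\|\pi_s\|=\|\pi_u\|=1/|\sin\alpha(x)|$, whereas the paper avoids appealing to this and instead derives the needed estimate directly from the law of cosines together with $2\|v_s\|\|v_u\|\le\|v_s\|^2+\|v_u\|^2$, which gives $\|v_s\|^2+\|v_u\|^2\le\|v\|^2/(1-\cos\alpha)$ and then $\le 2\|v\|^2/\sin^2\alpha$ by $1-\cos\alpha\ge\tfrac12\sin^2\alpha$. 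Both routes land on the same factor. Be aware, though, that the oblique-projection-norm identity you call ``a standard computation'' is really Ljance's theorem (equivalently Kato's identity $\|P\|=\|I-P\|$ together with the minimal-angle description of the gap); it is an elementary trigonometric fact when $\dim M=2$, but in higher dimensions it is a genuine theorem, and since this lemma is stated for arbitrary $d$, the paper's self-contained three-line algebra is arguably cleaner. Your lower bound is effectively identical to the paper's: same test vector $v=\xi-\eta$, same use of the $m=0$ term in \eqref{e.S}--\eqref{e.U}, and the trigonometric step is the same inequality written two ways ($\cos^2(\alpha/2)\ge\tfrac12$ versus $1-\cos\alpha\le\sin^2\alpha$, equivalent by the double-angle identity $1+\cos\alpha=2\cos^2(\alpha/2)$).
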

\begin{proof}
Decompose a non-zero $v\in T_x M$ into  $v=\xi+\eta$, with  $\xi\in E^s(x), \eta\in E^u(x)$.
\begin{align*}
&\frac{\|C_{\bar \chi}(x)^{-1}v\|^2_{\R^d}}{\|v\|^2}=\frac{\|C_{\bar \chi}(x)^{-1}\xi\|^2_{\R^d}+\|C_{\bar \chi}(x)^{-1}\eta\|^2_{\R^d}}{\|\xi+\eta\|^2} \leq \frac{S_{\bar \chi}(x)^2\|\xi\|^2+U_{\bar \chi}(x)^2\|\eta\|^2}{\|\xi+\eta\|^2}\\
&\leq \max\{S_{\bar \chi}(x)^2,U_{\bar \chi}(x)^2\}\frac{\|\xi\|^2+\|\eta^2\|}{\|\xi+\eta\|^2}\\
&\equiv \max\{S_{\bar \chi}(x)^2,U_{\bar \chi}(x)^2\}\bigg/\frac{\|\xi\|^2+\|\eta^2\|+2\|\xi\|\|\eta\|\cos\measuredangle(\xi,\eta)}{\|\xi\|^2+\|\eta\|^2}\\
&\leq \frac{\max\{S_{\bar \chi}(x)^2,U_{\bar \chi}(x)^2\}}{1-\cos\measuredangle(\xi,\eta)}
\leq \frac{\max\{S_{\bar \chi}(x)^2,U_{\bar \chi}(x)^2\}}{1-\cos\alpha(x)} 
\leq \frac{2\max\{S_{\bar \chi}(x)^2,U_{\bar \chi}(x)^2\}}{\sin^2\alpha(x)},
\end{align*}
since $\sin^2\alpha\leq 2(1-\cos(\alpha))$ on $[-\pi/2,\pi/2]$.

Since $|ab|\leq \max(|a|,|b|)^2$, we obtain 
\begin{align*}
&\frac{\|C_{\bar \chi}(x)^{-1}v\|^2_{\R^d}}{\|v\|^2}\leq 2\left(\max\{S_{\bar \chi}(x)^2,U_{\bar \chi}(x)^2,\frac{1}{\sin^2\alpha(x)}\}\right)^2.
\end{align*}
Taking square root, we obtain the right half of \eqref{e.C-and-Xi}.

Next, $\|C_{\bar \chi}(x)^{-1}\|\geq \max\{S_{\bar \chi}(x), U_{\bar \chi}(x)\}$.
To see that $\|C_{\bar \chi}(x)^{-1}\|\geq 1/|\sin\alpha(x)|$, pick unit vectors $\xi\in E^s(x), \eta\in E^u(x)$ such that $\measuredangle(\xi,\eta)=\alpha(x)$, and take $v:=\xi-\eta$.
\begin{align*}
&\frac{\|C_{\bar \chi}(x)^{-1}v\|^2_{\R^d}}{\|v\|^2}=\frac{\|C_{\bar \chi}(x)^{-1}\xi\|^2_{\R^d}+\|C_{\bar \chi}(x)^{-1}\eta\|^2_{\R^d}}{\|\xi-\eta\|^2}\\
&\geq \frac{2}{\|\xi-\eta\|^2},\quad \text{(take $m=0$ in \eqref{e.S} and \eqref{e.U}})\\
&=\frac{2}{2-2\cos\alpha}\geq \frac{1}{\sin^2\alpha},\quad \text{since $\sin^2\alpha\geq (1-\cos(\alpha))$ on $[{-\pi/2},\pi/2]$.}
\qedhere
\end{align*}
\end{proof}
In \S\ref{ss.construction-bornological}, we will need  the following bound for $\|C_{\bar{\chi}}(x)^{-1}\|$ on $(\chi,\eps)$-Pesin blocks:
\begin{lemma}\label{l.K-C}
There exist $A(f,\bar \chi),B(f, \bar \chi,{\bar \eps})\in (0,1)$,
such that the following holds,  for every $\varepsilon\in (0,\bar\eps]$ and $K\geq 1$. Suppose  $x\in\NUH^*_{\bar \chi}$  satisfies the Pesin bounds~\eqref{e.def-pesin}
with the numbers $\chi,\varepsilon,K$. Then 
$K\geq B\|C_{\bar \chi}(x)^{-1}\|^{A}.$
\end{lemma}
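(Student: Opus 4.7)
The plan is to combine Lemma~\ref{l.C-and-Xi} with separate polynomial-in-$K$ bounds on $S_{\bar\chi}(x)$, $U_{\bar\chi}(x)$, and $1/\sin\alpha(x)$. More efficiently, I will first record the sharper inequality
$$\|C_{\bar\chi}(x)^{-1}\|\;\leq\;(S_{\bar\chi}(x)+U_{\bar\chi}(x))/\sin\alpha(x),$$
obtained by decomposing a unit $v=v^s+v^u\in T_xM$, noting $\|v^s\|,\|v^u\|\leq 1/\sin\alpha(x)$, and using that $C_{\bar\chi}(x)^{-1}$ maps $E^s(x)$ and $E^u(x)$ to orthogonal subspaces of $\mathbb{R}^d$. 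Since $K\geq 1$ (take $k=0$ in \eqref{e.def-pesin}), it is enough to bound $S_{\bar\chi}(x), U_{\bar\chi}(x)$ linearly and $1/\sin\alpha(x)$ polynomially in $K$, with all constants and exponents controlled by $(f,\bar\chi,\bar\eps)$.

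The bounds on $S_{\bar\chi}(x)$ and $U_{\bar\chi}(x)$ are immediate from the Pesin inequality at $y=x$, $n=0$, which reads $\|Df^m|_{E^s(x)}\|\leq K e^{-m\chi}$. Inserting this into \eqref{e.S} and using $\chi-\bar\chi>\bar\eps$ gives
$$S_{\bar\chi}(x)^2\;\leq\; 2K^2\sum_{m\geq 0}e^{-2m(\chi-\bar\chi)}\;\leq\; \frac{2K^2}{1-e^{-2\bar\eps}},$$
so $S_{\bar\chi}(x)\leq C_1(\bar\eps)\,K$ with $C_1(\bar\eps):=\sqrt{2/(1-e^{-2\bar\eps})}$; the same argument bounds $U_{\bar\chi}(x)$.

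The main obstacle is the lower bound on $\sin\alpha(x)$, since Definition~\ref{d.pesin} carries no explicit angle condition. I plan to recover one by an iteration argument. Pick unit vectors $v^s\in E^s(x)$ and $v^u\in E^u(x)$, and a sign $\eta\in\{\pm 1\}$ such that $v:=v^s+\eta v^u$ attains the minimal norm $2\sin(\alpha(x)/2)$. The reverse triangle inequality, combined with $\|Df^n v^s\|\leq K e^{-\chi n}$ and $\|Df^n v^u\|\geq K^{-1}e^{(\chi-\bar\eps)n}$ (the latter from applying \eqref{e.def-pesin} at $f^n(x)$ with $k=n$ to the vector $Df^n v^u$ and inverting, using $\varepsilon\leq\bar\eps$), yields
$$\|Df^n v\|\;\geq\; K^{-1}e^{(\chi-\bar\eps)n}-K e^{-\chi n}.$$
Choosing $n_0:=\lceil\log(2K^2)/(2\chi-\bar\eps)\rceil$ makes the first term at least twice the second, hence $\|Df^{n_0} v\|\geq \tfrac12 K^{-1}e^{(\chi-\bar\eps)n_0}$. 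Confronting this with the trivial upper bound $\|Df^{n_0} v\|\leq \|Df\|_{\sup}^{n_0}\cdot 2\sin(\alpha(x)/2)$ and substituting $n_0$ yields
$$\sin(\alpha(x)/2)\;\geq\; c(\bar\eps)\, K^{-1-2\gamma},\qquad \gamma:=\frac{\log\|Df\|_{\sup}-\chi+\bar\eps}{2\chi-\bar\eps}.$$

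Finally, using $2\chi-\bar\eps>\bar\chi$, one obtains the uniform upper bound $\gamma\leq \gamma_0:=(\log\|Df\|_{\sup})/\bar\chi$, which depends only on $f$ and $\bar\chi$. Combining the estimates above with the inequality of the first paragraph, $\|C_{\bar\chi}(x)^{-1}\|\leq C(f,\bar\chi,\bar\eps)\, K^{2+2\gamma_0}$; together with $K\geq 1$, this proves the lemma with $A:=1/(2+2\gamma_0)\in(0,1)$ depending only on $(f,\bar\chi)$, and $B:=\min(C^{-A},1/2)\in(0,1)$. The edge case $\log\|Df\|_{\sup}\leq \chi-\bar\eps$ (where $\gamma\leq 0$ and no iteration is needed, since then $\sin\alpha(x)\geq c K^{-1}$ already follows directly) is routine; the genuine difficulty is the iteration-based polynomial angle estimate described above.
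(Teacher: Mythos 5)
Your proposal is correct and follows essentially the same route as the paper: bound $S_{\bar\chi}(x)$ and $U_{\bar\chi}(x)$ linearly in $K$ via the Pesin bounds, obtain a polynomial lower bound for $\sin\alpha(x)$ by iterating $\approx\log K$ times so that the unstable growth swamps the stable decay, and then pass to $\|C_{\bar\chi}(x)^{-1}\|$. The only deviations are cosmetic: you re-derive a slightly sharper version of the right-hand inequality in Lemma~\ref{l.C-and-Xi} (namely $\|C_{\bar\chi}(x)^{-1}\|\le\sqrt{S^2+U^2}/\sin\alpha$) rather than invoking the lemma directly, and you pick the iteration cutoff $n_0$ in terms of $2\chi-\bar\eps$ while the paper uses $\lceil\log(4K)/\bar\chi\rceil$; both yield a polynomial bound with exponent depending only on $(f,\bar\chi)$, as required (your $\gamma\le\gamma_0$ step handles this). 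One unnecessary remark: the edge case $\log\|Df\|_{\sup}\le\chi-\bar\eps$ cannot occur, since the Pesin bounds force $\chi\le\log\|Df\|_{\sup}$, but since you handled it anyway this does no harm.
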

\begin{proof}
Fix unit vectors  $\eta\in E^u(x)$ and $\xi\in E^s(x)$ such that $\measuredangle(\xi,\eta)=\alpha(x)$. Then $$
\|\eta-\xi\|=2|\sin\tfrac{\alpha}{2}|=\frac{|\sin\alpha|}{\cos(\alpha/2)}\leq \sqrt{2}|\sin\alpha|\ \ \text{ (recall that }|\alpha|\leq \frac{\pi}{2}).
$$
By~\eqref{e.def-pesin},  $\|Df^m_x\xi\|\leq K e^{-m\chi}\|\xi\|$ and
$\|\eta\|= \|Df_{f^m(x)}^{-m}Df_{x}^m\eta\|\leq K e^{{\eps} m}e^{-m\chi}\|Df_{x}^m\eta\|$ for  $m\geq 0$. So
$
\|Df_{x}^m\eta\|\geq K^{-1}e^{m({\chi}-{\eps})}\|\eta\|
$, whence
\begin{equation*}
 \|\eta-\xi\|\geq \frac{\|Df^m_x \eta-Df^m_x\xi\|}{\|Df^m_x\|}\geq \frac{K^{-1}e^{m({\chi}-{\eps})}-K e^{-m{\chi}}}{\Lip(f)^m}.
\end{equation*}
Let $m:=\lceil \log (4K)/{\bar \chi}\rceil$. By \eqref{e.epsilon-bar},
 $K^2e^{-m(2\chi-\varepsilon)}\leq K^2e^{-2m{\bar \chi}}\leq \tfrac 1 2$, whence
$K e^{-m{\chi}}\leq  \tfrac{1}{2}K^{-1}e^{m({\chi}-{\eps})}$, and therefore
\begin{equation}\label{e.m-xi}
\|\eta-\xi\|\geq \frac{K^{-1}e^{m({\chi}-{\eps})}}{2\Lip(f)^m}.
\end{equation}
We saw above that $|\sin\alpha|\geq (1/\sqrt{2})\|\eta-\xi\|$.
By the choice of $m$ and \eqref{e.m-xi},
\begin{align*}
|\sin\alpha|\geq \const K^{-\log\Lip(f)/{\bar \chi}},
\end{align*}
where the constant only depends on $\Lip(f)$ and ${\bar \chi}$.
Hence, for some $c_1,c_2$ which only depend on $\bar\chi$ and $\Lip(f)$,
$
K\geq c_1\left(\frac{1}{|\sin\alpha|}\right)^{c_2}.
$

Next, choose some arbitrary unit vector $\xi\in E^s(x)$.
By \eqref{e.pesin} and \eqref{e.epsilon-bar},
 $$
 \|Df_x^m\xi\|^2 e^{2m\bar\chi}\leq K^2e^{-2m\chi}e^{2m\bar\chi}\leq K^2 e^{-2m(\chi-\bar{\chi})}\leq K^2 e^{-2m\bar{\eps}}.
 $$ Summing over $m$, we deduce that
\begin{align*}
&S_{\bar \chi}(x)\leq \left(2\sum_{m=0}^\infty K^2 e^{-2m\bar \eps} \right)^{1/2}\leq \frac{K\sqrt{2}}{\sqrt{1-e^{-2{\bar \eps}}}}\leq K\sqrt{2} e^{\bar \eps}/{{\sqrt{\bar \eps}}}\leq \frac{K\sqrt{2}\,e^{\lambda_{\max}(f)}}{{\bar \eps}}.
\end{align*}
We now decrease   $c_1$ so that $c_1<\frac{{\sqrt{\bar \eps}}}{{\sqrt{2}e^{\lambda_{\max}(f)}}}$, and  obtain
$
K\geq c_1 S_{\bar \chi}(x) \geq c_1 S_{\bar \chi}(x)^{c_2}.
$
Similarly, $
K\geq c_1 U_{\bar \chi}(x)^{c_2}.
$
So $K\geq c_1\varrho_{\bar\xi}(x)^{c_2}$.
The lemma  follows from \eqref{e.C-and-Xi}.
\end{proof}

\subsubsection{Pesin Charts and the Parameter ${Q_{\bar \eps}(x)}$.}
Recall that $d=\dim M$.
In the previous section we constructed linear changes of coordinates $C_{\bar\chi}(x)$  which transform $Df_x: T_x M\to T_{f(x)}M$ into a linear hyperbolic map on $\R^d$, see \eqref{e.OP-reduction-1}. We will now construct non-linear changes of coordinates $\Psi_x$ which transform $f$ on a neighborhood of $x$ into a non-linear perturbation of a linear hyperbolic map on $\R^d$.

Let $\exp_x: T_x M\to M$ denote the exponential map defined by the Riemannian structure. The {\em Pesin chart} of {\em size} $p$ and {\em center} $x$  is the map
$$
\Psi^p_x:[-p,p]^d\to M\ , \ \Psi^p_x(v):=\exp_x[C_{\bar \chi}(x)v].
$$
We have $\|C_{\bar \chi}(x)\|\leq 1$. So
if $p$ is smaller than the injectivity radius of $M$, then $\Psi_x$ is a diffeomorphism onto its image, and  $\Psi_x$ defines a system of  coordinates on a neighborhood of $x$. Pesin showed that if  $p$ is small enough, then ``$f$ in coordinates"
$
\Psi_{f(x)}^{-1}\circ f \circ \Psi_x
$
is close to a uniformly hyperbolic linear map $\R^d\to\R^d$, see~\cite{Pesin-Izvestia-1976},\cite{Barreira-Pesin-Non-Uniform-Hyperbolicity-Book}. How small should $p$ be depends on $x$. We will work with an explicit (but non-canonical) threshold $Q_{\bar{\epsilon}}(x)$, which we proceed to define.

Let $I_{\bar \eps}:=\{e^{-\frac{1}{3}\ell{\bar \eps}}:\ell\in\N\}$. For any number $0<Q<1$, we set $$\lfloor Q\rfloor_{{\bar \eps}}:=\max\{0<\tau\leq Q:\tau=e^{-\frac{1}{3}\ell{\bar \eps}}, \ell\in\NN\}\in I_{\bar \eps}.$$
Recall that $\beta>0$ is the H\"older exponent of $Df$.
Given $x$ in $\NUH^*_{\bar \chi}$, let \begin{align}
Q_{\bar \eps}(x):=\left\lfloor\frac{{\bar \eps}^{90/\beta}}{3^{6/\beta}}
\|C_{\bar \chi}(x)^{-1}\|^{-\frac{48}{\beta}}\right\rfloor_{\bar \eps}.\label{e.Q}
\end{align}
The following can be shown as in \cite[Thm~5.3.1]{Barreira-Pesin-Non-Uniform-Hyperbolicity-Book}, \cite[Thm~ 2.13]{Ben-Ovadia-Codings}:
\begin{lemma}\label{l.f-in-coord}
Suppose $\bar{\eps}=\bar{\eps}(f,M,\beta,{\chi},\bar{\chi})$ is small enough.
For any point $x\in \NUH^*_{\bar \chi}$ and for all $0<p\leq Q_{\bar \eps}(x)$:
\begin{enumerate}[(1)]
\item $\Psi_x:[-p,p]^d\to M$ is a diffeomorphism onto its image.
\item $f_x:=\Psi_{f(x)}^{-1}\circ f\circ \Psi_x$ is well-defined and injective on $[-p,p]^d$.
\item $f_x(0)=0$, the derivative matrix of $f_x$ at zero $(Df_x)_0$ equals the block matrix on the right-hand side of \eqref{e.OP-reduction-1}, and
    $
    \|f_x-(Df_x)_0\|_{C^{1+\frac{\beta}{2}}}\leq {\bar \eps}\text{ on }[-p,p]^d
    $.
\item The symmetric statements hold for $f_x^{-1}:=\Psi_x^{-1}\circ f^{-1}\circ\Psi_{f(x)}$.
\end{enumerate}
\end{lemma}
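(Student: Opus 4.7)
The plan is to follow the standard derivation of Pesin chart estimates, as carried out in Chapter 5 of Barreira--Pesin and streamlined in Theorem 2.13 of \cite{Ben-Ovadia-Codings}. The strategy is threefold: (i) verify that $\Psi_x$ is a diffeomorphism onto its image using $\|C_{\bar\chi}(x)\|\leq 1$ and the basic properties of the exponential map on balls smaller than the injectivity radius; (ii) compute the derivative $(Df_x)_0$ directly from the chain rule combined with the Oseledets--Pesin reduction \eqref{e.OP-reduction-1}; and (iii) establish the $C^{1+\beta/2}$ estimate on $f_x - (Df_x)_0$ by Taylor expansion, using the H\"older continuity of $Df$ together with the explicit bound on $Q_{\bar\eps}(x)$.

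For (1), since $\|C_{\bar\chi}(x)\|\leq 1$, the set $C_{\bar\chi}(x)[-p,p]^d$ is contained in the ball of radius $p\sqrt d$ in $T_xM$. If $\bar\eps$ is chosen so small that $\bar\eps^{90/\beta}\cdot\sqrt d/3^{6/\beta}$ is below the injectivity radius of $M$, then $\Psi_x$ is automatically a diffeomorphism onto its image for every admissible $p$, and (1) follows. Item (2) is then a consequence of the Lipschitz estimates obtained in (3): since the linearization $(Df_x)_0$ has norm bounded by $\kappa$ in \eqref{e.OP-reduction-2} and the nonlinear correction has $C^1$-norm at most $\bar\eps$, the map $f_x$ is Lipschitz with a controlled bound, so it maps $[-p,p]^d$ into the domain of $\Psi_{f(x)}^{-1}$ and is a small perturbation of an invertible linear map, hence injective.

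The core of the argument is (3). Writing $f_x=C_{\bar\chi}(f(x))^{-1}\circ\exp_{f(x)}^{-1}\circ f\circ\exp_x\circ C_{\bar\chi}(x)$, the chain rule together with $D(\exp_x)_0=\id$ yields $(Df_x)_0 = C_{\bar\chi}(f(x))^{-1}\,Df_x\,C_{\bar\chi}(x)$, which is the block matrix $\operatorname{diag}(D_s(x),D_u(x))$ by \eqref{e.OP-reduction-1}. For the H\"older estimate on $Df_x-(Df_x)_0$ the error decomposes into three contributions: the H\"older variation of $Df$ on a Riemannian ball of radius $O(p)$, yielding a factor of order $p^\beta\cdot\|Df\|_{C^\beta}$; the distortion of $\exp_x$ and $\exp_{f(x)}^{-1}$ relative to their linearizations, of the same order with a constant depending on the curvature of $M$; and an amplification by $\|C_{\bar\chi}(f(x))^{-1}\|$ when pulling the derivative of $f$ back through the reduction at $f(x)$. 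The bound $Q_{\bar\eps}(x)^{\beta/2}\leq \bar\eps^{45}/3^3\cdot\|C_{\bar\chi}(x)^{-1}\|^{-24}$ built into \eqref{e.Q}, combined with the tempering inequality $\|C_{\bar\chi}(f(x))^{-1}\|\leq e^{\bar\eps/3}\|C_{\bar\chi}(x)^{-1}\|$ (a standard output of the construction of $C_{\bar\chi}$), is designed precisely so that all these amplification factors combine to produce a bound by $\bar\eps$, provided $\bar\eps$ is small enough in terms of $f,M,\beta,\chi,\bar\chi$.

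The main obstacle will be purely bookkeeping: checking that the particular exponents $90/\beta$ and $48/\beta$ in the definition of $Q_{\bar\eps}(x)$ are large enough to absorb every amplification factor produced by the chain rule, by the passage between Riemannian and Euclidean distances, and by the H\"older norm of $Df$. Item (4) follows by applying (1)--(3) to $f^{-1}$, noting that the Oseledets--Pesin reduction for $f^{-1}$ is obtained from that of $f$ by swapping the roles of $E^s$ and $E^u$, and that $Q_{\bar\eps}$ satisfies the compatibility bound $Q_{\bar\eps}(f(x))\geq e^{-\bar\eps}\,Q_{\bar\eps}(x)$, which guarantees that the analogous bounds on $f_x^{-1}$ hold on the same domain $[-p,p]^d$.
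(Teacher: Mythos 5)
Your outline follows the same route as the paper: the paper does not prove this lemma in detail but cites Barreira--Pesin and \cite{Ben-Ovadia-Codings}, and your sketch reproduces the skeleton of their argument --- chain rule for the linear part, then three error sources (H\"older variation of $Df$, curvature distortion of $\exp$, chain-rule amplification by $\|C_{\bar\chi}(f(x))^{-1}\|$) that the large negative powers of $\|C_{\bar\chi}(x)^{-1}\|$ in \eqref{e.Q} are designed to absorb.

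One input you quote as standard is not correct, however. The inequality $\|C_{\bar\chi}(f(x))^{-1}\|\leq e^{\bar\eps/3}\|C_{\bar\chi}(x)^{-1}\|$ is false for a generic $x\in\NUH^*_{\bar\chi}$; the true bound is $e^{-c_0}\|C_{\bar\chi}(x)^{-1}\|\leq\|C_{\bar\chi}(f(x))^{-1}\|\leq e^{c_0}\|C_{\bar\chi}(x)^{-1}\|$ with $c_0$ a constant depending on $f$ and $\bar\chi$ but \emph{not} on $\bar\eps$ (the same phenomenon as in \eqref{e.bound-tempered}). The $e^{\pm\bar\eps}$-type one-step bound is a feature of the $\bar\eps$-\emph{tempered envelope} (Lemma~\ref{l.Tempered-Tail}), not of $\|C_{\bar\chi}^{-1}\|$ itself; likewise the control $Q_{\bar\eps}(f(x))\geq e^{-\bar\eps}Q_{\bar\eps}(x)$ that you invoke in item (4) holds only along the discretized chains (see \S\ref{ss.discretization}(a)(iii)), not pointwise on $\NUH^*_{\bar\chi}$. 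Neither error breaks the argument: the exponents in \eqref{e.Q} together with the freedom to shrink $\bar\eps$ absorb the fixed factor $e^{c_0}$, and item (4) is naturally stated with $p\leq Q_{\bar\eps}(f(x))$ so the compatibility bound is not needed. But the distinction between a raw quantity and its tempered envelope recurs throughout the shadowing theory and is worth keeping sharp.
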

In particular, ``$f$ in coordinates," $f_x$,  is uniformly  close in $C^{1+\frac{\beta}{2}}$ to the hyperbolic linear map $(Df_x)_0$. By \eqref{e.OP-reduction-2}, $(Df_x)_0$ is  hyperbolic {\em uniformly} in $x$.

\medskip
\noindent
{\em Henceforth we will only consider Pesin charts $\Psi_x^p$ such that $0<p\leq Q_{\bar \eps}(x)$. }

\subsubsection{Overlap Conditions}\label{ss.overlap}
If $\Psi_x$ and $\Psi_{f(x)}$ satisfy the conclusion of Lemma \ref{l.f-in-coord}, then so do all sufficiently small perturbations of $\Psi_x$ and $\Psi_{f(x)}$, albeit on a smaller neighborhood of $0$, and in a slightly weaker topology.

This statement can be made quantitative, and uniform. Building on \cite{Sarig-JAMS}, \cite{Ben-Ovadia-Codings} specifies what it means for $\Psi_y$ to {\em $\bar{\eps}$-overlap} $\Psi_x$, and then shows that  {\em if $\Psi_y^{q}$ ${\bar \eps}$-overlaps $\Psi_{f(x)}^p$, then
$\Psi_y^{-1}\circ f\circ \Psi_x$ is ``uniformly close" to the uniformly hyperbolic linear map $(Df_x)_0$ on $[-p,p]^2$, in the $C^{1+\frac{\beta}{3}}$ topology}.  We omit the precise statements, which can be found in \cite[Def.~2.18, Prop.~2.21]{Ben-Ovadia-Codings}.

We denote the ${\bar \eps}$-overlap condition by
$
\Psi_x^p\overset{{\bar \eps}}{\approx}\Psi_y^{q}.
$

\subsection{Chains and Shadowing}\label{ss.chains-and-shadowing}
The construction of hyperbolic codings uses the shadowing theory for general non-uniformly hyperbolic maps in \cite{Sarig-JAMS}. In this theory,  pseudo-orbits
are replaced by objects called ``$\bar{\eps}$-chains," and defined below.

\subsubsection{Double Charts.}
A {\em double chart} is a pair $(\Psi_x^{p^u}, \Psi_x^{p^s})$ of two concentric Pesin charts
where $x\in \NUH^*_{\bar \chi}$
and $p^s,p^u$ belong to $(0,Q_{\bar \eps}(x)]\cap I_{\bar \eps}$.
We will denote double charts  by the formal symbol $\Psi_x^{p^u,p^s}$.

\subsubsection{Chains.}\label{ss.chains}
We denote $p\wedge p':=\min(p,p')$.
Given  double charts $\Psi_x^{p^u,p^s}$ and $\Psi_y^{q^u,q^s}$, we write $\Psi_x^{p^u,p^s}\to \Psi_y^{q^u,q^s}$, if the following conditions hold:
\begin{enumerate}[\quad $\circ$]
\item
$
q^u=\min\{e^{\bar \eps} p^u,Q_{\bar \eps}(y)\}$ and $p^s=\min\{e^{\bar \eps} q^s, Q_{\bar \eps}(x)\},
$ see \eqref{e.Q},
\item $\Psi_{x}^{p^u\wedge p^s}\overset{{\bar \eps}}{\approx}\Psi_{f^{-1}(y)}^{p^u\wedge p^s}$ and $\Psi_{y}^{q^u\wedge q^s}\overset{{\bar \eps}}{\approx}\Psi_{f(x)}^{q^u\wedge q^s}$, see \S\ref{ss.overlap},
\item $\dim E^s(x)=\dim E^s(y)$ and $\dim E^u(x)=\dim E^u(y)$.
\end{enumerate}
The following consequence will be useful to us later \cite[Lem~4.4]{Sarig-JAMS}:
\begin{equation}\label{tralala}
\Psi_x^{p^u,p^s}\to\Psi_y^{q^u,q^s} \Rightarrow \frac{q^u\wedge q^s}{p^u\wedge p^s}\in [e^{-{\bar \eps}},e^{{\bar \eps}}].
\end{equation}

\begin{definition}
An {\em ${\bar \eps}$-chain} is a sequence of double charts $\un \Psi=(\Psi_{x_n}^{p^u_n, p^s_n})_{n\in\Z}$ such that  $\Psi_{x_n}^{p^u_n, p^s_n}\to \Psi_{x_{n+1}}^{p^u_{n+1}, p^s_{n+1}}$ for all $n$.
\end{definition}

We use ${\bar \eps}$-chains as substitutes to pseudo-orbits.

\subsubsection{The Shadowing Theorem}
We say that an $\bar{\eps}$-chain $(\Psi_{x_i}^{p^u_i,p^s_i})_{i\in\Z}$  {\em shadows} the orbit of $z\in M$, if for all $i\in\Z$,
$
f^i(z)\in \Psi_{x_i}\bigl([-\eta_i,\eta_i]^2\bigr),\text{ where }\eta_i:=p^u_i\wedge p^s_i.
$

\begin{lemma}\label{l.shadowing}
Suppose ${\bar \eps}={\bar \eps}(f,\beta,M,{\chi},\bar \chi)$ is small enough.
Then every $\bar{\eps}$-chain shadows the orbit of a unique point $z:=\mathfrak{sh}(\un{\Psi})$,
and  $\fs:\{\text{${\bar \eps}$-chains}\}\to M$ satisfies:
\begin{enumerate}[(i)]
\item {\em Equivariance:} $\fs\circ\sigma=f\circ\fs$ ($\sigma$ denotes the left shift).
\item  {\em H\"older Continuity:} There are $\theta\in (0,1)$ and $C>0$ such that if $\un{\Psi},\un{\Psi}'$ are two ${\bar \eps}$-chains with $\Psi_n=\Psi'_n$ for all $|n|\leq m$, then $d(\fs(\un{\Psi}),\fs(\un{\Psi'}))<C\theta^m$.
\item There exists an invariant measurable subset $\NUH^\#_{\bar \chi}\subset \NUH^*_{\bar \chi}$, with full measure for every
$\bar{\chi}$-hyperbolic invariant measure, such that every $x\in \NUH^\#_{\bar \chi}$ is shadowed by some $\bar{\eps}$-chain.
\item[(iv)] {\em Hyperbolicity:} Suppose  $\un \Psi$ is an $\bar\eps$-chain, and $z=\mathfrak{sh}(\un{\Psi})$. Then we can decompose $T_z M= E^s(\un \Psi)\oplus E^u(\un \Psi)$ s.t.
for every non zero  $v^s\in E^s(\un \Psi)$, $v^u\in E^u(\un \Psi)$, $$
\limsup_{n\to\pm\infty}\tfrac{1}{n}\log\|Df^n_z.v^s\|\leq-\tfrac{2}{3}\bar \chi \text{ and }
\limsup_{n\to\pm\infty}\tfrac{1}{n}\log\|Df^{-n}_z.v^u\|\leq -\tfrac{2}{3}\bar \chi.
$$
\item[(v)] The  functions $\un{\Psi}\mapsto E^s(\un{\Psi}), E^u(\un{\Psi})$ in (iv) are H\"older continuous with respect to the metric \eqref{e.metric-on-sigma} on the space of sequences, and the Grassmannian metric.
\end{enumerate}
\end{lemma}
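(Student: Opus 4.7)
The plan is to treat the dynamics of $f$ along an $\bar\eps$-chain $\un\Psi=(\Psi_{x_n}^{p^u_n,p^s_n})_{n\in\ZZ}$ as a small $C^{1+\beta/3}$-perturbation of the block-diagonal uniformly hyperbolic linear cocycle $(Df_{x_n})_0$ from \eqref{e.OP-reduction-1}, and to build $\fs(\un\Psi)$ by a classical graph-transform / Perron-type fixed point argument, as in \cite[Prop.~4.3--4.5]{Sarig-JAMS} and \cite[\S3]{Ben-Ovadia-Codings}. First I would set $f_n:=\Psi_{x_{n+1}}^{-1}\circ f\circ\Psi_{x_n}$, and exploit the overlap conditions together with \S\ref{ss.overlap} to obtain that $f_n$ is defined on $[-\eta_n,\eta_n]^d$ (with $\eta_n:=p^u_n\wedge p^s_n$) and is within $\bar\eps$ in $C^{1+\beta/3}$ of the hyperbolic linear map whose norms satisfy \eqref{e.OP-reduction-2}. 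The bound \eqref{tralala} ensures that the scales $\eta_n$ vary sub-exponentially, so the usual admissible stable/unstable manifold machinery applies along the chain.

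For parts (i)--(iii), I would construct an admissible stable manifold $V^s_n$ and an admissible unstable manifold $V^u_n$ passing through $0$ in each chart $\Psi_{x_n}$ as graphs over the stable (resp.\ unstable) coordinate axis, by iterating the graph transform forward (resp.\ backward) from the initial choice of the coordinate axis itself: the uniform hyperbolicity of the linearization combined with the smallness of the nonlinear error forces geometric convergence of the iterates in a suitable Lipschitz metric. The intersection $V^s_0\cap V^u_0$ consists of a single point, whose image $\fs(\un\Psi):=\Psi_{x_0}(V^s_0\cap V^u_0)$ is the unique orbit shadowing $\un\Psi$: uniqueness follows because any second shadowing point would give a nontrivial tangent vector whose forward (resp.\ backward) iterates remain bounded while being forced to grow by the hyperbolic splitting, a contradiction. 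Equivariance $\fs\circ\sigma=f\circ\fs$ is immediate from the construction, and H\"older continuity of $\fs$ in the symbolic metric \eqref{e.metric-on-sigma} follows because agreement of two chains on $|n|\le m$ forces $V^s_0$ and $V^u_0$ to be graphs that are exponentially (in $m$) close to each other, with rate governed by the hyperbolicity constant $\bar\chi-O(\bar\eps)$.

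For part (iv) I would  use Pesin theory to construct a canonical $\bar\eps$-chain along any recurrent non-uniformly hyperbolic orbit. Starting from Lusin-regularity of $x\mapsto C_{\bar\chi}(x)$ and $x\mapsto Q_{\bar\eps}(x)$ on $\NUH^*_{\bar\chi}$, together with the tempered-envelope estimates of Lemma~\ref{l.Tempered-Tail}, one selects for $\mu$-a.e. $x$ a sequence of parameters $p^u_n,p^s_n\in I_{\bar\eps}\cap(0,Q_{\bar\eps}(f^n(x))]$ that decay only sub-exponentially and satisfy the recursive relations defining $\to$. The set $\NUH^\#_{\bar\chi}$ is then defined as the set of points admitting such a choice, and a Poincar\'e-recurrence-type argument shows that its complement has zero measure for every $\bar\chi$-hyperbolic invariant measure. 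This is exactly the content of \cite[\S3--4]{Sarig-JAMS} (in dimension two) and \cite[Thm.~3.13]{Ben-Ovadia-Codings} in general dimension; I would simply cite these.

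For parts (iv)--(v), I would define $E^s(\un\Psi),E^u(\un\Psi)$ as the tangent spaces at $z=\fs(\un\Psi)$ of the admissible manifolds $\Psi_{x_0}(V^s_0)$ and $\Psi_{x_0}(V^u_0)$; the hyperbolicity bound $-\tfrac23\bar\chi$ follows from \eqref{e.OP-reduction-2} together with the $C^1$-smallness of the nonlinear error, which costs only an $O(\bar\eps)$-factor in the exponent. The H\"older continuity of $\un\Psi\mapsto E^{s/u}(\un\Psi)$ comes from the standard observation that the graphs $V^{s/u}_0$ depend Lipschitz-continuously on the chain in the $C^1$-topology of graphs; agreement of two chains on $|n|\le m$ yields $C^1$-closeness of order $\theta^m$ of the associated admissible manifolds, and in particular of their tangent spaces at the shadowing point. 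The main obstacle, and the step requiring the most care, is controlling this tangent-space dependence uniformly in the chain: one must ensure that the graph-transform convergence rates do not degenerate as the Pesin parameters $Q_{\bar\eps}(x_n)$ become small, and here the fact that $Q_{\bar\eps}$ appears only polynomially through $\|C_{\bar\chi}(x)^{-1}\|$ in \eqref{e.Q}, combined with the sub-exponential variation \eqref{tralala}, is what makes the argument go through. Throughout I would take $\bar\eps=\bar\eps(f,\beta,M,\chi,\bar\chi)$ small enough that each of the above perturbative estimates holds with a fixed loss at most $\bar\chi/3$ in the hyperbolic rate, which is precisely the threshold needed to reach $-\tfrac23\bar\chi$ in (iv).
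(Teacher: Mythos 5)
The paper's own proof of this lemma is a one-line citation to \cite[Thm.~3.13, Prop.~2.30, 3.5, 3.12, 4.4(2)]{Ben-Ovadia-Codings} (and \cite{Sarig-JAMS} for the 2D case). Your proposal essentially unwinds what those references prove, following the same graph-transform/Perron-type strategy, the same definition of $E^{s/u}(\un\Psi)$ as tangents to admissible manifolds, and the same role for the overlap conditions and tempered scales. So the approach is the same; you are expanding the citation into a sketch of the underlying argument, which is a reasonable thing to do.

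One bookkeeping slip: your third paragraph (``For part (iv) I would use Pesin theory to construct a canonical $\bar\eps$-chain along any recurrent non-uniformly hyperbolic orbit\dots'') is actually the proof of part (iii), not (iv) --- it constructs $\NUH^\#_{\bar\chi}$ and shows it has full measure for $\bar\chi$-hyperbolic measures. Similarly, your second paragraph announces ``parts (i)--(iii)'' but only addresses (i), (ii) and uniqueness of the shadowing point. These are pure labeling errors and do not affect the mathematics, but you should fix them so the reader does not think you have two competing proofs of (iv) and no proof of (iii). Apart from this, the numerology ($-\tfrac23\bar\chi$ achieved by budgeting a loss of at most $\bar\chi/3$ against the nominal rate $\bar\chi$ of \eqref{e.OP-reduction-2}) is exactly what the cited constructions deliver, and the Hölder continuity of $\un\Psi\mapsto E^{s/u}(\un\Psi)$ via $C^1$-closeness of admissible manifolds when the chains agree on a long block is also the standard route.
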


\begin{proof}
The lemma follows from Thm~3.13 and its proof in \cite{Ben-Ovadia-Codings}, together with
\cite[Prop.~2.30, Prop.~3.5, Prop.~3.12, and Prop. 4.4(2)]{Ben-Ovadia-Codings}. (The 2D case is in \cite{Sarig-JAMS}.)
\end{proof}

\begin{remark}
By Lemma~\ref{l.splitting}, if  $z$ belongs to a Pesin block with parameters $\bar\chi, \bar \varepsilon$,
the decomposition in (iv) only depends on $z$ and not on the sequence $\un \Psi$ that shadows $z$,
and it coincides with the splitting of Definition~\ref{d.pesin}.
\end{remark}

\subsubsection{Discretization.}\label{ss.discretization}
The set of  $\bar{\eps}$-chains equals the set of paths on  a directed graph,
whose vertices are  double charts, and whose edges are defined in \S\ref{ss.chains}. This graph is uncountable.

To obtain a countable graph, we  replace the uncountable set of all double charts by a  countable subset  $\mathfs V$, with the following properties:
\begin{enumerate}[\quad (a)]
\item {\em Sufficiency\/:}
For the  measurable set  $\NUH^\#_{\bar \chi}$ in Lemma \ref{l.shadowing}(iii) (which has full measure for every
${\chi}$-hyperbolic ergodic measure), we have the following:

\smallskip
\noindent
Every $x\in \NUH^\#_{\bar \chi}$ is shadowed by some $\bar{\eps}$-chain  $\un{\Psi}=(\Psi_{x_n}^{p^u_n,p^s_n})_{n\in\Z}$ such that
\begin{enumerate}[(i)]
\item $\Psi_{x_n}^{p^u_n,p^s_n}\in\mathfs V$ for all $n$;
\item $(\Psi_{x_n}^{p^u_n,p^s_n})_{n\geq 0}$ and
$(\Psi_{x_n}^{p^u_n,p^s_n})_{n\leq 0}$ {both} contain constant subsequences;
\item $\forall n\in \ZZ$, $Q_{\bar \eps}(f^n(x))/Q_{\bar \eps}(x_n)\in [e^{-{\bar \eps}/3},e^{{\bar \eps}/3}]$.
\end{enumerate}

\smallskip
\item {\em Discreteness:\/} For every $t>0$ there are only finitely many $\Psi_x^{p^u,p^s}\in\mathfs V$ such that $p^u\wedge p^s>t$.
\end{enumerate}

\noindent
The construction of $\mathfs V$ uses a coarse-graining procedure, which is described in \cite[\S 2.2.3, \S 2.3.1, Thm~3.13]{Ben-Ovadia-Codings} and  \cite[\S 3.3, Prop.~4.5, Theorem 4.16]{Sarig-JAMS}.

Let $\mathfs H$ denote the graph whose set of vertices is $\mathfs V$, and whose edges are defined in  \S\ref{ss.chains}. This is a countable graph, and $\Sigma(\mathfs H)$ equals the set of $\bar{\eps}$-chains in $\mathfs V^\Z$.

The sufficiency of $\mathfs V$ implies that $\fs(\Sigma^\#(\mathfs H))$ has full measure for all ${\chi}$-hyperbolic invariant measures. The discreteness of $\mathfs V$ and \eqref{tralala} imply that $\mathfs H$ is locally finite.

\subsubsection{The Shadowing Inverse Problem} Many different ${\bar \eps}$-chains can shadow the same orbit, but all the  chains in $\Sigma^\#(\mathfs H)$ which project to a given point $x$ share some common features:
\begin{lemma}\label{l.inverse} If $(\Psi_{x_n}^{p^u_n,p^s_n})_{n\in\Z},(\Psi_{y_n}^{q^u_n,q^s_n})_{n\in\Z} \in\Sigma^\#(\mathfs H)$
 shadow the same point, then for all $n\in\Z$,
\begin{enumerate}[(i)]
\item $p^u_n/q^u_n,\; p^s_n/q^s_n\in [e^{-\sqrt[3]{{\bar \eps}}},e^{\sqrt[3]{{\bar \eps}}}]$,
\item There are linear maps $\Xi_n: T_{x_n}M\to T_{y_n} M$ with $\|\Xi_n\|, \|\Xi_n^{-1}\|\leq e^{{\bar \eps}}$ such that
$$
\frac{\|C_{\bar \chi}(x_n)^{-1}\xi\|}{\|C_{\bar \chi}(y_n)^{-1}\Xi_n\xi\|}\in [e^{-4\sqrt{{\bar \eps}}},e^{4\sqrt{{\bar \eps}}}]
\quad \text{ for all non-zero $\xi\in T_{x_n} M$.}
$$
\item $|\log[Q_{\bar \eps}(x_n)/Q_{\bar \eps}(y_n)]|\leq \tfrac{200\sqrt {\bar \eps}}{\beta}$.
\end{enumerate}
\end{lemma}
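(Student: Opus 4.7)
The plan is to exploit that the common shadowed point $x$ rigidly ties the geometries of the two chains at each index together, reducing the three estimates to bookkeeping on the edge recursion and on the defining formula \eqref{e.Q} of $Q_{\bar\eps}$. It suffices to prove the statement at $n=0$: since $\fs\circ\sigma=f\circ\fs$ and $\Sigma^\#(\mathfs H)$ is shift-invariant, the shifted pair $(\sigma^n\un\Psi,\sigma^n\un\Psi')$ again shadows $f^n(x)$ and satisfies the same hypotheses, so the estimate at $0$ promotes to every $n$. The centerpiece of the argument is item (ii); items (iii) and (i) will cascade from it.

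For (ii), I would set $\Xi_0 := C_{\bar\chi}(y_0)\circ L\circ C_{\bar\chi}(x_0)^{-1}$, where $L:\R^d\to\R^d$ is the block-diagonal orthogonal map matching the stable and unstable coordinate blocks across $x_0$ and $y_0$. The $\bar\eps$-overlap condition of \S\ref{ss.overlap} at the two charts $\Psi_{x_0}^{p^u_0\wedge p^s_0}$ and $\Psi_{y_0}^{q^u_0\wedge q^s_0}$, whose centers both sit within $Q_{\bar\eps}$-distance of the common point $x$ by the shadowing bound together with $\|C_{\bar\chi}\|\le 1$, directly forces $\|\Xi_0\|,\|\Xi_0^{-1}\|\le e^{\bar\eps}$. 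For the distortion bound on $\|C_{\bar\chi}(x_0)^{-1}\xi\|/\|C_{\bar\chi}(y_0)^{-1}\Xi_0\xi\|$, I would compare the defining sums \eqref{e.S}--\eqref{e.U} for $S_{\bar\chi}$ and $U_{\bar\chi}$ at $x_0$ and $y_0$ term by term: the $m$-th summand involves the derivative cocycle along orbits that, by overlap and shadowing, remain inside a common Pesin chart of $f^m(x)$, and Lemma \ref{l.f-in-coord} together with the hyperbolicity of Lemma \ref{l.shadowing}(iv) makes each summand agree up to a multiplicative error $1+O(\sqrt{\bar\eps})$; the geometric damping $e^{-2m(\chi-\bar\chi)}$ coming from \eqref{e.epsilon-bar} prevents accumulation and yields an overall factor in $[e^{-4\sqrt{\bar\eps}},e^{4\sqrt{\bar\eps}}]$.

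Item (iii) then drops out of formula \eqref{e.Q}: the exponent $-48/\beta$ amplifies the $e^{\pm 4\sqrt{\bar\eps}}$ distortion on $\|C_{\bar\chi}^{-1}\|$ into $e^{\pm 192\sqrt{\bar\eps}/\beta}$, with an extra $e^{\pm\bar\eps}$ from the rounding $\lfloor\cdot\rfloor_{\bar\eps}$, still comfortably within $e^{\pm 200\sqrt{\bar\eps}/\beta}$. For item (i), I would unwind the edge rule $q^u_n=\min\{e^{\bar\eps}q^u_{n-1},Q_{\bar\eps}(y_n)\}$ backward in time: since $\un\Psi,\un\Psi'\in\Sigma^\#(\mathfs H)$ contain constant subsequences in negative times, one obtains the closed forms $p^u_0=\min_{k\le 0}e^{-\bar\eps k}Q_{\bar\eps}(x_k)$ and $q^u_0=\min_{k\le 0}e^{-\bar\eps k}Q_{\bar\eps}(y_k)$ as attained minima over a finite range. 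Applying (iii) at every $k\le 0$ gives $p^u_0/q^u_0\in[e^{-200\sqrt{\bar\eps}/\beta},e^{200\sqrt{\bar\eps}/\beta}]\subset[e^{-\sqrt[3]{\bar\eps}},e^{\sqrt[3]{\bar\eps}}]$ once $\bar\eps$ is small enough compared to $\beta$; the $p^s/q^s$ ratio is handled symmetrically by running the recursion forward. The principal obstacle will be making the summand-by-summand comparison of step (ii) uniform: a priori, the two reductions see different orbit segments and naive estimates would drift with the summation index. The resolution is to route the comparison through the shadowed orbit of $x$ as an intermediate reference, so that the drift is controlled by the hyperbolic decay rates of Lemma \ref{l.shadowing}(iv) rather than by cumulative chart-to-chart discrepancies between the two chains.
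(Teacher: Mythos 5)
Your architecture is sound: shift-equivariance reduces to $n=0$; (iii) follows from (ii) by feeding the bound on $\|C_{\bar\chi}(\cdot)^{-1}\|$ into formula \eqref{e.Q} (this is exactly the paper's route); and your derivation of (i) by unrolling the edge recursion $p^u_n=\min\{e^{\bar\eps}p^u_{n-1},Q_{\bar\eps}(x_n)\}$ into the closed form $p^u_0=\min_{k\le 0}e^{-k\bar\eps}Q_{\bar\eps}(x_k)$ and then applying (iii) termwise is a clean alternative to the paper's direct citation of \cite[Prop.~4.8]{Ben-Ovadia-Codings} (it works once $200\sqrt{\bar\eps}/\beta\le\sqrt[3]{\bar\eps}$, i.e.\ $\bar\eps\lesssim(\beta/200)^6$, which is within the standing smallness assumption on $\bar\eps$).

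The gap is in (ii). First a logical issue: the $\bar\eps$-overlap condition of \S\ref{ss.overlap} is stipulated only between \emph{consecutive} charts \emph{within a single chain}, as part of the edge relation $\Psi^{p^u,p^s}_x\to\Psi^{q^u,q^s}_y$. There is no hypothesis at all that $\Psi_{x_0}$ and $\Psi_{y_0}$ --- the zeroth charts of two \emph{different} chains --- overlap. Shadowing the same point $x$ only tells you both centers $x_0,y_0$ are within $Q_{\bar\eps}$-distance of $x$; the overlap condition additionally demands quantitative closeness of the \emph{linear reductions} $C_{\bar\chi}(x_0)$ and $C_{\bar\chi}(y_0)$, which is precisely the conclusion of (ii). So the chain ``shadowing $\Rightarrow$ overlap $\Rightarrow$ $\|\Xi_0\|,\|\Xi_0^{-1}\|\le e^{\bar\eps}$'' is circular. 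Second, note that with your choice $\Xi_0=C_{\bar\chi}(y_0)\circ L\circ C_{\bar\chi}(x_0)^{-1}$ and $L$ orthogonal, the displayed ratio in (ii) equals $1$ identically; all content is pushed into the norm bound on $\Xi_0$, so your ``term-by-term comparison of \eqref{e.S}--\eqref{e.U}'' is trying to prove exactly what the overlap step already assumed.

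What is actually needed --- and this is the content of \cite[Lem.~4.12]{Ben-Ovadia-Codings} and of Part~2 of \cite{Sarig-JAMS} --- is a direct comparison of the Oseledets-Pesin reductions at $x_0$ and $y_0$ in two steps: (a) show $E^s(x_0)$ is close to $E^s(y_0)$ (and likewise for $E^u$), using graph-transform/admissible-manifold estimates that exploit the fact that \emph{both} chains shadow the full orbit of $x$, not just the time-$0$ point; only then (b) bound $S_{\bar\chi}(x_0)/S_{\bar\chi}(y_0)$ and $U_{\bar\chi}(x_0)/U_{\bar\chi}(y_0)$ by comparing the sums \eqref{e.S}--\eqref{e.U} via the shadowed orbit as a common reference, with the geometric decay from \eqref{e.epsilon-bar} controlling the cumulative drift. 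Your sketch addresses (b) but omits (a). Without (a), the two sums act on subspaces whose relative position is uncontrolled, so the termwise comparison has nowhere to start. In short: the overall reduction is correct and the derivations of (i) and (iii) from (ii) go through, but the proof of (ii) as written is circular, and the missing content --- step (a) above --- is the real substance of the cited lemmas.
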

\noindent
For the proof of parts (i) and (ii), see  \cite[Prop.~4.8 and  Lemma~4.12]{Ben-Ovadia-Codings}. Part (iii) follows from (ii) and  \eqref{e.Q}. (For the simpler 2D case, see \cite[Part 2]{Sarig-JAMS}.)

\subsection{Hyperbolic Codings of Diffeomorphisms}\label{s.G-hat}
$\fs:\Sigma(\mathfs H)\to M$ is H\"older continuous, and it satisfies properties \eqref{i.def1} and \eqref{i.def3} in Def.~\ref{d.Hyperbolic-Coding}. But it  is not a hyperbolic coding, because  $\fs:\Sigma^\#(\mathfs G)\to M$ is not necessarily finite-to-one.
To get a finite-to-one coding, we project the natural Markov partition of
$\Sigma^\#(\mathfs H)$ to $M$, and refine it to a Markov partition for $f$. Specifically, let
\begin{equation}\label{e.Z}
\mathfs Z:=\{Z(v):v\in \mathfs V\}\ \ ,\ \ Z(v):=\{\fs(\un{\Psi}): \un{\Psi}\in\Sigma^\#(\mathfs G)\ ,  \Psi_0=v\}.
\end{equation}
Since $\mathfs V$ is sufficient, $\mathfs Z$ covers $\NUH_{\bar\chi}^\#$, a set of full measure for all ergodic ${\chi}$-hyperbolic invariant measures. The elements of $\mathfs Z$ may overlap, but by Lemma \ref{l.inverse}, the overlaps have the following local finiteness property \cite[\S 5.1.1]{Ben-Ovadia-Codings}:
\begin{lemma}
$\{Z'\in\mathfs Z:Z'\cap Z\neq \emptyset\}$ is finite for every $Z\in\mathfs Z$.
\end{lemma}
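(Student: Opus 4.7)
The plan is to prove this directly by combining Lemma~\ref{l.inverse}(i) with the discreteness property (b) of $\mathfs V$ from \S\ref{ss.discretization}. No new ideas seem required; the lemma is essentially just a clean corollary of the inverse shadowing control already established.

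First I would unpack the definitions. Fix $Z = Z(v) \in \mathfs Z$ with $v = \Psi_x^{p^u,p^s}$, and set $t_0 := p^u \wedge p^s > 0$. Suppose $Z' = Z(v') \in \mathfs Z$ with $v' = \Psi_y^{q^u,q^s}$ satisfies $Z \cap Z' \neq \emptyset$. Choose any point $z \in Z \cap Z'$. By the definition of $Z(\cdot)$ in \eqref{e.Z}, there exist chains $\underline{\Psi},\underline{\Psi}' \in \Sigma^\#(\mathfs H)$ that both shadow $z$, with $\Psi_0 = v$ and $\Psi'_0 = v'$.

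Next I would apply Lemma~\ref{l.inverse}(i) at index $n=0$: the ratios $p^u/q^u$ and $p^s/q^s$ both lie in $[e^{-\sqrt[3]{\bar\eps}},e^{\sqrt[3]{\bar\eps}}]$. In particular
$$
q^u \wedge q^s \;\geq\; e^{-\sqrt[3]{\bar\eps}} (p^u \wedge p^s) \;=\; e^{-\sqrt[3]{\bar\eps}} \, t_0 \;=:\; t_1 \;>\; 0.
$$
Thus every double chart $v' \in \mathfs V$ contributing a set $Z(v')$ that meets $Z(v)$ belongs to the family
$$
\mathfs V_{t_1} \;:=\; \{\Psi_y^{q^u,q^s} \in \mathfs V : q^u \wedge q^s \geq t_1\}.
$$

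Finally I would invoke the discreteness property (b) of $\mathfs V$, which asserts that for any $t>0$ only finitely many vertices $\Psi_y^{q^u,q^s} \in \mathfs V$ have $q^u \wedge q^s > t$. Since the admissible values $q^u,q^s$ lie in the discrete set $I_{\bar\eps}$, choosing $t < t_1$ slightly shows that $\mathfs V_{t_1}$ is finite, and therefore $\{Z' \in \mathfs Z : Z' \cap Z \neq \emptyset\} \subset \{Z(v') : v' \in \mathfs V_{t_1}\}$ is finite. There is no substantive obstacle here: the content of the lemma has already been absorbed into Lemma~\ref{l.inverse} (which handles the delicate comparison of shadowing chains) and into the coarse-graining construction that produces the discrete countable set $\mathfs V$.
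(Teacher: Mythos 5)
Your proof is correct and follows essentially the same route as the cited argument in Ben-Ovadia's paper: combine the uniform inverse shadowing bound of Lemma~\ref{l.inverse}(i) at $n=0$ with the discreteness property (b) of the coarse-grained alphabet $\mathfs V$. The only step worth writing cleanly is the passage from a $\geq t_1$ bound to finiteness (take any $t<t_1$, say $t=t_1/2$, and apply discreteness with that $t$), which you handle; and you tacitly use that the map $v'\mapsto Z(v')$ from $\mathfs V_{t_1}$ onto the relevant set of $Z'$ is a surjection from a finite set, which is fine.
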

This crucial finiteness property makes it possible to refine $\mathfs Z$ to a {countable partition}. Moreover, a  procedure similar to the one used by Bowen in \cite{Bowen-LNM}, generates a countable refinement which is a {\em Markov partition}. For details, see \cite[\S11.1]{Sarig-JAMS} and \cite[\S6]{Ben-Ovadia-Codings}.
We denote this partition by $\mathfs R$.

Now we build  a new countable locally finite graph $\wh{\mathfs G}$ with set of vertices $\mathfs R$, and edges $R\to S$ whenever $R\cap f^{-1}(S)\neq \emptyset$.

\begin{lemma}\label{l.coding-final}
There is a map $\hpi:\Sigma(\wh{\mathfs G})\to M$ with  the following properties:
\begin{enumerate}[(i)]
\item $\hpi\circ\sigma=f\circ\hpi$, the map $\hpi$ is H\"older continuous, $\mu[\hpi(\Sigma^\#(\wh{\mathfs G}))]=1$ for all ${\chi}$-hyperbolic ergodic measures $\mu$, 
and  $\hpi:\Sigma^\#(\wh{\mathfs G})\to M$ is finite-to-one.
\item For every $R\in\mathfs R, Z\in\mathfs Z$, the sets $\{Z'\in\mathfs Z:Z'\cap R\neq\emptyset\}$ and $\{R'\in\mathfs R:R'\cap Z\neq\emptyset\}$ are finite.
Moreover either $R\subset Z$ or $R\cap Z=\emptyset$.
\item $\hpi(\Sigma^\#(\wh{\mathfs G}))=\fs(\Sigma^\#(\mathfs H))$.  Moreover, for every $\un{R}\in\Sigma^\#(\widehat{\mathfs G})$, there exists an ${\bar \eps}$-chain $\un{\Psi}\in\Sigma^\#(\mathfs H)$ such that $\wh{\pi}(\un{R})=\fs(\un{\Psi})$, and $R_i\subset Z(\Psi_i)$ for all $i$.
\item The stable and unstable spaces of $\hpi(\un{x})$ depend H\"older continuously on $\un{x}$.
\end{enumerate}
\end{lemma}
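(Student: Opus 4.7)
The plan is to define $\hpi$ via the Markov partition $\mathfs R$, to realize $\hpi(\un R)$ as the shadow of an associated $\bar\eps$-chain in $\Sigma(\mathfs H)$, and then to read off each of (i)--(iv) either directly from this identification or from the already established properties of $\fs$ and $\mathfs R$ (Lemma~\ref{l.shadowing}, Lemma~\ref{l.inverse}, and Bowen's refinement procedure).

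First I would define $\hpi$. For $\un R=(R_n)_{n\in\Z}\in\Sigma(\wh{\mathfs G})$, the Markov property of $\mathfs R$ and the uniform hyperbolicity in Pesin charts imply that the nested sets $\bigcap_{|n|\leq N} f^{-n}(\ov{R_n})$ have diameters tending to $0$, so their intersection is a single point $\hpi(\un R)$. Equivariance $\hpi\circ\sigma=f\circ\hpi$ is immediate. For H\"older continuity, I would exhibit, for each $\un R\in\Sigma^\#(\wh{\mathfs G})$, an $\bar\eps$-chain $\un\Psi\in\Sigma^\#(\mathfs H)$ such that $R_n\subset Z(\Psi_n)$ for every $n$ and $\fs(\un\Psi)=\hpi(\un R)$: this is essentially how $\mathfs R$ is constructed as a refinement of $\mathfs Z$ (cf.\ \cite[\S11.1]{Sarig-JAMS}, \cite[\S6]{Ben-Ovadia-Codings}), and is exactly the content of (iii). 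Once this chain is at hand, the H\"older continuity of $\hpi$ and of the splitting $E^{s/u}(\hpi(\un R))=E^{s/u}(\un\Psi)$ follow from Lemma~\ref{l.shadowing}(ii),(v). This gives (iv) and the H\"older part of (i).

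Next I would prove (ii). The inclusion ``$R\subset Z$ or $R\cap Z=\emptyset$'' is built into Bowen's refinement, since $\mathfs R$ is designed to refine $\mathfs Z$. Local finiteness of $\{Z'\in\mathfs Z:Z'\cap R\neq\emptyset\}$ and $\{R'\in\mathfs R:R'\cap Z\neq\emptyset\}$ reduces, via the first statement, to the local finiteness of overlaps within $\mathfs Z$, which is supplied by Lemma~\ref{l.inverse}(i)--(iii): any two vertices $v,v'\in\mathfs V$ with $Z(v)\cap Z(v')\neq\emptyset$ have comparable parameters $p^{u/s}$ and $Q_{\bar\eps}$, and only finitely many vertices of $\mathfs V$ share these parameters up to the required tolerance (by the discreteness condition (b) in \S\ref{ss.discretization}).

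For the remaining parts of (i), full measure and finite-to-one, I would argue as follows. Full measure for $\chi$-hyperbolic ergodic measures $\mu$: by sufficiency of $\mathfs V$, $\mu(\fs(\Sigma^\#(\mathfs H)))=1$; by (iii), $\fs(\Sigma^\#(\mathfs H))=\hpi(\Sigma^\#(\wh{\mathfs G}))$, so $\mu(\hpi(\Sigma^\#(\wh{\mathfs G})))=1$. Finite-to-one: if $\hpi(\un R)=\hpi(\un R')$ with $\un R,\un R'\in\Sigma^\#(\wh{\mathfs G})$, then by (iii) there exist $\un\Psi,\un\Psi'\in\Sigma^\#(\mathfs H)$ shadowing the same point with $R_n\subset Z(\Psi_n)$, $R'_n\subset Z(\Psi'_n)$; Lemma~\ref{l.inverse} bounds the parameters of $\Psi'_n$ in terms of those of $\Psi_n$, and combined with the local finiteness in (ii) this leaves only finitely many candidates for each $R'_n$, and then only finitely many for the whole sequence.

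The main obstacle I anticipate is step (iii), i.e.\ producing the $\bar\eps$-chain $\un\Psi$ associated to a sequence $\un R$. The construction of $\mathfs R$ refines $\mathfs Z$ only in a fixed-time sense, but to get $R_n\subset Z(\Psi_n)$ for every $n\in\Z$ with $\un\Psi$ itself an edge-preserving sequence in $\mathfs H$ requires a coherent choice of vertex $\Psi_n\in\mathfs V$ for every $n$ such that consecutive choices satisfy the edge condition of \S\ref{ss.chains} (including the rigid relations $p^u_{n+1}=\min\{e^{\bar\eps}p^u_n,Q_{\bar\eps}(x_{n+1})\}$ and the overlap conditions). I would handle this by defining $\Psi_n$ inductively from an initial compatible choice, using Lemma~\ref{l.inverse} to certify that the edge relations in $\mathfs H$ are preserved under the small perturbations introduced by the refinement, and using the regularity of $\un R\in\Sigma^\#(\wh{\mathfs G})$ (constant subsequences in both directions) to anchor the construction so that the resulting $\un\Psi$ also lies in $\Sigma^\#(\mathfs H)$.
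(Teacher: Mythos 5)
The paper does not give a proof of this lemma: it cites the construction of $\mathfs R$ in \cite[\S 6]{Ben-Ovadia-Codings} (and \cite[\S 11.1]{Sarig-JAMS}) for (i)--(ii), and footnote~12 and Prop.~6.1 there for (iii)--(iv). Your proposal is a reasonable reconstruction of those arguments: the definition of $\hpi$ as the point in $\bigcap_N \overline{\bigcap_{|n|\leq N} f^{-n}(R_n)}$, the deduction of H\"older continuity of $\hpi$ and of the splittings from Lemma~\ref{l.shadowing}(ii),(v) once (iii) is secured, the reduction of (ii) to the local finiteness of $\mathfs Z$ supplied by Lemma~\ref{l.inverse} and \S\ref{ss.discretization}(b), and the identification of (iii) as the genuinely delicate step, are all in the spirit of the cited references.

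There is, however, a gap in your argument for the finite-to-one claim in (i). You argue that, for $\un R,\un R'\in\Sigma^\#(\wh{\mathfs G})$ with $\hpi(\un R)=\hpi(\un R')$, Lemma~\ref{l.inverse} together with (ii) restrict each $R'_n$ to a finite set of candidates, and you then conclude ``and then only finitely many for the whole sequence.'' That last step does not follow: a countable product of finite (even uniformly bounded) sets is in general uncountable, so bounding the candidate set at each coordinate does not bound the number of admissible sequences. What the references actually use is a Bowen-type combinatorial argument exploiting the regular-part structure: since $\un R,\un R'\in\Sigma^\#$, each contains a pair of symbols repeating infinitely often forward and backward, and one shows (via the ``affiliation'' relation and a pigeonhole argument) that for fixed such anchors the number of coding sequences of a given point is finite. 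This is the content of the ``locally finite Bowen property'' alluded to in Thm~\ref{t.SPR-coding}($\Sigma$\ref{i.Sigma7}); your plan would need to incorporate this extra combinatorial step to close the gap.

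Separately, your intended inductive construction of $\un\Psi$ in (iii) should address one more subtlety: the edge relations of $\mathfs H$ in \S\ref{ss.chains} are \emph{rigid} (the equalities $q^u=\min\{e^{\bar\eps}p^u,Q_{\bar\eps}(y)\}$, $p^s=\min\{e^{\bar\eps}q^s,Q_{\bar\eps}(x)\}$), so one cannot freely pick $\Psi_n$ at each step with only overlap constraints. In the references this is resolved by defining the refinement $\mathfs R$ simultaneously with a canonical ``representative chain'' assignment, rather than choosing $\Psi_n$ after the fact. As stated, your plan acknowledges the difficulty but does not explain how to propagate the rigid parameter relations, so it would need that mechanism spelled out to be complete.
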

\noindent
The Markov partition $\mathfs R$ is constructed in \cite[\S 6]{Ben-Ovadia-Codings}, where one can find the proofs of (i) and (ii).  Parts (iii)
and (iv) are explained in footnote 12 and Prop. 6.1 there.

\begin{corollary}\label{c.coro-epsilon(M,f,chi,beta)}
Let $f$ be a $C^{1+\beta}$ diffeomorphism of a closed manifold $M$ which admits a $\chi$-hyperbolic invariant measure for some $\chi>0$. 
For every $\bar{\chi}\in (0,\chi)$ sufficiently small,  
for every $\bar{\eps}$ small enough,
the following holds:
\begin{enumerate}[(1)]
\item $(\Sigma(\wh{\mathfs G}),\hpi)$ {built above (with this choice of $\bar\chi,\bar \epsilon$)} is a hyperbolic coding of $f$ in $M$;
\item If $\mu$ is a $\chi$-hyperbolic $f$-invariant measure on $M$, then $\mu(\wh{\pi}(\Sigma^\#(\wh{\mathfs G})))=1$.
\end{enumerate}
\end{corollary}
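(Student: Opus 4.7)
The plan is to verify the four axioms of Definition \ref{d.Hyperbolic-Coding} for $(\Sigma(\wh{\mathfs G}),\hpi)$ when $\bar{\chi}\in(0,\chi)$ and $\bar\eps$ are taken small enough for Lemmas \ref{l.f-in-coord}--\ref{l.shadowing} and the Markov partition construction of \S\ref{s.G-hat} to apply. Almost every clause is already recorded in Lemmas \ref{l.shadowing} and \ref{l.coding-final}; what remains is to check the local compactness of $\Sigma(\wh{\mathfs G})$ and to upgrade the ergodic statement of Lemma \ref{l.coding-final}(i) to arbitrary $\chi$-hyperbolic invariant measures.

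For the local compactness, I would invoke Lemma \ref{l.local-compactness-crit} and check that $\wh{\mathfs G}$ is locally finite. Any vertex $R\in\mathfs R$ lies in a unique $Z(v)\in\mathfs Z$, and an edge $R\to S$ in $\wh{\mathfs G}$ forces $f(R)\cap S\ne\emptyset$, so that $S$ must lie inside some $Z(w)$ with $v\to w$ in $\mathfs H$. The graph $\mathfs H$ is locally finite thanks to the discreteness of $\mathfs V$ (\S\ref{ss.discretization}) combined with \eqref{tralala}, which restricts $w$ to finitely many choices; Lemma \ref{l.coding-final}(ii) then bounds the number of $S\subset Z(w)$ for each such $w$. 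A symmetric argument controls the in-degree. With this in hand, axioms (a) and (b) of Definition \ref{d.Hyperbolic-Coding} are exactly Lemma \ref{l.coding-final}(i); axiom (c) is vacuous because we are taking $X:=M$; axiom (d) is obtained by using Lemma \ref{l.coding-final}(iii) to lift each $\un R\in\Sigma(\wh{\mathfs G})$ to an $\bar\eps$-chain $\un\Psi$ with $\fs(\un\Psi)=\hpi(\un R)$, reading off the splitting and the exponent $\chi_0:=\tfrac23\bar\chi$ from Lemma \ref{l.shadowing}(iv), and the H\"older dependence of $E^s(\un R),E^u(\un R)$ from Lemma \ref{l.coding-final}(iv). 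H\"older continuity of $\hpi$ itself is part of Lemma \ref{l.coding-final}(i).

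For part (2), the image $\hpi(\Sigma^\#(\wh{\mathfs G}))$ is Borel by Lemma \ref{l.Hyp-Mark-Coding-Prop}(1). Given a $\chi$-hyperbolic (not necessarily ergodic) invariant measure $\mu$, I would take its ergodic decomposition $\mu=\int \mu_x\,d\mu(x)$. Because $\chi$-hyperbolicity is a pointwise condition on the $f$-invariant functions $\lambda^i(\cdot)$, $\mu$-almost every ergodic component $\mu_x$ is itself $\chi$-hyperbolic, hence in particular $\bar\chi$-hyperbolic since $\bar\chi<\chi$. The ergodic statement in Lemma \ref{l.coding-final}(i) then yields $\mu_x(\hpi(\Sigma^\#(\wh{\mathfs G})))=1$ for $\mu$-a.e.\ $x$, and integration gives the claim.

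Nothing deep is at stake here beyond bookkeeping. The most delicate point is the local finiteness of $\wh{\mathfs G}$, which requires juxtaposing the combinatorics of the Markov partition $\mathfs R$ with those of the ``parent'' cover $\mathfs Z$ through Lemma \ref{l.coding-final}(ii)--(iii); once that is dealt with, the result follows by assembling statements already established in \S\ref{ss.construction-hyperbolic}--\S\ref{s.G-hat}.
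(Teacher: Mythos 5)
Your argument is correct and follows the paper's route (which is terse and simply cites Lemma~\ref{l.coding-final}(ii)--(iv), Lemma~\ref{l.shadowing}(iv), and Lemma~\ref{l.local-compactness-crit}), usefully spelling out the local finiteness of $\wh{\mathfs G}$ via the edge structure of $\mathfs H$ and Lemma~\ref{l.coding-final}(ii). Two small inaccuracies that do not affect the conclusion: a vertex $R\in\mathfs R$ may be contained in several $Z(v)$'s rather than a unique one (only finiteness is guaranteed by Lemma~\ref{l.coding-final}(ii), and fixing any one $v$ suffices for your argument), and Lemma~\ref{l.coding-final}(iii) lifts only chains $\un R\in\Sigma^\#(\wh{\mathfs G})$, so the existence of the splitting at \emph{every} $\un R\in\Sigma(\wh{\mathfs G})$ in Definition~\ref{d.Hyperbolic-Coding}(d) should instead be read off directly from Lemma~\ref{l.coding-final}(iv), which you also cite.
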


\begin{proof}
We have already seen that $\mathfs H$ is locally finite.
By Lemma \ref{l.coding-final}(ii) and (iii),
every vertex in the graph $\widehat{\mathfs G}$ has finite incoming and outgoing degrees. By Lemma \ref{l.local-compactness-crit}, $\Sigma(\wh{\mathfs G})$ is locally compact. The remaining properties of $(\Sigma(\wh{\mathfs G}),\hpi)$ follow from Lemmas \ref{l.coding-final} and \ref{l.shadowing}(iv).
\end{proof}
\subsection{Irreducible Hyperbolic Codings of Borel Homoclinic Classes}
We now localize to a  Borel homoclinic class $X$, and show that $f|_X$ has a hyperbolic coding with an irreducible $\Sigma$ (see \S\ref{s.spectral-decomp}).
The key  is the following result   from \cite{BCS-1}.

\begin{lemma}\label{l.irreducible}
{Given $\chi>0$,}
let $(\Sigma(\wh{\mathfs G}),\hpi)$ be the coding  in Lemma \ref{l.coding-final}.
If $X$ is a Borel homoclinic class of $f$, then
there exists a
maximal connected component $\mathfs G\subset\wh{\mathfs G}$ such that:\begin{enumerate}[(i)]
\item  $\Sigma({\mathfs G})$ is irreducible and  $\sigma:\Sigma(\mathfs G)\to\Sigma(\mathfs G)$ is topologically transitive;
\item Every $\chi$-hyperbolic ergodic measure $\mu$ on  $X$ satisfies $\mu[\hpi(\Sigma^\#(\mathfs G))]=1$;
\item Every $\sigma$-invariant measure $\hmu$ on $\Sigma(\mathfs G)$ satisfies  $(\hpi_*\hmu)(X)=1$.
\end{enumerate}
\end{lemma}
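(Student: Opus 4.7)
The plan is to fix a $\chi$-hyperbolic ergodic measure $\mu_0$ on $X$ (which exists by the standing hypotheses; otherwise item (ii) is vacuous and one may take for $\mathfs G$ any maximal component whose image meets $X$). By Corollary~\ref{c.coro-epsilon(M,f,chi,beta)}(2), $\mu_0$ is carried by $\hpi(\Sigma^\#(\wh{\mathfs G}))$, and Lemma~\ref{l.Hyp-Mark-Coding-Prop}\eqref{i.basic2} produces an ergodic $\sigma$-invariant lift $\hmu_0$. Since $\hmu_0$ is ergodic, there is a \emph{unique} maximal connected component $\mathfs G \subset \wh{\mathfs G}$ with $\hmu_0(\Sigma(\mathfs G))=1$; I would define $\mathfs G$ to be this component. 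By Lemma~\ref{l.top-transitive-for-MS}, $\Sigma(\mathfs G)$ is irreducible and $\sigma|_{\Sigma(\mathfs G)}$ topologically transitive, proving (i).

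Item (iii) follows by ergodic decomposition from the ergodic case: if $\hnu \in \Proberg(\sigma|_{\Sigma(\mathfs G)})$, then Lemma~\ref{l.Hyp-Mark-Coding-Prop}\eqref{i.basic4} applied to the irreducible shift $\Sigma(\mathfs G)$ says that $\hpi_*\hnu$ is homoclinically related to $\mu_0=\hpi_*\hmu_0$. Hence $\hpi_*\hnu$ is carried by the Borel homoclinic class of $\mu_0$, which is $X$ by hypothesis.

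The nontrivial part is (ii). Given any $\chi$-hyperbolic ergodic $\mu'$ on $X$, the same lifting procedure yields an ergodic lift $\hmu'$ on some maximal connected component $\mathfs G'\subset \wh{\mathfs G}$, and I must show $\mathfs G'=\mathfs G$. This reduces to the following key claim: \emph{if $\mu_1,\mu_2$ are homoclinically related $\chi$-hyperbolic ergodic measures, then their ergodic lifts are supported in the same maximal connected component of $\wh{\mathfs G}$.} To prove it, pick generic points $\un x_i \in \Sigma^\#(\mathfs G_i)$ for $\hmu_i$, so that $x_i := \hpi(\un x_i)$ is $\mu_i$-generic, and use the homoclinic relation to choose $k\ge 0$ and a transverse intersection $y \in W^u(x_1)\pitchfork f^k(W^s(x_2))$. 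Because the backward orbit of $y$ is exponentially asymptotic to that of $x_1$ and its forward orbit to that of $f^k(x_2)$, the point $y$ belongs to $\NUH^\#_{\bar\chi}$, so by the sufficiency clause of \S\ref{ss.discretization} it is shadowed by some chain $\un\Psi_y\in\Sigma^\#(\mathfs H)$. The shadowing inverse problem (Lemma~\ref{l.inverse}) then forces the past of $\un\Psi_y$ to agree in size and in $C_{\bar\chi}^{-1}$-norm with a chain shadowing $x_1$, and symmetrically its future with $\sigma^k\un x_2$; hence after passing to the Markov refinement via Lemma~\ref{l.coding-final}\eqref{i.basic3}, one obtains an admissible word in $\wh{\mathfs G}$ from a vertex of $\mathfs G_1$ visited $\hmu_1$-almost surely to such a vertex of $\mathfs G_2$. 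A symmetric argument applied to a transverse intersection in $W^u(x_2)\pitchfork f^\ell(W^s(x_1))$ produces a path in the opposite direction, so $\mathfs G_1$ and $\mathfs G_2$ lie in a single maximal connected component, i.e.\ $\mathfs G_1=\mathfs G_2$.

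The main obstacle is this last step, which amounts to translating the geometric homoclinic relation on $M$ into graph-theoretic connectivity in $\wh{\mathfs G}$; this is in essence Prop.~3.6 of \cite{BCS-1} adapted to our setting. All the ingredients are in place in \S\ref{ss.construction-hyperbolic} -- the explicit form of Pesin charts, the sufficiency and discreteness of $\mathfs V$, the hyperbolicity of the shadowed orbit (Lemma~\ref{l.shadowing}(iv)), and the rigidity of shadowing chains (Lemma~\ref{l.inverse}) -- but combining them coherently on the two tails of the heteroclinic orbit $y$, and then lifting from the presentation $\mathfs H$ to the refined Markov graph $\wh{\mathfs G}$, requires careful bookkeeping.
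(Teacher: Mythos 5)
The paper itself does not prove this lemma: it cites Theorem~3.1 and \S3.4 of \cite{BCS-1} (noting the surface-case proofs extend verbatim, with an alternative treatment in \cite{Ben-Ovadia-coded-set}). Your overall decomposition is sound and matches the intent of that reference: fixing a $\chi$-hyperbolic ergodic $\mu_0$, lifting to an ergodic $\hmu_0$, and taking $\mathfs G$ to carry $\hmu_0$, Item~(i) is Lemma~\ref{l.top-transitive-for-MS}, Item~(iii) follows from Lemma~\ref{l.Hyp-Mark-Coding-Prop}\eqref{i.basic4} together with the uniqueness of the Borel homoclinic class of $\mu_0$, and Item~(ii) reduces to your ``key claim'' that homoclinically related $\chi$-hyperbolic ergodic measures lift into a common maximal component. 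This is the right reduction.

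The gap is in the proof of the key claim, which you acknowledge but underestimate. A small point first: you attribute it to Proposition~3.6 of \cite{BCS-1}, but that is the \emph{converse} implication (graph connectivity $\Rightarrow$ homoclinic relation), which you use correctly for (iii); what you need here is Theorem~3.1. More substantially, your sketch has two missing steps. (a)~You invoke $y\in\NUH^\#_{\bar\chi}$ for a transverse intersection $y\in W^u(x_1)\pitchfork f^k(W^s(x_2))$, but the only characterization of $\NUH^\#_{\bar\chi}$ given here (Lemma~\ref{l.shadowing}(iii)) is measure-theoretic --- full measure for $\bar\chi$-hyperbolic measures --- and $y$ is a null set for both $\mu_1$ and $\mu_2$, so this tells you nothing about $y$. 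One must instead run the chain-building machinery directly on $y$, using that each half-orbit of $y$ eventually stays in a fixed Pesin block by exponential convergence to the orbits of $x_1$ and $f^k(x_2)$. (b)~You then appeal to Lemma~\ref{l.inverse} to force the tails of $\un\Psi_y$ to agree with chains shadowing $x_1$ and $\sigma^k\un x_2$, but Lemma~\ref{l.inverse} compares chains shadowing the \emph{same} point, and $y\neq x_1$, $y\neq f^k(x_2)$. What is actually needed is a Markov-type gluing statement: chains shadowing two points on a common local stable (resp.\ unstable) leaf in a common Pesin block eventually share the same forward (resp.\ backward) tail, and these tails can be joined into an admissible path through $\wh{\mathfs G}$. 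Establishing this --- and then propagating it from the presentation $\mathfs H$ to the refined Markov graph $\wh{\mathfs G}$ --- is precisely the technical content of \S3.4 of \cite{BCS-1} that your outline elides.
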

\noindent
See Theorem 3.1 and Section 3.4 in \cite{BCS-1}.
(The proofs in \cite{BCS-1} were written in the two dimensional case, but as noted in \S 1.6 there, they work verbatim in higher dimension. A proof in the higher-dimensional case, which also applies to infinite conservative invariant measures can be found in \cite{Ben-Ovadia-coded-set}.)

\begin{corollary}\label{c.coro-transitive-full-hyp-coding}
Let $X$ be a Borel homoclinic class  of a $C^{1+\beta}$ diffeomorphism $f$ with some $\chi$-hyperbolic invariant measure, for some $\chi>0$.
For every $\bar{\chi}\in (0,\chi)$ small enough, for every $\bar{\eps}>0$ small enough, we have:
\begin{enumerate}[(1)]
\item   $\wh{\mathfs G}$ from \S\ref{s.G-hat} has a maximal connected component $\mathfs G$ such that
$(\Sigma(\mathfs G),\hpi|_{\Sigma(\mathfs G)})$ is a hyperbolic coding of $f$ in $X$. In particular, $\Sigma(\mathfs G)$ is irreducible.
\item If $\mu$ is a $\chi$-hyperbolic $f$-invariant measure on $X$, then $\mu(\wh{\pi}(\Sigma^\#(\mathfs G)))=1$.
\item If $(\Sigma(\wh{\mathfs G}),\hpi)$ is $\chi$-bornological, then  $(\Sigma(\mathfs G),\hpi|_{\Sigma(\mathfs G)})$ is $\chi$-bornological.
\end{enumerate}
\end{corollary}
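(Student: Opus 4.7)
The plan is to choose $\bar\chi$ and $\bar\eps$ so small that the conclusions of both Corollary~\ref{c.coro-epsilon(M,f,chi,beta)} and Lemma~\ref{l.irreducible} apply simultaneously, and then to verify that the restriction of the coding on $\wh{\mathfs G}$ to an irreducible component $\mathfs G\subset \wh{\mathfs G}$ inherits all of the required properties. Corollary~\ref{c.coro-epsilon(M,f,chi,beta)} provides the hyperbolic coding $(\Sigma(\wh{\mathfs G}),\hpi)$ in $M$, and Lemma~\ref{l.irreducible} (applied to the Borel homoclinic class $X$) furnishes a maximal connected component $\mathfs G\subset \wh{\mathfs G}$ such that $\Sigma(\mathfs G)$ is irreducible, every $\chi$-hyperbolic ergodic measure on $X$ is carried by $\hpi(\Sigma^\#(\mathfs G))$, and every $\sigma$-invariant measure on $\Sigma(\mathfs G)$ projects to $X$. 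Item~(2) of the Corollary is thus exactly Lemma~\ref{l.irreducible}(ii).

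For item~(1), I would check the four clauses of Def.~\ref{d.Hyperbolic-Coding} for $(\Sigma(\mathfs G), \hpi|_{\Sigma(\mathfs G)})$. Local compactness of $\Sigma(\mathfs G)$ follows because $\mathfs G$ is a maximal connected subgraph of the locally finite graph $\wh{\mathfs G}$, so $\mathfs G$ is itself proper and locally finite, and Lemma~\ref{l.local-compactness-crit} applies. Equivariance and H\"older continuity pass from $\hpi$ to its restriction; finite-to-oneness on $\Sigma^\#(\mathfs G)\subset \Sigma^\#(\wh{\mathfs G})$ is inherited from Lemma~\ref{l.coding-final}(i); the image condition $(\hpi_\ast\hmu)(X)=1$ is Lemma~\ref{l.irreducible}(iii); and the hyperbolic splitting with constant $\chi_0$ is inherited pointwise from the splitting on $\Sigma(\wh{\mathfs G})$ given by Lemma~\ref{l.coding-final}(iv).

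For item~(3), I would verify the two conditions of Def.~\ref{d.bornological} for the restricted coding. For condition~(a): given a $(\chi,\eps)$-Pesin block $\Lambda$, the bornological hypothesis on $(\Sigma(\wh{\mathfs G}),\hpi)$ yields finitely many cylinders $A_i$ in $\Sigma(\wh{\mathfs G})$ with $\hmu\bigl(\hpi^{-1}(\Lambda)\setminus\bigcup_i A_i\bigr)=0$ for every $\hmu\in\mathbb P(\sigma|_{\Sigma(\wh{\mathfs G})})$. Setting $A_i':=A_i\cap \Sigma(\mathfs G)$, each $A_i'$ is either empty or a cylinder in $\Sigma(\mathfs G)$ defined by the same word (this uses that a word admissible in $\mathfs G$ is admissible in $\wh{\mathfs G}$). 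Any $\hmu\in\mathbb P(\sigma|_{\Sigma(\mathfs G)})$ extends trivially to a measure on $\Sigma(\wh{\mathfs G})$ supported on $\Sigma(\mathfs G)$, and hence satisfies $\hmu\bigl(\hpi^{-1}(\Lambda)\setminus\bigcup_i A_i'\bigr)=0$. For condition~(b): any cylinder $A$ in $\Sigma(\mathfs G)$ is contained in the cylinder $A'$ in $\Sigma(\wh{\mathfs G})$ defined by the same word; the $(\tilde\chi,\tilde\eps)$-Pesin block provided by the bornological hypothesis for $A'$ then works for $A$, because $\hpi(A\cap \Sigma^\#(\mathfs G))\subset \hpi(A'\cap \Sigma^\#(\wh{\mathfs G}))$ and any $\chi$-hyperbolic $\mu\in \mathbb P(f|_X)$ is in particular a $\chi$-hyperbolic measure on $M$.

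None of the individual steps is deep; the whole argument is a matter of checking that each property transfers under restriction to a maximal connected subgraph, using for item~(2) and clause~(c) of Def.~\ref{d.Hyperbolic-Coding} the nontrivial content of Lemma~\ref{l.irreducible}, which was already proved in \cite{BCS-1}. The one point that deserves a little care is the identification of cylinders in $\mathfs G$ with the restrictions of cylinders in $\wh{\mathfs G}$, but this is immediate from the definition of admissibility. There is no genuine obstacle.
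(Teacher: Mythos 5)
Your proposal is correct and follows the same route as the paper's proof: combine Corollary~\ref{c.coro-epsilon(M,f,chi,beta)} with Lemma~\ref{l.irreducible} to get items (1)--(2), then transfer both halves of the bornological property by intersecting cylinders of $\Sigma(\wh{\mathfs G})$ with $\Sigma(\mathfs G)$ (resp.\ noting that every cylinder of $\Sigma(\mathfs G)$ sits inside the cylinder of $\Sigma(\wh{\mathfs G})$ with the same word). The only difference is that you spell out the verification of Def.~\ref{d.bornological}(b), which the paper treats as immediate; your added detail is harmless and correct.
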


\begin{proof}
Parts (1) and (2) follow directly from Lemma \ref{l.irreducible} and
 Cor~\ref{c.coro-epsilon(M,f,chi,beta)}. Part (3) is proved as follows.
Suppose $(\Sigma(\wh{\mathfs G}),\hpi)$ is $\chi$-bornological (Def.~\ref{d.bornological}).
{Note that}   $(\Sigma({\mathfs G}),\hpi|_{\Sigma({\mathfs G})})$ satisfies Item~\eqref{i.born2} of the bornological property. To see that it also satisfies Item~\eqref{i.born1},
fix $\eps$ so that for every $(\chi,\eps)$-Pesin block $\Lambda$, there is  a finite family of cylinders $A_i$ in $\Sigma(\wh{\mathfs G})$ so that $\hmu(\hpi^{-1}(\Lambda)\setminus \bigcup_i A_i)=0$ for all $\hmu\in\mathbb P(\Sigma(\wh{\mathfs G}))$. Given $i$, let
$B_i:=A_i\cap\Sigma(\mathfs G)$. This  is either empty, or it is a cylinder in  $\Sigma({\mathfs G})$.  Clearly,  $\hmu(\hpi^{-1}(\Lambda)\setminus \bigcup_i B_i)=0$ for all $\hmu\in\mathbb P(\Sigma({\mathfs G}))$.
\end{proof}

\section{Existence of SPR Codings}
\label{ss.construction-bornological}

In this section we state and prove our main results  on the symbolic codings of SPR diffeomorphisms {(Theorem~\ref{t.SPR-coding})}: {\em An SPR diffeomorphism can be coded by an SPR Markov shift, and an SPR Borel homoclinic class can be coded by an irreducible Markov shift.} Conversely, in dimension two and some other cases,   any diffeomorphism with a ``good`` SPR coding must itself be SPR.

The papers \cite{Sarig-JAMS,Ben-Ovadia-Codings,BCS-1} provide hyperbolic codings with the required irreducibility properties. We will show that these codings are bornological, and invoke Prop.~\ref{p.lift-SPR} and Prop.~\ref{p.project-SPR}. 
Unlike the material in the previous section, this is new.

\subsection{Proof of the Bornological Property}
Suppose $f$ is a $C^{1+\beta}$ diffeomorphism,  with a Borel homoclinic class $X$ such that $h_{\Bor}(f|_X)>0$.
{We fix parameters $\chi>\bar \chi>0$}
and we let $\bar{\eps}>0$ be some number as in \eqref{e.epsilon-bar}, which is so small  that all the results of \S\ref{ss.construction-hyperbolic} are valid.
We also need the number $A(f, \bar \chi)$ from Lemma~\ref{l.K-C}, and the hyperbolic coding $(\Sigma(\wh{\mathfs G}),\hpi)$ from Cor \ref{c.coro-epsilon(M,f,chi,beta)}.
Fix
\begin{equation}\label{def.gamma}
\gamma:=\tfrac{\beta}{48}A(f, \bar \chi)\text{ and }
\varepsilon \in {(0, \gamma \bar\eps)}.
\end{equation}

\begin{definition}\label{d.Optimal-Pesin-Constant}
The
\emph{optimal $(\chi,\eps)$-Pesin bound}  at $x$  is  the infimum $K_\ast(\chi,\eps;x)$ of the numbers $K>0$ for which there are splittings $T_{f^n(x)} M=E^u(f^n(x))\oplus E^s(f^n(x))$ $(n\in\Z)$ so that
$$
\forall n\in\Z\, \forall k\geq 0\, \max(\|Df^k|_{E^s(f^n(x))}\|, \|Df^{-k}|_{E^u(f^n(x))}\|)\leq K e^{-\chi k+\varepsilon |n|}.
$$
{By convention, if there is no such splitting, then $K_*(\chi,\eps; x):=+\infty$.}
\end{definition}
\noindent
Henceforth we work with the parameters $\chi$ and $\eps$ fixed above, and we let $K_*(x):=K_*(\chi,\eps; x)$. The plan is to check
Def.~\ref{d.bornological}(a) with the parameters $\chi$ and $\eps$.

By  Lemma~\ref{l.splitting},  {there is at most one} splitting  $T_{f^n(x)}M=E^s(f^n(x))\oplus E^u(f^n(x))$ {as above}. {It follows that if the infimum $K_\ast(x)$ is finite, then it is in fact a minimum}. It is also clear that  the level sets $\{x:K_\ast(x)\leq K\}$ are all $(\chi,\eps)$-Pesin blocks, and that every  $(\chi,\eps)$-Pesin block is a subset of a set like that.
Finally, by the optimality of $K_*(x)$, this function is {\em $\eps$-tempered}:
\begin{equation}\label{e.temperability}
e^{-\eps}K_\ast(x)\leq K_\ast(f(x))\leq e^{\eps}K_\ast(x).
\end{equation}

The following bound for $K_\ast(x)$ uses the shadowing theory of \S\ref{ss.chains-and-shadowing}:
\begin{lemma}\label{l.bound-Pesin-bound}
There exists ${c:=}c(f,\beta,\bar\chi,\bar \eps)\in (0,1)$ such that
for any $x$ in $\NUH^\#_{\bar \chi}$ and $\un \Psi=(\Psi_{x_i}^{p^u_i,p^s_i})_{i\in\Z}$ in $\Sigma^\#(\mathfs H)$
such that  $x=\fs(\un\Psi)$,
$
K_\ast(x)\geq c\cdot(p^u_0\wedge p^s_0)^{-\gamma}.
$
\end{lemma}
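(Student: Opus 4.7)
The plan is to relate $p:=p^u_0\wedge p^s_0$ to the values of $Q_{\bar\eps}$ along the orbit $(f^n(x))_{n\in\Z}$, then chain together \eqref{e.Q}, Lemma~\ref{l.K-C}, and the $\eps$-temperedness \eqref{e.temperability} of $K_*$ at an optimally chosen time $n_0\in\Z$. The choice $\gamma=(\beta/48)A(f,\bar\chi)$ together with the condition $\eps<\gamma\bar\eps$ from \eqref{def.gamma} is arranged precisely so that the resulting $|n_0|$-dependence cancels.

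First I would iterate the chain condition $p^u_{n+1}=\min(e^{\bar\eps}p^u_n,Q_{\bar\eps}(x_{n+1}))$ backward from $n=0$ to obtain $p^u_0\le e^{k\bar\eps}Q_{\bar\eps}(x_{-k})$ for every $k\ge 0$; the symmetric argument for $p^s$ then yields
$$
   Q_{\bar\eps}(x_n)\;\ge\; e^{-|n|\bar\eps}\,p\qquad(n\in\Z).
$$
Because $\un\Psi\in\Sigma^\#(\mathfs H)$ contains constant subsequences in both directions, unfolding the chain relation from such a subsequence and using that the residual term $e^{|n_k|\bar\eps}p^u_v$ is eventually dominated shows that the above inequality is in fact attained at some finite $n_0\in\Z$. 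Next, because $x\in\NUH^\#_{\bar\chi}$, the sufficiency property in \S\ref{ss.discretization} furnishes an auxiliary $\bar\eps$-chain $\un\Psi^*=(\Psi_{x_n^*}^{p_n^{u,*},p_n^{s,*}})\in\Sigma^\#(\mathfs H)$ shadowing $x$ with $Q_{\bar\eps}(f^n(x))/Q_{\bar\eps}(x_n^*)\in[e^{-\bar\eps/3},e^{\bar\eps/3}]$; applying Lemma~\ref{l.inverse}(iii) to the pair $(\un\Psi,\un\Psi^*)$ produces a constant $D=D(\beta,\bar\eps)>0$ such that $Q_{\bar\eps}(f^n(x))\le D\,Q_{\bar\eps}(x_n)$ for every $n$, and therefore
$$Q_{\bar\eps}(f^{n_0}(x))\;\le\; D\,e^{-|n_0|\bar\eps}\,p.$$

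Finally, the lower inequality in \eqref{e.Q} (coming from $\lfloor Q\rfloor_{\bar\eps}\ge e^{-\bar\eps/3}Q$) gives
$$
   \|C_{\bar\chi}(f^{n_0}(x))^{-1}\|\;\ge\; c_1\,Q_{\bar\eps}(f^{n_0}(x))^{-\beta/48}\;\ge\; c_2\,e^{|n_0|\bar\eps\cdot\beta/48}\,p^{-\beta/48},
$$
with $c_1,c_2$ depending only on $\beta,\bar\eps$. Lemma~\ref{l.K-C} applied at $f^{n_0}(x)\in\NUH^*_{\bar\chi}$ (the bound being vacuous if $K_*(f^{n_0}(x))=+\infty$) upgrades this to
$$
   K_*(f^{n_0}(x))\;\ge\; B\,\|C_{\bar\chi}(f^{n_0}(x))^{-1}\|^{A}\;\ge\; c_3\,e^{|n_0|\bar\eps\gamma}\,p^{-\gamma},
$$
since $\gamma=(\beta/48)A$, and then the $\eps$-temperedness \eqref{e.temperability} of $K_*$ yields
$$
   K_*(x)\;\ge\; e^{-\eps|n_0|}K_*(f^{n_0}(x))\;\ge\; c_3\,e^{|n_0|(\bar\eps\gamma-\eps)}\,p^{-\gamma}\;\ge\; c_3\,p^{-\gamma},
$$
because $\bar\eps\gamma-\eps\ge 0$ by \eqref{def.gamma}. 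The main obstacle is essentially cosmetic: tracking the several comparability constants (from the discretization $\lfloor\cdot\rfloor_{\bar\eps}$, from sufficiency, from Lemma~\ref{l.inverse}(iii), and from the prefactors in \eqref{e.Q}) and checking that they can be absorbed into a single constant $c=c(f,\beta,\bar\chi,\bar\eps)$.
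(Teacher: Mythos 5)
Your outline is correct in structure and uses the same ingredients as the paper's proof, but the step you wave through in the middle -- "the above inequality is in fact attained at some finite $n_0\in\Z$" -- is the heart of the argument and is not adequately justified as stated. Two things are missing. First, there is a necessary case split: the equality $Q_{\bar\eps}(x_{n_0})=e^{-|n_0|\bar\eps}\,p$ (with $p=p^u_0\wedge p^s_0$) holds either for $n_0\le 0$ via the $p^u$-chain (when $p=p^u_0$), or for $n_0\ge 0$ via the $p^s$-chain (when $p=p^s_0$), but generically not both, and not for a single uniformly chosen $n_0$. Your phrasing "residual term $e^{|n_k|\bar\eps}p^u_v$" references only the $p^u$ side, which shows you would actually derive $Q(x_{n_0})=e^{-|n_0|\bar\eps}p^u_0$; that has the \emph{wrong sign} if $p^s_0<p^u_0$, since then $e^{-|n_0|\bar\eps}p^u_0>e^{-|n_0|\bar\eps}p$ and you cannot upgrade the lower bound on $Q(x_{n_0})$ to the upper bound your next display requires. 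Second, even within one case, it is not enough to find \emph{some} index where $p^u_i=Q(x_i)$; to get the identity $p^u_0=e^{|i|\bar\eps}p^u_i$ (so that the subsequent powers of $e^{\bar\eps}$ cancel against the temperedness factor) you must take the \emph{maximal} such $i\le0$, guaranteeing that no truncation by $Q$ occurs between $i$ and $0$. The constant-subsequence contradiction you allude to only proves existence, not maximality or the absence of intermediate truncations.

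The paper sidesteps both issues by proving the two estimates $K_*(x)\ge c(p^u_0)^{-\gamma}$ and $K_*(x)\ge c(p^s_0)^{-\gamma}$ completely separately (each with its own extremal index $k\le 0$ or $j\ge 0$) and taking the maximum at the end, rather than bundling them through $p=p^u_0\wedge p^s_0$ from the outset. Your reorganization is legitimate and arguably tidier once the case split and the maximality of $n_0$ are made explicit, but as written it elides exactly the points that need care. The remaining steps -- the use of sufficiency plus Lemma~\ref{l.inverse}(iii) to compare $Q_{\bar\eps}(f^{n_0}(x))$ with $Q_{\bar\eps}(x_{n_0})$, the lower bound on $\|C_{\bar\chi}^{-1}\|$ from the floor in \eqref{e.Q}, Lemma~\ref{l.K-C}, and the cancellation $\bar\eps\gamma-\eps\ge 0$ against temperedness -- are all correct and in the same order the paper uses them.
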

\begin{proof}
By Lemma~\ref{l.K-C},
$K_*(x)\geq B\|C_{\bar\chi}(x)^{-1}\|^{A}$.

Recall the definition of $Q_{\bar{\eps}}(x)$ in
 \eqref{e.Q}.
By \S\ref{ss.discretization} (a)(iii) and Lemma~\ref{l.inverse}(iii),
$Q_{\bar \eps}(x)\geq e^{-\big({\bar \eps}/3+\tfrac{200\sqrt {\bar \eps}}{\beta}\big)} Q_{\bar \eps}(x_0).$
Looking at the definition of $Q_{\bar{\eps}}(x)$ again,  and recalling the choice of $\gamma$ in \eqref{def.gamma}, we conclude that for some $c(f,\beta,\bar\chi,\bar \varepsilon)\in (0,1)$,
\begin{equation}\label{e.initial-step}
K_*(x)\geq c\cdot Q_{\bar \eps}(x_0)^{-\gamma}.
\end{equation}

\begin{claim*}
There exist $i\leq 0\leq j$ such that $p^u_{i}=Q_{{\bar \eps}}(x_{i})$ and $p^s_j=Q_{\bar \eps}(x_j)$.
\end{claim*}
\smallskip
\noindent
{\em Proof of the Claim.\/}
Let $\Psi_i:=\Psi_{x_i}^{p^u_i,p^s_i}$. Since $\un \Psi\in \Sigma^\#(\mathfs H)$,  there exists $i_k\downarrow -\infty$ such that $\Psi_{i_k}$ is constant, say equal to $\Psi_z^{r^u,r^s}$ (see \eqref{e.regular-part}).

Assume by way of contradiction that $p_i^u\neq Q_{\bar \eps}(x_i)$ for all $i\leq 0$. Then $p_i^u<Q_{\bar \eps}(x_i)$ for all $i\leq 0$, see \S\ref{ss.chains}.
 Since $\un{\Psi}$ is a path on $\mathfs H$, $\un{\Psi}$ is an $\bar{\eps}$-chain, whence
$$
p^u_i=\min\{e^{{\bar \eps}}p^u_{i-1},Q_{\bar \eps}(x_i)\}=e^{\bar \eps} p^u_{i-1}\text{ for all }i\leq 0,
$$
see \S\ref{ss.chains}.
 But this implies that $p^u_0=e^{{\bar \eps}|i_k|}p^u_{i_k}=e^{{\bar \eps}|i_k|}r^u\xrightarrow[k\to\infty]{}+\infty$, which is absurd, since $p^u_0\leq Q_{\bar \eps}(x_0)<1$. The {existence  of $j$ is proved in a similar manner.} 

\medskip
Let $k:=\max\{i\leq 0: p_i^u=Q_{\bar \eps}(x_i)\}$. Applying \eqref{e.initial-step} to $f^k(x)$,  we obtain
$$
K_\ast(f^{k}(x))\geq c\cdot Q_{\bar \eps}(x_{k})^{-\gamma}
=
c \cdot (p^u_{k})^{-\gamma}.
$$
By the  maximality of $k$ and the definition of $\bar{\eps}$-chains in \S\ref{ss.chains},
$$
\forall k+1\leq n\leq 0,\
p^u_n= \min\{e^{{\bar \eps}}p^u_{n-1},Q_{\bar \eps}(x_n)\}= e^{{\bar \eps}}p^u_{n-1}.$$
So $p^u_0=e^{-k{\bar \eps}}p^u_{k}$. Next, by~\eqref{e.temperability}, and since $\varepsilon<\gamma\bar \varepsilon$,
 \begin{align*}
K_\ast(x)\geq e^{-|k|\eps}K_{\ast}(f^{k}(x))\geq e^{-|k|\eps} c\cdot (p^u_{k})^{-\gamma}
=e^{|k|(\bar \eps\gamma-\eps)}c\cdot (p^u_0)^{-\gamma}\geq c\cdot (p^u_0)^{-\gamma}.\end{align*}
Similarly, one can use $k:=\min\{j\geq 0: p^s_j=Q_{\bar \eps}(x_j)\}$ to show that
$K_\ast(x)\geq c (p^s_0)^{-\gamma}$. Together, this gives $K_\ast(x)\geq c(p^u_0\wedge p^s_0)^{-\gamma}$.
\end{proof}

\begin{lemma}\label{l.cylinder-finite}
Let $(\Sigma(\wh{\mathfs G}),\hpi)$ be the coding in Cor~\ref{c.coro-epsilon(M,f,chi,beta)}.
For every $t\!>\!0$,
the set
$ \{\un{R}\!\in\!\Sigma^\#(\widehat{\mathfs G})\cap \hpi^{-1}(\NUH^\#_{\bar \chi})\!:\! K_\ast(\hpi(\un{R}))\!\leq\! t\}$
is contained in a finite  union of  cylinders.
\end{lemma}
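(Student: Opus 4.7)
The plan is to combine three ingredients from the previous section: the lower bound on the optimal Pesin constant in terms of the chart sizes (Lemma~\ref{l.bound-Pesin-bound}), the discreteness of the countable vertex set $\mathfs V$ (\S\ref{ss.discretization}(b)), and the local finiteness of the refinement $\mathfs R$ relative to $\mathfs Z$ (Lemma~\ref{l.coding-final}(ii)). The goal is to show that only finitely many symbols $R_0$ can appear as the $0$-coordinate of any such $\un R$, which will imply the claim since then the set is contained in the finite union $\bigcup_{R} [R]$ over these admissible values.

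First I would invoke Lemma~\ref{l.coding-final}(iii) to assign, to each $\un R \in \Sigma^\#(\widehat{\mathfs G}) \cap \hpi^{-1}(\NUH^\#_{\bar \chi})$, an $\bar\eps$-chain $\un\Psi = (\Psi_{x_i}^{p^u_i,p^s_i})_{i\in\Z} \in \Sigma^\#(\mathfs H)$ such that $\hpi(\un R) = \fs(\un\Psi)$ and $R_0 \subset Z(\Psi_0)$. Applying Lemma~\ref{l.bound-Pesin-bound} at $x := \hpi(\un R)$, the assumption $K_\ast(x) \le t$ gives
\[
   p^u_0 \wedge p^s_0 \;\ge\; (c/t)^{1/\gamma} \;=:\; t_0 > 0,
\]
where $c = c(f,\beta,\bar\chi,\bar\eps)$ and $\gamma$ are the constants from Lemma~\ref{l.bound-Pesin-bound} and \eqref{def.gamma}.

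Next, I would apply the discreteness property of the vertex set $\mathfs V$: by \S\ref{ss.discretization}(b), only finitely many double charts $\Psi_x^{p^u,p^s} \in \mathfs V$ satisfy $p^u \wedge p^s \ge t_0$. Thus the symbol $\Psi_0$ ranges over a finite subset $\mathfs V_0 \subset \mathfs V$. Correspondingly, the associated sets $Z(\Psi_0)$ lie in a finite subfamily of $\mathfs Z$.

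Finally, by Lemma~\ref{l.coding-final}(ii), for each $Z \in \mathfs Z$ only finitely many $R \in \mathfs R$ satisfy $R \cap Z \ne \emptyset$; moreover, for such $R$, either $R \subset Z$ or $R \cap Z = \emptyset$. Hence for each of the finitely many $Z \in \{Z(v):v\in\mathfs V_0\}$, there are only finitely many $R \in \mathfs R$ with $R \subset Z$. Combining these finite choices, the set of possible $R_0$ is finite, so $\{\un R : K_\ast(\hpi(\un R)) \le t\}$ is contained in the finite union of the cylinders $[R_0]$ over those admissible symbols. I do not anticipate any real obstacle here: the estimate of Lemma~\ref{l.bound-Pesin-bound} is the substantive input, and the rest of the argument is a direct bookkeeping with the local-finiteness properties of the coding.
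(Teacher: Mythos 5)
Your argument is correct and follows exactly the same route as the paper's proof: Lemma~\ref{l.coding-final}(iii) to get the shadowing $\bar\eps$-chain with $R_0\subset Z(\Psi_0)$, Lemma~\ref{l.bound-Pesin-bound} to convert $K_\ast(\hpi(\un R))\le t$ into the lower bound $p^u_0\wedge p^s_0\ge (c/t)^{1/\gamma}$, then the discreteness of $\mathfs V$ (\S\ref{ss.discretization}(b)) plus Lemma~\ref{l.coding-final}(ii) to conclude there are only finitely many admissible values of $R_0$. Nothing is missing; this matches the paper.
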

\begin{proof}
Let $\un{R}\in\Sigma^\#(\widehat{\mathfs G})\cap \hpi^{-1}(\NUH_{\bar{\chi}}^\#)$, $x:=\hpi(\un{R})$, and suppose $K_\ast(x)\leq t$.

By Lemma~\ref{l.coding-final}(iii), there exists an $\bar\eps$-chain $\un{\Psi}=(\Psi_{x_i}^{p^u_i,p^s_i})\in \Sigma^\#(\mathfs H)$ such that $R_i\subset Z(\Psi_i)$ for all $i$ and $x=\fs(\un{\Psi})$.
By Lemma~\ref{l.bound-Pesin-bound},
    $
c\cdot (p^u_0\wedge p^s_0)^{-\gamma}\leq K_\ast(x)\leq t$.
Recall the definition of $Z(\Psi_0)$ from \eqref{e.Z}. Then
\begin{enumerate}[\quad$\circ$]
\item  $Z(\Psi_0)\in\mathfs Z_t$, where
$\mathfs Z_t:=\{Z(\Psi_{x}^{p^u,p^s})\in\mathfs Z: p^u\wedge p^s\geq (t/c)^{-\frac{1}{\gamma}}\}$, and
\item $R_0\in\mathfs R_t$, where
    $
    \mathfs R_t:=\bigcup_{Z\in\mathfs Z_t}\{R\in\mathfs R:R\subset Z\}.
    $
\end{enumerate}
$\mathfs Z_t$ is finite by the discreteness property of $\mathfs V$, see \S\ref{ss.discretization}(b). $\mathfs R_t$ is finite by Lemma~\ref{l.coding-final}(ii). So
    $\{\un{R}\in\Sigma^\#(\widehat{\mathfs G})\cap\hpi^{-1}(\NUH^\#_{\bar \chi}):K_\ast(\hpi(\un{R}))\leq t\}$ is contained in
    $\{\un{y}\in\Sigma^\#(\widehat{\mathfs G}):\; y_0\in \mathfs R_t\}$, whence in a finite union of cylinders.
\end{proof}

\begin{proposition}\label{c.bornologicalA}
The codings in Cor \ref{c.coro-epsilon(M,f,chi,beta)} and \ref{c.coro-transitive-full-hyp-coding} are   $\chi$-bornological.
\end{proposition}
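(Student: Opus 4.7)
The plan is to verify properties (a) and (b) of Def.~\ref{d.bornological} for the hyperbolic coding $(\Sigma(\wh{\mathfs G}), \hpi)$ of Cor.~\ref{c.coro-epsilon(M,f,chi,beta)} using the parameters $\chi, \bar\chi, \bar\eps, \eps$ already fixed in this section. Once this is done, the corresponding statement for the coding of Cor.~\ref{c.coro-transitive-full-hyp-coding} follows immediately from part (3) of that corollary.

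For property (a), I would invoke Lemma~\ref{l.cylinder-finite} directly. Given a $(\chi,\eps)$-Pesin block $\Lambda$, the choice $\eps<\bar\eps<\bar\chi<\chi$ together with Lemma~\ref{l.splitting} shows that the splitting on $\Lambda$ is the canonical one, so the optimal bound $K_\ast(x)\le K$ is uniform in $x\in\Lambda$ for some constant $K$ depending on $\Lambda$. Applying Lemma~\ref{l.cylinder-finite} with $t=K$ produces a finite union of cylinders $A_1,\dots,A_n$ containing $\hpi^{-1}(\Lambda)\cap \Sigma^\#(\wh{\mathfs G})\cap\hpi^{-1}(\NUH_{\bar\chi}^\#)$. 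It then remains to show that for every $\hmu\in\Prob(\sigma)$, the set $\Sigma^\#(\wh{\mathfs G})\cap\hpi^{-1}(\NUH^\#_{\bar\chi})$ has full $\hmu$-measure: the first factor is handled by Poincar\'e recurrence (applied coordinate-wise to the atomic partition $\{[R]\}_{R\in\mathfs R}$), and the second uses Def.~\ref{d.Hyperbolic-Coding}\eqref{i.def3}, Lemma~\ref{l.shadowing}(iv) and Lemma~\ref{l.shadowing}(iii), provided $\bar\chi$ has been chosen so that the projected hyperbolicity constant $\tfrac{2}{3}\bar\chi$ of every $\hpi_\ast\hmu$ is compatible with the set $\NUH^\#_{\bar\chi}$ (this is the reason for taking $\bar\chi$ strictly smaller than $\chi$ in \eqref{e.epsilon-bar}).

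For property (b), I would take a cylinder $A\subset \Sigma(\wh{\mathfs G})$, which, after translating by a power of $\sigma$, can be assumed of the form $[R]$ for a single Markov rectangle $R\in\mathfs R$. By Lemma~\ref{l.coding-final}(ii)--(iii), any point $x=\hpi(\underline R)$ with $\underline R\in A\cap\Sigma^\#$ is shadowed by some $\bar\eps$-chain $\underline\Psi\in\Sigma^\#(\mathfs H)$ whose central double chart $\Psi_0$ lies in the finite set $\{Z\in\mathfs Z\colon Z\supset R\}$. This yields a uniform lower bound $p_0^u\wedge p_0^s\ge \delta>0$ depending only on $R$, and then by \eqref{e.Q} a uniform upper bound on $\|C_{\bar\chi}(x_0)^{-1}\|$. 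Combined with the splitting $T_x M=E^s(\underline\Psi)\oplus E^u(\underline\Psi)$ from Lemma~\ref{l.shadowing}(iv), the temperability/comparison estimates of Lemma~\ref{l.inverse}, and Lemma~\ref{l.C-and-Xi}, I expect to obtain Pesin bounds on the whole orbit of $x$ with parameters $(\tilde\chi,\tilde\eps)=(\tfrac{2}{3}\bar\chi,\bar\eps')$ for some $\bar\eps'$ slightly larger than $\bar\eps$ but still less than $\tilde\chi$, and with a constant $K'$ depending only on $R$. Thus $\hpi(A\cap\Sigma^\#)$ is contained in a single $(\tilde\chi,\tilde\eps)$-Pesin block $\Lambda$, which establishes~(b) in a stronger form (for \emph{all} invariant measures, not only the $\chi$-hyperbolic ones).

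The main obstacle I anticipate is part~(b), specifically the step converting a pointwise control on $\|C_{\bar\chi}(x_0)^{-1}\|$ at the central index into \emph{uniform} Pesin bounds along the entire orbit of $x$, independently of the invariant measure. This requires carefully tracking how the shadowing estimates of \cite{Sarig-JAMS,Ben-Ovadia-Codings} propagate through the equivalences of Lemma~\ref{l.inverse}, and in particular confirming that the geometric loss $\tfrac{2}{3}\bar\chi$ in Lemma~\ref{l.shadowing}(iv) and the $\eps$-temperability in \eqref{e.temperability} can be combined into a single set of Pesin parameters uniform over the cylinder.
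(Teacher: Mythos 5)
Your plan follows the paper's architecture at a high level, but there are two genuine gaps.

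\textbf{On part (a).} You want to show that $\Sigma^\#(\wh{\mathfs G})\cap\hpi^{-1}(\NUH^\#_{\bar\chi})$ has full $\hmu$-measure for \emph{every} $\hmu\in\Prob(\sigma)$. This is not true and cannot be patched by "choosing $\bar\chi$ compatibly": the hyperbolic-coding property (Def.~\ref{d.Hyperbolic-Coding}\eqref{i.def3}, via Lemma~\ref{l.shadowing}(iv)) only guarantees that the projection $\hpi_*\hmu$ is $\tfrac{2}{3}\bar\chi$-hyperbolic, and $\tfrac{2}{3}\bar\chi<\bar\chi$, so such a measure need not charge $\NUH^\#_{\bar\chi}$ (whose definition requires exponents strictly outside $[-\bar\chi,\bar\chi]$). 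The paper sidesteps this by a case split: if $\mu(\Lambda)=0$ the conclusion $\hmu(\hpi^{-1}(\Lambda)\setminus A)=0$ is trivial; if $\mu(\Lambda)>0$ then $\mu$ charges the $(\chi,\eps)$-Pesin block $\Lambda$, hence (by ergodicity) has all exponents outside $(-\chi,\chi)\supset[-\bar\chi,\bar\chi]$, hence \emph{is} $\bar\chi$-hyperbolic and charges $\NUH^\#_{\bar\chi}$. Only under this case hypothesis does one recover the full-measure statement you need, and the set equality $\hpi^{-1}(\Lambda)\bmod\hmu=\Sigma^\#(\wh{\mathfs G})\cap\hpi^{-1}(\Lambda\cap\NUH^\#_{\bar\chi})\bmod\hmu$ then lets Lemma~\ref{l.cylinder-finite} finish.

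\textbf{On part (b).} First, the parameters $(\tilde\chi,\tilde\eps)=(\tfrac23\bar\chi,\bar\eps')$ with $\bar\eps'>\bar\eps$ are not the ones the argument produces, and you correctly sensed the obstruction but took the wrong tool to overcome it. Lemma~\ref{l.shadowing}(iv) gives only asymptotic (limsup) exponential rates, not uniform Pesin bounds, so it cannot by itself produce a Pesin block. The proof must instead go through the Oseledets--Pesin reduction norm: from \eqref{e.S}--\eqref{e.U}, one has $\|Df^m|_{E^s(f^n x)}\|\leq\|C_{\bar\chi}(f^n x)^{-1}\|\,e^{-\bar\chi m}$ (decay rate $\bar\chi$, not $\tfrac23\bar\chi$), so $\tilde\chi=\bar\chi$; and then the polynomial dependence $Q_{\bar\eps}(x)\asymp\|C_{\bar\chi}(x)^{-1}\|^{-48/\beta}$ in \eqref{e.Q} converts the sub-exponential drift of $Q_{\bar\eps}$ along the orbit into a sub-exponential growth of $\|C_{\bar\chi}(f^nx)^{-1}\|$ with rate $\tilde\eps=\bar\eps\beta/48$ (which is \emph{smaller} than $\bar\eps$). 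Second, your argument conflates two chains that the proof must keep separate. Lemma~\ref{l.coding-final}(iii) furnishes a chain $\un\Psi'$ with $R\subset Z(\Psi'_0)$, but this chain does not automatically carry the sufficiency estimate $Q_{\bar\eps}(f^n x)\geq e^{-\bar\eps/3}Q_{\bar\eps}(x_n)$ of \S\ref{ss.discretization}(a)(iii). One must bring in a second, ``sufficiency'' chain $\un\Psi$, for which that estimate \emph{does} hold, and then compare $\un\Psi$ with $\un\Psi'$ via the shadowing inverse problem (Lemma~\ref{l.inverse}(i)); this is why the proof passes from the finite family $\wt{\mathfs Z}_R$ to the enlarged finite family ${\mathfs Z}_R$. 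Finally, the stronger statement you claim---that (b) holds for all invariant measures, not only $\chi$-hyperbolic ones---does not follow: the whole construction requires $x\in\NUH^\#_{\bar\chi}$ (needed both for the sufficiency estimate and for $\|C_{\bar\chi}(x)^{-1}\|$ to even be defined), and only $\chi$-hyperbolic measures are guaranteed to be carried by that set, which is precisely why Def.~\ref{d.bornological}\eqref{i.born2} restricts to $\chi$-hyperbolic measures.
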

\begin{proof}
First we show that $(\Sigma(\wh{\mathfs G}),\hpi)$ is $\chi$-bornological.

Let us consider some $(\chi,\varepsilon)$-Pesin block $\Lambda$.
By definition, there exists $t>0$ such that any $x\in \Lambda$ satisfies $K_*(x)<t$.
By Lemma~\ref{l.cylinder-finite} there is a  finite union of cylinders $A$
such that $\Sigma^\#(\widehat{\mathfs G})\cap \hpi^{-1}(\Lambda\cap \NUH^\#_{\bar \chi})\subset A$.

Let $\hmu$ be some ergodic $\sigma$-invariant measure on $\Sigma(\widehat{\mathfs G})$
and set $\mu=\hpi_*\hmu$. If $\mu(\Lambda)=0$, then $\hmu(\hpi^{-1}(\Lambda))=0$.
Otherwise, by ergodicity, all the Lyapunov exponents of $\mu$ are outside $(-\chi,\chi)$, whence by \eqref{e.epsilon-bar}, outside $[-\bar{\chi},\bar{\chi}]$. In this case $\mu$
is $\bar{\chi}$-hyperbolic, and  it follows that $\mu$ gives full measure to the set
$\NUH^\#_{\bar \chi}$ from Lemma \ref{l.shadowing}(iii).
Therefore $\hmu$ gives full measure to $\hpi^{-1}(\NUH_{\bar{\chi}}^\#)\cap\Sigma^\#(\wh{\mathfs G})$, and as a result,
\begin{equation*}
\hmu(\hpi^{-1}(\Lambda)\setminus A)=\hmu\big(\bigl[\Sigma^\#(\widehat{\mathfs G})\cap \hpi^{-1}(\Lambda\cap \NUH^\#_{\bar \chi})\bigr]\setminus A\big)=0.
\end{equation*}
This proves that $\hmu(\hpi^{-1}(\Lambda)\setminus A)=0$ for all ergodic $\sigma$-invariant measures on $\Sigma(\wh{\mathfs G})$. By the ergodic decomposition, this is also the case for non-ergodic measures, and we verified the first half of the bornological property, Def.~\ref{d.bornological}(a).

\medskip
Next we prove  the second half of the bornological property, Def.~\ref{d.bornological}(b), with
\begin{equation}\label{e.born-tild-const}
\widetilde \chi:=\bar \chi \text{ and } \widetilde \varepsilon:=\tfrac{\bar \varepsilon\beta}{48}.
\end{equation}

Given $R\in \mathfs R$, let $A_R:=\{\un R\in \Sigma^\#(\widehat{\mathfs G}):R_0=R\}$. We must show that for some $(\wt{\chi},\wt{\eps})$-Pesin block $\Lambda$, for all $\chi$-hyperbolic invariant measures $\mu$,
\begin{equation}\label{e.rishi}
\mu[\pi(A_R)\setminus\Lambda]=0.
\end{equation}

By Lemma~\ref{l.coding-final}(ii) the set $\widetilde  {\mathfs Z}_R:=\{Z(\Psi_{y}^{q^u,q^s})\in\mathfs Z: R\subset Z (\Psi_{y}^{q^u,q^s})\}$ is finite.
By \S\ref{ss.discretization}(b), the following set is finite:
$$\mathfs Z_R:=\{Z(\Psi_{x}^{p^u,p^s})\in\mathfs Z:
\exists  Z(\Psi_{y}^{q^u,q^s})\in\widetilde  {\mathfs Z}_R \text{ s.t. } p^u>e^{-\sqrt[3]{{\bar \eps}}} q^u
\text{ and } p^s>e^{-\sqrt[3]{{\bar \eps}}} q^s \}.$$
In particular there exists $c_0>0$ such that $p^u\wedge p^s>c_0$ for all $Z(\Psi_{x}^{p^u,p^s})\in \mathfs Z_R$.

Fix some $x\in\NUH^{\#}_{{\bar{\chi}}}\cap \wh{\pi}(A_{R})$, and write $x= \hpi(\un R)$ with $\un R\in A_R$.
By \S\ref{ss.discretization}(a), there exists some  $\un \Psi=(\Psi_{x_i}^{p^u_i,p^s_i})_{i\in\Z}$ in $\Sigma^\#( \mathfs H)$
satisfying
$$
\fs(\un \Psi)=x\text{ and }Q_{\bar \varepsilon}(f^n(x))\geq Q_{\bar \varepsilon}(x_n) e^{-\bar\varepsilon/3}\text{
for each }n\in \ZZ.
$$
By Lemma~\ref{l.coding-final}(iii), since $x\in\NUH_{\bar{\chi}}^\#$, there exists $\un \Psi'=(\Psi_{y_i}^{q^u_i,q^s_i})_{i\in\Z}$ in $\Sigma^\#( \mathfs H)$
with $\fs(\un \Psi')=\hpi(\un{R})=x$ and $R=R_0\subset Z(\Psi_{y_0}^{q^u_0,q^s_0})$. In particular $Z(\Psi_{y_0}^{q^u_0,q^s_0})\in \widetilde {\mathfs Z}_R$.
$\un{\Psi}$ and $\un{\Psi}'$ both shadow $x$.
By Lemma~\ref{l.inverse}(i),  $Z(\Psi_{x_0}^{p^u_0,p^s_0} )\in {\mathfs Z}_R$.
Then, for all $n\in\Z$:
\begin{align*}
Q_{\bar \varepsilon}(f^n(x))&\geq Q_{\bar \varepsilon}(x_n) e^{-\bar\varepsilon/3}\quad \text{ {by \S\ref{ss.discretization}(a)(iii),}}\\
&\geq (p_n^u\wedge p^s_n) e^{-\bar\varepsilon/3}  \quad \text{by the definition of double charts},\\
&\geq (p_0^u\wedge p_0^s) e^{-\bar\varepsilon |n| -\bar\varepsilon/3}  \quad \text{by~\eqref{tralala} and the definition of $\bar{\eps}$-chains},\\
&\geq c_0\cdot e^{-\bar\varepsilon |n| -\bar\varepsilon/3}  \quad \text{since $ Z(\Psi_{x_0}^{p^u_0,p^s_0})\in {\mathfs Z}_R$.}\end{align*}
 By \eqref{e.Q}, there is a constant $\wt{K}=\wt{K}(\beta,\bar\varepsilon,c_0)>0$ such that
$
\|C_{\bar{\chi}}(f^n(x))^{-1}\|\leq \wt{K} e^{\bar{\eps}\beta|n|/48}
$.
It now follows from
\eqref{e.S} that for all $n\in\Z$ and all $m\geq 0$,
\begin{align*}
\|Df^m|_{E^s(f^n(x))}\|&\leq \|C_{\bar \chi}(f^n(x))^{-1}\| \exp(-\bar \chi m)\leq \wt{K} \exp(-\widetilde \chi m +\widetilde \varepsilon |n|)
\end{align*}
(recall that $\wt\chi=\bar\chi$ and $\wt{\eps}=\bar{\eps}\beta/48$). A similar estimate holds for $\|Df^{-m}|_{E^u(f^n(x))}\|$.

This shows that  $\hpi(A_R)\cap \NUH^\#_{\bar \chi}$ is contained in the $(\widetilde \chi,\widetilde \varepsilon)$-Pesin block $\Lambda_R$ of all points which satisfy \eqref{e.def-pesin}, but with $(\wt{K},\wt{\chi},\wt{\eps})$ replacing $(K,\chi,\eps)$.

Now suppose $\mu$ is a $\chi$-hyperbolic invariant measure $\mu$. Such measures are $\bar{\chi}$-hyperbolic, and therefore they are  carried by $\NUH_{\bar{\chi}}^\#$. Necessarily,
$$
\mu[\hpi(A_R)\setminus\Lambda_R]=\mu[\hpi(A_R)\cap \NUH_{\bar{\chi}}^\#\setminus\Lambda_R]=\mu(\emptyset)=0,
$$
and we obtained \eqref{e.rishi}.
This completes the proof of that  $(\Sigma(\wh{\mathfs G}),\hpi)$ is  $\chi$-bornological.

 By Part {3} of Cor~\ref{c.coro-transitive-full-hyp-coding},  $(\Sigma({\mathfs G}),\hpi|_{\Sigma({\mathfs G})})$ is also $\chi$-bornological.
\end{proof}

\subsection{SPR Codings for SPR Diffeomorphisms}\label{section-Sigma}

Collecting the results of the previous sections, we obtain our main coding result:
\begin{theorem}\label{t.SPR-coding}
Let $f$ be a $C^{1+}$ diffeomorphism on a closed manifold $M$,
and $X$ be $M$ or a Borel homoclinic class. For every {$\chi>0$}
there is a map $\pi:\Sigma\to M$ s.t.:

\begin{enumerate}[($\mathit\Sigma$1)]

\item \textbf{The Markov Shift.}\label{i.Sigma1}  $\Sigma$ is a locally compact countable state Markov shift.  If  $X$ is a Borel homoclinic class, we can choose  $\Sigma$ to be  irreducible.

\smallskip
\item \textbf{The Coding Map.} \label{i.Sigma2} $\pi:\Sigma\to M$ is H\"older continuous, $\pi\circ\sigma=f\circ\pi$, and $\pi:\Sigma^\#\to M$ is finite-to-one. (See \eqref{e.regular-part} for the definition of  $\Sigma^\#$.)

\smallskip
\item \textbf{Projection of Measures.} \label{i.Sigma3} Every ergodic $\sigma$-invariant measure $\hmu$ on $\Sigma$ projects to an ergodic $f$-invariant measure $\mu:=\pi_\ast\hmu:=\hmu\circ\pi^{-1}$ such that 
$\mu(X)=1$ and $h(f,\mu)=h(\sigma,\hmu)$.

\smallskip
\item \textbf{Lifting Measures.} \label{i.Sigma4}
{For every} ergodic $\chi$-hyperbolic $f$-invariant measure $\mu$ on $X$
{there exists} some  ergodic $\sigma$-invariant measure $\hmu$ on $\Sigma$
such that $\pi_*\hmu=\mu$.

\smallskip
\item \textbf{Hyperbolicity.} \label{i.Sigma5} {For some $\chi_0\!>\!0$, at any $\un x\in \Sigma$
there is a splitting $T_{\pi(\un x)}M=E^s(\un x)\oplus E^u(\un x)$
such that $\underline x\mapsto E^{s}(\un x)$ and $\underline x\mapsto E^{u}(\un x)$ are H\"older continuous and $\underset{n\to +\infty} \limsup \tfrac 1 n \log \|Df^n|_{E^s(\un x)}\|<-\chi_0$,   $\underset{n\to +\infty} \limsup \tfrac 1 n \log \|Df^{-n}|_{E^u(\un x)}\|<-\chi_0$.} 

\smallskip
\item \label{i.Sigma6} \textbf{Bornological Property.} $(\Sigma,\pi)$ is a $\chi$-bornological hyperbolic coding in $X$.

\smallskip
\item \textbf{SPR.}\label{i.Sigma1bis} If $f|_X$ {is $\chi'$-SPR} with $\chi'>\chi$, then $\Sigma$ is SPR, $(\Sigma,\pi)$ is entropy-full, and  $h_\Bor(\Sigma)=h_{\Bor}(f|_X)$.

\smallskip
\item \textbf{Additional Properties.} \label{i.Sigma7}  $(\Sigma,\pi)$ satisfies all the properties described in \cite[\S3]{BCS-1}, and in particular the ``locally finite Bowen property" stated there.\footnote{We do not need these additional properties in this paper.}

\end{enumerate}
\textbf{Converse Statement:} Suppose \eqref{e.chi-entropy-hyperbolic} holds with  $\chi$ (as is always the case when $\dim M=2$ and $\chi<h_{\Bor}(f|_X)$). Then 
the {following} holds:
If $f|_X$ has an SPR Markov coding satisfying ($\mathit{\Sigma}$\ref{i.Sigma1})--($\mathit\Sigma$\ref{i.Sigma6}), then $f|_X$ is entropy-tight, whence SPR.
\end{theorem}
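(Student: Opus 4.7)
The plan is to reduce the converse statement directly to Proposition~\ref{p.project-SPR}, which already delivers entropy-tightness (and hence the SPR property, via Proposition~\ref{prop-tight-implies-SPR}) from the existence of a hyperbolic, $\chi$-bornological, and \emph{entropy-full} coding by an SPR Markov shift, provided that \eqref{e.chi-entropy-hyperbolic} holds. The hypotheses ($\mathit\Sigma$\ref{i.Sigma1}), ($\mathit\Sigma$\ref{i.Sigma2}), ($\mathit\Sigma$\ref{i.Sigma3}), ($\mathit\Sigma$\ref{i.Sigma5}), and ($\mathit\Sigma$\ref{i.Sigma6}) already package $(\Sigma,\pi)$ as a hyperbolic $\chi$-bornological coding in $X$ (in the sense of Definitions~\ref{d.Hyperbolic-Coding} and \ref{d.bornological}), and $\Sigma$ is SPR by assumption. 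Thus the only nontrivial point to verify is entropy-fullness (Definition~\ref{d-entropy-full}).

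To check entropy-fullness, I would fix the constant $h_0<h_{\Bor}(f|_X)$ provided by \eqref{e.chi-entropy-hyperbolic}, so that every $\nu\in\Proberg(f|_X)$ with $h(f,\nu)>h_0$ is automatically $\chi$-hyperbolic. By the lifting property ($\mathit\Sigma$\ref{i.Sigma4}), any such $\nu$ admits an ergodic $\sigma$-invariant lift $\hnu$ on $\Sigma$ with $\pi_*\hnu=\nu$. Now I would invoke the standard fact that any ergodic $\sigma$-invariant probability measure on a countable state Markov shift is carried by the regular part $\Sigma^\#$: indeed, $\hnu$ charges some partition set $[a]$, and then Poincar\'e recurrence applied to both $\sigma$ and $\sigma^{-1}$ forces $\hnu$-a.e. $\underline{x}$ to visit $[a]$ infinitely often both as $n\to+\infty$ and as $n\to-\infty$, so that $\hnu(\Sigma^\#)=1$. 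Consequently $\nu(\pi(\Sigma^\#))=\hnu(\pi^{-1}(\pi(\Sigma^\#)))\geq\hnu(\Sigma^\#)=1$, which is exactly Definition~\ref{d-entropy-full}.

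With entropy-fullness in hand, Proposition~\ref{p.project-SPR} applies verbatim and yields that $f$ is entropy-tight on $X$. Proposition~\ref{prop-tight-implies-SPR} then promotes entropy-tightness to the SPR property, completing the argument. The remark in the statement that \eqref{e.chi-entropy-hyperbolic} is automatic for $\dim M=2$ and $\chi<h_{\Bor}(f|_X)$ is a direct consequence of Ruelle's inequality applied to both $f$ and $f^{-1}$, forcing $\lambda^+(\nu)\geq h(f,\nu)>\chi$ and $\lambda^-(\nu)\leq -h(f,\nu)<-\chi$ for every ergodic $\nu$ with $h(f,\nu)>\chi$, so no additional work is needed in that case.

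There is no genuine obstacle here; the content of the converse is essentially the observation that ($\mathit\Sigma$\ref{i.Sigma4}) together with \eqref{e.chi-entropy-hyperbolic} \emph{automatically} upgrades the coding to an entropy-full one, because only $\chi$-hyperbolic measures need to be lifted and \eqref{e.chi-entropy-hyperbolic} guarantees that all sufficiently high-entropy ergodic measures are of that form. All the nontrivial work has already been packaged into Proposition~\ref{p.project-SPR} (which in turn relies on Proposition~\ref{p.pesin} to pass from a single Pesin bornology to full entropy tightness).
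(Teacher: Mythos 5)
Your proposal is correct and follows essentially the same route as the paper: the paper's converse argument also reduces directly to Prop.~\ref{p.project-SPR}, observing that \eqref{e.chi-entropy-hyperbolic} together with ($\mathit\Sigma$\ref{i.Sigma4}) forces entropy-fullness, and the Poincar\'e-recurrence observation $\hnu(\Sigma^\#)=1$ that you make explicit is left implicit in the paper at this point (though it appears inside the proof of Prop.~\ref{p.project-SPR} itself).
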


\begin{proof}
Without loss of generality, $X$ carries some hyperbolic invariant measure: If $X$ is a Borel homoclinic class, this is automatic; If $X=M$ and there are no hyperbolic invariant measures, then the theorem holds vacuously with the empty Markov shift.
Next, we observe that decreasing $\chi$ makes the (direct) statement of the theorem stronger; Therefore, we may also assume without loss of generality that $\chi$ is so small that $X$ carries a $\chi$-hyperbolic invariant measure.

{The Markov coding provided by Cor~\ref{c.coro-epsilon(M,f,chi,beta)} or \ref{c.coro-transitive-full-hyp-coding} satisfies Properties  (${\Sigma}$\ref{i.Sigma1})--($\Sigma$\ref{i.Sigma5}) and Prop.~\ref{c.bornologicalA} gives $(\Sigma\ref{i.Sigma6})$.}
Since the constructions in \S\ref{ss.construction-hyperbolic} are compatible with those of \cite{BCS-1}, we also have $(\Sigma\ref{i.Sigma7})$. It remains to check $(\Sigma\ref{i.Sigma1bis})$: Suppose $f|_X$ {is $\chi'$-SPR with $\chi'>\chi$.}
By Remark~\ref{r.SPR-implies-hyperbolicity},  \eqref{e.chi-entropy-hyperbolic} holds 
and $(\Sigma\ref{i.Sigma4})$ implies that $(\Sigma,\pi)$ is entropy-full.  By Lemma~\ref{l-entropy-equal},  $h_{\Bor}(\Sigma)=h_{\Bor}(f|_X)$.  Next, $f|_X$ must be  $\chi$-SPR, because $f|_X$ is $\chi'$-SPR and $\chi'>\chi$, see Lemma \ref{l.P-is-bornology}(4). By Prop.~\ref{p.lift-SPR} and the $\chi$-bornological property, $\Sigma$ is SPR. So we also have $(\Sigma\ref{i.Sigma1bis})$.

Conversely, suppose \eqref{e.chi-entropy-hyperbolic}, and that $f$ has an SPR Markov coding in $X$ satisfying  (${\Sigma}$\ref{i.Sigma1})--($\Sigma$\ref{i.Sigma6}).
In particular it is hyperbolic and $\chi$-bornological. By \eqref{e.chi-entropy-hyperbolic} and
(${\Sigma}$\ref{i.Sigma4}), {the coding is entropy-full}. Prop.~\ref{p.project-SPR} then implies that $f|_X$ is entropy-tight.
\end{proof}

\begin{remark}\label{r.eps-tilde-arbit-small}
Looking at \eqref{e.born-tild-const}, and recalling the choices of $\bar \chi,\bar\eps$,
we deduce the following. For any $\wt{\chi}\in (0,\chi)$ and all $\wt{\eps}>0$ small enough, there are codings $(\Sigma,\pi)$ as  in  Thm~\ref{t.SPR-coding}  which satisfy the bornological property (Def.~\ref{d.bornological}\eqref{i.born2}) with $\wt{\chi},\wt{\eps}$.
\end{remark}

We end with an extension of Item $(\Sigma\ref{i.Sigma1bis})$  to potentials of equilibrium measures.
Suppose $\phi$ is a real-valued H\"older continuous function on $M$ (or more generally one of the quasi-H\"older functions  considered  in \S\ref{s.setup-for-consequences}). For a Borel homoclinic class $X$:
\begin{itemize}
\item $f$ is {\em $\chi$-SPR for $\phi$ on $X$}, if it satisfies Def. \ref{d.SPR-potential} with the parameter $\chi$; 
\item 
a hyperbolic coding $(\Sigma,\pi)$ in $X$ is {\em pressure-full for $\phi$}, if there is $p_0<P_{\Bor}(f|_X,\phi)$ such that for every $\mu\in\Proberg(f|_X)$, if   $h_{\mu}(\sigma)+\int\phi\, d\mu>p_0$, then  $\mu[\pi(\Sigma^{\#})]=1$, and $\mu$ has a lift with the same entropy and pressure to $\Sigma$
\end{itemize}

\begin{addendum}\label{a.SPR-coding}
The coding in Thm~\ref{t.SPR-coding} satisfies
\begin{enumerate}[($\mathit\Sigma$7')]
\item \textbf{SPR for Potentials.}\label{i.Sigma1bisbis} If $f|_X$ is $\chi'$-SPR for $\phi$ with $\chi'>\chi$, then $\Sigma$ is SPR for $\phi\circ \pi$,   $P_\Bor(\Sigma,\phi\circ \pi)=P_{\Bor}(f|_X,\phi)$, and  $(\Sigma,\pi)$ is pressure-full for $\phi$.
\end{enumerate}
\end{addendum}
\noindent
The proof is identical to the proof of $(\Sigma\ref{i.Sigma1bis})$, and we omit it.

\part{Properties of SPR Diffeomorphisms}\label{p.properties-of-SPR-diffeos}

{

\section{Consequences of the SPR Property} \label{s.statement-of-conseq}
This section is devoted to the precise  statements of the ergodic properties announced in Thms \ref{t.MME}--\ref{t.equilibrium} and the direct part of Thm~\ref{t.tail}. The proofs will be given in the next section.

Let $f$ be a $C^{1+}$ diffeomorphism of a closed smooth $d$-dimensional  manifold $M$, with $d\geq 2$. Let  $X$ be an SPR Borel homoclinic class. We will see below that $f|_X$ has a unique measure of maximal entropy. We denote this measure by $\mu$.

\subsection{Quasi-H\"older functions}\label{s.setup-for-consequences}
Given  $\psi:X\to\C$,  let
$$
\psi_n=\psi+\psi\circ f+\cdots + \psi\circ f^{n-1}.
$$
We are mostly interested in H\"older continuous $\psi$  and  in {\em geometric potentials}
\begin{equation}\label{e.geometric}
J^s(x)=-\log |\det(Df|_{E^s(x)})|\ \
\text{ and } J^u(x)=-\log |\det(Df|_{E^u(x)})|,
\end{equation}
which are not always continuous (or even globally defined).
The geometric potentials are  needed  in \S\ref{ss.expansion-bounds} and \S\ref{ss.Kadyrov-for-Diffeos}.

To treat $J^s(x), J^u(x)$ and H\"older continuous functions on the same footing, we introduce the  class of {\em quasi-H\"older functions}, which contains them both.
Informally, $\psi(x)$ is {\em quasi-H\"older}, if it can be recast as a H\"older continuous function of $(x,E^u(x),E^s(x))$.
For example, $J^u(x)=\Psi(x,E^u(x),E^s(x))$, where $\Psi(x,E_1,E_2)=-\log|\det(Df|_{E_1})|$, and  $E_i\subset T_x M$ are linear subspaces. We now make this precise.

Let $\mathfrak G(k,M)$ denote the $k$-th Grassmannian bundle of $M$, with its canonical Riemannian structure (see Appendix~\ref{app-grassmannian}).
Let
\begin{equation}\label{e.frak-G}
\mathfrak G:= \bigsqcup_{k=1}^{d-1}\mathfrak G(k,M)\oplus \mathfrak G(d-k,M)\ \ \text{(disjoint union)}.
\end{equation}

If $x\in X$, then $(x;E^u(x),E^s(x))\in \mathfrak G$. Indeed, on  $X\subset\NUH(f)$ (see \eqref{e.NUH}),  $E^u(x),E^s(x)$ are well-defined and unique by Lemma \ref{l.splitting}. \begin{definition}A {\em $\mathfrak G$-lift} of a function $\psi:X\to\C$ is a function $\Psi:{\mathfrak G}\to\C$  such that for all $x\in X$,
$
\Psi(x;E^s(x),E^u(x))=\psi(x)
$.
\end{definition}
\noindent

Every connected component of $\mathfrak G$ has a canonical metric, induced by the Riemannian structure. We extend this metric to  $\mathfrak G$ by declaring that the  distance   between points  in different connected components equals the diameter of $M$.
Let $d_{\mathfrak G}(\cdot,\cdot)$ denote the restriction of this distance function to $\mathfrak G$.

\begin{definition}
$\psi:X\to{\C}$ is {\em quasi-H\"older on $X$} (with exponent $\beta$), if $\psi$ admits an $\mathfrak G$-lift $\Psi$ which is H\"older continuous with exponent $\beta$, with respect to $d_\mathfrak{G}$. In this case, the {\em $\beta$-quasi-H\"older norm} of $\psi$ is defined to be
$$
{\|\psi\|_\beta':=\inf\left\{\sup_{\mathfrak G}|\Psi|+\sup_{\xi\neq \eta}\frac{|\Psi(\xi)-\Psi(\eta)|}{d_{\mathfrak G}(\xi,\eta)^\beta}:\Psi:\mathfrak G\to\C\text{ is a lift of }\psi\right\}}.
$$
\end{definition}
\noindent
(Actually we will only use the lift  to $\Gamma:=\{(x;E^u(x),E^s(x)):x\in X\}\subset \mathfrak G$. 
One could wonder whether requiring a lift to all of $\mathfrak G$ is wasteful. It is not: Any H\"older function on $\Gamma$ can be extended to $\mathfrak G$ without  decreasing the H\"older exponent.)

\begin{example}\label{e.Holder-is-Quasi-Holder}
If $\psi:M\to\C$ is H\"older with exponent $\beta$, then $\psi|_{X}$ is quasi-H\"older with exponent $\beta$, and  $\|\psi\|_{\beta}'\leq\|\psi\|_\beta$: use the lift $\Psi(x;E_1,E_2)=\psi(x_1)$.
\end{example}

\begin{example}\label{e.Geometric-Is-Quasi-Holder}
If $f\in C^{1+\beta}$ with $\beta>0$,  then $x\mapsto J^s(x)$ and $x\mapsto J^u(x)$ (introduced in~\eqref{e.geometric}) are
quasi-H\"older on $X$ with exponent $\beta$. \end{example}

\begin{lemma}\label{l.QH-functions-are-Bounded}
Quasi-H\"older functions on $X$ are measurable and  bounded. \end{lemma}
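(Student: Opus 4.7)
\medbreak\noindent\textit{Proof proposal.}
The plan is to establish boundedness via the compactness of the ambient bundle, and measurability via the Borel structure of the Oseledets distributions on Pesin blocks.

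First I would address boundedness. Each connected component $\mathfrak G(k,M)\oplus\mathfrak G(d-k,M)$ of $\mathfrak G$ is a fibre bundle over the compact manifold $M$ with compact fibres (finite Grassmannians), hence is itself compact. Since $\mathfrak G$ is a finite disjoint union of such components, $\mathfrak G$ is compact. By hypothesis, $\psi$ admits a $\mathfrak G$-lift $\Psi\colon\mathfrak G\to\C$ which is $\beta$-H\"older (with respect to $d_{\mathfrak G}$), and any H\"older continuous function is continuous, hence bounded on the compact set $\mathfrak G$. Since $\psi(x)=\Psi(x;E^s(x),E^u(x))$ for $x\in X$, we conclude $\sup_X|\psi|\le\sup_{\mathfrak G}|\Psi|<\infty$.

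Next I would prove measurability of $\psi$ by showing that the map
$$\Phi\colon X\to\mathfrak G,\qquad \Phi(x):=(x;E^s(x),E^u(x))$$
is Borel measurable: once this is established, $\psi=\Psi\circ\Phi$ is Borel measurable as the composition of a Borel map with a continuous (H\"older) function. Since $X\subset \NUH'(f)\subset\NUH(f)$, the definition \eqref{e.NUH} together with the remark in \S\ref{s.PesinSets} (there is a countable family $\{\Lambda_i\}_{i\in\NN}$ of $(\chi,\eps)$-Pesin blocks such that any Pesin block is contained in some $\Lambda_i$) lets us write
$$X=\bigcup_{i\in\NN}(X\cap \ov{\Lambda_i}),$$
a countable union of Borel sets. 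On each $\ov{\Lambda_i}$, Lemma~\ref{l.splitting} states that the splittings $T_xM=E^s(x)\oplus E^u(x)$ are uniquely defined and vary continuously with $x$; hence the restriction of $\Phi$ to $X\cap\ov{\Lambda_i}$ is continuous into the appropriate component of $\mathfrak G$, and in particular Borel measurable. Since a function that is Borel measurable on each piece of a countable Borel partition is globally Borel measurable, $\Phi$ is Borel on $X$, completing the proof.

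I do not expect any serious obstacle here: the boundedness step is an immediate consequence of the compactness of $\mathfrak G$ (essentially a consistency check of the definition), and the measurability step reduces to the well-known continuity of the stable/unstable distributions on Pesin blocks. The only mild subtlety is to verify that the dimensions of $E^s(x)$ and $E^u(x)$ are locally constant (so that $\Phi$ maps into a single component of $\mathfrak G$ on each piece), but this follows from the continuity of the splitting on each Pesin block together with the local constancy of $\dim E^s(x)$ there.
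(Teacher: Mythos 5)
Your proof is correct and follows essentially the same route as the paper: measurability comes from the continuity of $x\mapsto(x;E^s(x),E^u(x))$ on Pesin blocks (Lemma~\ref{l.splitting}) together with the countable cover of $X$ by Pesin blocks. For boundedness the paper argues from the (slightly weaker) fact that $\mathfrak G$ has finite diameter — so any H\"older function on it is automatically bounded — rather than from compactness, but this is an immaterial variant since $\mathfrak G$ is indeed compact.
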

\begin{proof}{
By Lemma \ref{l.splitting}, $x\mapsto (x; E^u(x),E^s(x))$ is continuous on Pesin blocks. Therefore any quasi-H\"older function $\psi:X\to\mathbb C$ is continuous on Pesin blocks. Since $X$ is covered by countably many Pesin blocks, $\psi$ is measurable on $X$. Next,
$\mathfrak G$ has finite diameter, and every  H\"older continuous function on $\mathfrak G$ is bounded.}
\end{proof}

We proceed to state our results on the properties of SPR diffeomorphisms.

\subsection{Existence and Structure of  Measures of Maximal Entropy}
The following implies Thm~\ref{t.MME}.

\begin{theorem}\label{t.MME-exists}
Let $f$ be a $C^{1+}$ diffeomorphism of a closed manifold, and let $X$ be an SPR Borel homoclinic class with period $p$. Then:
\begin{enumerate}[(1)]
\item There exists a measure $\mu$ on $X$ such that $h(f,\mu)=\hTOP(f|_X)$.
\item This measure is unique (on $X$), ergodic, {hyperbolic},  and its support equals $\overline X$. If $p=1$, then $\mu$ is Bernoulli. If $p>1$, then $\mu$ is
 isomorphic to the product of a Bernoulli scheme with a cyclic permutation of $p$ atoms.
\item {Suppose $f$ is SPR, then $f$ has a finite positive number of ergodic measures of maximal entropy, each carried by a distinct SPR Borel homoclinic class. }
\end{enumerate}
\end{theorem}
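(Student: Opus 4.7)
The plan is to transfer the structural properties of SPR Markov shifts from Part~\ref{part-SPR-Markov} to the diffeomorphism via the symbolic coding of Theorem~\ref{t.SPR-coding}. Since $f|_X$ is SPR, we may pick $\chi>0$ so that $f|_X$ is $\chi'$-SPR for some $\chi'>\chi$, and apply Theorem~\ref{t.SPR-coding} to obtain an \emph{irreducible} locally compact SPR Markov shift $\Sigma$ and a H\"older coding map $\pi\colon\Sigma\to M$ satisfying ($\Sigma$\ref{i.Sigma1})--($\Sigma$\ref{i.Sigma1bis}); in particular $h_\Bor(\Sigma)=h_\Bor(f|_X)$ and $(\Sigma,\pi)$ is entropy-full.

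\emph{Items~(1)--(2):} Because $\Sigma$ is irreducible and SPR, Corollary~\ref{t.Existence-of-MME-CMS} produces a unique MME $\hat\mu$ on $\Sigma$, which by Theorem~\ref{t.Parry-Gurevich-ASS} is ergodic and fully supported. Setting $\mu:=\pi_\ast\hat\mu$ and invoking ($\Sigma$\ref{i.Sigma3}) yields an ergodic $f$-invariant probability measure with $\mu(X)=1$ and $h(f,\mu)=h(\sigma,\hat\mu)=h_\Bor(f|_X)$, giving existence. Uniqueness on $X$ and the Bernoulli-times-cyclic-permutation-of-$\mathrm{period}(X)$-atoms structure are precisely Proposition~\ref{p.mme-on-bhc}, and hyperbolicity of $\mu$ is Proposition~\ref{p.homoclinic}(4). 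The inclusion $\supp(\mu)\subseteq\overline X$ follows from $\mu(X)=1$; conversely, the full support of $\hat\mu$ gives $\pi(\Sigma)\subseteq\supp(\mu)$, and applying ($\Sigma$\ref{i.Sigma4}) to Dirac measures supported on hyperbolic periodic orbits of $X$ (with $\chi$ shrunk as necessary so the orbit is captured by the corresponding coding, all such codings yielding the same $\mu$ by uniqueness), together with the density of such orbits in $\overline X$ from Remark~\ref{r.topological-class}, gives the reverse inclusion.

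\emph{Item~(3):} Suppose $f$ is SPR. If $h_\top(f)=0$ then Proposition~\ref{p.SPR-in-Zero-Entropy} settles the matter directly. Otherwise, Proposition~\ref{p.decomposition} furnishes $h_0<h_\top(f)$ and a finite list $X_1,\dots,X_k$ of SPR Borel homoclinic classes such that every ergodic measure of entropy $>h_0$ is hyperbolic and, by Proposition~\ref{p.homoclinic}(3), carried by some $X_i$, while no other Borel homoclinic class has top entropy $>h_0$. Any ergodic MME of $f$ has entropy $h_\top(f)>h_0$, so by Items~(1)--(2) it coincides with the unique MME on some $X_i$. This yields a finite collection of ergodic MMEs, each carried by a distinct class; at least one exists, since some $X_i$ must satisfy $h_\Bor(f|_{X_i})=h_\top(f)$ by definition of $h_\top$ and then contributes one via Item~(1).

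\emph{Main obstacle:} The subtle step is the support equality $\supp(\mu)=\overline X$ in Item~(2), where one must translate the symbolic full-support of $\hat\mu$ into the topological density of coded hyperbolic periodic orbits of $X$ inside $\overline X$. This is the place where one genuinely exploits the compatibility between codings attached to different values of $\chi$ and the refined geometric features of $(\Sigma,\pi)$ summarized in ($\Sigma$\ref{i.Sigma7}).
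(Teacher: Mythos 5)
Your proof is correct and follows essentially the same route as the paper: build an irreducible SPR coding $(\Sigma,\pi)$ via Theorem~\ref{t.SPR-coding}, project the unique MME of $\Sigma$ to $X$ using~($\Sigma$\ref{i.Sigma3}), and invoke Propositions~\ref{p.mme-on-bhc}, \ref{p.homoclinic}, and~\ref{p.decomposition} for the remaining structural claims. The one genuine addition is your self-contained argument for $\supp\mu=\overline X$, where the paper simply defers all of Part~(2) to [BCS-1]: you lift Dirac measures on hyperbolic periodic orbits of $X$ to codings $(\Sigma_\chi,\pi_\chi)$ with $\chi$ shrunk as needed, and use uniqueness of the MME to pass between codings; this works because $\supp\bigl((\pi_\chi)_*\hmu_\chi\bigr)=\overline{\pi_\chi(\Sigma_\chi)}$ by continuity of $\pi_\chi$ together with the full support of $\hmu_\chi$ guaranteed by Theorem~\ref{t.Parry-Gurevich-ASS}. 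Two small points: your separate handling of the zero-entropy case in Part~(3) is harmless but unnecessary, since Proposition~\ref{p.decomposition} already applies with $h_0<0$ when $h_\top(f)=0$; and the uniqueness of $\hmu$ on the irreducible $\Sigma$ should be credited to Theorem~\ref{t.Gurevich-Theorem}(3) or Theorem~\ref{t.Parry-Gurevich-ASS}(1) rather than Corollary~\ref{t.Existence-of-MME-CMS}, which by itself only gives finiteness.
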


\begin{definition} The integer $p$ in (2)  is called the \emph{period of $\mu$}. \end{definition}

\begin{remark}\label{r.Thm-13.1}
Part (1) is proved in \S\ref{s.Proof-existence-MME-diffeo}. Part
(2) was obtained in~\cite{BCS-1}, and (3) follows from  (2) and Prop. \ref{p.decomposition}. By \cite[Sec.~1.6]{BCS-1}, the period $p$ of $\mu$ coincides with {\em the period of the  Borel homoclinic class of $\mu$}, defined to be the greatest common divisor of the period of all hyperbolic periodic orbits homoclinically related to $\mu$.

\end{remark}

\begin{remark}\label{r.top-mixing-implies-Bernoulli}{
Suppose $\dim M=2$, $f\in C^\infty$ and $h_{\top}(f)>0$.
\begin{enumerate}[\quad$\circ$]
\item If $f$ is a  topologically transitive, then there exists exactly one Borel homoclinic class $X$ carrying measures with positive entropy \cite{BCS-1}. The MME of $f|_X$ is the unique MME of $f$.

\item  If $f$ is topologically mixing, then  $\mu$ is Bernoulli and  $p=1$  \cite[Thm 6.12(5)]{BCS-1}.
\end{enumerate}
These results use two-dimensionality and $C^\infty$-smoothness in an essential way.}
\end{remark}
\subsection{Exponential Decay of Correlations}
\label{ss.exp-dec-cor}
We now study the ergodic properties of $(X,f,\mu)$,
starting with the exponential mixing announced in Thm~\ref{t.main}.

The {\em expectation} of an $L^1$ function $\psi$ will be denoted by $
\E_\mu(\psi):=\int \psi d\mu.
$
The {\em variance} of an $L^2$ function $\psi$ is the number $\mathrm{Var}_\mu(\psi):=\|\psi-\E_\mu(\psi)\|_2^2$.
The
 {\em covariance} of two real-valued  $\vf,\psi\in L^2(\mu)$ is
$$
\mathrm{Cov}_\mu(\vf,\psi):=\E_\mu\bigl[(\vf-\E_\mu(\vf))(\psi-\E_\mu(\psi))\bigr]=\int \vf \psi d\mu-\int\vf d\mu \int \psi d\mu.
$$
The {\em correlation coefficient} of non-constant real-valued $\vf,\psi\in L^2(\mu)$ is $${\rho_\mu}(\vf,\psi):=\frac{\mathrm{Cov}_\mu(\vf,\psi)}
{\sqrt{\mathrm{Var}_\mu(\vf)\mathrm{Var}_\mu(\psi)}}\in [-1,1].
$$

We are interested in the correlation between the results of ``a measurement at time $n$" $\vf\circ f^n$ and ``a measurement at time $m$" $\psi\circ f^m$, for $|n-m|\gg 1$. By the $f$-invariance of $\mu$, for every $\vf,\psi\in L^2\setminus\{\text{constants}\}$, there is a $c>0$ such that
$$
\rho_\mu(\vf\circ f^n,\psi\circ f^m)=c\cdot \mathrm{Cov}_\mu(\vf,\psi\circ f^{m-n})
=c\cdot \mathrm{Cov}_\mu(\psi,\vf\circ f^{n-m})\ \ \ (\forall n,m\in\mathbb Z).$$ So it is sufficient to  estimate $\mathrm{Cov}_\mu(\vf,\psi\circ f^{k})$ for $k\in\N$.

Let $p$ denote the period of $\mu$. If $p=1$, then $\mu$ is  mixing, and  $ \rho_\mu(\vf,\psi\circ f^n)\to 0$. 
If $p>1$, then $\mu$ is not mixing,  and we will need to work with the ergodic components of $\mu$ for the iterate $f^p$:

\begin{theorem}\label{t.DOC-diffeos}
Let $f$ be a $C^{1+}$ diffeomorphism of a closed manifold, let $X$ be an SPR Borel homoclinic class with period $p$, and let $\mu$ be the MME of $f|_X$. For every $\beta>0$ there are $0<\theta<1$ and $C>1$ such that for all $\vf,\psi$ which are
$\beta$-quasi-H\"older on $X$,   and for every ergodic component $\mu'$ of $(X,\mu,f^p)$,
\begin{equation}\label{e.exp-doc}
\left|\int \vf \cdot (\psi\circ f^{np}) \;d\mu'-\int \vf \;d\mu'\int \psi \;d\mu'\right|\leq C\|\phi\|_\beta'\|\psi\|_\beta'\theta^{np} \ \ \ (\forall n\geq 0).
\end{equation}
If $(X,f,\mu)$ is mixing, then the theorem holds with $p=1$ and $\mu'=\mu$.
\end{theorem}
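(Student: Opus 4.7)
\begin{proofof}{Theorem~\ref{t.DOC-diffeos} (Sketch)}
The plan is to transfer the theorem to its symbolic counterpart on Markov shifts (Theorem~\ref{t.DOC-for-CMS}) via the SPR coding of Theorem~\ref{t.SPR-coding}. First, since $X$ is an SPR Borel homoclinic class, pick some $\chi'>0$ such that $f|_X$ is $\chi'$-SPR, then fix $0<\chi<\chi'$ (for example $\chi:=\chi'/2$), and apply Theorem~\ref{t.SPR-coding} at the parameter $\chi$ to obtain a H\"older coding $\pi:\Sigma\to M$, where $\Sigma$ is an \emph{irreducible} SPR Markov shift with $h_\Bor(\Sigma)=h_\Bor(f|_X)$ and $(\Sigma,\pi)$ is entropy-full. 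By $(\Sigma\ref{i.Sigma4})$ and Lemma~\ref{l.Hyp-Mark-Coding-Prop}\eqref{i.basic2}--\eqref{i.basic1}, the MME $\mu$ on $X$ admits an ergodic lift $\hmu$ to $\Sigma$, which is necessarily the unique MME of $\Sigma$ provided by Corollary~\ref{t.Existence-of-MME-CMS}.

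Next I would match the periods. By Proposition~\ref{p.mme-on-bhc} and Remark~\ref{r.Thm-13.1}, $\mu$ has period $p$ equal to the period of the Borel homoclinic class $X$; an inspection of the coding (using that $\hmu$ is ergodic and that ergodic lifts preserve mixing properties up to finite rotation) shows that the period of $\Sigma$ coincides with $p$. Applying the spectral decomposition of Lemma~\ref{l.spectral-decomposition-CMS}, write $\Sigma=\biguplus_{i=0}^{p-1}\Sigma_i$ with $\sigma^p:\Sigma_i\to\Sigma_i$ topologically mixing. Then $\hmu_i:=p\cdot\hmu|_{\Sigma_i}$ are the ergodic components of $(\Sigma,\hmu,\sigma^p)$, and their pushforwards $\mu_i':=\pi_*\hmu_i$ are precisely the ergodic components of $(X,\mu,f^p)$.

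The crucial technical step is the following lifting lemma: \emph{if $\psi:X\to\C$ is $\beta$-quasi-H\"older, then $\hpsi:=\psi\circ\pi:\Sigma\to\C$ is $\beta'$-H\"older continuous on $\Sigma$ for some $\beta'=\beta'(\beta)>0$, with $\|\hpsi\|_{\beta'}\leq C\|\psi\|_\beta'$.} To prove this, choose a H\"older $\mathfrak{G}$-lift $\Psi:\mathfrak{G}\to\C$ of $\psi$ realizing (up to a factor $2$) the infimum $\|\psi\|_\beta'$. By property $(\Sigma\ref{i.Sigma5})$ of the coding, the map
$$\un x\mapsto \bigl(\pi(\un x);E^u(\un x),E^s(\un x)\bigr)\in\mathfrak{G}$$
is H\"older continuous from $(\Sigma,d_\Sigma)$ to $(\mathfrak{G},d_\mathfrak{G})$, with some H\"older exponent $\alpha>0$ depending only on $f$ and $\chi$. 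For $\un x\in\Sigma$, the point $\pi(\un x)$ lies in the Pesin block associated with the hyperbolicity bounds of $(\Sigma\ref{i.Sigma5})$; there, Lemma~\ref{l.splitting} guarantees that the splitting provided by $(\Sigma\ref{i.Sigma5})$ coincides with the Oseledets splitting of any invariant measure giving mass to $\pi(\un x)$, so composing $\Psi$ with the lift gives precisely $\hpsi$, and $\hpsi$ is $\alpha\beta$-H\"older with the claimed norm bound.

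Finally, apply Theorem~\ref{t.DOC-for-CMS} (decay of correlations for SPR Markov shifts) to the topologically mixing system $(\Sigma_i,\sigma^p,\hmu_i)$ with the $\alpha\beta$-H\"older observables $\hvf|_{\Sigma_i},\hpsi|_{\Sigma_i}$. This yields constants $0<\theta<1$ and $C>1$, independent of $\vf,\psi$ and of the component $i$, such that
$$\left|\int \hvf\cdot(\hpsi\circ\sigma^{np})\,d\hmu_i-\int\hvf\,d\hmu_i\int\hpsi\,d\hmu_i\right|\leq C\|\hvf\|_{\alpha\beta}\|\hpsi\|_{\alpha\beta}\theta^{np}.$$
Since $\pi\circ\sigma=f\circ\pi$ and $\pi_*\hmu_i=\mu_i'$, the left-hand side equals $\bigl|\int\vf\cdot(\psi\circ f^{np})\,d\mu_i'-\int\vf\,d\mu_i'\int\psi\,d\mu_i'\bigr|$, and the lifting lemma absorbs $\|\hvf\|_{\alpha\beta}\|\hpsi\|_{\alpha\beta}$ into $C\|\vf\|_\beta'\|\psi\|_\beta'$ (updating $C$). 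The main obstacle is the lifting lemma above, since quasi-H\"older functions need \emph{not} be continuous on $M$, so it is essential that the Oseledets distributions vary H\"older continuously in the symbolic parameter---precisely what property $(\Sigma\ref{i.Sigma5})$ provides.
\end{proofof}
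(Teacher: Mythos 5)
Your plan is correct in outline — lift to the SPR coding $(\Sigma,\pi)$, push the problem to the symbolic side, apply Theorem~\ref{t.DOC-for-CMS}, and control the norms via a lifting lemma for quasi-H\"older observables — and the lifting lemma you sketch is essentially the paper's Lemma~\ref{l.psi-hat-Holder}, proved the same way from $(\Sigma\ref{i.Sigma5})$.

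There is, however, one genuine gap: the period-matching step. You assert that ``an inspection of the coding \ldots shows that the period of $\Sigma$ coincides with $p$'' and then decompose $\Sigma=\biguplus_{i=0}^{p-1}\Sigma_i$ with $\sigma^p$ mixing on each piece. Nothing in the construction of $(\Sigma,\pi)$ forces the period $q$ of the Markov graph to equal $p$. The Markov partition $\mathfs R$ is a non-canonical refinement; a periodic orbit of $f$-period $k$ in $X$ is coded by a periodic sequence whose $\sigma$-period may be a proper multiple $mk$ (the coding is finite-to-one, not injective, on $\Sigma^\#$, and the Bowen-type refinement procedure involves choices). Consequently the gcd of cycle lengths in the graph — which is $q$ — need only be a multiple of $p$. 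The paper proves exactly this: Lemma~\ref{l.p|q} establishes $\mu=\pi_*\hmu$ and $p\mid q$ (via the eigenfunction $g\circ f=e^{2\pi i/p}g$ and the $\sigma^q$-Bernoulli structure of $\hmu$), and does \emph{not} claim $p=q$. Accordingly the paper decomposes $\Sigma$ into $q$ pieces, identifies $\mu'$ as $\pi_*\hmu_j'$ for a suitable $\sigma^q$-ergodic component $\hmu_j'$ of $\hmu$ (the lemma opening \S\ref{s.decay-of-corr-diffeo-proof}, which uses the uniqueness of the $f^{pq}$-ergodic decomposition), applies Theorem~\ref{t.DOC-for-CMS} to $(\Sigma',\sigma^q,\hmu_j')$, and then absorbs the mismatch $pn=q\ell_n+r_n$ with $0\le r_n<q$ by bounding the extra $\sigma^{r_n}$ shifts (costing only a bounded factor $\hat\beta^{-r_n}$). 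Your proof would need this bookkeeping: with your identification of $p$-components of $\Sigma$, when $q>p$ the sets $\Sigma_i$ you use are not $\sigma^p$-invariant, and Theorem~\ref{t.DOC-for-CMS} does not apply to $(\Sigma_i,\sigma^p)$.
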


\begin{remark}
By Example \ref{e.Holder-is-Quasi-Holder}, if $\vf,\psi$ are $\beta$-H\"older continuous on $M$, then $\|\cdot\|_\beta'$ can be replaced by the usual $\beta$-H\"older norm $\|\cdot\|_\beta$.
\end{remark}

\begin{remark}\label{r.doc-for-top-mixing-surface-diffeos}
Suppose $\dim M=2$,  $f\in C^\infty$, $h_{\top}(f)>0$, {and $f$ is topologically mixing}. By Remark \ref{r.top-mixing-implies-Bernoulli}, $f$ has a unique MME $\mu$, the Borel homoclinic class of $\mu$ is SPR, and the period equals one. So \eqref{e.exp-doc} holds with $\mu'=\mu$ and $p=1$.
\end{remark}

Thm~\ref{t.main} follows from Thm~\ref{t.DOC-diffeos}, and the previous  remarks.

For the proof of Thm~\ref{t.DOC-diffeos}, see \S\ref{s.decay-of-corr-diffeo-proof}.  {The theorem is known for Anosov diffeomorphisms, Axiom A attractors, and ``Smale systems," see Bowen \cite{Bowen-LNM} and Ruelle \cite{Ruelle-TDF-book}. 
In the general non-uniformly hyperbolic setting, it is the first result of its type for measures of maximal entropy.

\subsection{Asymptotic Variance}
Suppose $\psi$ is a real-valued function in $L^2(\mu)$. Recall that $\psi_n=\psi+\psi\circ f+\cdots+\psi\circ f^{n-1}$. The  \emph{asymptotic variance} of $\psi$ is
\begin{equation}\label{eq-def-avar}
\sigma_\psi^2:=\lim\limits_{n\to\infty}\tfrac{1}{n}\mathrm{Var}_\mu(\psi_n)
\end{equation}
(whenever the limit exists).
The following theorem studies this limit.
We have defined in \S\ref{ss.equilibrium}
the top pressure $P_\Bor(f|_X,\psi)$ of $f$ on $X$ with respect to $\psi$.
\begin{theorem}\label{t.asymp-var-diffeos}
Let $f$ be a $C^{1+}$ diffeomorphism of a closed manifold, let $X$ be an SPR~Borel homoclinic class, let $\mu$
be the measure of maximal entropy of $f|_X$, and suppose $\psi:X\to\R$ is $\beta$-quasi-H\"older. Then the limit \eqref{eq-def-avar} exists, and:
\begin{enumerate}[(1)]
\item {\bf Green-Kubo Formula:} Let $p$ be the period of $\mu$, then
    $$
    \sigma_\psi^2=\frac{1}{p}\left[\mathrm{Var}_\mu(\psi_p)
    +2\sum_{n=1}^\infty \mathrm{Cov}_\mu(\psi_{p},\psi_{p}\circ f^{np})\right].
    $$
\item {\bf Linear Response Formula:}  $\sigma_\psi^2=\frac{d^2}{dt^2}\big|_{t=0}P_\Bor(f|_X,t\psi)$.
\item  {\bf Asymptotic Laplace Transform:} Suppose $\int\psi d\mu=0$, then
    $$
    \E_\mu(e^{z\psi_n/\sqrt{n}})\xrightarrow[n\to+\infty]{}e^{\frac{1}{2}\sigma_\psi^2 z^2}\ \ \ \ (\forall z\in\mathbb C).
    $$
\item {\bf Upper Bound\/:} For some $M_\beta>0$ which depends on $\beta,f$
but not on  $\psi$,
\begin{equation}\label{e.sigma-bound-diffeo}
\sigma_\psi\leq M_\beta\|\psi\|_\beta'.
\end{equation}

\item {\bf Conditions for Zero Asymptotic Variance:} The following are equivalent:
    \begin{enumerate}[(a)]
    \item $\sigma_\psi^2=0$.
    \item $\psi-\int\psi d\mu=u-u\circ f$ $\mu$-a.e. for some Borel function $u:M\to\R$.
    \item For all  hyperbolic periodic points $x$, if $x$ is  homoclinically related to $\mu$, then $\psi_n(x)=n\int\psi d\mu$,
    where $n$ is the period of $x$.
        \item $\int \psi d\nu=\int\psi d\mu$ for all  $f$-invariant measures $\nu$  on $X$.
    \end{enumerate}
\end{enumerate}
\end{theorem}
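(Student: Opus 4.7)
The strategy is to transfer the problem to the symbolic model and then invoke Theorem~\ref{t.zero-variance-CMS}, the corresponding result for SPR countable state Markov shifts. Since $X$ is an SPR Borel homoclinic class, Theorem~\ref{t.SPR-coding} provides an irreducible SPR Markov shift $\Sigma$ of period $p$ and a finite-to-one H\"older continuous coding $\pi:\Sigma\to M$ with $\pi\circ\sigma=f\circ\pi$, such that the unique MME $\hmu$ of $\sigma$ satisfies $\pi_*\hmu=\mu$ and $h(\sigma,\hmu)=h(f,\mu)$. Moreover, since every $f$-invariant measure on $X$ is hyperbolic (Prop.~\ref{p.homoclinic}(4)), each such measure admits a lift to $\Sigma$ by property $(\Sigma$\ref{i.Sigma4}$)$ of the coding.

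The key observation is that pullbacks of quasi-H\"older functions are H\"older continuous on $\Sigma$. Indeed, if $\Psi:\mathfrak G\to\R$ is a $\beta$-H\"older lift of $\psi$, then $\hpsi:=\psi\circ\pi$ factors as $\hpsi(\un x)=\Psi(\pi(\un x),E^s(\un x),E^u(\un x))$, and the map $\un x\mapsto(\pi(\un x),E^s(\un x),E^u(\un x))$ is H\"older continuous by properties $(\Sigma$\ref{i.Sigma2}$)$ and $(\Sigma$\ref{i.Sigma5}$)$. This produces a H\"older continuous function on $\Sigma$ whose H\"older norm is bounded by a multiple of $\|\psi\|_\beta'$ depending only on $f$ and $\beta$. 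Since $\pi_*\hmu=\mu$ and $\pi\sigma^n=f^n\pi$, we have $\hpsi_n=\psi_n\circ\pi$, $\mathrm{Var}_\mu(\psi_n)=\mathrm{Var}_{\hmu}(\hpsi_n)$, and in particular $\sigma_\psi^2=\sigma_{\hpsi}^2$. Items (1), (3) and (4) then follow at once from the corresponding statements of Theorem~\ref{t.zero-variance-CMS}. For item (2), the pressure identification $P_\Bor(f|_X,t\psi)=P_\Bor(\Sigma,t\hpsi)$ for $|t|$ sufficiently small is provided by Addendum~\ref{a.SPR-coding}, so the symbolic analyticity and linear response formula transfer to $M$.

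For item (5), let (a'), (b'), (c'), (d') denote the symbolic analogues of (a)--(d) for $(\Sigma,\sigma,\hmu,\hpsi)$; these are mutually equivalent by Theorem~\ref{t.zero-variance-CMS}. The equivalence (a)$\Leftrightarrow$(a') is the variance identity above. For (d)$\Leftrightarrow$(d'), every ergodic $\nu\in\Prob(f|_X)$ lifts to some $\hnu\in\Prob(\sigma)$ with $\int\hpsi\,d\hnu=\int\psi\,d\nu$, and conversely $\pi_*\hnu\in\Prob(f|_X)$ for any $\hnu\in\Prob(\sigma)$. For (c)$\Leftrightarrow$(c'), every hyperbolic periodic orbit in $X$ homoclinically related to $\mu$ carries a hyperbolic ergodic measure whose lift to $\Sigma$ is concentrated on finitely many points of $\Sigma^\#$ (by the finite-to-one property) and is therefore supported on a single $\sigma$-periodic orbit by ergodicity; conversely, any $\sigma$-periodic orbit projects to an $f$-periodic orbit in $X$ which is homoclinically related to $\mu$ by Lemma~\ref{l.Hyp-Mark-Coding-Prop}(\ref{i.basic4}). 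The implication (b)$\Rightarrow$(b') is immediate by setting $\hat u:=u\circ\pi$.

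The main obstacle will be the descent (b')$\Rightarrow$(b), since a Borel $\hat u$ on $\Sigma$ need not factor through the non-injective coding $\pi$. The plan is to exploit the $\sigma$-invariance of $\hmu$ via a conditional expectation argument. Disintegrate $\hmu=\int\hmu_x\,d\mu(x)$ over $\pi$ and set $u(x):=\int\hat u\,d\hmu_x$, a Borel function defined $\mu$-a.e. on $M$ (after choosing a Borel version). The relation $\pi\sigma=f\pi$ together with $\sigma_*\hmu=\hmu$ yields by uniqueness of disintegration the identity $\hmu_{f(x)}=\sigma_*\hmu_x$ for $\mu$-a.e.~$x$, so $\int\hat u\circ\sigma\,d\hmu_x=\int\hat u\,d\hmu_{f(x)}=u(f(x))$. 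Integrating the symbolic coboundary identity $\hpsi-\int\hpsi\,d\hmu=\hat u-\hat u\circ\sigma$ against $\hmu_x$ then gives $\psi(x)-\int\psi\,d\mu=u(x)-u(f(x))$ for $\mu$-a.e.~$x$, closing the cycle of equivalences.
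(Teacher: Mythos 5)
Your overall transfer strategy matches the paper's, but there is a genuine gap in your treatment of Item~(1), the Green–Kubo formula, and the paper explicitly warns against exactly the step you take. You write that Items (1), (3), (4) ``follow at once'' from the symbolic theorem (you cite Theorem~\ref{t.zero-variance-CMS}, but you mean Theorem~\ref{t.asymp-var-CMS}). For Items (3) and (4) this is fine, because the asymptotic variance and Laplace transforms are transported intact by the identity $\hpsi_n=\psi_n\circ\pi$ $\hmu$-a.e. But the symbolic Green–Kubo formula is phrased in terms of the period $q$ of the Markov shift $\Sigma$, whereas the statement you need involves the period $p$ of $\mu$ (equivalently of the Borel homoclinic class). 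The paper proves only $p\mid q$ (Lemma~\ref{l.p|q}); in general $p<q$ is possible, in which case the formula you would obtain by transfer is the $q$-version, not the $p$-version that the theorem asserts. The paper flags this in a footnote and does not transfer (1) symbolically; instead it reruns the Green–Kubo computation directly on $M$, using the exponential decay of correlations for the ergodic components of $(X,f^p,\mu)$ established in Theorem~\ref{t.DOC-diffeos}. Your proof needs the same repair. Similarly, your citation of Addendum~\ref{a.SPR-coding} for the pressure identity in Item~(2) presupposes that $f|_X$ is SPR for each potential $t\psi$; the paper instead proves the identity $P_{\Bor}(f|_X,t\psi)=P_\Bor(\Sigma,t\hpsi)$ for small $|t|$ directly via a lifting argument (Claim~\ref{c.lift-pressure}) without any such SPR assumption.

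Your treatment of Item (5) takes a genuinely different route from the paper, and it is worth comparing. The paper closes the cycle $\text{(c)}\Rightarrow\text{(b)}\Rightarrow\text{(a)}\Rightarrow\text{(d)}\Rightarrow\text{(c)}$, where the only symbolic step is $\text{(a)}\Rightarrow\text{(d)}$; crucially, $\text{(c)}\Rightarrow\text{(b)}$ is obtained from the Katok–Mendoza generalization of the Livsic theorem, which operates directly on the manifold without any coding. You instead propose to descend the symbolic coboundary $\hat u$ to $M$ by averaging over the (finite) fibers of $\pi$, using the identity $\sigma_*\hmu_x=\hmu_{f(x)}$ from uniqueness of the disintegration of $\hmu$ over $\mu$. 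This disintegration argument appears sound and avoids the Livsic machinery, which is a nice simplification; the paper chose the Livsic route perhaps because it gives a $u$ defined pointwise everywhere on the Pesin set rather than merely $\mu$-a.e. One caveat you should address: the lifting of $\nu$ (or of periodic orbit measures) to $\Sigma$ via property $(\Sigma\ref{i.Sigma4})$ is guaranteed only for $\chi$-hyperbolic measures, and the Lyapunov exponents of an arbitrary hyperbolic measure on $X$ need not exceed the specific $\chi$ of the coding. The paper's proof of $\text{(a)}\Rightarrow\text{(d)}$ explicitly notes ``Since $\chi$ can be chosen arbitrarily small, this is the case for all $\nu\in\Proberg(f|_X)$.'' Your equivalences $\text{(c)}\leftrightarrow\text{(c')}$ and $\text{(d)}\leftrightarrow\text{(d')}$ silently assume every relevant periodic orbit or measure lifts; you need the same escape via $\chi\to 0$, or else reroute through $\text{(b')}\Rightarrow\text{(b)}\Rightarrow\text{(d)}\Rightarrow\text{(c)}$ which avoids any further lifting.
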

\noindent
Thm~\ref{t.asymp-var-diffeos} is proved in \S\ref{s.asymp-var-diffeos}.
{The special cases when $f$ is Anosov, axiom A, or a Smale system (see Example \ref{e.BHC-for-axiom-A}) are due to Bowen \cite{Bowen-LNM}, Ruelle \cite{Ruelle-TDF-book}, Livsic \cite{Livsic}, and Guivarc'h \& Hardy \cite{Guivarch-Hardy}. The equivalence of (b) and (c) in (4) is close to results of Katok \& Mendoza \cite{Katok-Hasselblatt-Book} and Pollicott \cite{Pollicott-Livsic}.}

\subsection{The Central Limit Theorem and Convergence of Moments} The following result is an immediate consequence of Theorem \ref{t.asymp-var-diffeos}(3):
\begin{corollary}
\label{c.CLT-diffeos}
Let $f$ be a $C^{1+}$ diffeomorphism of a closed manifold, let $X$ be an SPR Borel homoclinic class, and let $\mu$
be the MME of $f|_X$. Suppose $\psi:X\to\R$ is  quasi-H\"older, and $\int\psi d\mu=0$. Then:
\begin{enumerate}[(1)]
\item {\bf Central Limit Theorem (CLT).\/} For every $a<b$,
$$
\lim_{n\to+\infty}\mu\left\{x:\frac{\psi_n(x)}{\sqrt{n}}\in (a,b)\right\}=\begin{cases}
\frac{1}{\sqrt{2\pi\sigma_\psi^2}}\int_a^b e^{-t^2/2\sigma_\psi^2}dt & \text{if } \sigma_\psi\neq 0,\\
\mathds{1}_{(a,b)}(0) & \text{if } \sigma_\psi=0 \text{ and } ab\neq 0.
\end{cases}
$$
\item {\bf Convergence of Moments.} For every $k\in\N$, $n^{-k/2}\E_\mu(\psi_n^{k})\xrightarrow[n\to+\infty]{} m_{k} \sigma_\psi^{k} $, where $m_{k}$ is the $k$-th moment of the standard Gaussian distribution.
\end{enumerate}
\end{corollary}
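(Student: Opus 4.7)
The plan is to deduce both assertions directly from Theorem~\ref{t.asymp-var-diffeos}(3), which asserts the convergence of the two-sided Laplace transform $F_n(z):=\E_\mu(e^{z\psi_n/\sqrt n})$ to $F(z):=e^{\sigma_\psi^2 z^2/2}$ for every $z\in\C$. Item~(1) will follow by restricting to purely imaginary $z$ and invoking L\'evy's continuity theorem; Item~(2) will follow by combining the distributional convergence from (1) with a uniform integrability argument extracted from the convergence of $F_n$ on the real axis.

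For Item~(1), specialize Theorem~\ref{t.asymp-var-diffeos}(3) to $z=it$ with $t\in\R$, yielding the pointwise convergence of characteristic functions
$$\chi_n(t):=\E_\mu\bigl(e^{it\psi_n/\sqrt n}\bigr)\xrightarrow[n\to\infty]{} e^{-\sigma_\psi^2 t^2/2}.$$
When $\sigma_\psi>0$, the right-hand side is the characteristic function of $\cN(0,\sigma_\psi^2)$, so by L\'evy's continuity theorem, $\psi_n/\sqrt n$ converges in distribution to $\cN(0,\sigma_\psi^2)$, which yields the stated Gaussian integral formula on every interval $(a,b)$. When $\sigma_\psi=0$, the limit equals $1$, the characteristic function of $\delta_0$, so $\psi_n/\sqrt n$ converges in distribution to $0$, giving $\mu\{\psi_n/\sqrt n\in (a,b)\}\to \mathds{1}_{(a,b)}(0)$ whenever $0\notin\{a,b\}$.

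For Item~(2), note that by Lemma~\ref{l.QH-functions-are-Bounded} the function $\psi$ is bounded, hence each $F_n$ is entire and Theorem~\ref{t.asymp-var-diffeos}(3) applies at $z=\pm R$ for every real $R>0$; this yields the uniform exponential moment bound
$$C_R:=\sup_{n\geq 1}\E_\mu\bigl(e^{R|\psi_n|/\sqrt n}\bigr)\leq \sup_{n\geq 1}\bigl[F_n(R)+F_n(-R)\bigr]<\infty.$$
The elementary inequality $|x|^k\leq (k!/R^k)\,e^{R|x|}$ then gives, for every fixed $k\in\N$,
$$\sup_{n\geq 1}\E_\mu\bigl(|\psi_n/\sqrt n|^{k+1}\bigr)\leq \frac{(k+1)!}{R^{k+1}}\,C_R<\infty,$$
so the family $\{(\psi_n/\sqrt n)^k\}_n$ is uniformly integrable. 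Combined with the distributional convergence from Item~(1), this implies
$$n^{-k/2}\E_\mu(\psi_n^k)=\E_\mu\bigl((\psi_n/\sqrt n)^k\bigr)\xrightarrow[n\to\infty]{}\E(Z^k)=\sigma_\psi^k\, m_k,$$
where $Z\sim \cN(0,\sigma_\psi^2)$ (with the convention $Z\equiv 0$ when $\sigma_\psi=0$, in which case both sides vanish for $k\geq 1$).

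The main---indeed, essentially the only---obstacle lies in Theorem~\ref{t.asymp-var-diffeos}(3) itself, whose proof encodes the full strength of the SPR spectral-gap analysis; once that complex Laplace transform convergence is available, the present corollary reduces to a standard probabilistic argument.
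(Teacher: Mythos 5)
Your proposal is correct and follows essentially the same route as the paper: Item (1) is L\'evy's continuity theorem applied to Theorem~\ref{t.asymp-var-diffeos}(3) restricted to $z\in i\RR$, and Item (2) is distributional convergence plus uniform integrability extracted from the real part of the same Laplace-transform convergence. The only cosmetic difference is in the uniform-integrability step: you bound the $(k+1)$-st moment via $|x|^k\leq (k!/R^k)e^{R|x|}$, whereas the paper bounds the $2k$-th moment via $|x^{2k}/(2k)!|\leq\cosh x$; both yield $L^p$-boundedness for some $p>1$ and hence uniform integrability.
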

\begin{proof}
The proof of (1) follows the classical approach: Theorem \ref{t.asymp-var-diffeos} gives the limit $\E_\mu(e^{z\psi_n/\sqrt{n}})\to e^{\frac{1}{2}\sigma_\psi^2 z^2}$ for every $z\in i\mathbb R$,  and the CLT follows from  L\'evy's continuity theorem.

The CLT says that $\psi_n/\sqrt{n}$ converges in distributions to $\sigma_\psi \mathcal{N}$ where $\mathcal{N}$ is a standard Gaussian random variable. To see that  $\E_\mu((\psi_n/\sqrt{n})^k)\to \E(\sigma_\psi^k N^k)$, it is sufficient to check that $X_n:=(\psi_n/\sqrt{n})^k$ is {\em uniformly integrable}, i.e. that for every $\eps>0$ there exists $K$ so that $\E_\mu\bigl(|X_n|\mathds{1}_{[|X_n|>K]}\bigr)\leq\eps$ for all $n$.

This is indeed the case: $|\frac{x^{2k}}{(2k)!}|\leq \frac{e^x+e^{-x}}{2}$, therefore for every $K>1$,
$$
\E_\mu\bigl(|X_n|\mathds{1}_{[|X_n|>K]}\bigr)\leq \tfrac{1}{K^2} \E_\mu\bigl(X_n^2 \bigr)\leq  \frac{(2k)!}{2K^2}\biggl(\E_\mu[e^{\psi_n/\sqrt{n}}]+\E_\mu[e^{-\psi_n/\sqrt{n}}]\biggr).
$$
This is $O(1/K^2)$, because  $\E_\mu[e^{\pm \psi_n/\sqrt{n}}]\to e^{\frac{1}{2}\sigma_\psi^2 }$, by Thm~\ref{t.asymp-var-diffeos}(3).
\end{proof}
In the special case of Anosov diffeomorphisms and Smale systems (see Example \ref{e.BHC-for-axiom-A}), Cor~\ref{c.CLT-diffeos} is due to
 Bowen \cite{Bowen-LNM}, Ruelle \cite{Ruelle-TDF-book}, and Guivarc'h \& Hardy \cite{Guivarch-Hardy}.

\subsection{Large Deviations}\label{s.LDP-diffeos}
Given a bounded measurable function $\psi:X\to\R$, let
$$
\Lambda_\psi(t):=\limsup_{n\to+\infty}\tfrac{1}{n}\log \E_\mu[ e^{t\psi_n}],
\quad I_\psi(s):=\sup_{t\in\R}\{st-\L_\psi(t)\}.$$
$\Lambda_\psi$ is called the
{\em asymptotic log-moment generating function},  and $I_\psi$ is called the  {\em rate function}, see \cite{Dembo-Zeitouni}.

\begin{theorem}\label{t.LDP-diffeos}
Let $f$ be a $C^{1+}$ diffeomorphism of a closed manifold, let $X$ be an SPR Borel homoclinic class,  and let $\mu$ be the MME of $f|_X$.
For every $\beta>0$ there is $c>0$ with the following property.
Suppose $\psi:X\to\R$ is
$\beta$-quasi-H\"older with norm $\|\psi\|_\beta'=1$,  $\int\psi d\mu=0$,
and $\sigma_\psi^2\neq 0$. Then: \begin{enumerate}[(1)]
\item {For every closed set $F\subset\R$,
$
\displaystyle\limsup_{n\to\infty}\tfrac{1}{n}\log\mu\{x: \tfrac{1}{n}\psi_n(x)\in F\}\leq -\inf_F I_\psi.$}
\item {For every open set $G\subset\R$, $
\displaystyle\liminf_{n\to\infty}\tfrac{1}{n}\log\mu\{x: \tfrac{1}{n}\psi_n(x)\in G\}\geq -\inf_{G\cap (-c\sigma_\psi^4,c\sigma_\psi^4)} I_\psi.$}
\item
$\displaystyle\lim_{n\to\infty}\tfrac{1}{n}\log\mu\{x\,:\, \psi_n(x)\geq na\}\, =\, -I_\psi(a)$ for all $0<a<c\sigma_\psi^4$.
\item $I_\psi(a)=\tfrac {a^2}{2\sigma^{2}_\psi}(1+o(1))$ as $a\to 0^+$.
\end{enumerate}
\end{theorem}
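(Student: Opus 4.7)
The strategy is to transport the problem to the symbolic SPR coding and apply Thm~\ref{t.LDP-CMS} in conjunction with the analyticity of pressure (Thm~\ref{t.equilibrium}(3) / item~(5) of \S7.3). By Thm~\ref{t.SPR-coding} and Addendum~\ref{a.SPR-coding} applied with a value $\chi$ smaller than the $\chi_0$ of the $\chi_0$-SPR property, there is an SPR Markov coding $\pi\colon \Sigma\to X$ with $h_\Bor(\Sigma)=h_\Bor(f|_X)$, an ergodic lift $\hmu$ of $\mu$ at maximal entropy, and, via the H\"older regularity $\underline x\mapsto(\pi(\underline x),E^u(\underline x),E^s(\underline x))$ of $(\Sigma\ref{i.Sigma5})$, a H\"older representative $\hpsi(\underline x):=\Psi(\pi(\underline x);E^u(\underline x),E^s(\underline x))$ of $\psi\circ\pi$ on $\Sigma$, whose H\"older norm is bounded by $C\|\psi\|'_\beta=C$ for a constant $C=C(f,\beta)$. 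In particular $\int\hpsi\,d\hmu=0$ and, by the Abramov-Rokhlin identity together with Thm~\ref{t.asymp-var-diffeos}(2), $\sigma^2_{\hpsi}=\sigma^2_\psi$.

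The first main step is to establish the existence and analyticity of the limit
\[
\Lambda(t):=\lim_{n\to\infty}\tfrac1n\log\E_\mu[e^{t\psi_n}]=\lim_{n\to\infty}\tfrac1n\log\E_{\hmu}[e^{t\hpsi_n}]
\]
on an interval $(-\eps_0,\eps_0)$ with $\eps_0=\eps_0(f,\beta)>0$ \emph{independent of $\psi$}. Indeed, by item~(5) in \S8.3 (equivalently Thm~\ref{t.equilibrium}(3)) applied to the SPR shift $\Sigma$ and to $t\hpsi$, the potential $t\hpsi$ admits a spectral gap for the Ruelle transfer operator for $|t|<\eps_0$, and $t\mapsto P_\Bor(\sigma,t\hpsi)$ is real-analytic there. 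The Ruelle-Perron-Frobenius formalism gives $\Lambda(t)=P_\Bor(\sigma,t\hpsi)-h_\Bor(\Sigma)$, so $\Lambda(0)=\Lambda'(0)=0$ and $\Lambda''(0)=\sigma^2_\psi$. The Cauchy estimates on a disk of radius $\eps_0/2$ yield
\[
\bigl|\Lambda(t)-\tfrac12\sigma^2_\psi t^2\bigr|\le C_1|t|^3\qquad (|t|\le \eps_0/2),
\]
where $C_1=C_1(f,\beta)$ depends only on the uniform sup-norm bound for $\Lambda$ on that disk, which in turn depends only on $\|\hpsi\|_\beta\le C$.

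The second step is the standard G\"artner-Ellis argument. Chebyshev's inequality gives $\mu\{\tfrac1n\psi_n\ge a\}\le e^{-n(ta-\Lambda(t))}$ for $t\ge 0$, and likewise for the other tail; optimizing over $t$ and passing to closed sets proves~(1) with rate $I_\psi$. For the lower bound, on the interval $(-\eps_1,\eps_1)$ with $\eps_1:=\min(\eps_0/2,\sigma^2_\psi/(4C_1))$ the function $\Lambda$ is strictly convex with $\Lambda''\ge \sigma^2_\psi/2$, so its Legendre transform is exposed on the image of $\Lambda'$, which contains $(-c\sigma^4_\psi,c\sigma^4_\psi)$ for $c=1/(8C_1)$. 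For each $a$ in this interval, pick the unique $t_a$ with $\Lambda'(t_a)=a$ and consider the tilted measure $\hmu_{t_a}$, i.e.\ the unique equilibrium measure of $t_a\hpsi$ on $\Sigma$ (which exists and is pressure-full by Addendum~\ref{a.SPR-coding}). A standard change-of-measure and CLT for $\hpsi-a$ under $\hmu_{t_a}$ (a consequence of the spectral gap, via the symbolic version of Thm~\ref{t.asymp-var-diffeos}(3)) yields $\liminf_n \tfrac1n\log \hmu\{\tfrac1n\hpsi_n\in(a-\delta,a+\delta)\}\ge -I_\psi(a)-o_\delta(1)$. Projecting to $X$ using $\pi_*\hmu=\mu$, we obtain (2) and (3).

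Finally, (4) is a direct consequence of the expansion $\Lambda(t)=\tfrac12\sigma^2_\psi t^2+O(|t|^3)$: inverting $a=\Lambda'(t)=\sigma^2_\psi t+O(t^2)$ gives $t=a/\sigma^2_\psi+O(a^2/\sigma^4_\psi)$, and substituting back in $I_\psi(a)=ta-\Lambda(t)$ produces $I_\psi(a)=a^2/(2\sigma^2_\psi)+O(a^3/\sigma^6_\psi)$. The main obstacle I foresee is the uniformity of the constants $\eps_0$ and $C_1$ in the quasi-H\"older norm of $\psi$; this is what forces us to normalize $\|\psi\|'_\beta=1$ and to bound the H\"older norm of the symbolic lift $\hpsi$ in terms of $\|\psi\|'_\beta$ alone, using the H\"older continuity of the Oseledets distributions on $\Sigma$ granted by $(\Sigma\ref{i.Sigma5})$. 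A secondary point to be checked carefully is that in the non-mixing case (period $p>1$), the limit defining $\Lambda$ still exists, which follows from Thm~\ref{t.DOC-diffeos} applied to each ergodic component of $f^p$ combined with a cyclic averaging.
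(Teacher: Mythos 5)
Your proposal is correct and follows the same overall strategy as the paper — transfer everything to the symbolic model via an SPR coding, control the log-moment generating function through the analyticity of pressure under small H\"older perturbations, and run a G\"artner--Ellis argument. The difference is one of modularity: the paper transfers to $\Sigma$ once, checks the three identities $\Lambda_\psi=\Lambda_{\hpsi}$, $I_\psi=I_{\hpsi}$, $\mu\{\tfrac1n\psi_n\in E\}=\hmu\{\tfrac1n\hpsi_n\in E\}$, and then invokes the self-contained symbolic Theorem~\ref{t.LDP-CMS} (proved in Appendix~\ref{appendix-Markov-Shifts} via Dembo--Zeitouni's G\"artner--Ellis with exposed points), reserving Lemma~\ref{l.large-deviation-diffeo} for the refined statements about $\Lambda_\psi$ and $I_\psi$. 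You instead re-derive the symbolic LDP inline — Chebyshev plus optimization for the upper bound, tilted equilibrium measures and a CLT under the tilt for the lower bound. This is a valid and standard variant; what it buys you is a self-contained argument for the lower bound, at the price of re-proving work the paper packages into Thm~\ref{t.LDP-CMS} and Lemma~\ref{t.Free-Energy-CMS}.

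Two small corrections to your write-up. First, $\sigma^2_{\hpsi}=\sigma^2_\psi$ has nothing to do with Abramov--Rokhlin (which is an entropy identity): it is immediate from $\mathrm{Var}_\mu(\psi_n)=\mathrm{Var}_{\hmu}(\hpsi_n)$, which holds since $\mu=\pi_*\hmu$ and $\hpsi_n=\psi_n\circ\pi$ $\hmu$-a.e.; this is exactly \eqref{e.sigma-psi-hat}. Second, the SPR property for the tilted potential $t_a\hpsi$ on $\Sigma$ comes from the stability statement Thm~\ref{t.no-phase-transition} (stability of SPR under sup-small perturbations), not from Addendum~\ref{a.SPR-coding}, which governs passage between $X$ and $\Sigma$ rather than perturbations within $\Sigma$. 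Neither issue affects the validity of the argument.
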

\noindent
The proof, and additional information on  $\Lambda_\psi(t)$ and $I_\psi(s)$, are  in \S\ref{s.Proof-of-LDP-diffeos}.  {The special case of Anosov or Axiom A diffeomorphisms (see Example \ref{e.BHC-for-axiom-A}) is due to  Kifer \cite{Kifer}.}

\begin{remark}\label{r.LDP}
{Had the infimum in (2) been over  $G$ and not just  $G\cap (-c\sigma_\psi^4,c\sigma_\psi^4)$, then  (1) and (2) would have constituted the   ``large deviations principle," see \cite{Dembo-Zeitouni}. In \S\ref{s.Proof-of-LDP-diffeos} we will see that
 $
 \displaystyle\inf_{G\cap (-c\sigma_\psi^4,c\sigma_\psi^4)} I_\psi=\inf_G I_\psi
 $
 whenever $G$ intersects $[-c\sigma_\psi^4,c\sigma_\psi^4]$.}
\end{remark}

\subsection{Approximation by Brownian Motion}\label{s.ASIP-Statement}
The main result of this section is an almost sure   invariance principle (ASIP) in the spirit of Strassen \cite{Strassen}. 

We need  some classical definitions from Probability Theory \cite{Billingsley}.
A probability space $(\Omega,\mathfs F,\mu)$ is called {\em standard} if $\Omega$ is a complete separable metric space, $\mu$ is a Borel probability measure, and $\mathfs F$ is the Borel $\sigma$-algebra (which can always be completed with respect to $\mu$, though it is not always convenient for us to do so).
A (real-valued) {\em stochastic process} is a measurable parameterized family of real-valued functions
$(X_t)_{t\in T}$ on the same standard  probability space. In this paper, $T=\N, [0,T_0]\text{  or }[0,\infty)$ with the usual Borel structure, and
``measurability" means that $(t,\omega)\mapsto X_t(\omega)$ is Borel measurable.

Two real-valued stochastic processes $(X_t^{(i)})_{t\in T}$,  $i\in \{1,2\}$, on (possibly different) probability spaces $(\Omega_i, m_i)$ are said to be  {\em equal in distribution}, if for every $k\geq 1$, every $t_1<\cdots<t_k$ in $T$, and every finite family  of Borel sets $E_1,\ldots,E_k\subset \R$,
$$
m_i\{\omega\in \Omega_i: X_{t_j}^{(i)}(\omega)\in E_j\ \ \forall 1\leq j\leq k\}\text{ are equal for $i\in \{1,2\}$}.
$$
In this case,  the same events happen almost surely for $(X_t^{(1)})_{t\in T}$ and for $(X_t^{(2)})_{t\in T}$.

Suppose $T=[0,T_0]$ or $[0,\infty)$ and $(\Omega,\mathfs F,m)$ is a standard probability space. A parameterized family of functions $B_t:\Omega\to\R$ $(t\in T)$ is called  a {\em standard Brownian motion} on $(\Omega,\mathfs F,m)$, if
\begin{enumerate}[(1)]
\item $(t,\omega)\mapsto B_t(\omega)$ is measurable;
\item $B_0\equiv 0$, and $B_t-B_s$ has Gaussian distribution with mean zero and variance $|t-s|$, i.e.
    $m\{\omega:B_t(\omega)-B_s(\omega)<\tau\}=(2\pi|t-s|)^{-1/2}\int_{-\infty}^\tau e^{-x^2/2|t-s|}dx$;
\item For each $0<t_1<\cdots<t_{n+1}$, $B_{t_{i+1}}-B_{t_i}$ are independent random variables, i.e., $m(\bigcap_i\{\omega: B_{t_{i+1}}(\omega)-
    B_{t_{i}}(\omega)\in E_i \})=\prod_i m\{\omega: B_{t_{i+1}}(\omega)-
    B_{t_{i}}(\omega)\in E_i\}$ for every Borel sets $E_i\subset\R$.
\end{enumerate}

\begin{definition}[{\bf ASIP}]\label{def-ASIP}
We say that a stochastic process $(S_n)_{n\ge1}$ satisfies the \emph{almost sure invariance principle (ASIP)} with parameter $\sigma\ge0$ and rate $o(n^\gamma)$ for $0<\gamma<\tfrac{1}{2}$, if there exist two stochastic processes  $(\wt{S}_n)_{n\geq 1}$ and $(\wt{B}_t)_{t\geq 0}$ defined on a common standard probability space
such that
\begin{enumerate}[(1)]
\item the stochastic processes $(\wt{S}_n)_{n\geq 1}$ and  $(S_n)_{n\geq 1}$ are equal in distribution;
\item $({\wt B}_t)_{t\geq 0}$ is a standard Brownian motion;
\item $|\wt{S}_n-\sigma {\wt B}_n|=o(n^{\gamma})$ a.e. as $n\to\infty$. 
\end{enumerate}
\end{definition}

\begin{remark}
The value of $\sigma$  is uniquely defined. The ASIP implies that    $S_n/\sqrt{n}$ converges to a normal distribution, and $\sigma$ is the standard deviation of the limit.
\end{remark}

\begin{theorem}\label{t.ASIP-diffeo}
Let $f$ be a $C^{1+}$ diffeomorphism of a closed manifold, let $X$ be an SPR Borel homoclinic class, and let
$\mu$ be the MME of $f|_X$. Suppose $\psi:X\to \R$ is  quasi-H\"older with $\int\psi d\mu=0$.
Then $S_n(x):=\psi(x)+\psi(f(x))+\cdots+\psi(f^{n-1}(x))$
satisfies the ASIP 
with $\sigma=\sigma_\psi$. 
 \end{theorem}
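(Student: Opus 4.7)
The plan is to deduce the ASIP for $(X,f,\mu,\psi)$ from the ASIP for Hölder observables on SPR Markov shifts (Thm~\ref{t.ASIP-Sigma} of the Markov shift appendix), by transferring through the symbolic coding provided by Thm~\ref{t.SPR-coding}. First I would fix $\chi>0$ small enough that the unique MME $\mu$ of $f|_X$, which is hyperbolic by Thm~\ref{t.MME-exists}, is $\chi$-hyperbolic, and invoke Thm~\ref{t.SPR-coding} to obtain an irreducible SPR Markov shift $\Sigma$ together with an entropy-full bornological hyperbolic coding $\pi\colon\Sigma\to M$. By ($\Sigma$\ref{i.Sigma3})--($\Sigma$\ref{i.Sigma4}) and the entropy-fullness, $\mu$ admits an ergodic lift $\hmu$ on $\Sigma$ with $\pi_\ast\hmu=\mu$ and $h(\sigma,\hmu)=h_\Bor(\Sigma)$, so that $\hmu$ is the unique MME of $\Sigma$.

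Next I would lift $\psi$ to a Hölder continuous observable on $\Sigma$. Since $\psi$ is $\beta$-quasi-Hölder, pick a $\beta$-Hölder $\mathfrak{G}$-lift $\Psi\colon\mathfrak{G}\to\R$ of $\psi$ and define $\hpsi\colon\Sigma\to\R$ by
\[
  \hpsi(\un x):=\Psi\bigl(\pi(\un x),\, E^u(\un x),\, E^s(\un x)\bigr),
\]
where $E^{u/s}(\un x)$ are the symbolic splittings produced in ($\Sigma$\ref{i.Sigma5}). The Hölder continuity of $\pi$ (($\Sigma$\ref{i.Sigma2})) and of $\un x\mapsto(E^s(\un x),E^u(\un x))$ (($\Sigma$\ref{i.Sigma5})) together with the Hölder continuity of $\Psi$ give that $\hpsi$ is Hölder continuous on $\Sigma$, with norm controlled by $\|\psi\|'_\beta$. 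At $\hmu$-a.e.~point, $\pi(\un x)$ lies on a Pesin block in $X$, and by Lemma~\ref{l.splitting} the symbolic splitting of $\un x$ coincides with the Pesin splitting $E^{u/s}(\pi(\un x))$; hence $\hpsi=\psi\circ\pi$ $\hmu$-a.e. In particular $\int\hpsi\,d\hmu=\int\psi\,d\mu=0$.

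Third, I would apply Thm~\ref{t.ASIP-Sigma} to the SPR Markov shift $(\Sigma,\sigma,\hmu)$ and the Hölder observable $\hpsi$. This produces an ASIP (in the sense of Def~\ref{def-ASIP}) for the Birkhoff sums $\hS_n:=\hpsi+\hpsi\circ\sigma+\dots+\hpsi\circ\sigma^{n-1}$ under $\hmu$, with some asymptotic variance $\hsigma^2$. Since $\pi\circ\sigma=f\circ\pi$ and $\hpsi=\psi\circ\pi$ $\hmu$-a.e., one has $\hS_n=S_n\circ\pi$ $\hmu$-a.e., and because $\pi_\ast\hmu=\mu$, the joint distributions of $(\hS_n)_{n\ge1}$ under $\hmu$ and of $(S_n)_{n\ge1}$ under $\mu$ are identical. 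The ASIP is a property of the joint distribution of the partial sums (Def~\ref{def-ASIP}(1)), so it transfers verbatim: there exist $(\widetilde S_n)_{n\ge1}$ equal in distribution to $(S_n)_{n\ge 1}$ and a standard Brownian motion $(\widetilde B_t)_{t\ge 0}$ on a common standard space such that $|\widetilde S_n-\hsigma \widetilde B_n|=o(n^\gamma)$ a.s. Finally I would identify $\hsigma^2=\sigma_\psi^2$: $\Var_\mu(S_n)=\Var_{\hmu}(\hS_n)$ for every $n$, so dividing by $n$ and letting $n\to\infty$ shows that the asymptotic variance of $(S_n)$ exists and equals the one provided by the symbolic ASIP, namely $\hsigma^2$.

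There is no genuine obstacle; the one point that requires care, and which is the crux of the argument, is the construction of the Hölder lift $\hpsi$ on all of $\Sigma$ together with the identification $\hpsi=\psi\circ\pi$ on a set of full $\hmu$-measure. This is where the particular form of the coding matters: property ($\Sigma$\ref{i.Sigma5}), together with the uniqueness of the stable/unstable splitting on Pesin blocks (Lemma~\ref{l.splitting}), converts the possibly discontinuous datum $(x,E^u(x),E^s(x))$ on $X$ into a genuinely Hölder continuous datum on the symbolic model, which is precisely what allows quasi-Hölder observables — including the geometric potentials $J^{u/s}$ — to be handled on the same footing as continuous Hölder observables on $M$.
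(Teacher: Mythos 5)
Your proposal is correct and follows essentially the same route as the paper: lift the quasi-H\"older observable to a H\"older observable $\hpsi$ on the SPR symbolic model via the H\"older continuous symbolic splittings of ($\Sigma$5), invoke Thm~\ref{t.ASIP-Sigma}, and transfer the ASIP back through equality of distributions of the Birkhoff-sum processes, identifying variances via $\mathrm{Var}_\mu(S_n)=\mathrm{Var}_{\hmu}(\hS_n)$. The paper packages the lift construction as Lemma~\ref{l.psi-hat-Holder} and records the variance identification as eq.~\eqref{e.sigma-psi-hat}, but these are exactly the two points you spell out explicitly.
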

\noindent
The proof is in \S\ref{s.ASIP-diffeos}.  {The Anosov case  is due to Denker \& Philipp \cite{Denker-Philipp}.}

\begin{remark}\label{r-dynamical-ASIP}
Lemma~\ref{l.Borelomania} below implies  the following strengthening of the ASIP, which we call the  ``\emph{dynamical ASIP}":
There is a Borel probability measure $\nu$ on $X\times[0,1]$
which projects to $\mu$ by $(x,t)\mapsto x$ such that
$\wt{S}_n$, $\wt{B}_t$ are defined on $(X\times[0,1],\nu)$ and
$\wt{S}_n(x,\xi)=\psi(x)+\dots+\psi(f^{n-1}(x)).$ Hence
\begin{equation}\label{e.ASIP-dynamical}
     \psi(x)+\psi(f(x))+\dots+\psi(f^{n-1}(x)) = \sigma \wt{B}_n(x,\xi)
          +o(n^{\gamma})\text{, as $n\to\infty$, $\nu$-a.e.}
\end{equation}
\end{remark}

\begin{remark}
It is likely that Thm~\ref{t.ASIP-diffeo} can be improved. Firstly, the error bound $o(n^{\frac{1}{4}+\eps})$ is probably not optimal \cite{Komlos-Major-Tusnady}. Secondly, the factor $[0,1]$ in the previous remark
is  an artifact of Lemma \ref{l.Borelomania}, and it would be nice to get rid of it. But is not clear that  these improvements would lead to new applications.
\end{remark}

\subsection{Functional Central Limit Theorem}\label{s.FCLT}
The ASIP discusses the a.s. behavior of the graph of  $n\mapsto \psi_n(x)$ for  {\em fixed}  $x$. The functional CLT describes the  behavior of the {\em ensemble} of graphs of $n\mapsto \psi_n(x)$, with   $x$ ranging over $(X,\mu)$.

{We first recall some basic definitions from probability theory \cite{Billingsley}. Suppose $(\Omega_n,\mathfs F_n,\mu_n)$ $(n\geq 0)$  are probability spaces, and let $(Y,d)$ be a complete separable metric space, equipped with its Borel $\sigma$-algebra $\mathfs B(Y)$. Let $Z_n:\Omega_n\to Y$ be measurable functions. We say that $Z_n$ {\em converges in distributions to $Z_0$}, and write
$$
Z_n\xrightarrow[n\to\infty]{dist} Z_0,
$$
if $\int g(Z_n)d\mu_n\to \int g(Z_0)d\mu_0$ for every bounded continuous function $g:Y\to\R$. }

Measurable functions $Z:\Omega\to Y$ are called  {\em $Y$-valued random variables} (on $\Omega$). We will be interested in the case  $Y=\mathcal C([0,1]):=\{\omega:[0,1]\to\R:\omega\text{ is continuous}\}$, equipped with the metric $d(\omega_1,\omega_2):=\max|\omega_1-\omega_2|$ and the Borel $\sigma$-algebra.

\begin{example}[{\bf Brownian Motion}]\label{e.Wiener}
Wiener constructed a measure $\mu_W$ on the space $\mathcal C([0,1])$, called {\em Wiener's measure}, so that the
 one-parameter family of measurable functions $B_t\colon \mathcal{C}([0,1])\to \R$ $(t\in [0,1])$
defined by $B_t(\omega):=\omega(t)$ is a standard Brownian motion on $(\mathcal{C}([0,1]),\mu_{W})$.
It is useful to introduce
$$
\overline{B}:\mathcal{C}([0,1])\to \mathcal{C}([0,1])\ ,\ \overline{B}(\omega):=\omega.
$$
This $\mathcal{C}([0,1])$-valued random variable models a random Brownian {\em path} during [0,1].
\end{example}

\begin{example}[{\bf Renormalized Linear Interpolations}]\label{e.linear-interp}
 Let $X$ be a Borel homoclinic class of a diffeomorphism $f$. Assume $f|_X$ is SPR and let $\mu$ be its MME.
For every $x\in X$ and $n\geq 1$, we define
$
\overline{\psi}_n(x)\in \mathcal{C}([0,1])
$ to be the piecewise linear function on $[0,1]$, which is affine on the intervals $[\frac{k-1}{n},\frac{k}{n}]$ $(1\leq k\leq n)$, and with  the following values at the endpoints:
$$
\overline{\psi}_n(x)(0):=0 \text{ and }\overline{\psi}_n(x)(\tfrac{k}{n})=\frac{\psi_k(x)}{\sqrt{n}}.
$$
We see it as a $\mathcal{C}([0,1])$-valued random variable on the probability space $(X, \mu)$.
\end{example}

\begin{corollary}[{\bf Functional CLT}]\label{c.FCLT-Diffeos}
In the setting of  Thm~\ref{t.ASIP-diffeo}, $$
\overline{\psi}_n\xrightarrow[n\to+\infty]{dist}\sigma_\psi \overline{B}.
$$
\end{corollary}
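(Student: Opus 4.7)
The plan is to deduce the functional CLT from Theorem~\ref{t.ASIP-diffeo} (or really its strengthened form in Remark~\ref{r-dynamical-ASIP}) plus classical Brownian scaling. Recall that convergence in distribution on $\mathcal{C}([0,1])$ is preserved under taking a.s. sup-norm limits on a common probability space (``converging together lemma''), so it suffices to build, on a single probability space, random variables $\wt\psi_n$ equal in distribution to $\overline\psi_n$ and a standard Brownian motion $\overline{W}_n$ such that $\|\wt\psi_n - \sigma_\psi \overline{W}_n\|_\infty\to 0$ almost surely.

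First I would invoke the dynamical ASIP: on the probability space $(X\times[0,1],\nu)$ there exists a standard Brownian motion $(\wt B_t)_{t\ge 0}$ satisfying $\psi_k(x)=\sigma_\psi \wt B_k(x,\xi)+o(k^\gamma)$ almost surely, for some fixed $\gamma\in(0,1/2)$. Define $\wt\psi_n(x,\xi)\in\mathcal C([0,1])$ to be the piecewise linear interpolation of $k/n\mapsto \psi_k(x)/\sqrt n$ for $0\le k\le n$. Since $\wt\psi_n$ does not depend on $\xi$ and has the same joint law as $\overline\psi_n$, it is sufficient to prove $\wt\psi_n\xrightarrow{\rm dist}\sigma_\psi\overline B$ on $(X\times[0,1],\nu)$.

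Next, I would invoke the Brownian scaling identity: for each $n$, the process
\[
W_n(t):=\tfrac{1}{\sqrt n}\wt B_{nt},\qquad t\in[0,1],
\]
is a standard Brownian motion on $(X\times[0,1],\nu)$, and I denote by $W_n^{\rm lin}$ its linear interpolation at the grid $\{k/n:0\le k\le n\}$. The dynamical ASIP yields, uniformly in $0\le k\le n$,
\[
\bigl|\wt\psi_n(k/n)-\sigma_\psi W_n^{\rm lin}(k/n)\bigr|=\frac{|\psi_k-\sigma_\psi\wt B_k|}{\sqrt n}=o(n^{\gamma-1/2})\quad\text{a.s.,}
\]
and since both sides are piecewise linear on the grid, $\|\wt\psi_n-\sigma_\psi W_n^{\rm lin}\|_\infty = o(n^{\gamma-1/2})\to 0$ a.s. Meanwhile the L\'evy modulus of continuity of Brownian motion (applied to the Brownian motion $W_n$) gives
\[
\|W_n-W_n^{\rm lin}\|_\infty\le \sup_{\substack{s,t\in[0,1]\\|s-t|\le 1/n}}|W_n(s)-W_n(t)|=O\!\left(\sqrt{\tfrac{\log n}{n}}\right)\quad\text{a.s.}
\]
Combining the two bounds, $\|\wt\psi_n-\sigma_\psi W_n\|_\infty\to 0$ a.s.

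To conclude, I would observe that each $W_n$ is a standard Brownian motion, so $\sigma_\psi W_n$ has, for every $n$, exactly the law $\sigma_\psi\overline B$ on $\mathcal C([0,1])$; hence $\sigma_\psi W_n\xrightarrow{\rm dist}\sigma_\psi\overline B$ trivially. Almost sure sup-norm closeness then forces $\wt\psi_n$, and therefore $\overline\psi_n$, to converge in distribution to $\sigma_\psi\overline B$ as well. The main obstacle I anticipate is purely bookkeeping: carefully tracking that ``equal in distribution'' provided by the ASIP refers to the joint law over all times $k=1,\ldots,n$ (which it does, since the ASIP is built on the same $x\mapsto \psi_k(x)$ and only rearranges the probability space), so that the passage from $\overline\psi_n$ to $\wt\psi_n$ is lawful as random elements of $\mathcal C([0,1])$, not merely finite-dimensional marginals. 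Once that is cleared up, the argument is essentially a soft application of the ASIP plus Brownian scaling.
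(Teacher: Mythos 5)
Your proposal is correct and takes essentially the same route as the paper's Theorem~\ref{thm-FCLT-from-ASIP}: invoke the dynamical ASIP on $(X\times[0,1],\nu)$, define the rescaled Brownian motion $W_n(t)=n^{-1/2}\wt B_{nt}$, show a.s.\ sup-norm convergence of $\overline\psi_n-\sigma_\psi W_n$ to $0$, and conclude by convergence in distribution. The differences are cosmetic and worth flagging. Where you invoke the L\'evy modulus of continuity ``applied to the Brownian motion $W_n$,'' be careful: L\'evy's theorem is an a.s.\ statement about a \emph{single} Brownian motion as $\delta\to0$, whereas here you need a bound on $\sup_{|s-t|\le 1/n}|W_n(s)-W_n(t)|$ uniformly over a single a.s.\ set as $n\to\infty$, where $W_n$ changes with $n$. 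Unwinding the scaling, this equals $n^{-1/2}\sup_{k\le n}\sup_{t\in[k,k+1]}|\wt B_t-\wt B_k|$, so what is really needed is a Borel--Cantelli estimate on increments of the single underlying process $\wt B$ over $[0,n]$; this is precisely what the paper's Lemma~\ref{e.BM-fact} supplies and is the clean way to justify your step. Finally, your conclusion goes through a Slutsky/``converging-together'' lemma (a.s.\ sup-norm closeness to a sequence that is constant in distribution), while the paper tests against bounded uniformly continuous $g$, applies bounded convergence, and finishes with the portmanteau theorem \cite[Thm~2.1]{Billingsley}. Both are standard and correct; the paper's variant is slightly more self-contained, yours is slightly more compact.
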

\noindent
{This is a direct consequence of the ASIP, see \cite{Billingsley} or {Thm~\ref{thm-FCLT-from-ASIP}}.

\subsection{Law of the Iterated Logarithm (LIL)}\label{s.LIL-diffeos} The LIL gives the optimal almost sure bounds for the growth of $\psi_n(x)$, as $n\to\infty$.
\begin{corollary}\label{c.LIL} In the setting of  Thm~\ref{t.ASIP-diffeo}, if $\sigma_\psi\ne0$,
for $\mu$-a.e. $x$ and every $0< c< 1$ we have {\em the law of the iterated logarithm:}
\begin{equation} \limsup_{n\to\infty}\frac{\psi_n(x)}{\sigma_\psi\sqrt{2n\log\log n}}=1, \
\liminf_{n\to\infty}\frac{\psi_n(x)}{\sigma_\psi \sqrt{2n\log\log n}}=-1.\label{e.LIL1}
\end{equation} We also have as well as {\em Strassen's identity:}
\begin{equation} \limsup_{N\to\infty} \tfrac{1}{N}\#\left\{1\leq n\leq N: \psi_n(x)>c \sigma_\psi\sqrt{2n\log\log n}\right\}=1-e^{-4(c^{-2}-1)}.\label{e.LIL2}
\end{equation} \end{corollary}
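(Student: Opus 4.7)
The plan is to deduce both \eqref{e.LIL1} and \eqref{e.LIL2} by transferring, via the dynamical ASIP of Theorem~\ref{t.ASIP-diffeo} and Remark~\ref{r-dynamical-ASIP}, the classical iterated-logarithm results for Brownian motion due to Khintchine and Strassen. First, I would invoke Remark~\ref{r-dynamical-ASIP}: there exist $\gamma\in(0,\tfrac12)$, a probability measure $\nu$ on $X\times[0,1]$ that projects to $\mu$, and a standard Brownian motion $(\wt B_t)_{t\ge0}$ on $(X\times[0,1],\nu)$, such that $\nu$-almost surely, as $n\to\infty$,
\begin{equation}\label{e.ASIP-LIL}
\psi_n(x)=\sigma_\psi\,\wt B_n(x,\xi)+o(n^\gamma).
\end{equation}
Since $\gamma<\tfrac12$, the error satisfies $o(n^\gamma)=o(\sqrt{n\log\log n})$, so for every $\eta>0$ and $\nu$-a.e.\ $(x,\xi)$ there exists $N_0(x,\xi,\eta)$ with $|\psi_n(x)-\sigma_\psi\wt B_n(x,\xi)|\le \eta\sigma_\psi\sqrt{2n\log\log n}$ for all $n\ge N_0$.

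For \eqref{e.LIL1}, Khintchine's classical LIL for standard Brownian motion gives
$\limsup_n \wt B_n/\sqrt{2n\log\log n}=1$ and $\liminf_n \wt B_n/\sqrt{2n\log\log n}=-1$ $\nu$-a.s. Dividing \eqref{e.ASIP-LIL} by $\sigma_\psi\sqrt{2n\log\log n}$ and letting $\eta\to 0$ through a countable sequence yields \eqref{e.LIL1} $\nu$-a.s.\ on $X\times[0,1]$. Since the events in \eqref{e.LIL1} depend only on $x$, and since the projection of $\nu$ on $X$ is $\mu$, Fubini's theorem gives \eqref{e.LIL1} $\mu$-a.s.

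For \eqref{e.LIL2}, the corresponding identity for Brownian motion,
\[
\limsup_{N\to\infty}\tfrac1N\#\bigl\{1\le n\le N:\wt B_n>c\sqrt{2n\log\log n}\bigr\}=1-e^{-4(c^{-2}-1)}\quad\text{$\nu$-a.s.,}
\]
is a known consequence of Strassen's functional LIL~\cite{Strassen}; I would cite it. To transfer it to $\psi_n$, fix $0<c<1$ and $\eta>0$ small enough that $c\pm\eta\in(0,1)$. The estimate above shows that, eventually in $n$,
\[
\bigl\{\psi_n(x)>c\sigma_\psi\sqrt{2n\log\log n}\bigr\}\subset \bigl\{\sigma_\psi\wt B_n>(c-\eta)\sigma_\psi\sqrt{2n\log\log n}\bigr\},
\]
and the reverse inclusion holds with $c+\eta$ in place of $c-\eta$. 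Applying the Brownian identity at thresholds $c\pm\eta$ sandwiches the limsup density for $\psi_n$ between $1-e^{-4((c+\eta)^{-2}-1)}$ and $1-e^{-4((c-\eta)^{-2}-1)}$; since $c\mapsto 1-e^{-4(c^{-2}-1)}$ is continuous on $(0,1)$, letting $\eta\to0$ through a countable sequence yields \eqref{e.LIL2} $\nu$-a.s., hence $\mu$-a.s.\ as above.

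The main (and only) technical point is the sandwich argument in the last paragraph: a pathwise $o(n^\gamma)$ error needs to be shown to not spoil the counting of level crossings at the critical scale $\sqrt{2n\log\log n}$. This is immediate because $\gamma<\tfrac12$ gives an error strictly smaller than that scale, and the target formula is continuous in the threshold $c$, so one can absorb the error into an arbitrarily small shift of $c$. No further technical ingredient beyond the ASIP and the classical Brownian LIL results is required.
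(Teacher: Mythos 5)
Your proposal is correct and follows essentially the same route as the paper's proof (Theorem~\ref{thm-LIL-from-ASIP} in Appendix~C): invoke the dynamical ASIP, apply the classical Brownian/random-walk versions of the Khintchine and Strassen laws, and observe that both identities are stable under a pathwise $o(n^\gamma)$ perturbation because $\gamma<\tfrac12$. The only difference is that the paper states this stability in a single terse sentence ("If \eqref{e.LIL1-Z} and \eqref{e.LIL2-Z} hold for $Z_n$, they also hold for $Z_n+o(n^\gamma)$"), whereas you spell it out as a sandwich argument with thresholds $c\pm\eta$ and a continuity step; that is a correct and complete justification of the same claim.
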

\noindent
This is a consequence of the
ASIP, see \cite{Billingsley} or Thm~\ref{thm-LIL-from-ASIP}.

\medbreak

Strassen noticed an amusing consequence of  \eqref{e.LIL2}: The upper density of $n$ such that  $\psi_n(x)>\tfrac 1 2\sigma_\psi\sqrt{2n\log\log n}$ is a.s. bigger than  $0.99999$, but smaller than $1$.

\subsection{Arcsine Law}
\begin{corollary}\label{c.Arcsine}
Let
$
d_n(x):=\frac{1}{n}\#\{1\leq k\leq n: \psi_k(x)>0\}
$.
Under the assumptions of Thm~\ref{t.ASIP-diffeo}, if $\sigma_\psi\ne0$, then for every $s\in [0,1]$,
\begin{equation}\label{e.arcsine}
\lim_{n\to\infty}\mu\{x\in X: d_n(x)\leq s\}=\tfrac{2}{\pi}\arcsin(\sqrt{s}).
\end{equation}
\end{corollary}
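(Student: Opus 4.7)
The strategy is to transfer P.~L\'evy's classical arcsine law for standard Brownian motion to the dynamical setting, using the functional central limit theorem (Corollary~\ref{c.FCLT-Diffeos}) together with an extended continuous mapping argument. The FCLT provides the key input: the renormalized linear interpolations $\overline{\psi}_n$ of Example~\ref{e.linear-interp} converge in distribution to $\sigma_\psi\,\overline{B}$ in $\mathcal{C}([0,1])$. I will integrate against the indicator of positivity to relate $d_n(x)$ to the occupation time $\mathrm{Leb}\{t\in[0,1]:B_t>0\}$, for which L\'evy's theorem identifies the arcsine distribution.

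First, I introduce the functionals $\Phi,\Psi_n\colon \mathcal{C}([0,1])\to[0,1]$ defined by
$$
\Phi(\omega):=\mathrm{Leb}\{t\in[0,1]:\omega(t)>0\},\quad \Psi_n(\omega):=\tfrac1n\,\#\{1\le k\le n:\omega(k/n)>0\},
$$
so that $d_n(x)=\Psi_n(\overline{\psi}_n(x))$. The key analytic step is the Riemann-sum sandwich lemma: if $\omega_n\to\omega$ uniformly in $\mathcal{C}([0,1])$ and $\mathrm{Leb}\{\omega=0\}=0$, then $\Psi_n(\omega_n)\to\Phi(\omega)$. Indeed, for any $\epsilon>0$ uniform convergence yields $\|\omega_n-\omega\|_\infty<\epsilon$ for $n$ large, so
$$
\tfrac1n\#\{1\le k\le n:\omega(k/n)>\epsilon\}\le \Psi_n(\omega_n)\le \tfrac1n\#\{1\le k\le n:\omega(k/n)>-\epsilon\}.
$$
For all but countably many $\epsilon>0$, $\mathrm{Leb}\{\omega=\pm\epsilon\}=0$, so the indicator of $\{\omega>\pm\epsilon\}$ is Riemann integrable and the outer sums converge to $\mathrm{Leb}\{\omega>\epsilon\}$ and $\mathrm{Leb}\{\omega>-\epsilon\}$ respectively. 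Letting $\epsilon\to 0^+$ along such a sequence and using $\mathrm{Leb}\{\omega=0\}=0$, both bounds converge to $\Phi(\omega)$, proving the claim.

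Next, I would invoke the extended continuous mapping theorem (a consequence of Skorokhod's representation for convergence in distribution on Polish spaces). Since a Brownian sample path almost surely has zero set of Lebesgue measure zero (the zero set has Hausdorff dimension $1/2$), the Wiener measure assigns full mass to the set of paths to which the sandwich lemma applies. Combined with $\overline{\psi}_n\xrightarrow{dist}\sigma_\psi\,\overline{B}$ from Corollary~\ref{c.FCLT-Diffeos}, this yields
$$
d_n=\Psi_n(\overline{\psi}_n)\xrightarrow{dist}\Phi(\sigma_\psi\,\overline{B}).
$$
Since $\sigma_\psi\ne 0$, $\{\sigma_\psi B_t>0\}$ equals $\{B_t>0\}$ or $\{B_t<0\}$ according to the sign of $\sigma_\psi$, and since $\overline{B}$ and $-\overline{B}$ have the same distribution, $\Phi(\sigma_\psi\,\overline{B})$ has the same law as $\Phi(\overline{B})$. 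P.~L\'evy's arcsine law identifies this as the arcsine distribution with CDF $\tfrac{2}{\pi}\arcsin\sqrt{s}$, completing the proof of~\eqref{e.arcsine}.

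The main obstacle is the sandwich lemma linking the discrete counting functional $\Psi_n$ to the continuous occupation-time functional $\Phi$; once this is in place, the FCLT and L\'evy's arcsine law combine essentially mechanically. An alternative route, equally clean, would be to apply the dynamical ASIP of Remark~\ref{r-dynamical-ASIP} directly: the relation $\psi_k(x)=\sigma_\psi\widetilde B_k(x,\xi)+o(k^\gamma)$ reduces the statement to the density-of-sign-change estimate $\tfrac1n\#\{1\le k\le n:|\widetilde B_k(x,\xi)|\le Ck^\gamma\}\to 0$ almost surely, itself a quick consequence of Brownian scaling and the reflection principle.
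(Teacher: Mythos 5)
Your proof is correct, but it takes a genuinely different route from the paper's. You derive the arcsine law by pushing L\'evy's theorem through the functional CLT (Cor.~\ref{c.FCLT-Diffeos}) via an extended continuous mapping argument: you establish a deterministic ``sandwich'' lemma asserting $\Psi_n(\omega_n)\to\Phi(\omega)$ whenever $\omega_n\to\omega$ uniformly and $\mathrm{Leb}\{\omega=0\}=0$, note that Brownian paths a.s.\ lie in the latter set, and appeal to the extended continuous mapping theorem for $n$-dependent functionals (which requires Skorokhod representation or its equivalent). The paper instead works directly from the ASIP (eq.~\eqref{e.C.1}): it first proves a ``softened'' arcsine law (eq.~\eqref{e.soft-arcsine}) using Lemma~\ref{l.BM-zero-set}, which quantifies the smallness of $\mathrm{Leb}\{|B_t|<\delta\}$, and then sandwiches $d_n(x)$ between occupation times of the coupled Brownian motion shifted by $\pm 2\delta$, using Lemma~\ref{e.BM-fact} to pass from discrete counts to continuous occupation time. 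Your route is conceptually cleaner --- it factors through the FCLT, which is already a stated corollary, and would apply immediately to any other occupation-time functional of the path --- but it imports a piece of general machinery (the extended continuous mapping theorem with varying $g_n$) that the paper's elementary $\delta$-sandwich avoids. Your final remark about the ``alternative route'' via the dynamical ASIP gestures at the paper's actual argument, though as stated it is incomplete: reducing to the density estimate $\frac1n\#\{k\le n:|\widetilde B_k|\le Ck^\gamma\}\to 0$ still leaves the gap between the \emph{discrete} count $\frac1n\#\{k\le n:B_k>0\}$ and the \emph{continuous} occupation time $\lambda\{t\in[0,1]:B_t>0\}$ that L\'evy's theorem addresses; closing it is precisely the role of Lemma~\ref{e.BM-fact}.
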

\noindent
This is a consequence of the} ASIP, see \cite{Billingsley} or Thm~\ref{thm-Arcsine-from-ASIP}.

To understand the arcsine law, consider the density  of the arcsine distribution,  $\frac{d}{ds}(\frac{2}{\pi}\arcsin\sqrt{s})=\pi^{-1}[s(1-s)]^{-1/2}$. This function has a global  minimum at $s=\frac{1}{2}$ and increases to infinity as $s\downarrow 0$ or $s\uparrow\infty$. 
 Hence, \eqref{e.arcsine} says that the most likely scenario is that most of $\psi_k(x)$ have the same sign, and the least likely scenario is that  roughly half are positive and roughly half are negative.

\subsection{Law of Records}\label{s.records}
\begin{corollary}\label{c.Records}
Under assumptions of Thm~\ref{t.ASIP-diffeo},
if $\sigma_\psi\ne0$,
for every $s>0$,
\begin{equation}\label{e-Records}
\mu\left\{x\in X: \tfrac{1}{\sqrt{n}}\max_{1\leq k\leq n}\psi_k(x)\geq s\right\}\xrightarrow[n\to\infty]{}\sqrt{\tfrac{2}{\pi\sigma^2_\psi}}
\int_s^\infty e^{-t^2/2\sigma^2_\psi}dt.
\end{equation}
\end{corollary}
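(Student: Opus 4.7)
The plan is to derive Corollary~\ref{c.Records} directly from the Functional CLT (Corollary~\ref{c.FCLT-Diffeos}) via the continuous mapping theorem and the reflection principle for Brownian motion. The random variable whose tail appears in~\eqref{e-Records} is (up to a harmless adjustment at $k=0$) the supremum of a renormalized Birkhoff path, and Corollary~\ref{c.FCLT-Diffeos} gives weak convergence of that entire path, in $\mathcal{C}([0,1])$, to a Brownian trajectory, so the claim will follow by pushing this weak convergence through the $\sup$ functional.

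First I would rewrite the event in terms of the piecewise linear interpolation $\overline{\psi}_n(x)\in\mathcal{C}([0,1])$ of Example~\ref{e.linear-interp}. Since $\overline{\psi}_n(x)$ is piecewise linear with knots at $k/n$ and $\psi_0\equiv 0$, its supremum over $[0,1]$ is attained at some $k/n$ and equals $\tfrac{1}{\sqrt n}\max_{0\le k\le n}\psi_k(x)$. For $s>0$, this supremum is $\ge s$ if and only if $\tfrac{1}{\sqrt n}\max_{1\le k\le n}\psi_k(x)\ge s$, so the two events coincide. Next, the functional $g:\mathcal{C}([0,1])\to\R$, $g(\omega):=\max_{0\le t\le 1}\omega(t)$, is $1$-Lipschitz for the sup-norm and hence continuous, so the continuous mapping theorem applied to the convergence $\overline{\psi}_n\xrightarrow{dist}\sigma_\psi\overline{B}$ of Corollary~\ref{c.FCLT-Diffeos} yields
$$g(\overline{\psi}_n)\xrightarrow[n\to\infty]{dist}g(\sigma_\psi\overline{B})=\sigma_\psi\max_{0\le t\le 1}B_t.$$

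It then remains to identify the limit law. Here I would invoke the reflection principle for Brownian motion: $\max_{0\le t\le 1}B_t$ has the same distribution as $|B_1|$, namely the half-normal law with density $\sqrt{2/\pi}\,e^{-t^2/2}$ on $[0,\infty)$. Since $\sigma_\psi\ne 0$, the limit $\sigma_\psi|B_1|$ admits a continuous distribution function at every $s>0$, so the weak convergence upgrades to pointwise convergence of the probabilities under consideration. A change of variable $u=\sigma_\psi t$ then transforms $\Prob(\sigma_\psi|B_1|\ge s)=\sqrt{2/\pi}\int_{s/\sigma_\psi}^\infty e^{-t^2/2}dt$ into $\sqrt{2/(\pi\sigma_\psi^2)}\int_s^\infty e^{-u^2/(2\sigma_\psi^2)}du$, which is exactly the right-hand side of~\eqref{e-Records}. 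I do not foresee a substantial obstacle: the only step requiring mild care is the identification of the discrete maximum $\max_{1\le k\le n}\psi_k$ with the continuous-time maximum of the linear interpolation, and this is immediate from the piecewise-linear structure of $\overline{\psi}_n$ together with $\psi_0=0$; everything else is a textbook consequence of the functional CLT already established in the paper.
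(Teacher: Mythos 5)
Your proposal is correct and follows essentially the same route as the paper: apply the Functional CLT (Corollary~\ref{c.FCLT-Diffeos}) to the path $\overline{\psi}_n$, push the weak convergence through the sup functional $g(\omega)=\max_{[0,1]}\omega$, use that $\sigma_\psi\max_{[0,1]}B_t$ has a continuous CDF (portmanteau) to get convergence of the tail probabilities, and finish with the reflection principle. The only difference is cosmetic: the paper handles the $k=0$ vs.\ $k=1$ discrepancy by working with bounded uniformly continuous test functions $h\circ g$ and noting that the two maxima differ by at most $|\psi_1|/\sqrt{n}=o(1)$, whereas you observe directly that, since $\psi_0\equiv 0$ and $s>0$, the events $\{\max_{0\le k\le n}\psi_k\ge s\sqrt n\}$ and $\{\max_{1\le k\le n}\psi_k\ge s\sqrt n\}$ are literally equal --- a slightly cleaner version of the same point.
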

\noindent
This is a consequence of the Functional CLT, 
see \cite{Billingsley} or Thm~\ref{thm-Records-from-ASIP}}.

\subsection{Sharp Expansion Bounds}\label{ss.expansion-bounds}
The ASIP has dynamical consequences. Recall the definition of  $\Lambda^+(\mu)$ from \S \ref{ss.def-of-Lambda-plus}.
\begin{corollary}\label{t.expansion-bounds}
Let $X,f,\mu$ be as in \S\ref{s.setup-for-consequences}.
If $X$ contains two hyperbolic periodic orbits with different sums of positive Lyapunov exponents (counted with multiplicity), then there is $\sigma>0$ such that for $\mu$-a.e. $x\in X$ the following properties hold.
\begin{enumerate}[(1)]
\item {\bf Law of Iterated Logarithm\/}: For all $c>1$, for all $n$ large enough,
$$
e^{n\Lambda^+(\mu)-c\sigma\sqrt{2n\log\log n}}
\leq |\det (Df^n_x|_{E^u(x)})|\leq e^{n\Lambda^+(\mu)+c\sigma\sqrt{2n\log\log n}}.
$$
\item
{\bf These Bounds are Sharp:} For all $0<c<1$,
\begin{align*}
&\limsup_{N\to\infty}\tfrac{1}{N}\#\{1\leq n\leq N:
 |\det (Df^n_x|_{E^u(x)})|> e^{n\Lambda^+(\mu)+c\sigma\sqrt{2n\log\log n}}\}\\
&=\limsup_{N\to\infty}\tfrac{1}{N}\#\{1\leq n\leq N:
|\det (Df^n_x|_{E^u(x)})|< e^{n\Lambda^+(\mu)-c\sigma\sqrt{2n\log\log n}}\}\\
&=1-e^{-4(c^{-2}-1)}.
\end{align*}

\smallskip
\item {\bf Arcsine Law:}
Let $
d_N(x):=\frac{1}{N}\#\{1\leq n\leq N: |\det (Df^n_x|_{E^u(x)})|>e^{n\Lambda^+(\mu)}\},
$
then
$\displaystyle
\lim_{N\to\infty}\mu\{x: d_N(x)\leq s\}=\frac{2}{\pi}\arcsin(\sqrt{s})\text{ for all $s{\in[0,1]}$.}
$

\smallskip
\item {\bf Large Deviations:} As $\varepsilon\to 0$,
$$
\lim_{n\to\infty}\tfrac1n\log \mu\left\{x: |\det(Df^n|_{E^u(x)})|\geq e^{n(\Lambda^+(\mu)+\eps)}\right\}= -\tfrac{\eps^2}{2\sigma^2}(1+o(1)),
$$

$$
\lim_{n\to\infty}\tfrac1n\log \mu\left\{x: |\det(Df^n|_{E^u(x)})|\leq e^{n(\Lambda^+(\mu)-\eps)}\right\}= -\tfrac{\eps^2}{2\sigma^2}(1+o(1)).
$$
\end{enumerate}
\end{corollary}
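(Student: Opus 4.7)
The plan is to reduce all four statements to theorems already established for quasi-H\"older Birkhoff sums. Define
\[
 \psi(x) := \log|\det(Df|_{E^u(x)})| - \Lambda^+(\mu) = -J^u(x) - \Lambda^+(\mu),
\]
where $J^u$ is the geometric potential of Example~\ref{e.Geometric-Is-Quasi-Holder}. Since $X\subset\NUH'(f)$, the unstable bundle $E^u$ is well-defined on $X$, so $\psi:X\to\R$ is well-defined. By Example~\ref{e.Geometric-Is-Quasi-Holder}, $\psi$ is $\beta$-quasi-H\"older. Moreover, because $\mu$ is hyperbolic and ergodic, the Oseledets theorem yields $\int J^u\,d\mu=-\Lambda^+(\mu)$, hence $\int\psi\,d\mu=0$. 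Observe that the $n$-th Birkhoff sum satisfies
\[
 \psi_n(x) = \log|\det(Df^n_x|_{E^u(x)})| - n\Lambda^+(\mu),
\]
so that $|\det(Df^n_x|_{E^u(x)})| = \exp(n\Lambda^+(\mu)+\psi_n(x))$.

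The first step is to verify that $\sigma:=\sigma_\psi>0$. By Theorem~\ref{t.asymp-var-diffeos}(5), the vanishing of $\sigma_\psi$ is equivalent to $\psi_n(x)=0$ on every hyperbolic periodic orbit homoclinically related to $\mu$. Since by definition all periodic orbits in the Borel homoclinic class $X$ are homoclinically related to one another (and in particular to $\mu$), condition~(5)(c) of Theorem~\ref{t.asymp-var-diffeos} would force $\tfrac{1}{n(x)}\Lambda^+_{n(x)}(x)=\Lambda^+(\mu)$ on every periodic orbit in $X$, contradicting the hypothesis that two such orbits have distinct sums of positive Lyapunov exponents. Hence $\sigma>0$.

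Once $\psi$ is in place, the four statements follow from the abstract results. For~(1), apply the Law of the Iterated Logarithm (Corollary~\ref{c.LIL}) to both $\psi$ and $-\psi$: for any $c>1$ and $\mu$-a.e.\ $x$, one has $|\psi_n(x)|\le c\sigma\sqrt{2n\log\log n}$ for all sufficiently large $n$; exponentiating gives the claimed two-sided bound on $|\det(Df^n_x|_{E^u(x)})|$. For~(2), apply Strassen's identity \eqref{e.LIL2} of Corollary~\ref{c.LIL} to $\psi$ for the ``$>$'' density and to $-\psi$ for the ``$<$'' density, giving the asserted value $1-e^{-4(c^{-2}-1)}$ in both cases. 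For~(3), apply the Arcsine Law (Corollary~\ref{c.Arcsine}) directly: the event $|\det(Df^n_x|_{E^u(x)})|>e^{n\Lambda^+(\mu)}$ is exactly $\{\psi_n(x)>0\}$. For~(4), apply Theorem~\ref{t.LDP-diffeos}(3)--(4) to $\pm\psi/\|\psi\|'_\beta$ (the normalization is harmless because rescaling $\psi$ rescales $\sigma$ and $\varepsilon$ in the same way), to conclude $\tfrac{1}{n}\log\mu\{\psi_n\ge n\varepsilon\}\to -I_\psi(\varepsilon)$ for small $\varepsilon>0$, together with the quadratic asymptotic $I_\psi(\varepsilon)=\tfrac{\varepsilon^2}{2\sigma^2}(1+o(1))$ as $\varepsilon\to0^+$; the symmetric bound follows by applying the same to $-\psi$.

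The only real step that is not purely bookkeeping is the verification that $\sigma>0$; everything else is a mechanical transcription of the ASIP/LIL/arcsine/large-deviation package of Sections~\ref{s.LDP-diffeos}--\ref{s.records} through the substitution $\psi=-J^u-\Lambda^+(\mu)$.
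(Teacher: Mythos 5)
Your proof is correct and follows exactly the same route as the paper: introduce $\psi = \log|\det(Df|_{E^u})| - \Lambda^+(\mu)$, note it is quasi-H\"older with zero mean by Example~\ref{e.Geometric-Is-Quasi-Holder} and Oseledets, use Theorem~\ref{t.asymp-var-diffeos}(5) together with the periodic-orbit hypothesis to get $\sigma_\psi>0$, and then read off the four items from Corollaries~\ref{c.LIL} and \ref{c.Arcsine} and Theorem~\ref{t.LDP-diffeos}. The only difference from the paper's proof is that you spell out the reduction and the normalization step in (4) in more detail, which is fine.
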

\begin{proof}
By the chain rule, $|\det(Df^n_x|_{E^u(x)})|=\exp\left(\psi_n(x)+n \Lambda^+(\mu)\right)$, where
$$
\psi(x):=\log|\det(Df_x|_{E^u(x)})|-\Lambda^+(\mu).
$$
This function is quasi-H\"older on $X$, by Example \ref{e.Geometric-Is-Quasi-Holder}, and $\int\psi d\mu=0$ because by the Oseledets theorem and the ergodicity of $\mu$,
$$
\int\log|\det(Df_x|_{E^u(x)})|d\mu =\lim_{n\to
\infty}\tfrac{1}{n}\log|(\det Df^n_x|_{E^u(x)})|=\Lambda^+(\mu).
$$
By Thm \ref{t.asymp-var-diffeos}(5) and the assumptions on $X$,
 $\sigma:=\sigma_\psi\neq 0$. The corollary now follows from 
Cor~\ref{c.LIL} and \ref{c.Arcsine}, and Thm~\ref{t.LDP-diffeos}.
\end{proof}
\begin{remark}
 Similar results for $Df|_{E^s}$ follow by applying the theorem to $f^{-1}$.
 \end{remark}
 \begin{remark}
 Note that in dimension two, $|\det(Df^n|_{E^u(x)})|=\|Df^n|_{E^u(x)}\|$.
\end{remark}

\subsection{Effective Intrinsic Ergodicity}\label{ss.Kadyrov-for-Diffeos}
The following statements imply Thm~\ref{t.effective}.
Since $f|_X$ has a unique MME $\mu$, every measure with entropy $h_{\Bor}(f|_X)$ equals  $\mu$.  By the following result, measures with entropy {\em close} to $h_{\Bor}(f|_X)$ are {\em close} to $\mu$:

\begin{theorem}\label{t.Kadyrov-Ineq-diffeos}
Let $f$ be a $C^{1+}$ diffeomorphism of a closed manifold, let $X$ be an SPR Borel homoclinic class, and let
$\mu$ be the MME of $f|_X$.
For every $\beta>0$ there is a number $C>0$ as follows. For every $\beta$-quasi-H\"older observable $\psi$ on $X$ and every non-necessarily ergodic $f$-invariant probability measure $\nu$ on $X$,
$$
\left| \int\psi d\mu-\int\psi d\nu \right|\leq C\|\psi\|_\beta'\sqrt{h(f,\mu)-h(f,\nu)}.
$$
\end{theorem}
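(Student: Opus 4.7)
The plan is to reduce the inequality to an analogous statement for the symbolic model $(\Sigma,\sigma,\hmu)$ provided by Theorem \ref{t.SPR-coding}, where the transfer operator of $\hmu$ acts with spectral gap (see \S\ref{s.SPR-shift} and \cite{Cyr-Sarig}), and then apply a Pinsker-type argument at the symbolic level, in the spirit of Kadyrov \cite{Kadyrov-Effective-Uniqueness}. First I would dispose of the trivial case: quasi-H\"older observables are bounded by $\|\psi\|_\beta'$ (Lemma \ref{l.QH-functions-are-Bounded}), so whenever $h(f,\nu)\le h(f,\mu)-\delta_0$ for some fixed $\delta_0>0$ the inequality holds with $C:=2/\sqrt{\delta_0}$. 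Henceforth assume $h(f,\nu)>h(f,\mu)-\delta_0$, with $\delta_0$ chosen smaller than both the entropy-fullness threshold of the coding in Theorem \ref{t.SPR-coding} and the entropy-hyperbolicity gap of Remark \ref{r.SPR-implies-hyperbolicity}.

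Next I would reduce to the ergodic case using the ergodic decomposition $\nu=\int\nu_\omega\,dP(\omega)$, whose components are carried by $X$ since $X$ is an invariant Borel set. Since $\mu$ is the MME of $f|_X$, every $\nu_\omega$ satisfies $h(f,\nu_\omega)\le h(f,\mu)$, and by the concavity of the square root (Jensen's inequality),
\[
\Big|\!\int\!\psi\,d\mu-\!\int\!\psi\,d\nu\Big|\le \int\Big|\!\int\!\psi\,d\mu-\!\int\!\psi\,d\nu_\omega\Big|dP\le C\|\psi\|_\beta'\!\int\!\sqrt{h(f,\mu)-h(f,\nu_\omega)}\,dP\le C\|\psi\|_\beta'\sqrt{h(f,\mu)-h(f,\nu)}.
\]
So it suffices to treat ergodic $\nu$. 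For such $\nu$ with entropy exceeding $h(f,\mu)-\delta_0$, entropy-fullness and Lemma \ref{l.Hyp-Mark-Coding-Prop}(\ref{i.basic2}) yield an ergodic $\sigma$-invariant lift $\hnu$ on $\Sigma$ with $h(\sigma,\hnu)=h(f,\nu)$; the MME $\hmu$ of $\Sigma$ is the unique lift of $\mu$ (by uniqueness of the MME in Theorem \ref{t.Existence-of-MME-CMS} together with ($\Sigma$\ref{i.Sigma3})--($\Sigma$\ref{i.Sigma4})).

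Now I would lift the observable: picking a H\"older $\mathfrak G$-lift $\Psi$ of $\psi$ almost realizing $\|\psi\|_\beta'$, set $\hpsi(\un x):=\Psi(\pi(\un x),E^s(\un x),E^u(\un x))$. By ($\Sigma$\ref{i.Sigma2}) the coding $\pi$ is H\"older on $\Sigma$ and by ($\Sigma$\ref{i.Sigma5}) the splittings $\un x\mapsto E^{s,u}(\un x)$ are H\"older on $\Sigma$, so $\hpsi$ is H\"older on $\Sigma$ with $\|\hpsi\|_\beta\le C_1\|\psi\|_\beta'$ for some $C_1=C_1(f,\beta)$ independent of $\psi$. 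The theorem then reduces to the following \emph{symbolic Kadyrov-type inequality}: there is $C_2=C_2(\Sigma,\hmu,\beta)$ such that, for every shift-invariant probability $\hnu$ on $\Sigma$ and every H\"older $\hpsi\colon\Sigma\to\mathbb R$,
\begin{equation}\label{eq-proof-plan-sym}
\Big|\int\hpsi\,d\hmu-\int\hpsi\,d\hnu\Big|\le C_2\|\hpsi\|_\beta\sqrt{h(\sigma,\hmu)-h(\sigma,\hnu)}.
\end{equation}

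The remaining symbolic inequality \eqref{eq-proof-plan-sym} is the main technical step, and I expect to prove it in the appendix by the classical Kolmogorov--Sinai/Pinsker argument, extending the cylinder estimate of \cite{Ruhr-Sarig} (used in the proof of Proposition \ref{p.SPR-potential}) to H\"older observables. For each $n$ consider the partition $\mathcal P_n$ of $\Sigma$ into time-$n$ cylinders; the Gibbs formula $\hmu([a_0,\dots,a_{n-1}])\asymp e^{-nh(\hmu)}r_{a_{n-1}}/r_{a_0}$ from Theorem \ref{t.Parry-Gurevich-ASS}, combined with the Kolmogorov--Sinai identity $h(\sigma,\hnu)=\lim\tfrac1n H_{\hnu}(\mathcal P_n)$, gives $h(\hmu)-h(\sigma,\hnu)\ge\tfrac1n D_n(\hnu\,\|\,\hmu)-O(\tfrac1n)$ on a finite sub-alphabet; Pinsker's inequality then bounds $\sum_{C}|\hnu(C)-\hmu(C)|$ by $\sqrt{2n(h(\hmu)-h(\sigma,\hnu))}+o(1)$. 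The H\"older continuity of $\hpsi$ controls its oscillation on each $C\in\mathcal P_n$ by $C_3\|\hpsi\|_\beta\theta^n$; combining the two estimates and optimizing $n\asymp -\log(h(\hmu)-h(\sigma,\hnu))/|\log\theta|$ produces the square-root bound. The principal obstacle is the countable alphabet: one must restrict to a finite sub-alphabet of $\mathcal P_n$ supporting most of the mass of both measures. This is precisely furnished by the SPR property via entropy-tightness (Corollary \ref{coro-tight-SPR}), which guarantees that any ergodic $\hnu$ with $h(\sigma,\hnu)$ near $h(\hmu)$ places mass $\ge 1-\varepsilon$ on a fixed finite union of cylinders, with the exponential tail bound of Corollary \ref{c.Tail-CMS} handling the complementary contribution.
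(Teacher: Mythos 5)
Your reduction to the symbolic level — trivial case, ergodic decomposition plus Jensen (using the affinity of entropy), entropy-fullness to lift high-entropy ergodic measures via ($\Sigma$\ref{i.Sigma4}) and Lemma~\ref{l.Hyp-Mark-Coding-Prop}, lifting the observable via a H\"older $\mathfrak G$-lift so that $\|\hpsi\|\lesssim\|\psi\|_\beta'$ — is exactly the paper's argument, down to the constants. Note that once you have reduced to an ergodic lift with $\mu(\psi)=\hmu(\hpsi)$, $\nu(\psi)=\hnu(\hpsi)$, the desired inequality is precisely Theorem~\ref{t.effective-ergodicity-shift}(c), which the paper cites from \cite{Ruhr-Sarig} rather than re-deriving; there is no H\"older extension for you to supply.

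Where your proposal has a real gap is in the sketch of the symbolic inequality \eqref{eq-proof-plan-sym} itself. Let $\Delta:=h(\hmu)-h(\sigma,\hnu)$. Your Pinsker step gives, on the $n$-cylinder partition $\mathcal P_n$, a total-variation bound of order $\sqrt{D_n}\lesssim\sqrt{n\Delta+O(1)}$ (the $O(1)$ accounting for the bounded distortion in Thm~\ref{t.Parry-Gurevich-ASS}, and the finite-alphabet truncation supplied by Cor.~\ref{coro-tight-SPR}). The oscillation of $\hpsi$ on $\mathcal P_n$ is $O(\theta^n)$. Your proposed optimization $n\asymp|\log\Delta|/|\log\theta|$ then gives $\theta^n\asymp\Delta$ but $\sqrt{n\Delta}\asymp\sqrt{\Delta\log(1/\Delta)}$, so the combined bound is $O(\sqrt{\Delta\log(1/\Delta)})$, \emph{not} $O(\sqrt{\Delta})$. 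No choice of $n$ repairs this: if $n$ is bounded, $\theta^n$ does not decay; if $n\to\infty$ as $\Delta\to0$, the Pinsker term acquires the extra $\sqrt{n}$. The per-cylinder Pinsker/Kolmogorov--Sinai argument intrinsically loses a logarithm. (There are also smaller issues — two-sided $n$-cylinders have diameter $O(1)$ unless you pass through the one-sided shift via Sinai's Lemma, and $H_{\hnu}(\mathcal P_1)$ may be infinite over a countable alphabet — but these are manageable; the log loss is not.)

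The inequality with the correct square-root rate does not come from pointwise Pinsker. What \cite{Ruhr-Sarig} actually uses (see the formulation in Lemma~\ref{t.Free-Energy-CMS} and its proof) is the second-order smoothness of the pressure function $t\mapsto P_\Bor(\Sigma,\phi+t\hpsi)$ coming from spectral-gap perturbation theory (Cor.~\ref{c.perturbation-theory}), together with the variational principle: for any invariant $\hnu$, $h(\sigma,\hnu)+t\,\hnu(\hpsi)\leq P_\Bor(\Sigma,\phi+t\hpsi)$. Optimizing over $t$ (Legendre duality) gives $\Delta\geq I_{\hpsi}\big(\hnu(\hpsi)\big)$, and the Taylor expansion $I_{\hpsi}(s)\sim(s-\hmu(\hpsi))^2/(2\sigma_{\hpsi}^2)$ then yields $|\hmu(\hpsi)-\hnu(\hpsi)|\lesssim\sqrt{\Delta}$ with the sharp constant $\sqrt{2}\sigma_{\hpsi}$. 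If you want to re-derive the symbolic estimate rather than cite it, this pressure-function / rate-function route is the one to follow; the naive Pinsker route as written does not close.
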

\noindent
In the special case of Anosov diffeomorphisms (see Example \ref{e.BHC-for-axiom-A}),
Thm~\ref{t.Kadyrov-Ineq-diffeos} is due to Kadyrov \cite{Kadyrov-Effective-Uniqueness}.
The next theorem is more precise, when $h(f,\nu)\to h(f,\mu)$:

\begin{theorem}\label{t.Kadyrov-Ineq-diffeos-Strong}
Under the assumptions of Thm~\ref{t.Kadyrov-Ineq-diffeos},
for every $\beta>0$, for every $\eps>0$ small enough and for every $\beta$-quasi-H\"older function $\psi\colon X\to \RR$ with  asymptotic variance $\sigma_\psi^2$ (w.r.t. $\mu$), there is $\delta>0$ as follows. For every non-necessarily ergodic $f$-invariant measure $\nu$ on $X$ such that $h(f,\nu)>h(f,\mu)-\delta$,
$$
\left|\int \psi d\mu-\int \psi d\nu\right|\leq e^{\eps}\sqrt{2\sigma_\psi^2(h(f,\mu)-h(f,\nu))}.
$$
\end{theorem}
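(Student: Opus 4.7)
\emph{Plan of Proof.} Replacing $\psi$ by $\psi-\int\psi\,d\mu$ (which leaves both sides of the target inequality unchanged for invariant $\nu$), we may assume $\int\psi\,d\mu=0$. If $\sigma_\psi=0$, then by Theorem~\ref{t.asymp-var-diffeos}(5), $\int\psi\,d\nu=\int\psi\,d\mu$ for every invariant $\nu$ on $X$ and the claim is trivial; hence assume $\sigma_\psi^2>0$.

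The key input is the real-analyticity supplied by Theorem~\ref{t.equilibrium}(3), applied with base potential $\phi\equiv 0$: after rescaling $\psi$ to have $\sup|\psi|$ below the $\eps_0$ provided there (which only affects the final value of $\delta$), there exists $a>0$ such that
$$F(t):=P_{\Bor}(f|_X,\,t\psi)$$
is real-analytic on $(-a,a)$. Since $\mu$ is the unique equilibrium measure for the zero potential on $X$ (Theorem~\ref{t.MME-exists}), the standard equilibrium identity gives $F'(0)=\int\psi\,d\mu=0$, while Theorem~\ref{t.asymp-var-diffeos}(2) yields $F''(0)=\sigma_\psi^2$. Consequently
$$F(t)=h(f,\mu)+\tfrac{1}{2}\sigma_\psi^2\, t^2+R(t),\qquad R(t)=O(t^3)\text{ as }t\to 0.$$

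Given any invariant $\nu$ on $X$, set $\Delta:=h(f,\mu)-h(f,\nu)\geq 0$. The variational principle $h(f,\nu)+t\int\psi\,d\nu\leq F(t)$, valid for every $t\in(-a,a)$, rearranges to
$$t\int\psi\,d\nu\leq \Delta+\tfrac{1}{2}\sigma_\psi^2\, t^2+R(t).$$
For $t>0$ this gives $\int\psi\,d\nu\leq \tfrac{\Delta}{t}+\tfrac{1}{2}\sigma_\psi^2\, t+\tfrac{R(t)}{t}$; choosing the Legendre-optimal value $t_\Delta:=\sqrt{2\Delta/\sigma_\psi^2}$ for the first two terms produces
$$\int\psi\,d\nu\leq \sqrt{2\sigma_\psi^2\,\Delta}\;+\;\tfrac{|R(t_\Delta)|}{t_\Delta}=\sqrt{2\sigma_\psi^2\,\Delta}\,\bigl(1+O(\sqrt{\Delta}/\sigma_\psi)\bigr).$$
Given $\eps>0$, choose $\delta=\delta(\eps,\psi)>0$ small enough that $t_\Delta<a$ and that the multiplicative error factor above is bounded by $e^\eps$ whenever $0\leq\Delta<\delta$. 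Every $\nu$ with $h(f,\nu)>h(f,\mu)-\delta$ then satisfies $\int\psi\,d\nu\leq e^\eps\sqrt{2\sigma_\psi^2\,\Delta}$, and the symmetric lower bound follows by running the same argument with $-\psi$ in place of $\psi$ (which has the same asymptotic variance).

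\emph{Main obstacle.} The genuinely nontrivial ingredients --- real-analyticity of the pressure perturbation (which encodes the spectral gap of the transfer operator on the SPR symbolic model) and the Green-Kubo identification of $F''(0)$ with $\sigma_\psi^2$ --- are already packaged in Theorems~\ref{t.equilibrium} and~\ref{t.asymp-var-diffeos}. Once these are in hand, everything reduces to a one-variable Legendre optimization of the variational-principle bound; the sharp multiplicative constant $e^\eps$ is automatic, coming solely from the fact that the cubic Taylor remainder is of lower order than the leading quadratic.
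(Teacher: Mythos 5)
Your proof is correct, and it takes a genuinely different route from the paper's. The paper proceeds by lifting ergodic measures to the SPR symbolic coding $\Sigma$ and invoking the R\"uhr--Sarig effective-uniqueness theorem (Thm~\ref{t.effective-ergodicity-shift}(a)) there; because only ergodic $\chi$-hyperbolic measures are guaranteed to lift, the paper then handles non-ergodic $\nu$ by splitting the ergodic decomposition into low-entropy and high-entropy parts and combining a trivial bound with the ergodic estimate. Your argument instead works directly on the manifold: you combine the analyticity of $t\mapsto P_\Bor(f|_X,t\psi)$ (Thm~\ref{t.equilibrium}(3), after rescaling $\psi$) with the linear response identity $F''(0)=\sigma_\psi^2$ (Thm~\ref{t.asymp-var-diffeos}(2)), and then Legendre-optimize the variational inequality $h(f,\nu)+t\nu(\psi)\le F(t)$ over $t$. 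This treats all invariant measures --- ergodic or not --- in one stroke and avoids unpacking the R\"uhr--Sarig input as a black box, which is genuinely cleaner at the level of the theorem statement. Two small remarks. First, $F'(0)=\int\psi\,d\mu$ deserves an explicit justification; it follows from convexity of $F$ plus differentiability (the supporting line at $t=0$ from the unique MME), or from Lemma~\ref{l.large-deviation-diffeo}(2)+(4a). Second, because your $\delta$ depends on the Taylor remainder bound for $F$ on a $\psi$-dependent interval, the argument does not immediately recover the sharper explicit form of $\delta$ asserted in Remark~\ref{r.Kadyrov-delta} (which depends only on $\sigma_\psi/\|\psi-\mu(\psi)\|'_\beta$ and $\beta$); the paper's route tracks these constants more tightly through the symbolic model. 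This does not affect the theorem statement itself, which only requires the existence of some $\delta=\delta(\eps,\psi)>0$.
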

\begin{remark}\label{r.Kadyrov-delta}The proof shows that the factor $2\sigma_\psi^2$ is optimal and that there are
$\tilde{\eps},\tilde{C}>0$ which depend only on $X$ and $\beta$ (but not on $\psi$), so that
the theorem holds with
$\delta:=\tilde{C}\min(\eps,\tilde{\eps})^6\big(\min(1,{\sigma_\psi}/{\|\psi-\mu(\psi)\|_\beta'})\big)^{14}$
for all $\eps>0$.
\end{remark}
\noindent
Theorems \ref{t.Kadyrov-Ineq-diffeos} and \ref{t.Kadyrov-Ineq-diffeos-Strong} follow from known properties of SPR Markov shifts \cite{Ruhr-Sarig}, see \S\ref{s.Kadirov-Proof-diffeos}.
They have several interesting consequences.

\begin{corollary}\label{c.Kadyrov1}
Under the assumptions of Thm~\ref{t.Kadyrov-Ineq-diffeos},
if  $\nu_n$ are {$f$-invariant measures on $X$} and  $h(f,\nu_n)\to h(f,\mu)$, then $\nu_n\to\mu$ weak-$*$ on $M$.
\end{corollary}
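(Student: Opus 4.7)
The plan is to deduce weak-$*$ convergence directly from the quantitative estimate of Thm~\ref{t.Kadyrov-Ineq-diffeos}, by first establishing convergence against a dense class of test functions (H\"older continuous ones on $M$) and then extending to all continuous functions on $M$ by a standard density argument. The main (modest) obstacle is bridging the gap between the natural test class of Thm~\ref{t.Kadyrov-Ineq-diffeos}, which is \emph{quasi-H\"older functions on $X$}, and the test class for weak-$*$ convergence on $M$, which is $C(M)$; this is handled by the inclusion recorded in Example~\ref{e.Holder-is-Quasi-Holder}.

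First, I would fix any $\beta > 0$, e.g.\ $\beta = 1$, and let $C = C(\beta)$ be the constant provided by Thm~\ref{t.Kadyrov-Ineq-diffeos}. Given a $\beta$-H\"older continuous function $\phi\colon M\to\R$, its restriction to $X$ is a $\beta$-quasi-H\"older function on $X$ with $\|\phi|_X\|'_\beta \le \|\phi\|_\beta$ by Example~\ref{e.Holder-is-Quasi-Holder}. Interpreting the measures $\nu_n,\mu$ on $X$ as Borel probability measures on $M$ by extension by zero outside $X$, Thm~\ref{t.Kadyrov-Ineq-diffeos} gives
$$
    \left|\int_M \phi\, d\nu_n - \int_M \phi\, d\mu\right|
      \;\le\; C\,\|\phi\|_\beta\,\sqrt{h(f,\mu)-h(f,\nu_n)} \;\xrightarrow[n\to\infty]{}\; 0.
$$
Thus $\int\phi\, d\nu_n \to \int\phi\, d\mu$ for every $\beta$-H\"older continuous $\phi\colon M\to\R$.

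Second, I would extend this to arbitrary $\phi\in C(M)$. Since $M$ is a compact metric space, $\beta$-H\"older continuous functions are uniformly dense in $C(M)$ (for instance by the Stone--Weierstrass theorem, or by the standard mollification/Lipschitz approximation of continuous functions on a compact metric space). Given $\phi\in C(M)$ and $\varepsilon>0$, pick a $\beta$-H\"older $\tilde\phi$ with $\|\phi-\tilde\phi\|_\infty<\varepsilon$. Then
$$
 \left|\int\phi\, d\nu_n - \int\phi\, d\mu\right|
 \;\le\; 2\varepsilon + \left|\int\tilde\phi\, d\nu_n - \int\tilde\phi\, d\mu\right|,
$$
and the second term tends to $0$ by the first step, so $\limsup_n |\int\phi\, d\nu_n - \int\phi\, d\mu| \le 2\varepsilon$. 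Since $\varepsilon$ was arbitrary, $\int\phi\, d\nu_n \to \int\phi\, d\mu$ for every $\phi\in C(M)$, which is precisely weak-$*$ convergence $\nu_n\to\mu$ on $M$. \qed
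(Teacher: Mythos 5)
Your proof is correct and takes the same route the paper has in mind; the authors simply label the argument ``immediate'' without writing it out. Restricting a H\"older test function on $M$ to $X$ via Example~\ref{e.Holder-is-Quasi-Holder}, applying Thm~\ref{t.Kadyrov-Ineq-diffeos}, and then extending to $C(M)$ by uniform density of H\"older functions is exactly the intended deduction.
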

\noindent
The proof is immediate, but the result is not trivial, because   $X$ may be non-closed,  and the entropy map  need not be upper semi-continuous (so it is not a simple consequence of Newhouse existence theorem for $C^\infty$ maps).

Next we compare the Lyapunov exponents of $\nu_n$ and $\mu$.
Recall that $\Lambda^+(\nu)$ and $\Lambda^-(\nu)$ are the integrals over all ergodic components of $\nu$ of the sum of positive (resp. negative) Lyapunov exponents with multiplicity (see \S\ref{ss.def-of-Lambda-plus}).
\begin{corollary}\label{t.Lyap-Stab}
Let $X,f,\mu$ be as in \S\ref{s.setup-for-consequences}. Then there exists $C>0$ so that for any $f$-invariant probability measure $\nu$ on $X$
\begin{align*}
&|\Lambda^\pm(\mu)-\Lambda^\pm(\nu)|\leq C\sqrt{h(f,\mu)-h(f,\nu)}.
\end{align*}
In particular, if $\nu_n\in\Prob(f|_X)$ and $h(f,{\nu_n})\to h(f,\mu)$, then $\Lambda^\pm({\nu_n})\to\Lambda^\pm({\mu})$.
\end{corollary}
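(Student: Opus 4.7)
\begin{proofof}{Corollary \ref{t.Lyap-Stab} (proof sketch)}
The plan is to deduce the estimate directly from Theorem \ref{t.Kadyrov-Ineq-diffeos} by taking the two geometric potentials as test observables. Since $f$ is assumed $C^{1+\beta}$ for some $\beta>0$, Example \ref{e.Geometric-Is-Quasi-Holder} guarantees that the functions
$$
   \psi^+(x):=\log|\det(Df_x|_{E^u(x)})|=-J^u(x),\qquad \psi^-(x):=\log|\det(Df_x|_{E^s(x)})|=-J^s(x)
$$
are $\beta$-quasi-H\"older on $X$, via the $\mathfrak G$-lifts $\Psi^{\pm}(x;E_1,E_2):=\log|\det(Df_x|_{E_1})|$ and the analogous expression for $E_2$. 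Note that $\psi^\pm$ are well defined on all of $X$, because on a Borel homoclinic class all invariant measures are hyperbolic with the same unstable dimension (see footnote 7), and $E^s,E^u$ are defined everywhere on $X\subset \NUH'(f)$.

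The next step is to identify the integrals of these potentials with the sums of Lyapunov exponents. For any \emph{ergodic} $\nu\in\Proberg(f|_X)$, the Oseledets theorem yields
$$
   \int \psi^{+}\,d\nu \;=\; \sum_{i:\lambda^i(\nu)>0}\lambda^i(\nu)\;=\;\Lambda^{+}(\nu),
$$
and similarly $\int \psi^{-}d\nu=\Lambda^{-}(\nu)$. For a non-ergodic $\nu\in\Prob(f|_X)$, I would integrate these identities with respect to the ergodic decomposition; combined with the definition of $\Lambda^{\pm}(\nu)$ in \S\ref{ss.def-of-Lambda-plus} this gives
\begin{equation}\label{eq-proofplan-Lyap}
   \int \psi^{\pm}\,d\nu \;=\; \Lambda^{\pm}(\nu)\qquad (\forall\, \nu\in\Prob(f|_X)).
\end{equation}

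With \eqref{eq-proofplan-Lyap} at hand, Theorem \ref{t.Kadyrov-Ineq-diffeos} applied to $\psi^{+}$ and $\psi^{-}$ (with the same exponent $\beta$) provides a constant $C_0=C_0(X,\beta)$ such that for every $\nu\in\Prob(f|_X)$,
$$
   |\Lambda^{\pm}(\mu)-\Lambda^{\pm}(\nu)|
   \;=\;\Bigl|\int \psi^{\pm} d\mu-\int \psi^{\pm} d\nu\Bigr|
   \;\le\; C_0\,\|\psi^{\pm}\|_\beta'\,\sqrt{h(f,\mu)-h(f,\nu)}.
$$
Choosing $C:=C_0\max(\|\psi^{+}\|_\beta',\|\psi^{-}\|_\beta')$ yields the first assertion. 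The convergence statement $\Lambda^{\pm}(\nu_n)\to \Lambda^{\pm}(\mu)$ whenever $h(f,\nu_n)\to h(f,\mu)$ is an immediate consequence.

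There is no substantive obstacle beyond Theorem \ref{t.Kadyrov-Ineq-diffeos}: the only point that deserves care is the quasi-H\"older regularity of $\psi^{\pm}$, which is granted by Example \ref{e.Geometric-Is-Quasi-Holder} precisely because the Oseledets splitting on $X$ can be replaced by the globally H\"older lift to the Grassmannian bundle $\mathfrak G$.
\end{proofof}
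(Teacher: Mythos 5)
Your proof is correct and follows exactly the paper's approach: apply Theorem \ref{t.Kadyrov-Ineq-diffeos} to the geometric potentials $J^u,J^s$ (up to sign, your $\psi^\pm$), whose quasi-H\"older regularity is granted by Example \ref{e.Geometric-Is-Quasi-Holder}. You have simply spelled out the verification of $\int\psi^\pm\,d\nu=\Lambda^\pm(\nu)$ via Oseledets and the ergodic decomposition, which the paper leaves implicit in its one-line proof.
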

\begin{proof}
Apply Thm~\ref{t.Kadyrov-Ineq-diffeos} to the geometric potentials $J^s,J^u$ in Example \ref{e.Geometric-Is-Quasi-Holder}.\end{proof}

 \begin{remark}
In dimension two, 
the above gives a rate of convergence of the Lyapunov exponents of measures $\nu_n$ such that $h(f,\nu_n)\to h(f,\mu)$, to the Lyapunov exponents of $\mu$.
 \end{remark}

\begin{corollary}\label{c.Kadyrov2}
Let $f$ be a SPR $C^{1+}$ diffeomorphism of a closed manifold and let $\nu_n\in \Proberg(M)$ be a weak-$\ast$ convergent sequence of measures with some limit $m$. If  $h(f,\nu_n)\to h_\top(f)$, then  $m$ is a hyperbolic ergodic MME and $\Lambda^\pm({\nu_n})\to\Lambda^\pm(m)$.
\end{corollary}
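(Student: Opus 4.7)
The plan is to use the SPR decomposition from Prop~\ref{p.decomposition} to reduce the problem to a single Borel homoclinic class, and then apply Cor~\ref{c.Kadyrov1} and Cor~\ref{t.Lyap-Stab} on that class. Since $f$ is SPR, Prop~\ref{p.decomposition} furnishes an $h_0 < h_\top(f)$ such that every ergodic measure with entropy exceeding $h_0$ is hyperbolic, and the Borel homoclinic classes $X_1, \dots, X_\ell$ with $h_\Bor(f|_{X_i}) > h_0$ form a finite family, each of them SPR. After slightly enlarging $h_0$ (while keeping $h_0 < h_\top(f)$), we may assume $h_\Bor(f|_{X_i}) = h_\top(f)$ for every $i$. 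For all $n$ large enough, $h(f,\nu_n) > h_0$, so $\nu_n$ is hyperbolic and, being ergodic, is carried by exactly one of the classes $X_{i(n)}$ by Prop~\ref{p.homoclinic}(3).

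The crux of the argument is to show that the index $i(n)$ is eventually constant. Since there are only $\ell$ possibilities, some index $i$ is realized infinitely often; along the corresponding subsequence, $\nu_n \in \Prob(f|_{X_i})$ with $h(f,\nu_n) \to h_\top(f) = h(f,\mu_i)$, where $\mu_i$ denotes the unique (ergodic, hyperbolic) local MME on $X_i$ provided by Thm~\ref{t.MME-exists}. Applying Cor~\ref{c.Kadyrov1} on the SPR class $X_i$ forces $\nu_n \to \mu_i$ weak-$*$ on $M$ along that subsequence; by uniqueness of the limit $m$, this gives $m = \mu_i$. If some $j \neq i$ were also realized infinitely often, the same reasoning would yield $m = \mu_j$; but $\mu_i$ and $\mu_j$ are mutually singular, since each is carried by its own Borel homoclinic class and the classes are pairwise disjoint. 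Hence $i(n) = i$ eventually, and $m = \mu_i$ is an ergodic hyperbolic MME, proving the first conclusion.

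The Lyapunov assertion is then immediate. Setting $X := X_i$, for all $n$ sufficiently large we have $\nu_n \in \Prob(f|_X)$, so Cor~\ref{t.Lyap-Stab} yields
$$
|\Lambda^\pm(m) - \Lambda^\pm(\nu_n)| = |\Lambda^\pm(\mu_i) - \Lambda^\pm(\nu_n)| \le C\sqrt{h(f,\mu_i) - h(f,\nu_n)} \xrightarrow[n\to\infty]{} 0.
$$

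The only substantive step is the stabilization of $i(n)$, which rests on the disjointness of distinct Borel homoclinic classes combined with the effective intrinsic ergodicity of Cor~\ref{c.Kadyrov1}. The remaining content is bookkeeping assembled from results already established in the paper.
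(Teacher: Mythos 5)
Your proof is correct and follows essentially the same route as the paper's: Prop~\ref{p.decomposition} reduces to finitely many SPR classes with full entropy, Cor~\ref{c.Kadyrov1} identifies $m$ with the local MME of the class carrying a subsequence, and Cor~\ref{t.Lyap-Stab} gives the Lyapunov convergence. The paper argues along a single subsequence and leaves the eventual stabilization of the carrying class implicit; you make that step explicit via mutual singularity of the local MMEs on disjoint Borel homoclinic classes, which closes the Lyapunov assertion for the full sequence a bit more cleanly.
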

\begin{proof}
By Prop~\ref{p.decomposition}, for some $h_0<h_\top(f)$ there exists only finitely many Borel homoclinic classes with entropy larger than $h_0$; moreover they are all SPR and their union carries all the measures $\nu_n$ for $n$ sufficiently large. One can thus find a Borel homoclinic class $X$ which supports all the measures from a subsequence of $(\nu_n)$. In particular $h_\Bor(X)=h_\top(f)$. Since $X$ is SPR, it supports a hyperbolic ergodic MME $\nu$. So  $h(f,\nu_n)\to \hTOP(f|_X)=h(f,\nu)$ and Cor~\ref{c.Kadyrov1} says that $\mu=\nu$.
\end{proof}

In fact the assumption $h(f,\nu_n)\to h(f,\mu)$ for non-atomic invariant measures $\nu_n$ on $X$ implies a stronger form of convergence, related to optimal transport:
We can deform $\nu_n$ into $\mu$ using a mass preserving map $T$ which ``does not alter $x$, $E^u(x)$ or $E^s(x)$ too much, on average''.
To make this precise, let $d(\cdot,\cdot)$ denote the Riemannian distance function on $M$, and let  $\dist$ denote the Riemannian distance function on the disconnected union of the Grassmannian bundles of $M$,  $\bigsqcup_{1\leq k\leq \dim M} \mathfrak G(k,M)$    (Appendix~\ref{app-grassmannian}), with the convention that the distance between points in different connected components is $\diam(M)$.
\begin{corollary}\label{t.Osceledets-Stab}
For any $X,f,\mu$ as in \S\ref{s.statement-of-conseq}, there exists $C>0$ as follows: For every non-atomic invariant probability  measure $\nu$ on $X$, there exists a Borel one-to-one map $T:M\to M$ so that $T_\ast\nu=\mu$, and
\begin{align}
&\int_M c(x,Tx)\,d\nu\leq C\sqrt{h(f,\mu)-h(f,\nu)}, \label{e.Transport}
\end{align}
where $c(x,y):=d(x,y)+\dist(E^u(x),E^u(y))+\dist(E^s(x),E^s(y))$.
\end{corollary}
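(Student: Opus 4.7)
The plan is to derive Corollary~\ref{t.Osceledets-Stab} from the effective intrinsic ergodicity estimate of Theorem~\ref{t.Kadyrov-Ineq-diffeos} via Kantorovich--Rubinstein duality, and then to promote the resulting optimal coupling to a Borel bijection using the non-atomicity of both $\nu$ and $\mu$. Consider the Borel map $\Phi\colon X\to \mathfrak G$, $\Phi(x):=(x,E^u(x),E^s(x))$. By construction the cost $c$ is the $\Phi$-pullback of the distance $d_\mathfrak{G}$ on the compact space $\mathfrak G$, so $c$ is a bounded Borel pseudometric on $X$ with $\diam(X,c)\leq D:=\diam(\mathfrak G)$. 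Any $\psi\colon X\to\R$ that is $1$-Lipschitz for $c$ is the $\Phi$-pullback of a $1$-Lipschitz function on $\Phi(X)$, which extends by McShane's theorem to a $1$-Lipschitz $\Psi$ on $\mathfrak G$; after subtracting a constant we may take $\sup_{\mathfrak G}|\Psi|\leq D$. Hence $\psi$ is quasi-H\"older with exponent $1$ and norm $\|\psi\|'_1\leq D+1$. Theorem~\ref{t.Kadyrov-Ineq-diffeos} with $\beta=1$ then yields a constant $C_1$ (depending only on $X,f$) such that
\begin{equation*}
    \bigl|\textstyle\int\psi\, d\mu-\int\psi\, d\nu\bigr|\leq C_1(D+1)\sqrt{h(f,\mu)-h(f,\nu)}
\end{equation*}
for every $c$-$1$-Lipschitz $\psi\colon X\to\R$.

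Equip $X$ with the Polish topology pulled back from $\mathfrak G$ by $\Phi$; in this topology $c$ is continuous. Taking the supremum of the above estimate over all $c$-$1$-Lipschitz $\psi$ and invoking Kantorovich--Rubinstein duality produces a Borel coupling $\pi$ on $X\times X$ with marginals $\nu$ and $\mu$ satisfying
\begin{equation*}
    \int_{X\times X} c(x,y)\, d\pi\; \leq\; C_1(D+1)\sqrt{h(f,\mu)-h(f,\nu)}.
\end{equation*}
By Theorem~\ref{t.MME-exists}, $\mu$ is isomorphic to a Bernoulli scheme times a cyclic permutation of $p$ points, hence non-atomic; by hypothesis $\nu$ is also non-atomic. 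Pratelli's theorem (2007) on the Monge--Kantorovich equality for lower semi-continuous costs on Polish spaces with non-atomic source then yields a Borel map $T_0\colon M\to M$ with $(T_0)_*\nu=\mu$ and $\int c(x,T_0x)\, d\nu\leq 2\int c\, d\pi$. Using the non-atomicity of $\mu$, we promote $T_0$ to a Borel \emph{bijection} $T$ with $T_*\nu=\mu$: disintegrate $\nu=\int\nu_y\, d\mu(y)$ along $T_0$ and, using that any Borel subset of $M$ of positive $\mu$-measure is Borel-isomorphic to $([0,1],\mathrm{Leb})$, spread each fibre $\nu_y$ injectively onto a small $c$-neighborhood of $y$ of $\mu$-mass $\nu(T_0^{-1}\{y\})$. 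Carrying this out on a fine $c$-partition of $M$ keeps the cost distortion bounded by a universal factor, yielding the claim with $C$ an explicit multiple of $C_1(D+1)$.

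The main obstacle will be this last measure-theoretic step: promoting an almost-optimal Borel Monge map to a Borel bijection while retaining a cost bound of the same order. The Monge--Kantorovich equivalence for Borel costs on Polish spaces with non-atomic source is the nontrivial theorem of Pratelli, following Ambrosio; the bijection refinement is a standard application of the Borel isomorphism theorem for non-atomic standard Borel probability spaces, but its combination with the cost bound requires some care. A conceptually cleaner alternative is to work in the symbolic coding $(\Sigma,\hpi)$ provided by Theorem~\ref{t.SPR-coding}: both $\mu,\nu$ lift to non-atomic shift-invariant measures $\hmu,\hnu$ on the SPR Markov shift $\Sigma$, on which the Oseledets bundles are H\"older continuous, so the required transport can be constructed directly by matching cylinders of equal $\hmu$- and $\hnu$-mass and applying Borel isomorphisms inside each cylinder, producing a Borel bijection on $\Sigma^\#$ which projects down to the desired $T$ on $M$.
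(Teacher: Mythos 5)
Your argument is essentially the paper's: Kantorovich--Rubinstein duality applied to the cost $c$ pulled through the map $x\mapsto(x;E^u(x),E^s(x))$, combined with Theorem~\ref{t.Kadyrov-Ineq-diffeos}, yields a low-cost coupling; Pratelli's theorem converts it to a Monge map; and non-atomicity of $\mu$ and $\nu$ plus a fine partition and the Borel isomorphism theorem upgrade it to a Borel bijection with comparable cost. The paper carries out exactly this scheme.

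Two small differences worth noting. First, the paper works upstairs: it lifts $\mu,\nu$ to $\wt\mu:=\xi_*\mu$ and $\wt\nu:=\xi_*\nu$ on $\mathfrak G$, runs KR duality and Pratelli's theorem there (where the ambient space is manifestly compact Polish), and only at the very end projects back to $X$ via the first coordinate; you instead equip $X$ itself with the pullback topology from $\mathfrak G$, which is more delicate since a Borel subset with the subspace topology need not be Polish, so the hypotheses of Pratelli's theorem require a little extra care. Second, $c$ is not literally the $\Phi$-pullback of $d_{\mathfrak G}$; it is only \emph{comparable} to it (the paper says ``bounded by a constant times $d_{\mathfrak G}$''), and your McShane argument should allow the Lipschitz constant to absorb this comparison factor. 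Finally, in your bijection step, the phrase ``fibre $\nu_y$ of mass $\nu(T_0^{-1}\{y\})$'' is misleading since singletons have zero mass under the non-atomic $\mu$; the operative step is the one you mention next, taking a fine $c$-partition $\{B_j\}$ and redistributing $\nu|_{T_0^{-1}(B_j)}$ injectively inside $B_j$, which is precisely the $A_i,B_{ij}$ refinement the paper writes out. The symbolic-coding alternative you mention at the end is plausible but is not the route the paper takes for this corollary.
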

\noindent
The corollary follows from Thm~\ref{t.Kadyrov-Ineq-diffeos}, and standard techniques in the theory of optimal transport, see \S\ref{s.Proof-Oseledets-Stab}.

\subsection{Exponential Tails in Pesin Theory}\label{s.Conseq-Pesin-Theory}
The study of L.-S. Young towers \cite{Young-Towers-Annals} leads naturally to the question of the rate of decay of the tail of the first entrance time to a Pesin set.   S. Luzzatto suggested that it would also be interesting to estimate the measure of the complement of Pesin sets ``with large constants" (private communication).
The next result (containing Cor~\ref{c.tail} and part of Thm~\ref{t.tail}), proved in \S\ref{s.Tail-Diffeos}, is in this direction.

\begin{theorem}\label{t.Tail-Estimates-diffeos}
Suppose $X,f,\mu$ are as in \S\ref{s.setup-for-consequences}. Then {for every $\chi>0$ small enough and for all $0<\eps<\chi$,} the following holds.
\begin{enumerate}[(1)]
\item {\em {\bf First Entrance Times to Pesin blocks:}\/} There exist a $(\chi,\eps)$-Pesin block $P$ and $0<\theta<1$ so that
    $
    \tau_P(x):=\inf\{n\geq 1: f^n(x)\in P\}
    $
    satisfies $$\mu\{x\in X: \tau_P(x)>n\}=O(\theta^n)\text{ as $n\to\infty$.}$$
\item {\em {\bf Optimal Pesin Bounds:}\/} Let $K_\ast(x)$ denote the optimal $(\chi,\eps)$-Pesin bound (see Def.~\ref{d.Optimal-Pesin-Constant}). Then there exists  $0<\theta<1$ such that
    $$
    \mu\{x\in X: \log K_\ast(x)>n\}=O(\theta^n),\text{ as }n\to\infty.
    $$
\end{enumerate}
\end{theorem}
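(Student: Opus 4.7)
The plan is to transport both tail estimates from the symbolic side via a suitable SPR coding, and then deduce (2) from (1) by exploiting the temperability of the optimal Pesin bound.

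First I would invoke Theorem \ref{t.SPR-coding} applied to the SPR Borel homoclinic class $X$ to obtain an irreducible locally compact Markov shift $\Sigma$ and a H\"older map $\pi\colon\Sigma\to M$ with $\pi\circ\sigma=f\circ\pi$. By ($\Sigma$\ref{i.Sigma1bis}), $\Sigma$ is SPR and $h_{\Bor}(\Sigma)=h_\Bor(f|_X)$; by ($\Sigma$\ref{i.Sigma3})--($\Sigma$\ref{i.Sigma4}) the MME $\mu$ of $f|_X$ admits an ergodic lift $\hmu$ to $\Sigma$, which is the (unique) MME of $\Sigma$. Given the target parameters $0<\eps<\chi$ (with $\chi$ small enough that $\chi$ stays below the SPR-constant of $f|_X$), Remark~\ref{r.eps-tilde-arbit-small} allows one to choose the coding so that its bornological property Def.~\ref{d.bornological}\eqref{i.born2} holds with parameters $(\tilde\chi,\tilde\eps)$ satisfying $\tilde\chi\geq\chi$ and $\tilde\eps\leq\eps$; any $(\tilde\chi,\tilde\eps)$-Pesin block is then automatically a $(\chi,\eps)$-Pesin block (with the same optimal constant).

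For Part (1), I would fix any vertex $v$ of the defining graph of $\Sigma$. By Def.~\ref{d.bornological}\eqref{i.born2}, there is a $(\tilde\chi,\tilde\eps)$-Pesin block $P$ such that $\mu\bigl(\pi([v]\cap\Sigma^\#)\setminus P\bigr)=0$, and by the preceding paragraph $P$ is a $(\chi,\eps)$-Pesin block. Using $\pi\circ\sigma=f\circ\pi$ and Poincar\'e recurrence (so that $\sigma^n\hat x\in\Sigma^\#$ for all relevant $n$, $\hmu$-a.s.), one gets $\tau_P(\pi\hat x)\leq \tau_{[v]}(\hat x)$ for $\hmu$-a.e. $\hat x$, where $\tau_{[v]}(\hat x):=\inf\{n\geq 1\colon (\hat x)_n=v\}$. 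Since $\hmu$ is the MME of the irreducible SPR shift $\Sigma$, Corollary~\ref{c.Tail-CMS} gives constants $C>0$, $\theta\in(0,1)$ with $\hmu\{\tau_{[v]}>n\}\leq C\theta^n$. Pushing down under $\pi_*\hmu=\mu$ yields $\mu\{\tau_P>n\}\leq C\theta^n$, proving (1).

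For Part (2), I would exploit the fact that the optimal $(\chi,\eps)$-Pesin constant $K_*(x)=K_{\chi,\eps}(x)$ is $\eps$-tempered, namely $K_*(f^n x)\geq e^{-\eps|n|}K_*(x)$ for all $n\in\ZZ$ (equation~\eqref{e.temperability}). Let $K_P:=\sup_{x\in P}K_*(x)<\infty$, which is finite by definition of a Pesin block. If $K_*(x)>t$ and $\tau_P(x)=n$, then $f^n(x)\in P$ forces $K_P\geq K_*(f^n x)\geq e^{-\eps n}K_*(x)>e^{-\eps n}t$, hence $n>\eps^{-1}\log(t/K_P)$. Combining with Part (1),
\begin{equation*}
\mu\{x\colon K_*(x)>t\}\leq \mu\bigl\{x\colon \tau_P(x)>\eps^{-1}\log(t/K_P)\bigr\}\leq C'\,t^{-\eta}
\end{equation*}
for $\eta:=-\log\theta/\eps>0$. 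Setting $t=e^n$ yields $\mu\{\log K_*>n\}=O(e^{-\eta n})$, which is (2).

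The main obstacle is really bookkeeping with the various constants $\chi,\bar\chi,\tilde\chi,\chi_0$ and $\eps,\bar\eps,\tilde\eps$ from the coding construction: one must be sure that the parameters in the bornological property can be chosen to dominate the target Pesin parameters $(\chi,\eps)$ uniformly, which is precisely what Remark~\ref{r.eps-tilde-arbit-small} provides provided $\chi$ is small enough relative to the SPR constant. Once the coding is in place, both estimates reduce cleanly to Cor.~\ref{c.Tail-CMS} on the symbolic side and to the temperability of $K_*$.
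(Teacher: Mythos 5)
Your proposal is correct and follows essentially the same route as the paper's proof in \S\ref{s.Tail-Diffeos}: use Remark~\ref{r.eps-tilde-arbit-small} to arrange a bornological coding whose parameters $(\tilde\chi,\tilde\eps)$ dominate the target $(\chi,\eps)$, deduce $\tau_P\circ\pi\leq\tau_{[v]}$ on $\Sigma^\#$ from Def.~\ref{d.bornological}(b), transfer the exponential tail from Corollary~\ref{c.Tail-CMS}, and then use the $\eps$-temperability of $K_*$ (eq.~\eqref{e.temperability}) to get Part~(2). The only cosmetic differences are that you fix $(\chi,\eps)$ up front and choose the coding to dominate it, whereas the paper proves the estimate for $(\tilde\chi,\tilde\eps)$ first and monotonicity at the end; and your parenthetical ``(with the same optimal constant)'' should read ``(with an optimal constant that can only decrease)'', though this does not affect the argument.
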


\subsection{Equilibrium States (Thm~\ref{t.equilibrium} Items~(1) and (2))}\label{ss-prop-equilibrium}
So far we have discussed measures of maximal entropy, but many of our  results extend without much difficulty to equilibrium measures of SPR quasi-H\"older potentials. 

Specifically, the proofs of Thms \ref{t.MME-exists}, \ref{t.DOC-diffeos}, \ref{t.asymp-var-diffeos}, \ref{t.LDP-diffeos}, \ref{t.ASIP-diffeo} and Cor~\ref{c.CLT-diffeos},
given in \S\ref{s.Conseq-of-SPR-Proofs-Diffeos} below are based (a) on the coding of SPR Borel homoclinic classes by SPR Markov shifts stated in \S\ref{section-Sigma} and (b) on results for SPR irreducible Markov shifts in Appendix~\ref{appendix-Markov-Shifts}.
In the case of a Borel homoclinic class $X$ which is SPR for a quasi-H\"older potential $\phi\colon X\to \RR$,
there also exists a coding which is SPR for the (H\"older continuous) lifted potential $\phi\circ \pi$ (see Addendum~\ref{a.SPR-coding}).
We have taken care to prove the  results in Appendix~\ref{appendix-Markov-Shifts} for general H\"older continuous potential. This allows a straightforward extension of the proofs in \S\ref{s.Conseq-of-SPR-Proofs-Diffeos} and \S\ref{section-Sigma} from the case of the zero potential to the case of quasi-H\"older SPR potentials.

This gives the existence, uniqueness, and structure of equilibrium states for the potential $\phi$ on $X$,
the exponential decay of correlations (when the period is $p=1$) and the large deviation property, the almost sure invariance principle,
the properties of the asymptotic variance, i.e. Items (1) and (2) of Thm~\ref{t.equilibrium}.

\subsection{No Phase Transitions In High Temperature (Thm~\ref{t.equilibrium} Item~(3))}\label{s.TDF-for-diffeos}

The thermodynamical formalism is based on analogies between certain dynamical notions and corresponding   notions  in equilibrium statistical physics:
\begin{enumerate}[$\bullet$]
\item The average of a potential
 $\int\phi\,d\mu$ corresponds to $-\frac{\const}{T} U$ where $U$ is the ``specific energy" of a macroscopic state $\mu$ at some ``temperature" $T$.
\item The pressure $h(f,\mu)+\int\phi d\mu=-\frac{\const}{T}F$, where $F$ is the  ``specific free energy." 
\end{enumerate}
In equilibrium statistical physics, the first derivatives of the free energy are thermodynamic quantities and its second derivatives are linear response functions. In Ruelle's theory,   a ``first-order phase transition" is a situation when some directional derivative $\frac{d}{dt}|_{t=0}P_{\top}(\phi+t\psi)$ does not exist, and a ``high-order phase transition" when some higher directional derivative  $\frac{d^k}{dt^k}|_{t=0}P_{\top}(\phi+t\psi)$ does not exist.

The following theorem (which corresponds to the third item of Thm~\ref{t.equilibrium}), when applied to $\phi:=-\frac{const}{T}U$ with $T$ sufficiently large,  says that for quasi-H\"older energy functions $U$,  ``there are no phase transitions at sufficiently high temperatures".

\begin{theorem}\label{t.no-phase-transition-diffeo}
Given a $C^{1+}$ diffeomorphism $f$ of a closed manifold
which is SPR on a Borel homoclinic class $X$ for some quasi-H\"older potential $\phi\colon X\to \RR$,
there exists $\eps_0>0$ as follows. If $\psi:X\to\R$ is quasi-H\"older with $\sup|\psi|<\eps_0$, then
$f$ is SPR on $X$ for $\phi+\psi$, and
$t\mapsto P_{\Bor}(\phi+t\psi)$ is real-analytic on $(-1,1)$.
\end{theorem}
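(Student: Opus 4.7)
\medskip

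\noindent\textit{Proof proposal.} The plan is to transport the problem to the symbolic model, apply the known result for Markov shifts, and then transport back. By hypothesis $f$ is SPR on $X$ for $\phi$, so Theorem~\ref{t.SPR-coding} together with Addendum~\ref{a.SPR-coding} provides a hyperbolic, bornological, entropy-full coding $\pi\colon\Sigma\to X$ by an irreducible countable state Markov shift $\Sigma$, such that the lifted potential $\hphi:=\phi\circ\pi$ is SPR on $\Sigma$, the coding is pressure-full for $\phi$, and $P_{\Bor}(\Sigma,\hphi)=P_{\Bor}(f|_X,\phi)$. Because $\pi$ is H\"older and, by property $(\Sigma\ref{i.Sigma5})$ of Theorem~\ref{t.SPR-coding}, the Oseledets distributions $\un x\mapsto E^{u/s}(\un x)$ are also H\"older on $\Sigma$, any quasi-H\"older $\psi$ on $X$ has a H\"older lift $\hpsi:=\psi\circ\pi$ on $\Sigma$, with $\sup|\hpsi|\le\sup|\psi|$.

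Next, I would invoke the  stability result for SPR potentials on countable state Markov shifts (Theorem~\ref{t.no-phase-transition} in Appendix~\ref{appendix-Markov-Shifts}, which is the shift-analogue the theorem cites): since $\hphi$ is H\"older and SPR on $\Sigma$, there is $\eps_0'>0$ such that every H\"older $\hat\eta:\Sigma\to\RR$ with $\sup|\hat\eta|<\eps_0'$ is again SPR for $\Sigma$, and moreover the map $t\mapsto P_\Bor(\Sigma,\hphi+t\hat\eta)$ is real-analytic on a neighbourhood of $[-1,1]$. Applied with $\hat\eta=\hpsi$ and $\eps_0:=\eps_0'$, this gives the SPR property of $\hphi+t\hpsi$ on $\Sigma$ and real-analyticity of $t\mapsto P_\Bor(\Sigma,\hphi+t\hpsi)$ on $(-1,1)$, provided $\sup|\psi|<\eps_0$.

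It then remains to transfer these two statements to $X$. For the pressure identity $P_\Bor(f|_X,\phi+t\psi)=P_\Bor(\Sigma,\hphi+t\hpsi)$, one uses that $\pi$ projects (resp.\ lifts) ergodic invariant measures with the same entropy by $(\Sigma\ref{i.Sigma3})$--$(\Sigma\ref{i.Sigma4})$ and preserves integrals of $\hphi+t\hpsi$ and $\phi+t\psi$: the projection gives $P_\Bor(\Sigma,\hphi+t\hpsi)\le P_\Bor(f|_X,\phi+t\psi)$, and pressure-fullness of the coding (which persists for $\phi+t\psi$ because $|t\psi|$ is uniformly bounded, so pressure-fullness thresholds may be shifted by $\sup|\psi|$) yields the reverse inequality. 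Hence real-analyticity transports verbatim. For the SPR property on $X$, I would apply a pressure-analogue of Proposition~\ref{p.project-SPR}: starting from a finite union of cylinders $A\subset\Sigma$ witnessing the SPR property of $\hphi+t\hpsi$, condition (b) of the bornological property (Def.~\ref{d.bornological}) produces a Pesin block $\Lambda$ such that every hyperbolic invariant measure $\nu$ on $X$ satisfies $\nu(\pi(A\cap\Sigma^\#)\setminus\Lambda)=0$, and Prop.~\ref{p.pesin} upgrades this to a $(\chi_0,\widetilde\eps)$-Pesin block of almost full mass for any ergodic $\nu$ with $P(f,\nu,\phi+t\psi)$ close enough to the top pressure, exactly as in the entropy case. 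This yields Def.~\ref{d.SPR-potential}.

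The main technical point is the transfer back in the last paragraph: one must check that the arguments of Propositions~\ref{p.project-SPR} and \ref{p.pesin} go through with Kolmogorov--Sinai entropy replaced by the pressure $P(f,\cdot,\phi+t\psi)$. This is essentially routine, since the definitions of SPR for a potential (Def.~\ref{d.SPR-potential}) and for Markov shifts with a potential (Def.~\ref{d.SPR-potential3}) are engineered to be parallel to the entropy case, and since pressure-fullness is stable under uniformly bounded perturbations of the potential. The only delicate point is to verify \eqref{e.chi-entropy-hyperbolic} in its pressure-analogue form for $\phi+t\psi$, which follows because $f|_X$ is assumed SPR for $\phi$ (and this property is open under uniformly small perturbations of the potential, by the very same projection argument applied with the input being the SPR property of $\hphi+t\hpsi$ on $\Sigma$ given by the appendix).
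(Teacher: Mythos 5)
Your proposal is correct and follows the same route as the paper's proof, which is stated in one line ("Code $X$ by an irreducible SPR shift, and apply Thm~\ref{t.no-phase-transition}") and leaves the transfer-back details implicit, relying on the remark in \S\ref{ss-prop-equilibrium} that the entropy arguments extend routinely to pressure. One minor point: the claim $\sup|\hpsi|\le\sup|\psi|$ is not immediate from Lemma~\ref{l.psi-hat-Holder}, which only bounds the quasi-H\"older norm; it requires first truncating the $\mathfrak G$-lift $\Psi$ at $\pm\sup_X|\psi|$ (which preserves the H\"older constant and the values on $\Gamma$) before composing with $\imath\circ\pi$.
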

\begin{proof}
Code $X$ by an irreducible SPR shift, and apply Thm~\ref{t.no-phase-transition}.
\end{proof}

\section{Proofs of the Consequences}\label{s.Conseq-of-SPR-Proofs-Diffeos}
\subsection{Standing Assumptions I}
Throughout this section,  $f$ is a $C^{1+}$ diffeomorphism of a closed manifold $M$
and $X$ is an SPR Borel homoclinic class with period $p$. Without loss of generality we may assume that
the entropy $h_{\Bor}(f|_X)$ is positive since otherwise all our results  hold  trivially, or vacuously.

For all $0<\chi<\chi'$, every ergodic measure which charges some $(\chi',\eps)$-Pesin block must be $\chi$-hyperbolic.
Let us denote:
$$\Proberg(\chi,f|_X)=\{\nu\in \Proberg(f|_X):\nu\text{ is $\chi$-hyperbolic}\}.$$
By the  SPR property, there exist $\chi,\kappa>0$ such that
\begin{equation}\label{e.chi-kappa-def}
\left.\begin{array}{c}
h(f,\nu)>\hTOP(f|_X)-\kappa\\
\nu\in\Proberg(f|_X)
\end{array}\right\}\Longrightarrow
\nu\in \Proberg(\chi,f|_X).
\end{equation}
We can take $\chi,\kappa$ arbitrarily small, because decreasing $\chi,\kappa$  weakens \eqref{e.chi-kappa-def}.
In particular  we may choose $\chi$ so small that Thm~\ref{t.SPR-coding} yields an irreducible SPR coding $\pi:\Sigma\to M$ satisfying Properties ($\Sigma$\ref{i.Sigma1})--($\Sigma\ref{i.Sigma6}$),  \eqref{e.chi-entropy-hyperbolic} for $f|_X$ and $\hTOP(\Sigma)=\hTOP(f|_X)$.

Let us consider the spectral decomposition of $\Sigma$ (as in Lemma~\ref{l.spectral-decomposition-CMS}):
$$\Sigma=\biguplus_{j=0}^{q-1}\sigma^j(\Sigma').$$ In particular, $\sigma^q:\Sigma'\to\Sigma'$ is naturally conjugated to an {\em aperiodic} irreducible Markov shift. The period $q$ of $\Sigma$ may a priori be distinct from the period $p$ of $X$.

\begin{lemma}\label{l.psi-hat-Holder}
For any $\beta>0$,
there are numbers $C=C(\Sigma,\pi,\beta)$ and $\wh{\beta}=\wh{\beta}(\Sigma,\pi,\beta)$ such that for every $\beta$-quasi-H\"older $\psi:X\to\R$, there exists a $\wh{\beta}$-H\"older continuous function $\hpsi:\Sigma\to\R$ which satisfies $\hpsi=\psi\circ\pi$ {on $\pi^{-1}(X)$,} and
$
\|\hpsi\|_{\wh{\beta}}\leq C\|\psi\|_\beta'.
$
\end{lemma}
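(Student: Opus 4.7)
The plan is to define $\hpsi$ by lifting $\psi$ through $\pi$ together with the symbolic Oseledets splitting carried by the coding, and then to bookkeep how the H\"older exponent deteriorates along the way.

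First, I would choose a $\beta$-H\"older continuous $\mathfrak G$-lift $\Psi\colon \mathfrak G\to\R$ of $\psi$ with $\|\Psi\|_\beta\le 2\|\psi\|_\beta'$ (any lift within twice the infimum in the definition of $\|\psi\|_\beta'$ works). Recalling property $(\Sigma\ref{i.Sigma5})$, at every $\un x\in \Sigma$ the coding provides a splitting $T_{\pi(\un x)}M=E^s(\un x)\oplus E^u(\un x)$ and the maps $\un x\mapsto E^s(\un x),\, E^u(\un x)$ are H\"older continuous (say with some exponent $\alpha_1>0$). Since the dimensions of $E^s(\un x)$ and $E^u(\un x)$ are continuous integer-valued on $\Sigma$, hence locally constant, and since we only need $\hpsi$ inside each irreducible component of $\Sigma$, we may assume that these dimensions are constant equal to some $(k,d-k)$ on $\Sigma$. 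We then set
\[
  \hpsi(\un x):=\Psi\bigl(\pi(\un x)\,;\,E^u(\un x),E^s(\un x)\bigr),\qquad \un x\in\Sigma.
\]

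Next I would check that $\hpsi=\psi\circ\pi$ on $\pi^{-1}(X)$. For $\un x\in\pi^{-1}(X)$, the point $\pi(\un x)$ lies in the Borel homoclinic class $X\subset \NUH'(f)$, hence inside some $(\chi',\eps')$-Pesin block; by Lemma~\ref{l.splitting} the splitting $T_{\pi(\un x)}M=E^s(\pi(\un x))\oplus E^u(\pi(\un x))$ is unique, and by property $(\Sigma\ref{i.Sigma5})$ the symbolic splitting $E^s(\un x)\oplus E^u(\un x)$ also satisfies the hypotheses characterizing it. Uniqueness then forces $E^t(\un x)=E^t(\pi(\un x))$ for $t=u,s$, and consequently $\hpsi(\un x)=\Psi(\pi(\un x); E^u(\pi(\un x)),E^s(\pi(\un x)))=\psi(\pi(\un x))$.

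Finally I would bound $\|\hpsi\|_{\wh\beta}$. Let $\alpha_0$ be a H\"older exponent for $\pi\colon\Sigma\to M$ (from $(\Sigma\ref{i.Sigma2})$) and $\alpha_1$ a common H\"older exponent for $\un x\mapsto E^{s/u}(\un x)$ into the Grassmannian bundles. For $\un x,\un y\in\Sigma$, by the definition of the metric $d_{\mathfrak G}$ on $\mathfrak G$ (restricting to the single connected component determined by the fixed dimensions $(k,d-k)$) we have
\[
  d_{\mathfrak G}\bigl((\pi(\un x),E^u(\un x),E^s(\un x)),(\pi(\un y),E^u(\un y),E^s(\un y))\bigr)\le C_1\, d(\un x,\un y)^{\min(\alpha_0,\alpha_1)}
\]
for some constant $C_1$ depending only on $(\Sigma,\pi)$. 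Setting $\wh\beta:=\beta\min(\alpha_0,\alpha_1)$, the H\"older continuity of $\Psi$ gives $|\hpsi(\un x)-\hpsi(\un y)|\le \|\Psi\|_\beta C_1^\beta d(\un x,\un y)^{\wh\beta}$, while $\sup|\hpsi|\le\sup_{\mathfrak G}|\Psi|\le\|\Psi\|_\beta$. Combining with $\|\Psi\|_\beta\le 2\|\psi\|_\beta'$ yields $\|\hpsi\|_{\wh\beta}\le C\|\psi\|_\beta'$ for a constant $C=C(\Sigma,\pi,\beta)$, as required. The only genuine point requiring care is the local constancy of the dimensions $(\dim E^u,\dim E^s)$ on $\Sigma$, which is what allows us to treat $d_{\mathfrak G}$ as a genuine H\"older target rather than jumping discontinuously between components of $\mathfrak G$; this is the step I would verify most carefully.
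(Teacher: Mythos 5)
Your proof follows the same route as the paper: choose a nearly-optimal $\beta$-H\"older $\mathfrak G$-lift $\Psi$, compose it with the map $\un x\mapsto(\pi(\un x);E^u(\un x),E^s(\un x))$ furnished by $(\Sigma\ref{i.Sigma2})$ and $(\Sigma\ref{i.Sigma5})$, check agreement with $\psi\circ\pi$ on $\pi^{-1}(X)$ via uniqueness of the splitting (Lemma~\ref{l.splitting}), and track the H\"older exponent through the composition. The paper writes this more tersely as $\hpsi=\Psi\circ\imath\circ\pi$ and simply asserts that $\imath\circ\pi$ is $\gamma$-H\"older, leaving implicit exactly the point you single out: that the map must land in a single connected component of $\mathfrak G$ at small scales, since otherwise $d_{\mathfrak G}$ jumps by $\diam(M)$ and H\"older continuity fails. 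You were right to flag this.

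One caveat on your justification of that step: on a CSMS, ``continuous integer-valued, hence locally constant'' does not by itself give ``constant on an irreducible component,'' because the space is totally disconnected. The constancy actually comes from the combinatorics of the coding: the dimensions $\dim E^s$, $\dim E^u$ depend only on the double chart at the vertex $x_0$, and the definition of an edge in the chain graph (see \S\ref{ss.chains}) explicitly requires $\dim E^s(x)=\dim E^s(y)$ and $\dim E^u(x)=\dim E^u(y)$ for adjacent vertices; since the graph of an irreducible component is connected, the dimensions are constant across all its vertices, hence across all of $\Sigma$. With this substitution your argument is complete and the rest of the estimate is exactly the paper's.
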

\begin{proof}
Since $\psi$ is $\beta$-quasi-H\"older,  there exists a $\beta$-H\"older continuous $\Psi:\mathfrak G\to\R$ such that $\|\Psi\|_\beta\leq 2\|\psi\|_\beta'$, and  $\psi=\Psi\circ\imath$ on $X$, where $\imath(x)=(x,E^u(x);x,E^s(x))$.

Let $\hpsi=\Psi\circ\imath\circ\pi$, then $\hpsi=\psi\circ\pi$ on $\pi^{-1}(X)$.
By Properties ($\Sigma$\ref{i.Sigma2}) and ($\Sigma$\ref{i.Sigma5}) (in \S\ref{section-Sigma}),  $\imath\circ\pi$ is  $\gamma$-H\"older continuous for some $\gamma$; so $\hpsi$ is $\wh{\beta}$-H\"older continuous with $\wh{\beta}:=\gamma\beta$, and
$
\|\hpsi\|_{\wh{\beta}}\leq \|\Psi\|_\beta \|\imath\circ\pi\|_{\gamma}^{\beta}\leq C\|\psi\|_\beta'$, with  $C:=2\|\imath\circ\pi\|_{\gamma}^{\beta}$.
\end{proof}
\subsection{Existence of the Measure of Maximal Entropy (Thms \ref{t.MME} and \ref{t.MME-exists})}\label{s.Proof-existence-MME-diffeo}
Since $\Sigma$ is SPR and irreducible,  $\Sigma$ is positively recurrent and has a unique MME (Thm~\ref{t.Gurevich-Theorem}).
Call this measure $\hmu$, and let  $\mu:=\pi_*\hmu$. By ($\Sigma$\ref{i.Sigma3}) in \S\ref{section-Sigma}, $\mu$ is carried by $X$, and $h(f,\mu)=h(\sigma,\hmu)=h_{\Bor}(\Sigma)=h_{\Bor}(f|_X)$. So   $\mu$ is a MME of $f|_X$.
This proves the first part of Thm~\ref{t.MME-exists}.
The other parts of the theorem now follow from known results, see Remark \ref{r.Thm-13.1}.
\hfill$\Box$

\medbreak

\subsection{Standing Assumptions II} In addition to Assumptions I, we let  $\mu$ be the MME of $f|_X$, and  $\hmu$ denote the MME of $\Sigma$.
Recall that
$p$ is the period of $X$ (and therefore of $\mu$), and that $q$ is the period of $\Sigma$.

\begin{lemma}\label{l.p|q}
$
\mu=\hmu\circ\pi^{-1}
$
and
$
p|q.
$
\end{lemma}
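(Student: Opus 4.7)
The identity $\mu = \hmu \circ \pi^{-1}$ will follow immediately from what was already established: \S\ref{s.Proof-existence-MME-diffeo} shows that $\pi_*\hmu$ is an MME of $f|_X$, and Thm~\ref{t.MME-exists}(2) says that $\mu$ is the unique such measure, so the two must coincide.

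For $p \mid q$, my plan is to transport the cyclic decomposition of $X$ back to $\Sigma$ through $\pi^{-1}$ and compare it to the cyclic decomposition of $\Sigma$. Let $A \subset X$ be the Borel set furnished by Prop~\ref{p.decomposition}, so $X = A \sqcup f(A) \sqcup \cdots \sqcup f^{p-1}(A)$ and $f^p(A)=A$, and let $\Sigma = \Sigma_0 \sqcup \cdots \sqcup \Sigma_{q-1}$ be the cyclic decomposition from Lemma~\ref{l.spectral-decomposition-CMS}, with $\sigma(\Sigma_i) = \Sigma_{i+1 \bmod q}$ and $\sigma^q|_{\Sigma_i}$ topologically mixing. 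Using $\pi\circ\sigma=f\circ\pi$, the set $B:=\pi^{-1}(A)$ will satisfy $\sigma^p(B)=B$ and $\sigma^j(B) = \pi^{-1}(f^j(A))$ for $0\le j\le p-1$; hence $B,\sigma(B),\ldots,\sigma^{p-1}(B)$ are pairwise disjoint and cover $\pi^{-1}(X)$, which is $\hmu$-conull by ($\Sigma$\ref{i.Sigma3}), giving $\hmu(B)=1/p$.

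The heart of the argument will be to describe the $\sigma^p$-invariant subsets of $\Sigma$ modulo $\hmu$-null sets. Setting $d:=\gcd(p,q)$, the permutation $i\mapsto i+p \bmod q$ of $\{0,\ldots,q-1\}$ has $d$ orbits, each of length $q/d$. On the union $U$ of the $\Sigma_i$'s along one such orbit, $(\sigma^p|_U)^{q/d}$ restricts on each $\Sigma_i\subset U$ to the mixing map $(\sigma^q|_{\Sigma_i})^{p/d}$; a routine bookkeeping argument will then show that $\sigma^p$ acts ergodically on $(U,\hmu|_U)$. Consequently $\hmu$ will have exactly $d$ ergodic components under $\sigma^p$, all of equal $\hmu$-mass $1/d$ (by $\sigma$-invariance of $\hmu$ and the fact that $\sigma$ cyclically permutes these components), so any $\sigma^p$-invariant set of positive $\hmu$-measure is, mod null, a union of $k\ge 1$ of them.

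Applying this description to $B$ gives $\hmu(B)=k/d$, which combined with $\hmu(B)=1/p$ yields $pk=d$; since $d$ divides $p$ this forces $k=1$ and $d=p$, whence $p\mid q$. The one step that will require some care is the ergodicity of $\sigma^p$ on each orbit union $U$ — essentially the verification that a cyclic extension of a positive iterate of a mixing Markov shift remains ergodic — but this should be a standard bookkeeping exercise rather than a genuine obstacle.
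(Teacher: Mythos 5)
Your proof of $\mu=\hmu\circ\pi^{-1}$ is correct and in fact simpler than the paper's: you just invoke the uniqueness of the MME on $X$ (Thm~\ref{t.MME-exists}(2)), whereas the paper lifts the ergodic components of $\mu$ to $\Sigma$ and uses the uniqueness of the MME on $\Sigma$ instead. Both are valid; yours is shorter.

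For $p\mid q$, your approach is correct but genuinely different from the paper's. The paper uses an eigenfunction argument: since $\mu$ is isomorphic to a Bernoulli scheme times a $p$-cycle (Thm~\ref{t.MME-exists}(2)), there is $g:X\to\C$ with $|g|=1$ and $g\circ f=e^{2\pi i/p}g$; pulling back by $\pi$ gives $G=g\circ\pi$ satisfying $G\circ\sigma^q=e^{2\pi i q/p}G$ $\hmu$-a.e., and since each $\sigma^q$-ergodic component of $\hmu$ is mixing (Thm~\ref{t.Parry-Gurevich-ASS}(1)), $G$ is a.e.\ constant, forcing $q/p\in\Z$. Your argument instead pulls back the cyclic decomposition of $X$ to $\Sigma$, identifies the $\sigma^p$-ergodic components of $\hmu$ as the $d=\gcd(p,q)$ orbit-unions of the spectral pieces $\Sigma_i$, and compares cardinalities: $\hmu(\pi^{-1}(A))=1/p$ must equal $k/d$ for some integer $k\ge1$, which together with $d\mid p$ forces $d=p$. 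Both arguments ultimately rest on the same mixing fact for the $\sigma^q$-ergodic components of $\hmu$. The paper's route is shorter and avoids any gcd bookkeeping; yours is more explicit and avoids invoking the Bernoulli-plus-cycle structure of $\mu$ (only the existence of the $f^p$-invariant piece $A$). One slip: the cyclic decomposition $X=A\sqcup f(A)\sqcup\cdots\sqcup f^{p-1}(A)$ with $f^p(A)=A$ comes from the unlabeled proposition in \S\ref{s.measurable-homoclinic-classes} immediately following Prop.~\ref{p.homoclinic} (the ``period of a Borel homoclinic class'' proposition), not from Prop.~\ref{p.decomposition}, which concerns the reduction of the SPR property to homoclinic classes. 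The step you flag as requiring care — ergodicity of $\sigma^p$ on each orbit-union $U$ — is indeed routine: the first-return map of $\sigma^p$ to any $\Sigma_i\subset U$ is $(\sigma^q)^{p/d}$, a positive power of a mixing (in fact Bernoulli) transformation, hence ergodic, and ergodicity of the return map implies ergodicity on the tower over it.
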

\begin{proof}
Suppose $\mu$ is an MME of $f|_X$. By the affinity of the entropy function, a.e. ergodic component of $\mu$ has entropy $h_{\Bor}(f|_X)=h_{\Bor}(\Sigma)=h$. By \eqref{e.chi-kappa-def}, such measures are  $\chi$-hyperbolic. By ($\Sigma$\ref{i.Sigma4}), they lift to ergodic measures with entropy $h$  on $\Sigma$. But on $\Sigma$, there is just one such measure: $\hmu$ (Thm~\ref{t.Parry-Gurevich-ASS}). Consequently,  a.e. ergodic component of $\mu$ equals $\pi_*\hmu$, whence  $\mu=\pi_*\hmu$.

By Thm~\ref{t.MME-exists}(2), there is a measurable function $g:X\to\C$ such that $g\circ f=e^{2\pi i/p}g$
and $|g(x)|=1$ for all $x$. Let $G:=g\circ\pi$; then $G$ is defined $\hmu$-a.e., see ($\Sigma$\ref{i.Sigma3}). Using the commutation relation $\pi\circ\sigma=f\circ\pi$, we obtain  $G\circ\sigma^q=e^{2\pi i q/p}G$ $\hmu$-a.e.

However, by Thm~\ref{t.Parry-Gurevich-ASS}(1) (with $q$ replacing what is denoted there $p$),  each of the ergodic components of $(\Sigma,\sigma^q,\hmu)$ is Bernoulli, whence mixing. So $G$ is a.e. constant, and $q/p$ is an integer.
\end{proof}

\subsection{Exponential Decay of Correlations (Thms~\ref{t.main} and \ref{t.DOC-diffeos})}\label{s.decay-of-corr-diffeo-proof}
First, we prove  Thm~\ref{t.DOC-diffeos};  Then, {Thm~\ref{t.main} will follow from Remark \ref{r.doc-for-top-mixing-surface-diffeos}.}

By Thm~\ref{t.MME-exists}(2), if the period $p$ of $\mu$ is $1$, then $\mu$ is mixing. Otherwise, $( X,\mu,f^p)$ has $p$ ergodic components $\mu'_i:=\mu'\circ f^{-i}$ ($0\leq i\leq p-1$), each Bernoulli with respect to $f^p$.
We will show that for every $\beta$  there are constants $C>0, 0<\theta<1$ such that for all $\beta$-quasi-H\"older $\psi,\vf:X\to\R$
and for all $n\geq 0$,
\begin{equation}\label{e.aim-i}
\mathrm{Cov}_{\mu'_i}(\vf,\psi\circ f^{pn})<C{\|\psi\|_{\beta}'\|\vf\|_{\beta}'}\theta^{pn}.
\end{equation}
It is sufficient to do this for $i=0$, i.e. for $\mu_i'=\mu'$.

\begin{lemma}
$\mu'=\pi_\ast\hmu_j$, where  $\hmu_j$ is an ergodic component of $\hmu$ for the map $\sigma^q$.
\end{lemma}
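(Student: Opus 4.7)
The plan is to compare the ergodic decompositions of $(\Sigma,\hat\mu,\sigma^q)$ and $(X,\mu,f^p)$ and to use the semi-conjugacy $\pi\circ\sigma^q=f^q\circ\pi$ (which follows from $\pi\circ\sigma=f\circ\pi$) to match them up, exploiting the fact that $p\mid q$ by Lemma~\ref{l.p|q}.

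First, I would record the two ergodic decompositions. By Thm~\ref{t.Parry-Gurevich-ASS}(1), $(\Sigma,\hat\mu,\sigma)$ is isomorphic to the product of a Bernoulli scheme with a cyclic permutation of $q$ points; hence $\hat\mu$ decomposes under $\sigma^q$ into exactly $q$ ergodic (indeed Bernoulli) components $\hat\mu_0,\dots,\hat\mu_{q-1}$, cyclically permuted by $\sigma$, with $\hat\mu=\tfrac1q\sum_{j=0}^{q-1}\hat\mu_j$. Similarly, by Thm~\ref{t.MME-exists}(2), $(X,\mu,f^p)$ decomposes into $p$ Bernoulli components $\mu'_0,\dots,\mu'_{p-1}$, cyclically permuted by $f$, with $\mu=\tfrac1p\sum_{i=0}^{p-1}\mu'_i$. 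Since Bernoulli implies mixing and $p\mid q$, each $\mu'_i$ remains ergodic under $f^q=(f^p)^{q/p}$, so $\mu'_0,\dots,\mu'_{p-1}$ are pairwise distinct $f^q$-ergodic measures whose average is $\mu$.

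Next, I would push forward. Since $\pi\circ\sigma^q=f^q\circ\pi$, each $\pi_*\hat\mu_j$ is $f^q$-invariant, and since push-forward preserves ergodicity, each $\pi_*\hat\mu_j$ is $f^q$-ergodic. From Lemma~\ref{l.p|q} we have $\mu=\pi_*\hat\mu=\tfrac1q\sum_{j=0}^{q-1}\pi_*\hat\mu_j$, so this sum must agree with $\tfrac1p\sum_{i=0}^{p-1}\mu'_i$. By uniqueness of the ergodic decomposition of $\mu$ relative to $f^q$, each $\pi_*\hat\mu_j$ equals some $\mu'_{i(j)}$, and each $\mu'_i$ is hit (with multiplicity $q/p$) by the map $j\mapsto i(j)$. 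In particular, our given ergodic component $\mu'=\mu'_0$ is of the form $\pi_*\hat\mu_j$ for some $j\in\{0,\dots,q-1\}$, as claimed.

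The only subtle point is justifying that the push-forward of an ergodic component is itself ergodic and not merely invariant; this is immediate from ergodicity being preserved under measurable factor maps, combined with the fact that $\pi$ intertwines $\sigma^q$ and $f^q$ on the full-measure set $\Sigma^\#$ provided by property~$(\Sigma\ref{i.Sigma2})$. No further estimates are required, so there is no significant obstacle beyond bookkeeping the two decompositions.
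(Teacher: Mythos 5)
Your proof is correct and follows essentially the same route as the paper: write down the two ergodic decompositions of $\mu$ (one coming from the symbolic side, one from the diffeomorphism side), show that every term is ergodic for a common iterate, and invoke uniqueness of the ergodic decomposition. The one noteworthy variation is that you compare the decompositions with respect to $f^q$, whereas the paper passes to the common iterate $f^{pq}$; your choice is slightly more economical and works because $p\mid q$ and mixing (hence Bernoulli) $f^p$-components remain ergodic under the higher power $f^q=(f^p)^{q/p}$. The paper constructs the $\sigma^q$-ergodic components explicitly from the spectral decomposition $\Sigma=\biguplus_j\sigma^j(\Sigma')$ rather than reading them off from the Bernoulli$\times$cyclic description in Thm~\ref{t.Parry-Gurevich-ASS}(1), but these are two phrasings of the same fact. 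One tiny nit: you do not actually need $(\Sigma\ref{i.Sigma2})$ or the set $\Sigma^\#$ to push ergodicity forward, since $\pi\circ\sigma=f\circ\pi$ holds on all of $\Sigma$; the factor-map preservation of ergodicity is then immediate.
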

\begin{proof}The spectral decomposition $\Sigma=\biguplus_{j=0}^{q-1}\sigma^j(\Sigma')$ induces the decomposition
$
\displaystyle
\hmu=\tfrac{1}{q}\sum_{j=0}^{q-1}\hmu'\circ\sigma^{-j},\text{ where }\hmu'$
is the renormalization of the restriction of $\hmu$ to $\Sigma'$.

Clearly,   $\hmu'$ is the  unique MME of $\sigma^q|_{\Sigma'}$, and since $\sigma^q|_{\Sigma'}$ is conjugate to an {\em aperiodic} irreducible Markov shift, $\hmu'$ is $\sigma^q$-Bernoulli, see Thm~\ref{t.Parry-Gurevich-ASS}(1).
By Lemma~\ref{l.p|q},
 \begin{equation}\label{e.p-q-J}
\tfrac{1}{q}\sum_{j=0}^{q-1}(\hmu'\circ\pi^{-1})\circ f^{-j}=\hmu\circ\pi^{-1}=\mu=\tfrac{1}{p}\sum_{i=0}^{p-1}\mu'\circ f^{-i}.
\end{equation}
We claim that
 \eqref{e.p-q-J} gives two ergodic decompositions for $\mu$ with respect to $f^{pq}$. (If $q>p$, then some summands on the left appear with multiplicity.)
Indeed:
\begin{enumerate}[\quad$\circ$]
\item Since $\hmu'$ is $\sigma^q$-Bernoulli, $\hmu'$ is $\sigma^{pq}$-ergodic and invariant. {Therefore $\hmu'\circ\pi^{-1}\circ f^{-j}$ are $f^{pq}$-ergodic and invariant.}

\item
Since $\mu'$ is $f^p$-Bernoulli, $\mu'$ is $f^{pq}$--ergodic and invariant. Therefore $\mu'\circ f^{-i}$ are $f^{pq}$--ergodic and invariant.
\end{enumerate}

By the uniqueness of the ergodic decomposition, since $\mu'$ appears on the right side of \eqref{e.p-q-J}, it must also appear on its  left side. Choose $0\leq j\leq q-1$ such that
$$\mu'=\hmu'\circ\pi^{-1}\circ f^{-j}=\hmu'\circ\sigma^{-j}\circ\pi^{-1}\equiv\hmu_j'\circ\pi^{-1},\text{where $\hmu'_j:=\hmu'\circ\sigma^{-j}$.}$$
Since $\hmu'$ is $\sigma^q$-Bernoulli,    $\hmu'_j$ is $\sigma^q$-Bernoulli, whence $\sigma^q$-ergodic. By construction, $\hmu=\frac{1}{q}\sum_{\ell=0}^{q-1}\hmu'_j\circ\sigma^{-\ell}$. Necessarily,
 $\hmu_j'$ is one of the   $\sigma^q$-ergodic components of $\hmu$.
\end{proof}

\smallskip
\noindent
{\em Proof of Theorem~\ref{t.DOC-diffeos}.\/}
Let $C$ and $\hbeta$ be the constants from Lemma \ref{l.psi-hat-Holder}. Then for every $\beta$-quasi-H\"older functions $\vf$ and $\psi$ on $X$,  there are two $\hbeta$-H\"older continuous functions $\hpsi,\hvf:\Sigma\to\R$ such that
$\hpsi=\psi\circ\pi$, $\hvf=\vf\circ\pi$ {on $\pi^{-1}(X)$,} and
$
\|\wh{\varphi}\|_{\wh{\beta}}\leq C\|\vf\|_{\beta}'\text{ , }\|\wh{\psi}\|_{\wh{\beta}}\leq C\|\psi\|_{\beta}'.
$
{Since $\hmu\circ\pi^{-1}=\mu$ and $\mu$ is carried by $X$,
$
\hpsi=\psi\circ\pi\text{ and }\hvf=\vf\circ\pi\ \text{$\hmu$-a.e.}
$}
{Note that for each $n\geq 0$, $\|\hpsi\circ\sigma^{n}\|_{\hbeta}\leq \hbeta^{-n}\|\hpsi\|_{\hbeta}\leq C\hbeta^{-n}\|\psi\|_\beta'$.  }

In Appendix \ref{appendix-Markov-Shifts}, we state and prove the following result (Thm~\ref{t.DOC-for-CMS}): {\em Every  $\sigma^q$-ergodic component of an MME on an SPR irreducible Markov shift with period $q$, has exponential decay of correlations for $\hbeta$-H\"older continuous observables.}

Applying this to $\hmu_j'$, gives $C_{j}>0$, $0<\theta_{j}<1$ which depend only on $\Sigma,\pi,\beta$, s.t. $$|\mathrm{Cov}_{\hmu'_j}(\hvf,(\hpsi\circ \sigma^r)\circ \sigma^{q\ell})|\leq C_{j}\|\hvf\|_{\hbeta}\|\hpsi\circ\sigma^r\|_{\hbeta}\theta_{j}^\ell\leq
{C_{j}C^2\hbeta^{-r}\|\vf\|_{\beta}'\|\psi\|_{\beta}'}\theta_{j}^\ell,
 \ \ \text{ $\forall r,\ell\geq 0$.}$$
For every $n$, we write
$
pn=q\ell_n+r_n,\text{ where }\ell_n:=\lfloor\frac{pn}{q}\rfloor\ , \ 0\leq r_n\leq q-1.
$
Then
\begin{align*}
|\mathrm{Cov}_{\mu'}(\vf,\psi\circ f^{pn})|=|\mathrm{Cov}_{\hmu'_j}(\hvf,\hpsi\circ \sigma^{r_n}\circ \sigma^{q\ell_n})|\leq C_{j}C^2\hbeta^{-r_n}\|\vf\|_{\beta}'\|\psi\|_{\beta}'\theta_{j}^{\ell_n}.
\end{align*}
Thus,
$|\mathrm{Cov}_{\mu'}(\vf,\psi\circ f^{pn})|\leq C'\|\vf\|_{\beta}'|\psi\|_{\beta}'\theta^n$ where
{$C':=\max_{1\leq j\leq q}\{C_{j}C^2\hbeta^{-r}\theta_{j}^{-1}\}$, and $\theta:=\max_{1\leq j\leq q}\{\theta_{j}^{p/q}\}$. }
We obtained \eqref{e.aim-i}, in case $i=0$.
The case $i\neq 0$ follows by symmetry, and we proved
Thm~\ref{t.DOC-diffeos}.
\qed

\subsection{Asymptotic Variance (Thm~\ref{t.asymp-var-diffeos})}\label{s.asymp-var-diffeos}
The theorem holds for SPR Markov shifts, see \S\ref{ss.CMS.variance}. We will use symbolic dynamics to deduce it for diffeomorphisms.

By Lemmas \ref{l.psi-hat-Holder}, \ref{l.p|q}, there are $\hpsi$, $\hmu$ such that (a) $\hmu$ is the MME of $\Sigma$; (b) $\mu=\pi_*\hmu$; (c)  $\hpsi=\psi\circ\pi$  on $\pi^{-1}(X)$; (d)  $\|\hpsi\|_{\hbeta}\leq C\|\psi\|_\beta'$. Since $\mu(X)=1$, (b) and (c) imply
$
\hpsi_n:=\sum_{k=0}^{n-1}\hpsi\circ\sigma^k=\psi_n\circ\pi\text{ $\hmu$-a.e., for all $n$.}
$
Thus by  (b), $\mathrm{Var}_\mu(\psi_n)=\mathrm{Var}_{\hmu}(\hpsi_n)$.
By Thm~\ref{t.asymp-var-CMS}, $\lim\limits_{n\to\infty}\frac{1}{n}\mathrm{Var}_\mu(\psi_n)$ exists, and
\begin{equation}\label{e.sigma-psi-hat}
\sigma_\psi^2=\sigma_{\hpsi}^2:=\lim_{n\to\infty}\tfrac{1}{n}\mathrm{Var}_{\hmu}(\hpsi_n),
\end{equation}
\begin{equation}\label{e.Laplace-Transform-f}
\E_\mu[e^{z\psi_n/\sqrt{n}}]=\E_{\hmu}(e^{z\hpsi_n/\sqrt{n}})\longrightarrow e^{\frac{1}{2}\sigma_\psi^2 z^2}\   \ (z\in\mathbb C)\ \  \text{ if $\int \psi d\mu= 0$}.
\end{equation}
(The variance $\sigma^2_\psi=\sigma^2_{\wh\psi}$ does depend on $\mu$, even if omitted from the notation).

Next, by (d), $\sigma_\psi=\sigma_{\hpsi}\leq  M_{\wh{\beta}}{\|\hpsi\|_{\hbeta}}\leq CM_{\wh{\beta}}{\|\psi\|_{\beta}'}$, with $M_{\wh{\beta}}$ as in \eqref{e.sigma-bound-CMS}.
This proves the existence of the limit  \eqref{eq-def-avar}, and Parts (3) and (4) of Thm~\ref{t.asymp-var-diffeos}.

Part (1), the Green-Kubo identity, follows from the exponential decay of correlations of $f^p$ with respect to the ergodic components of  $(X,f^p,\mu)$, exactly as in the proof of Theorem \ref{t.asymp-var-CMS}.\footnote{Note that we cannot deduce the Green-Kubo formula for $\sigma_\psi$ directly from the Green-Kubo formula for $\sigma_{\hpsi}$,  because of the possible difference between the period of $\mu$ and the period of $\Sigma$.}

By \eqref{e.sigma-psi-hat} and Thm \ref{t.asymp-var-CMS}, $\sigma_\psi^2=\sigma_{\hpsi}^2= \frac{d^2}{dt^2}\big|_{t=0}P_\Bor(\Sigma,t\hpsi)$.
Part (2) follows from:
\begin{claim}\label{c.lift-pressure}
Recall $\kappa$ from \eqref{e.chi-kappa-def}.
There is $\varepsilon(\kappa)>0$ s.t.
$P_{\Bor}(f|_X,t\psi)=P_\Bor(\Sigma, t\hpsi)$ 
for all $|t|<\tfrac{\varepsilon}{\|\psi\|_\infty}$.
\end{claim}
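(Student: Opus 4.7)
\emph{Plan.} I will prove the two inequalities $P_{\Bor}(\Sigma, t\hpsi)\le P_{\Bor}(f|_X, t\psi)$ and $P_{\Bor}(f|_X, t\psi)\le P_{\Bor}(\Sigma, t\hpsi)$ separately. The first inequality will follow with no restriction on $t$ from the fact that $\pi$ preserves entropy of invariant measures and that integrals of $\hpsi$ and $\psi$ against corresponding measures agree. The second inequality is the one requiring a smallness assumption on $t$: it will rest on the fact that, thanks to the SPR gap \eqref{e.chi-kappa-def}, measures with nearly-maximal pressure for $t\psi$ have entropy above the hyperbolicity threshold $h_{\Bor}(f|_X)-\kappa$, and hence lift to $\Sigma$ via Property $(\Sigma\ref{i.Sigma4})$.

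\emph{Easy direction.} Let $\hnu\in\Proberg(\sigma)$ and set $\nu:=\pi_\ast\hnu$. By Property $(\Sigma\ref{i.Sigma3})$, $\nu\in\Proberg(f|_X)$, and by Lemma~\ref{l.Hyp-Mark-Coding-Prop}\eqref{i.basic1} we have $h(\sigma,\hnu)=h(f,\nu)$. Since $\hnu(\pi^{-1}(X))=\nu(X)=1$ and $\hpsi=\psi\circ\pi$ on $\pi^{-1}(X)$, also $\int\hpsi\, d\hnu=\int\psi\, d\nu$. Therefore
\[
P(\sigma,\hnu,t\hpsi)=h(\sigma,\hnu)+t\!\int\!\hpsi\, d\hnu=h(f,\nu)+t\!\int\!\psi\, d\nu=P(f,\nu,t\psi)\le P_{\Bor}(f|_X,t\psi),
\]
and taking the supremum over $\hnu\in\Proberg(\sigma)$ yields $P_{\Bor}(\Sigma,t\hpsi)\le P_{\Bor}(f|_X,t\psi)$.

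\emph{Hard direction.} Set $\varepsilon:=\kappa/4$, so that $|t|\|\psi\|_\infty<\varepsilon$ implies $2|t|\|\psi\|_\infty<\kappa/2$. Evaluating the pressure at the MME $\mu$ of $f|_X$ gives the lower bound $P_{\Bor}(f|_X,t\psi)\ge h_\Bor(f|_X)-|t|\|\psi\|_\infty$. Consequently, any $\nu\in\Proberg(f|_X)$ with $P(f,\nu,t\psi)>P_{\Bor}(f|_X,t\psi)-\kappa/4$ satisfies
\[
h(f,\nu)>P_{\Bor}(f|_X,t\psi)-|t|\|\psi\|_\infty-\tfrac{\kappa}{4}\ge h_\Bor(f|_X)-2|t|\|\psi\|_\infty-\tfrac{\kappa}{4}>h_\Bor(f|_X)-\kappa.
\]
By \eqref{e.chi-kappa-def} such a $\nu$ is $\chi$-hyperbolic, so by Property $(\Sigma\ref{i.Sigma4})$ it admits an ergodic lift $\hnu\in\Proberg(\sigma)$ with $\pi_\ast\hnu=\nu$ and $h(\sigma,\hnu)=h(f,\nu)$. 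The same computation as in the easy direction gives $P(\sigma,\hnu,t\hpsi)=P(f,\nu,t\psi)$, whence $P_{\Bor}(\Sigma,t\hpsi)\ge P(f,\nu,t\psi)$. Taking the supremum over all such $\nu$ produces $P_{\Bor}(\Sigma,t\hpsi)\ge P_{\Bor}(f|_X,t\psi)$.

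\emph{Main obstacle.} The only substantive point is the calibration $\varepsilon=\kappa/4$: we must ensure that the perturbation $|t\int\psi\, d\nu|$ cannot push the top pressure so far away from $h_\Bor(f|_X)$ that there exist nearly-optimizing ergodic measures whose entropy lies below the hyperbolicity threshold $h_\Bor(f|_X)-\kappa$ of \eqref{e.chi-kappa-def}; once this is guaranteed, the hyperbolic-coding machinery of Thm~\ref{t.SPR-coding} handles the lift-project correspondence automatically.
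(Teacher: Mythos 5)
Your proof is correct and follows essentially the same route as the paper's: the easy direction uses Property~$(\Sigma\ref{i.Sigma3})$ to push invariant measures from $\Sigma$ down to $X$ preserving entropy and the integral of the potential, and the hard direction uses the gap \eqref{e.chi-kappa-def} to ensure that any near-optimizer of $P(f,\nu,t\psi)$ has entropy above $h_\Bor(f|_X)-\kappa$, hence is $\chi$-hyperbolic and lifts via $(\Sigma\ref{i.Sigma4})$. The only cosmetic difference is your fixed calibration $\varepsilon=\delta=\kappa/4$ versus the paper's more flexible ``choose $\eps,\delta>0$ with $2\eps+\delta<\kappa$'' — both enforce the same inequality, and your numbers satisfy it.
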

\begin{proof}By the affinity of the entropy function, 
the supremum defining $P_\Bor(\Sigma, t\hpsi)$ and $P_{\Bor}(f|_Xt\psi)$ can be replaced by a supremum over  ergodic measures.
\begin{enumerate}[(1)]
\item[($\geq$)]
By ($\Sigma$\ref{i.Sigma3}) in \S\ref{section-Sigma},  every   $\hnu\in \Proberg({\sigma})$ projects to an ergodic measure $\nu$ on $X$, with same entropy. So  $h(\sigma,{\hnu})+t\int \hpsi d\hnu=h(f,{\nu})+t\int \psi d\nu\leq P_{\Bor}(f|_X,t\psi)$ and $P_\Bor(\Sigma,t\hpsi)\leq P_{\Bor}(f|_X,t\psi) \text{ for all $t$}.$
\item[($\leq$)]
Recall $\chi,\kappa>0$ from \eqref{e.chi-kappa-def},  and choose $\eps,\delta>0$ so small that
$
2\eps+\delta<\kappa
$.
Fix $|t|<\eps/\|\psi\|_\infty$. If $\nu\in\Proberg(f,X)$ and  $h(f,\nu)+t\int\psi d\nu>P_{\Bor}(f|_X,t\psi)-\delta$, then $h(f,\nu)>h_{\Bor}(f|_X)-\kappa$. By \eqref{e.chi-kappa-def}, $\nu\in\mathbb \Proberg(\chi,f|_X)$, and by ($\Sigma$\ref{i.Sigma4}),  $\nu$ has an ergodic lift to $\Sigma$, with same entropy. So $h(f,\nu)+t\int\psi d\nu\leq P_\Bor(\Sigma,t\hpsi)$ and
$
 P_{\Bor}(f|_X,t\psi) \leq P_\Bor(\Sigma,t\hpsi)\text{ for all $|t|<\eps/\|\psi\|_\infty$.}$ 
\end{enumerate}
So $P_{\Bor}(f|_X,t\psi) = P_\Bor(\Sigma,t\hpsi)$ for all $t\in(-\eps/\|\psi\|_\infty,\eps/\|\psi\|_\infty)$.
\end{proof}

\smallskip
It remains to prove Part (5) of the theorem. It is sufficient to consider the case  $\int\psi d\mu=0$, and  show that the following conditions are equivalent:
\begin{enumerate}[(1)]
\item $\sigma_\psi^2=0$.
\item $\psi=u-u\circ f$ $\mu$-a.e. for some Borel function $u:M\to\R$ $\mu$-a.e.
\item If $f^n(x)=x$, then  $\psi_n(x)=0$.
\item If $\nu$ is an  invariant measure on $X$, then $\int\psi d\nu=0$.
\end{enumerate}

\medskip
\noindent
$(3)\Rightarrow(2)$: By the SPR property, $\mu$ is a hyperbolic measure. If (3) holds, then (2) must follow, by the Katok-Mendoza generalization of the Livsic Theorem to non-uniformly hyperbolic diffeomorphisms \cite[Thm S.4.7]{Katok-Hasselblatt-Book}. (The Katok-Mendoza Theorem assumes that the weights of {\em all} periodic orbits is zero, but a close look at the proof shows that only hyperbolic periodic orbits homoclinic to $\mu$ are needed.)

\medskip
\noindent
$(2)\Rightarrow (1)$: Suppose $\psi=u-u\circ f$ $\mu$-a.e.  It is easy to see that $|u-u\circ f^n|/\sqrt{n}\to 0$ in probability, therefore
$\E_\mu(e^{it\psi_n/\sqrt{n}})\to 1$ for all $t$ real. By \eqref{e.Laplace-Transform-f}, $\sigma_\psi=0$.

\medskip
\noindent
$(1)\Rightarrow (4)$:
Every $\chi$-hyperbolic $\nu\in\Proberg(f|_X)$  lifts to an ergodic measure $\hnu$ on $\Sigma$. By (1) and \eqref{e.sigma-psi-hat},  $\sigma_{\hpsi}=0$, and by  the characterization of zero variance for SPR Markov shifts (Thm~\ref{t.asymp-var-CMS}), $\int \hpsi d\hnu=\int \hpsi d\hmu=0$. It follows that $\int \psi d\nu=0$ for all $\chi$-hyperbolic ergodic invariant measures on $X$. Since $\chi$ can be chosen arbitrarily small, this is the case for all $\nu\in\Proberg(f|_X)$ (all such measures are hyperbolic). By the ergodic decomposition, $\int \psi d\nu=0$ for all $\nu\in\mathbb P(f|_X)$.

\medskip
\noindent
$(4)\Rightarrow(3)$ is obvious. Thm~\ref{t.asymp-var-diffeos} is now proved.
\hfill$\Box$

\subsection{Large Deviations (Thm \ref{t.LDP-diffeos})} \label{s.Proof-of-LDP-diffeos}
Choose $M_\beta$ as in~\eqref{e.sigma-bound-diffeo} {and} $C$, $\hpsi$, $\hbeta$, $\hmu$ as in Lemmas \ref{l.psi-hat-Holder}--\ref{l.p|q}. By \eqref{e.Laplace-Transform-f}, $\E_\mu[e^{t\psi_n}]=\E_{\hmu}[e^{t\hpsi_n}]$ for all $n$ and $t\in\R$. Hence
$$
\Lambda_\psi(t)=\underset{n\to\infty}{\limsup}\tfrac{1}{n}\log\E_{\hmu}(e^{t\hpsi_n})=:\Lambda_{\hpsi}(t)\ ,\ I_\psi(s)=\sup_t\{st-\Lambda_{\hpsi}(t)\}=:I_{\hpsi}(s).
$$
Similarly, for every Borel set $E\subset\R$,
$
\mu\{x\in X:\frac{1}{n}\psi_n(x)\in E\}=(\hmu\circ\pi^{-1})\{x\in X:\frac{1}{n}\psi_n(x)\in E\}=\hmu\{\un{x}\in\Sigma:\frac{1}{n}\hpsi_n(\un{x})\in E\}
$.
 Thm~\ref{t.LDP-diffeos} now follows from the large deviations theorem for SPR Markov shifts (Thm~\ref{t.LDP-CMS}), and \eqref{e.sigma-psi-hat}. \qed

\medskip
This argument can be refined to obtain some more information.
Item (3) below justifies Remark \ref{r.LDP}.
\begin{lemma}\label{l.large-deviation-diffeo}
Under the setting of Thm~\ref{t.LDP-diffeos}, for any $\beta>0$ there are $\varepsilon,c>0$
such that for any $\beta$-quasi-H\"older function $\psi$ with $\|\psi\|_\beta'=1$, $\int \psi d\mu=0$ and  $\sigma_\psi^2\neq 0$:
\begin{enumerate}[(1)]
\item \begin{enumerate}[(a)]
\item $\Lambda_\psi$ is finite, convex, non-negative on $\R$. On $(-\varepsilon,\varepsilon)$, $\Lambda_\psi$ is $C^\infty$ and strictly convex.
\item
$I_\psi$ is  convex and   non-negative  on  $\R$. On $(-c{\sigma_\psi^4},c{\sigma_\psi^4})$, $I_\psi$ is  finite, $C^\infty$, strictly convex, and satisfies $\frac{1}{2}\sigma_\psi^{-2}\leq I_\psi''\leq 2\sigma_\psi^{-2}$.
\end{enumerate}
\item $\Lambda_\psi(0)=0\ , \ \Lambda_\psi'(0)=0\ , \ \Lambda_\psi''(0)=\sigma^2_\psi\ ,$
$I_\psi(0)=0\ , \ I_\psi'(0)=0\ , \ I_\psi''(0)={\sigma^{-2}_\psi}.
$

\item If  $G$ is open and $G\cap[-c\sigma_\psi^4,c\sigma_\psi^4]\neq \emptyset$, then
$
 \displaystyle\inf_{G\cap (-c\sigma_\psi^4,c\sigma_\psi^4)} I_\psi=\inf_G I_\psi
 $.
\item
\begin{enumerate}[(a)]
\item $\Lambda_\psi(t)=\sup\left\{h(f,\nu)+t\nu(\psi):\nu\in \Prob(f|_X)\right\}-h_{\Bor}(f|_X)$ for all $|t|<\varepsilon$.
\item $I_\psi(s)=h_{\Bor}(f|_X)-\sup\left\{h(f,\nu):\nu\in \Proberg(f|_X),\  \nu(\psi)=s\right\}$, for $|s|<c\sigma_\psi^4$.
\item
The suprema are attained by unique measures; these measures are ergodic.
\end{enumerate}
\end{enumerate}
\end{lemma}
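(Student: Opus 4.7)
My approach is to reduce everything to the analogous statement for SPR Markov shifts, exactly as in the proof of Thm~\ref{t.LDP-diffeos} in \S\ref{s.Proof-of-LDP-diffeos}. Fix $\chi,\kappa>0$ and $\pi\colon \Sigma\to M$ as in Standing Assumptions I--II. Using Lemmas~\ref{l.psi-hat-Holder} and~\ref{l.p|q}, lift $\psi$ to a $\hbeta$-H\"older function $\hpsi:=\psi\circ\pi$ on $\Sigma$ with $\|\hpsi\|_{\hbeta}\le C\|\psi\|_\beta'=C$, where $C$ depends only on $(\Sigma,\pi,\beta)$. Since $\mu=\pi_*\hmu$, we have $\Lambda_\psi=\Lambda_{\hpsi}$, $I_\psi=I_{\hpsi}$, and $\sigma_\psi=\sigma_{\hpsi}$. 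So it suffices to establish the corresponding statements for the symbolic observable $\hpsi$ on the SPR irreducible Markov shift $\Sigma$, where $\|\hpsi\|_{\hbeta}\le C$ and $\sigma_{\hpsi}>0$ is bounded above by $M_{\hbeta}\|\hpsi\|_{\hbeta}$.

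For Items (1) and (2), I invoke the analytic perturbation theory of the transfer operator of $\hmu$ developed for SPR shifts in Appendix~\ref{appendix-Markov-Shifts} (following \cite{Cyr-Sarig}). On a neighborhood $(-\eps,\eps)$ of $0$ (whose size depends only on $\|\hpsi\|_{\hbeta}\le C$, hence only on $\beta$ and $(\Sigma,\pi)$), the family of twisted transfer operators $\mathcal L_{t\hpsi}$ acts with a simple isolated leading eigenvalue $\lambda(t)$ on the Banach space $\mathcal L$, depending real-analytically on $t$; then $\Lambda_\psi(t)=\log\lambda(t)-h_\Bor(\Sigma)$ on $(-\eps,\eps)$. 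This gives real-analyticity and, by standard perturbation identities, $\Lambda_\psi(0)=0$, $\Lambda_\psi'(0)=\int\hpsi\,d\hmu=\int\psi\,d\mu=0$, and $\Lambda_\psi''(0)=\sigma_{\hpsi}^2=\sigma_\psi^2$ (this last identity is just Part (1) of Thm~\ref{t.asymp-var-diffeos} combined with the asymptotic Laplace transform). Since $\sigma_\psi^2>0$, $\Lambda_\psi$ is strictly convex on a subinterval of $(-\eps,\eps)$; outside $(-\eps,\eps)$, convexity and non-negativity of $\Lambda_\psi$ follow from H\"older's inequality and from $\Lambda_\psi(0)=0$ together with $\Lambda_\psi'(0)=0$. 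The corresponding properties of $I_\psi$ on $(-c\sigma_\psi^4,c\sigma_\psi^4)$ follow by Legendre duality: $I_\psi=\Lambda_\psi^*$ is the convex conjugate, and inverting the strictly monotone $\Lambda_\psi'$ near $0$ gives a $C^\infty$ diffeomorphism onto a neighborhood of $0$ of size $\asymp\sigma_\psi^2\cdot\eps$; choosing $\eps$ comparable to a fixed multiple of $\sigma_\psi^2$ (the uniform bound $\Lambda_\psi''\le 2\sigma_\psi^{-2}$ bounds how fast $\Lambda_\psi'$ grows) produces the window $(-c\sigma_\psi^4,c\sigma_\psi^4)$, and the bounds $\tfrac12\sigma_\psi^{-2}\le I_\psi''\le 2\sigma_\psi^{-2}$ follow from $I_\psi''=1/\Lambda_\psi''(\tau)$ at the conjugate point.

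Item (3) is a soft consequence of (1)(b): since $I_\psi$ is convex on $\R$, finite and strictly convex with minimum $0$ at $0$ on $(-c\sigma_\psi^4,c\sigma_\psi^4)$, it is non-decreasing on $[0,\infty)$ and non-increasing on $(-\infty,0]$. Therefore, if $G$ is open and meets $[-c\sigma_\psi^4,c\sigma_\psi^4]$, the infimum of $I_\psi$ over $G$ is attained either at $0$ (if $0\in G$) or at an endpoint of $G\cap(-c\sigma_\psi^4,c\sigma_\psi^4)$ approached from inside; a brief case analysis gives the claimed identity.

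Item (4) is the variational formula. For (4a), Claim~\ref{c.lift-pressure} (reproved there for general $t\psi$ with $|t|<\eps/\|\psi\|_\infty$) gives $P_{\Bor}(f|_X,t\psi)=P_{\Bor}(\Sigma,t\hpsi)$, and the standard identity for Markov shifts $P_{\Bor}(\Sigma,t\hpsi)=\log\lambda(t)$ combined with $h_\Bor(\Sigma)=h_\Bor(f|_X)$ yields the stated formula on $(-\eps,\eps)$. For (4b), I dualize: for $|s|<c\sigma_\psi^4$ the supremum defining $I_\psi(s)=\sup_t\{st-\Lambda_\psi(t)\}$ is attained at a unique $t=\tau(s)\in(-\eps,\eps)$, and by the variational principle the unique ergodic measure attaining the supremum in (4a) for this $\tau(s)$ is the equilibrium state of $\tau(s)\hpsi$ on $\Sigma$ (which is SPR for the perturbed potential, by Thm~\ref{t.no-phase-transition-diffeo}), whose projection under $\pi$ has mean $s$. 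Uniqueness and ergodicity in (4c) transfer from uniqueness of the equilibrium state on $\Sigma$ via $(\Sigma\ref{i.Sigma3})$--$(\Sigma\ref{i.Sigma4})$.

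The only subtle point is the calibration of $\eps$ and $c$ so that they depend only on $\beta$ and on $(\Sigma,\pi)$, not on the specific $\psi$ beyond $\|\psi\|_\beta'=1$; this is ensured by the uniform H\"older bound $\|\hpsi\|_{\hbeta}\le C$ provided by Lemma~\ref{l.psi-hat-Holder}, together with the fact that the spectral gap estimates in the appendix are uniform over H\"older balls of bounded radius.
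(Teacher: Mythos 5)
Your proposal is substantively correct and follows the paper's overall strategy: lift to the SPR coding $(\Sigma,\pi)$ via Lemmas~\ref{l.psi-hat-Holder}--\ref{l.p|q}, import the facts from Lemma~\ref{t.Free-Energy-CMS} and the analytic perturbation theory of the transfer operator, and transfer back using Claim~\ref{c.lift-pressure} and $(\Sigma\ref{i.Sigma3})$--$(\Sigma\ref{i.Sigma4})$. The one genuinely different organizational step is Item~(4b): the paper proves it by directly comparing the suprema on $X$ and on $\Sigma$, via the inequality $\mathrm{RHS}=h-I_{\hpsi}(s)\geq h-s^2/\sigma_\psi^2 >h-\kappa$ (Taylor near $0$) and then the lifting argument for ergodic measures with entropy $>h-\kappa$. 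You instead derive (4b) as the Legendre dual of (4a), which silently reuses the same SPR lifting machinery buried in the proof of Claim~\ref{c.lift-pressure}. Both routes are valid; yours is arguably cleaner, but you should make explicit the step ``$\sup_{t\in\R}\{st-\Lambda_\psi(t)\}=\sup_{|t|<\eps}\{st-\Lambda_\psi(t)\}$ for $|s|<c\sigma_\psi^4$'' (this follows from global convexity of $\Lambda_\psi$ and the fact that $\Lambda_\psi'$, being non-decreasing on $\R$, hits $s$ inside $(-\eps,\eps)$ for such $s$). Two small slips: the bound controlling $\Lambda_\psi''$ should read $\Lambda_\psi''\le 2\sigma_\psi^{2}$, not $2\sigma_\psi^{-2}$ (it is $I_\psi''$ that satisfies $\tfrac12\sigma_\psi^{-2}\le I_\psi''\le 2\sigma_\psi^{-2}$); and the reference for ``no phase transition'' at the symbolic level should be Thm~\ref{t.no-phase-transition}, not Thm~\ref{t.no-phase-transition-diffeo}.
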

\begin{remark}
We do not know if the supremum in (4b) also holds over non-necessarily ergodic measures.
However the proof shows that it holds over all invariant measures which admit lifts to $\Sigma$. \end{remark}
\begin{proof}
Choose constants $\varepsilon_1,c_1>0$ as in 
Lemma~\ref{t.Free-Energy-CMS}, applied to  for $\Sigma,\hbeta$ and $\phi\equiv 0$.
By Lemma~\ref{l.psi-hat-Holder},
$\|\hpsi\|_{\hbeta}\leq C \|\psi\|_\beta'=C$.
Items (1)--(2) follow immediately when $\|\psi\|_{\wh{\beta}}'=1$,  and using Remark \ref{r.Free-Energy-CMS}, when $\|\psi\|_{\wh{\beta}}'\neq 1$, and $\varepsilon\leq \varepsilon_1/C$ and $c\leq c_1/C^3$.
This implies that $I_\psi$ is strictly increasing on $(0,+\infty)$, strictly decreasing on $(-\infty,0)$, hence Item (3).

\medskip
\noindent
{\em Proof of (4a).\/}
By Lemma~\ref{t.Free-Energy-CMS}(3a), for $|t|\leq \varepsilon_1/ \|\hpsi\|_{\wh \beta}$, $\Lambda_{\hpsi}(t)=P_\Bor(\Sigma,t\hpsi)-h(\sigma,\hmu)$.
By Claim~\ref{c.lift-pressure}, there is $\eps_2(\kappa)>0$ such that for $|t|<\eps_2/\|\hpsi\|_{\wh \beta}$,
 $P_\Bor(\Sigma,t\hpsi)=P_{\Bor}(f|_X,t\psi)$.
 So for all $|t|\leq \min(\eps_1,\eps_2)/\|\hpsi\|_{\wh{\beta}}$,
$$\Lambda_\psi(t)=\Lambda_{\hpsi}(t)=P_\Bor(\Sigma,t\hpsi)-h(\sigma,{\hmu})
=P_{\Bor}(f|_X,t\psi)-h(f,\mu).
$$
Hence
 (4a) holds for  $\varepsilon<\min(\varepsilon_1,\varepsilon_2)/C$.\qed

\medskip
\noindent
{\em Proof of (4b).\/} We will construct $c_2=c_2(\Sigma,\beta)$ such that for all $|s|\leq c_2\sigma_\psi^4$,
\begin{align*}
\mathrm{LHS}:=&\sup\{h(f,\nu):\nu\in\Proberg(f|_X),\ \textstyle{\int} \psi d\nu=s\}\\
&\hspace{1cm}=\sup\{h(\sigma,\hnu):\hnu\in\mathbb \Proberg(\sigma),\textstyle{\int} \hpsi d\hnu=s\}=:\mathrm{RHS}.
\end{align*}
Once this is done, (4b) follows from Lemma~\ref{t.Free-Energy-CMS}(3b) and the identity $I_\psi=I_{\hpsi}$.

The inequality $(\geq)$ holds for all $s$, because by ($\Sigma$\ref{i.Sigma3}), if $\hnu\in\mathbb P_{erg}(\sigma)$ and $\int\hpsi d\hnu=s$, then $\nu:=\hnu\circ\pi^{-1}$ is carried by $X$, is ergodic,   $\int\psi d\nu=s$ and $h(f,\nu)=h(\sigma,\hnu)$.

The inequality $(\leq)$ is more subtle.
Let $h:=h_\Bor(f|_X)=h_\Bor(\Sigma)$.
Lemma \ref{t.Free-Energy-CMS} and Taylor's approximation give $c_3(\Sigma,\pi,\beta)$ such that  for every $|s|\leq c_3\sigma_{\psi}^4/\|\hpsi\|_{\wh{\beta}}^{3}$,
$$
\mathrm{RHS}=h-I_{\hpsi}(t)\geq h-\tfrac{1}{2}s^2\max_{|s|\leq c_3\sigma_{\psi}^4/\|\hpsi\|_{\wh{\beta}}^{3}} I_{\hpsi}''\geq h-{s^2}/\sigma_\psi^2.
$$
{We recall $\sigma_\psi\leq M_\beta\|\psi\|_\beta'$.}
Thus,  there is $c_4(\Sigma,\pi,\beta)$ such that  RHS$>h-\kappa$ for all $|s|<c_4\sigma_\psi^4$. 
By  ($\geq$),  $\mathrm{LHS}>h-\kappa$ for all such $s$. Therefore the LHS is determined by $\nu\in\Proberg(f|_X)$ with entropy bigger than $h-\kappa$. By \eqref{e.chi-kappa-def}, such $\nu$ belong to $\Proberg(\chi, f|_X)$, and by ($\Sigma$\ref{i.Sigma4}), they have ergodic lifts to $\Sigma$, with the same entropy. Hence
$\mathrm{LHS}\leq \sup\{h(\sigma,\hnu):\hnu\in\mathbb P_{erg}(\sigma)\text{ s.t. }\int \hpsi d\hnu=s\}=\mathrm{RHS}$,
proving $(\leq)$.\hfill$\Box$

\medskip
\noindent
{\em Proof of (4c).\/} The proofs of Items (4a) and (4b) show that the measures on $X$ which solve the variational problems in these two items lift to measures on $\Sigma$ which solve the variational problems there.  Conversely, solutions on $\Sigma$ project to solutions on $X$. Now Item (4c) follows from Lemma \ref{t.Free-Energy-CMS}(3).
\end{proof}

\subsection{The Almost Sure Invariance Principle (Thm \ref{t.ASIP-diffeo})}\label{s.ASIP-diffeos}
\begin{proof}[Proof of Theorem \ref{t.ASIP-diffeo}]
Choose  $\hmu$ and $\hpsi$ as in \S\ref{s.asymp-var-diffeos}, then the stochastic processes $(\psi_n)_{n\geq 1}$ on $(M,\mathfs B(M),\mu)$ and $(\hpsi_n)_{n\geq 1}$ on $(\Sigma,\mathfs B(\Sigma),\hmu)$ are equal in distribution.

By the ASIP for SPR Markov shifts (Thm~\ref{t.ASIP-Sigma}), there are  a probability space $(\Omega,\mathfs F,\nu)$
and  measurable functions $\wt{S}_n,\wt B_t:\Omega\to \R$ such that $(\wt B_t)_{t\geq 0}$ is a standard Brownian motion,
$(\wt{S}_n)_{n\geq 1}$ is equal in distributions to $(\hpsi_n)_{n\geq 1}$,  and   for all $\gamma>\tfrac{1}{4}$,
$
|\wt{S}_n(\omega)-\sigma_{\hpsi} \wt B_n(\omega)|=o(n^\gamma)\text{ a.e. in $\Omega$, as $n\to\infty$.}
$

By \eqref{e.sigma-psi-hat}, $\sigma_\psi=\sigma_{\hpsi}$.  Since $({\wt S}_n)_{n\geq 1}=(\hpsi_n)_{n\geq 1}$  in distribution,  $({\wt S}_n)_{n\geq 1}=({\psi}_n)_{n\geq 1}$ in distribution. 
This shows that the ASIP is satisfied.

The  ``dynamical ASIP" from Remark \ref{r-dynamical-ASIP} follows from the following lemma:

\begin{lemma}\label{l.Borelomania}
Let $S_n:X\to\R$ be  Borel functions on a standard probability space $(X,\mathfs B,\mu)$. Suppose $(\wt{S}_n)_{n\geq 1}$ and $(\wt{B}_t)_{t\geq 0}$ are two stochastic processes on some other standard probability space $(\Omega,\mathcal F,m)$, which satisfy parts (1)--(3) in Def \ref{def-ASIP} with constants $\sigma$ and $\gamma$. 
Then there exist a Borel probability measure $\nu$ on $X\times [0,1]$ which projects to $\mu$,  and a standard Brownian motion $(B_t)_{t\geq 0}$ on $(X\times [0,1],\nu)$ s.t.
$$
|{S}_n(x)-\sigma {B}_n(x,\xi)|=o(n^{\gamma}) \text{ $\nu$-a.e. in $X\times [0,1]$.}
$$
\end{lemma}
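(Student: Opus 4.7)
The plan is to exploit that $[0,1]$ with Lebesgue measure is a universal source of randomness for standard Borel spaces: given the values $(S_n(x))_{n\geq 1}$ at $x$, we will use $\xi \in [0,1]$ to sample a Brownian path from the appropriate conditional distribution.

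First I would repackage the hypothesis as a single joint law on the Polish space $\R^{\N} \times W$, where $W := C([0,\infty),\R)$ equipped with the topology of uniform convergence on compacts. Set
$$F : \Omega \to \R^{\N} \times W, \quad F(\omega) := \bigl((\tilde S_n(\omega))_{n\geq 1},\; (\tilde B_t(\omega))_{t\geq 0}\bigr),$$
and $\rho := F_* m$. Parts (1)--(3) of Def.~\ref{def-ASIP} give that the first marginal of $\rho$ equals $\mu_S := ((S_n)_{n\geq 1})_* \mu$, the second equals Wiener measure $\mathbb{P}_W$ (for which $w \mapsto w(t)$ is a standard Brownian motion), and that the Borel set
$$E := \{(s,w) \in \R^{\N} \times W : s_n - \sigma\, w(n) = o(n^\gamma)\}$$
has $\rho$-measure $1$.

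Next I would disintegrate $\rho$ along the first projection. Since $W$ is Polish, the disintegration theorem yields a Borel family $\{\rho_s\}_{s\in\R^{\N}}$ of probability measures on $W$ with $\rho = \int (\delta_s \otimes \rho_s)\, d\mu_S(s)$. Fix a Borel isomorphism $\phi : W \to [0,1]$, which exists because $W$ is an uncountable standard Borel space, and let $F_s(u) := \phi_*\rho_s([0,u])$ denote the CDF of $\phi_*\rho_s$. The generalized inverse $(s,\xi) \mapsto F_s^{-1}(\xi)$ is jointly Borel measurable, so
$$G : \R^{\N} \times [0,1] \to W, \quad G(s,\xi) := \phi^{-1}\bigl(F_s^{-1}(\xi)\bigr)$$
is Borel and satisfies $G(s,\cdot)_*\mathrm{Leb}_{[0,1]} = \rho_s$ for $\mu_S$-a.e. $s$. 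Now define $\nu := \mu \otimes \mathrm{Leb}_{[0,1]}$ on $X \times [0,1]$, which clearly projects to $\mu$, and set
$$B_t(x,\xi) := G\bigl((S_n(x))_{n\geq 1},\; \xi\bigr)(t).$$
By construction the push-forward of $\nu$ under $(x,\xi) \mapsto ((S_n(x))_{n\geq 1}, B_\cdot(x,\xi))$ equals $\rho$; hence the $B$-marginal is $\mathbb{P}_W$ (so $(B_t)_{t\geq 0}$ is a standard Brownian motion on $(X \times [0,1],\nu)$) and the full-measure event $E$ pulls back to $\{|S_n(x) - \sigma B_n(x,\xi)| = o(n^\gamma)\}$, proving the claim.

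The only point requiring real care is the Borel measurability of the conditional simulator $G$, which rests on two classical facts from descriptive set theory: the disintegration theorem for Borel probability measures on Polish spaces, and the Borel isomorphism theorem for uncountable standard Borel spaces (see, e.g., \cite{Kechris}). Beyond that, the argument is soft: the ASIP on $\Omega$ really encodes a joint law on a Polish space, and any such joint law can be realized over $(X,\mu)$ at the cost of one auxiliary uniform random variable $\xi \in [0,1]$.
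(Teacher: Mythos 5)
Your proof is correct in its essentials and takes a genuinely different route from the paper's.

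The paper's argument never places the Brownian motion into a path space. It fixes a measure-theoretic isomorphism $\theta:([0,1],m')\to(\Omega,m)$, disintegrates $\mu$ over the level sets of $\underline{S}:X\to\R^\N$ into conditional measures $\mu_{\underline{y}}$, and then glues $\nu:=\int_{[0,1]}\mu_{\underline{T}(\theta(\xi))}\times\delta_\xi\,dm'(\xi)$, with $B_t(x,\xi):=\widetilde B_t(\theta(\xi))$. The resulting $\nu$ is \emph{not} a product measure: the $x$-marginal given $\xi$ is pinned to the fiber $\{\underline{S}=\underline{T}(\theta(\xi))\}$. Your argument instead forms the joint law $\rho=F_*m$ on $\R^\N\times W$, disintegrates $\rho$ along the first coordinate, and reconstructs the conditional kernels via an inverse-CDF simulator $G$, which lets you take the cleaner product measure $\nu=\mu\otimes\mathrm{Leb}_{[0,1]}$. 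Both approaches are standard applications of Rokhlin-type disintegration and the Borel isomorphism theorem; yours is arguably more transparent (the auxiliary $\xi$ is literally a uniform seed), while the paper's avoids any reference to the Wiener path space.

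That last point is the one place where your argument has a (patchable) gap. The paper's Definition~\ref{def-ASIP} of a standard Brownian motion only requires joint measurability of $(t,\omega)\mapsto\widetilde B_t(\omega)$; it does \emph{not} require a.s.-continuous sample paths. So $\omega\mapsto(\widetilde B_t(\omega))_{t\ge0}$ need not land in $W=C([0,\infty),\R)$, and $F$ as you wrote it is not well-defined without an extra step. You should first replace $\widetilde B$ by a continuous modification (Kolmogorov's continuity criterion gives one with the same finite-dimensional distributions, hence still a standard Brownian motion and still satisfying Def.~\ref{def-ASIP}(3), since the full-measure set where $\widetilde B_n$ and its modification agree at all integer times has full measure). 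With that fix, the rest of your measurability bookkeeping — Borel disintegration, the Borel isomorphism $\phi:W\to[0,1]$, joint Borelness of $(s,\xi)\mapsto F_s^{-1}(\xi)$ via rational infima, and the push-forward identity $\Phi_*\nu=\rho$ — is all sound, and your construction even gives the bonus that the resulting $B_\cdot(x,\xi)$ has continuous paths.
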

\begin{proof}
If $t>0$, the law of $\wt{B}_t:\Omega\to\R$ has no atoms. Therefore $(\Omega,\mathfs F,m)$ has no atoms. By  Kuratowski's isomorphism theorem, there is a Borel probability measure $m'$ on $[0,1]$, with a  measure-theoretic isomorphism $\theta:([0,1],\mathfs B,m')\to (\Omega,\mathfs F,m).$

Let $Y:=\R^\N$ and define  $\un{S}:X\to Y$ by $\un{S}(x):=(S_1(x),S_2(x),\cdots)$. Let $\Xi$ denote the measurable partition of $X$  into the sets $\{x\in X:\un{S}(x)=\un{y}\}$, $\un{y}\in Y$.
By Rokhlin's theorem, there is a family of measures $\mu_{\un{y}}$, each carried by the $\Xi$--atom $\{x\in X:\un{S}(x)=\un{y}\}$, such that
$$
\mu=\int_Y \mu_{\un{y}} d\mu\circ\un{S}^{-1}.
$$

Next, let $\un{T}:\Omega\to Y$ be the function $\un{T}(\omega)=(\wt{S}_1(\omega),\wt{S}_2(\omega),\cdots)$. Since
 $(S_n)_{n\geq 1}$ is equal to $(\wt{S}_n)_{n\geq 1}$ in distribution,
$
\mu\circ\un{S}^{-1}=m\circ\un{T}^{-1}=m'\circ \theta^{-1}\circ\un{T}^{-1}.
$
Therefore
$
\mu=\int_{[0,1]} \mu_{\un{T}(\theta(\xi))}dm'(\xi).
$
Thus, the following measure on $X\times [0,1]$ projects to $\mu$:
$$
\nu:=\int_{[0,1]}\mu_{\un{T}(\theta(\xi))}\times \delta_\xi\, dm'(\xi).
$$

Define ${S}_n,{B}_t:X\times [0,1]\to\R$ by
$
{S}_n(x,\xi):=S_n(x) \ , \ {B}_t(x,\xi):=\wt{B}_t(\theta(\xi)).
$
For $\nu$-almost every $(x,\xi)$,  $x$ is in the $\Xi$--atom  which carries  $\mu_{\un{T}(\theta(\xi))}$. Therefore, $\nu$-almost surely, $\un{S}(x)=\un{T}(\theta(\xi))$. Equivalently,
$
({S}_n(x))_{n\geq 1}=(\wt{S}_n(\theta(\xi)))_{n\geq 1}
$. So
\begin{align*}
&\bigl(({S}_n(x))_{n\geq 1},({B}_t(x,\xi))_{t\geq 0}\bigr)=
\bigl((\wt{S}_n(\theta(\xi)))_{n\geq 1},(\wt{B}_t(\theta(\xi)))_{t\geq 0}\bigr)\text{ $\nu$-almost surely}\\
&=\bigl((\wt{S}_n(\omega))_{n\geq 1},(\wt{B}_t(\omega))_{t\geq 0}\bigr)\text{ in distributions (because $m'\circ \theta^{-1}=m$)}.
\end{align*}
So $({B}_t(x,\xi))_{t\geq 0}$ is a standard Brownian motion on $(X\times [0,1],\nu)$, and
$\nu\{(x,\xi):|S_n(x)-\sigma {B}_{n}(x,\xi)|=o(n^{\gamma})\}=m\{\omega: |\wt{S}_n(\omega)-\sigma \wt{B}_{n}(\omega)|=o(n^{\gamma})\}=1
$.
\end{proof}

We finish the proof of Thm~\ref{t.ASIP-diffeo} by showing $\sigma=\sigma_\psi$.
Using~\eqref{e.ASIP-dynamical}, the bounded convergence theorem, and the characteristic function of normal laws, we get

$$
    \lim_{n\to\infty} \int e^{i\psi_n/\sqrt{n}} d\mu =
     \lim_{n\to\infty} \int e^{i\sigma B_n/\sqrt{n}} d\nu =
     \int e^{i\sigma B_1} d\nu =
    e^{\frac{-\sigma^2}{2}}.
    $$
By Thm~\ref{t.asymp-var-diffeos}(3) it is also equal to $\exp({\frac{-\sigma_\psi^2}{2}})$, hence $\sigma=\sigma_\psi$.
\end{proof}

\subsection{Effective Intrinsic Ergodicity (Thms~\ref{t.effective}, \ref{t.Kadyrov-Ineq-diffeos}, \ref{t.Kadyrov-Ineq-diffeos-Strong})}\label{s.Kadirov-Proof-diffeos}

Suppose that  $X,f,\mu$ are as in \S\ref{s.setup-for-consequences} and $(\Sigma,\pi)$ as in our standing assumptions, and choose $\kappa$ and $\chi$ as in \eqref{e.chi-kappa-def}.
Fix $\psi\colon X\to \RR$ quasi-H\"older of exponent $\beta$. Choose $\wh{\beta}:=\wh{\beta}(\Sigma,\pi,\beta)$, $C_1(\Sigma,\pi,\beta)$ as in  Lemma \ref{l.psi-hat-Holder} and $C^*,K>0$ as in Thm~\ref{t.effective-ergodicity-shift}.
Let $\hpsi$ be a H\"older continuous function such that $\hpsi=\psi\circ\pi$ on $\pi^{-1}(X)$, and
 $\|\hpsi\|_{\wh{\beta}}\leq C_1\|\psi\|_\beta'$.

\begin{proof}[Proof of Theorem~\ref{t.Kadyrov-Ineq-diffeos}]

Suppose $\nu$ is an {ergodic} $f$-invariant measure carried by $X$. If $h_{\nu}(f)> h_{\mu}(f)-\kappa$, then
$\nu$ is $\chi$-hyperbolic by \eqref{e.chi-kappa-def}, and  by ($\Sigma$\ref{i.Sigma4}) in \S\ref{section-Sigma}, $\nu$  has an ergodic lift $\hnu$  to $\Sigma$ with the same entropy.
By Thm~\ref{t.effective-ergodicity-shift}(c),
$$
|\mu(\psi)-\nu(\psi)|=|\hmu(\hpsi)-\hnu(\hpsi)|\leq C_1 K\|\psi\|_{\beta}'\sqrt{h(f,\mu)-h(f,\nu)}.
$$
If $h_{\nu}(f)\leq h_{\mu}(f)-\kappa$, then we have the trivial bound:
$$
|\mu(\psi)-\nu(\psi)|\leq 2\sup_{ X}|\psi|\leq 2\|\psi\|_{\beta}'\leq 2\kappa^{-1/2}\|\psi\|_{\beta}'\sqrt{h(f,\mu)-h(f,\nu)}.
$$
Taking $C:=\max\{C_1K,2\kappa^{-1/2}\}$, we see that for all $\nu\in \Proberg(f|_X)$,
\begin{equation}\label{e.Kad-Erg-Step}
|\mu(\psi)-\nu(\psi)|\leq C\|\psi\|_{\beta}'\sqrt{h(f,\mu)-h(f,\nu)}.
\end{equation}

For non-ergodic measures $\nu$ on $X$, we use the ergodic decomposition $\nu=\int \nu_\xi d\xi$: \begin{align*}
&|\mu(\psi)-\nu(\psi)|\leq \int |\mu(\psi)-\nu_\xi(\psi)|d\xi\leq C\|\psi\|_\beta' \int|h(f,\mu)-h(f,\nu_\xi)|^{1/2}d\xi\\
&\leq C \|\psi\|_\beta' \left(\int |h(f,\mu)-h(f,\nu_\xi)|d\xi\right)^{1/2}=C \|\psi\|_\beta' \left(\int h(f,\mu)-h(f,\nu_\xi)d\xi\right)^{1/2}  \\
&=C\|\psi\|_\beta'\sqrt{h(f,\mu)-h(f,\nu)},
\text{ because }h_\nu(f)=\int h_{\nu_{\xi}}(f) d\xi.\qedhere
\end{align*}
\end{proof}

\begin{proof}[Proof of Theorem~\ref{t.Kadyrov-Ineq-diffeos-Strong} and Remark \ref{r.Kadyrov-delta}]
If $\sigma_\psi=0$, then $\mu(\psi)=\nu(\psi)$ for all $f$-invariant  measures $\nu$ on $X$ (Thm \ref{t.asymp-var-diffeos}), and the claim is trivial.
 Thus,
without loss of generality, assume $\mu(\psi)=0$, $\|\psi\|'_\beta=1$.

Fix $\eps>0$ small and $\delta_0:= C^\#
\eps^2\sigma_\psi^6$ with $C^\#=C^\#(\Sigma,\pi,\beta):=C^*/4$ (the number given by Thm~\ref{t.effective-ergodicity-shift}) so that $\delta_0\leq C^\ast(\tfrac \eps 2)^2\sigma_{\hpsi}^6$. 
Recall that
$\sigma_{\hpsi}=\sigma_\psi$ (see \eqref{e.sigma-psi-hat}), $\|\hpsi\|_{\wh{\beta}}\leq C_1\|\psi\|_\beta'$, and  $\sigma_{\psi}\leq  M_{{\beta}}\|\psi\|_{{\beta}}$ (see \eqref{e.sigma-bound-diffeo}).

 Suppose $\nu$ is an {\em ergodic} probability measure on $X$ such that $h(f,\nu)>h(f,\mu)-\delta_0$. Since $\delta_0<\kappa$,  $\nu$ lifts to a shift invariant measure $\hnu$ with the same entropy.

Then
$
h(\hnu,\sigma)>h(\hmu,\sigma)-\delta_0
$. 
Since $\delta_0<C^*\eps^2\sigma_\psi^6$ and $0<\eps\le\eps^*$ with $\eps^*$ small enough, Thm~\ref{t.effective-ergodicity-shift}(a) gives
$$\begin{aligned}
|\mu(\psi)-\nu(\psi)|&=|\hmu(\hpsi)-\hnu(\hpsi)|
\leq e^{\eps/2}\sqrt{2\sigma_{\hpsi}^2(h(\sigma,\hmu)-h(\sigma,\hnu))}\\
&=
e^{\eps/2}\sqrt{2\sigma_{\psi}^2(h(f,\mu)-h(f,\nu))}.
\end{aligned}$$

The optimality of the constant $2\sigma_\psi^2$ can be proved by using the measures $\nu_n:=\hnu_n\circ\pi^{-1}$, where $\hnu_n$
are given by Thm~\ref{t.effective-ergodicity-shift}(b).

We now consider the non-ergodic  case. The idea is to collect the ergodic components with small entropies and  with large entropies to separate measures $\nu_0,\nu_1$ and treat them separately.

To see this, we fix small number $0<\delta\ll\delta_0$
(specified below) and suppose $h(f,\mu)-h(f,\nu)<\delta$. Decompose $\nu=\alpha\nu_0+(1-\alpha)\nu_1$, where almost every ergodic component of $\nu_0$ (resp. $\nu_1$) has entropy less than or equal to (resp. greater than) 
$h(f,\mu)- \delta_0$ 
(here $0\le\alpha\le 1$ and by convention $\nu_0=\nu_1=\nu$ if $\alpha=0$ or $1$). We note that
 $
  h(f,\nu)=\alpha h(f,\nu_0)+(1-\alpha)h(f,\nu_1) \le \alpha (h(f,\mu)-\delta_0) + (1-\alpha) h(f,\mu),
 $
hence
 $$
    0\le\alpha \le \frac{h(f,\mu)-h(f,\nu)}{\delta_0} < \frac{\delta}{\delta_0} < 1.
 $$

Let $\nu_1=\int\nu_1^\xi\, d\xi$ be the ergodic decomposition of $\nu_1$.  By construction, 
 $h(f,\mu)-h(f,\nu_1^\xi)<\delta_0$ for a.e. $\xi$. By the first part of the proof  and  the concavity of $\sqrt{t}$, 
 $$\begin{aligned}
   |\mu(\psi)-\nu_1(\psi)| &\leq \int |\mu(\psi)-\nu_1^\xi(\psi)|\, d\xi
    \leq   \sqrt{2e^\eps\sigma_\psi^2} \int \sqrt{h(f,\mu)-h(f,\nu_1^\xi)}\, d\xi\\
    &\leq  \sqrt{2e^\eps\sigma_\psi^2 \int h(f,\mu)-h(f,\nu_1^\xi)}\, d\xi
    = \sqrt{2e^\eps\sigma_\psi^2 ( h(f,\mu)-h(f,\nu_1))}\\
    &\leq \sqrt{2e^\eps\sigma_\psi^2 ( h(f,\mu)-h(f,\nu))}
   \quad \text{as $h(f,\nu_1)\ge h(f,\nu)$.}
 \end{aligned}$$
Next, the trivial bound $|\mu(\psi)-\nu_0(\psi)|\leq 2\sup|\psi|$ leads us to 
 $$\begin{aligned}
  & |\mu(\psi)-\nu(\psi)|
      \leq (1-\alpha)|\mu(\psi)-\nu_1(\psi)| + \alpha|\mu(\psi)-\nu_0(\psi)|\\
      &\leq (1-\alpha)\sqrt{2e^\eps\sigma_\psi^2 ( h(f,\mu)-h(f,\nu))}  + \alpha\cdot 2\sup|\psi|\\
      &\leq e^{\eps/2}\sqrt{2\sigma_\psi^2 ( h(f,\mu)-h(f,\nu))}  + 2\|\psi\|_\beta'\frac{h(f,\mu)-h(f,\nu)}{\delta_0}\\
      &\leq e^{\eps/2}\sqrt{2\sigma_\psi^2 ( h(f,\mu)-h(f,\nu))}\left(1 + \frac{\sqrt2}{e^{\eps/2}\sigma_\psi\delta_0} \sqrt{h(f,\mu)-h(f,\nu)}\right),\ \text{ since } \|\psi\|_\beta'=1\\
      &\leq e^{\eps/2}(1+\eps/2)\sqrt{2\sigma_\psi^2 ( h(f,\mu)-h(f,\nu))}
      \leq e^{\eps}\sqrt{2\sigma_\psi^2 ( h(f,\mu)-h(f,\nu))}
 \end{aligned}$$
provided $\delta\leq \frac{\epsilon^2}{8}\sigma_\psi^2\cdot\delta_0^2=
\frac{1}{8} (C^\#)^2\epsilon^6\sigma_\psi^{14}=
\frac{1}{128}(C^*)^2\epsilon^6\sigma_\psi^{14}$.
\end{proof}

\subsection{Oseledets Decompositions in High Entropy (Cor \ref{t.Osceledets-Stab})}\label{s.Proof-Oseledets-Stab}
Let $\mathfrak G$ be the bundle \eqref{e.frak-G}, with its distance function $d_{\mathfrak G}(\cdot,\cdot)$.
For every $x\in X$, let $\xi(x):=(x;E^u(x),E^s(x))\in \mathfrak G$.
Given a non-atomic invariant measure $\nu$ on $X$,
we will construct a Borel one-to-one map $T:M\to M$ so that $T_\ast\nu=\mu$, and
\begin{equation}\label{e.rach}
\int d_{{\mathfrak G}}(\xi(x),\xi(Tx))d\nu(x)\leq \const\cdot \sqrt{h(f,\mu)-h(f,\nu)}.
\end{equation}
We assume $h(f,\nu)<h(f,\mu)$ since otherwise $\nu=\mu$ and we can take $T=\id$.

Lift $\mu,\nu$ to
$\wt{\mu}:=\xi_*\mu$ and
$\wt{\nu}:=\xi_*\nu$.
By the Kantorovich--Rubinstein theorem, and the observation that 
 any $1$-Lipschitz map $\Psi:{\mathfrak G}\to\C$ induces
a $1$-quasi-H\"older map $\psi\colon X\to \C$ defined by $\psi(x)=\Psi(x;E^u(x),E^s(x))$
s.t. $\|\psi\|_1'\leq 1 +\diam(\mathfrak G)$,
\begin{align*}
&\inf\left\{\int d_{\mathfrak G}(\xi,\eta)dm:\begin{array}{l}
\text{$m$ is a probability measure on $\mathfrak G\times\mathfrak G$ s.t.}\\
\text{$m(A\times \mathfrak G)=\wt{\nu}(A)$, $m(\mathfrak G\times A)=\wt{\mu}(A)$ for all $A$}
\end{array}
\right\}\\
&=\sup\left\{
\left|\int\Psi d\wt{\mu}-\int\Psi d\wt{\nu}\right|: \Psi:\mathfrak G\to\R\text{ is bounded, Lipschitz,  }\mathrm{Lip}(\Psi)\leq 1
\right\}\\
&\leq \sup\left\{
\left|\int\psi d{\mu}-\int\psi d{\nu}\right|: \|\psi\|_1'\leq 1 +\diam(\mathfrak G)
\right\}\\
&\leq (1+\diam(\mathfrak G))C\sqrt{h(f,\mu)-h(f,\nu)},
\end{align*}
with $C$ as in Theorem \ref{t.Kadyrov-Ineq-diffeos}.
So  there is a coupling $m$ of $\wt{\nu}$ and $\wt{\mu}$ with ``cost" 
 $$
   \int d_{\mathfrak G}(\xi,\eta) dm<2(1+\diam(\mathfrak G))C\sqrt{h(f,\mu)-h(f,\nu)}.
 $$
Then \cite[Thm B]{Pratelli-2007},
gives a Borel map $\wt{T_0}:\mathfrak G\to\mathfrak G$ s.t. $\wt{\mu}=\wt{\nu}\circ \wt{T_0}^{-1}$ and
$$
\int_{\mathfrak G} d_{\mathfrak G}(\xi, \wt{T_0}\xi) d\wt{\nu}<3(1+\diam(\mathfrak G))C\sqrt{h(f,\nu)-h(f,\mu)}.
$$

One can replace $\wt{T_0}$ by a {\em one-to-one}  map $\wt T$ which is arbitrarily close to it, as follows. First, we  build two finite measurable partitions $\{A_i\}$, $\{B_i\}$ of $\mathfrak G$ such that
$A_i$, $B_i$ have small diameters (in particular the variations of $d_{\mathfrak G}$ on each $A_i\times B_j$ are small).
Observe that for each $j$, 
$$
\sum_i \wt{\nu}(A_i\cap \wt{T}_0^{-1} B_j)=\wt{\nu}(\wt{T}_0^{-1} B_j)
=\wt{\mu}(B_j).
$$
Now build a measurable partition $\{B_{ij}\}$ of $B_j$ so that $\wt{\mu}(B_{ij})=
\wt{\nu}(A_i\cap \wt{T}_0^{-1} B_j)$, and a measure theoretic Borel isomorphism $(A_i\cap \wt{T}_0^{-1}B_j,\wt{\nu})\to (B_{ij},\wt{\mu})$. These maps glue to a Borel measure theoretic  isomorphism $\wt{T}:(\mathfrak G,\wt{\nu})\to  (\mathfrak G,\wt{\mu})$. Clearly,
$$
|d_{\mathfrak G}(\xi,\wt{T}\xi)-d_{\mathfrak G}(\xi,\wt{T}_0\xi)|\leq \diam (B_j)\ \ \text{ for all }\xi\in A_i\cap \wt{T}_0^{-1} B_j.
$$
Integrating and summing over $i,j$, we find that 
$|\int_{\mathfrak G} d_{\mathfrak G}(\xi, \wt{T}\xi) d\wt{\nu}-\int_{\mathfrak G} d_{\mathfrak G}(\xi, \wt{T_0}\xi) d\wt{\nu}|<\sup_j [\diam(B_j)]$, which can be made arbitrarily small.

Since $\wt{\mu}$ and $\wt{\nu}$ are carried by the graph
$\{(x,E^u(x);x,E^s(x)):x\in X\}$, the map
$$
T(x):=\text{first coordinate of }\wt{T}(x;E^u(x),E^s(x))
$$
is  one-to-one, Borel,  $\mu=\nu\circ T^{-1}$ and \eqref{e.rach} holds with  $const:=3(1+\diam(\mathfrak G))C$.

Since $c(x,y):=d(x,y)+\dist(E^s(x),E^s(y))+ \dist(E^u(x),E^u(y))$ is bounded by a constant times $d_{\mathfrak G}(\xi(x),\xi(y))$ on $X$,
Cor \ref{t.Osceledets-Stab} follows.
\qed

\subsection{Exponential Tails in Pesin Theory (Thm~\ref{t.Tail-Estimates-diffeos}, Cor~\ref{c.tail})}\label{s.Tail-Diffeos}
By Remark \ref{r.eps-tilde-arbit-small}, for any $\wt{\chi}$ small and all $\wt{\eps}$ small enough, we can choose the coding $(\Sigma,\pi)$ in Thm~\ref{t.SPR-coding} so that it satisfies the bornological property (Def.~\ref{d.bornological}(b))  with $\wt\chi, \wt\eps$.

Fix some vertex $a$ of the graph which defines the Markov shift $\Sigma$. By the bornological property, there is a $(\tilde{\chi},\tilde{\eps})$-Pesin block $\Lambda$ such that
$$
\Lambda\supset \pi(\{x\in\Sigma^\#: x_0=a\})\mod\nu,\text{ for all }\nu\in \Proberg(\chi,f|_X).
$$
By \eqref{e.chi-kappa-def}, the MME $\mu$ is in $\Proberg(\chi,f|_X)$. Hence the hitting time functions
\begin{align*}
&\tau_\Lambda(x):=\inf\{n\geq 1:f^n(x)\in \Lambda\}& \text{ (a function on $X$)}\\
&\tau_a(x):=\inf\{n\geq 1:x_n=a\}& \text{ (a function on $\Sigma$)}
\end{align*}
 satisfy the inequality $\tau_\Lambda\circ\pi\leq \tau_a$ $\hmu$-a.e. in $\Sigma$.
Consequently,
$$\mu[\tau_\Lambda>n]=\hmu[\tau_\Lambda\circ\pi>n]\leq \hmu[\tau_a>n].$$
The last expression is $O(\theta^n)$ as $n\to\infty$ for some $0<\theta<1$,  because $\Sigma$ is SPR, see Thm~\ref{c.Tail-CMS}.
We found a $(\tilde{\chi},\tilde{\eps})$-Pesin block $\Lambda$ whose hitting time has an exponential tail.
This is also the case for all sets $\Lambda'\supset \Lambda$.
Therefore, for all $\chi'\le\tilde{\chi}$ and $\eps'\ge\tilde{\eps}$,  there are $({\chi},{\eps})$-Pesin blocks $\Lambda'$ such that $\mu[\tau_{\Lambda'}>n]=O(\theta^n)$.

Since $\wt{\eps}$ is arbitrarily small (with $\wt{\chi}$ fixed), this argument shows that for every $\chi$ sufficiently small and for every $0<\eps<\chi$, there exists a $(\chi,\eps)$-Pesin block such that $\mu[\tau_\Lambda>n]=O(\theta^n)$. We have proved the first part of Thm~\ref{t.Tail-Estimates-diffeos}.
\smallskip

Next we consider the tail of the $(\tilde{\chi},\tilde{\eps})$-optimal Pesin bound $K_\ast(x)$ (Def.~\ref{d.Optimal-Pesin-Constant}).
$K_\ast$ is bounded on  $(\tilde{\chi},\tilde{\epsilon})$-Pesin blocks, and in particular, on $\Lambda$.

Let $C:=\sup_\Lambda K_\ast.$ By \eqref{e.temperability},
 $[K_\ast>Ce^{n\tilde{\eps}}]\subset [\tau_\Lambda>n]$.
Since $\mu[\tau_\Lambda>n]=O(\theta^n)$,   $\mu[\log K_\ast>n]=O(\gamma^n)$, with $\gamma:=\theta^{1/\wt{\eps}}$.

Note that $K'_\ast\leq K_\ast$ for all $(\chi,\eps)$-optimal Pesin bounds $K_\ast'$  such that $\tilde{\eps}\leq \eps<\chi\leq \tilde{\chi}$. Therefore $\mu[\log K'_\ast>n]=O(\gamma^n)$ for such $\chi,\eps$. Since $\wt{\eps}$ can be chosen arbitrarily small (with $\wt{\chi}$ fixed), this is the case for all $\chi$ sufficiently small and  $0<\eps<\chi$.
\hfill$\Box$

\section{Alternative Characterizations of the SPR Property}\label{s.converse-statements}
Some
of the consequences of the SPR property we discussed in \S \ref{s.statement-of-conseq} are actually equivalent to the SPR property. In particular we prove here Thm~\ref{t.tail}.
\medskip

Let $f$ be a $C^{1+}$ diffeomorphism of a closed manifold $M$,  $d=\dim(M)$ and $X$ be a Borel homoclinic class with $h_{\Bor}(f|_X)>0$.
Recall the definition of $\beta$-quasi-H\"older continuous functions and their norm $\|\cdot\|'_\beta$ in \S\ref{s.setup-for-consequences}.
$
$
Consider the following properties:\begin{enumerate}[(I)]
\item\label{i.SPR} {\bf SPR} (Def.~\ref{def-SPR-diffeo-local}). There is $\chi>0$ s.t.  for all $\eps>0$ there are a $(\chi,\eps)$-Pesin block $\Lambda$, $h_0<h_{\Bor}(f|_X)$, and $\tau>0$ s.t.: $\forall\nu\in\Proberg(f|_X)$, $h(f,\nu)>h_0\Rightarrow\nu(\Lambda)>\tau$.

\medskip
\item\label{i.Tail} {\bf MME with Exponential Tails.} There are $\mu\in\Proberg(f|_X)$ and $\chi,\delta>0$ s.t.:
\begin{enumerate}[(i)]
\item The measure $\mu$ is an MME for $f|_X$. \item For every $0<\eps<\chi$, there are a $(\chi,\eps)$-Pesin block $\Lambda$ and $0<\theta<1$ s.t.
    $
    \mu[\tau_\Lambda>n]=O(\theta^n)$ as $n\to\infty$, where $\tau_\Lambda(x):=\inf\{n\geq 0: f^n(x)\in \Lambda\}.
    $
\item
Every $\nu\in\Proberg(f|_X)$ with $h(f,\nu)>h_{\Bor}(f|_X)-\delta$ is $\chi$-hyperbolic. 
\end{enumerate}

\medskip
\item\label{i.SPR-coding} {\bf SPR Hyperbolic  Coding.}
There is an irreducible  SPR entropy-full hyperbolic coding $(\Sigma,\pi)$ with properties ($\Sigma$\ref{i.Sigma1})--($\Sigma$\ref{i.Sigma3}) of Thm~\ref{t.SPR-coding}.
    
\medskip
\noindent
\item\label{i.Robustness-Strong} {\bf Quantitative Robustness of the MME.} $f|_{X}$ has a unique MME $\mu$, and, for any $\beta\in(0,1)$, there exists $C_\beta$ such that: for every $f$-invariant measure $\nu$ on $X$
and every $\beta$-quasi-H\"ol\-der continuous function $\psi:X\to\R$
\begin{center} $\left|\int\psi d\mu-\int\psi d\nu\right|\leq C_\beta \|\psi\|_{\beta}'\sqrt{h_{\Bor}(f|_X)-h(f,\nu)}.$\end{center}

\item\label{i.Robustness-Weak} {\bf Qualitative Robustness of the MME.} $f|_{X}$ has a unique MME $\mu$,
 and for all  $\nu_n\in\Proberg(f|_X)$, if
    $h(f,{\nu_n})\to h_{\Bor}(f|_{X})$, then $\nu_n\wsc\mu$ on $M$, and $\Lambda^+(\nu_n)\to\Lambda^+(\mu)$.

\medskip
\item\label{i.Lambda-entropy} {\bf Entropy Hyperbolicity and Entropy Continuity} (\S\ref{s.thm-CE-SPR}).
 There is $\chi>0$ s.t. for all $\nu_n\in \Proberg(f|_X)$, if $h(f,\nu_n)\to h_{\Bor}(f|_X)$ and $\nu_n\wsc\nu$ on $M$, then:
(EH) $\exists i<d$ s.t. $\lambda^i(x)>\chi>-\chi>\lambda^{i+1}(x)$ $\nu$-a.e. \& (EC) $\Lambda^+(\nu_n)\to\Lambda^+(\nu)$.

\medskip
\item\label{i.Entropy-Tightness} {\bf Entropy Tightness} (Def.~\ref{d.entropy-tight-diffeo}). There are $0<{\eps}<{\chi}$ s.t. for all   $\tau\in (0,1)$, there are a $ ({\chi},{\eps})$-Pesin block $\Lambda$ and $h_\tau<h_{\Bor}(f|_X)$ satisfying: for all $\nu\in\Proberg(f|_X)$, $h(f,\nu)>h_\tau\Rightarrow\nu(\Lambda)>\tau$.
\end{enumerate}

\begin{theorem}\label{thm-characterizations}
Let $f$ be a $C^{1+}$ diffeomorphism of a closed manifold and $X$ a Borel homoclinic class with $h_{\Bor}(f|_X)>0$.
Then  properties \eqref{i.SPR}--\eqref{i.Entropy-Tightness} are all equivalent.
\end{theorem}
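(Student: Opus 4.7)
The plan is to establish the implications along the scheme
\[
(\text{I})\Leftrightarrow(\text{III})\Leftrightarrow(\text{VII}),\qquad (\text{I})\Leftrightarrow(\text{II}),\qquad (\text{I})\Rightarrow(\text{IV})\Rightarrow(\text{V})\Rightarrow(\text{VI})\Rightarrow(\text{I}),
\]
leaning heavily on the machinery already developed in the paper. Most of these implications are direct citations or short arguments, and the main technical work will lie in $(\text{II})\Rightarrow(\text{I})$.

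The block $(\text{I})\Leftrightarrow(\text{III})\Leftrightarrow(\text{VII})$ essentially repackages what has already been proved. $(\text{I})\Rightarrow(\text{III})$ is Theorem~\ref{t.SPR-coding}, applied with $\chi$ small. For $(\text{III})\Rightarrow(\text{I})$, the uniform hyperbolicity $(\Sigma\ref{i.Sigma5})$ combined with entropy-fullness supplies \eqref{e.chi-entropy-hyperbolic} with $\chi:=\chi_0$, so the converse statement of Theorem~\ref{t.SPR-coding} (whose proof establishes entropy-tightness as an intermediate step) yields both (I) and (VII); the remaining implication $(\text{VII})\Rightarrow(\text{I})$ is Proposition~\ref{prop-tight-implies-SPR}. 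Likewise $(\text{I})\Rightarrow(\text{II})$ reduces to Theorem~\ref{t.Tail-Estimates-diffeos}(1) for (i)--(ii) (applied to the unique MME furnished by Theorem~\ref{t.MME-exists}) together with Remark~\ref{r.SPR-implies-hyperbolicity}(c) for (iii). For the effective-robustness chain, $(\text{I})\Rightarrow(\text{IV})$ is exactly Theorem~\ref{t.Kadyrov-Ineq-diffeos}. To prove $(\text{IV})\Rightarrow(\text{V})$, apply (IV) to an arbitrary $\beta$-H\"older continuous function on $M$ (quasi-H\"older on $X$ by Example~\ref{e.Holder-is-Quasi-Holder}) to obtain $\nu_n\wsc\mu$ on $M$, and to the geometric potential $J^u$ (quasi-H\"older by Example~\ref{e.Geometric-Is-Quasi-Holder}, with $-\int J^u\,d\nu=\Lambda^+(\nu)$ on measures carried by $X$, since every such measure is hyperbolic by Proposition~\ref{p.homoclinic}(4)) to obtain $\Lambda^+(\nu_n)\to\Lambda^+(\mu)$. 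For $(\text{V})\Rightarrow(\text{VI})$: Propositions~\ref{p.mme-on-bhc} and~\ref{p.homoclinic}(4) make the unique MME $\mu$ ergodic and hyperbolic, so $\chi:=\tfrac12\min_i|\lambda^i(\mu)|>0$ is well defined and $i:=i(\mu)$ witnesses (EH), while $\Lambda^+(\nu_n)\to\Lambda^+(\mu)=\Lambda^+(\nu)$ (using that (V) forces $\nu=\mu$) witnesses (EC). Finally $(\text{VI})\Rightarrow(\text{I})$ is Theorem~\ref{thm-CE-SPR}.

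The main substantive implication is $(\text{II})\Rightarrow(\text{I})$. The strategy is to manufacture an SPR coding from the tail data and invoke $(\text{III})\Rightarrow(\text{I})$. Condition (II)(iii) supplies \eqref{e.chi-entropy-hyperbolic}, so for $\chi'\in(0,\chi)$ sufficiently small, Theorem~\ref{t.SPR-coding}$(\Sigma\ref{i.Sigma1})$--$(\Sigma\ref{i.Sigma6})$ produces a hyperbolic, irreducible, bornological coding $(\Sigma,\pi)$ in $X$; entropy-fullness follows from $(\Sigma\ref{i.Sigma4})$ together with \eqref{e.chi-entropy-hyperbolic}, and Lemma~\ref{l-entropy-equal} gives $h_\Bor(\Sigma)=h_\Bor(f|_X)$. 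The MME $\mu$ from (II)(i) is $\chi'$-hyperbolic and lifts via $(\Sigma\ref{i.Sigma4})$ to an ergodic $\hmu$ on $\Sigma$ of the same entropy, so $\hmu$ is the unique MME of the positive-recurrent irreducible shift $\Sigma$ (Theorem~\ref{t.Parry-Gurevich-ASS}). The exponential tail $\mu[\tau_\Lambda>n]=O(\theta^n)$ from (II)(ii) transfers via Definition~\ref{d.bornological}(a) to the exponential decay $\hmu[\tau_A>n]=O(\theta^n)$ for a finite union of cylinders $A$, which one can enlarge to a finite union of partition sets $A=[u_1]\cup\dots\cup[u_r]$.

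The remaining and hardest step is to deduce SPR of $\Sigma$ from these exponential hitting-time tails to a finite union of partition sets. The plan is to reduce to a single partition set by considering the finite-state Markov chain on $\{u_1,\dots,u_r\}$ that records the symbol at each successive return to $A$: irreducibility of $\Sigma$ makes this chain irreducible, so the number of returns to $A$ before the first visit to a chosen $u_{j_0}$ has geometric tail under $\hmu$ restricted to $A$. Combined with uniform exponential moments for the inter-return-time gaps (derived from $\hmu[\tau_A>n]=O(\theta^n)$ via the Markov property), a standard geometric-sum estimate yields exponential decay of $\hmu[\varphi_{[u_{j_0}]}>n]$, where $\varphi_{[u_{j_0}]}$ is the first return time to the single partition set $[u_{j_0}]$. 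By the Parry--Gurevich identity $\hmu[\varphi_{[u_{j_0}]}=n]=c(u_{j_0})\lambda^{-n}Z_n^*(u_{j_0})$ with $\lambda=e^{h_\Bor(\Sigma)}$ (Theorem~\ref{t.Parry-Gurevich-ASS}), this is equivalent to $\limsup_n\tfrac1n\log Z_n^*(u_{j_0})<h_\Bor(\Sigma)$, i.e.\ to SPR of $\Sigma$ in the sense of Definition~\ref{d.SPR-for-Shifts}. Having produced an SPR, entropy-full, bornological, hyperbolic coding, the already-established $(\text{III})\Rightarrow(\text{I})$ then yields SPR of $f|_X$, closing the circle. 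The main obstacle I anticipate is the careful justification of this last step, in particular verifying that the inter-return gaps enjoy exponential moments uniformly in the starting state within $A$.
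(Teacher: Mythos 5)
Your chain fractures at the point where condition~(III) feeds back into the cycle. You claim that $(\text{III})\Rightarrow(\text{I})$ (and likewise $(\text{III})\Rightarrow(\text{VII})$) follows from the Converse Statement of Theorem~\ref{t.SPR-coding}, but that converse assumes an SPR coding satisfying \emph{all} of $(\Sigma\ref{i.Sigma1})$--$(\Sigma\ref{i.Sigma6})$ --- in particular the bornological property $(\Sigma\ref{i.Sigma6})$. Condition~(III) only posits an SPR, entropy-full, \emph{hyperbolic} coding with properties $(\Sigma\ref{i.Sigma1})$--$(\Sigma\ref{i.Sigma3})$, and a hyperbolic coding need not be bornological: property~$(\Sigma\ref{i.Sigma5})$ controls the \emph{asymptotic} contraction rate along $E^s(\un x)$ but gives no uniform Pesin constant $K$ on a cylinder, so nothing forces $\pi(A\cap\Sigma^\#)$ into a single Pesin block, which is what Definition~\ref{d.bornological}(b) requires and what Proposition~\ref{p.project-SPR} consumes. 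Since $(\text{III})\Rightarrow(\text{I})$ and $(\text{III})\Rightarrow(\text{VII})$ are both built on this, condition~(III) never re-enters your cycle and the theorem is not proved. (Inside your proof of $(\text{II})\Rightarrow(\text{I})$ the coding \emph{is} the one from Theorem~\ref{t.SPR-coding}, hence bornological, so invoking the Converse Statement there is fine; but phrasing that as ``invoking $(\text{III})\Rightarrow(\text{I})$'' obscures that you are using a property stronger than (III).) The fix is the paper's: instead of $(\text{III})\Rightarrow(\text{I})$, prove $(\text{III})\Rightarrow(\text{IV})$, which needs only entropy-fullness, SPR of $\Sigma$, the hyperbolic splitting to make observables quasi-H\"older on the symbolic side (as in Lemma~\ref{l.psi-hat-Holder}), and the shift-level effective intrinsic ergodicity of Theorem~\ref{t.effective-ergodicity-shift}. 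The bornological property is only needed going \emph{up} (constructing an SPR coding from (I) or (II)), not coming down.

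On $(\text{II})\Rightarrow(\text{I})$: your route is genuinely different from the paper's. After transferring the tail estimate to $\hmu[\tau_A>n]=O(\theta^n)$ for a finite union of partition sets, the paper does not reduce to a single state. It goes straight to the Gurevich--Zargaryan criterion, Theorem~\ref{t.SPR-as-Entropy-Gap-Shifts}(6): enlarge $A$ to a finite connected subgraph $\mathfs G_0$ of entropy $>h-|\ln\theta|$, and then use the exact Parry--Gurevich formula $\hmu([a,\xi_1,\dots,\xi_n,b])=c_{ab}e^{-nh}$ to bound the \emph{count} of excursions of length $n$ avoiding $W$ by $c^{-1}e^{nh}\hmu\{x_1,\dots,x_n\notin W\}=O(e^{nh}\theta^n)$, which is smaller than $e^{n h_\Bor(\Sigma(\mathfs G_0))}$. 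This avoids your Markov-chain-of-returns reduction and the geometric-sum/exponential-moment bookkeeping entirely. Your approach should also work --- the chain of return symbols is genuinely a finite-state irreducible Markov chain, and the uniform exponential moments across the finitely many starting symbols do follow from $\hmu\{x_1,\dots,x_n\notin W\}=O(\theta^n)$ divided by $\min_w\hmu[w]>0$ --- but it requires precisely the ``uniformity of exponential moments over starting states'' argument that you flag as the anticipated obstacle, and it adds a dependence on the finite reduction that the paper's one-line counting argument sidesteps. If you pursue your version, be careful that the reduction ``to a finite union of partition sets'' is not automatic for an arbitrary finite union of cylinders (shifted cylinders ${}_m[a_0,\dots,a_k]$ with $m>0$ need not lie in any $[u]$); in the concrete bornological codings of Proposition~\ref{c.bornologicalA} the cylinders produced by Definition~\ref{d.bornological}(a) are already sets of the form $\{R_0\in\mathfs R_t\}$, so this is fine there, but it deserves a word.
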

\begin{proof}
{\bf \eqref{i.SPR}$\Rightarrow$\eqref{i.Tail}:}  Thm~\ref{t.MME-exists} gives (i), Thm~\ref{t.Tail-Estimates-diffeos} gives (ii), and (iii) holds, since any ergodic measure which charges a $(\chi_\text{I},\eps)$-Pesin block is $\chi_\text{II}$-hyperbolic once $\chi_\text{II}<\chi_\text{I}$.

\medskip
\noindent
{\bf \eqref{i.Tail}$\Rightarrow$\eqref{i.SPR-coding}:}
Choose $\chi_\text{II}, \delta_\text{II}$ and $\mu$ as in \eqref{i.Tail} and let $h:=h_\Bor(f|_X)$.
The constructions of \S\ref{ss.construction-hyperbolic} and \S\ref{ss.construction-bornological} and in particular Thm~\ref{t.SPR-coding} give, for every sufficiently small $\chi\in (0,\chi_\text{II})$, a Markovian coding $(\Sigma,\pi)$ with  ($\Sigma$\ref{i.Sigma1})--($\Sigma$\ref{i.Sigma7}); it is irreducible
by ($\Sigma$\ref{i.Sigma1}). By \eqref{i.Tail}(iii) and Cor~\ref{c.coro-transitive-full-hyp-coding}, we find that $(\Sigma,\pi)$ is entropy-full and by ($\Sigma$\ref{i.Sigma5}) it is hyperbolic. It remains to show that $\Sigma$ is SPR.

 By Lemma \ref{l-entropy-equal}, $h=h_{\Bor}(f|_X)=h_{\Bor}(\Sigma)$.
 By Entropy-fullness and Lemma~\ref{l.Hyp-Mark-Coding-Prop}, $\mu=\pi_\ast\hmu$ where $\hmu$ is an ergodic $\sigma$-invariant measure on $\Sigma$, with the same entropy as $\mu$.  Since
 $
h(\sigma,\hmu)=h(f,\mu)=h_{\Bor}(f|_X)=h_{\Bor}(\Sigma)$, $\hmu$ is the MME of $\Sigma$.

By ($\Sigma$\ref{i.Sigma6}),  $(\Sigma,\pi)$ is $\chi$-bornological. Therefore,  there is some $\eps\in (0,\chi)$ so that
the pre-image of any $(\chi,\eps)$-Pesin by $\pi$ can be covered  by a union of  finitely many cylinders of $\Sigma$, and a set of zero $\hmu$-measure.
By \eqref{i.Tail}, there is a $(\chi_\text{II},\eps)$-Pesin block $\Lambda$ and $0<\theta<1$ such that $\mu[\tau_{\Lambda}>n]=O(\theta^n)$ as $n\to\infty$.  Since $\chi<\chi_\text{II}$, $\Lambda$ is a $(\chi,\eps)$-Pesin block, and there is a finite collection $W^\ast$ of symbols $w$ such that
\begin{equation}\label{pi-p}
\hmu\left(\pi^{-1}(\Lambda)\setminus\bigcup_{w\in W^\ast} [w]\right)=0.
\end{equation}

Let $\mathfs G$ denote the countable directed graph such that $\Sigma=\Sigma(\mathfs G)$. By a theorem of Gurevich \cite{Gurevich-Topological-Entropy}, there is a finite collection $W\supset W^\ast$ of vertices of $\mathfs G$ such that the restriction of $\mathfs G$ to $W$  is a connected graph $\mathfs G_{0}$ with $
h_0:= h_\Bor(\Sigma(\mathfs G_{0}))>h-|\ln\theta|.
$

The measure $\hmu$ is the MME of  $\Sigma$, and is therefore given by Thm~\ref{t.Parry-Gurevich-ASS}. So there are constants $c_{ab}>0$ so that for any non-empty cylinder $[a,\xi_1,\ldots,\xi_n,b]$,
$$
\hmu([a,\xi_1,\ldots,\xi_n,b])=c_{ab} e^{-nh}.
$$
Let $c:=\min\{c_{ab}:a,b\in W\}$, then for every $n\geq 0$, \begin{align*}
&\#\left\{(a,\xi_1,\ldots,\xi_n,b):\begin{array}{c}
a,b\in W\\
\xi_1,\ldots,\xi_n\not\in W\end{array}\ ,\ [a,\xi_1,\ldots,\xi_n,b]\neq \emptyset\right\}\\
&\hspace{1cm}\leq c^{-1} e^{nh}
\hmu\left\{\un{x}\in\Sigma^\#:\begin{array}{c}
x_0,x_{n+1}\in W\\
x_1,\ldots,x_n\not\in W\end{array}\right\}\\
&\hspace{1cm}\leq c^{-1} e^{nh}\hmu\left\{\un{x}\in\Sigma^\#:
x_1,\ldots,x_n\not\in W^\ast\right\}\ \ \ (\text{since } W^\ast\subset W)\\
&\hspace{1cm}\leq c^{-1} e^{nh}\hmu\{\un{x}\in\Sigma^\#: f^j(\pi(\un{x}))\not\in \Lambda\ (j=1,\ldots,n)\}
,\text{ by \eqref{pi-p}}\\
&\hspace{1cm}\leq c^{-1} e^{nh} \mu[\tau_\Lambda\circ f>n-1]=O(e^{nh}\theta^n)=O(e^{n(h_0-\kappa)}),
\end{align*}
where $\kappa:=h_0- (h-|\ln\theta|)>0$. By Thm~\ref{t.SPR-as-Entropy-Gap-Shifts}(6), $\Sigma$ is SPR.

\medskip
\noindent
{\bf \eqref{i.SPR-coding}$\Rightarrow$\eqref{i.Robustness-Strong}:}
By Lemma~\ref{l-entropy-equal} $h_\Bor(f|_X)=h_\Bor(\Sigma)$.
The existence and uniqueness of the MME of $f|_X$ is proved exactly as in \S\ref{s.Proof-existence-MME-diffeo}.
The other assertions of \eqref{i.Robustness-Strong} are trivial for measures with entropy bounded away from $h_{\Bor}(f|_X)$; therefore for proving them is sufficient to consider measures with entropy close to $h_{\Bor}(f|_X)$.

By entropy-fullness, these measures are carried by $\pi(\Sigma^\#)$, and can therefore be lifted without changing the entropy to $\Sigma$.
Now \eqref{i.Robustness-Strong} follows from the SPR property of $\Sigma$, as in the proof of Thm \ref{t.Kadyrov-Ineq-diffeos}
at \S\ref{s.Kadirov-Proof-diffeos}. \medskip

\noindent
{\bf \eqref{i.Robustness-Strong}$\Rightarrow$\eqref{i.Robustness-Weak}:}
Applying~\eqref{i.Robustness-Strong} to the geometric potentials $J^u$ in Example \ref{e.Geometric-Is-Quasi-Holder},
we get $\Lambda^+(\nu_n)\to\Lambda^+(\mu)$.
Since any continuous test function on $M$ can be approximated in the supremum norm by a H\"older continuous function on $M$, and every H\"older continuous function on $M$ is quasi-H\"older, \eqref{i.Robustness-Strong} also implies $\nu_n\wsc\mu$ on $M$.

\medskip
\noindent
{\bf \eqref{i.Robustness-Weak}$\Rightarrow$\eqref{i.Lambda-entropy}:}
By \eqref{i.Robustness-Weak}, $f|_X$ has a unique MME $\mu$. This measure is hyperbolic, because $X$ is a Borel homoclinic class. It is also ergodic (otherwise it would not be unique).
Suppose $\nu_n\in\Proberg(f|_X)$, $h(f,\nu_n)\to h_{\Bor}(f|_X)$, and  $\nu_n\wsc\nu$ on $M$. By \eqref{i.Robustness-Weak}, $\nu=\mu$, and (EC) follows. Since $\mu$ is ergodic and hyperbolic, (EH) follows.

\medskip
\noindent
{\bf \eqref{i.Lambda-entropy}$\Rightarrow$\eqref{i.Entropy-Tightness}:} This  is the content of  Thm~\ref{thm-CE-SPR} together with Remark~\ref{r.entropy-tightness}.

\medskip
\noindent
{\bf  \eqref{i.Entropy-Tightness}$\Rightarrow$\eqref{i.SPR}:} This the content of Prop.~\ref{prop-tight-implies-SPR}.
\end{proof}

}

\appendix

\part{Appendices}
\section{The Grassmannian Bundle}\label{app-grassmannian}
\subsection{The Grassmannian Bundle}
Suppose $V$ is an inner product space of dimension $n$, and $k\leq n$. The space of all $k$-dimensional linear subspaces of $V$ can be naturally identified with the homogeneous space
$O(n)/O(k)\times O(n-k)$. The smooth Riemannian manifold thus obtained  is called the  {\em $k$-th Grassmannian of $V$}, and is denoted by   $\mathbb G(k, V)$.

The {\em $k$-th Grassmannian bundle} of a closed smooth Riemannian manifold $M$  is the
smooth and compact bundle  $\mathfrak G(k,M)$ with base space $M$, and  fibres $\mathbb G(k,T_x M)$ $(x\in M)$.  Given $x\in M$ and $E\in \mathbb G(k,T_x M)$, there is a natural identification
$
T_{(x,E)}\, \mathfrak G(k,M)\cong T_x M\oplus T_E \, {\mathbb G}(k,T_x M).
$
The Riemannian metric on $\mathfrak G(k,M)$ is defined by declaring the two spaces on the right orthogonal, and endowing them with the inner products they inherit from the Riemannian structure on $M$ and ${\mathbb G}(k,T_x M)$.

\subsection{Lifts of Measures to the Grassmannian Bundle (Proof of Prop.~\ref{prop-erg-lifts})}\label{app-dilavg}
Suppose $\nu\in\Proberg(f)$  has unstable dimension $1\le i\le d$. Recall that its unstable lift $\wt{\nu}^+$ to $\mathfrak G(i,M)$ is the unique lift carried by $\{(x,E^+_x):x\in M\}$. Necessarily, $\wt{\nu}^+(\vf)=\int\log |\det(Df|_{E^+_x})|d\nu$. The invariance of $\nu$ and the chain rule give $\wt{\nu}^+(\vf)=(1/n)\int\log |\det(Df^n|_{E^+_x})|d\nu$. Taking the limit,  Oseledets theorem gives
$$\wt{\nu}^+(\vf)=\Lambda^+(\nu).$$

Next we show that any ergodic lift $\tnu\neq \tnu^+$ of $\nu$ to $\mathfrak G(i,M)$ satisfies $\wt{\nu}(\vf)<\Lambda^+(\nu)$.
For $\tnu$-a.e. $(x,F)\in\mathfrak G(i,M)$, let $\alpha_1>\dots>\alpha_r$ be the Lyapunov exponents of $Df$ in $F$.
(Since $\wt{\nu}$ is ergodic,  $\tnu$-a.e. $(x,F)$ has the same exponents.)
Using ergodicity and $\tnu\ne\tnu^+$, we have $F\ne E^\pci_x$ for $\tnu$-a.e. $(x,F)$. Therefore there exists a unit vector $u\in F\setminus E^\pci_x$ {such that} the limit of $\tfrac 1 n \log \|Df^{-n}. u\|$ is negative. This implies
$$\alpha_r<0<\lambda^i(\nu).$$
 Let $F= F^1\supset F^2\supset\dots\supset F^r\ne \{0\}$ be the flag associated to $\alpha_1>\dots>\alpha_r$. Pick inductively a basis $b_r$ of $F^r$, $b_{r-1}$ a collection of vectors such that $b_r\cup b_{r-1}$ is a basis of $F^{r-1}$, etc, so  that $b_1\cup\cdots\cup b_r=\{v^1_x,\ldots,v^i_x\}$ is a basis of $F$. Let
 $V_x^n$ denote the volume of the parallelepiped with sides $Df^n v_x^1,\ldots,Df^n v_x^i$. Then
\begin{align*}
\lim_{n\to\infty}&\tfrac{1}{n}\log|\det(Df^n|_F)|
=\lim_{n\to\infty}\tfrac{1}{n}\log \tfrac{V_x^n}{V_x^0}=\sum_{s=1}^r|b_s|\alpha_s\\
&\leq \lambda^1(\nu)+\lambda^2(\nu)+\dots+\lambda^{i-1}(\nu)+\alpha_r\leq \Lambda^+(\nu)-[\lambda^i(\nu)-\alpha_r].
\end{align*}
Integrating, we obtain by the bounded convergence theorem and the invariance of $\nu$ that
$
\displaystyle\wt{\nu}(\vf)=\int\log|\det(Df|_F)|d\wt{\nu}\leq \Lambda^+(\nu)-[\lambda^i(\nu)-\alpha_r]
<\Lambda^+(\nu).
$
\qed

\section{Spectral Gap for SPR Markov Shifts and Consequences}\label{appendix-Markov-Shifts}

The spectral gap of SPR Markov shifts manifests itself in the action of the ``transfer operator" on a space of regular ``one-sided functions", 
therefore we begin with a discussion of these objects.

\subsection{One-Sided Shifts, One-Sided Functions and One-Sided Measures}\label{s.one-sided}

Let $\mathfs G$ be a countable directed graph with set of vertices $\mathfs V$. Let
$$
\Sigma^+=\Sigma^+(\mathfs G):=\{(x_0,x_1,\cdots)\in \mathfs V^{\N\cup\{0\}}: x_i\to x_{i+1}\text{ for all }i\in\Z\}.
$$
We endow $\Sigma^+(\mathfs G)$ with the metric
\begin{equation}\label{e.natural-metric} d(\un{x},\un{y}):=
\exp[-\min\{i\geq 0:x_i\neq y_i\}] \text{ (or $0$ if  $\un{x}=\un{y}$)},
\end{equation}
and the action of the {\em left shift map}
$
\sigma:\Sigma^+(\mathfs G)\to \Sigma^+(\mathfs G),\ \ \sigma(\un{x}):=\un{y},\text{ where }y_i:=x_{i+1}.
$
This map is continuous, but not invertible.
The resulting topological dynamical system is called the {\em one-sided  Markov shift} associated to $\mathfs G$. We keep the name {\em Markov shift} for the invertible system $(\Sigma,\sigma):=(\Sigma(\mathfs G),\sigma)$, introduced in \S\ref{s.Markov-shifts} (using sometimes the phrase  \emph{two-sided} Markov shifts for emphasis).

All the results for (two-sided) Markov shifts stated in \S\ref{s.Markov-shifts} and \S\ref{s.SPR-shift}  extend to one-sided shifts, with two exceptions:
\begin{enumerate}[(1)]
\item Lemma \ref{l.local-compactness-crit}. Call a directed graph {\em forward locally finite} if every vertex has a finite {\em out}-going degree. The correct one-sided version of this lemma is as follows: 
\emph{Let $\mathfs G$ be a proper countable directed graph. $\Sigma^+(\mathfs G)$ is locally compact iff
$\mathfs G$ is forward locally finite. In this case the cylinders are compact.}
The proof is simple, and we omit it.

\medskip
\item Thm \ref{t.Parry-Gurevich-ASS}(1). In the case of $\Sigma^+(\mathfs G)$, the MME is isomorphic to the product of a system with Bernoulli {\em natural extension}, and a cyclic permutation of $p$ points.
\end{enumerate}

There is an obvious semi-conjugacy $\vartheta:(\Sigma,\sigma)\to(\Sigma^+,\sigma)$, given by
$$
\vartheta[(v_n)_{n\in\Z}]:=(v_n)_{n\geq 0}=:\un{v}^+.
$$

The {\em one-sided projection} of a $\sigma$-invariant probability measure $\mu$ on  $\Sigma$, is  the
 $\sigma$-invariant measure on $\Sigma^+$ defined by $$\mu^+:=\mu\circ \vartheta^{-1}.
 $$
Conversely, any $\sigma$-invariant measure $\mu^+$ on $\Sigma^+$ lifts to a unique $\sigma$-invariant measure $\mu$ on $\Sigma$ such that $\mu\circ\vartheta^{-1}=\mu^+$.  The lift is  determined by the following condition:
$$
\mu({_m}[v_0,\ldots,v_{n-1}])=\mu^+([v_0,\ldots,v_{n-1}])\text{ for all cylinders and all $m\in\Z$}.
$$
The lift and the projection  have the same entropy,  and $\mu$ is ergodic {(resp.\ strongly mixing)} iff $\mu^+$ is ergodic {(resp.\ strongly mixing)}.

Every function $\psi^+:\Sigma^+\to\R$ lifts to a function $\psi^+\circ\vartheta:\Sigma\to\R$.
Functions on $\Sigma$ of this  form are called {\em one-sided functions}.
A function on $\Sigma$ is generally not one-sided, however we have the following important result:

\begin{theorem}[Sinai Lemma]\label{t.Sinai-Lemma}
Let $\Sigma$ be a {(two-sided)} Markov shift. {For any $\beta>0$, there is $C_\beta>0$ s.t.}
every $\beta$-H\"older continuous $\psi:\Sigma\to \R$ can be written as
$
\psi={\widetilde\psi}+h-h\circ\sigma
$
where $\varphi,h$ are $\beta/2$-H\"older continuous, {$\widetilde \psi$} is one-sided,
{and $\|\widetilde \psi\|_{\beta/2}\leq C_\beta \|\psi\|_\beta$.}
\end{theorem}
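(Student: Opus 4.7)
The plan is to use the classical Sinai trick of ``cohomologically straightening'' the past: for each vertex $v\in\mathfs V$ we fix, once and for all (using countable choice if needed), an admissible left-infinite sequence $(\ldots,r_{-2}(v),r_{-1}(v))$ such that $(\ldots,r_{-2}(v),r_{-1}(v),v)$ is admissible in $\Sigma$; this is possible because one may assume $\mathfs G$ proper (see \S\ref{s.Markov-shifts}). For $\un x=(x_i)_{i\in\Z}\in\Sigma$, define $\un x^\ast$ by
$$x^\ast_i:=\begin{cases} r_i(x_0) & i<0 \\ x_i & i\geq 0.\end{cases}$$
Thus $\un x^\ast$ depends only on the one-sided data $(x_0,x_1,\ldots)$, and agrees with $\un x$ on every nonnegative coordinate.

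Next I would set
$$h(\un x):=\sum_{n=0}^\infty\bigl[\psi(\sigma^n\un x)-\psi(\sigma^n\un x^\ast)\bigr],$$
and take $\widetilde\psi:=\psi+h\circ\sigma-h$. Convergence of the series is immediate: $\sigma^n\un x$ and $\sigma^n\un x^\ast$ coincide on every coordinate $\geq -n$, so $d(\sigma^n\un x,\sigma^n\un x^\ast)\leq e^{-n}$, and $\beta$-H\"older continuity gives a summand bounded by $\|\psi\|_\beta e^{-\beta n}$. The one-sided character of $\widetilde\psi$ is then a telescoping check: both sums defining $h(\un x)-h(\sigma\un x)$ rearrange so that $\widetilde\psi(\un x)$ can be expressed purely in terms of $\psi$ evaluated at orbits of $\un x^\ast$ and $(\sigma\un x)^\ast$, each of which depends only on $(x_i)_{i\geq 0}$.

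The main work, and the only real obstacle, is the H\"older bound. Suppose $d(\un x,\un y)=e^{-m}$, so $x_i=y_i$ for $|i|\leq m$. Because $x_0=y_0$, the canonical pasts attached to $\un x$ and $\un y$ agree, so $\un x^\ast$ and $\un y^\ast$ also agree on all coordinates $\leq m$, which gives $d(\sigma^n\un x^\ast,\sigma^n\un y^\ast)\leq e^{-(m-n)}$ for $0\leq n\leq m$. I would then estimate the $n$-th summand of $h(\un x)-h(\un y)$ in two ways:
\begin{enumerate}[(a)]
\item Via $\un x\leftrightarrow\un y$ and $\un x^\ast\leftrightarrow\un y^\ast$, obtaining a bound $2\|\psi\|_\beta e^{-\beta(m-n)}$ (good for small $n$);
\item Via $\un x\leftrightarrow\un x^\ast$ and $\un y\leftrightarrow\un y^\ast$, obtaining a bound $2\|\psi\|_\beta e^{-\beta n}$ (good for large $n$).
\end{enumerate}
Using bound (a) for $0\leq n\leq m/2$ and bound (b) for $n>m/2$, the two geometric series both contribute $O_\beta(e^{-\beta m/2})\|\psi\|_\beta$. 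Since $d(\un x,\un y)^{\beta/2}=e^{-\beta m/2}$, this yields $\|h\|_{\beta/2}\leq C'_\beta\|\psi\|_\beta$.

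It follows that $\widetilde\psi=\psi+h\circ\sigma-h$ is $\beta/2$-H\"older continuous (note $\|\psi\|_{\beta/2}\leq\|\psi\|_\beta$ since the metric is bounded by $1$), one-sided by construction, and satisfies
$$\|\widetilde\psi\|_{\beta/2}\leq\|\psi\|_{\beta/2}+2\|h\|_{\beta/2}\leq(1+2C'_\beta)\|\psi\|_\beta=:C_\beta\|\psi\|_\beta,$$
completing the proof.
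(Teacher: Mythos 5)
Your proof is correct and is precisely the classical ``Sinai coboundary trick'' of Bowen's Lemma 1.6, which is exactly what the paper cites (it only remarks that Bowen's proof extends verbatim to the countable-state case, without writing it out). The one point worth making explicit is why a left-infinite canonical past $(\ldots,r_{-2}(v),r_{-1}(v))$ exists for every vertex $v$: this follows from the paper's reduction to a proper graph, where every vertex has positive in-degree, so the past can be chosen inductively; your invocation of countable choice is then sufficient.
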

\noindent
The proof for subshifts of finite type in \cite[Lemma 1.6]{Bowen-LNM} extends verbatim to the countable state case.
Note that since $\diam(\Sigma)<\infty$, $\phi,\psi$ and $h$ are bounded.

\begin{proposition}\label{p.conditional-measures}
{Let $\Sigma$ be a {(two-sided)} irreducible Markov shift with finite Gurevich entropy, let
$\mu$ be an equilibrium measure of a $\beta$-H\"older potential $\phi$, and let }
$\mathcal{A}_0^\infty$ be the $\sigma$-algebra generated by non-negative coordinates.

There is $C_\phi>0$ such that
for every $\beta$-H\"older continuous function
$\psi:\Sigma\to\R$, there exists a $\beta/2$-H\"older continuous function $\psi^+:\Sigma^+\to\R$ which satisfies
$
\|\psi^+\|_{\beta/2}\leq C_\phi\|\psi\|_\beta \; \text{ and }\; \E_{\mu}(\psi|\mathcal{A}_0^\infty)=\psi^+\circ\vartheta \quad \mu\text{-a.e.}$
\end{proposition}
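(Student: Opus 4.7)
The natural first step is to apply the Sinai Lemma (Theorem~\ref{t.Sinai-Lemma}) to $\psi$, writing
$$\psi = \widetilde\psi + h - h\circ\sigma,$$
where $\widetilde\psi$ is a one-sided $\beta/2$-H\"older function with $\|\widetilde\psi\|_{\beta/2}\le C_\beta\|\psi\|_\beta$, and $h$ is $\beta/2$-H\"older on $\Sigma$ (and bounded, as $\Sigma$ has finite diameter). Writing $\widetilde\psi=\widetilde\psi^+\circ\vartheta$, this reduces the problem to showing the following claim: {\it For every $\beta/2$-H\"older continuous $g:\Sigma\to\RR$, the conditional expectation $\E_\mu(g\mid\mathcal A_0^\infty)$ admits a version of the form $G^+\circ\vartheta$ where $G^+:\Sigma^+\to\RR$ is $\beta/2$-H\"older continuous with $\|G^+\|_{\beta/2}\le C_\phi'\|g\|_{\beta/2}$ for some constant $C_\phi'$ depending only on $\phi$.} Applying the claim to $g=h$ and $g=h\circ\sigma$ (whose $\beta/2$-H\"older norms are bounded in terms of $\|h\|_{\beta/2}$) and setting $\psi^+:=\widetilde\psi^+ + G^+_h - G^+_{h\circ\sigma}$ then yields the proposition.

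To prove the claim, I would use the Gibbs property of $\mu$. By the Ruelle--Perron--Frobenius setup for H\"older potentials on irreducible Markov shifts with finite Gurevich entropy (see e.g. \cite{Sarig-CMP-2001}), after replacing $\phi$ by a cohomologous H\"older potential we may assume $\phi$ is normalized so that the associated Ruelle operator $L_\phi$ on $\Sigma^+$ satisfies $L_\phi 1=1$ and $L_\phi^*\mu^+=\mu^+$. The regular conditional probabilities $\mu(\,\cdot\mid\mathcal A_0^\infty)(\underline x^+)$ can then be expressed explicitly using iterates of the Ruelle operator of the time-reversed shift together with the H\"older normalization. Concretely, for any cylinder $[a_{-n},\dots,a_{-1}]$ depending only on negative coordinates, the conditional mass given future $\underline x^+$ equals a sum of weights $\exp(\phi_n(\underline y)+\dots)$ along admissible backward continuations $\underline y$ of $(a_{-n},\dots,a_{-1},\underline x^+)$, and these weights enjoy bounded distortion with H\"older exponent $\beta$ in the tail variable.

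The main obstacle is to turn this Gibbs property into a quantitative H\"older bound on $G^+$. Specifically, for $\underline x^+,\underline y^+\in\Sigma^+$ agreeing in the first $n+1$ coordinates, one must show that the two conditional distributions on the past admit a natural coupling under which any pair $(\underline z,\underline z')$ in the support satisfies $d(\underline z,\underline z')\le e^{-n}$, together with Radon--Nikodym densities that are $(1+O(\theta^n))$-comparable for some $\theta<1$ depending on $\phi$. Combining this with the $\beta/2$-H\"older continuity of $g$ gives
$$\bigl|G^+(\underline x^+)-G^+(\underline y^+)\bigr|\le \mathrm{const}(\phi)\cdot \|g\|_{\beta/2}\cdot e^{-n\beta/2},$$
which is the claim with $C_\phi'$ depending only on $\phi$. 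The coupling/comparability estimate is the technical heart; in the finite-alphabet mixing case it is standard Bowen--Ruelle theory, and in the countable-state setting the required estimates follow from the big-image/big-preimage or RPF theorems proved for SPR (or merely positive recurrent) potentials in \cite{Sarig-CMP-2001,Sarig-ETDS-99} together with the Sinai-type decomposition used in step one to reduce to a one-sided potential.

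Finally, one checks that the resulting $\psi^+$ satisfies the claimed norm bound $\|\psi^+\|_{\beta/2}\le C_\phi\|\psi\|_\beta$ by combining the estimate $\|\widetilde\psi\|_{\beta/2}\le C_\beta\|\psi\|_\beta$ from Sinai's lemma with $\|h\|_{\beta/2}=O(\|\psi\|_\beta)$ and the linear bound on $G^+$ in the claim; the $\mu$-a.e.\ identity $\E_\mu(\psi\mid\mathcal A_0^\infty)=\psi^+\circ\vartheta$ is then immediate from the construction and the fact that $\widetilde\psi^+\circ\vartheta$ is already $\mathcal A_0^\infty$-measurable.
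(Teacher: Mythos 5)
Your plan is correct in substance, and its technical heart — constructing the conditional measures $\mu_{\underline{x}^+}(\cdot)=\mu(\cdot\mid\mathcal{A}_0^\infty)(\underline{x}^+)$ from the RPF-normalized one-sided potential and comparing them by a coordinate-swap coupling with bounded distortion — is exactly what the paper's Claim~3 does. But your preliminary step of applying Sinai's Lemma to $\psi$ is a detour that buys nothing: after writing $\psi=\widetilde\psi+h-h\circ\sigma$, the functions $h$ and $h\circ\sigma$ are again generic two-sided H\"older functions, so the ``claim'' you reduce to is the same kind of statement as the proposition itself (conditional expectation of a two-sided H\"older function has a one-sided H\"older representative), merely with $\beta$ replaced by $\beta/2$. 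The paper skips this and applies Sinai's Lemma only to the \emph{potential} $\phi$ to get a one-sided normalized cocycle $g=e^{\varphi}$ with $\sum_{\sigma\underline y=\underline x} g(\underline y)=1$, and then defines $\psi^+(\underline{x}^+):=\int\psi\,d\mu_{\underline{x}^+}$ for the original $\psi$ directly, where $\mu_{\underline{x}^+}$ is built from the products $g^{(n)}$. This is cleaner and avoids the cancellation bookkeeping for $G^+_h-G^+_{h\circ\sigma}$.

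One point you should make precise: in the distortion step you write that the Radon--Nikodym densities are $(1+O(\theta^n))$-comparable for ``some $\theta<1$ depending on $\phi$,'' and then conclude an $e^{-n\beta/2}$ H\"older estimate. Those two statements are only compatible if $\theta\le e^{-\beta/2}$, and this is exactly where the factor two in the exponent enters: $\phi$ is $\beta$-H\"older on the two-sided shift, but after Sinai's Lemma the one-sided replacement $\widetilde\phi$ (hence $\varphi=\log g$) is only \emph{locally} $\beta/2$-H\"older, so the Jacobian estimate $|J-1|\le C e^{-\beta n/2}$ has exponent $\beta/2$, not $\beta$. Without tracking this, your claimed bound $\|G^+\|_{\beta/2}\le C'_\phi\|g\|_{\beta/2}$ does not follow from an unspecified $\theta$. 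With this clarification, the plan is sound and amounts to the paper's proof with an extra (unnecessary) Sinai reduction applied to $\psi$ up front.
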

The proof requires some tools and is postponed to the end of \S~\ref{s.transfer-section}.

\subsection{Transfer Operators}\label{s.transfer-section} Suppose $T:\Omega\to\Omega$ is a (possibly non-invertible) measurable map which preserves a probability measure $m$.  Fix $\vf\in L^1(m)$. $T$ maps the signed measure $dm_\vf:={\vf\, dm}$ to the measure $T_\ast(m_\vf):=m_\vf\circ T^{-1}$. Since $m_\vf\circ T^{-1}\ll m$, we can write
$$
T_\ast(\vf dm)=L(\vf)\, dm,\text{ where }L(\vf):=\frac{dm_\vf\circ T^{-1}}{dm}.
$$
The operator $L:L^1(m)\to L^1(m)$ defined this way is called the {\em transfer operator} of $m$.
The following is well-known, and straightforward.
\begin{lemma}\label{l.transfer-op-properties} $L\colon L^1(m)\to L^1(m)$ is a bounded linear operator, and:\begin{enumerate}[(1)]
\item $L(1)=1$;
\item $\forall\psi\in L^1(m)$, $\int L(\psi) d m=\int \psi d m$;
\item $\forall\psi\in L^1(m)$ and $\chi\in L^\infty$, $L((\chi\circ T)\cdot \psi)=\chi \cdot L(\psi)$.
\end{enumerate}
In particular $\int \chi\cdot (L\psi) dm= \int L((\chi\circ T) \cdot \psi) dm =\int(\chi\circ T) \cdot \psi dm$. Moreover, the last property characterizes the transfer operator.

\end{lemma}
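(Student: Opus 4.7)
The proof will be a routine unwinding of the Radon--Nikodym definition of $L$, but it is worth planning because several of the later arguments in the paper invoke these identities so the cleanest derivation is via a single duality formula from which everything else drops out.

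The plan is to first verify the \emph{duality identity}
\[
  \int (f\circ T)\cdot\psi\, dm = \int f \cdot L\psi\, dm \qquad \forall\, f\in L^\infty(m),\; \psi\in L^1(m).
\]
This is just the change-of-variables formula: by definition $L\psi\, dm = T_*(\psi\, dm)$, so
$\int f\cdot L\psi\, dm = \int f\, d(T_*(\psi\, dm)) = \int (f\circ T)\,\psi\, dm$. Linearity of $L$ is immediate from linearity of the pushforward and of the Radon--Nikodym derivative. For boundedness, observe that $|L\psi|\le L|\psi|$ a.e.\ (since for a positive test function $f\ge0$, the duality gives $\int f\,L\psi\,dm \le \int (f\circ T)|\psi|\,dm = \int f\,L|\psi|\,dm$, and symmetrically for $-\psi$), so by property (2) below, $\|L\psi\|_1 \le \|L|\psi|\,\|_1 = \|\psi\|_1$, whence $\|L\|\le 1$.

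Given the duality identity, the three properties come out essentially for free. For (2), take $f\equiv 1$: $\int L\psi\, dm = \int \psi\, dm$. For (1), note that $L(1)$ is the unique element of $L^1(m)$ whose integral against every $\chi\in L^\infty$ equals $\int (\chi\circ T)\cdot 1\, dm = \int \chi\, dm$ (using $T$-invariance of $m$), so $L(1)=1$. For (3), apply the duality to $(\chi\circ T)\psi$: for any $f\in L^\infty$,
\[
  \int f\cdot L\bigl((\chi\circ T)\psi\bigr)\, dm
  = \int (f\circ T)(\chi\circ T)\,\psi\, dm
  = \int ((f\chi)\circ T)\,\psi\, dm
  = \int f\chi\cdot L\psi\, dm,
\]
and since $f$ was arbitrary, $L((\chi\circ T)\psi)=\chi\cdot L\psi$ a.e.

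Finally, for the characterization: the identity $\int \chi\cdot L\psi\, dm = \int (\chi\circ T)\,\psi\, dm$ is exactly the duality identity, and it uniquely specifies $L\psi\in L^1(m)$ because an element of $L^1(m)$ is determined by its integrals against all bounded measurable test functions. There is no real obstacle here; the only mild subtlety is to be explicit that $\chi\circ T\in L^\infty$ for $\chi\in L^\infty$, so all integrals in sight are absolutely convergent, and that the duality identity may be verified first on indicator functions $\chi = \mathds 1_A$ (where it reduces to $m_\psi(T^{-1}A) = \int_A L\psi\, dm$, which is the definition of the Radon--Nikodym derivative) and then extended by linearity and monotone convergence.
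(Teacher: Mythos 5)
Your proof is correct, and since the paper itself gives no argument (it simply labels the lemma ``well-known, and straightforward''), there is no paper proof to compare against; what you wrote is the standard derivation one would expect. A few very minor remarks. Your organization around the single duality identity
$\int f\cdot L\psi\, dm = \int (f\circ T)\,\psi\, dm$
is exactly the right way to structure this, and all three items, the boundedness, and the uniqueness characterization do fall out of it cleanly. For boundedness one can shorten slightly: by duality, $\|L\psi\|_1 = \sup_{\|\chi\|_\infty\le 1}\bigl|\int \chi\cdot L\psi\,dm\bigr| = \sup_{\|\chi\|_\infty\le 1}\bigl|\int(\chi\circ T)\psi\,dm\bigr|\le\|\psi\|_1$, avoiding the intermediate step $|L\psi|\le L|\psi|$ (though that step is also correct and sometimes useful to record). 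Also note that both items (1) and your derivation of it via $\int(\chi\circ T)\,dm = \int\chi\,dm$ use the $T$-invariance of $m$ — this is the only place invariance enters, and it is worth flagging since the definition of $L$ as a Radon--Nikodym derivative makes sense even without invariance (one just loses $L(1)=1$ and the isometry bound). Finally, your last paragraph about verifying the duality first on indicators $\chi=\mathds 1_A$ and then extending is the right level of rigor, since the duality identity \emph{is} the Radon--Nikodym definition in that case.
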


Suppose $\Sigma^+$ is  a one-sided Markov shift $\Sigma^+$, and   $\varphi\colon \Sigma^+\to \RR$ is a function. {\em Ruelle's operator} is the formal sum (which may or may not converge)
$$
(L_\varphi \psi)(\un{x})=\sum_{\sigma(\un{y})=\un{x}}e^{\varphi(\un{y})}\psi(\un{y}).
$$
We recall the local H\"older continuity introduced in \S\ref{ss.metric}. We have the following.
\begin{proposition}\label{p.transfer-op-formula}
Let $\Sigma^+$ be an irreducible one-sided Markov shift with finite Gurevich entropy,
and $\mu$ an equilibrium measure for a $\beta$-H\"older continuous {function} $\phi\colon \Sigma^+\to \RR$.
Then there is a locally $\beta$-H\"older continuous function $u$ such that:
\begin{enumerate}
\item The {function}  $\varphi:=\phi+u-u\circ \sigma-P(\sigma,\mu,\phi)$ satisfies
$\sum_{\sigma(\un{y})=\un{x}}e^{\varphi(\un{y})}=1$. \item {$L_\varphi$ defines an operator $L^1(\mu)\to L^1(\mu)$ which coincides with
the transfer operator of $\mu$.}
\item
$L_\varphi$ preserves the space of $L^\infty$ functions having continuous representatives $\psi$, and satisfies
$\sup|L_\varphi \psi| \leq \sup|\psi|$. \end{enumerate}
\end{proposition}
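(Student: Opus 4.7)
\smallbreak\noindent\textbf{Proof proposal.} The plan is to reduce everything to the generalized Ruelle--Perron--Frobenius (RPF) theorem for countable state Markov shifts, as developed by Sarig in \cite{Sarig-ETDS-99,Sarig-CMP-2001}. Since $\Sigma^+$ is irreducible with finite Gurevich entropy and $\phi$ is $\beta$-H\"older with an equilibrium measure $\mu$, the potential $\phi$ is positive recurrent in the sense of \cite{Sarig-CMP-2001}, and the generalized RPF theorem provides: a positive, locally $\beta$-H\"older continuous function $h:\Sigma^+\to (0,\infty)$, a conservative $\sigma$-finite Borel measure $\nu$ on $\Sigma^+$, and a constant $\lambda>0$ such that
\begin{equation*}
L_\phi h=\lambda h,\qquad L_\phi^\ast\nu=\lambda\nu,\qquad \int h\, d\nu=1,\qquad d\mu=h\, d\nu,
\end{equation*}
where $\lambda=\exp(P_\Bor(\Sigma^+,\phi))=\exp(P(\sigma,\mu,\phi))=:e^{P}$, with the last equality because $\mu$ is an equilibrium measure. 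The existence of these objects (and the identification of $\mu$ with $h\, d\nu$) is the main external input; once it is in hand, the rest of the argument is a short calculation.

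\smallbreak
I would then set $u:=\log h$, which is locally $\beta$-H\"older continuous, and define $\varphi:=\phi+u-u\circ\sigma-P$. To prove (1), observe that for each $\underline{x}\in\Sigma^+$,
\begin{equation*}
\sum_{\sigma(\underline{y})=\underline{x}}e^{\varphi(\underline{y})}
=e^{-P}\,h(\underline{x})^{-1}\sum_{\sigma(\underline{y})=\underline{x}}e^{\phi(\underline{y})}h(\underline{y})
=e^{-P}\,h(\underline{x})^{-1}(L_\phi h)(\underline{x})=e^{-P}\lambda=1,
\end{equation*}
which is the normalization. More generally, the same computation gives the useful conjugation formula
\begin{equation*}
L_\varphi\psi=e^{-P}h^{-1}L_\phi(h\psi).
\end{equation*}

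\smallbreak
For (2), I would use the characterization of the transfer operator from Lemma~\ref{l.transfer-op-properties}: an operator $L:L^1(\mu)\to L^1(\mu)$ equals the transfer operator of $\mu$ iff $\int(\chi\circ\sigma)\psi\, d\mu=\int\chi\cdot L\psi\, d\mu$ for all $\chi\in L^\infty(\mu)$ and $\psi\in L^1(\mu)$. Using $d\mu=h\, d\nu$, Lemma~\ref{l.transfer-op-properties}(3) applied to $L_\phi$ (with respect to $\nu$, for which $L_\phi$ acts as a bounded operator up to the factor $\lambda$ on the dual), and the conjugation formula above, one computes
\begin{equation*}
\int\chi\cdot L_\varphi\psi\, d\mu=\int\chi\cdot e^{-P}L_\phi(h\psi)\, d\nu=\lambda^{-1}\int L_\phi((\chi\circ\sigma)h\psi)\, d\nu=\int(\chi\circ\sigma)\psi\, d\mu,
\end{equation*}
where the middle equality uses $L_\phi((\chi\circ\sigma)\xi)=\chi\cdot L_\phi\xi$ and the last uses $L_\phi^\ast\nu=\lambda\nu$. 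This identifies $L_\varphi$ with the transfer operator of $\mu$.

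\smallbreak
Finally, for (3), the normalization $L_\varphi 1=1$ from (1) combined with the positivity of $L_\varphi$ gives $|L_\varphi\psi|\le L_\varphi|\psi|\le\sup|\psi|$ pointwise for any bounded $\psi$. To see that $L_\varphi\psi$ is continuous when $\psi$ is continuous and bounded, note that each partial sum over preimages through a fixed cylinder is continuous, and the normalization $\sum_{\sigma(\underline y)=\underline x}e^{\varphi(\underline y)}=1$ forces uniform convergence of the tail on each cylinder of $\Sigma^+$, so the limit is continuous. The main obstacle is really the invocation of the generalized RPF package—existence of $h$, $\nu$, and the identity $d\mu=h\, d\nu$ in the infinite-alphabet setting—which is the substantive content of \cite{Sarig-ETDS-99,Sarig-CMP-2001}; once these are accepted, all three assertions are direct verifications.
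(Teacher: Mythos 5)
Your proof is correct and follows essentially the same route as the paper: both invoke the generalized Ruelle--Perron--Frobenius package (the paper attributes the implication ``existence of equilibrium measure $\Rightarrow$ positive recurrence'' to \cite{Buzzi-Sarig} and the local H\"older regularity of $\log h$ to \cite[Lemma 6]{Sarig-ETDS-99}, but the content is what you describe), set $u=\log h$, verify the normalization by the same one-line computation, and identify $L_\varphi$ with the transfer operator via the conjugation formula and the eigenmeasure relation $L_\phi^*\nu=\lambda\nu$. One small imprecision worth flagging: in your step (2) you invoke Lemma~\ref{l.transfer-op-properties}(3) ``applied to $L_\phi$ with respect to $\nu$,'' but $\nu$ is a $\sigma$-finite eigenmeasure rather than an invariant probability measure, so that lemma does not apply as stated; what you actually need is just the algebraic pull-out identity $L_\phi\bigl((\chi\circ\sigma)\xi\bigr)=\chi\cdot L_\phi\xi$, which you also cite and which holds by inspection of the defining sum---the paper uses exactly this, so the argument goes through.
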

\begin{proof}
Let us consider the operator $L_{\phi}$.
In  \cite{Buzzi-Sarig}, it is shown that the existence of an equilibrium measure for $\phi$ entails the positive recurrence of $\phi$, and implies the existence of $\lambda>0$, a positive continuous function $h$, and a (possible infinite) Borel measure $\nu$ which is finite and positive on cylinders, such that $L_{\phi} (h)=\lambda{h}$, ${(L_{\phi})_\ast} \nu=\lambda\nu$, and $\int h d\nu=1$. The equilibrium measure $\mu$ is then given by ${h\, d\nu}$ and $P(\sigma,\mu,\phi)=\log(\lambda)$.

Now let $g:=\frac{e^\phi h}{\lambda h\circ\sigma}$. Then  
$\sum_{\sigma(\un{y})=\un{x}} g(\un{y})=\lambda^{-1} h^{-1} L_{\phi} (h)= 1$.
{Setting} $u=\log h$ and $\varphi=\log(g)$,
the operator $\widehat L:=L_\varphi$ satisfies: $\widehat L(\psi)=\lambda^{-1} h^{-1} L_{{\phi}}(h\psi)$, hence
$
{\widehat L_\ast}\mu={\widehat L_\ast}(h \nu)=h \nu=\mu.
$
It follows that $\int \chi\cdot \widehat L(\psi)d\mu=\int \widehat L(\chi\circ\sigma \cdot \psi)d\mu=\int (\chi\circ\sigma)\cdot \psi d\mu$. Thus $\widehat L$ is the transfer operator of $\mu$.
The local H\"older regularity of $u$ follows from the regularity properties of $\log h$ stated in  \cite[Lemma 6]{Sarig-ETDS-99}.

Since $\sum_{\sigma(\un{y})=\un{x}} g(\un{y})=1$ and $\log g$ is locally H\"older continuous, $L_\varphi$ preserves the class of bounded continuous functions, and $\sup|L_{\varphi}\psi|\leq\sup|\psi|$.
\end{proof}

\begin{remark}\label{r.transferMME}
In the special case when $\mu$ is an MME and $\ell_u, p_u, p_{uv}$ are as in Thm \ref{t.Parry-Gurevich-ASS},  $g=e^\varphi$ is given by
$g(\un{y}):=\frac{p_{y_0}p_{y_0 y_1}}{p_{y_1}}=e^{-h_{\Bor}(\Sigma)}\ell_{y_1}^{-1}\ell_{y_0}$
and $u(\un{y})={\log}\ell_{y_0}$. 
\end{remark}

\begin{proof}[Proof of Proposition \ref{p.conditional-measures}]
By Thm \ref{t.Sinai-Lemma}, $\mu$ is an equilibrium measure of a {\em one-sided}  $\beta/2$-H\"older potential ${\widetilde \phi}$. 
{Let $g=e^{\varphi}$ where $\varphi$ is the function in Prop \ref{p.transfer-op-formula}.}
Define $g^{(n)}:=\prod_{i=0}^{n-1}g\circ \sigma^i$.
An inductive argument which starts with Prop~\ref{p.transfer-op-formula}(1) shows
\begin{equation}\label{e.consistency}
\sum_{\sigma^n(\un{y})=\un{x}} g^{(n)}(\un{y})=1.
\end{equation}
Given $\un{x}^+\in\Sigma^+$ and a cylinder ${_{-n}}[a_{-n},\ldots,a_{-1},x_0]$ in the {\em two sided} shift $\Sigma$, let 
$$
\mu_{\un{x}^+}\bigl({{_{-n}}[}a_{-n},\ldots,a_{-1},x_0]\bigr)= g^{(n)}(a_{-n},\ldots,a_{-1},x_0,x_1,\ldots).$$
By \eqref{e.consistency},  $\mu_{\un{x}^+}$ can be extended to a probability measure $\mu_{\un{x}^+}$ on $\{\un{y}:y_i=x_i\ (i\geq 0)\}$.

Given a $\beta$--H\"older continuous $\psi:\Sigma\to\R$, define $\psi^+$ on $\Sigma^+$ by
  $$
  \psi^+(\un{x}^+):=\int\psi d\mu_{\un{x}^+}.
  $$

\medskip
\noindent
{\em Claim 1.\/} ${\psi}^+\circ\vartheta$ is measurable with respect to the  $\sigma$-algebra $\mathcal A_0^\infty$.

\medskip
\noindent
{\em Proof.\/} If  $\psi=\mathds{1}_{[\un{a}]}$, then
$(\psi^+\circ\vartheta)(\un{x})={\mu}_{\un{x}^+}[\un{a}]$ is continuous and one-sided, whence $\mathcal A_0^\infty$-measurable. If $\psi$ is H\"older continuous, use a standard approximation argument.

\medskip
\noindent
{\em Claim 2.\/} $(\psi^+\circ\vartheta)(\un{x})=\E_\mu(\psi|\mathcal{A}_0^\infty)(\un{x})$ for $\mu$-a.e. $\un{x}$.

\medskip
\noindent
{\em Proof.\/}
We need to show
$\int \chi \cdot{\psi}^+\circ\vartheta d\mu=\int \chi\cdot{\psi} d\mu$ for all $\mathcal{A}_0^\infty$--measurable $\chi\in L^\infty$, or equivalently,
$\mu=\int_{\Sigma^+}\mu_{\un{x}^+}d\mu^+.$
For any cylinder $C={_{-n}}[a_{-n},\ldots,a_{-1},b_0,\ldots,b_m]$,
\begin{align*}
&\int_{\Sigma^+}\mu_{\un{x}^+}(C)d\mu^+(\un{x}^+)=\int_{[\un{b}]}g^{(n)}(a_{-n},\ldots,a_{-1},\un{x}^+)d\mu^+(\un{x}^+)\\
&=\int_{\Sigma^+}\mathds{1}_{[\un{b}]}(\un{x}^+)\sum_{\sigma^n(\un{y}^+)=\un{x}^+} g^{(n)}(\un{y}^+) \mathds{1}_{[\un{a}]}(\un{y}^+) d\mu^+(\un{x}^+)=\int \mathds{1}_{[\un{b}]} \cdot L^n (\mathds{1}_{[\un{a}]}) d\mu^+,
\end{align*}
where $L$ is the transfer operator of $\mu^+$, see Prop \ref{p.transfer-op-formula}. Thus by Lemma \ref{l.transfer-op-properties},
$$
\int_{\Sigma^+}\mu_{\un{x}^+}(C)d\mu^+(\un{x}^+)=\int \mathds{1}_{[\un{b}]}\circ\sigma^n \cdot \mathds{1}_{[\un{a}]} d\mu^+=\mu^+([\un{a},\un{b}])=\mu({_0}[\un{a},\un{b}])=\mu(C)
$$
(the last equality uses the shift-invariance of $\mu$).  
Thus
$\mu(C)=\int \mu_{\un{x}^+}(C)d\mu$ for all cylinders $C$, whence for  all Borel sets.

\medskip
\noindent
{\em Claim 3.\/}  $\psi^+\circ\vartheta$ is $\beta/2$-H\"older continuous and $\|\psi^+\|_{\beta/2}\leq C_\phi\|\psi\|_\beta$, where $C_\phi$ depends only on $\phi$.

\medskip
\noindent
{\em Proof.\/} Clearly, $\|\psi^+\|_\infty\leq \|\psi\|_\infty\leq \|\psi\|_\beta$.  To estimate the $\beta/2$-H\"older constant, we fix  $\un{x},\un{y}\in\Sigma$ such that $x_i=y_i$ for $|i|\leq n$.

If $n<2$, then $|\psi^+(\un{x})-\psi^+(\un{y})|\leq 2\|\psi\|_\infty e^{\beta} \cdot e^{-\beta n/2}$.
We thus assume $n\geq 2$.
Let $\Theta:\supp\mu_{\un{x}^+}\to \supp\mu_{\un{y}^+}$ be the map $
\Theta(\un{x})=(\ldots,x_{-2},x_{-1},x_0,y_1,y_2,\ldots).
$ Then $\mu_{\un{y}^+}\circ\Theta\sim \mu_{\un{x}^+}$ and 
$$
J(\un{z}):=\frac{d\mu_{\un{x}^+}}{d\mu_{\un{y}^+}\circ\Theta}=\prod_{k=0}^\infty \frac{g(z_{-k},\ldots,z_{-1},x_0,\ldots,x_n,x_{n+1},\ldots)}{g(z_{-k},\ldots,z_{-1},x_0,\ldots,x_n,y_{n+1},\ldots)}\  \ \ \ \ \mu_{\un{x}^+}\text{-a.e.}
$$

Since $n\geq 2$, we can use the regularity properties of $\log g$  in Prop \ref{p.transfer-op-formula} to see that $|J(z)-1|\leq Ce^{-\beta n/2}$, where $C$ depends only on  $\log g$, whence only on $\phi$. (The exponent is $\beta/2$ and not $\beta$, because {$g=e^{\varphi}$ where}  $\varphi$ is only $\beta/2$-H\"older.) 
Since $d\mu_{\un{x}^+}=J d\mu_{\un{y}^+}\circ\Theta$, we have 
\begin{align*}
&|\psi^+(\un{x}^+)-\psi^+(\un{y}^+)|=\left|\int \psi d\mu_{\un{x}^+}-\int \psi d\mu_{\un{y}^+}\right|=
\left|\int \psi J d\mu_{\un{y}^+}\circ\Theta-\int \psi d\mu_{\un{y}^+}\right|\\
&\leq \int |(\psi J)\circ\Theta^{-1}-\psi|d\mu_{\un{y}^+}\leq \int \biggl(|\psi\circ \Theta^{-1}-\psi|\cdot J\circ\Theta^{-1}+|\psi|\cdot|J\circ\Theta^{-1}-1| \biggr)d\mu_{\un{y}^+}\\
&\leq \|\psi\|_{\beta} e^{-\beta n}\cdot\|J\|_\infty+\|\psi\|_\infty \cdot C e^{-\beta n/2}\leq 
\|\psi\|_\beta e^{-\beta n/2}(2C+1),
\end{align*}
because $\Theta^{-1}$ preserves the $k$-coordinates with $k\leq n$, and  $|J|\leq C+1$.  

Thus $\|\psi^+\|_{\beta/2}\leq C_\phi \|\psi\|_\beta$ for some  $C_\phi$ which only depends on $\phi$.
\end{proof}

\subsection{SPR and the Spectral Gap}
We say that a one-sided shift $\Sigma^+$ is SPR for $\phi^+\colon \Sigma^+\to \RR$ iff the associated two-sided shift $\Sigma$ is SPR for the potential $\phi:=\phi^+\circ \vartheta$ (see Def \ref{d.SPR-for-Shifts}).
\begin{remark}\label{r.project}
Given some H\"older continuous $\phi$ on $\Sigma$, Sinai's lemma {(Thm~\ref{t.Sinai-Lemma})} constructs a function $\phi^+\colon \Sigma^+\to \RR$
such that $\phi^+\circ \vartheta$ is cohomologous to $\phi$. Hence if $\Sigma$ is SPR for $\phi$, then $\Sigma^+$ is SPR for $\phi^+$.
Their pressures are equal. Moreover if $\mu$ is the equilibrium measure on $\Sigma$, then the equilibrium measure on $\Sigma^+$ is $\mu^+:=\vartheta_*\mu$.
\end{remark}

\begin{theorem}[Cyr-Sarig \cite{Cyr-Sarig}]\label{t.SGP} Let {$\Sigma^+$} be an irreducible one-sided Markov shift
with finite Gurevich entropy and period $1$, let $\mu$ be the equilibrium measure of an SPR H\"older continuous potential $\phi$, 
and let $\varphi$ be the function given by Prop~\ref{p.transfer-op-formula}.
For any $\beta>0$ there exists a complex Banach space $(\mathcal L,\|\cdot\|_{\mathcal L})$ of continuous functions $\psi:\Sigma^+\to\mathbb C$ s.t.:
\begin{enumerate}[(a)]
\item\label{item0} For any $\psi\in \mathcal L$ and $\un{x}\in \Sigma^{+}$, the sum $(L_\varphi\psi)(\un{x})=\sum_{\sigma(\un{y})=\un{x}}e^{\varphi(\un{y})}\psi(\un{y})$ converges.
\item\label{item1} $L_\varphi(\mathcal L)\subset\mathcal L$ and $L_\varphi:\mathcal L\to \mathcal L$ is bounded.
\item\label{item2} \emph{Spectral gap:} $L_\varphi=P+N$, where $P$ and $N$ are bounded linear operators on $\mathcal L$,  $PN=NP=0$,
$P^2=P$, $\operatorname{rank}(P)=1$ and $N$ has spectral radius $\rho(N)<1$.
\item\label{item8} If $\chi$ is $\beta$-H\"older continuous and $\psi\in \mathcal L$, then $\chi\cdot \psi\in \mathcal L$ and $\|\chi\cdot \psi\|_{\mathcal L}\leq \|\chi\|_\beta \|\psi\|_{\mathcal L}$.
\item\label{item4} $\mathcal L$ contains the $\beta$-H\"older continuous functions $\psi:\Sigma^+\to\mathbb \RR$.
\item\label{item3} $\mathcal{L}\subset L^1(\mu)$, $L_\varphi$ coincides with the transfer operator of $\mu$ on $\mathcal L$, 
and the operator $P$ is given by $\displaystyle P\psi=\int \psi d\mu\cdot 1.$
\item\label{item6} $\psi\in\mathcal L\Rightarrow |\psi|\in\mathcal L$ and $\bigl\||f|\bigr\|_{\mathcal L}\leq \|f\|_{\mathcal L}$.
\item\label{item7} Convergence in $\mathcal L$ implies uniform convergence on cylinders.
\end{enumerate}
\end{theorem}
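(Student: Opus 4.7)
The plan is to follow the Cyr-Sarig Rokhlin-tower construction based on first returns to a distinguished partition set. By Prop~\ref{p.transfer-op-formula}, after replacing $\phi$ by $\varphi$ we may assume $L_\varphi 1 = 1$, so that $L_\varphi$ acts as a Markov operator on $L^1(\mu)$. Fix a vertex $a$: the SPR hypothesis combined with Thm~\ref{t.SPR-as-Entropy-Gap-Shifts} gives (after the cohomological normalization by $\varphi$) an exponential decay of the return-time distribution $\mu[\varphi_a>n]\le C\theta_0^n$ via Cor~\ref{c.Tail-CMS}, where $\varphi_a(\un x)=\inf\{k\geq 1:x_k=a\}$ is the first return time to $[a]$.

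Next I would define the Banach space $\mathcal{L}$. Decompose $\Sigma^+$ according to the Rokhlin tower over $[a]$: the level-$n$ piece of the base is $\{\un x\in[a]: x_1\neq a,\ldots,x_{n-1}\neq a,x_n=a\}$, and this set decomposes into countably many cylinders indexed by the first-return loops of length $n$ at $a$. For a continuous $\psi\colon\Sigma^+\to\mathbb{C}$, set
$$\|\psi\|_\mathcal{L}:=\sup_{C}\theta^{-n(C)}\,\|\psi|_C\|_\beta,$$
where the supremum is over all such cylinders $C$ (in all levels and all forward iterates), $n(C)$ is the level of $C$, and $\theta\in(\theta_0,1)$ is chosen using the SPR gap. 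I would then establish a Lasota-Yorke inequality
$$\|L_\varphi^N\psi\|_\mathcal{L}\ \le\ \rho\,\|\psi\|_\mathcal{L}\ +\ K\,\sup|\psi|,$$
for suitable $\rho<1$, $K>0$, $N\geq 1$, by grouping preimages under $\sigma^N$ according to their interaction with $[a]$ and estimating the contributions of long returns via the exponential tail.

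With this inequality in hand, quasi-compactness of $L_\varphi$ on $\mathcal{L}$ follows from Hennion's theorem, using a compact embedding of bounded sets of $\mathcal{L}$ into $C(\Sigma^+)$ obtained by Arzel\`a-Ascoli applied cylinder-by-cylinder together with the geometric tower-weights providing tightness across levels. Since $\Sigma^+$ is topologically mixing (period one, Lemma~\ref{l.top-transitive-for-MS}) and $L_\varphi 1=1$, a standard irreducibility-plus-aperiodicity argument pins the peripheral spectrum of $L_\varphi$ on $\mathcal{L}$ to the simple eigenvalue $1$, whose spectral projector is $P\psi=(\int\psi\,d\mu)\cdot 1$; this gives the splitting in~(c) together with~(f). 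The identification of $L_\varphi$ with the transfer operator of $\mu$ on $\mathcal{L}$ uses Prop~\ref{p.conditional-measures}. The remaining properties of $\mathcal{L}$ are essentially built into the norm: stability under pointwise modulus, multiplication by H\"older functions, convergence implying uniform convergence on cylinders, and inclusion of H\"older functions are all direct consequences of the cylinder-wise H\"older norm together with the bound on $\varphi$ from Prop~\ref{p.transfer-op-formula}.

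The hard part will be verifying the Lasota-Yorke inequality with $\rho<1$. The subtlety lies in choosing the geometric weight $\theta$: it must be close enough to $1$ that every $\beta$-H\"older function (in particular the constant $1$) belongs to $\mathcal{L}$, yet small enough that the weighted sum over normalized first-return loops converges and yields a true contraction when combined with local H\"older contraction of $L_\varphi$. The SPR inequality $F_a(R_a)>1$ (equivalently, the strict entropy gap at infinity of $\Sigma^+$) is indispensable precisely here, because it provides the headroom for such a $\theta$ to exist; positive recurrence alone (as in Ruette's example in \S\ref{s.SPR-vs-PR}) would fail to yield any such contraction, and the essential spectral radius of $L_\varphi$ on $\mathcal{L}$ would touch $1$.
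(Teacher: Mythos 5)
Your overall strategy -- normalize via Prop.~\ref{p.transfer-op-formula}, induce on a partition set $[a]$ to build a Rokhlin tower, weight a cylinder-wise H\"older norm by an exponential in the tower level, prove a Lasota--Yorke inequality, and appeal to Hennion plus aperiodicity -- is indeed the shape of Cyr--Sarig's argument, whereas the paper simply cites their Theorem~2.1 for the core items and only supplies the derivations of (d), (e), (f) that are not stated verbatim there. So you are attempting a reconstruction of the construction that the paper imports as a black box, which is a legitimate and more ambitious route.

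However, the norm you define does not work. You set $\|\psi\|_{\mathcal L}:=\sup_C\theta^{-n(C)}\|\psi|_C\|_\beta$ with $\theta\in(\theta_0,1)$, so $\theta^{-1}>1$. Unless the return time to $[a]$ is bounded (which fails for any non--finite-type SPR shift), the tower has cylinders of arbitrarily large level $n(C)$, and since $\|1|_C\|_\beta=1$ for every cylinder, $\|1\|_{\mathcal L}=\sup_C\theta^{-n(C)}=\infty$. The constant function is not in $\mathcal L$. This is fatal: items (c), (e), and (f) of the statement all involve the constant $1$ as the rank-one eigenvector of $L_\varphi$, so the space as you have defined it cannot carry the spectral gap. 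What Cyr--Sarig actually do is combine an $L^1(\mu)$-weighted component, roughly $\sum_n\xi^n\int_{A_n}|\psi|\,d\mu$, with a separate H\"older seminorm; the point is that $\mu(A_n)\lesssim\theta_0^n$ (this is where SPR enters, via Cor.~\ref{c.Tail-CMS}), so taking $\xi\in(1,\theta_0^{-1})$ gives a finite norm on bounded functions while still producing a contraction. Replacing your sup by such a weighted integral would repair the definition, but then the Lasota--Yorke estimate --- which you acknowledge is the crux --- would still need to be carried out, and you have left it as an assertion.

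Two smaller issues. First, you cite Prop.~\ref{p.conditional-measures} for the identification of $L_\varphi$ with the transfer operator of $\mu$; this is the wrong reference, as that proposition concerns conditional expectations. The identification you need is exactly Prop.~\ref{p.transfer-op-formula}(2), which you already invoked at the start. Second, you state that items (e) and (f) are ``built into the norm,'' but the paper's own argument for these two items is the nontrivial part of its proof: it only gets membership of cylinder indicators from [Cyr--Sarig], then uses the spectral gap, Prop.~\ref{p.transfer-op-formula}(3), dominated convergence, mixing of $\mu$, and the multiplication property (d) to deduce first that $P\psi=(\int\psi\,d\mu)\cdot 1$, then that constants lie in $\mathcal L$, and finally that all H\"older functions do. None of that is automatic from the norm; it would need to be reproduced in your reconstruction.
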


\begin{remark}
Thm \ref{t.SGP} as well as all other results in this section hold in the more general setup of  {\em weakly} H\"older continuous SPR potentials $\phi$ on irreducible Markov shifts, possibly with {\em infinite} Gurevich entropy  (and with or without  period one depending on the context),  provided we add the assumption that $\sup\phi<\infty$ and $\phi$ has finite pressure. When $\phi$ is H\"older continuous, it is bounded, and the finite pressure  condition is equivalent to the condition that $\Sigma$ has finite Gurevich entropy.
\end{remark}

\begin{proof}
We will apply the  theory of SPR potentials in \cite{Sarig-CMP-2001} and \cite{Cyr-Sarig}. This theory applies to every  SPR potential which is bounded from above, has finite Gurevich pressure, and is {\em weakly} H\"older continuous in the sense of \S\ref{ss.metric}. The potential $\varphi:=\log g:=\phi+\log h-\log h\circ\sigma-P(\sigma,\mu,\phi)$ from  Prop \ref{p.transfer-op-formula} belongs to this class because  $\log h$ is {\em locally} H\"older continuous, see \cite[Lemma 6]{Sarig-ETDS-99} and its proof, and because condition \eqref{e.SPR-potential2} is invariant under the addition of coboundaries and constants.

Consequently, Theorem 2.1 in \cite{Cyr-Sarig} applies to $\varphi$. Note that the pressure $P_\Bor(\Sigma^+,\varphi)$ vanishes.
We obtain a Banach space $(\mathcal L,\|\cdot\|_{\mathcal L})$ satisfying \eqref{item0}, \eqref{item1}, \eqref{item2},  \eqref{item6}, \eqref{item7}, and the following weak version of \eqref{item4}:
\begin{enumerate}[(e')]
\item\label{item5} $\mathcal L$ contains all the indicators of cylinder sets.
\end{enumerate}

Property \eqref{item8} is not mentioned explicitly in Theorem 2.1, but is noted on page 650 in \cite{Cyr-Sarig}, and is a direct consequence of the definition of $\|\cdot\|_{\mathcal L}$ on page 643.

The derivation of \eqref{item4} is more complicated.
By (\ref{item4}'), $\mathcal L$ contains some {(non-trivial)} bounded continuous functions, e.g. $\psi=\mathds{1}_{[v]}$. 
{As $\psi$ is bounded it belongs to $L^1(\mu)$ and by Prop~\ref{p.transfer-op-formula}, $L^n_\varphi(\psi)=L^n(\psi)$ for all $n\geq 0$.}
By~\eqref{item2}, $L_\varphi^n=P+N^n\to P$ in norm. By~\eqref{item7},
$ L_\varphi^n \psi\to P\psi$ pointwise,
Prop~\ref{p.transfer-op-formula}(3) implies that $\|L_\varphi^n\psi\|_\infty\leq \|\psi\|_\infty$, and by the dominated convergence theorem,
$ L_\varphi^n \psi\to P\psi$ in $L^1(\mu)$.
The measure $\mu$ is mixing, by Theorem ~\ref{t.Parry-Gurevich-ASS} (the period $p$ equals one, by  assumption). Hence for any $\chi\in L^\infty(\mu)$ one gets
$\int(\chi\circ T^n) \cdot \psi d\mu\to \int \chi d\mu \int \psi d\mu$.
By Item (3) in Lemma~\ref{l.transfer-op-properties} we also have
$\int(\chi\circ T^n) \cdot \psi d\mu=\int \chi\cdot (L^n\psi) d\mu
{=\int \chi\cdot (L_\varphi^n\psi) d\mu}
\to \int \chi\cdot P\psi d\mu$.
It follows that $P\psi=\int\psi d\mu$ $\mu$-a.e., whence by continuity, everywhere. Since $P(\mathcal L)\subset\mathcal L$, the constant functions are in $\mathcal L$. We now invoke \eqref{item8}, and obtain \eqref{item4}.

It remains to check \eqref{item3}. Let $\psi\in\mathcal L$.  By~\eqref{item6}, one can  assume $\psi\ge0$. Since $\operatorname{rank}(P)=1$ and $P$ is positive, the previous argument shows $P\psi=c\cdot 1$ for some $c\ge0$ and $L_\varphi^n\psi\to c$ pointwise. Since $L_\varphi$ is positive, $L_\varphi^n\psi\ge0$. By Fatou's lemma,
 $$
 c=\int P\psi\,d\mu = \int \lim_n L_\varphi^n\psi\,d\mu \le \lim_n \int L_\varphi^n\psi\,d\mu = \int \psi\, d\mu. 
 $$
To prove the converse inequality, note that the bounded continuous functions $\psi\wedge M$ ($M\ge1$) belong to $\mathcal L$ by~\eqref{item6}. 
Therefore $P(\psi\wedge M)= \int \psi\wedge M\, d\mu \leq\int \psi d\mu\leq  P\psi$, and  using monotone convergence,
one gets~\eqref{item3}:
\begin{align*}
c&=\int P \psi\, d\mu\geq \int P(\psi\wedge M)\,d\mu =
\int \psi\wedge M\,d\mu\xrightarrow[M\to\infty]{}\int\psi\,d\mu.
\end{align*}
{By Prop~\ref{p.transfer-op-formula}, we get $L_\varphi=L$ on $\mathcal L\subset L^1(\mu)$.}
\end{proof}

\begin{remark}
\eqref{item2} implies that the spectrum of $L_\varphi$ consists of a simple eigenvalue $1$, with eigenprojection $P$, and a compact subset of the open unit disk (indeed of $\{z:|z|\leq\rho(N)\}$, where $\rho(N)$ is the spectral radius). This is the ``spectral gap."
\end{remark}
{From now on we use the notation $L$ for both the transfer operator and the Ruelle operator $L_\varphi$, and we let $P$ be the operator $P\psi=
\int\psi d\mu\cdot 1$}.

\begin{corollary}\label{c.perturbation-theory}
Under the assumptions of Thm~\ref{t.SGP}, there are $\eps>0$ and $\rho_0\in (0,1)$ as follows.
For every $\beta$-H\"older continuous function $\psi$ {on $\Sigma^+$} with $\|\psi\|_\beta\leq 1$ and every $|z|<\eps$ there exist $\lambda_z\in\mathbb C$ and bounded linear operators $P_z,N_z$ on $\mathcal L$ s.t.:
\begin{enumerate}[(1)]
\item $L_z=\lambda_z P_z+N_z$, where $L_z\chi=L(e^{z\psi}\chi)$ for any $\chi\in\mathcal L$.
\item  $P_z^2=P_z$, $\mathrm{rank}(P_z)=1$, $P_z N_z=N_z P_z=0$, and $\rho(N_z)<\rho_0$.
\item $\lambda_0=1$, $P_0=P$, $N_0=N$.
\item $\lambda_z$ is holomorphic on $\{z:|z|<\eps\}$.
\item $P_z, N_z$ are holomorphic on $\{z:|z|<\eps\}$, i.e. they  can be expanded to a power series in $z$ which converges in the operator norm on this disk.
    \end{enumerate}
\end{corollary}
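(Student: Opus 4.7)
The plan is to realize $L_z$ as an analytic operator-valued family and then invoke the analytic perturbation theory of isolated spectral values (Kato, Dunford--Schwartz). The starting observation is that $L_z = L \circ M_{e^{z\psi}}$, where $M_\chi:\mathcal L\to\mathcal L$ denotes multiplication by $\chi$. By Property~(\ref{item8}) of Theorem~\ref{t.SGP}, $\|M_\chi\|_{\mathcal L\to\mathcal L}\le\|\chi\|_\beta$ whenever $\chi$ is $\beta$-H\"older; and the $\beta$-H\"older norm used in the statement is submultiplicative, so $\|\psi^n\|_\beta\le\|\psi\|_\beta^n\le1$. Hence the exponential series
$$
M_{e^{z\psi}}=\sum_{n=0}^\infty \frac{z^n}{n!}\,M_{\psi^n}
$$
converges absolutely in $B(\mathcal L)$ for every $z\in\mathbb C$, defines an entire $B(\mathcal L)$-valued function, and satisfies $\|M_{e^{z\psi}}-I\|_{\mathcal L\to\mathcal L}\le e^{|z|}-1$. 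Composing with $L$, I conclude that $z\mapsto L_z$ is entire and
$$
\|L_z-L\|_{\mathcal L\to\mathcal L}\le \|L\|_{\mathcal L\to\mathcal L}\bigl(e^{|z|}-1\bigr)
$$
uniformly over all $\psi$ with $\|\psi\|_\beta\le1$.

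Next I apply the standard analytic functional calculus to extract the spectral decomposition. By Property~(\ref{item2}) of Theorem~\ref{t.SGP}, $L=P+N$ with $P$ a rank-one projection onto the eigenspace of the simple eigenvalue $1$, and $\rho(N)<1$. Fix $\rho_0\in(\rho(N),1)$ and $\delta\in(0,1-\rho_0)$. The circle $\Gamma=\{|w-1|=\delta\}$ lies in the resolvent set of $L$ and separates the eigenvalue $1$ from the remainder of the spectrum. Since $\Gamma$ is compact and $w\mapsto(w-L)^{-1}$ is continuous on it, the norm continuity of $z\mapsto L_z$ at $z=0$ provides $\eps>0$ such that for all $|z|<\eps$, $\Gamma$ still lies in the resolvent set of $L_z$ and $(w,z)\mapsto (w-L_z)^{-1}$ is jointly analytic on a neighborhood of $\Gamma\times\{|z|<\eps\}$. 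I then set
$$
P_z:=\frac{1}{2\pi i}\oint_\Gamma (w-L_z)^{-1}\,dw,\qquad \lambda_z:=\operatorname{tr}(L_z P_z),\qquad N_z:=L_z-\lambda_z P_z.
$$
Classical resolvent identities show that $P_z$ is a projection commuting with $L_z$ and holomorphic in $z$; upper semicontinuity of rank for norm-continuous projections (combined with $\operatorname{rank}(P_0)=\operatorname{rank}(P)=1$) gives $\operatorname{rank}(P_z)=1$ after possibly shrinking $\eps$. Then $\lambda_z$ is the unique eigenvalue of $L_z$ inside $\Gamma$, is holomorphic with $\lambda_0=1$, and $N_z=L_z(I-P_z)$ has spectrum equal to $\sigma(L_z)\setminus\{\lambda_z\}\subset\{|w|\le\rho_0\}$ after a further reduction of $\eps$, so $\rho(N_z)<\rho_0$. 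The relations $P_z N_z=N_z P_z=0$ and $P_z^2=P_z$ follow from $P_z$ being the spectral projection.

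The only point that requires care is \emph{uniformity in $\psi$}: the constants $\eps,\rho_0$ in the conclusion must be independent of $\psi$. This is precisely why the bound $\|L_z-L\|_{\mathcal L\to\mathcal L}\le\|L\|(e^{|z|}-1)$ from the first paragraph is so useful; it depends on $\psi$ only through $\|\psi\|_\beta\le1$. Every subsequent choice of $\eps$ (ensuring that $\Gamma$ remains in the resolvent set, that $\operatorname{rank}(P_z)=1$, and that $\rho(N_z)<\rho_0$) is dictated purely by this norm and by the resolvent of $L$ along $\Gamma$, both of which are independent of $\psi$. I expect no essential obstacle: the whole argument is a direct transcription of the Dunford--Taylor functional calculus, with Property~(\ref{item8}) of Theorem~\ref{t.SGP} supplying the multiplicative stability of $\mathcal L$ that is needed to get the analyticity of $z\mapsto L_z$ in the first place.
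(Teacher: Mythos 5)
Your proof is correct, and it takes essentially the same analyticity argument as the paper for the first step: both use Thm~\ref{t.SGP}\eqref{item8} to bound $\|M_\psi\|\leq\|\psi\|_\beta\leq1$, expand $L_z=\sum_n \frac{z^n}{n!}LM_\psi^n$, and obtain the $\psi$-uniform estimate $\|L_z-L\|\leq\|L\|(e^{|z|}-1)$. (Your detour through submultiplicativity of $\|\cdot\|_\beta$ to bound $\|\psi^n\|_\beta$ is fine but unnecessary, since $M_{\psi^n}=M_\psi^n$ and the operator norm is automatically submultiplicative.)

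Where you diverge is the second step. The paper treats the perturbation theorem as a black box: it cites Kato, Thm~IV.3.16 to produce a threshold $\eps_{pert}$ and $\rho_0$ depending only on $L$, so that any $T$ with $\|T-L\|<\eps_{pert}$ admits the decomposition $T=\lambda(T)P(T)+N(T)$ with the desired spectral properties varying analytically. You instead re-derive this fact from scratch via the Dunford--Taylor functional calculus: a fixed contour $\Gamma=\{|w-1|=\delta\}$ separating the simple eigenvalue $1$ from $\sigma(N)$, the Riesz projection $P_z=\frac{1}{2\pi i}\oint_\Gamma(w-L_z)^{-1}dw$, and stability of the rank under small perturbations. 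The two are equivalent; your version is more self-contained (no external reference needed, and the contour picture makes the uniformity in $\psi$ visibly transparent), while the paper's is shorter. One small bookkeeping point in your write-up: what makes $\operatorname{rank}(P_z)=\operatorname{rank}(P_0)$ for small $z$ is not ``upper semicontinuity of rank'' per se but the standard fact that two projections with $\|P_z-P_0\|<1$ are conjugate (Kato's ``transformation function''), hence have equal rank; the conclusion is the same.
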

\begin{proof}
By the theory of   analytic perturbations of linear operators on Banach spaces \cite[Thm-IV.3.16]{Kato-Book}, there are $\eps_{pert}>0$, $\rho_0\in (0,1)$ such that for every bounded linear operator $T$ on $\mathcal L$ satisfying $\|T-L\|<\eps_{pert}$, there exist $\lambda(T)\in\mathbb C$ and bounded linear operators $P(T),N(T)$ such that $T=\lambda(T)P(T)+N(T)$, and:
\begin{enumerate}[$\bullet$]
\item  $P(T)^2=P(T)$, $\mathrm{rank}(P(T))=1$, $P(T) N(T)=N(T) P(T)=0$, $\rho(N(T))<\rho_0$;
\item $\lambda(0)=1$, $P(0)=P$, $N(0)=N$;
\item if $z\mapsto T_z$ is analytic, $\lambda(T_z), P(T_z), N(T_z)$ are analytic  on $\{z:\|T_z-L\|<\eps_{pert}\}$.
\end{enumerate}

In order to apply this to $T_z:=L_z$, we fix $\eps:=\min\{1,\eps_{pert}/(\|L\| e)\}$ and show that $\|L_z-L\|<\eps_{pert}$ for $|z|<\eps$ and any choice of {a H\"older continuous} $\psi$ with $\|\psi\|_\beta\leq 1$:
By Property~\eqref{item8} in Theorem~\ref{t.SGP}, the linear operator $M_\psi\colon \chi\mapsto \psi\cdot \chi$
is bounded on $\mathcal L$ and satisfies $\|M_\psi\|\leq \|\psi\|_\beta \leq 1$.
We have $L_z=\sum_{n=0}^\infty \frac{z^n}{n!}L M_\psi^n$, therefore $\|L_z-L\|\leq \|L\|\sum_{n=1}^\infty \frac{|z|^n}{n!}$. So  if $|z|<\eps$, then  $\|L_z-L\|\leq \eps_{pert}$.
\end{proof}

\subsection{Exponential Decay of Correlations}\label{ss.CMS.correlations}
Recall the H\"older norm \eqref{e.Holder-Norm}.

\begin{theorem}\label{Lemma-DOC}
Let $\mu$ be the {equilibrium measure} for a H\"older continuous potential $\phi$   on a one-sided or two-sided irreducible Markov shift with period $1$ and finite Gurevich entropy. For any $\beta>0$, there are  constants $C>0$ and $0<\theta<1$ s.t. for all $\beta$-H\"older continuous ${\varphi},{\psi}$,
$$
\left|\int {\vf}\cdot ( {\psi}\circ \sigma^{n} )d\mu-\int {\vf} d\mu\int {\psi} d\mu\right|<C\|\psi\|_{\beta}\|\vf\|_{\beta}\theta^{n}, \text{ for all }n\geq 1.
$$
\end{theorem}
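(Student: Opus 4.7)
\medbreak

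\noindent\textbf{Proof proposal.} The plan is to first dispatch the one-sided case using the spectral gap in Theorem~\ref{t.SGP}, then bootstrap to the two-sided case by combining a finite-range approximation with the conditional expectation formula of Proposition~\ref{p.conditional-measures}.

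First I would handle $\Sigma^+$. Let $L$ be the transfer operator of $\mu^+$ on $L^1(\mu^+)$. By Lemma~\ref{l.transfer-op-properties}(3), $\int \varphi\cdot(\psi\circ\sigma^n)\,d\mu^+ = \int \psi\cdot L^n\varphi\,d\mu^+$ for all $n\ge0$. Theorem~\ref{t.SGP}\eqref{item3} and \eqref{item2} give $L=P+N$ on $\mathcal L$ with $P\chi=\int\chi\,d\mu^+\cdot 1$ and $\rho(N)<1$, and so
$$ \int\varphi\cdot(\psi\circ\sigma^n)\,d\mu^+ -\int\varphi\,d\mu^+\int\psi\,d\mu^+ \;=\; \int \psi\cdot N^n\varphi\,d\mu^+. $$
Using $\|N^n\|_{\mathcal L}\le C_1\theta^n$ for any $\theta\in(\rho(N),1)$, the continuous embedding $\mathcal L\hookrightarrow L^1(\mu^+)$ (a consequence of Theorem~\ref{t.SGP}\eqref{item3} applied to $|N^n\varphi|$ via \eqref{item6}), and the bound $\|\varphi\|_{\mathcal L}\le C_2\|\varphi\|_\beta$ from Theorem~\ref{t.SGP}\eqref{item8} applied with $\chi=\varphi$ and $\psi=1\in\mathcal L$, we obtain $|\int\psi\cdot N^n\varphi\,d\mu^+|\le \|\psi\|_\infty \cdot C_3\theta^n\|\varphi\|_\beta \le C\|\psi\|_\beta\|\varphi\|_\beta\theta^n$, as desired.

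Next I would reduce the two-sided case to this. Fix $n\geq 2$ and set $K:=\lfloor n/2\rfloor$. Approximate $\psi:\Sigma\to\R$ by the function $\psi_K$ depending only on coordinates $|i|\le K$ obtained by freezing the remaining coordinates at a reference sequence; this satisfies $\|\psi-\psi_K\|_\infty\le\|\psi\|_\beta e^{-\beta K}$, so at the cost of an error $O(\|\varphi\|_\beta\|\psi\|_\beta e^{-\beta K})$ we may replace $\psi\circ\sigma^n$ by $\psi_K\circ\sigma^n$. The latter depends only on coordinates $n-K\le i\le n+K$, and since $n-K\ge0$ it is $\mathcal A_0^\infty$-measurable, hence projects to a function $\tilde\psi_K$ on $\Sigma^+$ with $\|\tilde\psi_K\|_\beta\le\|\psi\|_\beta$, and $\psi_K\circ\sigma^n=(\tilde\psi_K\circ\sigma^n)\circ\vartheta$. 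Taking conditional expectations and applying Proposition~\ref{p.conditional-measures}, there is a $\beta/2$-H\"older $\varphi^+:\Sigma^+\to\R$ with $\|\varphi^+\|_{\beta/2}\le C_\phi\|\varphi\|_\beta$ such that
$$ \int \varphi\cdot (\psi_K\circ\sigma^n)\,d\mu \;=\; \int (\varphi^+\circ\vartheta)\cdot(\tilde\psi_K\circ\sigma^n)\circ\vartheta\,d\mu \;=\; \int \varphi^+\cdot(\tilde\psi_K\circ\sigma^n)\,d\mu^+. $$
An identical manipulation (with $\sigma^n$ removed) shows $\int\varphi\,d\mu\int\psi_K\,d\mu=\int\varphi^+\,d\mu^+\int\tilde\psi_K\,d\mu^+$.

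Applying the one-sided estimate to $(\varphi^+,\tilde\psi_K)$ with exponent $\beta/2$ then yields decay $C'\|\varphi\|_\beta\|\psi\|_\beta\theta_1^n$, while the approximation error is $O(\|\varphi\|_\beta\|\psi\|_\beta e^{-\beta K})=O(\|\varphi\|_\beta\|\psi\|_\beta e^{-\beta n/2})$. Combining, with $\theta:=\max(\theta_1,e^{-\beta/2})$, we obtain the desired bound for $n\ge 2$; the remaining cases $n=0,1$ are absorbed into the constant $C$ using the trivial bound $|\text{Cov}_\mu|\le 2\|\varphi\|_\infty\|\psi\|_\infty$. The main (mild) obstacle is the bookkeeping in Step~2: ensuring the finite-range approximation of $\psi$ together with the projection of $\varphi$ onto $\mathcal A_0^\infty$-measurable functions produces honest $\beta/2$-H\"older representatives on $\Sigma^+$ whose norms are controlled uniformly in $n$, so that Step~1 can be applied cleanly.
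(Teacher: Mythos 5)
Your proposal follows essentially the same two-step strategy as the paper: the one-sided case via the spectral gap of the transfer operator acting on Cyr--Sarig's space $\mathcal L$, and the two-sided case via a finite-range approximation of $\psi$ combined with the conditional-expectation map of Prop.~\ref{p.conditional-measures} applied to $\varphi$. Three small bookkeeping points are worth correcting, though none of them sink the argument.

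First, ``freezing the remaining coordinates at a reference sequence'' is not always available on a Markov shift: a given cylinder $[x_{-K},\dots,x_K]$ need not have a preferred admissible extension, and a fixed reference tail may not connect. The paper circumvents this by taking $\psi^{(n)}(\un x):=\sup\{\psi(\un y):y_i=x_i,\ i\ge -K\}$, which is always a sup over a nonempty set and still enjoys $\|\psi-\psi^{(n)}\|_\infty\le\|\psi\|_\beta e^{-\beta K}$. You should use that construction (or the analogous sup over the cylinder $|i|\le K$).

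Second, the shift exponent is off by a factor of two: if $\psi_K$ depends on coordinates $|i|\le K$ and $\tilde\psi_K$ is its one-sided representative depending on coordinates $0\le i\le 2K$, then $\psi_K\circ\sigma^n=(\tilde\psi_K\circ\sigma^{n-K})\circ\vartheta$, not $(\tilde\psi_K\circ\sigma^{n})\circ\vartheta$. You therefore get $\theta_1^{n-K}\approx\theta_1^{n/2}$ from the one-sided estimate, not $\theta_1^n$; still exponential, so the conclusion survives with $\theta:=\max(\theta_1^{1/2},e^{-\beta/2})$, but the bound you wrote is not the one you obtain.

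Third, the assertion $\|\tilde\psi_K\|_\beta\le\|\psi\|_\beta$ on $\Sigma^+$ is false: pushing a two-sided cylinder function forward by $\sigma^K$ inflates its H\"older constant in the one-sided metric by a factor $e^{\beta K}$, which is not uniform in $n$. This is exactly the worry you flagged at the end. The correct resolution --- and the one the paper uses in \eqref{e.uniform-exp-dec} --- is that the one-sided bound is asymmetric and only requires $\tilde\psi_K\in L^\infty$ (with $\|\tilde\psi_K\|_\infty\le\|\psi\|_\infty$), while the H\"older regularity is needed only for $\varphi^+$, which Prop.~\ref{p.conditional-measures} does control uniformly. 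Since your own one-sided Step~1 already produces this asymmetric estimate, the false claim is a red herring, but you should delete it and explicitly invoke the sup-norm version instead.
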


In the one-sided case, the argument below allows to take $\psi\in L^\infty(\mu)$, and to replace  $\|\psi\|_\beta$ on the right hand side  by $\|\psi\|_{\infty}$.

\begin{proof} The special case when $\vf$ and $\psi$ are indicators of partition sets follows from Vere-Jones's work \cite{Vere-Jones-Geometric-Ergodicity}.
Here we explain how to do the general H\"older case.

\smallskip
\noindent
{\em The One-Sided Case\/}: This follows from \cite[Thm 1.1b]{Cyr-Sarig} up to a routine modifications. We give a direct proof for the convenience of the reader.

Suppose {$\psi\in L^\infty(\mu)$ and $\vf$ $\beta$-H\"older continuous} on $\Sigma^+$, and let $\mathcal L$ be the space given in Theorem \ref{t.SGP}.
Let $\ov{\vf}:=\vf-\int\vf d{\mu}$, $\ov{\psi}:=\psi-\int\psi d{\mu}$. {Note that $\ov{\vf}$ is a $\beta$-H\"older continuous function, therefore $\ov{\vf}\in\mathcal L$.}

Recall that $L=P+N$ where $P\psi=(\int\psi d{\mu})\cdot 1$, $PN=NP=0$, and $N$ has spectral radius strictly smaller than one. Then
\begin{align*}
& \left|\int {\vf} \cdot({\psi}\circ \sigma^{n}) d{\mu}-\int {\vf} d{\mu}\int {\psi} d{\mu}\right|=\left|\int
\ov{\vf}\cdot(\ov{\psi}\circ \sigma^n )d{\mu}\right|=\left|\int
L^n(\ov{\vf})\ov{\psi} d{\mu}\right|\\
&\leq \|\ov{\psi}\|_\infty \int |L^n\ov{\vf}|d{\mu}= \|\ov{\psi}\|_\infty\frac{\|P(|L^n\ov{\vf}|)\|_{\mathcal L}}{\|1\|_{\mathcal L}}\leq \frac{2\|P\|\|\psi\|_\infty}{\|1\|_{\mathcal L}} \bigl\||L^n\ov{\varphi}|\bigr\|_{\mathcal L}.
\end{align*}
By Thm \ref{t.SGP}\eqref{item2}, \eqref{item8} and \eqref{item6},
$\bigl\||L^n\ov{\varphi}|\bigr\|_{\mathcal L}\leq \bigl\|L^n\ov{\varphi}\bigr\|_{\mathcal L}= \|(P+N)^n\ov{\vf}\|_{\mathcal L}= \|P\ov{\vf}+N^n\ov{\vf}\|_{\mathcal L}= \|N^n\ov{\vf}\|_{\mathcal L}\leq \|\ov{\vf}\|_{\mathcal L}\|N^n\|\leq \|1\|_\mathcal L \|\ov{\varphi}\|_\beta\|N^n\|\leq 2\|1\|_\mathcal L\|\vf\|_\beta\|N^n\|$.

This tends to zero exponentially fast, because $\rho(N)<1$. Fix $C'>0$ and  $0<\theta<1$ such that $\|N^n\|\leq C'\theta^n$, and let $C:=4C'\|P\|$. Then
\begin{equation}\label{e.uniform-exp-dec}
\left|\int {\vf}\cdot( {\psi}\circ \sigma^{n} )d{\mu}-\int {\vf} d{\mu}\int {\psi} d{\mu}\right|\leq C\|\psi\|_{\infty}\|\vf\|_{\beta} \theta^n.
\end{equation}

\noindent
{\em The Two-Sided Case\/}: {By Sinai's lemma (Thm \ref{t.Sinai-Lemma}) one can replace the potential by a one-sided potential, without changing
the equilibrium measure $\mu$.}
Then $\mu^+:=\mu\circ\vartheta^{-1}$ is an equilibrium measure on $\Sigma^+$ by Remark~\ref{r.project}.
Let $\psi,\vf$ be two $\beta$-H\"older continuous functions on $\Sigma$.  

\smallskip
\noindent
{\sc Claim:} {\em  For every $n$, there is a  $\beta$-H\"older continuous function ${\psi}^{(n)}:\Sigma\to\R$ such that $\|\psi^{(n)}\|_\infty=\|\psi\|_\infty$, $\|\psi-\psi^{(n)}\|_\infty\leq \|\psi\|_{\beta}e^{-\lfloor n/2\rfloor \beta}$, and $\psi^{(n)}\circ\sigma^{\lfloor n/2\rfloor }$ is one-sided.}

\smallskip
\noindent
{\em Proof.\/} Take $\psi^{(n)}(\un{x}):=\sup\{\psi(\un{y}): y_i=x_i\text{ for all }i\geq -\lfloor n/2\rfloor\}$. \qed
\medskip

Let $\ov{\vf}:=\vf-\int\vf\, d\mu$, $\ov{\psi}:=\psi-\int \psi d\mu$. Recall that $\mathcal A_0^\infty$ denote the $\sigma$-algebra generated by the non-negative coordinates and let $\ov{\varphi}^+$ be as in Prop. \ref{p.conditional-measures}.
Let also $\ov{\psi}_n^+:\Sigma^+\to\R$ be the  unique function such that $\ov{\psi}^{(n)}\circ\sigma^{\lfloor n/2\rfloor}=\ov{\psi}_n^+\circ\vartheta$.
Then:
$$|\mathrm{Cov}_\mu(\vf,\psi\circ\sigma^n)|=|\mathrm{Cov}_\mu(\ov{\vf},\ov{\psi}\circ\sigma^n)|\leq |\mathrm{Cov}_\mu(\ov{\vf},\ov{\psi}^{(n)}\circ\sigma^n)|
+\|\ov{\vf}\|_\infty\|\ov{\psi}-\ov{\psi}^{(n)}\|_\infty.$$
Since $\ov{\psi}^{(n)}\circ \sigma^n$ is one-sided,
\begin{align*}
|\mathrm{Cov}_\mu(\ov{\vf},\ov{\psi}^{(n)}\circ\sigma^n)|
&=\left|\int \ov{\vf} \cdot(\ov{\psi}^{(n)}\circ\sigma^n)\, d\mu\right|
=
\left|\int \E_{\mu}(\ov{\vf}|\mathcal A_0^\infty) \cdot(\ov{\psi}^{(n)}\circ\sigma^n)\, d\mu\right|\\
&=|\mathrm{Cov}_{\mu}(\ov{\vf}^+\circ\vartheta,\ov{\psi}^{(n)}\circ\sigma^n)|
=|\mathrm{Cov}_{\mu^+}(\vf^+,\ov{\psi}_n^+\circ \sigma^{n-\lfloor n/2\rfloor})|\\
&\leq C\|\psi\|_\infty\|\ov{\varphi}^+\|_{\beta}\theta^{n/2}, \text{ by \eqref{e.uniform-exp-dec}.}
\end{align*}
Putting together these inequalities and recalling the choice of $\ov{\psi}^{(n)}$, we obtain:
\begin{align*}
|\mathrm{Cov}_\mu(\vf,\psi\circ\sigma^n)|
&\leq C\|\psi\|_\infty\|\ov{\varphi}\|_{\beta}\theta^{n/2}+\|\ov{\vf}\|_\infty \|\ov{\psi}\|_\beta e^{-\lfloor n/2\rfloor \beta}\\
&\leq 2C\|\psi\|_{\beta}\|\vf\|_{\beta}\theta^{n/2}+4e^{\beta}\|\vf\|_\beta\|\psi\|_\beta e^{-\beta n/2 },\text{ because $\|\cdot\|_{\infty}\leq \|\cdot\|_\beta$}.
\end{align*}
So
$|\mathrm{Cov}_\mu(\vf,\psi\circ\sigma^n)|\leq (2C+4e^{\beta})\|\psi\|_\beta\|\vf\|_\beta \rho^n$,
with
 $\rho:=\min\{\theta^{1/2},e^{-\beta/3}\}$.
\end{proof}

Next, suppose  $\Sigma$ is irreducible but with period $p>1$.
By Thm \ref{t.Parry-Gurevich-ASS},  $(\Sigma,\mathfs B,\mu,\sigma)$ is  measure theoretically isomorphic to the product of a Bernoulli scheme and a cyclic permutation of $p$ elements. So $\mu$ is $\sigma$-ergodic, but not $\sigma^p$ ergodic, and its $\sigma^p$-ergodic decomposition takes the form
$
 \mu=\frac{1}{p}\sum_{i=0}^{p-1}\mu'\circ \sigma^i,
$
where $\mu'$ is $\sigma^p$-mixing. The analogous statements hold for $\Sigma^+$.

\begin{theorem}\label{t.DOC-for-CMS}
Let $\mu$ be the {equilibrium measure} of an SPR H\"older continuous potential on a one-sided or two-sided irreducible  Markov shift with finite Gurevich entropy and period $p$.  For every $\beta>0$ there are  $C>0$, $0<\theta<1$ s.t. for all $\beta$-H\"older continuous ${\varphi},{\psi}$,
$$
\left|\int {\vf} \cdot ({\psi}\circ \sigma^{np}) d\mu'-\int {\vf} d\mu'\int {\psi} d\mu'\right|<C\|\vf\|_\beta\|\psi\|_{\beta}\theta^{n},\text{ for all  $n\geq 1$.}
$$
\end{theorem}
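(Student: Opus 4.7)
The strategy is to reduce to the aperiodic case already handled in Theorem~\ref{Lemma-DOC} by passing to the natural aperiodic factor obtained from the spectral decomposition. First I would recall from Lemma~\ref{l.spectral-decomposition-CMS} that the period-$p$ graph $\mathfs G$ admits a partition of its vertex set $\mathfs V = S_0 \uplus\cdots\uplus S_{p-1}$ such that $\Sigma = \biguplus_{i=0}^{p-1} \Sigma_i$, where $\Sigma_i = [S_i]$, with $\sigma(\Sigma_i) = \Sigma_{i+1 \bmod p}$, and such that each map $\sigma^p\colon \Sigma_i \to \Sigma_i$ is topologically conjugate (via the block-coding $\un{x} \mapsto (({_0}[x_{kp},\dots,x_{(k+1)p-1}]))_{k\in\Z}$) to the left shift on an irreducible \emph{aperiodic} Markov shift $\Sigma(\mathfs H_i)$.

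The second step is to transport the measure and potential. Say $\mu'$ is the $\sigma^p$-ergodic component of $\mu$ carried by $\Sigma_0$; by Theorem~\ref{t.Parry-Gurevich-ASS} (and its generalization to equilibrium measures, see \S\ref{ss.positive-recurrence} and the isomorphism result cited there) one has $\mu = \tfrac{1}{p}\sum_{i=0}^{p-1} \mu'\circ\sigma^{-i}$ and $\mu'$ is $\sigma^p$-mixing. Pushing $\mu'$ through the conjugacy with $\Sigma(\mathfs H_0)$ produces a shift-invariant probability $\tilde\mu'$ on $\Sigma(\mathfs H_0)$, and pushing the potential $\phi_p := \phi + \phi\circ\sigma + \cdots + \phi\circ\sigma^{p-1}$ produces a function $\tilde\phi$ on $\Sigma(\mathfs H_0)$. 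The point is that $\tilde\mu'$ is an equilibrium measure for $\tilde\phi$: indeed $P(\sigma^p,\mu',\phi_p) = p\,P(\sigma,\mu,\phi) = p\,P_{\Bor}(\Sigma,\phi)$, and any other equilibrium state for $\tilde\phi$ would produce, via the inverse procedure, another equilibrium state for $\phi$ on $\Sigma$, contradicting uniqueness.

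The third step is to verify that the hypotheses of Theorem~\ref{Lemma-DOC} apply on $\Sigma(\mathfs H_0)$. H\"older regularity is preserved: if $\phi$ is $\beta$-H\"older on $\Sigma$, then $\phi_p$ is $\beta$-H\"older, and the block-coding is bi-Lipschitz (with an exponent loss depending only on $p$), so $\tilde\phi$ is H\"older on $\Sigma(\mathfs H_0)$. Finite Gurevich entropy transfers directly since entropy is multiplied by $p$. The SPR property also transfers: an entropy gap at infinity for $(\Sigma,\sigma,\phi)$ yields one for $(\Sigma(\mathfs H_0),\sigma,\tilde\phi)$ because a sequence of ergodic $\sigma^p$-invariant measures on $\Sigma_0$ escaping to infinity in the cylinder bornology of $\Sigma(\mathfs H_0)$ would, via symmetrization $\tfrac1p\sum_{i=0}^{p-1}\cdot\circ\sigma^{-i}$, give a sequence of $\sigma$-invariant measures on $\Sigma$ escaping to infinity with pressure converging to $P_{\Bor}(\Sigma,\phi)$, contradicting Def.~\ref{d.SPR-potential3}.

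Finally, given $\beta$-H\"older $\varphi,\psi$ on $\Sigma$ (respectively on $\Sigma^+$), their restrictions to $\Sigma_0$ transport through the block-coding to functions $\tilde\varphi,\tilde\psi$ on $\Sigma(\mathfs H_0)$ that are H\"older (with exponent, say, $\beta/p$) and whose norms are controlled by $\|\varphi\|_\beta,\|\psi\|_\beta$ up to a multiplicative constant depending only on $p$ and $\beta$. Applying Theorem~\ref{Lemma-DOC} to the aperiodic equilibrium state $\tilde\mu'$ for $\tilde\phi$ gives constants $\tilde C, \tilde\theta \in (0,1)$ so that
\[
\Bigl|\int \tilde\varphi\cdot(\tilde\psi\circ\tilde\sigma^n)\,d\tilde\mu' - \int \tilde\varphi\,d\tilde\mu'\int\tilde\psi\,d\tilde\mu'\Bigr| \le \tilde C\,\|\tilde\varphi\|_{\beta/p}\|\tilde\psi\|_{\beta/p}\,\tilde\theta^n,
\]
which, pulled back through the conjugacy, is exactly the desired bound with $\theta := \tilde\theta$ and a suitable $C$ depending on $p,\beta,\phi$. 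The two-sided case reduces to the one-sided case either directly by the same argument applied in the one-sided aperiodic setting, or by the Sinai lemma reduction used in the proof of Theorem~\ref{Lemma-DOC}.

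The main obstacle I anticipate is the bookkeeping in step three: confirming that the SPR property of $\phi$ on $\Sigma$ transfers to SPR of $\tilde\phi$ on $\Sigma(\mathfs H_0)$. The cleanest route is the symmetrization argument sketched above, which relies on the equivalence between SPR and the existence of a pressure gap at infinity for arbitrary (non-necessarily irreducible) shifts established in Def.~\ref{d.SPR-potential3} and Prop.~\ref{p.SPR-potential}.
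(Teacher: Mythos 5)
Your overall strategy mirrors the paper's one-sentence proof (``follows directly from the spectral decomposition''): conjugate $\sigma^p|_{\Sigma_0}$ to an aperiodic shift $\Sigma(\mathfs H_0)$, transport the potential and the equilibrium state, and invoke Theorem~\ref{Lemma-DOC}. Most of your bookkeeping (entropy rescales by $p$, H\"older regularity is preserved by the block coding, $\mu'$ becomes the unique equilibrium state for $\phi_p$) is correct.

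The gap is exactly where you flag it, and your proposed fix via symmetrization does not close it. You claim that if a sequence of ergodic $\sigma^p$-invariant measures $\tilde\nu_n$ on $\Sigma_0$ escapes to infinity in the cylinder bornology of $\Sigma(\mathfs H_0)$, then the symmetrized measures $\nu_n:=\frac1p\sum_{i=0}^{p-1}\tilde\nu_n\circ\sigma^{-i}$ escape to infinity in the cylinder bornology of $\Sigma$. This implication is false in general. If $a\in S_i$, then $\nu_n([a])=\tfrac1p\,\tilde\nu_n(\{x\in\Sigma_0:x_i=a\})$, and under the block coding the set $\{x\in\Sigma_0:x_i=a\}$ is the union $\bigcup_{w:\,w_i=a}[w]$ of $\mathfs H_0$-partition sets, which can be \emph{infinite} (finite Gurevich entropy only bounds loop counts, not vertex degrees). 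Escape to infinity in $\Sigma(\mathfs H_0)$ gives $\tilde\nu_n([w])\to0$ for each fixed $w$, but says nothing about the sum over all such $w$; the mass can slide from one block to another while $\nu_n([a])$ stays bounded below. So the contradiction you are aiming for does not materialize.

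The transfer is nonetheless true, and can be proved with tools already in the paper. Start from the same $\tilde\nu_n$ and symmetrize to get $\sigma$-ergodic $\nu_n$ with $P(\sigma,\nu_n,\phi)=\frac1p P(\sigma^p,\tilde\nu_n,\phi_p)\to P_\Bor(\Sigma,\phi)$. Fix a single length-$p$ cylinder $[w]=[w_0,\dots,w_{p-1}]$ with $w_0\in S_0$. Because the indicator $\mathds 1_{[w]}$ has $\beta$-H\"older norm at most $1+e^{(p-1)\beta}$, the effective intrinsic ergodicity estimate behind Prop.~\ref{p.SPR-potential} (i.e.\ \cite[Thm~7.1]{Ruhr-Sarig}) gives $\nu_n([w])\to\mu_\phi([w])>0$ (full support of $\mu_\phi$). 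But $\nu_n([w])=\tfrac1p\tilde\nu_n([w]_{\mathfs H_0})$, so $\tilde\nu_n$ keeps a definite mass on a fixed $\mathfs H_0$-partition set, contradicting escape to infinity. Hence any sequence of $\sigma^p$-ergodic measures on $\Sigma_0$ with pressure converging to $P_\Bor(\Sigma(\mathfs H_0),\tilde\phi)=pP_\Bor(\Sigma,\phi)$ is tight, and $\Sigma(\mathfs H_0)$ is SPR for $\tilde\phi$. (Alternatively, one can verify $\limsup_n\frac1n\log Z^*_n(\tilde\phi,[w])<pP_\Bor(\Sigma,\phi)$ directly: decompose $\mathfs H_0$-first-return loops at $[w]$ into $\mathfs G$-first-return loops at $w_0$ and use $F_{w_0}(e^{-P_\Bor})=1$ together with the fact that the subsum over loops whose initial block differs from $w$ is strictly less than $1$ and extends analytically past $e^{-pP_\Bor}$.) A minor point: the block coding \emph{improves} the H\"older exponent to $p\beta$ rather than degrading it to $\beta/p$; this does not affect the argument.
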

\noindent
The proof follows directly from the spectral decomposition (Lemma \ref{l.spectral-decomposition-CMS}).

\subsection{Asymptotic Variance}\label{ss.CMS.variance}
The results in this section were proved \cite{Cyr-Sarig}  in the  irreducible {\em  aperiodic} case (building on earlier work for subshifts  of finite type in \cite{Parry-Pollicott-Asterisque} and \cite{Guivarch-Hardy}). Here we explain how to treat the irreducible {\em periodic}  case. 
\begin{theorem}\label{t.asymp-var-CMS}
Let $\mu$ be the {equilibrium measure} of a H\"older continuous SPR potential $\phi$ on a one-sided or two-sided irreducible  Markov shift with finite Gurevich entropy and  period $p$. Let $\psi$ be a {$\beta$-}H\"older continuous function. Then the following statements hold.
\begin{enumerate}[(1)]
\item Let $\psi_n:=\sum_{j=0}^{n-1}\psi\circ\sigma^j$, then the limit $\sigma_\psi^2:=\lim\limits_{n\to\infty}\frac{1}{n}\mathrm{Var}_\mu(\psi_n)$ exists and
$$\sigma_\psi^2=\frac{1}{p}\left[\mathrm{Var}_\mu(\psi_p)+2\sum\limits_{n=1}^\infty
    \mathrm{Cov}_\mu(\psi_{p},\psi_{p}\circ\sigma^{np})\right] \quad \text{{\em (Green-Kubo Identity)}}.$$
    
\item {\em Linear Response Identity:}
    $$
    \sigma_\psi^2=\frac{d^2}{dt^2}\biggr|_{t=0} \left( {P_\Bor(\Sigma,\phi+t\psi)} \right).
    $$
\item {\em Asymptotic Laplace Transform:} If $\int\psi d\mu=0$, then for every $z\in\mathbb C$,
$$
\lim_{n\to\infty}\E_\mu(e^{z\psi_n/\sqrt{n}})=e^{\frac{1}{2}\sigma_\psi^2 z^2}.
$$

\item {\em Upper bound\/:} For some constant $M_\beta$ which only depends on $\phi$, $\beta$ and $\Sigma$,
\begin{equation}\label{e.sigma-bound-CMS}
\sigma_\psi\leq M_\beta\|\psi\|_\beta.
\end{equation}
\end{enumerate}
\end{theorem}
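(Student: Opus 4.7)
The plan is to reduce to the aperiodic case of \cite{Cyr-Sarig}. First I would use Sinai's lemma (Thm \ref{t.Sinai-Lemma}) to replace $\psi$ and $\phi$ by cohomologous one-sided functions $\tilde\psi, \tilde\phi$; since $\psi_n = \tilde\psi_n + (h - h\circ\sigma^n)$ with $h$ bounded, variances and covariances change only by $O(1)$, and the limits and sums in (1)--(4) agree for $\psi$ and $\tilde\psi$. This passes everything to the one-sided shift $\Sigma^+$, where the spectral gap (Thm \ref{t.SGP}) and its analytic perturbation (Cor \ref{c.perturbation-theory}) are available in the aperiodic case.

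Next I would apply the spectral decomposition (Lemma \ref{l.spectral-decomposition-CMS}): $\Sigma^+ = \biguplus_{i=0}^{p-1} \Sigma_i^+$ with $\sigma^p|_{\Sigma_i^+}$ topologically mixing, and $\mu^+ = \tfrac{1}{p}\sum_i \mu_i'$, where each $\mu_i'$ is the unique equilibrium measure for $\sigma^p|_{\Sigma_i^+}$ with potential $\phi_p := \sum_{j=0}^{p-1}\phi\circ\sigma^j$. The SPR property of $\phi$ on $\Sigma$ transfers to $\phi_p$ on each aperiodic component $\Sigma_i^+$ via $Z_n^\ast(\phi_p,a) \le Z_{np}^\ast(\phi,a)$ and $Z_n(\phi_p,a) \le Z_{np}(\phi,a)$, so the Cyr-Sarig results apply to each quadruple $(\sigma^p,\Sigma_i^+,\mu_i',\psi_p)$, providing asymptotic variances $\sigma_{\psi_p,i}^2$; cyclic symmetry via $\sigma$ shows that all $\sigma_{\psi_p,i}^2$ coincide, with a common value $\sigma_{\psi_p}^2$. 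The identity $\psi_{kp} = \sum_{j=0}^{k-1}\psi_p\circ\sigma^{pj}$ together with $|\psi_n - \psi_{kp}| \le p\|\psi\|_\infty$ for $n = kp+r$, $0\le r < p$, yields $\tfrac{1}{n}\mathrm{Var}_\mu(\psi_n) \to \sigma_{\psi_p}^2/p$, which is both the existence of $\sigma_\psi^2$ in (1) and, after unfolding the aperiodic Green-Kubo identity for $\sigma_{\psi_p}^2$, the formula displayed in (1). The same splitting handles (3): $\E_\mu(e^{z\psi_n/\sqrt n}) = \E_\mu(e^{z\psi_{kp}/\sqrt n})(1 + o(1))$, and the aperiodic Laplace transform theorem for $(\sigma^p,\mu_i',\psi_p)$ with scaled argument $z/\sqrt p$ applies component by component, giving the limit $\exp(\tfrac12 \sigma_{\psi_p}^2 z^2/p) = \exp(\tfrac12 \sigma_\psi^2 z^2)$. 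For the linear response identity (2) I would use $P_\Bor(\Sigma,\phi+t\psi) = \tfrac{1}{p} P_\Bor(\Sigma_0^+,\phi_p+t\psi_p)$, a consequence of the bijection between $\sigma$-invariant measures on $\Sigma$ and $\sigma^p$-invariant measures on $\Sigma_0^+$ together with $h(\sigma,\nu) = \tfrac{1}{p} h(\sigma^p, p\nu|_{\Sigma_0})$; real-analyticity in $t$ near $0$ and the identification of the second derivative at $t=0$ with $\sigma_{\psi_p}^2/p = \sigma_\psi^2$ then follow from Cor \ref{c.perturbation-theory} applied on $\Sigma_0^+$. Finally, (4) is immediate from (1) combined with Thm \ref{t.DOC-for-CMS} applied to $(\sigma^p,\mu_i',\psi_p)$, since the Green-Kubo series converges at an exponential rate with bound $O(p^2\|\psi\|_\beta^2)$.

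The principal obstacle is bookkeeping rather than anything conceptually deep: carefully verifying that the SPR property of $\phi$ on $\Sigma$ descends to $\phi_p$ on each aperiodic component, checking the cyclic equality of the $\sigma_{\psi_p,i}^2$, and justifying the pressure identity $P_\Bor(\Sigma,\phi+t\psi) = \tfrac{1}{p} P_\Bor(\Sigma_0^+,\phi_p+t\psi_p)$ together with the entropy relation underlying it, so that the aperiodic perturbation theory can be invoked on $\Sigma_0^+$ to deliver (2) and (3). Once these verifications are in place, the conclusions propagate routinely from the known aperiodic statements.
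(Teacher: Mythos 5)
Your proposal follows essentially the same route as the paper: Sinai's Lemma to pass between one- and two-sided shifts, spectral decomposition to reduce the period $p>1$ case to the aperiodic case, and then the Cyr--Sarig spectral-gap machinery (Thm~\ref{t.SGP}, Cor~\ref{c.perturbation-theory}) on the aperiodic one-sided component. The paper merely applies the two reductions in the opposite order for Parts (2) and (3), and for Part (1) it bypasses Sinai entirely by doing the variance computation directly in the two-sided periodic case using the exponential decay of correlations of Thm~\ref{t.DOC-for-CMS}. Both orderings work.

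One concrete slip is in your proposed transfer of SPR to the aperiodic component: the inequalities $Z_n^*(\phi_p,a)\le Z_{np}^*(\phi,a)$ and $Z_n(\phi_p,a)\le Z_{np}(\phi,a)$ are not correct as stated. A first-return loop of $\sigma^p$ at the $p$-cylinder $a=[a_0,\dots,a_{p-1}]$ of $\mathfs H_i$ can revisit the vertex $a_0$ of $\mathfs G$ at an intermediate multiple of $p$ through a \emph{different} $p$-cylinder, so it is not a first-return loop of $\sigma$ at $a_0$; the implication runs in the opposite direction, and the second inequality is in fact an equality of generating quantities up to vertex bookkeeping. The SPR property nevertheless does transfer, but this should be argued either through the entropy-at-infinity / entropy-tightness characterizations (Thm~\ref{t.SPR-as-Entropy-Gap-Shifts}, Cor~\ref{coro-tight-SPR}), which pass trivially to the aperiodic components, or via the discriminant criterion of \cite{Sarig-CMP-2001}; the paper leaves this point implicit. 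A secondary caution: your claim that ``the limits and sums in (1)--(4) agree for $\psi$ and $\tilde\psi$'' is only literally true of the \emph{values} $\sigma_\psi^2$ and $\sigma_{\tilde\psi}^2$; the individual terms in the Green--Kubo series do change under addition of the bounded coboundary, so the equality of the two series must come either from a separate telescoping computation or from deriving the formula on the two-sided shift directly, which is what Thm~\ref{t.DOC-for-CMS} makes possible (and is the paper's route for Part (1)).
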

\noindent

\begin{proof}
It is sufficient to consider the case when $\int_\Sigma\psi d\mu=0$.
\medskip

\noindent
\emph{Part (1).}
Let $p$ be the period of $\Sigma$, and $\Sigma=\biguplus_{i=0}^{p-1}\sigma^i(\Sigma')$ be the spectral decomposition from Lemma \ref{l.spectral-decomposition-CMS}.
There is a natural identification of  $(\Sigma',\sigma^p)$ with an irreducible aperiodic Markov shift, and if $\psi$ is H\"older continuous with respect to  the natural metric of $\Sigma$, then $\psi_p$ is H\"older continuous with respect to the natural metric on $\Sigma'$.
Note that $\mu=\frac{1}{p}\sum_{i=0}^{p-1}\mu'\circ\sigma^{-i}$, where $\mu'$ is the measure $\mu'(E):=\mu(E\cap\Sigma')/\mu(\Sigma')$, and $\mu'$ is the
{equilibrium measure of $(\Sigma',\sigma^p)$ for the potential $\phi_p=:=\sum_{j=0}^{n-1}\phi\circ\sigma^j$.} Also,
$$
\int_{\Sigma'}\psi_p d\mu'\circ\sigma^{-i}=0\ \ (i=0,\ldots,p-1).
$$
To see this, let $\mu'':=\mu'\circ\sigma^{-i}$, and note that $\sum_{j=0}^{p-1}\int\psi\circ \sigma^j d\mu''=\int\psi d\mu=0$.

The covariance is bilinear, and $\mu'\circ \sigma^{-p}=\mu'$. Therefore,
\begin{align*}
&\mathrm{Var}_{\mu'}(\psi_{np})=\mathrm{Cov}_{\mu'}\left(\sum_{i=0}^{n-1}\psi_p\circ\sigma^{ip}, \sum_{j=0}^{n-1}\psi_p\circ\sigma^{jp}\right)=\sum_{i,j=0}^{n-1}\mathrm{Cov}_{\mu'}(\psi_p\circ \sigma^{ip},\psi_p\circ \sigma^{jp})\\
&=n\mathrm{Var}_{\mu'}(\psi_p)+2\sum_{0\leq i<j\leq n-1}\mathrm{Cov}_{\mu'}(\psi_p,\psi_p\circ\sigma^{(j-i)p})\ \ \text{(by shift invariance)}\\
&=n\mathrm{Var}_{\mu'}(\psi_p)+\sum_{k=1}^{n-1} (n-k)\mathrm{Cov}_{\mu'}(\psi_p,\psi_p\circ\sigma^{kp}).
\end{align*}
The series $\sum \mathrm{Cov}_{\mu'} (\psi_p,\psi_p\circ\sigma^{kp})$ converges absolutely,
by the exponential mixing of  $(\Sigma',\sigma^p)$ with respect to $\mu'$. Thus by the dominated convergence theorem,
\begin{equation}\label{e.var-mu-prime}
\frac{1}{n} \mathrm{Var}_{\mu'}(\psi_{np})\xrightarrow[n\to\infty]{}
\mathrm{Var}_{\mu'}(\psi_p)+\sum_{k=1}^{\infty} \mathrm{Cov}_{\mu'}(\psi_p,\psi_p\circ\sigma^{kp}).
\end{equation}

Similarly
$
\displaystyle\frac{1}{n}\mathrm{Var}_{\mu'\circ \sigma^{-i}}(\psi_{np})\xrightarrow[n\to\infty]{}
\mathrm{Var}_{\mu'\circ\sigma^{-i}}(\psi_p)+\sum_{k=1}^{\infty} \mathrm{Cov}_{\mu'\circ\sigma^{-i}}(\psi_p,\psi_p\circ\sigma^{kp}).
$
Summing over $i$, dividing by $p^2$, and recalling that $\E_\mu(\psi)=\E_{\mu'\circ\sigma^{-i}}(\psi_p)=0$, gives
$$
\lim_{n\to\infty}\frac{1}{np}\mathrm{Var}_{\mu}(\psi_{np})=\frac{1}{p}\left[
\mathrm{Var}_\mu(\psi_{p})+2\sum_{k=0}^{\infty}\mathrm{Cov}_{\mu}(\psi_p,\psi_p\circ\sigma^{jp})\right].
$$
Next, for each fixed $0\leq r\leq p-1$,
\begin{align*}
&\mathrm{Var}_\mu(\psi_{np+r})=\mathrm{Var}_\mu(\psi_{np})+2\mathrm{Cov}_{\mu}(\psi_r,\psi_{np})+\Var_{\mu}(\psi_r)\\
&=\mathrm{Var}_{\mu}(\psi_{np})+O(\sqrt{n})+O(1),
\end{align*}
 because by the Cauchy-Schwarz inequality and the boundedness of $\psi$,
\begin{align*}
&|\mathrm{Cov}_{\mu}(\psi_r,\psi_{np})|\leq \sqrt{\mathrm{Var}_\mu(\psi_r)\mathrm{Var}_{\mu}(\psi_{np})}\leq 2r\|\psi\|_\infty \sqrt{O(1)O(np)}=O(\sqrt{n}).
\end{align*}
So  $\displaystyle
\lim_{n\to\infty}\frac{1}{np+r}\mathrm{Var}_{\mu}(\psi_{np+r})
=\frac{1}{p}\left[
\mathrm{Var}_\mu(\psi_{p})+2\sum_{k=0}^{\infty}\mathrm{Cov}_{\mu}(\psi_p,\psi_p\circ\sigma^{jp})\right]$ for each $r$. Part (1) follows.
\medskip

\noindent
\emph{Part (2).} We first consider the  one-sided {\em aperiodic} case and assume $\|\psi\|_\beta=1$ without loss of generality.
Let 
 $$\phi_t:=\phi+t\psi.$$
By the variational principle for Gurevich pressure \cite{Sarig-ETDS-99},
$$
\sup\left\{h_\nu(\sigma)+\int {\phi_t} d\nu:\begin{array}{l} \mu\text{ is a shift invariant}\\ \text{probability measure}
    \end{array}\right\}={P_\Bor(\Sigma,{\phi_t})}.
$$
 Let $\lambda_z,P_z,N_z$ and $\eps$ be as in Cor \ref{c.perturbation-theory}.
By \cite[p. 665]{Cyr-Sarig} and references therein,  for all $t$ real and $|t|<\eps$
$$
{P_\Bor(\Sigma,{\phi_t})}=\log\lambda(t) \qquad \text{ setting }\lambda(t):=\lambda_t.
$$
In particular, $t\mapsto {P_\Bor(\Sigma,{\phi_t})}$ is real-analytic on a neighborhood of zero.

The first two derivatives of $\log\lambda(z)$ at $z=0$ can be found exactly as in the case of subshifts of finite type discussed in  \cite{Guivarch-Hardy}, \cite{Parry-Pollicott-Asterisque}. The calculation is reproduced in the countable state case in \cite[p. 662-664]{Cyr-Sarig},\footnote{But $\lambda_t$ there is what we call $\lambda(it)$ here.} and leads to
$$
\lambda'(0)=\int\psi d\mu^+=0\ \ ,\  \
\lambda''(0)=\int\left[{\psi}^2+2{\psi}\sum_{k=1}^\infty L^k ({\psi})\right]d\mu^+.
$$
Since $L$ is the transfer operator of $\mu^+$, this equals  the  right-hand-side of the Green-Kubo identity (because $p=1$). So
$
\lambda''(0)=\sigma_\psi^2.
$

Since $\lambda(0)=1$ and $\lambda'(0)=0$, $\frac{d^2}{dt^2}\big|_{t=0}(\log\lambda)''(0)=\lambda''(0)=\sigma_\psi^2$, and Part (2) follows in the one-sided topologically mixing case.

The one-sided irreducible {\em periodic} case readily follows, because of the following simple fact.  Let $\Sigma=\biguplus_{i=0}^{p-1}\sigma^{-i}\Sigma'$ be the spectral decomposition, then
$
{P_\Bor(\Sigma,{\phi_t})}=\frac{1}{p} {P_\Bor(\Sigma',\phi_p+t\psi_p)}.
$
So $\frac{d^2}{dt^2}|_{t=0}{P_\Bor(\Sigma,{\phi_t})}=
\frac{1}{p}\lim\limits_{n\to\infty}\frac{1}{n}\mathrm{Var}_{\mu'}(\psi_{np})
=\sigma_\psi^2$
by (2) in the aperiodic case, and by \eqref{e.var-mu-prime}. This proves (2) in the one-sided case.

To analyze the two-sided  case, we use Sinai's Lemma to write ${\phi=\wt{\phi}+u-u\circ \sigma}$ where ${\wt{\phi}}$ is one-sided and $u$ is H\"older continuous. In particular $u$ is {\em bounded} and continuous, and therefore $\int (u-u\circ\sigma) d\nu=0$ for all shift invariant $\nu$. {We introduce $\wt{\phi}^+$, the unique H\"older continuous function on $\Sigma^+$ such that $\wt{\phi}=\wt{\phi}^+\circ\vartheta$.}
{We write similarly $\psi=\wt{\psi}+v-v\circ \sigma$ and $\wt{\psi}=\wt{\psi}^+\circ\vartheta$.
Also $\wt{\psi}_t=\wt{\phi}+t\wt{\psi}$.} So:
$$
{P_\Bor(\Sigma,\phi_t)}={P_\Bor(\Sigma,\wt\psi_t)} \text{ and } \frac{d^2}{dt^2}\big|_{t=0} {P_\Bor(\Sigma,\phi_t)}=\frac{d^2}{dt^2}\big|_{t=0} {P_\Bor(\Sigma,\wt\psi^+_t)}=\sigma_{\wt{\psi}^+}^2.
$$

To see that $\sigma_{\wt{\psi}^+}^2=\sigma_{{\psi}}^2$, we note that $\sigma_{\wt{\psi}^+}^2=\sigma_{\wt{\psi}}^2$ and that
$
\sigma_{\wt{\psi}}^2=\sigma_{\psi}^2.
$
The first identity is clear, and the second can be checked using the identity
$$
\psi_n=\wt{\psi}_n+v-v\circ\sigma^n,
$$
the uniform boundedness of $u$, the Cauchy-Schwarz inequality, and the bound  $\|\wt{\psi}_n\|_2\sim \sigma_{\wt{\psi}}\sqrt{n}=O(\sqrt{n})$. Part (2) is now proved.
\medskip

\noindent
\emph{Part (3).} We first consider the one-sided aperiodic case.
\begin{align*}
&\E_{\mu}(e^{z\psi_n/\sqrt{n}})=\int e^{z\psi_n/\sqrt{n}}d\mu=\int e^{z\psi_n/\sqrt{n}}\cdot 1\circ\sigma^n\, d\mu=\int L^n(e^{z\psi_n/\sqrt{n}}) d\mu.
\end{align*}
Let $\lambda(z), P_z,N_z,L_z$ be as in Cor \ref{c.perturbation-theory}.
An inductive argument using Lemma \ref{l.transfer-op-properties}(3) gives $L^n(e^{z\psi_n/\sqrt{n}})=L_{z/\sqrt{n}}^n 1$. So,
\begin{align*}
&\E_{\mu}(e^{z\psi_n/\sqrt{n}})
=\int L_{z/\sqrt{n}}^n 1\, d\mu=P_0 L_{z/\sqrt{n}}^n 1,\text{ because }P_0\vf=P\vf=\bigg(\int \vf d\mu\bigg)\cdot 1\\
&=P_{0}\biggl(\lambda\big(\tfrac{z}{\sqrt{n}}\big) P_{z/\sqrt{n}}+N_{z/\sqrt{n}}\biggr)^n1=\lambda\big(\tfrac{z}{\sqrt{n}}\big)^nP_0 P_{z/\sqrt{n}} 1+P_0 N_{z/\sqrt{n}}^n 1.
\end{align*}
We first consider $\lambda(\frac{z}{\sqrt{n}})^nP_0 P_{z/\sqrt{n}} 1$.
Recall that $\lambda(0)=1$, $\lambda'(0)=\int\psi d\mu=0$, and $\lambda''(0)=\sigma_\psi^2$. Therefore, for every $z$,
$
\lambda(\frac{z}{\sqrt{n}})^n=\biggl(1+\frac{\sigma_\psi^2 z^2}{2n}+O(\frac{z^3}{n^{3/2}})\biggr)^n\xrightarrow[n\to\infty]{}e^{\frac{1}{2}\sigma_\psi^2 z^2}.
$
 Next, $P_0 P_{z/\sqrt{n}}\to P_0^2=P_0$ in norm because $w\mapsto P_w$ is analytic on a complex neighborhood of zero. Thus  $\lambda(\frac{z}{\sqrt{n}})^nP_0 P_{z/\sqrt{n}} 1$ tends to $e^{\frac{1}{2}\sigma_\psi^2 z^2}$.

\medskip
\noindent
We then consider $P_0 N^n_{z/\sqrt{n}} 1$.
Recall that $N_0$ has spectral radius less than $1$, therefore $\|N_0^k\|<\kappa<1$ for some $\kappa<1$ and  $k\in\N$. Since $w\mapsto N_w$ is analytic, $\|N_w^k\|<\kappa$ for all $|w|$ sufficiently small, whence $\|P_0 N^n_{z/\sqrt{n}} 1\|$ tends to zero.

\medskip
\noindent
In summary, $
\lambda(\frac{z}{\sqrt{n}})^nP_0 P_{z/\sqrt{n}} 1+P_0 N_{z/\sqrt{n}}^n 1\to e^{\frac{1}{2}z^2\sigma_\psi^2}$ in norm
as (constant) functions in $\mathcal L$. This proves Part (3) in the one-sided aperiodic case.

\medskip
The two-sided aperiodic case follows again from Sinai's Lemma. As before we write $\psi=\wt{\psi}+v-v\circ\sigma$. We have already seen that
$
\sigma_{\wt{\psi}}^2=\sigma_{{\psi}}^2,
$
so by the first part of the proof,
$
\E_\mu(e^{z\wt{\psi}_n/\sqrt{n}})\xrightarrow[n\to\infty]{}
e^{\frac{1}{2}\sigma_\psi^2 z^2}.
$
It remains to see (using the  uniform boundedness of $v$ and the first part of the proof) that
$\left|\E_\mu(e^{z{\psi}_n/\sqrt{n}})-\E_\mu(e^{z\wt{\psi}_n/\sqrt{n}})
\right|\leq  \E_\mu(e^{\mathrm{Re}(z)\wt{\psi}_n/\sqrt{n}})\left\|e^{z(u-u\circ\sigma^n)/\sqrt{n}}-1\right\|_\infty\to 0
$.
This gives Part (3) in the aperiodic case; the general irreducible case follows from the spectral decomposition.
\medskip

\noindent
\emph{Part (4).}  \eqref{e.sigma-bound-CMS} follows  from the Green-Kubo identity and Theorem \ref{t.DOC-for-CMS}.
\end{proof}

\begin{theorem}\label{t.zero-variance-CMS}
Let $\mu$ be the equilibrium measure of an SPR H\"older continuous potential $\phi$ on a  one-sided or two-sided irreducible SPR Markov shift with finite Gurevich entropy, and let $\psi$ be a H\"older continuous function. The following are equivalent.
\begin{enumerate}[(a)]
\item  $\sigma_\psi^2=0$.
\item $\psi-\int\psi d\mu=u-u\circ\sigma$ for some continuous (but possibly unbounded) real-valued function $u$.
\item If $\sigma^n(x)=x$, then $\psi_n(x)=n\int\psi d\mu$.
\item If $\nu$ is shift invariant, then $\int\psi d\nu=\int\psi d\mu$.
\end{enumerate}
\end{theorem}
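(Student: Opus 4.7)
Without loss of generality assume $\int\psi\,d\mu = 0$. The plan is to establish $(b)\Leftrightarrow (c)\Leftrightarrow (d)$ together with $(a)\Leftrightarrow (b)$, with the core difficulty being $(a)\Rightarrow (b)$.

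The easy implications are $(b)\Rightarrow (c)$, $(d)\Rightarrow (c)$, and $(b)\Rightarrow (a)$. Given $\psi = u - u\circ\sigma$ with $u$ continuous we get $\psi_n = u - u\circ\sigma^n$ identically, which vanishes at periodic points, yielding $(c)$. Applying $(d)$ to the uniform measure on a periodic orbit of length $n$ immediately gives $(c)$. For $(b)\Rightarrow (a)$, since $u$ is continuous hence $\mu$-a.s.\ finite, $\mu\{|\psi_n|/\sqrt n > \varepsilon\}\le 2\mu\{|u|>\varepsilon\sqrt n/2\}\to 0$, so $\psi_n/\sqrt n\to 0$ in $\mu$-probability, bounded convergence gives $\E_\mu(e^{it\psi_n/\sqrt n})\to 1$, and comparison with the limit $e^{-\sigma_\psi^2 t^2/2}$ from Thm~\ref{t.asymp-var-CMS}(3) forces $\sigma_\psi^2 = 0$.

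The two remaining standard implications are $(c)\Rightarrow (d)$ and $(c)\Rightarrow (b)$. For $(c)\Rightarrow (d)$ I would use the density of periodic orbit measures among ergodic invariant measures for irreducible Markov shifts: for each ergodic $\nu$, pick a $\nu$-generic $x$ whose Birkhoff sample measures converge to $\nu$ for H\"older test functions, then use the irreducibility of $\mathfs G$ to close long initial orbit segments into periodic loops of controlled length, obtaining periodic orbit measures $\nu_k$ with $\int\psi\,d\nu_k = 0$; since $\psi$ is bounded (as a H\"older function on $\Sigma$, which has bounded diameter), $\int\psi\,d\nu = 0$ follows, and the non-ergodic case via ergodic decomposition. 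The implication $(c)\Rightarrow (b)$ is Livsic's theorem for irreducible Markov shifts: one fixes a reference vertex and, for each point $x$, constructs $u(x)$ by summing $\psi$ along an orbit segment reconciled with a nearby periodic orbit; H\"older continuity ensures exponential convergence and the vanishing of $\psi_n$ on all periodic orbits ensures consistency, yielding a continuous $u$ with $\psi = u - u\circ\sigma$.

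The main obstacle is $(a)\Rightarrow (b)$. My plan is to exploit the spectral gap of Thm~\ref{t.SGP}. By Sinai's lemma (Thm~\ref{t.Sinai-Lemma}) and the fact that adding a coboundary alters neither $\sigma_\psi^2$ nor the coboundary property, one may assume $\psi$ is one-sided, i.e.\ factors through $\vartheta$. Then $\psi \in \mathcal L$ with $P\psi = 0$ (since $\int\psi\,d\mu = 0$), hence $L^k\psi = N^k\psi$, and because $\rho(N) < 1$ the series $u := \sum_{k\ge 0}L^k\psi = (I-N)^{-1}\psi$ converges in $\mathcal L$ to a H\"older continuous function satisfying $u - Lu = \psi$. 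To convert this into a shift coboundary, pair with an arbitrary bounded measurable $\chi$ on $\Sigma$ and apply Lemma~\ref{l.transfer-op-properties}(3) together with $\sigma$-invariance of $\mu$:
\[ \int\psi\chi\,d\mu = \int u\chi\,d\mu - \int (Lu)\chi\,d\mu = \int u\chi\,d\mu - \int u\,(\chi\circ\sigma)\,d\mu = \int (u - u\circ\sigma^{-1})\chi\,d\mu. \]
This forces $\psi = u - u\circ\sigma^{-1}$ $\mu$-a.e., equivalently $\psi = v - v\circ\sigma$ with $v := -u\circ\sigma^{-1}$ continuous on $\Sigma$. Both sides of this last equation are continuous and agree on $\supp\mu = \Sigma$ (by the Gibbs property of equilibrium measures on irreducible shifts, or Thm~\ref{t.Parry-Gurevich-ASS}(1) in the MME case), so the identity extends to all of $\Sigma$. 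The period-$p$ case is handled by first applying the above to the aperiodic system $(\Sigma', \sigma^p)$ to obtain $\psi_p = u_0 - u_0\circ\sigma^p$ on $\Sigma'$, then setting $v|_{\sigma^i\Sigma'}(x) := u_0(\sigma^{-i}x) - \psi_i(\sigma^{-i}x)$ for $0 \leq i < p$, whose $\sigma$-coboundary is readily checked to equal $\psi$ using the cocycle identity and the relation on $\Sigma'$.
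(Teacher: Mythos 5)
The crux of your proposal is $(a)\Rightarrow(b)$, and this is where there is a genuine error. Your construction $u:=\sum_{k\ge 0}L^k\psi=(I-N)^{-1}\psi$ and the identity $u-Lu=\psi$ are correct and hold for \emph{every} one-sided H\"older $\psi$ with $\int\psi\,d\mu=0$, regardless of whether $\sigma_\psi^2=0$. Your subsequent manipulation would therefore conclude that every such $\psi$ is a continuous coboundary, which is false and contradicts item~(a)$\Leftrightarrow$(b) itself. The error lies in the step
\[
\int (Lu)\,\chi\,d\mu \;=\; \int u\,(\chi\circ\sigma)\,d\mu
\]
for \emph{arbitrary} bounded $\chi$ on the two-sided $\Sigma$. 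This adjoint identity for the one-sided transfer operator $L$ holds when $\chi$ is $\mathcal A_0^\infty$-measurable (one-sided), but not for general $\chi$: writing $\tilde\chi:=\E_\mu(\chi\mid\mathcal A_0^\infty)$, the left side equals $\int (u\circ\vartheta)\,(\tilde\chi\circ\vartheta\circ\sigma)\,d\mu$, while the right side equals $\int(u\circ\vartheta)\,\E_\mu(\chi\circ\sigma\mid\mathcal A_0^\infty)\,d\mu$, and $\tilde\chi\circ\vartheta\circ\sigma\neq\E_\mu(\chi\circ\sigma\mid\mathcal A_0^\infty)$ in general (take $\chi$ depending only on the $(-1)$-coordinate to see the two sides depend on different coordinates). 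Equivalently, you are implicitly identifying $(Lu)\circ\vartheta$ with $(u\circ\vartheta)\circ\sigma^{-1}$, but these are different functions: the first is the Ruelle averaging operator applied to $u$, the second pulls $u$ back by the inverse of the invertible shift. Only when one tests against one-sided $\chi$ do they integrate the same, and that does not determine the $(-1)$-coordinate behavior needed to conclude a coboundary identity.

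The paper's route to $(a)\Rightarrow(b)$ goes through the Green--Kubo identity $\sigma_\psi^2=\int(\psi^2+2\psi\sum_{k\ge1}L^k\psi)\,d\mu$, following Gou\"ezel. The point is that $\sigma_\psi^2$ equals the $L^2$-norm squared of the reverse-martingale part $m:=\psi+g-g\circ\sigma^+$ with $g:=\sum_{k\ge1}L^k\psi$ (one checks $Lm=0$), and hypothesis~(a) forces $m=0$ $\mu$-a.e., whence $\psi=g\circ\sigma^+-g$. The $\sigma_\psi^2=0$ assumption enters in an essential way, exactly where your argument does not use it. Your remaining implications are correct: $(b)\Rightarrow(a)$ via characteristic functions matches the paper's $(2)\Rightarrow(1)$, and $(c)\Rightarrow(b)$ via Livsic matches the paper's route. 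Note you prove $(c)\Rightarrow(d)$ via periodic-orbit approximation, whereas the paper proves the easier $(b)\Rightarrow(d)$ (handling the case $u\notin L^1(\nu)$ by Poincar\'e recurrence); your route is plausible but approximation by periodic-orbit measures on countable-state shifts requires more care than in the finite-alphabet case, so it is worth spelling out, though this choice does not affect the logical closure of the implications.
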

\begin{proof}
First we prove the theorem in the {\em one-sided aperiodic} case, and then we extend the proof to the general irreducible one-sided or two-sided case.

 (a)$\Rightarrow$(b) follows from the Green-Kubo identity in its equivalent form
$$\sigma^2_\psi=\int (\psi^2+2\psi\sum_{k=1}^\infty L^k \psi)d\mu$$ as in \cite{Gouezel-2}. The proof is reproduced in a context close to ours in \cite[p. 664]{Cyr-Sarig}.

(b)$\Rightarrow$(d): Suppose $\nu$ is shift invariant. If $u\in L^1(\nu)$ then (d) is obvious. If not, then  $\liminf_{n\to\infty}|\frac{1}{n}\psi_n-\int\psi d\mu|=\liminf_{n\to\infty}\frac{|u-u\circ\sigma^{n}|}{n}=0$ $\nu$-a.e., by Poincar\'e's Recurrence Theorem. Since  $\lim\frac{1}{n}\psi_n$ exists $\nu$-a.e., it must be the case that  $\lim\frac{1}{n}\psi_n=\int\psi d\mu$ $\nu$--a.e. By the bounded convergence theorem,  $\int\psi d\nu=\int\psi d\mu$.

(d)$\Rightarrow$(c) is trivial, and (c)$\Rightarrow$(b) is the  Livsic Theorem for Markov shifts (the proof for subshifts of finite type in \cite{Bowen-LNM} works verbatim in the countable state case).

It remains to prove (b)$\Rightarrow$(a). It is sufficient to do this in the special case when $\int\psi d\mu=0$. In this case, for all $t\in\R$,
\begin{align*}
&\E_\mu(e^{it\psi_n/\sqrt{n}})=\E_\mu(e^{it(u-u\circ\sigma^n)/\sqrt{n}})=\E_\mu(e^{-itu\circ\sigma^n/\sqrt{n}})+O(|\E_\mu(|e^{itu/\sqrt{n}}-1|))
\\
&=\E_\mu(e^{-itu/\sqrt{n}})+O(\E_\mu(|e^{itu/\sqrt{n}}-1|)),\text{ because }\mu\circ\sigma^{-1}=\mu\\
&\xrightarrow[n\to\infty]{}1,\text{ by the bounded convergence theorem.}
\end{align*}
By Thm \ref{t.asymp-var-CMS}(3), 
$e^{-\frac{1}{2} t^2 \sigma_\psi^2}=1$ for all $t\in\R$. So $\sigma_\psi=0$.
This proves the theorem in the irreducible one-sided aperiodic case. 

\medskip
Next we consider the irreducible one-sided {\em periodic} case, with period $p>1$. It is sufficient to consider the case $\int \psi d\mu=0$.
Let $\Sigma=\biguplus_{i=0}^{p-1}\sigma^{-i}\Sigma'$ be the spectral decomposition.
We saw in the proof of Theorem \ref{t.asymp-var-CMS} that $\sigma_\psi^2$ is up to a constant the asymptotic variance of $\psi_p|_{\Sigma'}$ with respect to $\sigma^p$ and the $\sigma^p$-ergodic components  $\mu'$ of $\mu$. We also saw that $\int \psi_p d\mu'=0$. So the following are equivalent:
\begin{enumerate}[(a)]
\item[(a)] $\sigma_\psi=0$.
\item[(b')] $\psi_p=u-u\circ\sigma^p$ on $\Sigma'$, with $u:\Sigma'\to\R$ continuous.
\item[(c')] $\sigma^{np}(x)=x, x\in\Sigma'\Rightarrow\psi_{np}(x)=0$.
\item[(d')] If $\nu'$ is a shift invariant measure on $\Sigma'$, then $\int\psi_p d\nu'=\int\psi_p d\mu'$.
\end{enumerate}

We claim that (b') is equivalent to (b). The implication (b)$\Rightarrow$(b') is clear. To see (b')$\Rightarrow$(b), we  extend $u:\Sigma'\to\R$ to a function $\wh{u}:\Sigma\to\R$ by setting
$$
\wh{u}|_{\Sigma'}=u\text{ and }\wh{u}|_{\sigma^{-i}\Sigma'}=u\circ\sigma^i+\psi_i
\text{ for $i=1,\ldots,p-1$}.
$$
On $\sigma^{-i}\Sigma'$, we have $\psi_p=\psi_i+\psi_p\circ\sigma^i-\psi_i\circ\sigma^p=\wh{u}-\wh{u}\circ\sigma^p=\wh{u}_p-\wh{u}_p\circ\sigma$.
So
$$
\psi_p=\wh{u}_p-\wh{u}_p\circ\sigma\text{ on }\Sigma.$$
Next,
$
\psi-\psi\circ\sigma^k=\psi_k-\psi_k\circ\sigma
$ for all $k$. Summing over $k=0,\ldots,p-1$, we obtain
$
p\psi-\psi_p=\left(\sum_{k=0}^{p-1} \psi_k\right)-\left(\sum_{k=0}^{p-1} \psi_k\right)\circ\sigma
$
whence
 $$p\psi=\wh{u}_p-\wh{u}_p\circ\sigma+\left(\sum_{k=0}^{p-1} \psi_k\right)-\left(\sum_{k=0}^{p-1} \psi_k\right)\circ\sigma.$$ The right-hand-side is a continuous $\sigma$-coboundary, and (b) follows.

We claim that (c') is equivalent to (c). The implication (c)$\Rightarrow$(c') is clear. The implication (c')$\Rightarrow$(c) is because if $\sigma^k(x)=x$ then $k=np$ for some $n$, and $x=\sigma^j(y)$ for some $y\in\Sigma'$ and $j=0,\ldots,p-1$. So $\psi_k(x)=\psi_{np}(\sigma^j y)=\psi_{np}(y)=0$.

Finally, we claim that (d') is equivalent to (d). This is because $\nu'$ is a $\sigma^p$-invariant measure on $\Sigma'$ iff $\nu:=\frac{1}{p}\sum_{j=0}^{p-1}\nu'\circ\sigma^j$ is a $\sigma$-invariant measure on $\Sigma$ and $\nu'$ is the measure $\nu'(E)=\nu(E\cap\Sigma')/\nu(\Sigma')$, and in this case, $\int\psi_p d\nu'=p\int\psi d\nu$.

This completes the proof in the one-sided case. The two-sided case follows from Sinai's Lemma, by taking a one-sided $\psi^+$ cohomologous to $\psi$ by a bounded continuous transfer function, see the end of the proof of Theorem \ref{t.asymp-var-CMS}.
\end{proof}

\subsection{Large Deviations}\label{ss.CMS.deviations}
Let $\Sigma$ be an irreducible one-sided or a two-sided Markov shift with finite Gurevich entropy, and let {$\mu$ be the equilibrium state of an SPR
H\"older continuous potential $\phi$.} Let $\psi:\Sigma\to\R$ be some H\"older continuous function.
The  {\em log-moment generating function} and the {\em rate function} of $\psi$ are given, respectively, by
\begin{equation}\label{e.Lambda}
\Lambda_\psi(t):=\limsup_{n\to\infty}\tfrac{1}{n}\log \int e^{t\psi_n}d\mu\ ,\ I_\psi(s):=\sup_{t\in\R}\{st-{\L_\psi}(t)\}.
\end{equation}

\begin{lemma}\label{t.Free-Energy-CMS}
{For any $\beta>0$}
there are $\eps,{c}>0$,  as follows. Suppose $\psi$ is $\beta$-H\"older continuous,  $\int\psi d\mu=0$,  $\|\psi\|_\beta=1$, and $\sigma_\psi\neq 0$.  Then:
\begin{enumerate}[(1)]
\item \begin{enumerate}[(a)]
\item $\Lambda_\psi$ is finite, convex, and non-negative on $\R$. On $(-{\eps},{\eps})$,  $\Lambda_\psi$ is  {$C^\infty$}, strictly convex,
{and the limsup in \eqref{e.Lambda} is a limit.}
\item $I_\psi$ is  convex and  non-negative on $\R$. On $(-{c}{\sigma_\psi^4},{c}
    {\sigma_\psi^4})$,  $I_\psi$ is finite,  $C^\infty$, strictly convex and it satisfies $\frac{1}{2}\sigma_\psi^{-2}\leq I_\psi''\leq 2\sigma_\psi^{-2}$.
\end{enumerate}

\item
$ \Lambda_\psi(0)=0, \ \Lambda_\psi'(0)=0, \ \Lambda_\psi''(0)=\sigma^2_\psi$, \  $I_\psi(0)=0, \ I_\psi'(0)=0, \ I_\psi''(0)={\sigma^{-2}_\psi}.$
\item \begin{enumerate}[(a)]
\item If $|t|<\eps$, then
$
\Lambda_\psi(t)=
\sup\left\{P(\sigma,\nu,\phi+t\psi):\nu\in \mathbb P(\sigma)\right\}
{-P_\Bor(\Sigma,\phi)}
$.
\item If $|s|<{c}{\sigma_\psi^4}$,
    $
    I_\psi(s)={P_\Bor(\Sigma,\phi)-\sup}\left\{P(\sigma,\nu,\phi):\nu\in \mathbb P(\sigma), \int\psi d\nu=s\right\}.
    $
\end{enumerate}
\noindent
The suprema are attained by unique measures; these measures are ergodic.
\end{enumerate}
\end{lemma}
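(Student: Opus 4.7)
The plan is to reduce all assertions to the spectral perturbation theory of the transfer operator provided by Corollary~\ref{c.perturbation-theory}, whose constants $\eps_{\mathrm{pert}}>0$ and $\rho_0<1$ are \emph{uniform} over all $\beta$-H\"older functions with $\|\psi\|_\beta\le 1$. First I would establish the claims of (1a) on all of $\R$: finiteness and $\Lambda_\psi(t)\le |t|$ follow from $\sup|\psi|\le\|\psi\|_\beta=1$; convexity follows from H\"older's inequality applied to $\int e^{t\psi_n}\,d\mu$; and non-negativity follows from Jensen's inequality and $\int\psi\,d\mu=0$. By Remark~\ref{r.project} it suffices to work with the one-sided shift, where Proposition~\ref{p.transfer-op-formula} identifies the transfer operator with the Ruelle operator $L=L_\varphi$. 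The standard identity
\begin{equation*}
    \int e^{t\psi_n}\,d\mu \;=\; \int L^n\bigl(e^{t\psi_n}\bigr)\,d\mu \;=\; \int L_t^n(1)\,d\mu, \qquad L_t(h):=L(e^{t\psi}h),
\end{equation*}
(proved by induction using Lemma~\ref{l.transfer-op-properties}(3)) combined with the decomposition $L_t=\lambda(t)P_t+N_t$ from Corollary~\ref{c.perturbation-theory} and the fact that $P_t(1)\to P(1)=1$ in $\mathcal L\hookrightarrow L^1(\mu)$ as $t\to 0$ yields, for $|t|<\eps_{\mathrm{pert}}$,
\begin{equation*}
    \Lambda_\psi(t)\;=\;\lim_{n\to\infty}\tfrac1n\log\!\int L_t^n(1)\,d\mu\;=\;\log\lambda(t).
\end{equation*}
In particular the limsup in~\eqref{e.Lambda} is a genuine limit on this interval, and $\Lambda_\psi$ inherits the analyticity of $t\mapsto\lambda(t)$.

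Next I would compute derivatives at $0$. Since $P_\Bor(\Sigma,\phi+t\psi)=P_\Bor(\Sigma,\phi)+\log\lambda(t)$ (because $\phi$ and $\varphi$ differ by a coboundary and an additive constant equal to the pressure), we obtain immediately from Theorem~\ref{t.asymp-var-CMS}(2) that $\Lambda_\psi''(0)=\sigma_\psi^2$; that $\Lambda_\psi(0)=0$ and $\Lambda_\psi'(0)=\int\psi\,d\mu=0$ is classical and follows from differentiating $\lambda(t)=1+t\int\psi\,d\mu+\mathrm{O}(t^2)$. This proves (2) and gives the variational identity (3a) at once via the variational principle for Gurevich pressure, with uniqueness of the maximizing measure because it is the equilibrium state of the SPR potential $\phi+t\psi$ (which remains SPR for small $t$ by Theorem~\ref{t.no-phase-transition}). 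To obtain strict convexity and $C^\infty$ regularity of $\Lambda_\psi$ on a definite interval $(-\eps,\eps)$, I would invoke Cauchy's estimates on the disk $\{|z|<\eps_{\mathrm{pert}}\}$: since $\|L_z\|$ is bounded uniformly over $\|\psi\|_\beta\le 1$, $\lambda(z)$ is a uniformly bounded analytic function on this disk, and hence $\|\Lambda_\psi'''\|_{L^\infty(-\eps_{\mathrm{pert}}/2,\eps_{\mathrm{pert}}/2)}\le K$ for some constant $K$ depending only on $\beta$ and $\Sigma$ (and $\phi$). Thus on the interval $|t|\le \sigma_\psi^2/(2K)$ one has $\Lambda_\psi''(t)\in[\tfrac12\sigma_\psi^2,\tfrac32\sigma_\psi^2]$; choosing $\eps:=\min\{\eps_{\mathrm{pert}}/2,\sigma_\psi^2/(2K)\}$, which using the bound $\sigma_\psi\le M_\beta$ from~\eqref{e.sigma-bound-CMS} can be taken uniform in $\psi$, completes (1a).

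For the rate function $I_\psi$, convexity and non-negativity on all of $\R$ are immediate from the definition as a Legendre transform together with $\Lambda_\psi(0)=0$. The finer regularity on a neighborhood of $0$ will come from applying the inverse function theorem to $\Lambda_\psi'$, which is a $C^\infty$ diffeomorphism from $(-\sigma_\psi^2/(2K),\sigma_\psi^2/(2K))$ onto an open interval $J\ni 0$. The calculation from the previous paragraph shows $J\supset(-c\sigma_\psi^4,c\sigma_\psi^4)$ for $c$ proportional to $1/K$. On this interval the Legendre relation $I_\psi(s)=st(s)-\Lambda_\psi(t(s))$, $\Lambda_\psi'(t(s))=s$, gives
\begin{equation*}
    I_\psi'(s)=t(s), \qquad I_\psi''(s)=\frac{1}{\Lambda_\psi''(t(s))}\in\Bigl[\tfrac{2}{3\sigma_\psi^2},\tfrac{2}{\sigma_\psi^2}\Bigr],
\end{equation*}
which delivers the two-sided bound of (1b) (after slightly shrinking $c$). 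The values at $0$ in (2) follow because $t(0)=0$ and $\Lambda_\psi''(0)=\sigma_\psi^2$. Finally, (3b) will follow by combining (3a) with Legendre duality: the unique optimizer $t=t(s)$ in the definition of $I_\psi(s)$ corresponds to the unique equilibrium state $\nu_{t(s)}$ of $\phi+t(s)\psi$, and the Gibbs identity $\int\psi\,d\nu_{t(s)}=\Lambda_\psi'(t(s))=s$ shows that this measure lies in the constraint set and that the $st(s)$ terms cancel, producing $I_\psi(s)=P_\Bor(\Sigma,\phi)-P(\sigma,\nu_{t(s)},\phi)$; maximality of $\nu_{t(s)}$ among all invariant $\nu$ with $\int\psi\,d\nu=s$ follows from the standard one-line argument comparing $P(\sigma,\cdot,\phi+t(s)\psi)$ of the two measures.

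The main obstacle I anticipate is keeping all constants uniform in $\psi$ with $\|\psi\|_\beta=1$ while allowing $\sigma_\psi$ to be arbitrarily small: the natural domain of strict convexity shrinks like $\sigma_\psi^2$ and the image interval for $\Lambda_\psi'$ shrinks like $\sigma_\psi^4$, which is precisely the $c\sigma_\psi^4$ scale of the statement, so one must be careful that the Cauchy-estimate bound $K$ on $\Lambda_\psi'''$ (equivalently, the bound on the fourth derivative of $\lambda(t)$ uniformly over $\|\psi\|_\beta\le 1$) does not hide a dependence on $\sigma_\psi$. The uniform perturbation bounds built into Corollary~\ref{c.perturbation-theory} should make this bookkeeping routine but genuinely require invoking that the whole analytic family $z\mapsto L_z$ is controlled uniformly for $\|\psi\|_\beta\le 1$.
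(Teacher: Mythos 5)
Your proof follows essentially the same path as the paper's: reduction to the one-sided aperiodic case, identification of $\Lambda_\psi(t)=\log\lambda(t)$ on a uniform interval via the spectral perturbation theory of Corollary~\ref{c.perturbation-theory}, and Legendre duality for $I_\psi$. The paper simply defers the constant-tracking to \cite[Lem~4.2, p.~711]{Ruhr-Sarig}, whereas you try to re-derive it with Cauchy estimates — and that is where a genuine gap appears.

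The problem is your claim that $\eps:=\min\{\eps_{\mathrm{pert}}/2,\,\sigma_\psi^2/(2K)\}$ ``can be taken uniform in $\psi$'' by invoking the bound $\sigma_\psi\le M_\beta$ from \eqref{e.sigma-bound-CMS}. This is backwards: that bound caps $\sigma_\psi$ from above, so it gives an \emph{upper} bound on $\sigma_\psi^2/(2K)$, whereas uniformity of $\eps$ requires a lower bound. Since $\sigma_\psi$ can be arbitrarily close to $0$ while remaining nonzero, your interval of strict convexity shrinks to a point, contradicting the statement of (1a), which asserts strict convexity on $(-\eps,\eps)$ for an $\eps$ chosen \emph{before} $\psi$. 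The Cauchy-estimate route cannot repair this: a uniform bound $|\Lambda_\psi'''|\le K$ together with $\Lambda_\psi''(0)=\sigma_\psi^2$ only forces $\Lambda_\psi''>0$ on an interval of length $\asymp\sigma_\psi^2$. You notice the issue in your final paragraph but wrongly dismiss it as ``routine bookkeeping.''

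What is actually needed is a Livsic-type argument, not a bound on third derivatives. On the uniform analytic interval $(-\eps_1,\eps_1)$ one has, for every $t$, that $\Lambda_\psi''(t)$ equals the asymptotic variance of $\psi$ with respect to the equilibrium state $\mu_t$ of $\phi+t\psi$ — which is still an SPR H\"older potential for $|t|$ small by Theorem~\ref{t.no-phase-transition}. If $\Lambda_\psi''(t_0)=0$ for some such $t_0$, then Theorem~\ref{t.zero-variance-CMS} applied to $\mu_{t_0}$ forces $\psi-\int\psi\,d\mu_{t_0}=u-u\circ\sigma$ for a continuous $u$; but being a coboundary up to a constant is a property of $\psi$ alone, independent of the reference measure, so the same theorem applied to $\mu$ gives $\sigma_\psi=0$, a contradiction. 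Hence $\Lambda_\psi''>0$ throughout $(-\eps_1,\eps_1)$, and the uniform $\eps$ comes out without any appeal to the size of $\sigma_\psi$. Your Cauchy estimate is still the right tool for the quantitative two-sided bound $\tfrac12\sigma_\psi^{-2}\le I_\psi''\le 2\sigma_\psi^{-2}$ on the $\sigma_\psi$-dependent interval in (1b), so the rest of your argument (the inverse function theorem, the Legendre relation and the Gibbs identity for (3b)) stands once (1a) is patched.
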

\begin{remark}\label{r.Free-Energy-CMS}
 If we do not assume that  $\|\psi\|_\beta=1$, the lemma  holds  with the numbers $\varepsilon,c$ replaced
by $\varepsilon/\|\psi\|_\beta$ and $c/\|\psi\|_\beta^3$.
\end{remark}
\begin{proof}
We prove the lemma in the one-sided irreducible and aperiodic case. The periodic case  and the two-sided case follow as in the previous sections from the spectral decomposition, Sinai's Lemma, and the uniform boundedness of $\psi$.

The finiteness of $\Lambda_\psi(t)$ holds because $\phi$ and $\psi$ are uniformly bounded, and $\Sigma$ has finite Gurevich entropy. The convexity of $\Lambda_{\psi}$ follows from H\"older's inequality. 
Strict convexity will be shown below.

Let $\lambda_z,P_z,N_z$ and $\eps_1>0$ given by Cor \ref{c.perturbation-theory}, and recall that $\eps_1$ is independent of $\psi$. Proceeding exactly as in the proof of Thm \ref{t.asymp-var-CMS}(3), one shows that for every $t$ {\em real} such that $|t|<\eps_1$, and for every $\beta$-H\"older continuous $\psi$ such that $\|\psi\|_\beta=1$,
$$
\Lambda_\psi(t)=\lim_{n\to\infty}\tfrac{1}{n}\log \E_{\mu}(e^{t\psi_n})=\log\lambda_t\text{ on }(-{\eps_1},{\eps_1}).
$$

By \cite[p. 665]{Cyr-Sarig}, $\log\lambda_t=P_\Bor(\Sigma,{\varphi}+t\psi)$, where {$\varphi$} is given by Lem~\ref{p.transfer-op-formula} and $P_\Bor$ here is the Gurevich pressure defined by~\eqref{e.gurevich-pressure}. We recall that $\varphi$ is bounded above, weakly H\"older continuous, and non-positive (see the proof of Thm \ref{t.SGP}).
Since {$\varphi$} is cohomologous to {$\phi-P_\Bor(\Sigma,\phi)$}, for all $|t|<\eps$, we get (3a):
$$
\Lambda_\psi(t)=\log\lambda_t={P_\Bor(\Sigma,\phi+t\psi)-P_\Bor(\Sigma,\phi)}.
$$
The function {$\mathfrak p_{\phi,\psi}(t):=P_\Bor(\Sigma,\phi+t\psi)$} is analyzed in detail in \cite[\S4]{Ruhr-Sarig}. Lemma 4.2 there  provides a positive constant $\eps_2$ (independent of $\psi$) so that $\mathfrak p_{\phi,\psi}(t)$ is real-analytic and strictly convex on $(-\eps_2,\eps_2)$. Let $\eps:=\min\{\eps_1,\eps_2\}$. Then $$\Lambda_\psi(t)=\mathfrak p_{\phi,\psi}(t)-P_\Bor(\Sigma,\phi)\text{ on } (-\eps,\eps).
$$
All the properties of $\Lambda_\psi(t)$  we want to prove now follow from \cite[Thm 3.2]{Ruhr-Sarig}.

\medskip
Let $\mathfrak q_{\phi,\psi}(t):=\sup\{h(\sigma,\nu)+\int\phi d\nu:\nu\in\mathbb P(\sigma),\ \int\psi d\nu=t\}$, with the understanding that $\sup\emptyset:=-\infty$.
The claim in \cite[p. 711]{Ruhr-Sarig} says that there is a positive constant ${c}$, which does not depend on $\psi$, as follows:
\begin{enumerate}[(1)]
\item The supremum defining $\mathfrak q_{\phi,\psi}(t)$ is attained by some unique measure for all $|t|<{c} {\sigma_\psi^4}$, and this measure is an equilibrium measure $\nu_t$ of some H\"older continuous potential. In particular $\nu_t$ is ergodic.
\item $\mathfrak q_{\phi,\psi}$ is minus the Legendre transform of $\mathfrak p_{\phi,\psi}$ on $(-{c}{\sigma_\psi^4},{c}{\sigma_\psi^4}) $. See \cite[Eq. (5.2)]{Ruhr-Sarig}. By definition, the Legendre transform of $\mathfrak p_{\phi,\psi}$ is $I_\psi{-P_\Bor(\Sigma,\phi)}$. Thus
    $$
    I_\psi(t)={P_\Bor(\Sigma,\phi)}-\mathfrak q_{\phi,\psi}(t)\text{ on $(-{c}{\sigma_\psi^4},{c}{\sigma_\psi^4}) $. }
    $$
\end{enumerate}
(3b) follows. The remaining properties of $I_\psi(t)$  are established in \cite[Lem 5.2]{Ruhr-Sarig}.
\end{proof}
\begin{theorem}[Large Deviations]\label{t.LDP-CMS}
{For any $\beta>0$, there is
$c>0$ as follows.} Suppose $\psi$ is $\beta$-H\"older continuous,  $\int\psi d\mu=0$, $\|\psi\|_\beta=1$, and $\sigma_\psi\neq 0$.  Then:
\begin{enumerate}[(1)]
\item For  closed sets $F\subset\R$,
$
\displaystyle\limsup_{n\to\infty}\tfrac{1}{n}\log\mu\{x: \tfrac{1}{n}\psi_n(x)\in F\}\leq -\inf_F I_\psi.$
\item For open sets $G\subset\R$, $
\displaystyle\liminf_{n\to\infty}\tfrac{1}{n}\log\mu\{x: \tfrac{1}{n}\psi_n(x)\in G\}\geq -\inf_{G\cap (-{c}\sigma_\psi^4,{c}\sigma_\psi^4)} I_\psi.$
\item
 $\displaystyle\lim_{n\to\infty}\tfrac{1}{n}\log\mu[\psi_n\geq na]=-I_\psi(a)$ for all {$a\in (0, c\sigma_\psi^4)$}.
 \item $I_\psi(a)=\frac{a^2}{{2}\sigma^2_\psi}(1+o(1))\text{ as $a\to 0^+$}.$
 \end{enumerate}
\end{theorem}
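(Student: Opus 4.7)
The proof follows the G\"artner-Ellis large deviations framework, with all the needed regularity furnished by Lemma \ref{t.Free-Energy-CMS}. Part (4) is immediate: by Lemma \ref{t.Free-Energy-CMS}(1b)(2), $I_\psi \in C^\infty$ on a neighborhood of $0$ with $I_\psi(0) = I_\psi'(0) = 0$ and $I_\psi''(0) = \sigma_\psi^{-2}$, so Taylor's theorem gives $I_\psi(a) = \tfrac{a^2}{2\sigma_\psi^2}(1+o(1))$ as $a \to 0^+$.

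For part (1), the Chebyshev--Chernoff bound yields $\mu\{\psi_n \geq na\} \leq e^{-nta}\,\E_\mu(e^{t\psi_n})$ for all $t \geq 0$ and $a \in \R$, and Lemma \ref{t.Free-Energy-CMS}(1a) gives $\limsup_n \tfrac{1}{n}\log\E_\mu(e^{t\psi_n}) \leq \Lambda_\psi(t)$. Optimizing over $t \geq 0$ and using $\Lambda_\psi(0)=0$ and $\Lambda_\psi \geq 0$ yields $\limsup_n \tfrac{1}{n}\log \mu\{\psi_n \geq na\} \leq -I_\psi(a)$ for $a \geq 0$; the symmetric bound handles $a \leq 0$. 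Since $I_\psi$ is convex and minimized at $0$, it is non-decreasing on $[0,\infty)$ and non-increasing on $(-\infty,0]$, so for any closed $F \subset \R$ the infimum $\inf_F I_\psi$ is attained at the point of $F$ closest to $0$ on each side (or at $0$ if $0 \in F$); combining the one-sided bounds gives (1).

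Part (2) is the G\"artner-Ellis lower bound via exponential tilting, applied locally. Given an open $G$ and $a \in G \cap (-c\sigma_\psi^4, c\sigma_\psi^4)$, Lemma \ref{t.Free-Energy-CMS}(1b) makes $a$ an exposed point of $I_\psi$ with exposing slope $t_a := I_\psi'(a)$; the bound $|I_\psi''| \leq 2/\sigma_\psi^2$ yields $|t_a| \leq 2|a|/\sigma_\psi^2 \leq 2c\sigma_\psi^2$, which we may arrange to lie in $(-\eps,\eps)$ by shrinking $c$. Pick $\delta > 0$ with $(a-\delta, a+\delta) \subset G$ and introduce the tilted probabilities $d\mu_n^{t_a} := e^{t_a\psi_n}d\mu / \E_\mu(e^{t_a\psi_n})$. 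From the spectral-gap expansion $\E_\mu(e^{s\psi_n}) = \lambda_s^n(P_0 P_s 1)(1+o(1))$ of Cor \ref{c.perturbation-theory} (analytic in $s$ on a complex neighborhood of $t_a$), differentiating once and twice at $s = t_a$ gives $\E_{\mu_n^{t_a}}(\psi_n/n) \to \Lambda_\psi'(t_a) = a$ and $\Var_{\mu_n^{t_a}}(\psi_n/n) = O(1/n)$, whence $\mu_n^{t_a}\{|\psi_n/n - a| < \delta\} \to 1$. Reversing the tilt via
\[\mu\{\psi_n/n \in (a-\delta, a+\delta)\} = \E_\mu(e^{t_a\psi_n}) \cdot \E_{\mu_n^{t_a}}\!\bigl(e^{-t_a\psi_n}\mathds{1}_{\{|\psi_n/n - a|<\delta\}}\bigr),\]
using $e^{-t_a\psi_n} \geq e^{-t_a n a - |t_a| n \delta}$ on the event in question, and applying Lemma \ref{t.Free-Energy-CMS}(1a) to $\E_\mu(e^{t_a\psi_n})$, one obtains $\liminf_n \tfrac{1}{n}\log\mu\{\psi_n/n \in G\} \geq \Lambda_\psi(t_a) - t_a a - |t_a|\delta = -I_\psi(a) - |t_a|\delta$. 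Letting $\delta \to 0$ and then taking infimum over $a$ yields (2).

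Part (3) combines (1) and (2): applying (1) to $F = [a,\infty)$ gives $\limsup \leq -I_\psi(a)$ thanks to the monotonicity of $I_\psi$ on $[0,\infty)$, and applying (2) to $G_\delta := (a, a+\delta) \subset (0, c\sigma_\psi^4)$ for small $\delta$ gives the matching $\liminf \geq -I_\psi(a)$ once $\delta \to 0$, using continuity of $I_\psi$. The main obstacle is the tilted concentration step of part (2): although Cor \ref{c.perturbation-theory} furnishes the requisite asymptotics in principle, extracting the mean and variance of $\psi_n$ under the (non-shift-invariant) tilted measure $\mu_n^{t_a}$ requires differentiating $\E_\mu(e^{(t_a+z)\psi_n}) = \lambda_{t_a+z}^n(P_0 P_{t_a+z} 1)(1+o(1))$ in $z$ at $z = 0$ with uniform control of the remainder in $n$, in the spirit of the CLT proof of Thm \ref{t.asymp-var-CMS}(3) but now tilted to a non-zero base point $t_a$.
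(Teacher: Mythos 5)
Your proposal is correct in substance, but it takes a genuinely different route from the paper, especially for the lower bound. The paper simply invokes the G\"artner--Ellis theorem (citing \cite[Thm~2.3.6]{Dembo-Zeitouni}) for the upper bound and the G\"artner--Ellis lower bound restricted to exposed points for (2); Lemma~\ref{t.Free-Energy-CMS}(1b) guarantees that the interval $(-c\sigma_\psi^4,c\sigma_\psi^4)$ consists of exposed points, and the monotonicity/continuity of $I_\psi$ then give (3). You instead re-derive both halves of G\"artner--Ellis from scratch: a Chernoff argument for the upper bound, and an explicit exponential-tilting argument with spectral perturbation for the lower bound. Both are classical and your sketch is sound --- the tilting identity, the bound $e^{-t_a\psi_n}\geq e^{-t_a na-|t_a|n\delta}$ on the event, the constraint $|t_a|\leq 2c\sigma_\psi^2\leq 2cM_\beta^2<\eps$ after shrinking $c$ (noting $\sigma_\psi\leq M_\beta$ by \eqref{e.sigma-bound-CMS}, so $c$ depends only on $\beta$), and the Legendre duality $\Lambda_\psi'(t_a)=a$ are all in order. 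The technical subtlety you flag at the end (differentiating the expansion $\E_\mu(e^{(t_a+z)\psi_n})=\lambda_{t_a+z}^n(P_0 P_{t_a+z}1)(1+o(1))$ in $z$ with uniform control of the remainder) is a real concern, but it can be resolved: the error term $P_0N_{t_a+z}^n1$ is holomorphic in $z$ on a fixed disk where $\|N^n_{t_a+z}\|\leq C\rho_0^n$, so its $z$-derivatives also decay geometrically by Cauchy estimates. What your route buys is a self-contained proof that does not rely on the general G\"artner--Ellis machinery; what it costs is roughly a page of additional technical work (the tilted variance bound, uniform remainder control, and --- for the two-sided case --- a reduction to the one-sided shift via Sinai's lemma before the transfer-operator argument can be applied), all of which the paper avoids by citing the theorem directly.
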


\begin{proof}
By the G\"artner-Ellis Theorem (see e.g. \cite[Thm 2.3.6]{Dembo-Zeitouni}) and Lemma \ref{t.Free-Energy-CMS}(1),
$
\displaystyle \limsup_{n\to\infty}\tfrac{1}{n}\log\mu\{\psi_n/n\in F\}\leq -\inf_F I_\psi
$
for all closed sets $F\subset\R$. In addition, for every open set $G\subset\R$,
$
\displaystyle\limsup_{n\to\infty}\tfrac{1}{n}\log\mu\{\psi_n/n\in G\}\geq -\inf_{G\cap \mathcal F} I_\psi,
$
 where $\mathcal F$ is the set of {\em exposed points} of $I_\psi$:
$$
\mathcal F:=\{y\in\R: \exists c\in\R\text{ s.t. }\forall x\neq y,\ c(y-x)\gneqq I_\psi(y)-I_\psi(x)\}.
$$
Since $I_\psi$ is differentiable and strictly convex on $(-{c}\sigma_\psi^4,{c}\sigma_\psi^4)$,
$$\mathcal F\supset (-{c}\sigma_\psi^4,{c}\sigma_\psi^4),$$
and Part (2) of the theorem follows as well.

To see Part (3), we take $0<a<{c}\sigma_\psi^4$, and apply Parts (1) and (2) with $F=[a,\infty)$, $G=(a,{c}\sigma_\psi^4)$,  noting that
$
\inf_{[a,\infty)}I_\psi=\inf_{(a,{c}\sigma_\psi^4)}I_\psi=I_\psi(a)
$,
because
$I_\psi$ is continuous and monotonic increasing on $(0,\infty)$ (since it is  a finite smooth convex function such that $I_\psi'(0)=0$).

Part (4) follows from smoothness of $I_\psi$ on a neighborhood of zero, Taylor's expansion,  and the identities $I_\psi(0)=0, I_\psi'(0)=0,I_\psi''(0)=\sigma_\psi^{-2}$.
\end{proof}

\subsection{Almost Sure Invariance Principle (ASIP) for Markov Shifts}\label{ss.CMS.asip} We refer the readers to the beginning of \S\ref{s.ASIP-Statement}, where they can find  a summary of the necessary definitions from probability theory.
\begin{theorem}\label{t.ASIP-Sigma}
Let $\mu$ be the equilibrium measure of an SPR H\"older continuous potential $\phi$ on a one-sided or two-sided irreducible SPR Markov shift with finite Gurevich entropy, and let $\psi$ be a H\"older continuous function such that $\int\psi d\mu=0$. Then
$(\Sigma,\mathcal{B}(\Sigma),\mu,\sigma,\psi)$ satisfies the ASIP with rate $o(n^{\frac{1}{4}+\eps})$ for all $\varepsilon>0$.
\end{theorem}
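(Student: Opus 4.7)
\medbreak
\noindent\textbf{Proof plan.} The plan is to reduce the theorem to the one-sided aperiodic case and there invoke the spectral ASIP theorem of \cite{Gouezel-ASIP}, fed by the spectral gap from Thm~\ref{t.SGP}.

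\medbreak
\noindent\textbf{Reductions.} In the two-sided case, I would first apply Sinai's lemma (Thm~\ref{t.Sinai-Lemma}) to both $\phi$ and $\psi$, producing bounded H\"older continuous transfer functions $u,v$ and one-sided representatives $\tilde\phi^+,\tilde\psi^+$ on $\Sigma^+$ satisfying $\phi-\tilde\phi^+\circ\vartheta=u-u\circ\sigma$ and $\psi-\tilde\psi^+\circ\vartheta=v-v\circ\sigma$. The projection $\mu^+:=\vartheta_*\mu$ is the equilibrium measure of $\tilde\phi^+$ on $\Sigma^+$ and inherits the SPR property and the period of $\Sigma$ (Remark~\ref{r.project}). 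Since $\psi_n-\tilde\psi^+_n\circ\vartheta=v-v\circ\sigma^n$ is uniformly $O(1)$ and the ASIP is stable under additive bounded perturbations (without changing the parameter, and here $\sigma_\psi^2=\sigma_{\tilde\psi^+}^2$ as established inside the proof of Thm~\ref{t.asymp-var-CMS}), it suffices to prove the ASIP for $\tilde\psi^+$ on $(\Sigma^+,\sigma,\mu^+)$. If the period $p$ is greater than one, the spectral decomposition $\Sigma^+=\biguplus_{i=0}^{p-1}\sigma^i\Sigma'$ (Lem~\ref{l.spectral-decomposition-CMS}) reduces the problem further to $(\sigma^p,\Sigma',\mu')$, which is irreducible and aperiodic, SPR for the H\"older potential $\tilde\phi^+_p|_{\Sigma'}$. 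An ASIP for the Birkhoff sums of $\tilde\psi^+_p$ under $\sigma^p$ on $(\Sigma',\mu')$ with parameter $\tilde\sigma$ then transfers to $\tilde\psi^+$ under $\sigma$: writing $N=np+r$ with $0\le r<p$ and $x\in\sigma^{-i}\Sigma'$, the discrepancy $\tilde\psi^+_{N}(x)-\tilde\psi^+_{np}(\sigma^{i}x)$ is uniformly $O(p\|\tilde\psi^+\|_\infty)$ and is absorbed by the $o(N^{1/4+\epsilon})$ error; the Green-Kubo identity in Thm~\ref{t.asymp-var-CMS}(1) fixes $\tilde\sigma^2=p\sigma_\psi^2$, so rescaling $N=np+O(1)$ yields the correct parameter $\sigma_\psi$.

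\medbreak
\noindent\textbf{One-sided aperiodic case via the spectral method.} Assume now $\Sigma^+$ is irreducible and aperiodic. Thm~\ref{t.SGP} equips us with a Banach space $\mathcal L$ containing the $\beta$-H\"older continuous functions on which the transfer operator $L$ of $\mu^+$ admits a spectral gap decomposition $L=P+N$, with $P\chi=(\int\chi\,d\mu^+)\cdot 1$ of rank one and $\rho(N)<1$. By Cor~\ref{c.perturbation-theory}, the multiplicative perturbations $L_z\chi:=L(e^{z\tilde\psi^+}\chi)$ form an analytic family on $\{|z|<\epsilon\}$ with decomposition $L_z=\lambda_z P_z+N_z$, holomorphic in $z$, and uniform spectral gap $\rho(N_z)<\rho_0<1$. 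As shown in the proof of Thm~\ref{t.asymp-var-CMS}(3), the characteristic function of the Birkhoff sums factors through the spectral identity $\E_{\mu^+}[e^{z\tilde\psi^+_n}]=\int L_z^n 1\,d\mu^+=\lambda_z^n\int P_z 1\,d\mu^+ + \int N_z^n 1\,d\mu^+$, with exponentially small remainder. These are the Nagaev--Guivarc'h spectral hypotheses entering Gou\"ezel's abstract spectral ASIP theorem \cite{Gouezel-ASIP}, whose conclusion is the ASIP with rate $o(n^{1/4+\epsilon})$ for every $\epsilon>0$ and parameter $\sigma_{\tilde\psi^+}=\sigma_\psi$.

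\medbreak
\noindent\textbf{Main obstacle.} The main technical point lies in verifying that the Banach space $(\mathcal L,\|\cdot\|_{\mathcal L})$ from \cite{Cyr-Sarig} and the analytic family $\{L_z\}$ of Cor~\ref{c.perturbation-theory} meet all the abstract hypotheses of the spectral ASIP theorem in \cite{Gouezel-ASIP}, in particular uniform operator-norm bounds for $\|L_{it}^n\|$ with $t$ in a bounded interval of the real line (not merely $t$ near zero). In our setting, this follows by combining the uniform spectral gap of Cor~\ref{c.perturbation-theory}, the boundedness of multiplication by the bounded continuous function $e^{it\tilde\psi^+}$ on $\mathcal L$ (property \eqref{item8} of Thm~\ref{t.SGP}), and the stability of quasi-compactness under small operator perturbations. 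Once this matching is carried out, the reductions in the previous step are standard bookkeeping.
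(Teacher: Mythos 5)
Your proposal follows the same route as the paper's proof: reduce to the one-sided aperiodic case via Sinai's lemma and the spectral decomposition, then feed the Cyr--Sarig spectral gap (Thm~\ref{t.SGP}) and its analytic perturbation (Cor~\ref{c.perturbation-theory}) into Gou\"ezel's spectral ASIP theorem, and finally transfer the Brownian approximation back across the period-$p$ decomposition using the relation $(\sigma')^2=p\sigma_\psi^2$ from the Green--Kubo identity. The reduction bookkeeping you sketch is the same as the paper's, up to notation.

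The one thing you have wrong is the diagnosis in your last paragraph. Gou\"ezel's Theorem~2.1 in \cite{Gouezel-ASIP} only requires the uniform bound $\sup_n\|L_{it}^n\|<\infty$ for $t$ in a \emph{neighborhood of zero}, not on an arbitrary bounded interval of the real line. So the ``main obstacle'' you describe does not arise, and the proposed remedy (invoking stability of quasi-compactness to push the bound out to a fixed interval) is both unneeded and, as stated, would not actually give you a uniform bound far from zero anyway---spectral stability is local. The small-$t$ bound is exactly what the paper verifies, via a short self-contained argument: the analytic perturbation gives $\|L_{it}^n-\lambda(t)^nP_{it}\|=\|N_{it}^n\|\to0$ uniformly for small $t$; combined with the trivial sup-norm bound $\|L_{it}^n1\|_\infty\le1$ and $P_{it}1\approx1$ (a consequence of item~\eqref{item7} of Thm~\ref{t.SGP}), this forces $|\lambda(t)|\le1$ and hence $\sup_n\|L_{it}^n\|<\infty$ for small $t$. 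In other words, the verification you flag as the hard part is already contained in the spectral gap package, and the extra machinery you suggest is a detour. Aside from that misreading of the hypothesis, your argument is sound and matches the paper.
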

Lemma~\ref{l.Borelomania} allows to define the approximating Brownian motion on $\Sigma\times [0,1]$, see Remark~\ref{r-dynamical-ASIP}.

\begin{proof}
We begin with the aperiodic one-sided case.
{Let $L:\mathcal L\to\mathcal L$ be the transfer operator associated to $\mu$ and
$L_{it}$ be defined by
$
L_{it}\varphi:=L(e^{it\psi}\varphi)
$ as in Cor~\ref{c.perturbation-theory}.}

\medskip
\noindent
{\em Claim.\/}
The following conditions hold:
\begin{enumerate}[(1)]
\item $L=P+N$ where $P$ is a projection, $\dim\mathrm{Im}(P)=1$, $P N=N P=0$, and the spectral radius of $N$ satisfies $\rho(N)<1$;\item there is $C>0$ such that $\|L_{it}^n\|\leq C$ for all $n\in\N$ and all small (real) $t$;
\item $\<\mu,\vf\>:=\int\vf d\mu$ is a bounded linear functional on $\mathcal L$;
\item Nagaev's identity:  $\E_{\mu}(e^{i\sum_{\ell=0}^{n-1} t_\ell \psi\circ \sigma^\ell})=\<\mu, L_{t_{n-1}}\cdots L_{t_0}1\>$.
\end{enumerate}
In the terminology of \cite{Gouezel-ASIP}, (3) and (4) say that the characteristic function of the process $(\psi\circ\sigma^\ell)_{\ell\ge0}$ is coded by $(\mathcal L,(L_{it})_{t\in\RR},1,\mu)$.
\begin{proof} Item (1) is Theorem~\ref{t.SGP}\eqref{item2}. 

Let ${\lambda_z}, P_z, N_z$ be as in Cor \ref{c.perturbation-theory}.
Since $\rho(N_0)<|\lambda_0|=1$,  $\rho(N_z)<1$. By the spectral radius formula, there are positive numbers $k,\delta$ such that $\|N_0^{k}\|\leq e^{-\delta}$. Since $z\mapsto  \|N_z^{k}\|$ is continuous, there are $k,\delta>0$ so that $\|N^k_{z}\|<e^{{-\delta}}$ for all $z$ in a neighborhood of zero. Thus 
$$
\|N^{n}_{it}\|\xrightarrow[n\to\infty]{}0\text{ uniformly in $t$ on a neighborhood of zero}. 
$$
{For all $t$ small let $\lambda(t):=\lambda_{it}$. Then}
\begin{equation}\label{e.L-it-power-n}
\|L_{it}^n-{\lambda(t)}^n P_{it}\|=\|N_{it}^n\|\xrightarrow[n\to\infty]{}0 \text{ uniformly in $t$ near zero}.
\end{equation}
In $\mathcal L$, convergence in norm implies uniform convergence on partition sets. So \eqref{e.L-it-power-n}  implies that $|L_{it}^n 1-{\lambda(t)}^n P_{it} 1|\to 0$ uniformly on partition sets. It is easy to verify that  $\|L_{it}^n 1\|_\infty\leq \|L^n 1\|_\infty=1$. Since  $P_{it} 1\approx P_0 1=1\not\equiv 0$ for all  $|t|$ small,  $|{\lambda(t)}|\leq 1$. Returning to  \eqref{e.L-it-power-n}, we see that $\sup_n\|L_{it}^n\|<\infty$ for all $|t|$ small. This is (2).

Item (3) follows Thm \ref{t.SGP}\eqref{item3}:
$\mathcal L\subset L^1(\mu)$, $1\in \mathcal L$, and $P\vf=\int\vf d\mu \cdot 1$, so $|\<\mu,\vf\>|\leq (\|P\|/\|1\|_{\mathcal L})\|\vf\|_{\mathcal L}$.

To see Nagaev's identity we use Lemma \ref{l.transfer-op-properties}(3) to check by induction that
$$
L_{it_{n-1}}\cdots L_{it_0} 1= L^{n}\bigg(\exp\sum_{\ell=0}^{n-1}it_\ell\psi\circ\sigma^\ell\bigg).
$$
Integrating with respect to $\mu$ and using the identity $L^\ast\mu=\mu$, we obtain that
$
\<\mu,L_{it_{n-1}}\cdots L_{it_0} 1\>=\int  e^{i\sum_{\ell=0}^{n-1}t_\ell\psi\circ\sigma^\ell}d\mu=\E_{\mu}
(e^{i\sum_{\ell=0}^{n-1}t_\ell\psi_\ell\circ\sigma^\ell})
$.
\end{proof}

The claim we just proved verifies the conditions of Gou\"ezel's almost sure invariance principle \cite[Thm 2.1]{Gouezel-ASIP}: The {characteristic function of the} $L^\infty$-bounded process $(\psi\circ\sigma^\ell)_{\ell\ge0}$ is coded by $(\mathcal L,(L_{it})_{t\in\RR},1,\mu)$ where $L_{it}$ has spectral gap (1) and bounded iterates (2).
Therefore Gou\"ezel's theorem yields a standard probability space $(\Omega,\mathcal F,m)$ with two  families $(\wt{S}_n)_{n\geq 1}, (\wt{B}_t)_{t\geq 0}:\Omega\to\R$
such that:
\begin{enumerate}[(1)]
\item $(\wt{S}_n)_{n\geq 1}=(\psi_n)_{n\geq 1}$ in distribution,
\item $(\wt{B}_t)_{t\geq 0}$ is a standard Brownian motion,
\item $\forall\eps>0$, $m\{\omega\in \Omega: |\wt{S}_n(\omega)-{\sigma_\psi} \wt{B}_n(\omega)|=o(n^{\frac{1}{4}+\eps})$ as $n\to\infty\}=1$.
\end{enumerate}

\medskip
This proves the one-sided aperiodic case.
We now consider the one-sided case, with period $p>1$. Let $\Sigma=\biguplus_{i=0}^{p-1}\sigma^i(\Sigma')$ denote the spectral decomposition.

By the ASIP in the aperiodic case {and Lemma \ref{l.Borelomania}}, there is a probability measure $\nu'$ on $\Sigma'\times [0,1]$ which projects to $\mu'(\cdot):=\mu(\cdot\cap\Sigma')/\mu(\Sigma)$, and there is a standard Brownian motion $(B_t')_{t\geq 0}:\Sigma'\times [0,1]\to\R$ such that for all $\eps>0$
$$
|\psi_{np}-\sigma' B_n'|=o(n^{\frac{1}{4}+\eps})\text{ a.e., }
$$
where $\sigma'$ is the asymptotic variance of $\psi_p$ with respect to $\sigma^{p}:\Sigma'\to\Sigma'$.

Let us define {$\sigma=\sigma'/\sqrt{p}$ and} $\nu:=\frac{1}{p}\sum_{j=0}^{p-1}\nu'\circ (\sigma^{j}\times \mathrm{id})^{-1}$ on $\Sigma\times [0,1]$. We also extend $(B_t')_{t\geq 0}$ to a standard Brownian motion on {$\Sigma\times[0,1]$} by setting
$$
B_t(x,\xi):=\sqrt{p} B_{t/p}'(y,\xi)\text{ for the unique $y\in\Sigma'$ such that $x=\sigma^j(y)$ with $0\leq j<p$}.
$$

Given $0\leq j\leq p-1$ and  $(x,\xi)\in \sigma^j(\Sigma')\times [0,1]$, write $x=\sigma^j(y)$. Since $\psi\in L^\infty$,
{\begin{align*}
&|\psi_{m p}(x)-\sigma B_{m p}(x,\xi)|\leq |\psi_{m p}(y)-\sigma' B_{m}'(y,\xi)|+O(1)=o(n^{\frac{1}{4}+\eps}).
\end{align*}}
For a.e. path of Brownian motion,
$
|B_{n+j}-B_{n}|=o(n^{\frac{1}{4}})\text{ for all $n$ large enough},
$
(because   $\sum_{n\geq 1}\Prob(|B_{n+j}-B_{n}|>n^{1/5})<\infty$). It follows that for $\nu$-a.e. $(x,\xi)$,
\begin{align*}
&|\psi_{mp+j}(x)-\sigma B_{mp+j}(x,\xi)|\leq |\psi_{mp}(y)-\sigma' B_{m}'(y,\xi)|+O(1)+o(m^{\frac{1}{4}+\eps})=o(m^{\frac{1}{4}+\eps}).
\end{align*}
So $|\psi_{n}(x)-\sigma B_{n}(x,\xi)|=o(n^{\frac{1}{4}+\eps})$ $\nu$-a.s.

This completes the proof of the ASIP in the one-sided case.
The two-sided case follows immediately from Sinai's Lemma (Thm~\ref{t.Sinai-Lemma}).
\end{proof}

\noindent
The consequences of the ASIP stated in \S\ref{s.FCLT}--\ref{s.records} are explained in Appendix~\ref{appendix-ASIP}. 
\subsection{Effective Intrinsic Ergodicity}
\begin{theorem}[R\"uhr-Sarig~\cite{Ruhr-Sarig}]\label{t.effective-ergodicity-shift}
Let $\Sigma$ be a one-sided or two-sided irreducible SPR Markov shift with finite Gurevich entropy, and with an MME  $\mu$.
For any $\beta>0$,  there are $\eps^\ast,C^*,K>0$ such that the following holds for every $\beta$-H\"older continuous function $\psi:\Sigma\to\R$:
\begin{enumerate}[(a)]
\item  If
     $0<\eps\leq \eps^\ast$, then  for every $\nu\in\mathbb P(\sigma)$ s.t. $h(\sigma,{\nu})>h(\sigma,\mu)-
     C^*\eps^2(\sigma_{\psi}/\|\psi\|_{{\beta}})^6$,
    $$
    |\mu(\psi)-\nu(\psi)|\leq e^{\eps}
    \sqrt{2\sigma_{\psi}^2(h(\sigma,\mu)-h(\sigma,\nu))}.
    $$
\item There exist $\nu_n\in\Proberg(\sigma)$ such that $h(\sigma,\nu_n)\neq h(\sigma, \mu)$, $h(\sigma,\nu_n)\to h(\sigma, \mu)$, and
    $$
    \frac{|\mu(\psi)-\nu(\psi)|}
    {\sqrt{2\sigma_{\psi}^2(h(\sigma,\mu)-h(\sigma, \nu))}}\xrightarrow[n\to\infty]{}
    1.
    $$
\item For every $\nu\in\mathbb P(\sigma)$,
    $
|\mu(\psi)-\nu(\psi)|\leq K\|\psi\|_{{\beta}}\sqrt{h(\sigma,\mu)-h(\sigma,\nu)}.
    $
\end{enumerate}
\end{theorem}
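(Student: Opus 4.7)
The plan is to follow the approach of R\"uhr-Sarig \cite{Ruhr-Sarig}, working with the \emph{centered pressure function}
\[
\mathfrak{p}(t) := P_\Bor(\Sigma, t\psi) - h(\sigma,\mu), \quad t \in \R.
\]
Replacing $\psi$ by $\psi - \mu(\psi)$ costs only a factor 2 in $\|\psi\|_\beta$ and leaves both $\sigma_\psi$ and $|\mu(\psi) - \nu(\psi)|$ unchanged, so I may assume $\mu(\psi) = 0$; Sinai's lemma (Thm~\ref{t.Sinai-Lemma}) further reduces the two-sided case to the one-sided one. The key input is that $\mathfrak{p}$ admits an analytic extension to a complex disk of radius $\delta_0/\|\psi\|_\beta$ for some $\delta_0 = \delta_0(\Sigma,\beta)>0$. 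This follows from analytic perturbation theory for the transfer operator on the Banach space $\mathcal L$ of Thm~\ref{t.SGP}: Cor~\ref{c.perturbation-theory} applied with the rescaled $\widetilde\psi := \psi/\|\psi\|_\beta$ gives $\mathfrak{p}(t) = \log \lambda(t)$ with $\lambda$ analytic in this disk. By Thm~\ref{t.asymp-var-CMS}(2), $\mathfrak{p}(0) = 0$, $\mathfrak{p}'(0) = 0$, $\mathfrak{p}''(0) = \sigma_\psi^2$, and Cauchy's estimates provide constants $\delta_1, C_3$ depending only on $\Sigma, \beta$ with
\[
\mathfrak{p}(t) \leq \tfrac12 \sigma_\psi^2 t^2 + C_3 \|\psi\|_\beta^3 |t|^3, \qquad |t| \leq \delta_1/\|\psi\|_\beta.
\]

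With this bound in hand, the variational principle gives, for every $\nu\in\mathbb{P}(\sigma)$ and any real $t$ in the analyticity range, $h(\sigma,\nu) + t\nu(\psi) \leq h(\sigma,\mu) + \mathfrak{p}(t)$. Writing $a := h(\sigma,\mu) - h(\sigma,\nu) \geq 0$ and applying the inequality to both signs of $t$,
\[
|\mu(\psi)-\nu(\psi)| \;\leq\; \inf_{0 < t \leq \delta_1/\|\psi\|_\beta}\frac{a + \tfrac12 \sigma_\psi^2 t^2 + C_3 \|\psi\|_\beta^3 t^3}{t}.
\]
The quadratic part alone is minimized at $t^{\ast} = \sqrt{2a/\sigma_\psi^2}$ with value $\sqrt{2\sigma_\psi^2 a}$, and the cubic correction at $t^{\ast}$ equals $2C_3\|\psi\|_\beta^3 a/\sigma_\psi^2$. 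I will obtain (c) by splitting into the two regimes $a \leq a_0$ and $a > a_0$ for a fixed $a_0 = a_0(\Sigma,\beta)$: in the first, $t^{\ast}$ lies inside the analyticity disk and the above bound yields $|\mu(\psi) - \nu(\psi)| \leq K' \|\psi\|_\beta \sqrt{a}$ (using $\sigma_\psi \leq M_\beta \|\psi\|_\beta$ from Thm~\ref{t.asymp-var-CMS}(4)); in the second, the trivial bound $|\mu(\psi) - \nu(\psi)| \leq 2\|\psi\|_\infty \leq 2\|\psi\|_\beta$ suffices. For (a), the hypothesis $a \leq C^{\ast}\eps^2(\sigma_\psi/\|\psi\|_\beta)^6$ with $C^{\ast} = C^{\ast}(\Sigma,\beta)$ small ensures that $t^{\ast}$ stays inside the analyticity disk and that the cubic contribution is bounded by $(e^\eps - 1)\sqrt{2\sigma_\psi^2 a}$ for $\eps \leq \eps^{\ast}$, yielding the sharp bound.

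For (b), the plan is to test optimality on the family of equilibrium measures of $t\psi$. By Thm~\ref{t.no-phase-transition}, for $|t|$ sufficiently small the shift $\Sigma$ is SPR for $t\psi$ and admits a unique ergodic equilibrium measure $\mu_t \in \Proberg(\sigma)$, and $t \mapsto \mu_t(\psi) = \mathfrak{p}'(t)$ is real-analytic on a neighborhood of $0$. The identity $h(\sigma,\mu) - h(\sigma,\mu_t) = t\mathfrak{p}'(t) - \mathfrak{p}(t)$ combined with the Taylor expansions $\mathfrak{p}'(t) = \sigma_\psi^2 t + O(t^2)$ and $\mathfrak{p}(t) = \tfrac12\sigma_\psi^2 t^2 + O(t^3)$ gives $h(\sigma,\mu) - h(\sigma,\mu_t) = \tfrac12 \sigma_\psi^2 t^2 + O(t^3)$ and $\mu_t(\psi) - \mu(\psi) = \sigma_\psi^2 t + O(t^2)$. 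Taking $\nu_n := \mu_{1/n}$, the ratio in (b) equals $(\sigma_\psi^2 / n + O(n^{-2}))/\sqrt{\sigma_\psi^4/n^2 + O(n^{-3})} \to 1$; the non-degeneracy condition $h(\sigma,\nu_n) \neq h(\sigma,\mu)$ holds once $\sigma_\psi > 0$ and $n$ is large, and the case $\sigma_\psi = 0$ is vacuous by Thm~\ref{t.zero-variance-CMS}.

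The hardest part is the uniform Taylor control of $\mathfrak{p}$ with constants depending only on $\Sigma$ and $\beta$, not on $\psi$. This hinges on the ``uniform in $\psi$'' bound $\|M_\psi\| \leq \|\psi\|_\beta$ for the multiplication operator on $\mathcal L$ (Thm~\ref{t.SGP}\eqref{item8}), which is exactly what makes Cor~\ref{c.perturbation-theory} applicable with a radius proportional to $1/\|\psi\|_\beta$. Once this uniformity is in place, the remainder of the argument is Taylor-expansion bookkeeping and the application of the variational principle, which in the MME setting is a direct specialization of the general equilibrium-state machinery developed in \cite{Ruhr-Sarig}.
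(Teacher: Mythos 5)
The paper does not prove this theorem; it is cited verbatim from R\"uhr--Sarig \cite{Ruhr-Sarig}, so I am evaluating your sketch on its own terms. Your overall strategy (centering $\psi$, reducing to the one-sided case via Sinai's lemma, controlling the pressure function $\mathfrak p(t)=P_\Bor(\Sigma,t\psi)-h(\sigma,\mu)$ by analytic perturbation of the transfer operator, and feeding the resulting Taylor estimate into the variational principle $t\nu(\psi)\le a+\mathfrak p(t)$, $a:=h(\sigma,\mu)-h(\sigma,\nu)$) is indeed the R\"uhr--Sarig approach, and your treatments of (a) and (b) are correct: for (a), requiring $a\le C^\ast\eps^2(\sigma_\psi/\|\psi\|_\beta)^6$ forces $t^\ast=\sqrt{2a/\sigma_\psi^2}$ into the analyticity disk and the cubic defect at $t^\ast$, namely $2C_3\|\psi\|_\beta^3 a/\sigma_\psi^2$, is then $\le(e^\eps-1)\sqrt{2\sigma_\psi^2 a}$; for (b), the identity $h(\sigma,\mu)-h(\sigma,\mu_t)=t\mathfrak p'(t)-\mathfrak p(t)$ and the Taylor expansions give the asserted convergence of the ratio.

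There is, however, a genuine gap in your argument for (c). You split into $a\le a_0$ and $a>a_0$ with $a_0=a_0(\Sigma,\beta)$, and claim that in the first regime $t^\ast$ lies in the analyticity disk and the bound
$$
|\nu(\psi)|\le\sqrt{2\sigma_\psi^2 a}+2C_3\|\psi\|_\beta^3 a/\sigma_\psi^2
$$
yields $K'\|\psi\|_\beta\sqrt a$. Both claims fail when $\sigma_\psi/\|\psi\|_\beta$ is small. First, $t^\ast\le \delta_1/\|\psi\|_\beta$ is equivalent to $a\le\tfrac12\delta_1^2(\sigma_\psi/\|\psi\|_\beta)^2$, which cannot be guaranteed by a constraint $a\le a_0(\Sigma,\beta)$ alone since $\sigma_\psi$ can be arbitrarily small for a unit $\|\psi\|_\beta$. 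Second, even when $t^\ast$ does lie in the disk, the cubic term $2C_3\|\psi\|_\beta^3 a/\sigma_\psi^2$ is not $O(\|\psi\|_\beta\sqrt a)$ uniformly in $\sigma_\psi$: it is comparable to $(\|\psi\|_\beta/\sigma_\psi)^2\cdot\|\psi\|_\beta\sqrt a\cdot\sqrt a$, which for fixed small $a$ and $\sigma_\psi\to 0$ diverges. The trivial bound handles only $a>a_0$, so there is an uncovered range.

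The fix, which keeps the rest of your scheme intact, is to replace the sharp cubic Taylor estimate by the cruder but uniform bound coming from Cauchy's inequality on $\mathfrak p''$: on the half-disk $|t|\le\delta_1/(2\|\psi\|_\beta)$ one has $|\mathfrak p''(t)|\le C_7\|\psi\|_\beta^2$ with $C_7=C_7(\Sigma,\beta)$, hence $\mathfrak p(t)\le\tfrac12 C_7\|\psi\|_\beta^2 t^2$ there (this discards the precise leading constant $\sigma_\psi^2$ that (a) and (b) need, which is fine for (c)). Legendre-transforming this bound over $|t|\le\delta_1/(2\|\psi\|_\beta)$, together with the elementary observation that $|\nu(\psi)-\mu(\psi)|\le 2\|\psi\|_\beta$, gives $a\ge\mathfrak p^*(\nu(\psi))\ge c(\Sigma,\beta)\,(\nu(\psi)/\|\psi\|_\beta)^2$ for all $\nu$, which is exactly (c) with $K=c^{-1/2}$.

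A last small point: in (b), when $\sigma_\psi=0$ the displayed ratio is $0/0$ rather than vacuous; what Thm~\ref{t.zero-variance-CMS} gives is that the numerator vanishes identically, so the statement is then degenerate rather than ``vacuous.'' This is cosmetic but worth saying precisely.
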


\subsection{No Phase Transitions In High Temperature}

\begin{theorem}\label{t.no-phase-transition}
{Let $\Sigma$ be a one-sided or two-sided irreducible Markov shift with finite Gurevich entropy, which is SPR for a
H\"older continuous potential $\phi$. Then
there exists $\eps_0>0$ such that for any H\"older function $\psi:\Sigma\to\R$ with $\sup|\psi|<\eps_0$,
$\Sigma$ is SPR for $\phi+\psi$ and $t\mapsto P_{\Bor}(\Sigma,\phi+t\psi)$ is real-analytic on  $(-1,1)$.}
\end{theorem}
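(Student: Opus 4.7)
The plan is to combine two ingredients: (i) show that the SPR property for a H\"older potential is $C^0$-open in the sup-norm, so that $\phi+t\psi$ remains SPR for all $t$ in an open neighborhood of $[-1,1]$ once $\eps_0$ is chosen small enough; and (ii) use the spectral gap of the associated transfer operator (Theorem \ref{t.SGP}) together with analytic perturbation theory (Corollary \ref{c.perturbation-theory}) to obtain local analyticity of $t\mapsto P_{\Bor}(\Sigma,\phi+t\psi)$ at every $t_0\in(-1,1)$, which then glues to real-analyticity on the whole interval.

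For (i), I will use Definition \ref{d.SPR-potential3} directly. Given the witnesses $B$ (a finite union of cylinders), $\tau>0$ and $P_0<P_{\Bor}(\Sigma,\phi)$ for the SPR property of $\phi$, set $\Delta:=P_{\Bor}(\Sigma,\phi)-P_0>0$. The identity $P(\sigma,\nu,\phi+\eta)=P(\sigma,\nu,\phi)+\int\eta\,d\nu$ shows that both $P(\sigma,\nu,\cdot)$ and $P_{\Bor}(\Sigma,\cdot)$ vary by at most $\sup|\eta|$ under the perturbation $\phi\to\phi+\eta$. Choosing $\eps_0<\Delta/4$, then for any $\psi$ with $\sup|\psi|<\eps_0$ and any $|t|<2$, the values $P_0':=P_0+\Delta/2$ witness SPR of $\phi+t\psi$ with the same $B$ and $\tau$: indeed $P_0'<P_{\Bor}(\Sigma,\phi)-\eps_0\le P_{\Bor}(\Sigma,\phi+t\psi)$, and any ergodic $\nu$ with $P(\sigma,\nu,\phi+t\psi)>P_0'$ must satisfy $P(\sigma,\nu,\phi)>P_0'-\eps_0>P_0$, hence $\nu(B)>\tau$. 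This fixes $\eps_0$.

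For (ii), I first reduce to the one-sided aperiodic case: Sinai's Lemma (Theorem \ref{t.Sinai-Lemma}) replaces $\phi$ and $\psi$ by cohomologous one-sided H\"older functions of possibly smaller exponent, preserving pressure and SPR; for period $p>1$, the spectral decomposition $\Sigma=\bigsqcup_{i=0}^{p-1}\sigma^i(\Sigma')$ gives $P_{\Bor}(\Sigma,\phi+t\psi)=\tfrac{1}{p}P_{\Bor}(\Sigma',\phi_p+t\psi_p)$, where $\sigma^p|_{\Sigma'}$ is aperiodic and irreducible. Now fix $t_0\in(-1,1)$. By (i), $\phi+t_0\psi$ is SPR, so Theorem \ref{t.SGP} supplies a Banach space $\mathcal L_{t_0}$ on which the normalized transfer operator $L^{(t_0)}$ of the equilibrium measure of $\phi+t_0\psi$ has spectral gap. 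Define the analytic family $L_z\chi:=L^{(t_0)}(e^{z\psi}\chi)$; the algebra property Thm \ref{t.SGP}(d) makes this well-defined and operator-norm analytic in $z\in\mathbb C$. Corollary \ref{c.perturbation-theory} then yields, for $|z|<\eps_{t_0}$, an analytic leading eigenvalue $\lambda(z)$ with $\lambda(0)=1$. Unpacking the normalization of Proposition \ref{p.transfer-op-formula} exactly as in the proof of Theorem \ref{t.asymp-var-CMS}(2), one obtains the conjugation $L_z=e^{-P_{\Bor}(\Sigma,\phi+t_0\psi)}M_{h_{t_0}}^{-1}L_{\phi+(t_0+z)\psi}M_{h_{t_0}}$ and hence
$$P_{\Bor}(\Sigma,\phi+(t_0+z)\psi)=P_{\Bor}(\Sigma,\phi+t_0\psi)+\log\lambda(z)\qquad(|z|<\eps_{t_0}).$$
This gives real-analyticity of the pressure on $(t_0-\eps_{t_0},t_0+\eps_{t_0})$, and since $t_0\in(-1,1)$ is arbitrary, on all of $(-1,1)$.

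The hardest part is really the bookkeeping in step (ii): the Banach space $\mathcal L_{t_0}$, the Perron eigenfunction $h_{t_0}$, and the local radius $\eps_{t_0}$ all depend on the base point $t_0$, and one must be careful that the only uniformity required is \emph{local} analyticity around each $t_0$, which then yields global analyticity on $(-1,1)$ by standard covering. Apart from this, the theorem reduces to the elementary $C^0$-openness argument (i) feeding into the spectral-gap and analytic-perturbation machinery already developed in Theorem \ref{t.SGP} and Corollary \ref{c.perturbation-theory}.
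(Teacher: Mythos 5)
Your proof is correct and follows essentially the same two-step route as the paper: first show SPR is $C^0$-open in the sup-norm, then deduce local analyticity of the pressure from the spectral gap. The differences are in the level of detail rather than in strategy. For step (i), the paper checks openness directly from the partition-function characterization \eqref{e.SPR-potential2} (a sup-norm perturbation of size $\eps$ changes each $\frac1n\log Z_n(\cdot,a)$ and $\frac1n\log Z_n^*(\cdot,a)$ by at most $\eps$, so the gap survives), whereas you use the cylinder-tightness formulation of Definition~\ref{d.SPR-potential3}; these are equivalent by Proposition~\ref{p.SPR-potential}, so both work. For step (ii), the paper simply delegates to \cite[Thm~1.1]{Cyr-Sarig}, whereas you re-derive the local analyticity from the paper's own Theorem~\ref{t.SGP} and Corollary~\ref{c.perturbation-theory}, reducing first to the one-sided aperiodic case as in Theorem~\ref{t.asymp-var-CMS}(2), fixing a base point $t_0$, and reading the pressure off the leading eigenvalue $\lambda(z)$ of the family $L_z$; your bookkeeping of the base-point dependence ($\mathcal L_{t_0}$, $h_{t_0}$, $\eps_{t_0}$) and the observation that local analyticity on a cover gives global analyticity are exactly what makes the gluing work. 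One tiny imprecision: when $1\le|t|<2$ you have $\sup|t\psi|<2\eps_0$ rather than $<\eps_0$, so the displayed chain $P_0'<P_{\Bor}(\Sigma,\phi)-\eps_0\le P_{\Bor}(\Sigma,\phi+t\psi)$ should have $2\eps_0$ in place of $\eps_0$; since $\eps_0<\Delta/4$ and $P_0'=P_{\Bor}(\Sigma,\phi)-\Delta/2$, the conclusion is unaffected.
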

\begin{proof}
Looking at \eqref{e.SPR-potential2}, it is easy to see that the SPR property is stable under perturbation with small supremum norm of the potential.
This gives the first property. The second one
follows from \cite[Thm 1.1]{Cyr-Sarig}.
\end{proof}

\section{Consequences of the Almost Sure Invariance Principle}\label{appendix-ASIP}
Donsker and Strassen realized that many well-known properties of the Brownian motion can be generalized to other stochastic processes, using the approximation by Brownian motion given by the ASIP, which they invented. In this appendix, we recall how to obtain  Cor~\ref{c.FCLT-Diffeos}--\ref{c.Records} this way. See \cite{Billingsley} and references therein.

\indent Throughout this appendix, $f$ denotes a measurable map on a standard probability space $(X,\mathfs B,\mu)$ and $\psi\in L^1(\mu)$ is a function such that $\int\psi d\mu=0$.
\footnote{In the case which interests us in this paper, $\sup|\psi|<\infty$, and the condition $\int\psi d\mu=0$ follows from the ASIP and the strong law of large numbers for Brownian motion.}
 We will say that {\em $(X,f,\mu,\psi)$ satisfies the ASIP with parameter $\sigma$ and rate $o(n^\gamma)$}, if
$$
S_n(x):=\psi_n(x):=\psi(x)+\psi(f(x))+\cdots+\psi(f^{n-1}(x))
$$
satisfies the ASIP with the same parameter and rate (Def \ref{def-ASIP}). Specifically, by Remark \ref{r-dynamical-ASIP}, there is a Borel probability measure $\nu$ on $X\times [0,1]$ which projects to $\mu$, and there is a standard Brownian motion $B_t(x,\xi)$ on $(X\times [0,1],\nu)$ such that
\begin{equation}\label{e.C.1}
|S_n(x)-\sigma B_n(x,\xi)|=o(n^\gamma)\text{ as $n\to\infty$, $\nu$-a.e. in $X\times [0,1]$.}
\end{equation}
(We do not assume that  $f$ preserves the measure $\mu$.)

\subsection{Functional Central Limit Theorem (Cor~\ref{c.FCLT-Diffeos})}
Recall that  $\ov{B}$ denotes a random Brownian path (Example \ref{e.Wiener}), and $\ov{\psi}_n$ denotes the normalized linear interpolation of $(0,\psi(x),\psi_2(x),\ldots)$ (Example \ref{e.linear-interp}).

\begin{theorem}\label{thm-FCLT-from-ASIP}
Suppose  $(X,\mu,f,\psi)$ satisfies the ASIP with parameter $\sigma$ and rate $o(n^\gamma)$, $0<\gamma<\frac{1}{2}$. Then 
$
\overline{\psi}_n\xrightarrow[n\to+\infty]{}\sigma \overline{B}
$ in distribution.
\end{theorem}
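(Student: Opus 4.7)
The plan is to transfer the problem, via the ASIP coupling, to a direct comparison in the space $\mathcal C([0,1])$ between the piecewise linear interpolation of $(\psi_k/\sqrt n)$ and an appropriately rescaled Brownian path, then invoke Brownian scaling together with Lévy's modulus of continuity to conclude. Since convergence in distribution of $\mathcal C([0,1])$-valued random variables depends only on the laws of $\overline\psi_n$, I may pull the entire construction onto the enriched space $(X\times[0,1],\nu)$ supplied by the dynamical ASIP (Remark~\ref{r-dynamical-ASIP}). On this space, set
\[
\widetilde\psi_n(x,\xi)(t):=\overline\psi_n(x)(t),\qquad
\widetilde B^{(n)}(x,\xi)(t):=\frac{B_{nt}(x,\xi)}{\sqrt n}\quad (t\in[0,1]).
\]
By Brownian scale invariance, $\widetilde B^{(n)}$ is itself a standard Brownian motion, so its law equals the law of $\overline B$ from Example~\ref{e.Wiener}. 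Thus if I can show
\[
\bigl\|\widetilde\psi_n-\sigma\widetilde B^{(n)}\bigr\|_{\infty}\xrightarrow[n\to\infty]{}0\quad\nu\text{-a.s.,}
\]
then $\widetilde\psi_n-\sigma\widetilde B^{(n)}\to 0$ in probability, and Slutsky's theorem combined with the fact that $\sigma\widetilde B^{(n)}$ is distributionally equal to $\sigma\overline B$ for every $n$ will give $\widetilde\psi_n\xrightarrow{dist}\sigma\overline B$, hence $\overline\psi_n\xrightarrow{dist}\sigma\overline B$.

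To establish the sup-norm convergence, I would split the estimate through the polygonal interpolation $P_n$ of the values $(\sigma B_k(x,\xi)/\sqrt n)_{k=0}^n$. First, $\widetilde\psi_n$ and $P_n$ are both piecewise linear on the same nodes $k/n$, so their sup distance is attained at a node and equals $\max_{0\le k\le n}|\psi_k(x)-\sigma B_k(x,\xi)|/\sqrt n$; by the a.s.\ bound \eqref{e.C.1} (applied at times $1,\dots,n$), this is $o(n^{\gamma-1/2})$ with $\gamma<1/2$, hence tends to $0$ almost surely. Second, the sup distance between $P_n$ and $\sigma\widetilde B^{(n)}$ is the maximal oscillation of the Brownian path $\sigma B_t/\sqrt n$ on an interval of length $1$ when sampled at spacing $1/n$; equivalently, it equals
\[
\sigma\cdot\max_{0\le k<n}\sup_{s\in[0,1/n]}\bigl|B_{k+ns}(x,\xi)-\bigl((1-ns)B_k+ns\,B_{k+1}\bigr)\bigr|/\sqrt n.
\]
By Lévy's uniform modulus of continuity for Brownian motion this is $O\bigl(\sqrt{\log n/n}\bigr)$ almost surely, which again tends to $0$.

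The main obstacle is nothing more than the second estimate: one must justify that the polygonal interpolation at mesh $1/n$ of a Brownian path is uniformly close, on $[0,1]$, to the path itself. This is classical but slightly delicate because one needs an a.s.\ bound uniform in $n$; the cleanest route is to bound the probability that some bridge excursion in one of the $n$ subintervals exceeds $c\sqrt{\log n/n}$ using the reflection principle, then apply Borel–Cantelli. Once these two estimates are in hand, the triangle inequality yields the a.s.\ sup-norm approximation, and the weak convergence of $\overline\psi_n$ to $\sigma\overline B$ follows immediately from Brownian self-similarity.
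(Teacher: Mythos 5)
Your proof is correct and follows essentially the same route as the paper's: compare $\overline{\psi}_n$ to the rescaled path $\sigma\widetilde{B}^{(n)}$ in sup norm, control the nodes by the ASIP rate and the interior Brownian oscillation by a reflection--Borel--Cantelli bound (the paper's Lemma~\ref{e.BM-fact}), and conclude by a continuous-mapping argument. The only caution is that ``L\'evy's uniform modulus of continuity'' cannot be applied verbatim since $\widetilde{B}^{(n)}$ is a different Brownian path for each $n$, so the uniform-in-$n$ almost-sure estimate must be obtained --- as you yourself indicate --- by a Borel--Cantelli argument on the single underlying process $(B_t)_{t\ge 0}$, which is exactly what Lemma~\ref{e.BM-fact} furnishes.
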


Before giving a proof, we note the following.

\begin{lemma}\label{e.BM-fact}
Let $(B_t)_{t\geq 0}$ be a standard Brownian motion on $(\Omega,\mathfs F,m)$. Then
$$
\displaystyle\sup_{k=0,\ldots,n}\  \sup_{t\in [k,k+1]}\ \ \tfrac{1}{\sqrt n}|B_t(\omega)-B_k(\omega)|\xrightarrow[n\to\infty]{}0
\;\;\;\text{ $m$-a.e.}
$$
\end{lemma}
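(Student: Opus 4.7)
The plan is to reduce the claim to the almost sure convergence $M_k/\sqrt{k}\to 0$ for the iid sequence $M_k:=\sup_{t\in[k,k+1]}|B_t-B_k|$, and then deduce the uniform bound by a routine envelope argument.

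First I would observe that, by the independent and stationary increments property of Brownian motion, the random variables $(M_k)_{k\geq 0}$ are iid, each distributed as $M:=\sup_{t\in[0,1]}|B_t|$. The next step is a tail bound on $M$. The reflection principle gives, for any $x>0$,
$$\Prob(M>x)\leq 2\Prob\bigl(\sup_{t\in[0,1]}B_t>x\bigr)\leq 4\,\Prob(B_1>x)\leq 4 e^{-x^2/2}.$$
In particular $\E(M^2)<\infty$.

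Next I would apply Borel--Cantelli. Fix $\eps>0$. Since the $M_k$ are identically distributed and $\E(M^2)<\infty$,
$$\sum_{k\geq 1}\Prob\bigl(M_k>\eps\sqrt{k}\bigr)=\sum_{k\geq 1}\Prob\bigl(M^2>\eps^2 k\bigr)\leq \eps^{-2}\E(M^2)<\infty.$$
Hence $m$-almost surely, $M_k\leq \eps\sqrt{k}$ for all $k$ sufficiently large; since $\eps>0$ is arbitrary, $M_k/\sqrt{k}\to 0$ almost surely.

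Finally I would upgrade this to the uniform statement. Fix $\eps>0$ and, on the full measure set where $M_k/\sqrt{k}\to 0$, choose $N=N(\omega)$ such that $M_k(\omega)\leq\eps\sqrt{k}$ for all $k\geq N$. Setting $C(\omega):=\max_{0\leq k<N}M_k(\omega)$, we obtain, for all $n\geq N$,
$$\max_{0\leq k\leq n}M_k(\omega)\leq\max\bigl(C(\omega),\,\eps\sqrt{n}\bigr),$$
so $\tfrac{1}{\sqrt n}\max_{0\leq k\leq n}M_k(\omega)\leq \max(C(\omega)/\sqrt{n},\eps)\to\eps$. Since $\eps$ was arbitrary, the lemma follows. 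There is no real obstacle here; the only nontrivial input is the Gaussian tail bound on $M$, which is immediate from the reflection principle.
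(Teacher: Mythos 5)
Your proof is correct and follows essentially the same route as the paper's: both reduce to the stationary increment $M_k:=\sup_{t\in[k,k+1]}|B_t-B_k|$, control its tail via the reflection principle, and apply Borel--Cantelli to the events $\{M_k>\eps\sqrt{k}\}$. The only cosmetic differences are that you use the weaker second-moment bound $\eps^{-2}\E(M^2)$ in place of the paper's direct exponential estimate $O(e^{-\eps^2 k/2})$ for the summability, and you spell out the final envelope step that the paper compresses into ``the lemma follows, since $\eps$ is arbitrary.''
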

\begin{proof} Fix $\eps>0$ small and let  $p_k:=\displaystyle\Prob\big(|B_t-B_k|>\eps{\sqrt{k}}:\; \text{for some }t\in [k,k+1]\big)$. Since $(B_t-B_k)_{t\geq k}$ is a Brownian motion, $p_k=\Prob\big(|B_t|>\eps\sqrt{k}:\; \text{for some }t\in [0,1]\big)$.

By the reflection principle for the Brownian motion,  $
p_k \leq 4\Prob\big(B_1>\eps\sqrt{k}\big)$. Since $B_1$ is Gaussian with standard deviation one,
$$
p_k\leq \frac{4}{\sqrt{2\pi}}\int_{\eps\sqrt{k}}^\infty e^{-t^2/2}dt\leq \frac{4}{\sqrt{2\pi}}\int_{\eps\sqrt{k}}^\infty \frac{te^{-t^2/2}}{\eps\sqrt{k}}dt=O(e^{-\eps^2k/2}).
$$
So $\sum p_k<\infty$. By the Borel-Cantelli Lemma, for a.e. $\omega$, for all $k$ large enough, $\sup_{t\in [k,k+1]}|B_t(\omega)-B_k(\omega)|\leq \eps\sqrt{k}$.  The lemma follows, since $\eps$ is arbitrary.
\end{proof}

\begin{proof}
By construction, $\overline{\psi}_n(x)(k/n)=\psi_k(x)/\sqrt{n}$. By \eqref{e.C.1},
for $\nu$-a.e. $(x,\xi)\in X\times [0,1]$,
$
\displaystyle\sup_{k=0,1,\ldots,n}|\overline{\psi}_n(x)(\tfrac{k}{n})-\tfrac{\sigma}{\sqrt{n}}B_k(x,\xi)|=o(1)\text{ as }n\to+\infty.
$

Define $\wt{B}_n(x,\xi)(\cdot)\in \mathcal C([0,1])$ by
$ \wt{B}_n(x,\xi)(t):=\frac{1}{\sqrt{n}}B_{tn}(x,\xi)$.
Note that
$\wt{B}_n(x,\xi)(\cdot)$ is a standard Brownian motion for each $n$.
By Lemma \ref{e.BM-fact}
$$\sup_{t\in [0,1]}|\overline\psi_n(x)(t)-\sigma\wt{B}_n(x,\xi)(t)|
\xrightarrow[n\to\infty]{}0, \ \ \nu\text{-a.e.}
$$

Now fix some  bounded uniformly continuous function $g:\mathcal{C}([0,1])\to \R$.  Then
$
|g(\overline{\psi}_n(x))-g(\sigma \wt{B}_n(x,\xi))|\xrightarrow[n\to\infty]{}0\text{ $\nu$-a.e.}
$
By the bounded convergence theorem, and since $\nu$ projects to $\mu$,
$$
\lim\limits_{n\to+\infty}\E_\mu[g(\overline{\psi}_n)]=\lim\limits_{n\to \infty}\E_\nu[g(\sigma\wt{B}_n)].$$
$\wt{B}_n$ is a standard Brownian motion on $[0,1]$, so
$\E_\nu[g(\sigma\wt{B}_n)]=\E[g(\sigma\overline{B})]$ for all $n$.
In summary,
 $\E_\mu[g(\overline{\psi}_n)]\to \E_{\mu_W}[g(\sigma \overline{B})]$ for all bounded {\em uniformly} continuous $g:\mathcal C([0,1])\to\R$.  By \cite[Thm 2.1]{Billingsley}, this must also be the case for all bounded continuous functions $g:\mathcal C([0,1])\to\R$. So $\overline{\psi}_n\xrightarrow[n\to\infty]{}\sigma\overline{B}$ in distribution.
 \end{proof}

\subsection{Law of the Iterated Logarithm (Cor~\ref{c.LIL})}

{\begin{theorem}\label{thm-LIL-from-ASIP}
Suppose $(X,\mu,f,\psi)$ satisfies the ASIP with  $\sigma\neq 0$ and rate $o(n^\gamma)$, $0<\gamma<\frac{1}{2}$.
Then for any $c\in (0,1)$ the following holds for $Z_n=\psi_n$:
\begin{align}
&\limsup_{n\to\infty}\frac{Z_n}{\sigma\sqrt{2n\log\log n}}=1, \
\liminf_{n\to\infty}\frac{Z_n}{\sigma \sqrt{2n\log\log n}}=-1, \label{e.LIL1-Z}\\
&\limsup_{N\to\infty} \tfrac{1}{N}\#\left\{1\leq n\leq N: Z_n>c \sigma\sqrt{2n\log\log n}\right\}=1-e^{-4(c^{-2}-1)} \label{e.LIL2-Z}.
\end{align}
\end{theorem}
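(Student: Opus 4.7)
The plan is to deduce both~\eqref{e.LIL1-Z} and~\eqref{e.LIL2-Z} from the corresponding classical results for Brownian motion, using the ASIP approximation. By Remark~\ref{r-dynamical-ASIP} (or Lemma~\ref{l.Borelomania} applied to $Z_n = \psi_n$), I may pass to the extended probability space $(X \times [0,1], \nu)$, where $\nu$ projects to $\mu$, and obtain a standard Brownian motion $B_t(x,\xi)$ such that
$$
   |Z_n(x) - \sigma B_n(x,\xi)| = o(n^\gamma) \quad \text{$\nu$-a.e.}
$$
Since $\gamma < 1/2$, the error is $o(\sqrt{n \log\log n})$, which is precisely the normalizing scale in the LIL.

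For~\eqref{e.LIL1-Z}, I would divide the ASIP error bound by $\sigma\sqrt{2n\log\log n}$ and apply Khintchine's classical LIL: for $\nu$-a.e.\ $(x,\xi)$, $\limsup_n B_n(x,\xi)/\sqrt{2n\log\log n} = 1$ and $\liminf_n = -1$. Since the error term tends to zero after normalization, these same values hold for $Z_n/(\sigma\sqrt{2n\log\log n})$. The conclusion depends only on $x$, so the set of good $x$ has $\nu$-measure one on $X \times [0,1]$, hence $\mu$-measure one.

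For~\eqref{e.LIL2-Z}, I would use Strassen's classical density theorem for Brownian motion: for every $c' \in (0,1)$, $\nu$-a.e.,
$$
  \limsup_{N \to \infty} \frac{1}{N}\#\{1 \leq n \leq N : B_n > c'\sqrt{2n\log\log n}\} = 1 - e^{-4(c'^{-2}-1)}.
$$
Fix $c \in (0,1)$ and $\varepsilon > 0$ small enough that $c \pm \varepsilon \in (0,1)$. Since $|Z_n - \sigma B_n| = o(\sqrt{n \log\log n})$, for $\nu$-a.e.\ $(x,\xi)$ there exists $N_0$ such that for all $n \geq N_0$,
$$
  \{\sigma B_n > (c+\varepsilon)\sigma\sqrt{2n\log\log n}\} \subset \{Z_n > c\sigma\sqrt{2n\log\log n}\} \subset \{\sigma B_n > (c-\varepsilon)\sigma\sqrt{2n\log\log n}\}.
$$
Taking upper densities and applying Strassen's identity on both sides yields
$$
   1 - e^{-4((c+\varepsilon)^{-2}-1)} \leq \limsup_N \tfrac{1}{N}\#\{1 \leq n \leq N : Z_n > c\sigma\sqrt{2n\log\log n}\} \leq 1 - e^{-4((c-\varepsilon)^{-2}-1)}.
$$
Letting $\varepsilon \to 0$ and invoking continuity of $c \mapsto 1 - e^{-4(c^{-2}-1)}$ gives~\eqref{e.LIL2-Z}.

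The main potential obstacle is ensuring the sandwiching step is valid \emph{simultaneously} almost surely, but this is automatic: for each rational $\varepsilon > 0$, Strassen's identity holds on a full-measure set, and the countable intersection over rational $\varepsilon$ is still of full measure. Everything else is routine, as the ASIP has already done the heavy lifting by reducing the problem to a classical statement about Brownian paths.
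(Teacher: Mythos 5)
Your proof is correct and follows essentially the same route as the paper: transfer the classical law of the iterated logarithm and Strassen's density identity from Brownian motion to $\psi_n$ via the ASIP, using that the $o(n^\gamma)$ error is negligible at the $\sqrt{n\log\log n}$ scale. You make explicit the $c\pm\varepsilon$ sandwich and the continuity of $c\mapsto 1-e^{-4(c^{-2}-1)}$ needed for the density statement, which the paper leaves implicit in its one-line assertion that both conclusions are stable under $o(n^\gamma)$ perturbations.
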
}

\begin{proof}
Suppose $Z_n=X_1+\cdots+X_n$, where $X_i$ are independent random variables with centered Gaussian distribution and standard deviation $\sigma$. Then
 \eqref{e.LIL1-Z} holds a.s. by the Hartman-Wintner theorem, and \eqref{e.LIL2-Z} holds a.s., by Strassen's law of the iterated logarithm~\cite{Strassen}.

Suppose $Z_n=\sigma B_n$, where $B_t$ is a standard Brownian motion. Looking at the decomposition  $B_n=\sum_{k=1}^n (B_k-B_{k-1})$, we see that $Z_n$  is a sum of Gaussian random variables as above. Therefore  \eqref{e.LIL1-Z} and \eqref{e.LIL2-Z} hold with $Z_n=\sigma B_n$.

If \eqref{e.LIL1-Z} and \eqref{e.LIL2-Z} hold for $Z_n$, they also hold for $Z_n+o(n^{\gamma})$.
By \eqref{e.C.1},
$\psi_n(x)=\sigma B_n+o(n^{\gamma})$ a.s.,  where $B_n$ is a standard Brownian motion. Hence \eqref{e.LIL1-Z} and \eqref{e.LIL2-Z} hold for $\mu$-a.e. $x$, with $Z_n=\psi_n(x)$.
\end{proof}

\subsection{Arcsine Law (Cor~\ref{c.Arcsine})}
{\begin{theorem}\label{thm-Arcsine-from-ASIP}
Suppose  $(X,\mu,f,\psi)$ satisfies the ASIP with $\sigma\neq 0$ and rate $o(n^\gamma)$, $0<\gamma<\frac{1}{2}$.  Let $ d_n(x):=\frac{1}{n}\#\{1\leq k\leq n: \psi_k(x)>0\}$,  then
$$\lim_{n\to\infty}\mu\{x\in X: d_n(x)\leq s\}=\tfrac{2}{\pi}\arcsin(\sqrt{s}).$$
\end{theorem}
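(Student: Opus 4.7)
My plan is to combine the functional central limit theorem (Theorem \ref{thm-FCLT-from-ASIP}) with L\'evy's classical arcsine law for Brownian motion via the continuous mapping theorem, then show that $d_n$ differs from the continuous functional by an $o_P(1)$ term controlled by the number of sign changes.

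First I would set up the connecting functional. Define $\Phi \colon \mathcal{C}([0,1]) \to [0,1]$ by
$$\Phi(\omega) := \mathrm{Leb}\{t \in [0,1] : \omega(t) > 0\}.$$
It is classical that $\Phi$ is continuous at every $\omega$ with $\mathrm{Leb}\{t : \omega(t)=0\}=0$, and that Wiener measure assigns full measure to such paths (by Fubini, $\E[\mathrm{Leb}\{t : B_t = 0\}] = \int_0^1 \Prob(B_t=0)\,dt = 0$). L\'evy's arcsine law states that $\Phi(\overline{B})$ has the distribution with CDF $\frac{2}{\pi}\arcsin(\sqrt{s})$. Since $\sigma \neq 0$, the sign of $\sigma \overline{B}(t)$ equals $\pm \mathrm{sign}(\overline{B}(t))$, so $\Phi(\sigma \overline{B})$ is equal in distribution to $\Phi(\overline{B})$ (using the symmetry $B \stackrel{d}{=} -B$ if $\sigma < 0$). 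Theorem \ref{thm-FCLT-from-ASIP} then gives $\overline{\psi}_n \to \sigma\overline{B}$ in distribution, and the continuous mapping theorem yields
$$\Phi(\overline{\psi}_n) \xrightarrow{d} \Phi(\overline{B}) \sim \tfrac{2}{\pi}\arcsin(\sqrt{\cdot}).$$

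Next I would relate $\Phi(\overline{\psi}_n)$ to $d_n$. On each interval $[(k-1)/n, k/n]$, the function $\overline{\psi}_n(x)$ is affine from $\psi_{k-1}(x)/\sqrt{n}$ to $\psi_k(x)/\sqrt{n}$. If these two values have the same sign the interval contributes $1/n$ (resp.\ $0$) to both $\Phi(\overline{\psi}_n)$ and $d_n$ in the positive (resp.\ negative) case, so the contributions cancel. Only intervals with a sign change contribute, and then by at most $1/n$ in absolute value. Hence
$$|\Phi(\overline{\psi}_n(x)) - d_n(x)| \leq \frac{C_n(x)}{n},\quad C_n(x) := \#\{1 \leq k \leq n : \psi_{k-1}(x)\psi_k(x) < 0\}.$$

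The main technical step is showing $\E[C_n] = o(n)$, which I would do by ASIP-comparison with Brownian motion. By \eqref{e.C.1}, on a full-measure set $\psi_k(x) = \sigma B_k(x,\xi) + r_k$ with $|r_k| \leq k^\gamma$ for all $k \geq N(x,\xi)$. A sign discrepancy between $\psi_k$ and $\sigma B_k$ forces $|B_k| \leq k^\gamma/|\sigma|$, so
$$C_n \leq V_n + 2 A_n + 2N, \quad V_n := \#\{k \leq n : B_{k-1}B_k < 0\}, \quad A_n := \#\{k \leq n : |B_k| \leq k^\gamma/|\sigma|\}.$$
Using the Gaussian density, $\Prob(|B_k| \leq ck^\gamma) = O(k^{\gamma - 1/2})$, so $\E[A_n] = O(n^{1/2+\gamma}) = o(n)$ since $\gamma < 1/2$. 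For $V_n$, the joint distribution of $(B_{k-1}, B_k)$ has correlation $\sqrt{(k-1)/k}$, giving $\Prob(B_{k-1}B_k < 0) = \tfrac{1}{\pi}\arccos(\sqrt{(k-1)/k}) = O(k^{-1/2})$, so $\E[V_n] = O(\sqrt{n})$. Combining gives $\E[C_n]/n \to 0$, so $C_n/n \to 0$ in $\nu$-probability, hence in $\mu$-probability after projection.

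Combining steps via Slutsky's theorem, $d_n = \Phi(\overline{\psi}_n) + o_P(1)$ converges in distribution to the arcsine law, which gives the conclusion. The main obstacle will be the bound on $\E[V_n]$; while the Gaussian arccosine computation is standard, I want to be careful that the estimate is uniform enough (in particular not requiring independence, only the joint Gaussian structure of the skeleton of $B$) and that the passage from ``convergence in $\nu$-probability on $X\times[0,1]$'' to ``convergence in $\mu$-probability on $X$'' is justified, which follows since $d_n$ depends only on $x$ and $\nu$ projects to $\mu$.
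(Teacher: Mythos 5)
Your approach is correct in its core but differs substantially from the paper's proof, which does \emph{not} go through the functional CLT. Instead, the paper works directly with the almost sure comparison $|\psi_k - \sigma B_k| < \delta\sqrt{n}$ provided by the ASIP, obtains the two-sided bound
$$
d_n(x)\ \lessgtr\ \lambda\{0\le t\le 1:\tfrac{1}{\sqrt n}B_{tn}(x,\xi)> \mp 2\delta\}+o(1),
$$
and then removes the tolerance $\delta$ using Lemma~\ref{l.BM-zero-set} (Brownian paths spend little Lebesgue time near $0$). That route sidesteps the discretization error entirely: because the comparison is an inequality with a fixed slack $\delta$, one never has to count sign changes of the skeleton. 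Your route -- functional CLT, continuous mapping applied to $\Phi(\omega)=\operatorname{Leb}\{t:\omega(t)>0\}$, then a separate estimate showing $|\Phi(\ov{\psi}_n)-d_n|\le C_n/n=o_P(1)$ via the Gaussian bounds $\E[V_n]=O(\sqrt n)$, $\E[A_n]=O(n^{\gamma+1/2})$ -- is a valid and somewhat more conceptual alternative. The price is that you must carry out the sign-change count honestly, whereas the paper's $\delta$-slack argument is shorter precisely because it never needs it.

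Two small points to tighten if you write this up. First, your $C_n:=\#\{k:\psi_{k-1}\psi_k<0\}$ misses the intervals where one endpoint is exactly zero (e.g.\ $\psi_{k-1}>0$, $\psi_k=0$ contributes $1/n$ to $\Phi$ but $0$ to $d_n$, yet is not counted); replace it by $\#\{k:\mathds 1[\psi_{k-1}>0]\neq\mathds 1[\psi_k>0]\}$, which is still dominated by $V_n+2A_n+2N$ since any index where $\psi_j$ and $\sigma B_j$ have different (weak) signs forces $|B_j|\le j^\gamma/|\sigma|$. Second, your claim $\E[C_n]=o(n)$ is not literally justified because $N=N(x,\xi)$ need not be integrable; argue instead that $(V_n+2A_n)/n\to 0$ in probability by Markov and $N/n\to 0$ a.s., so $C_n/n\to 0$ in $\nu$-probability, then push down to $\mu$ since $d_n$ depends only on $x$. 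Neither issue requires a new idea; the proof is sound once these are patched.
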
}

The proof will use the following fact.
Let $\lambda$ denote Lebesgue's measure on $[0,1]$, and let $\mu_W$ denote Wiener's measure (see Example \ref{e.Wiener}).

\begin{lemma}\label{l.BM-zero-set}
In the notation of Example \ref{e.Wiener}, for $\mu_W$-a.e. $\omega$ and for every $\eps>0$ there is a ${\delta=}\delta(\omega,{\eps})>0$
such that $\lambda\{0<t<1:|B_t(\omega)|<\delta\}<\epsilon.$
\end{lemma}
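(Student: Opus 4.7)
The plan is to establish this by showing that the Lebesgue measure of the zero set of a Brownian path is almost surely zero, and then invoking monotone convergence of measures as $\delta \downarrow 0$. The map $\delta \mapsto \lambda\{t \in (0,1): |B_t(\omega)| < \delta\}$ is monotone non-decreasing in $\delta$, and the sets decrease as $\delta \downarrow 0$ to $Z(\omega):=\{t \in (0,1): B_t(\omega)=0\}$. Since $\lambda$ is a finite measure on $(0,1)$, by continuity from above,
$$
\lambda\{t \in (0,1): |B_t(\omega)|<\delta\} \xrightarrow[\delta \downarrow 0]{} \lambda(Z(\omega)).
$$
So it suffices to prove that $\lambda(Z(\omega))=0$ for $\mu_W$-a.e. $\omega$.

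To show this, I would apply Fubini's theorem to the jointly measurable indicator $(\omega,t)\mapsto \mathds{1}_{\{B_t(\omega)=0\}}$ on the product $\mathcal{C}([0,1])\times (0,1)$ equipped with $\mu_W\otimes\lambda$. For each fixed $t\in(0,1)$, the random variable $B_t$ has Gaussian distribution with mean $0$ and variance $t>0$, hence $\mu_W\{\omega: B_t(\omega)=0\}=0$. Integrating over $t$ yields
$$
\int_{\mathcal C([0,1])} \lambda(Z(\omega))\, d\mu_W(\omega) = \int_0^1 \mu_W\{\omega: B_t(\omega)=0\}\, dt = 0,
$$
so $\lambda(Z(\omega))=0$ for $\mu_W$-a.e.\ $\omega$, which gives the lemma.

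There is essentially no obstacle: the only technical point is verifying the joint measurability of $(\omega,t)\mapsto B_t(\omega)$, which follows from the continuity of Brownian sample paths (Brownian motion viewed as a map $\mathcal{C}([0,1])\times (0,1)\to\R$, $(\omega,t)\mapsto \omega(t)$, is continuous, hence Borel). Everything else is a routine application of Fubini and monotone convergence.
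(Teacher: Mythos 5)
Your proof is correct and follows essentially the same path as the paper's: reduce the claim to showing that the Lebesgue measure of the zero set $\{t : B_t(\omega)=0\}$ vanishes for $\mu_W$-a.e.\ $\omega$, then use continuity of the finite measure $\lambda$ from above. The only difference is that the paper simply cites this fact (to Durrett, \S 7.4.1), whereas you supply the standard Fubini proof of it; both are fine.
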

\begin{proof}
It is well-known that for a.e. Brownian path,  $\lambda(\{t:B_t(\omega)=0\})=0$ (see~\cite[\S7.4.1]{durrett}). Therefore, for $\mu_W$-a.e. $\omega\in \mathcal C([0,1])$, the Lebesgue measure of
$\{t:|B_t(\omega)|<\delta\}$ tends to zero, as $\delta\to 0$.
\end{proof}

\begin{proof}[Proof of Thm \ref{thm-Arcsine-from-ASIP}]
L\'evy's
Arcsine law for Brownian motion~\cite{revuz-yor} says that for all $s$ in $[0,1]$,
$
\mu_W\bigl\{\omega: \lambda\{t\in [0,1]: B_t(\omega)>0\}\leq s  \bigr\}=\frac{2}{\pi}\arcsin\sqrt{s}.
$
By Lemma \ref{l.BM-zero-set},
\begin{equation}\label{e.soft-arcsine}
\begin{aligned}
&\lim_{\delta\to 0}\mu_W\bigl\{\omega: \lambda\{t\in [0,1]: B_t(\omega)>-\delta\}\leq s  \bigr\}=\tfrac{2}{\pi}\arcsin\sqrt{s}.
\end{aligned}
\end{equation}

\indent Without loss of generality, $\sigma=1$. By \eqref{e.C.1},
given any $\delta>0$, for $\nu$-a.e. $(x,\xi)$, for all $n$ large enough and $1\leq k\leq n$,
$|\psi_k(x)-B_k(x,\xi)|<\delta n^{\frac{1}{2}}$. For such $n$,
\begin{align*}
d_n(x)&:=\tfrac{1}{n}\#\{1\leq k\leq n:\psi_k(x)>0\}=\tfrac{1}{n}\#\{1\leq k\leq n:\tfrac{1}{\sqrt{n}}\psi_k(x)>0\}\\
&\leq \tfrac{1}{n}\#\{1\leq k\leq n:\tfrac{1}{\sqrt{n}}B_k(x,\xi)>-\delta\}\\
&\leq \tfrac{1}{n}\lambda\{0\leq t\leq n:\tfrac{1}{\sqrt{n}}B_t(x,\xi)>-2\delta\}+o(1)\quad \text{ by Lemma \ref{e.BM-fact}}\\
&\leq \lambda\{0\leq t\leq 1:\tfrac{1}{\sqrt{n}}B_{tn}(x,\xi)>-2\delta\} +o(1).\\
\text{Hence, }  \liminf_{n\to\infty}&\mu\{x\in X: d_n(x)\leq s\}=\liminf_{n\to\infty}\nu\{(x,\xi)\in X\times [0,1]: d_n(x)\leq s\}\\
&\geq \liminf_{n\to\infty}\nu\big\{(x,\xi): \lambda\{0\leq t\leq 1:\tfrac{1}{\sqrt{n}}B_{tn}(x,\xi)>-2\delta\}\leq s+o(1)\big\}\\
&=\mu_W\{\omega: \lambda\{0\leq t\leq 1: B_t(\omega)>-2\delta\}\leq s+o(1)\},
\end{align*}
because $\frac{1}{\sqrt{n}}B_{tn}(x,\xi)$ is a standard Brownian motion on $(X\times [0,1],\nu)$.

Invoking \eqref{e.soft-arcsine}, and passing to the limit $\delta\to 0$, we obtain
$$
\liminf_{n\to\infty}\mu\{x\in X: d_n(x)\leq s\}\geq \tfrac{2}{\pi}\arcsin\sqrt{s}.
$$
Similarly, one shows that $
\limsup\limits_{n\to\infty}\mu\{x\in X: d_n(x)\leq s\}\leq \frac{2}{\pi}\arcsin\sqrt{s}
$.
\end{proof}

\subsection{Law of Records (Cor~\ref{c.Records})}
\begin{theorem}\label{thm-Records-from-ASIP}
Suppose $(X,\mu,f,\psi)$ satisfies the ASIP
with $\sigma\neq 0$ and rate $o(n^\gamma)$, $0<\gamma<\frac{1}{2}$. Then for any $s>0$,
$$
\mu\left\{x\in X: \tfrac{1}{\sqrt{n}}\max_{1\leq k\leq n}\psi_k(x)\geq s\right\}\xrightarrow[n\to\infty]{}\sqrt{\tfrac{2}{\pi\sigma^2}}
\int_s^\infty e^{-t^2/2\sigma^2}dt.
$$
\end{theorem}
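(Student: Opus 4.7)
The plan is to deduce the Law of Records from the Functional Central Limit Theorem (Theorem~\ref{thm-FCLT-from-ASIP}) via the continuous mapping theorem, and then identify the limit using the reflection principle for Brownian motion. The main point is that taking the maximum over $[0,1]$ is a \emph{continuous} functional on $\mathcal{C}([0,1])$, so convergence in distribution of continuous processes automatically yields convergence in distribution of their maxima.

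First, I would recall from Example~\ref{e.linear-interp} that $\overline{\psi}_n(x)$ is the piecewise linear interpolation of the values $\psi_k(x)/\sqrt{n}$ at the points $k/n$. Since $\psi_0\equiv 0$ and the interpolation is affine on each $[\tfrac{k-1}{n},\tfrac{k}{n}]$, the maximum of $\overline{\psi}_n(x)$ over $[0,1]$ is attained at one of the nodes, so
$$
F(\overline{\psi}_n(x)) \;=\; \max_{0\le k\le n} \frac{\psi_k(x)}{\sqrt n}\;=\;\max\!\left(0,\;\tfrac1{\sqrt n}\max_{1\le k\le n}\psi_k(x)\right),
$$
where $F:\mathcal{C}([0,1])\to\R$, $F(\omega)=\max_{t\in[0,1]}\omega(t)$. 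For $s>0$ the events $\{F(\overline{\psi}_n)\ge s\}$ and $\{\tfrac1{\sqrt n}\max_{1\le k\le n}\psi_k(x)\ge s\}$ therefore coincide.

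Next, $F$ is $1$-Lipschitz with respect to the sup-norm on $\mathcal{C}([0,1])$, hence continuous. By Corollary~\ref{c.FCLT-Diffeos} (Theorem~\ref{thm-FCLT-from-ASIP}), $\overline{\psi}_n \xrightarrow{dist} \sigma\overline{B}$, so the continuous mapping theorem yields
$$
F(\overline{\psi}_n) \;\xrightarrow{dist}\; F(\sigma\overline{B}) \;=\; \sigma\, M, \qquad M:=\max_{t\in[0,1]} B_t.
$$
The distribution of $M$ is absolutely continuous (by the reflection principle below), hence so is that of $\sigma M$; thus the convergence of distribution functions holds at \emph{every} $s>0$, and we conclude
$$
\mu\!\left\{x: \tfrac1{\sqrt n}\max_{1\le k\le n}\psi_k(x)\ge s\right\} \;\xrightarrow[n\to\infty]{}\; \Prob(\sigma M \ge s).
$$

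Finally, the reflection principle for standard Brownian motion gives $\Prob(M\ge s)=2\Prob(B_1\ge s)=\sqrt{2/\pi}\int_s^\infty e^{-t^2/2}\,dt$ for $s\ge 0$. A change of variables $u=\sigma t$ then gives
$$
\Prob(\sigma M\ge s)\;=\;\sqrt{\tfrac{2}{\pi}}\int_{s/\sigma}^\infty e^{-t^2/2}dt\;=\;\sqrt{\tfrac{2}{\pi\sigma^2}}\int_s^\infty e^{-u^2/2\sigma^2}du,
$$
which matches the right-hand side of the theorem. There is no serious obstacle here: the only delicate point is to make sure the continuous mapping theorem applies in the form we use, which is automatic since $F$ is sup-norm continuous and $\sigma\overline{B}$ has a continuous distribution of $F$-values.
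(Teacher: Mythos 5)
Your proof is correct and follows essentially the same route as the paper's: both arguments apply the Functional CLT (Theorem~\ref{thm-FCLT-from-ASIP}) together with the reflection principle and a change of variables. The only real difference is a small but clean simplification on your part: where the paper passes from $\max_{0\le k\le n}$ to $\max_{1\le k\le n}$ via the uniform continuity of a test function $h$ and the bound $|\psi_1|/\sqrt n\to0$, you observe directly that $\max_{0\le k\le n}\psi_k/\sqrt n=\max(0,\max_{1\le k\le n}\psi_k/\sqrt n)$ and that for $s>0$ the events $\{F(\overline\psi_n)\ge s\}$ and $\{\max_{1\le k\le n}\psi_k/\sqrt n\ge s\}$ coincide exactly, which dispenses with the approximation step entirely. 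Your appeal to the continuous mapping theorem plus the absolute continuity of the distribution of $\sigma M$ is the same content as the paper's portmanteau-theorem step.
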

\begin{proof}
We use notations $\ov{B}$, $\ov{\psi}_n$ from Examples \ref{e.Wiener} and \ref{e.linear-interp}.
Let $g:\mathcal C([0,1])\to \R$ denote  the continuous function $g(\omega):=\max\{\omega(t):0\leq t\leq 1\}$. Let $h:\R\to\R$ be a bounded uniformly continuous function. Then $h\circ g: \mathcal C([0,1])\to \R$ is a bounded {uniformly} continuous function, and
by the functional CLT,
$$
\displaystyle\lim_{n\to\infty}\E_\mu[(h\circ g)(\ov{\psi}_n)]=\E_{\mu_W}[(h\circ g)(\sigma\ov{B})].
$$

The left hand side equals $\lim\limits_{n\to\infty}\E_\mu[h(\max\{\frac{\psi_k(x)}{\sqrt{n}}:0\leq k\leq n\})]$, with $\psi_0(x):=0$. Since 
$|\psi_1(x)/\sqrt{n}|\to 0$
and $h$ is bounded and uniformly continuous,
$$
\lim\limits_{n\to\infty}\E_\mu\left[h(\max\{\tfrac{\psi_k(x)}{\sqrt{n}}:1\leq k\leq n\})\right]=\E_{\mu_W}[(h\circ g)(\sigma\ov{B})]
.
$$
The portmanteau theorem \cite[Thm~2.1]{Billingsley} now tells us that
$$
\mu\{x\in X: \max\{\tfrac{\psi_k(x)}{\sqrt{n}}:k=1,\ldots,n\}\geq s\}\xrightarrow[n\to\infty]{} \mu_W\bigg[\max_{t\in [0,1]}\sigma\ov{B}_t(\omega)\geq s\bigg].
$$
We now recall that by  the reflection principle for Brownian motion,
$$
\mu_W\bigg[\max_{t\in [0,1]}\sigma\ov{B}_t(\omega)\geq s\bigg]=2\times \frac{1}{\sqrt{2\pi}}\int_{s/\sigma}^\infty e^{-\tau^2/2}d\tau.
$$
Changing variables $t=\sigma \tau$, we obtain the law of records.
\end{proof}

\small
\bibliographystyle{plain-like-initial} \bibliography{Omri's-bib-file}{}

\def\cprime{$'$} \def\cprime{$'$} \def\cprime{$'$} \def\cprime{$'$}
\begin{thebibliography}{100}

\bibitem{Abdenur-Crovisier}
F.~Abdenur and S.~Crovisier.
\newblock Transitivity and topological mixing for {$C^1$} diffeomorphisms.
\newblock In {\em Essays in mathematics and its applications}, pages 1--16.
  Springer, Heidelberg, 2012.

\bibitem{Abramov-Rokhlin}
L.~M. Abramov and V.~A. Rohlin.
\newblock Entropy of a skew product of mappings with invariant measure.
\newblock {\em Vestnik Leningrad. Univ.} \textbf{17} (1962), 5--13.

\bibitem{Adler-Shields-Smorodinsky}
R.~L. Adler, P.~Shields, and M.~Smorodinsky.
\newblock Irreducible {M}arkov shifts.
\newblock {\em Ann. Math. Statist.} \textbf{43} (1972), 1027--1029.

\bibitem{Adler-Weiss-Similarity-Toral-Automorphisms}
R.~L. Adler and B.~Weiss.
\newblock {\em Similarity of automorphisms of the torus}.
\newblock Memoirs of the American Mathematical Society \textbf{98}. AMS,
  Providence, R.I., 1970.

\bibitem{Alvez-Luzzatto-Pinheiro}
J.~F. Alves, S.~Luzzatto, and V.~Pinheiro.
\newblock Markov structures and decay of correlations for non-uniformly
  expanding dynamical systems.
\newblock {\em Ann. Inst. H. Poincar\'{e} C Anal. Non Lin\'{e}aire} \textbf{22}
  (2005), 817--839.

\bibitem{andersson-vasquez}
M.~Andersson and C.~H. V\'{a}squez.
\newblock Statistical stability of mostly expanding diffeomorphisms.
\newblock {\em Ann. Inst. H. Poincar\'{e} Anal. Non Lin\'{e}aire} \textbf{37}
  (2020), 1245--1270.

\bibitem{Anosov-Geodesic-Flows}
D.~V. Anosov.
\newblock {\em Geodesic flows on closed {R}iemann manifolds with negative
  curvature.}
\newblock Proceedings of the Steklov Institute of Mathematics \textbf{90}
  (1967). AMS, Providence, R.I., 1969.

\bibitem{Araujo-Lima-Poletti}
E.~Araujo, Y.~Lima, and M.~Poletti.
\newblock {\em Symbolic dynamics for nonuniformly hyperbolic maps with
  singularities in high dimension}.
\newblock Memoirs of the AMS \textbf{301}. AMS, 2024.

\bibitem{Castro}
A.~Armando~de Castro~J\'unior.
\newblock Fast mixing for attractors with a mostly contracting central
  direction.
\newblock {\em Ergodic Theory Dynam. Systems} \textbf{24} (2004), 17--44.

\bibitem{Barreira-Pesin-Non-Uniform-Hyperbolicity-Book}
L.~Barreira and Y.~Pesin.
\newblock {\em Nonuniform hyperbolicity}, volume 115 of {\em Encyclopedia of
  Mathematics and its Applications}.
\newblock Cambridge University Press, Cambridge, 2007.

\bibitem{Ben-Ovadia-Codings}
S.~Ben~Ovadia.
\newblock Symbolic dynamics for non-uniformly hyperbolic diffeomorphisms of
  compact smooth manifolds.
\newblock {\em J. Mod. Dyn.} \textbf{13} (2018), 43--113.

\bibitem{Ben-Ovadia-coded-set}
S.~Ben~Ovadia.
\newblock The set of points with {M}arkovian symbolic dynamics for
  non-uniformly hyperbolic diffeomorphisms.
\newblock {\em Ergodic Theory Dynam. Systems} \textbf{41} (2021), 3244--3269.

\bibitem{Berger-Henon}
P.~Berger.
\newblock Abundance of non-uniformly hyperbolic henon-like endomorphisms.
\newblock {\em Ast{\'e}risque} \textbf{410} (2019), 53--177.

\bibitem{Billingsley}
P.~Billingsley.
\newblock {\em Convergence of probability measures}.
\newblock Wiley Series in Probability and Statistics: Probability and
  Statistics. John Wiley \& Sons, Inc., New York, second edition, 1999.

\bibitem{bochi-viana}
J.~Bochi and M.~Viana.
\newblock The {L}yapunov exponents of generic volume-preserving and symplectic
  maps.
\newblock {\em Ann. of Math.} \textbf{161} (2005), 1423--1485.

\bibitem{bonatti-crovisier-connecting}
C.~Bonatti and S.~Crovisier.
\newblock R\'ecurrence et g\'en\'ericit\'e.
\newblock {\em Inventiones Mathematicae} \textbf{158} (2004), 33--104.

\bibitem{Bonatti-Diaz}
C.~Bonatti and L.~J. D\'{\i}az.
\newblock Persistent nonhyperbolic transitive diffeomorphisms.
\newblock {\em Ann. of Math.} \textbf{143} (1996), 357--396.

\bibitem{Bonatti-Viana}
C.~Bonatti and M.~Viana.
\newblock S{RB} measures for partially hyperbolic systems whose central
  direction is mostly contracting.
\newblock {\em Israel J. Math.} \textbf{115} (2000), 157--193.

\bibitem{Bowen-MP-Axiom-A}
R.~Bowen.
\newblock Markov partitions for {A}xiom {${\rm A}$} diffeomorphisms.
\newblock {\em Amer. J. Math.} \textbf{92} (1970), 725--747.

\bibitem{Bowen-LNM}
R.~Bowen.
\newblock {\em Equilibrium states and the ergodic theory of {A}nosov
  diffeomorphisms}.
\newblock Lecture Notes in Mathematics, Vol. 470. Springer-Verlag, Berlin,
  1975.

\bibitem{BoyleBuzziGomez2014}
M.~Boyle, J.~Buzzi, and R.~G\'{o}mez.
\newblock Borel isomorphism of {SPR} {M}arkov shifts.
\newblock {\em Colloq. Math.} \textbf{137} (2014), 127--136.

\bibitem{Burguet-Cr}
D.~Burguet.
\newblock Maximal measure and entropic continuity of {L}yapunov exponents for
  {${C}^r$} surface diffeomorphisms with large entropy.
\newblock {\em Ann. Henri Poincar\'{e}} \textbf{25} (2024), 1485--1510.

\bibitem{Buzzi-PQFT}
J.~Buzzi.
\newblock Puzzles of quasi-finite type, zeta functions and symbolic dynamics
  for multi-dimensional maps.
\newblock {\em Ann. Inst. Fourier} \textbf{60} (2010), 801--852.

\bibitem{Buzzi-SIM}
J.~Buzzi.
\newblock The intrinsic ergodicity of smooth interval maps.
\newblock {\em Israel J. Math.} \textbf{100} (1997), 125--161.

\bibitem{BuzziNoMax}
J.~Buzzi.
\newblock {$C^r$} surface diffeomorphisms with no maximal entropy measure.
\newblock {\em Ergodic Theory Dynam. Systems} \textbf{34} (2014), 1770--1793.

\bibitem{BCS-2}
J.~Buzzi, S.~Crovisier, and O.~Sarig.
\newblock Continuity of {L}yapunov exponents and entropy for surface
  diffeomorphisms.
\newblock {\em Inventiones Mathematicae} \textbf{230} (2022), 767--849.

\bibitem{BCS-1}
J.~Buzzi, S.~Crovisier, and O.~Sarig.
\newblock Measures of maximal entropy for surface diffeomorphisms.
\newblock {\em Annals of Math.} \textbf{195} (2022), 421--508.

\bibitem{Buzzi-Fisher-2013}
J.~Buzzi and T.~Fisher.
\newblock Entropic stability beyond partial hyperbolicity.
\newblock {\em J. Mod. Dyn.} \textbf{7} (2013), 527--552.

\bibitem{Buzzi-Fisher-Sambarino-Vasquez-2012}
J.~Buzzi, T.~Fisher, M.~Sambarino, and C.~Vasqu\`ez.
\newblock Maximal entropy measures for certain partially hyperbolic, derived
  from anosov systems.
\newblock {\em Ergodic Theory and Dynamical Systems} \textbf{32} (2012),
  63--79.

\bibitem{Buzzi-Fisher-Tahzibi}
J.~Buzzi, T.~Fisher, and A.~Tahzibi.
\newblock A dichotomy for measures of maximal entropy near time-one maps of
  transitive anosov flows.
\newblock {\em Ann. Sci. Ec. Norm. Sup{\'e}r.} \textbf{55} (2022), 969--1002.

\bibitem{Buzzi-Sarig}
J.~Buzzi and O.~Sarig.
\newblock Uniqueness of equilibrium measures for countable {M}arkov shifts and
  multidimensional piecewise expanding maps.
\newblock {\em Ergodic Theory Dynam. Systems} \textbf{23} (2003), 1383--1400.

\bibitem{Climenhaga-Tower}
V.~Climenhaga.
\newblock Specification and towers in shift spaces.
\newblock {\em Commun. Math. Phys.} \textbf{364} (2018), 441--504.

\bibitem{Climenhaga-Thompson}
V.~Climenhaga and D.~J. Thompson.
\newblock Intrinsic ergodicity beyond specification: {$\beta$}-shifts, {S}-gap
  shifts, and their factors.
\newblock {\em Isr. J. Math.} \textbf{192} (2012), 785--817.

\bibitem{Crovisier-Poletti}
S.~Crovisier and M.~Poletti.
\newblock Invariance principle and non-compact center foliations.
\newblock arXiv:2210.14989.

\bibitem{Cyr-Sarig}
V.~Cyr and O.~Sarig.
\newblock Spectral gap and transience for {R}uelle operators on countable
  {M}arkov shifts.
\newblock {\em Comm. Math. Phys.} \textbf{292} (2009), 637--666.

\bibitem{Daon}
Y.~Daon.
\newblock Bernoullicity of equilibrium measures on countable {M}arkov shifts.
\newblock {\em Discrete Contin. Dyn. Syst.} \textbf{33} (2013), 4003--4015.

\bibitem{Dembo-Zeitouni}
A.~Dembo and O.~Zeitouni.
\newblock {\em Large deviations techniques and applications}, volume~38 of {\em
  Stochastic Modelling and Applied Probability}.
\newblock Springer-Verlag, Berlin, 2010.

\bibitem{Denker-Philipp}
M.~Denker and W.~Philipp.
\newblock Approximation by {B}rownian motion for {G}ibbs measures and flows
  under a function.
\newblock {\em Ergodic Theory Dynam. Systems} \textbf{4} (1984), 541--552.

\bibitem{didier}
P.~Didier.
\newblock Stability of accessibility.
\newblock {\em Ergodic Theory Dynam. Systems} \textbf{23} (2003), 1717--1731.

\bibitem{Dolgopyat}
D.~Dolgopyat.
\newblock On dynamics of mostly contracting diffeomorphisms.
\newblock {\em Comm. Math. Phys.} \textbf{213} (2000), 181--201.

\bibitem{Dolgopyat-Wilkinson}
D.~Dolgopyat and A.~Wilkinson.
\newblock Stable accessibility is {$C^1$} dense.
\newblock {\em Ast\'{e}risque} \textbf{287} (2003), xvii, 33--60.
\newblock Geometric methods in dynamics. II.

\bibitem{durrett}
R.~Durrett.
\newblock {\em Probability -- theory and example}, volume~49 of {\em Cambridge
  Series in Statistical and Probabilistic Mathematics}.
\newblock Cambridge University Press, 2019.

\bibitem{ELPV}
M.~Einsiedler, E.~Lindenstrauss, P.~Michel, and A.~Venkatesh.
\newblock The distribution of closed geodesics on the modular surface, and
  {D}uke's theorem.
\newblock {\em Enseign. Math.} \textbf{58} (2012), 249--313.

\bibitem{Fisher-Potrie-Sambarino-2014}
T.~Fisher, R.~Potrie, and M.~Sambarino.
\newblock Dynamical coherence of partially hyperbolic diffeomorphisms of tori
  isotopic to anosov.
\newblock {\em Math. Z.} \textbf{278} (2014), 149--168.

\bibitem{Franks69}
J.~Franks.
\newblock Anosov diffeomorphisms on tori.
\newblock {\em Trans. Amer. Math. Soc.} \textbf{145} (1969), 117--124.

\bibitem{Ornstein-Friedman}
N.~A. Friedman and D.~S. Ornstein.
\newblock On isomorphism of weak {B}ernoulli transformations.
\newblock {\em Advances in Math.} \textbf{5} (1970), 365--394 (1970).

\bibitem{Gouezel-2}
S.~Gou\"{e}zel.
\newblock Regularity of coboundaries for nonuniformly expanding {M}arkov maps.
\newblock {\em Proc. Amer. Math. Soc.} \textbf{134} (2006), 391--401.

\bibitem{Gouezel-ASIP}
S.~Gou\"{e}zel.
\newblock Almost sure invariance principle for dynamical systems by spectral
  methods.
\newblock {\em Ann. Probab.} \textbf{38} (2010), 1639--1671.

\bibitem{GST}
S.~Gou\"ezel, B.~Schapira, and S.~Tapie.
\newblock Pressure at infinity and strong positive recurrence in negative
  curvature.
\newblock arXiv:2012.13226.

\bibitem{Guivarch-Hardy}
Y.~Guivarc'h and J.~Hardy.
\newblock Th\'eor\`emes limites pour une classe de cha\^\i nes de {M}arkov et
  applications aux diff\'eomorphismes d'{A}nosov.
\newblock {\em Ann. Inst. H. Poincar\'e Probab. Statist.} \textbf{24} (1988),
  73--98.

\bibitem{Gurevich-Topological-Entropy}
B.~M. Gurevic.
\newblock Topological entropy of a countable {M}arkov chain.
\newblock {\em Dokl. Akad. Nauk SSSR} \textbf{187} (1969), 715--718.

\bibitem{Gurevich-Measures-Of-Maximal-Entropy}
B.~M. Gurevic.
\newblock Shift entropy and {M}arkov measures in the space of paths of a
  countable graph.
\newblock {\em Dokl. Akad. Nauk SSSR} \textbf{192} (1970), 963--965.

\bibitem{Gurevich-Savchenko}
B.~M. Gurevich and S.~V. Savchenko.
\newblock Thermodynamic formalism for symbolic {M}arkov chains with a countable
  number of states.
\newblock {\em Uspekhi Mat. Nauk} \textbf{53} (1998), 3--106.

\bibitem{Gurevich-Zargaryan}
B.~M. Gurevich and A.~S. Zargaryan.
\newblock Conditions for the existence of a maximal measure for a countable
  symbolic {M}arkov chain.
\newblock {\em Vestnik Moskov. Univ. Ser. I Mat. Mekh.} \textbf{} (1988),
  14--18.

\bibitem{Gurevich-Stable}
B.~M. Gurevich.
\newblock Stably recurrent nonnegative matrices.
\newblock {\em Uspekhi Mat. Nauk} \textbf{51} (1996), 195--196.

\bibitem{Hofbauer-Tower}
F.~Hofbauer.
\newblock On intrinsic ergodicity of piecewise monotonic transformations with
  positive entropy. {II}.
\newblock {\em Israel J. Math.} \textbf{38} (1981), 107--115.

\bibitem{Iommi-Velozo-Cusp}
G.~Iommi, F.~Riquelme, and A.~Velozo.
\newblock Entropy in the cusp and phase transitions for geodesic flows.
\newblock {\em Israel J. Math.} \textbf{225} (2018), 609--659.

\bibitem{iommi-todd-velozo-2020}
G.~Iommi, M.~Todd, and A.~Velozo.
\newblock Upper semi-continuity of entropy in non-compact settings.
\newblock {\em Math. Res. Lett.} \textbf{27} (2020), 1055--1078.

\bibitem{Iommi-Todd-Velozo-Advances}
G.~Iommi, M.~Todd, and A.~Velozo.
\newblock Escape of entropy for countable {M}arkov shifts.
\newblock {\em Adv. Math.} \textbf{405} (2022), Paper No. 108507, 54.

\bibitem{Kadyrov-Effective-Uniqueness}
S.~Kadyrov.
\newblock Effective uniqueness of {P}arry measure and exceptional sets in
  ergodic theory.
\newblock {\em Monatsh. Math.} \textbf{178} (2015), 237--249.

\bibitem{Kato-Book}
T.~Kato.
\newblock {\em Perturbation theory for linear operators}.
\newblock Classics in Mathematics. Springer-Verlag, Berlin, 1995.

\bibitem{Katok79}
A.~Katok.
\newblock Bernoulli diffeomorphisms on surfaces.
\newblock {\em Ann. of Math.} \textbf{110} (1979), 529--547.

\bibitem{KatokIHES}
A.~Katok.
\newblock Lyapunov exponents, entropy and periodic orbits for diffeomorphisms.
\newblock {\em Inst. Hautes \'Etudes Sci. Publ. Math.} \textbf{51} (1980),
  137--173.

\bibitem{Katok-Hasselblatt-Book}
A.~Katok and B.~Hasselblatt.
\newblock {\em Introduction to the modern theory of dynamical systems},
  volume~54 of {\em Encyclopedia of Math and its Applications}.
\newblock Cambridge University Press, 1995.

\bibitem{Kechris}
A.~Kechris.
\newblock {\em Classical descriptive set theory}, volume 156 of {\em Graduate
  Texts in Mathematics}.
\newblock Springer-Verlag, New York, 1995.

\bibitem{Kifer}
Y.~Kifer.
\newblock Large deviations in dynamical systems and stochastic processes.
\newblock {\em Trans. Amer. Math. Soc.} \textbf{321} (1990), 505--524.

\bibitem{Kitchens-Book}
B.~P. Kitchens.
\newblock {\em Symbolic dynamics}.
\newblock Universitext. Springer-Verlag, Berlin, 1998.

\bibitem{Komlos-Major-Tusnady}
J.~Koml\'{o}s, P.~Major, and G.~Tusn\'{a}dy.
\newblock An approximation of partial sums of independent {${\rm RV}$}'s and
  the sample {${\rm DF}$}. {I}.
\newblock {\em Z. Wahrscheinlichkeitstheorie und Verw. Gebiete} \textbf{32}
  (1975), 111--131.

\bibitem{Lima-1D}
Y.~Lima.
\newblock Symbolic dynamics for one dimensional maps with nonuniform expansion.
\newblock {\em Ann. Inst. H. Poincar\'{e} C Anal. Non Lin\'{e}aire} \textbf{37}
  (2020), 727--755.

\bibitem{Martens-Liverani}
C.~Liverani and M.~Martens.
\newblock Convergence to equilibrium for intermittent symplectic maps.
\newblock {\em Comm. Math. Phys.} \textbf{260} (2005), 527--556.

\bibitem{Livsic}
A.~N. Liv\v{s}ic.
\newblock Cohomology of dynamical systems.
\newblock {\em Izv. Akad. Nauk SSSR Ser. Mat.} \textbf{36} (1972), 1296--1320.

\bibitem{Mane}
R.~Ma\~{n}\'{e}.
\newblock {\em Ergodic theory and differentiable dynamics}, volume~8 of {\em
  Ergebnisse der Mathematik und ihrer Grenzgebiete (3)}.
\newblock Springer-Verlag, Berlin, 1987.

\bibitem{Manning74}
A.~Manning.
\newblock There are no new {A}nosov diffeomorphisms on tori.
\newblock {\em Amer. J. Math.} \textbf{96} (1974), 422--429.

\bibitem{Martinchich}
S.~Martinchich.
\newblock Global stability of discretized anosov flows.
\newblock {\em J. Mod. Dyn.} \textbf{19} (2023), 561--623.

\bibitem{Melbourne-Nicol0}
I.~Melbourne and M.~Nicol.
\newblock Almost sure invariance principle for nonuniformly hyperbolic systems.
\newblock {\em Comm. Math. Phys.} \textbf{260} (2005), 131--146.

\bibitem{Melbourne-Nicol-ASIP}
I.~Melbourne and M.~Nicol.
\newblock A vector-valued almost sure invariance principle for hyperbolic
  dynamical systems.
\newblock {\em Ann. Probab.} \textbf{37} (2009), 478--505.

\bibitem{Misiurewicz}
M.~Misiurewicz.
\newblock Diffeomorphism without any measure with maximal entropy.
\newblock {\em Bull. Acad. Polon. Sci. S\'er. Sci. Math. Astronom. Phys.}
  \textbf{21} (1973), 903--910.

\bibitem{Mongez-Pacifico}
J.~C. Mongez and M.~J. Pacifico.
\newblock Finite measures of maximal entropy for an open set of partially
  hyperbolic diffeomorphisms.
\newblock {\em arXiv:2401.02776} \textbf{}.

\bibitem{Newhouse70}
S.~E. Newhouse.
\newblock On codimension one {A}nosov diffeomorphisms.
\newblock {\em Amer. J. Math.} \textbf{92} (1970), 761--770.

\bibitem{Newhouse-Homoclinic}
S.~E. Newhouse.
\newblock Hyperbolic limit sets.
\newblock {\em Trans. Amer. Math. Soc.} \textbf{167} (1972), 125--150.

\bibitem{Newhouse-Entropy}
S.~E. Newhouse.
\newblock Continuity properties of entropy.
\newblock {\em Ann. of Math.} \textbf{129} (1989), 215--235.

\bibitem{Parry-Intrinsic-MC}
W.~Parry.
\newblock Intrinsic {M}arkov chains.
\newblock {\em Trans. Amer. Math. Soc.} \textbf{112} (1964), 55--66.

\bibitem{Parry-Pollicott-Asterisque}
W.~Parry and M.~Pollicott.
\newblock Zeta functions and the periodic orbit structure of hyperbolic
  dynamics.
\newblock {\em Ast\'erisque} \textbf{187-188} (1990), 268.

\bibitem{Pesin-Izvestia-1976}
J.~B. Pesin.
\newblock Families of invariant manifolds that correspond to nonzero
  characteristic exponents.
\newblock {\em Izv. Akad. Nauk SSSR Ser. Mat.} \textbf{40} (1976), 1332--1379,
  1440.

\bibitem{Pollicott-Livsic}
M.~Pollicott.
\newblock Local {H}\"{o}lder regularity of densities and {L}ivsic theorems for
  non-uniformly hyperbolic diffeomorphisms.
\newblock {\em Discrete Contin. Dyn. Syst.} \textbf{13} (2005), 1247--1256.

\bibitem{Pratelli-2007}
A.~Pratelli.
\newblock On the equality between {M}onge's infimum and {K}antorovich's minimum
  in optimal mass transportation.
\newblock {\em Ann. Inst. H. Poincar\'e Probab. Statist.} \textbf{43} (2007),
  1--13.

\bibitem{revuz-yor}
D.~Revuz and M.~Yor.
\newblock {\em Continuous martingales and {B}rownian motion}, volume 293 of
  {\em Grundlehren der mathematischen Wissenschaften}.
\newblock Springer-Verlag, Berlin, 1999.

\bibitem{Young-Rey-Bellet}
L.~Rey-Bellet and L.-S. Young.
\newblock Large deviations in non-uniformly hyperbolic dynamical systems.
\newblock {\em Ergodic Theory Dynam. Systems} \textbf{28} (2008), 587--612.

\bibitem{Riquelme-Velozo}
F.~Riquelme and A.~Velozo.
\newblock Escape of mass and entropy for geodesic flows.
\newblock {\em Ergodic Theory Dynam. Systems} \textbf{39} (2019), 446--473.

\bibitem{RodriguezHertz-RodriguezHertz-Tahzibi-Ures}
F.~Rodriguez~Hertz, M.~A. Rodriguez~Hertz, A.~Tahzibi, and R.~Ures.
\newblock Maximizing measures for partially hyperbolic systems with compact
  center leaves.
\newblock {\em Ergodic Theory Dynam. Systems} \textbf{32} (2012), 825--839.

\bibitem{Rodriguez-Hertz-Squared-Tahzibi-Ures-CMP}
F.~Rodriguez~Hertz, M.~Rodriguez~Hertz, A.~Tahzibi, and R.~Ures.
\newblock Uniqueness of {SRB} measures for transitive diffeomorphisms on
  surfaces.
\newblock {\em Communications in Mathematical Physics} \textbf{306} (2011),
  35--49.

\bibitem{Ruelle-TDF-book}
D.~Ruelle.
\newblock {\em Thermodynamic formalism}, volume~5 of {\em Encyclopedia of
  Mathematics and its Applications}.
\newblock Addison-Wesley Publishing Co., Reading, Mass., 1978.

\bibitem{Ruette}
S.~Ruette.
\newblock On the {V}ere-{J}ones classification and existence of maximal
  measures for countable topological {M}arkov chains.
\newblock {\em Pacific J. Math.} \textbf{209} (2003), 366--380.

\bibitem{Ruhr-Sarig}
R.~R\"{u}hr and O.~Sarig.
\newblock Effective intrinsic ergodicity for countable state topological markov
  shifts.
\newblock {\em Israel J. Math.} \textbf{251} (2022), 679--735.

\bibitem{Sarig-ETDS-99}
O.~M. Sarig.
\newblock Thermodynamic formalism for countable {M}arkov shifts.
\newblock {\em Ergodic Theory Dynam. Systems} \textbf{19} (1999), 1565--1593.

\bibitem{Sarig-CMP-2001}
O.~M. Sarig.
\newblock Phase transitions for countable {M}arkov shifts.
\newblock {\em Comm. Math. Phys.} \textbf{217} (2001), 555--577.

\bibitem{Sarig-Bernoulli-JMD}
O.~M. Sarig.
\newblock Bernoulli equilibrium states for surface diffeomorphisms.
\newblock {\em J. Mod. Dyn.} \textbf{5} (2011), 593--608.

\bibitem{Sarig-JAMS}
O.~M. Sarig.
\newblock Symbolic dynamics for surface diffeomorphisms with positive entropy.
\newblock {\em J. Amer. Math. Soc.} \textbf{26} (2013), 341--426.

\bibitem{ST}
B.~Schapira and S.~Tapie.
\newblock Regularity of entropy, geodesic currents and entropy at infinity.
\newblock {\em Ann. Sci. E.N.S.} \textbf{54} (2021), 1--68.

\bibitem{Sinai-Construction-of-MP}
J.~G. Sina{\u \i}.
\newblock Construction of {M}arkov partitionings.
\newblock {\em Funkcional. Anal. i Prilo{\v z}en.} \textbf{2} (1968), 70--80.

\bibitem{Sinai-MP-U-diffeomorphisms}
J.~G. Sina{\u \i}.
\newblock Markov partitions and {U}-diffeomorphisms.
\newblock {\em Funkcional. Anal. i Prilo{\v z}en} \textbf{2} (1968), 64--89.

\bibitem{Sinai-Gibbs}
J.~G. Sina{\u \i}.
\newblock Gibbs measures in ergodic theory.
\newblock {\em Uspehi Mat. Nauk} \textbf{27} (1972), 21--64.

\bibitem{Smale}
S.~Smale.
\newblock Differentiable dynamical systems.
\newblock {\em Bull. Amer. Math. Soc.} \textbf{73} (1967), 747--817.

\bibitem{Strassen}
V.~Strassen.
\newblock An invariance principle for the law of the iterated logarithm.
\newblock {\em Z. Wahrscheinlichkeitstheorie und Verw. Gebiete} \textbf{3}
  (1964), 211--226.

\bibitem{Tahzibi-Yang}
A.~Tahizibi and J.~Yang.
\newblock Invariance principle and rigidity of high entropy measures.
\newblock {\em Trans. Amer. Math. Soc.} \textbf{371} (2019), 1231--1251.

\bibitem{Takahashi}
Y.~Takahashi.
\newblock Isomorphisms of {$\beta $}-automorphisms to {M}arkov automorphisms.
\newblock {\em Osaka Math. J.} \textbf{10} (1973), 175--184.

\bibitem{Ures-2012}
R.~Ur\`es.
\newblock Intrinsic ergodicity of partially hyperbolic diffeomorphisms with a
  hyperbolic linear part.
\newblock {\em Proc. Amer. Math. Soc.} \textbf{140} (2012), 1973--1985.

\bibitem{Vere-Jones-Geometric-Ergodicity}
D.~Vere-Jones.
\newblock Geometric ergodicity in denumerable {M}arkov chains.
\newblock {\em Quart. J. Math. Oxford Ser.} \textbf{13} (1962), 7--28.

\bibitem{Walters-Book}
P.~Walters.
\newblock {\em An introduction to ergodic theory}, volume~79 of {\em Graduate
  Texts in Mathematics}.
\newblock Springer-Verlag, New York, 1982.

\bibitem{JYang-u-entropy}
J.~Yang.
\newblock Entropy along expanding foliations.
\newblock arXiv:1601.05504.

\bibitem{Kakutani-Yosida}
K.~Yosida and S.~Kakutani.
\newblock Birkhoff's ergodic theorem and the maximal ergodic theorem.
\newblock {\em Proc. Imp. Acad. Tokyo} \textbf{15} (1939), 165--168.

\bibitem{Young-Towers-Annals}
L.-S. Young.
\newblock Statistical properties of dynamical systems with some hyperbolicity.
\newblock {\em Ann. of Math.} \textbf{147} (1998), 585--650.

\bibitem{young-tower-recurrence}
L.-S. Young.
\newblock Recurrence times and rates of mixing.
\newblock {\em Israel J. Math.} \textbf{110} (1999), 153--188.

\bibitem{Zang-P-C}
Y.~Zang.
\newblock Personal communication.

\end{thebibliography}
\bigskip

\hspace{-2.5cm}
\begin{tabular}{l l l l l}
\emph{J\'er\^ome Buzzi}
& &
\emph{Sylvain Crovisier}
& &
\emph{Omri Sarig}
\\

Laboratoire de Math\'ematiques d'Orsay
&& Laboratoire de Math\'ematiques d'Orsay
&& Faculty of Mathematics\\
CNRS - UMR 8628
&& CNRS - UMR 8628
&&  and Computer Science\\
Universit\'e Paris-Saclay
&&  Universit\'e Paris-Saclay
&& The Weizmann Institute of Science\\
Orsay 91405, France
&& Orsay 91405, France
&& Rehovot, 7610001,  Israel\\

\end{tabular}

\end{document}